\definecolor{ao(english)}{rgb}{0.0, 0.5, 0.0}
\theoremstyle{plain}
\newtheorem{theorem}{Theorem}[section]
\newtheorem{corollary}[theorem]{Corollary}
\newtheorem{prop}[theorem]{Proposition}
\newtheorem{lemma}[theorem]{Lemma}
\theoremstyle{definition}
\newtheorem{definition}[theorem]{Definition}
\newtheorem{remark}[theorem]{Remark}
\newtheorem{assumptions}[theorem]{Assumptions}\newtheorem{tightness-condition}[theorem]{Tightness condition}
\newtheorem{example}[theorem]{Example} 
\newcommand{\N}{\mathbb{N}}
\newcommand{\prob}{\mathbb{P}}
\newcommand{\oo}{{\mathfrak o}}
\newcommand{\p}{\mathbb{P}}
\newcommand{\E}{\mathbb{E}}
\newcommand{\Rd}{\mathbb{R}^4_\uparrow}
\newcommand{\expt}{\mathbb{E}}
\newcommand{\indic}{\mathbf{1}}
\newcommand{\floor}[1]{{\left\lfloor #1 \right\rfloor}}
\newcommand{\ceil}[1]{{\left\lceil #1 \right\rceil}}
\newcommand{\CDF}{\operatorname{CDF}}
\newcommand{\bigslant}[2]{{\raisebox{.2em}{$#1$}\left/\raisebox{-.2em}{$#2$}\right.}}
\newcommand{\sset}{\subset}
\newcommand{\la}{\lambda}
\newcommand{\al}{\alpha}
\newcommand{\Om}{\Omega}
\newcommand{\mathand}{\;\text{and}\;}
\newcommand{\mathas}{\;\text{as}\;}
\newcommand{\ga}{\gamma}
\newcommand{\Ga}{\Gamma}
\newcommand{\ep}{\epsilon}
\newcommand{\om}{\omega}
\newcommand{\ka}{\kappa}
\newcommand{\de}{\delta}
\newcommand{\be}{\beta}
\newcommand{\De}{\Delta}
\newcommand{\sig}{\sigma}
\newcommand{\eps}{\epsilon}
\newcommand{\del}{\partial}
\newcommand{\scrE}{\mathcal{E}}
\newcommand{\scrA}{\mathfrak{A}}
\newcommand{\scrD}{\mathcal{D}}
\newcommand{\scrK}{\mathcal{K}}
\newcommand{\scrL}{\mathcal{L}}
\newcommand{\scrH}{\mathcal{H}}
\newcommand{\scrS}{\mathcal{S}}
\newcommand{\scrI}{\mathcal{I}}
\newcommand{\close}[1]{\mkern 1.5mu\overline{\mkern-1.5mu#1\mkern-1.5mu}\mkern 1.5mu}
\newcommand{\Z}{\mathbb{Z}}
\newcommand{\Q}{\mathbb{Q}}
\newcommand{\R}{\mathbb{R}}
\newcommand{\eqd}{\stackrel{d}{=}}
\newcommand{\cvgd}{\stackrel{d}{\to}}
\newcommand{\cvgp}{\stackrel{\prob}{\to}}
\newcommand{\X}{\times}
\newcommand{\cvgup}{\uparrow}
\newcommand{\cvgdown}{\downarrow}
\newcommand{\smin}{\setminus}
\newcommand{\lf}{\left}
\newcommand{\rg}{\right}
\DeclareMathOperator{\Int}{int}
\newcommand{\bp}{\mathbf{p}}
\newcommand{\bq}{\mathbf{q}}
\newcommand{\Rh}{\R^4_\circ}
\numberwithin{equation}{section}
\title{The scaling limit of the longest increasing subsequence}
\author{Duncan Dauvergne \and B\'alint Vir\'ag}
\date{October 2022}
\begin{document}
	
	\maketitle
	
	\begin{abstract}
		We provide a framework for proving convergence to the directed landscape, the central object in the Kardar-Parisi-Zhang universality class. 
		For last passage models, we show that compact convergence to the Airy line ensemble implies convergence to the Airy sheet. In i.i.d.\ environments, we show that Airy sheet convergence implies convergence of distances and geodesics to their counterparts in the directed landscape. Our results imply convergence of classical last passage models and interacting particle systems. 
		Our framework is built on the notion of a directed metric, a generalization of metrics which behaves better under limits. 
		
		As a consequence of our results, we present a solution to an old problem: the scaled longest increasing subsequence in a uniform permutation converges to the directed geodesic. 
	\end{abstract}

	\begin{center}	\includegraphics[scale=0.8
		]{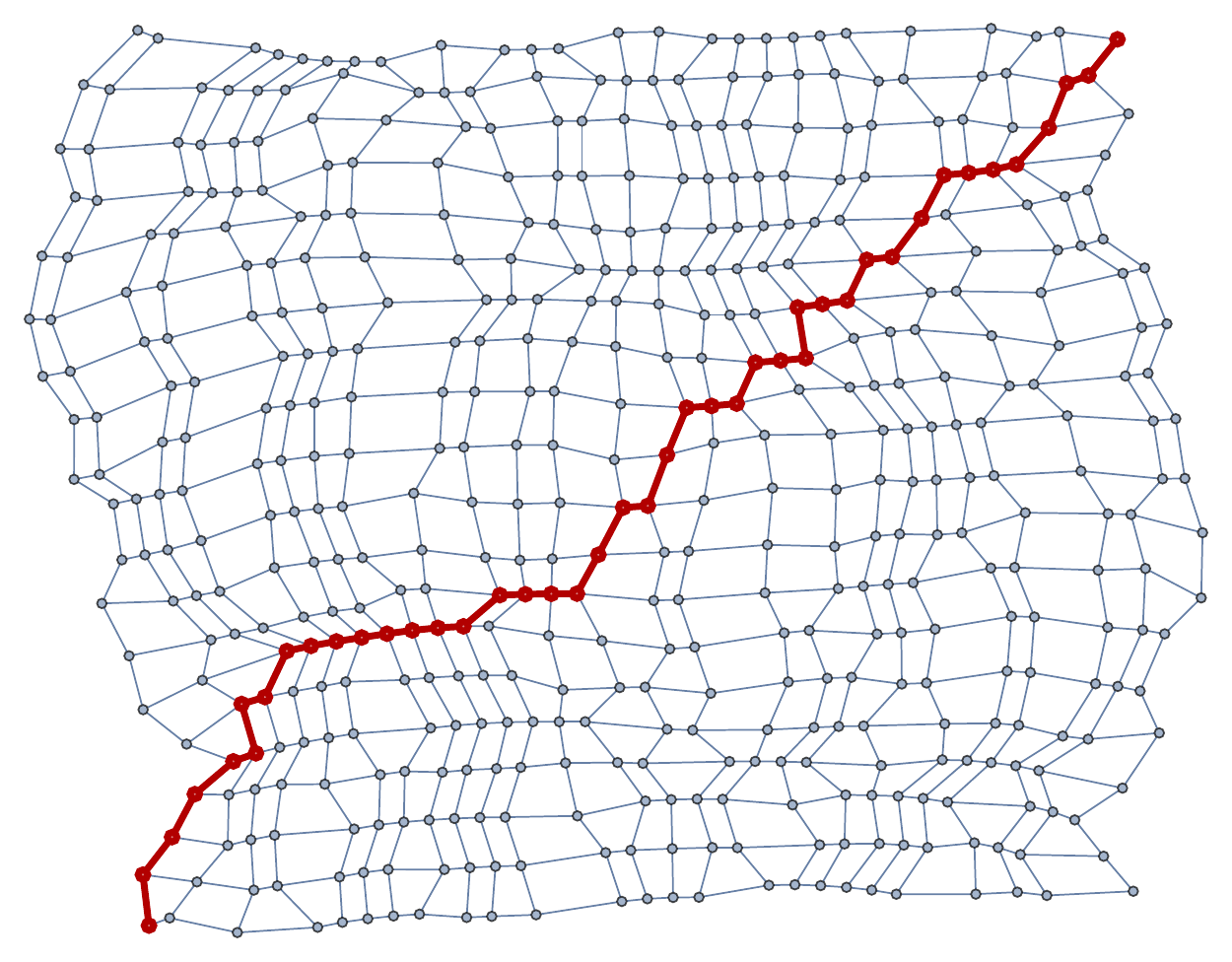}		
	\end{center}		
	
	\tableofcontents	
	
	\section{Introduction}
	
	\subsection{Random plane geometry}
	
	The goal of this paper is to set up a framework for proving convergence of random  directed plane geometry models to a universal limit,  the directed landscape. 
 
	In these models, we have a random ``distance'' function $d$ defined on the plane or a periodic subset, such as  a lattice. 
	A simple example is the model on the front page, which is equivalent to the  Sepp\"al\"ainen-Johansson model, see Example \ref{Ex:SJ}. For this picture, each horizontal edge in $\mathbb Z^2$ is given an i.i.d.\ length $1$ or $2$ and all vertical edges have length $1$.

	In such models, how far should a point $p$ be from the origin for $d(0,p)$ have standard deviation roughly $\sigma$? 
	The empirical answer is order $\sigma^3$, and histograms of $d(0, p)$ show that the distribution is not normal. This suggests that the noise in the random plane geometry has a nontrivial  structure. 

	To understand this structure, it helps to consider a simpler deterministic setting. In Euclidean geometry, paths from $0$ to a point at distance $\sig^3$ from $0$ whose length is $\sigma^3+\sigma$ deviate from a straight line by distance $O(\sigma^2)$. This suggest that the noise in our random metric has nontrivial correlations in a scaling window of size $\sigma^3$ in the main direction and $\sigma^2$ in the transversal direction. To make this more explicit, given a direction $v_0$, there are vectors $v||v_0$ and $u$ and a scalar $\mu$ so that as $\sigma\to \infty$ we expect 
	\begin{equation}\label{E:metric}
	d(\sigma^2ux + \sigma^3 v s, \sigma^2uy +\sigma^3vt)=(t-s)\sigma^3\mu - \sigma \scrL (x,s;y,t)+o(\sigma).
	\end{equation}
	Here $\scrL$ is a limiting universal random plane geometry called the {\bf directed landscape}, constructed in \cite*{DOV}. The negative sign in front of $\sig$ in \eqref{E:metric} is a convention.
	
	In \cite*{DOV}, it was shown that one particular random metric model -- Brownian last passage percolation -- converges to the directed landscape. This paper can be though of as a followup. We give a brief summary of the new results.
	
\subsection{New results}

We develop a general framework for the convergence of last passage models to the directed landscape, and apply this framework to several classical models. The following main contributions are described in more detail in the later part of the introduction. 

\smallskip 

\begin{itemize}[nosep]
\item We develop the notion of directed metric spaces, of which usual metrics, last passage percolation models, and the directed landscape are all examples. We formalize the notion of geodesics and establish hypograph and graph topologies for studying convergence. 

\item As a result of this framework, convergence of geodesics will follow from convergence of the corresponding directed metric spaces in either topology.
	
\item We introduce a maximal inequality for last passage percolation that allows for simple tightness conditions in the hypograph topology. For tightness in the stronger graph topology and for stronger exponential tightness, we give a general chaining argument that will apply to all models.

\item We use the RSK isometry to prove that convergence to the Airy sheet for last passage models follows from convergence to the Airy line ensemble without any additional assumptions. For i.i.d. lattice last passage models, convergence to the Airy line ensemble also implies convergence to the directed landscape and convergence of geodesics.

\item We use our framework to prove convergence to the directed landscape for geometric and exponential last passage percolation  (this part of the present work is used in \cite{busani2022stationary} and \cite{ganguly2022fractal}), Poisson last passage percolation across lines and in the plane (used in \cite{dauvergne2022non}).
 \item We prove that coupled taseps started from multiple initial conditions converges to  KPZ fixed points coupled via the directed landscape. To do so, we give a general framework for moving from convergence of directed metrics to convergence of metric interfaces. Tasep height functions are interfaces in the exponential last passage metric. This part of the present work has been used in \cite{rahman2021infinite} to show convergence of second class particle trajectories.
 \item We give a new definition for the Airy sheet in terms of the Airy line ensemble, resolving Conjecture 14.1 from \cite*{DOV}.
	\item We establish new symmetries for the Airy sheet and the Airy line ensemble.
	\item  We prove that the shape of the longest increasing subsequence in a random permutation converges to the directed geodesic. 
\end{itemize}
	\subsection{Directed metric spaces}

	We expect $\scrL$ to be the limit of most random metrics in the plane built out of independent noise. However, $\scrL$ itself is not a metric, and neither are the last passage models that are the main focus of this paper. Because of this, we need a generalized notion of metrics.
	
	A {\bf directed metric of positive sign} on a set $S$ is a function $d:S^2\to \R \cup \{\infty\}$ satisfying 
	\begin{align*}
	d(p,p)&=0, \qquad \qquad p\in S,\\
	d(p,q)+d(q,r)&\ge d(p,r), \qquad p,q,r\in S.
	\end{align*}
	Unlike with ordinary metrics, directed metrics can be asymmetric and may take negative values.  We say that $d$ is a {\bf directed metric of negative sign} if $-d$ is a directed metric of positive sign.
	
	All ordinary metrics are directed metrics of positive sign.  
	A familiar example of a directed metric of negative sign is the $L^p$-norm on $\R^d$ for $p\in(0,1)$: set $\|x\|_p^p=\sum x_i^p$ when all $x_i\ge 0$ and $-\infty$ otherwise, and let $d(x, y) = \|x-y\|_p$.
	
	The notion of a directed metric unifies the usual notions of metric, distance in directed graphs, and last passage percolation. In addition, directed metrics are closed under the deterministic shifts used in the approximation \eqref{E:metric}, while ordinary metrics are not.  Properties of directed metrics are studied in Section \ref{S:directed-metrics}. 
	
	\subsubsection{Classical models and other examples}
	\label{SS:classical}
	
	The following are selected examples from Section \ref{S:metric-examples}. Most of these are integrable classical models whose fine scaling behaviour is accessible.
	To construct these examples, we introduce the useful notion of an {\bf induced metric}. In the positive sign setting, given a function $d_0:A \to \R\cup \{\infty\}$ for some subset $A\subset S^2$, the induced metric of positive sign is the supremum of all directed metrics on $S$ with $d|_A \le d_0$, see Definition \ref{D:induced-dm}. Induced metrics of negative sign are defined the same way, with the inequality reversed and supremum replaced by infimum.
	
	\begin{example}\label{Ex:graphs-intro}
		Let $G$ be an undirected graph, and let $d_0(x, y) = 1$ whenever $(x, y)$ is an edge. The metric of positive sign on $G$ induced by $d_0$ is the usual graph distance. 
		
		This generalizes to weighted, directed graphs: when $d_0$ is defined on all directed edges to be the edge weight, the induced metric is the weighted directed graph distance. 
	\end{example}
	
	\begin{example}[Sepp\"al\"ainen-Johansson]\label{Ex:SJ} Continuing Example \ref{Ex:graphs-intro}, let $G$ be the square lattice $\mathbb Z^2$ where $(x, y)$ is a directed edge whenever $y$ is directly north or directly east of $x$. Let $d_0=0$ on all north edges, and let $d_0$ be $0$ or $1$ on east edges according to independent coin tosses with probability $p$. The resulting induced metric of positive sign on $\Z^2$ is a random directed metric known as the Sepp\"al\"ainen-Johansson model with parameter $p$, see Figure \ref{fig:sepp-joh}. This model was introduced by \cite*{seppalainen1998exact} and further studied by \cite*{johansson2001discrete}. 
	\end{example}
	
	\begin{example}[Geometric and exponential last passage]\label{Ex:geom-exp} Let $G = (V, E)$ be a directed graph, and define $d_0((x, y), (y, z))=1$ for any pair of directed edges $(x,y),(y,z)$. The induced metric $d$ on $E$ of negative sign assigns to each pair $e,f$ of directed edges the length of the longest walk starting with $e$ and ending with $f$, as measured by the number of intermediate vertices. 
		
		If we set $d_0((x,y),(y,z))=w(y)$ for all directed edges for some weight function $w$ on $V$, the induced metric corresponds to last passage percolation.
		
		When $G=\mathbb Z^2$ with edges directed north or east as in Example \ref{Ex:SJ}, and the weights $w(y)$ are i.i.d. and nonnegative, we call the corresponding model {\bf i.i.d.\ lattice last passage percolation}. This model is integrable when the weights $w(y)$ are {\bf exponential} or {\bf geometric}. Throughout the paper, we assume all exponential random variables are mean $1$, and that geometric random variables have mean $\ga \in (0, \infty)$ and are supported on $\{0, 1, 2, \dots\}$.
		
		This distance function can be extended to be defined on $V \cup E$ rather than just on $E$.
		This is done by instead looking at the metric induced by the function given by $d_0((x,y),y)=w(y)$ and  $d_0(x,(x,y))=0$, see Example \ref{Ex:graphs}.
	\end{example}
	
	For studying Examples \ref{Ex:SJ} and \ref{Ex:geom-exp}, our labelling of the points in $\Z^2$ will be as in Figure \ref{fig:sepp-joh}, where indices increase as we move south and east. There are natural reasons for making this unusual choice that are related to the combinatorics of last passage percolation, see Section \ref{S:lpp-cadlag}. 
	
	\begin{figure}
		\centering
		\includegraphics[scale=0.8]{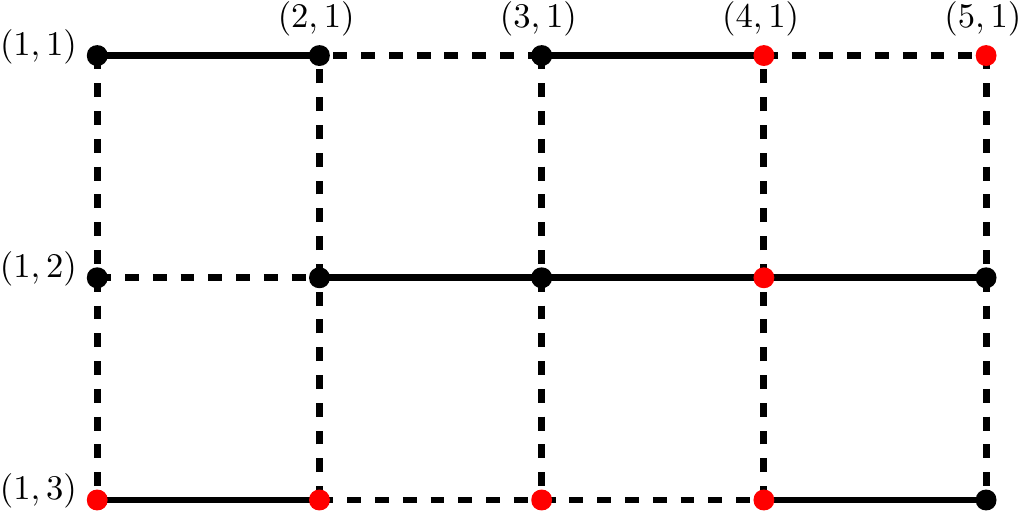}
		\caption{An instance of the Sepp\"al\"ainen-Johansson model. The solid edges have weight $1$ and the dashed edges have weight $0$. The path of red vertices is a geodesic from $(1, 3)$ to $(5, 1)$.}
			\label{fig:sepp-joh}
	\end{figure}
	
	\begin{example}[Poisson last passage]\label{Ex:Poi} Let $P\subset \mathbb R^2$ be a discrete set of points, and for $a, c \in \R^2$ with $a_1\le c_1,a_2\le c_2$, let  
		$$d_0(a,c)= {\mathbf 1} \Big\{[a_1,c_1]\times [a_2,c_2] \mbox{ contains a point of $P$ different from }a\Big\}.
		$$
		When $P$ is Poisson point process, the induced metric of negative sign on $\mathbb R^2$ is Poisson last passage percolation, see Definition \ref{D:planar-poisson}.   
	\end{example}
	
	\begin{example}[Semidiscrete metrics, Poisson lines, Brownian last passage percolation]\label{Ex:Brown}
		Let $f_i:\R\to \R,i\in \Z$ be a sequence of cadlag functions with no negative jumps. For $(x, m), (y, n) \in \R \X \Z$, let
		$$
		d_0((x,m),(y,n))=\begin{cases}
		f_m(y)-f_m(x^-), \qquad &m=n, x<y\\
		f_n(x)-f_n(x^-) \qquad &m=n+1, x=y
		\end{cases}
		$$
		We call the directed metric of negative sign on $\R \X \Z$ induced by $d_0$ the {\bf semidiscrete metric} of $f$. 
		
			When the $f_i$ are independent Brownian motions, the semidiscrete metric of negative sign is Brownian last passage percolation, see Section \ref{SS:Brownian-LP}. 
		When the $f_i$ are independent Poisson counting processes, the semidiscrete metric is called last passage percolation across Poisson lines, see Section \ref{SS:Poisson lines}. It is equivalent to the Hammersley process on the lattice, see \cite{seppalainen1996hydrodynamic} Section 3.1, and \cite{ferrari2005multiclass}.
	\end{example}

	All examples introduced so far can alternately be described by optimizing over paths. For example, in the setting of Example \ref{Ex:Brown}, let $x \le y$ and $n \ge m$. A {\bf path} $\pi$ from  $(x,n)$  to $(y,m)$ is a union of closed intervals
	\begin{equation*}
	[t_i,t_{i-1}]\times \{i\}\subset   \mathbb R \times \Z, \qquad i=m,m+1,\ldots,n,
	\qquad x=t_n\le t_{n-1}\le \cdots \le t_m \le  t_{m-1}=y.
	\end{equation*}
	The \textbf{length} of $\pi$ is given by
	\begin{equation}
	\label{E:pif-sumim}
	|\pi|_f =  \sum_{i=m}^{n} f_{i}(t_{i-1}) - f_i(t_{i}^-).
	\end{equation}
	\textbf{Geodesics} in these models are paths that maximize length among paths between the same endpoints. When $f_m$ is continuous at $x$, then $d((x,m),(y,n))$ is the maximal path length.

	\begin{example}
		\label{Ex:landscape-intro}
		The directed landscape $\scrL$ is a random directed metric of negative sign on $\R^2$. It is continuous on the parameter space
		$$
		\Rd = \{(x, s; y, t) \in \R^4 : s < t\},
		$$
		and satisfies $\scrL(u; v) = -\infty$ for all $(u; v) \notin\Rd$ with $u \ne v$.
		Because of this, we can think of $\scrL$ as a {\it spacetime} metric, which only assigns finite distances to points in the right time order. Paths and geodesics in $\scrL$ only move forward in time, and between any pair of points $(p, q) \in \Rd$, there is a almost surely a unique $pq$-geodesic in $\scrL$.
		The directed landscape is independent on disjoint time strips $\R \X [s_1, t_1], \dots, \R \X [s_2, t_2]$ and has natural scale and translation invariance properties. See Definition \ref{D:DL-def} and surrounding discussion for more detail.
	\end{example}

	\subsection{The main theorem for classical models}
	\label{SS:classical-intro}
	
	In the classical integrable last passage models above (Examples \ref{Ex:SJ}, \ref{Ex:geom-exp}, \ref{Ex:Poi}, and Poisson lines and Brownian last passage percolation in Example \ref{Ex:Brown}), we can establish convergence to the directed landscape.
	The following table gives scaling parameters, which are built from a direction $\rho \in (0, \infty)$.
	For the geometric distribution with mean $\gamma$, set $\bar \gamma=\sqrt{\gamma(\gamma+1)}$. For the Sepp\"al\"ainen-Johansson model with Bernoulli-$p$ variables, we set $\lambda=p/(1-p)$, and additionally assume that $\rho > 1/\lambda$. 
	\[\def\arraystretch{2}
	\begin{array}{l|cc|ccc}
	&  \|(x,-y)\|_d  &  \chi^3  &  \alpha  &  \beta  &  \chi/\tau^2 \\ \hline
	\text{S-J} & \frac{(\sqrt{x \la} - \sqrt{y})^2}{\lambda +1} &
	-\frac{\sqrt{\la}(\sqrt{\la \rho} - 1)^2(\sqrt{\rho} + \sqrt{\la})^2}{\sqrt{\rho}(\la+1)^3} &
	\frac{(\sqrt{\rho\la}-1)^2}{\lambda +1} &
	\frac{\la-\sqrt{\lambda/\rho
	}}{\lambda +1} &
	\frac{-\sqrt{\lambda }}{4(
		\lambda +1) \rho ^{3/2}}\\
	\text{Geometric} & (x+y)
	\gamma +2 \sqrt{x y}\bar \gamma &
	\frac{ \bar \gamma\left(\bar \ga(1 + \rho) +  (2 \gamma +1)\sqrt{\rho}
		\right)^2}{\sqrt{\rho} } & 
	\gamma(\rho+1) +2\bar \gamma 
	\sqrt{\rho}  & \gamma
	+\frac{\bar \gamma }{\sqrt{\rho }} &
	\frac{\bar \gamma }{4\rho ^{3/2}} \\
	\text{Exponential
	} & (
	\sqrt{x}+\sqrt{y})^2 &
	\frac{\left(\sqrt{\rho
		}+1\right)^4}{\sqrt{\rho }}
	& (\sqrt{\rho
	}+1)^2 &
	1+\frac{1}{\sqrt{\rho }} &
	\frac{1}{4 \rho ^{3/2}} \\
	\text{Poisson} & \|(x, y)\|_d = 2 \sqrt{xy}	& \sqrt{\rho } & 2
	\sqrt{\rho } &
	\frac{1}{\sqrt{\rho }} &
	\frac{1}{4 \rho ^{3/2}} \\
	\text{Brownian} & 2 \sqrt{x
		y} & \rho ^{3/2} & 2
	\sqrt{\rho } &
	\frac{1}{\sqrt{\rho }} &
	\frac{1}{4 \rho ^{3/2}} \\
	\text{Poisson lines} & x + 2 \sqrt{x y}
	&\sqrt{\rho}(1 + \sqrt{\rho})^2 & \rho + 2
	\sqrt{\rho } &
	1 + \frac{1}{\sqrt{\rho }} &
	\frac{1}{4 \rho ^{3/2}} \\
	\end{array}
	\]
	The table above uniquely determines the scaling parameters $\chi, \alpha, \beta$  and $\tau > 0$.  The value $\chi$ is positive for all models except the Sepp\"al\"ainen-Johansson model,  the only directed metric of positive sign in the table.
	
	\begin{theorem}\label{T:intro-main}
		Given $\rho$, consider either geometric, exponential, or Brownian last passage percolation with
		$\chi, \tau, \alpha, \beta$ in the table above.
		Let $v=(\rho,-1), u=(\tau,0)$. Then for any sequence of $\sigma \to \infty$, there is a coupling of identically distributed copies $d_\sigma$ of $d$ and the directed landscape $\scrL$ so that 
		\begin{equation}
		\label{E:d-sigma-intro}
		d_\sigma(x\sigma^2 u + s \sigma^3 v, y \sigma^2 u + t \sigma^3 v) = \alpha\sigma^3 (t-s)  + \beta \tau  \sigma^2(y-x)  +  \chi \sigma (\scrL +o_\sigma)(x,s;y,t), 
		\end{equation}
		where the random function $o_\sigma$ is small in the sense that for any compact set $K \sset \Rd$,  $\sup_K |o_\sigma|\to 0$ a.s.\ and there is $c>0$ so that with $\mathfrak{l}, \mathfrak{r}$ as in the table below, 
		\begin{equation}
		\label{E:osig-cvg}
		\E\exp \Big(c\sup_K (o_\sigma^-)^{\mathfrak l}+(o_\sigma^+)^{\mathfrak r}\Big)\to  1.		\end{equation} The same result holds for Poisson last passage, with $v = (\rho, 1)$ instead of $(\rho, -1)$.  
		For the Sepp\"al\"ainen-Johansson model and the Poisson line model, the result also holds except the convergence of $o_\sig \to 0$ is in the weaker hypograph topology, see Sections \ref{SSS:intro-hypograph}, \ref{SS:Poisson lines} and \ref{SS:SJ} for details. 
	\end{theorem}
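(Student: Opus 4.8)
The plan is to reduce Theorem~\ref{T:intro-main} entirely to the general convergence machinery of the paper, so that only three model-specific inputs remain: (i) convergence of the rescaled last passage picture, read as a line ensemble via the RSK/Robinson--Schensted correspondence, to the parabolic Airy line ensemble; (ii) identification of the centering and scaling constants $\alpha,\beta,\chi,\tau$; and (iii) one-point upper- and lower-tail large-deviation bounds for the last passage values. Everything else---the passage from the Airy line ensemble to the Airy sheet, from the Airy sheet to the full four-parameter directed metric and to $\scrL$, and the promotion of pointwise tail bounds to exponential tightness and to the uniform-over-compacts control \eqref{E:osig-cvg}---is furnished by the model-independent theorems described in the introduction: the RSK-isometry result, the structural theorem for i.i.d.\ lattice last passage, the maximal inequality, and the general chaining argument.

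For input (i), I would check for each row of the table that the centered, rescaled last passage values along a fixed ``time'' line converge, in the topology of compact convergence, to the parabolic Airy line ensemble. For geometric and exponential last passage this is classical determinantal asymptotics going back to Johansson; for Brownian last passage it is the Airy line ensemble scaling limit of the associated Dyson-type system; for planar Poisson last passage and last passage across Poisson lines it follows via the Hammersley-process correspondence together with the corresponding determinantal asymptotics. In parallel I would pin down input (ii): $\alpha$ is the limit-shape (law of large numbers) constant evaluated in the direction $v$, $\beta$ is its transversal derivative at $u=(\tau,0)$, and $\chi,\tau$ are the longitudinal and transversal fluctuation scales extracted from the same asymptotics; verifying that these equal the explicit entries of the table is a finite, if somewhat involved, computation with the known shape functions (the $\|(x,-y)\|_d$ column).

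With (i) in hand, the RSK-isometry result upgrades Airy line ensemble convergence to Airy sheet convergence with no additional hypotheses, and then---for the i.i.d.\ lattice models and, through the semidiscrete and planar analogues, for Brownian, Poisson, Poisson lines, and Sepp\"al\"ainen--Johansson---the structural convergence theorem produces a coupling in which the full rescaled directed metric converges to $\scrL$, i.e.\ \eqref{E:d-sigma-intro} with $o_\sigma \to 0$. The convergence is in the graph topology for the directed metrics of negative sign and only in the weaker hypograph topology for Sepp\"al\"ainen--Johansson and Poisson lines, where the maximal inequality is available but the full chaining input is not. Feeding the tail bounds (iii) into the chaining argument then converts the pointwise exponential estimates into exponential tightness and the uniform-over-compacts bound \eqref{E:osig-cvg}, with the exponents $\mathfrak l,\mathfrak r$ dictated by the upper- and lower-tail behaviour of each model.

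I expect the main obstacle to be precisely this last, quantitative step: matching the sharp tail exponents $\mathfrak l$ and $\mathfrak r$ for each model---the upper tail of last passage being typically the lighter of the two---and checking that the hypotheses of the chaining argument hold uniformly in $\sigma$, with particular care near the boundary of the time-ordered domain $\Rd$ and in the transversal direction, where the metric degenerates and the crude estimates fail to be integrable. A secondary difficulty is assembling a clean statement of Airy line ensemble convergence for the less standard models (last passage across Poisson lines), where the needed estimates are essentially known but scattered, and ensuring that the RSK embedding defining the prelimiting line ensemble is compatible with the scaling in \eqref{E:d-sigma-intro} so that both convergences refer to the same coupling.
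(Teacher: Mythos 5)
Your proposal matches the paper's actual route: Airy line ensemble convergence (imported from the companion paper on melon convergence) is upgraded to Airy sheet convergence via the RSK isometry, this feeds into the temporal-independence plus maximal-inequality machinery to give hypograph convergence, and the chaining argument with per-model one-point tail bounds yields graph convergence and the exponential estimate \eqref{E:osig-cvg}, with Sepp\"al\"ainen--Johansson and Poisson lines stopping at the hypograph level precisely because the requisite tail inputs for the chaining step are unavailable. One small slip in your commentary: for Tracy--Widom-type fluctuations the \emph{lower} tail (exponent $3$) is the lighter of the two, not the upper tail (exponent $1$ or $3/2$ here), which is why $\mathfrak l \ge \mathfrak r$ in every row of the table except Brownian, where the fact that the directed metric can take negative values degrades the left-tail chaining exponent to $3/4$.
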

	\[\begin{array}{l|c|c}
	&  \text{Left tail exponent }\mathfrak l  & \text{Right tail exponent } \mathfrak r  \\ \hline
	\text{Geometric} & 3 & 1 \\
	\text{Exponential
	} & 3 & 1 \\
	\text{Poisson} & 3 & 3/2  \\
	\text{Brownian} & 3/4 & 3/2
	\end{array}
	\]
	
	For the five discrete or semi-discrete models above, note that we need to modify the arguments of $d_\sig$ in \eqref{E:d-sigma-intro} to ensure that they lie in the appropriate discrete set, see Section \ref{S:integrable} for details. On the almost sure set where convergence holds in Theorem \ref{T:intro-main}, we have the following strong convergence of geodesics. 
	
	\begin{theorem}\label{T:intro-geod}
		Let $\pi_\sigma$ be the image of an arbitrary geodesic in $d_\sigma$ under the linear map satisfying $\sig^3 v \mapsto (0, 1)$ and $ \sig^2 u \mapsto (1, 0)$.
		Assume that as $\sig \to \infty$, the endpoints of $ \pi_\sigma$ converge to points $p, q$ with $(p,q) \in \Rd$.
		
		Then $\pi_\sigma$ is precompact in the Hausdorff topology, and on the almost sure set where there is a unique $pq$-geodesic $\pi$ in $\mathcal L$, $\pi_\sigma \to \pi$. In the four cases in Theorem \ref{T:ind-main} where $o_\sig \to 0$ compactly on $\Rd$, all subsequential limits of $\pi_\sigma$ are $pq$-geodesics in $\scrL$ even if there are multiple $pq$-geodesics.
	\end{theorem}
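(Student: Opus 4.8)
\textbf{Proof proposal for Theorem \ref{T:intro-geod}.}

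The plan is to deduce convergence of geodesics entirely from the metric convergence in Theorem \ref{T:intro-main} together with the general topological machinery promised in the introduction (directed metric spaces, the hypograph and graph topologies, and the statement that convergence of directed metric spaces in either topology forces convergence of geodesics). So the argument has two halves: first, precompactness of the rescaled geodesics $\pi_\sigma$ in the Hausdorff topology; second, identification of every subsequential limit as a $pq$-geodesic in $\scrL$.

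For precompactness, I would first restrict attention to a fixed compact set. Since the endpoints of $\pi_\sigma$ converge to $p,q$ with $(p;q)\in\Rd$, and since paths in these last passage models (after the linear rescaling $\sigma^3 v\mapsto(0,1)$, $\sigma^2 u\mapsto(1,0)$) are monotone in the time coordinate, the time-extent of $\pi_\sigma$ is bounded; the real content is a transversal (spatial) bound showing $\pi_\sigma$ cannot wander off to spatial infinity. This is where I expect to invoke the maximal inequality / chaining estimates advertised in the introduction: a geodesic that makes a large transversal excursion would have to pick up an anomalously small (for positive-sign S-J) or large (for negative-sign models) value of $d_\sigma$ on a long, skewed segment, and the tail bound \eqref{E:osig-cvg} on $o_\sigma$ together with the known shape behaviour of $\scrL$ rules this out with probability tending to $1$; on the almost sure event where \eqref{E:d-sigma-intro} holds, this forces all $\pi_\sigma$ into a common compact set, so by the Blaschke selection theorem the family is precompact in the Hausdorff metric.

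For the identification of limits, let $\pi$ be any subsequential Hausdorff limit of $\pi_\sigma$ along some subsequence. Each $\pi_\sigma$ is a $d_\sigma$-geodesic between its endpoints, and its endpoints converge to $p,q$. The ``length'' of a path is a functional that, in the directed-metric framework, is upper semicontinuous along the limit and whose optimal value converges: by \eqref{E:d-sigma-intro} the rescaled length of $\pi_\sigma$ converges to $\scrL(p;q)$, while by upper semicontinuity of the length functional under the metric convergence and Hausdorff convergence of paths, the $\scrL$-length of $\pi$ is at least $\limsup$ of the rescaled lengths, hence equals $\scrL(p;q)$; since it also connects $p$ to $q$, $\pi$ is a $pq$-geodesic in $\scrL$. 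When $o_\sigma\to0$ compactly on $\Rd$ (the geometric, exponential, Poisson, and Brownian cases) this argument runs cleanly and yields the final sentence of the theorem. When convergence is only in the hypograph topology (S-J and Poisson lines), one still gets that $\pi_\sigma\to\pi$ whenever the $pq$-geodesic in $\scrL$ is unique, because uniqueness upgrades hypograph convergence of the metric to the convergence of the single optimizer; the abstract input here is precisely the statement quoted in the introduction that geodesic convergence follows from convergence in either topology.

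The main obstacle I anticipate is the transversal tightness of $\pi_\sigma$: proving that the rescaled geodesics stay in a compact set requires quantitative control on how much length a far-reaching path can accumulate, and this is exactly the point at which the model-specific integrable input (the Airy line ensemble / Airy sheet estimates feeding into the chaining and maximal inequalities) is needed rather than soft topology. The second subtlety, minor by comparison, is handling the hypograph-only cases without assuming uniqueness, which the theorem wisely does not claim.
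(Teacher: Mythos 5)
Your high-level skeleton---precompactness of $\pi_\sigma$, then identification of subsequential limits, with the graph/hypograph distinction mapping to the two conclusions of the theorem---is the right one, but the mechanism you propose for the precompactness step is not the paper's, and it would actually fail on the two models where the theorem is most delicate.

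You want to rule out large transversal excursions of $\pi_\sigma$ by a chaining/maximal-inequality argument on the prelimit metrics, using the tail bound \eqref{E:osig-cvg} on $o_\sigma$. But \eqref{E:osig-cvg} is only supplied for the four models where $o_\sigma\to 0$ compactly (geometric, exponential, Poisson, Brownian); for the Sepp\"al\"ainen--Johansson and Poisson-lines models the paper explicitly says that convergence is only in the hypograph topology, and the table of tail exponents $\mathfrak{l},\mathfrak{r}$ has no entry for them. Your chaining argument therefore has no quantitative input in precisely the two hypograph-only cases. The paper never chains on the prelimit geodesics: once the a.s.\ coupled metric convergence is in hand (that is all the hard probabilistic work), the geodesic convergence is a \emph{deterministic} consequence. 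Theorem \ref{T:geod-cvg} works as follows: since $\scrL$ is almost surely a spacetime metric (Definition \ref{D:spacetime}, via Proposition \ref{P:from-DOV} and the estimate \eqref{E:LuCu}), there is a compact $K$ containing every $\scrL$-geodesic from $p$ to $q$ in its interior; $\bar\pi_\sigma\cap K$ is Blaschke-precompact, its subsequential limits are $\scrL$-geodesic sets by Proposition \ref{P:convergence} and hence lie in $\Int(K)$; so $\bar\pi_\sigma\cap\partial K=\emptyset$ for $\sigma$ large, and connectedness of $\pi_\sigma$ together with its endpoints lying in $\Int(K)$ forces $\bar\pi_\sigma\subset K$. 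The only quantitative estimate is about the limit $\scrL$, not about the prelimits.

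Two further gaps. First, Theorem \ref{T:geod-cvg} needs $d_\sigma(u_\sigma,v_\sigma)\to\scrL(p,q)$; this is automatic under compact (graph) convergence but not under hypograph convergence, and it is not the case that ``uniqueness upgrades hypograph convergence of the metric to convergence of the single optimizer'' directly. What the paper does instead (Theorem \ref{T:geod-cvg-stronger}) is a sandwich: using hypograph convergence one finds slightly shifted endpoint sequences at which the metric values \emph{do} converge, uses the ordering structure of prelimit geodesics (Lemma \ref{L:ordering}) to trap $\pi_\sigma$ between the geodesics from those shifted endpoints, and then squeezes; uniqueness of the $\scrL$-geodesic from $p$ to $q$ is what collapses the sandwich. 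Second, your ``upper semicontinuity of the length functional'' for identifying limits is the right intuition but cannot literally be run: directed metrics have no intrinsic notion of path length, only distances. Proposition \ref{P:convergence} implements the idea by picking $a,b$ in the subsequential limit, approximating by $a_n,b_n$ on $\pi_\sigma$, writing $d_\sigma(p_\sigma,a_\sigma)+d_\sigma(a_\sigma,b_\sigma)+d_\sigma(b_\sigma,q_\sigma)=d_\sigma(p_\sigma,q_\sigma)$ from the geodesic property, and passing to the limsup with the hypograph inequality and the triangle inequality for $\scrL$; the resulting identity is then shown to yield a compatible total order, so the limit is a genuine geodesic set. These corrections aside, the proof really is assembled exactly as you envisaged: establish the coupled metric convergence for the integrable models (Sections \ref{S:cvg-lattice}--\ref{S:integrable}), then deduce Theorem \ref{T:intro-geod} from the deterministic statements in Section \ref{S:geodesics}, as sketched in Remark \ref{R:intro-diff-main}.
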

	
	In Theorem \ref{T:intro-geod} and in the sequel, we say that a sequence of functions $f_n \to f$ \textbf{compactly} if $f_n \to f$ uniformly on compact sets. Compact convergence of $o_\sig$ is a consequence of \eqref{E:osig-cvg}.
	In the context of Brownian last passage percolation, slightly weaker versions of Theorem \ref{T:intro-main} and Theorem \ref{T:intro-geod} were shown in \cite*{DOV}. We have included the results here for completeness. Theorems \ref{T:intro-main} and \ref{T:intro-geod} are proven in Section \ref{S:integrable}.
	
	In many models, the main direction $v$ in the above scaling can change with $\sigma$ arbitrarily, and we can still conclude hypograph convergence to the directed landscape and convergence of geodesics. See Section \ref{S:integrable} for the corresponding theorems. 
	
	\begin{remark}[Scaling heuristics]
		As $\sigma\to\infty$, 
		$d(\sigma^3p,\sigma^3q)/\sigma^3\to \|p-q\|_d$. The function $\|\cdot \|_d:\mathbb R^2 \mapsto \mathbb R\cup \{\infty\}$ ($-\infty$ if the metric $d$ is of negative sign)  is a directed norm: it satisfies the triangle inequality, $\|0\|_d=0$ and  $\|av\|_d=a\|v\|_d$ for $a>0$.  The parameters $\beta, \alpha$ and $\chi/\tau^2$ are determined by the Taylor expansion of the norm near $v$ in the first coordinate direction: 
		\begin{equation}\label{E:Taylor}
		\|v+(x,0)\|_d=\alpha + \beta x - \frac{\chi}{\tau^2} x^2 +o(x^2).
		\end{equation} 
		The magnitude of $\chi$ is given by the standard deviation $$\operatorname {SD}d(0,\sigma^3 v)/\sigma\to  |\chi|\,\operatorname{SD}\mathcal L(0,0;0,1),$$ where $\mathcal L(0,0;0,1)$ has GUE Tracy-Widom law. The sign of $\chi$ is the opposite of the sign of the  directed metric $d$.
	\end{remark}
	\begin{figure}[ht]
		\centering\includegraphics[scale=0.7]{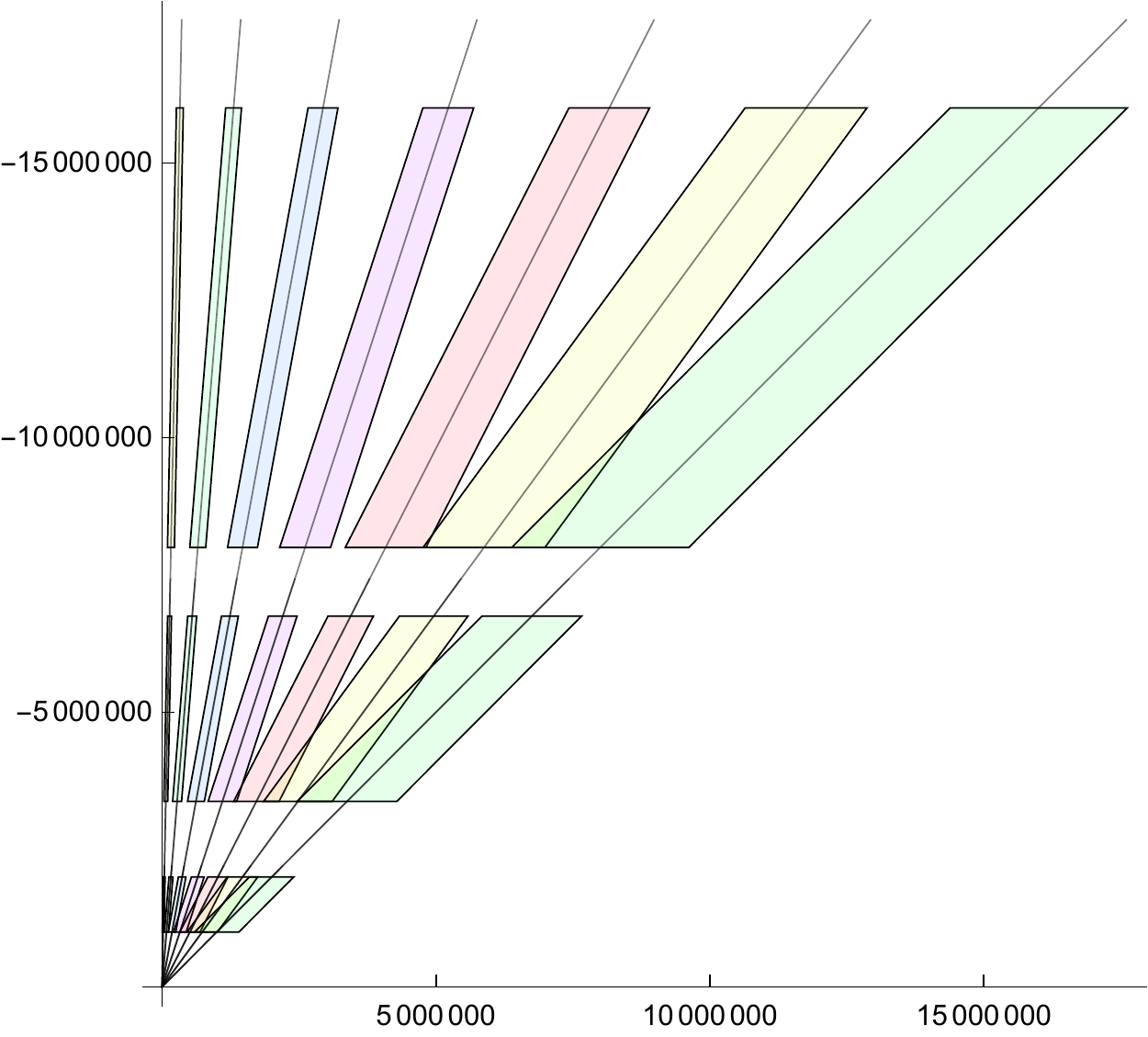}
		\caption{Scaling windows for geometric last passage percolation with mean $\ga = 1$ at slopes $\rho=(i/7)^2$, $i=1\ldots 7$ and $\sigma=100,200,300$.}
	\end{figure}
	
	\begin{remark}[Choice of direction]
		Our choice of  $u=(\tau,0)$ as the multiple of the first coordinate vector is convenient for the proof, but we can set $u=\tau e$ for any vector $e$ linearly independent from $v$.  The Taylor expansion of $\|v+xe\|_d$ determines the new parameters $\beta, \chi^2/\tau$ as in  \eqref{E:Taylor}, while  $\alpha, \chi$ are unchanged. 
		When $e=(0,1)$, the space direction is vertical, rather than horizontal. 
		When  $e$ is a tangent vector to the $\|\!\cdot \!\|_d$-disk of radius $\|v\|_d$ at $v$, then $\beta=0$, and simpler asymptotics equivalent to \eqref{E:metric} hold.
		
		More generally, let $v=(v_1,\pm v_2)$ where $v_1,v_2>0$ and $\pm$ is negative except in the case of Poisson last passage. Let  $u=(u_1,u_2)$, linearly independent from $v$. Then by a straightforward change-of-variables argument, Theorem \ref{T:intro-main} holds as follows: 
		\begin{equation}
		\label{E:d-sigma-intro2}
		d_\sigma(x\sigma^2 u + s \sigma^3 v, y \sigma^2 u + t \sigma^3 v) = \tilde \alpha\sigma^3 (t-s)  + \tilde \gamma  \sigma^2(y-x)  +  \tilde \chi \sigma (\scrL +o_\sigma)( x,\tilde \omega s; y,\tilde \omega t), 
	\end{equation}
%
where  $\alpha,\beta,\chi,\tau$ are from the table and 
$$
\tilde\alpha =  v_2 \alpha, \quad\tilde \eta=\frac{u_1+(v_1/v_2)u_2}{\tau},\quad \tilde\gamma=\beta\tilde \eta\tau -\alpha u_2,
\quad \tilde \chi=\chi \tilde\eta^{1/2}, \quad \tilde \omega=v_2/\tilde \eta^{3/2}.
$$
	\end{remark}
	
	\begin{remark}[Possible projects]
		Some of the results above can be improved. The Brownian left tail exponent $3/4$ is not optimal. This is because the directed metric takes negative values, and so more care is needed in the chaining argument, see Section \ref{S:graph-cvg}. For the Poisson lines and Sepp\"al\"ainen-Johansson models, tail bounds are needed to get the stronger version of convergence. This could be done in two ways:  (i) analysis of the contour integral formulas, (ii) by a combinatorial connection to other models where tail bounds are known. 
	\end{remark}
	
	\subsection{Outline of the proof}
	
	Theorem \ref{T:intro-main} is proven in three steps. First the results are shown with the initial location $(x,s)=0$ and the final time $t=1$ fixed. This part of the analysis is well-known, and in the present generality it is shown in the companion paper \cite*{DNV}.  To go further, we will use the RSK correspondence, for which we need to understand distances along multiple disjoint paths, as explained below. This will allow us to prove Theorem \ref{T:intro-main} when we only fix $s=0$ and $t=1$, and let $x,y$
	vary. The corresponding process $\scrS(x, y)$ is called the Airy sheet, first defined in \cite*{DOV}. 
	
	The directed landscape is assembled from compositions of independent Airy sheets as an inverse limit, analogously to L\'evy's construction of Brownian motion. The corresponding convergence statements are proven this way. Extra work is needed to prove tightness in sufficiently strong topologies. 
	
	\subsection{RSK}

	The key ingredient for all our proofs is a `Greene's theorem formulation' of the classical Robinson-Schensted-Knuth correspondence (RSK). In this paper we use a version of this correspondence, introduced in \cite*{DNVcadlag}, that unites classical RSK, dual RSK and continuous path RSK. We will also be able to take limits without leaving this universe. 
	
	Let $\scrD^n_0$ be the space of $n$-tuples of cadlag functions from $[0,\infty)\to\mathbb R$ without negative jumps. 
	For continuous $f\in \scrD^n_0$, recalling the notion of path length from \eqref{E:pif-sumim}, we can define 
	$$
	f[(x, m)^k\to (y, n)^k] = \sup \sum_{i=1}^k |\pi_i|_f,
	$$
	where the supremum is over all sets of $k$ paths $\pi_1, \dots, \pi_k$ from $(x, m)$ to $(y, n)$ that are disjoint away from $x$ and $y$. The same definition works for general $f \in \scrD^n_0$ but with a few extra technicalities, see Section \ref{S:multipoint} for details. In our setting the RSK transform maps a function $f\in \scrD^n_0$ to its \textbf{melon} $Wf$ as follows. 
	\begin{definition}\label{D:intro-melon} Let $f[p^0\to q^0]=0$, and for $k \ge 1$, set $f[p\to_{\De k} q] = f[p^k \to q^k] - f[p^{k-1}\to q^{k-1}]$.
		The {\bf RSK} or {\bf melon} map $W:\scrD^n_0\to\scrD^n_0$ is defined by 
		$$
		Wf_k(y)=f[(0, n)\to_{\De k} (y, 1)].
		$$   
	\end{definition}
	The RSK map has two remarkable properties. First, it is an isometry.
	\begin{theorem}[RSK isometry]\label{T:intro-RSK-isom} For all $0 \le x\le y$,
		\begin{equation}
		\label{E:isometry}
			f[(x,n)\to (y,1)]=Wf[(x,n)\to (y,1)].
		\end{equation}
	\end{theorem}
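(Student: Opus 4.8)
The plan is to establish \eqref{E:isometry} first for continuous $f \in \scrD^n_0$ — in fact for piecewise-linear $f$, where everything reduces to finite combinatorics — and then obtain the general cadlag case by approximation. For piecewise-linear $f$, sample the functions on a fine common mesh to obtain an $n \times N$ array of nonnegative increments; then $f[(x,n)\to(y,1)]$ becomes the classical longest path (up-right / last passage) functional through this array, and $f[(x,n)\to_{\Delta k}(y,1)]$ becomes the increment of the $k$-fold disjoint-path maximum. By Greene's theorem, the sum $f[(0,n)\to(y,1)]^{\text{via }k\text{ paths}} = f[(0,n)^k\to(y,1)^k]$ equals the sum of the top $k$ rows of the RSK $P$-tableau associated to the array, so the increments $f[(0,n)\to_{\Delta k}(y,1)]$ are exactly the row lengths, i.e. the coordinates of the $P$-tableau shape's rows read off as running maxima — which is precisely what $Wf_k(y)$ records. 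This is the standard Greene-theorem content of RSK; the key point is that applying RSK to the array and then taking the single-path last-passage value of the resulting (weakly decreasing-in-$k$) melon array returns the same number, because last passage through a monotone array telescopes across the disjoint-path decomposition. Concretely, one shows $Wf[(x,n)\to(y,1)] = \max_k \big( Wf_k(y) - Wf_k(x^-)\big)$-type identities collapse to $f[(x,n)\to(y,1)]$ by the interlacing/monotonicity of the $Wf_k$.

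First I would set up the discretization carefully: fix a mesh, replace each $f_i$ by its values at mesh points, and record the differences as a matrix $A$ with $A_{ij}\ge 0$ (using the no-negative-jumps hypothesis so that restricting to a mesh loses nothing in the limit). Then I would invoke the combinatorial Greene's theorem: for an integer (or real) matrix, $\sum_{i\le k}\lambda_i(A)$ (sum of the first $k$ parts of the RSK shape) equals the maximum total weight of $k$ pairwise non-crossing lattice paths. Translating "non-crossing in the matrix" to "disjoint away from the endpoints in the semidiscrete picture" gives $f[p^k\to q^k]=\sum_{i\le k}\lambda_i$, hence $f[p\to_{\Delta k}q]=\lambda_k$. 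Next I would identify $Wf_k$ with the $k$-th bottom row of the RSK insertion tableau as a function of the right endpoint $y$ — this is exactly the "patience sorting / row-bumping" description — so that $Wf$, viewed again as a semidiscrete array, has rows that are themselves RSK shapes, and its single-path last passage value unwinds to $\lambda_1(A')$ for the relevant sub-array, which equals the original $f[(x,n)\to(y,1)]$. The identity \eqref{E:isometry} for $0\le x\le y$ is then a direct consequence, the restriction $x\le y$ being forced because otherwise there are no admissible paths.

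For the passage from piecewise-linear to general $f\in\scrD^n_0$, I would use that both sides of \eqref{E:isometry} are continuous (indeed Lipschitz in an appropriate sense) under the relevant approximation of cadlag functions with no negative jumps — approximating $f_i$ from the right by step functions, or by piecewise-linear interpolants on a refining mesh — together with the fact, recorded in the cited companion work \cite{DNVcadlag} and in Section \ref{S:multipoint}, that the multipath functional $f[p^k\to q^k]$ is well-defined and stable for such $f$. This requires checking that the supremum defining $f[p^k\to q^k]$ is attained (or approximated) by paths whose jump contributions converge, which is where the "no negative jumps" assumption does real work: upward jumps can only help a maximizing path and are captured exactly by the $f_n(x)-f_n(x^-)$ terms in \eqref{E:pif-sumim}.

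The main obstacle I expect is not the finite Greene's-theorem step, which is classical, but rather the bookkeeping that matches the three a priori different combinatorial objects — the semidiscrete multipath functional $f[\cdot^k\to\cdot^k]$, the RSK insertion-tableau row lengths, and the melon $Wf$ defined via $f[\,\cdot\to_{\Delta k}\,\cdot\,]$ — into a single coherent picture that also survives the continuum limit. In particular, verifying that $Wf$ again lies in $\scrD^n_0$ with the correct ordering and that taking last passage of $Wf$ telescopes back to $f[(x,n)\to(y,1)]$ requires the monotonicity $Wf_1\ge Wf_2\ge\cdots$ and a careful use of the sub/super-additivity of the multipath maxima; getting these inequalities cleanly in the cadlag generality, rather than just for nice matrices, is the technical heart of the argument.
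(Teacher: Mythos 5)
The paper does not actually prove Theorem~\ref{T:intro-RSK-isom} itself; it cites Proposition~3.12(i) of \cite{DNVcadlag} (reproduced here as Proposition~\ref{P:W-facts}(i)), where the isometry is established via the framework of Pitman transforms. The discussion after Proposition~\ref{P:W-facts} explicitly notes the provenance: close relatives were proven by Noumi--Yamada in the fully discrete setting and by Biane--Bougerol--O'Connell in the continuous one. So your proposal takes a genuinely different route: discretize, invoke the classical matrix-level Greene's theorem, and pass to the limit. That route is in principle viable, and it has the attraction of being elementary and anchoring the statement in the most classical combinatorics.

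However, there is a real gap at the step that is the actual content of the theorem. Greene's theorem identifies $f[(x,n)^k\to(y,1)^k]$ with the sum of the top $k$ rows of the RSK shape, and hence identifies $f[(x,n)\to_{\Delta k}(y,1)]$ with the $k$-th row length. That correctly describes $Wf_k$ as a function. But Theorem~\ref{T:intro-RSK-isom} is an assertion about $Wf[(x,n)\to(y,1)]$ for arbitrary $0\le x\le y$, i.e.\ the single-path last passage value \emph{through the melon ensemble}, restarted at $(x,n)$ with $x>0$ in general. This is
\begin{equation*}
Wf[(x,n)\to(y,1)] \;=\; \sup_{x=t_n\le t_{n-1}\le\cdots\le t_0=y}\ \sum_{i=1}^{n}\bigl(Wf_i(t_{i-1})-Wf_i(t_i^-)\bigr),
\end{equation*}
and it is \emph{not} equal to $\max_k\bigl(Wf_k(y)-Wf_k(x^-)\bigr)$ as your sketch suggests: the optimal path generically collects increments from several distinct lines rather than from a single one, and the monotonicity $Wf_1\ge Wf_2\ge\cdots$ alone does not make this ``collapse.'' For $x=0$ the ordering does make the optimal path hug the top line and reduces the claim to $Wf_1(y)=f[(0,n)\to(y,1)]$, which is immediate from the definition; but the whole point of the theorem is the statement for general $x>0$, where one must recover last passage values of $f$ started strictly inside the domain using only the melon. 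In the discrete picture this is precisely the Noumi--Yamada/Greene-side isometry, and its proof requires more than identifying row lengths: one must show that the RSK output reconstructs the shapes of \emph{all sub-arrays} $A|_{[x,y]}$ of the input, not just the full array, and this reconstruction (e.g.\ via jeu de taquin or the Pitman-transform description) is the technical heart. Your sketch gestures at ``telescoping'' and ``interlacing'' here but does not supply an argument, and I don't see how to get one without essentially reproving the Noumi--Yamada identity. The discretization and limiting steps you describe are sound once the discrete isometry is in hand, but as written the proposal does not establish the discrete isometry.
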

	Second, because the RSK map $W$ is (one marginal of) a bijection, certain nice measures on $\scrD^n_0$ have tractable pushforwards under $W$. For example, independent Brownian motions get mapped to nonintersecting Brownian motions under $W$, see Theorem $7$ in \cite*{o2002representation}.
	Such properties make RSK a perfect tool for the probabilistic analysis of directed geometry problems.
	
	
	In particular, we may hope to understand limits of random  directed metrics in the plane  by taking the limits of the RSK isometry \eqref{E:isometry}. This project was  carried out for Brownian last passage percolation in \cite*{DOV}.  One goal of this paper is to make it applicable in a more general setting. 
	
	\subsection{The scaling limit of melons and the Airy sheet}
	
	In integrable settings, the melon side of the RSK isometry is an ensemble of nonintersecting random walks. The natural limit of nonintersecting random walks is an infinite sequence of nonintersecting Brownian motions known as the parabolic Airy line ensemble. This object was first described by \cite*{prahofer2002scale} as the scaling limit of the polynuclear growth model, and realized as a system of nonintersecting locally Brownian functions by \cite*{CH}. 
	The parabolic Airy line ensemble is a random sequence of functions $\scrA = \{\scrA_1> \scrA_2 > \ldots\}$. The process $\mathcal A(x) = \scrA(x)+x^2$ is stationary in $x$, and on every finite interval each $\scrA_i$ is absolutely continuous with respect to Brownian motion. 
	
	In the companion paper \cite*{DNV} it was shown that in many models, the limit of the melon is the parabolic Airy line ensemble. One of the main theorems of the present paper is that convergence to the Airy line ensemble implies convergence of the melon side of the RSK isometry \eqref{E:isometry} to a last passage problem in $\scrA$. The resulting process characterizes the Airy sheet, see Definition \ref{D:airy-sheet-mble} and Figure \ref{fig:Airy sheet} for more details. 

	\begin{figure}
		\centering
		\includegraphics[scale=0.16]{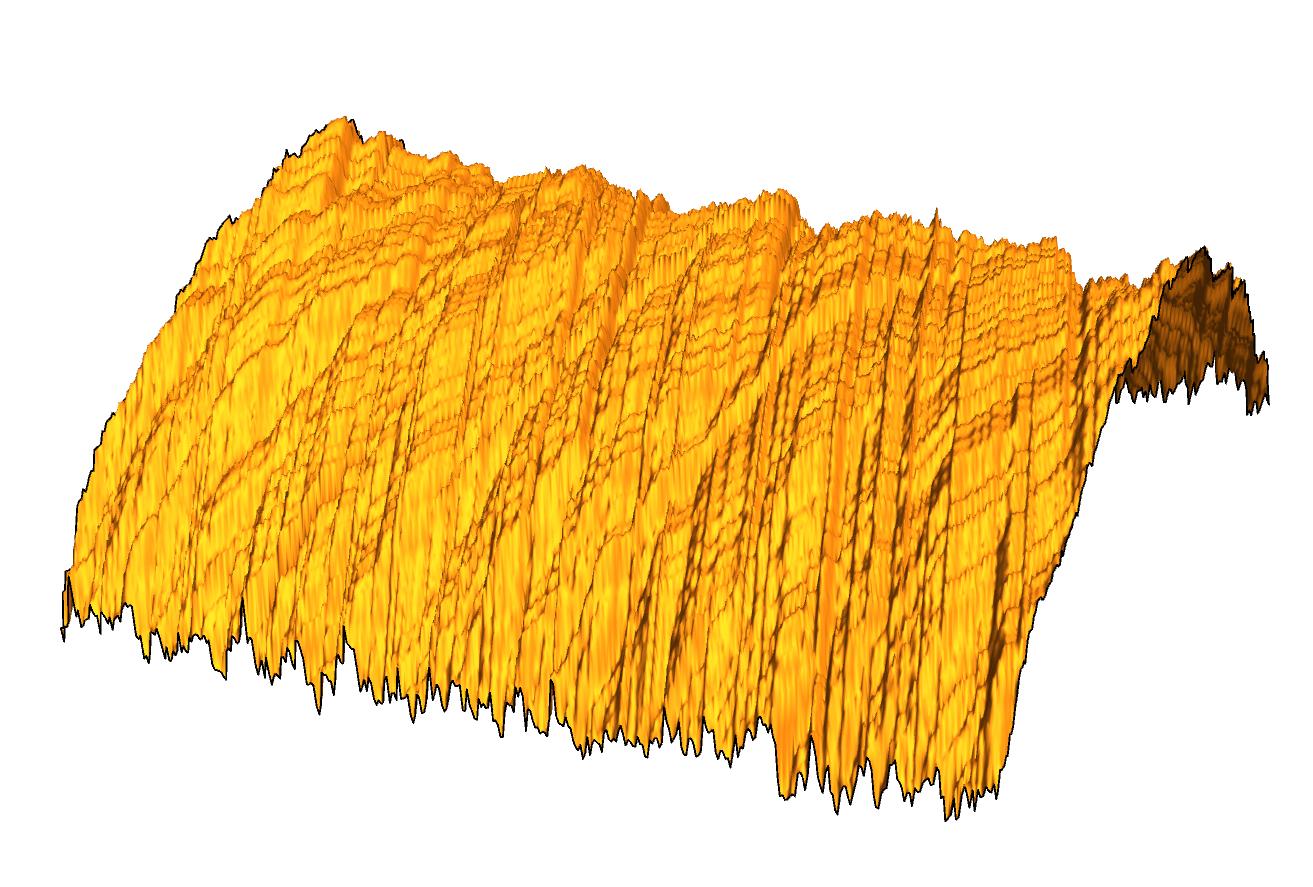}\hspace{5em}
		\includegraphics[scale=0.13]{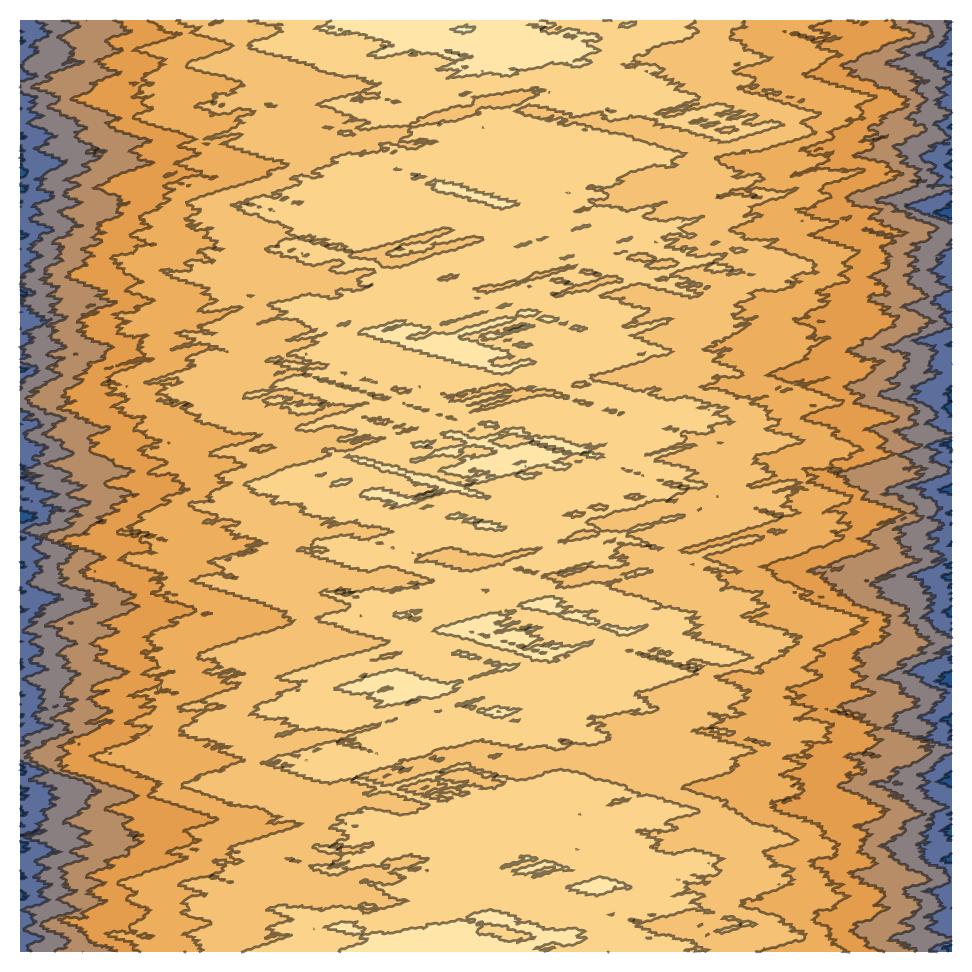}\hspace{2em}
		\caption{The Airy sheet $\mathcal S(x,y)$ function plot and contour plot. The ridge corresponds to $x=y$. See the decay $\mathcal S(x,x+y)\sim -y^2$. }
		\label{fig:Airy sheet}
	\end{figure}
	
	\subsection{General convergence to the Airy sheet}
	 
	\begin{theorem}\label{T:intro-sheet} For each $n$, let $F_n$ be a sequence of $n$ random cadlag functions as in Example \ref{Ex:Brown}.
		Let $a_n\le 0 $, $c_n\in \mathbb R$. If for every $z \in \R$, the sequences of random functions
		\begin{align*}
		F_n[(z + a_n, n) \to_{\Delta k} (z + \cdot, 1)] + c_n, \quad \mathand \quad
		F_n[(z + a_n - \cdot, n)  \to_{\Delta k} (z, 1)] + c_n
		\end{align*}
		converge in law in the product of Skorokhod topologies to the parabolic Airy line ensemble, then in some coupling of the processes $F_n$ and an Airy sheet $\scrS$ we have   
		$$
		F_n[(x+ a_n, n) \to (y, 1)] + c_n \to \scrS(x,y) \qquad \mbox{compactly a.s.}
		$$
	\end{theorem}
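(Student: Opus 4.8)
The plan is to push the whole statement through the \emph{melon} side of the RSK isometry, where the hypothesis directly supplies convergence to the parabolic Airy line ensemble $\scrA$, and then to recognize the resulting limiting last passage functional as the Airy sheet via the characterization in Definition~\ref{D:airy-sheet-mble}. First I would normalize: let $G_n$ be $F_n$ translated so that the lower starting coordinate $a_n$ is moved to $0$ (path lengths are unchanged, since the constants $F_{n,i}(a_n)$ cancel), so that by Theorem~\ref{T:intro-RSK-isom},
\begin{equation*}
	F_n[(x+a_n,n)\to(y,1)] + c_n \;=\; G_n[(x,n)\to(y-a_n,1)] + c_n \;=\; WG_n[(x,n)\to(y-a_n,1)] + c_n ,
\end{equation*}
the last expression being a single-path last passage value across all $n$ lines of the melon $WG_n$. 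Under the same normalization, and using the free translation parameter $z$ in the hypothesis to move the observation window, the two hypothesized families of processes become the statements that the top melon lines $WG_{n,k}$, recentered in height by $c_n$, converge in the product Skorokhod topology to $\scrA$ near every point, and that the reversed melon does likewise. These are exactly the inputs out of which Definition~\ref{D:airy-sheet-mble} assembles the Airy sheet $\scrS$ from a single parabolic Airy line ensemble, the forward melon governing the dependence on the exit coordinate $y$ and the reversed melon the dependence on the entrance coordinate $x$; so on the limit side the object we must produce, $\scrS(x,y)$, is the value of the ``last passage entering from infinitely far down in $\scrA$'' that Definition~\ref{D:airy-sheet-mble} makes rigorous.

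The core of the argument is then a single continuity statement: if the top finitely many lines of $WG_n$ converge to $\scrA$, then $WG_n[(x,n)\to(y-a_n,1)]$ converges to $\scrS(x,y)$. For the upper bound I would take a near-geodesic for $WG_n$ and argue that, although $WG_n$ has $n\to\infty$ lines while only the top $O(1)$ of them converge to $\scrA$, a localization estimate forces the truncation error --- the difference between the full $n$-line value and the value built from only the top $m$ lines, with the bottom part absorbed into $c_n$ --- to be uniformly small in $n$ once $m$ is large, after which the path is compared to an admissible path for the limiting functional. For the lower bound I would conversely lift a near-geodesic of the limiting last passage functional in $\scrA$ (that of Definition~\ref{D:airy-sheet-mble}) to a path in $WG_n$ using the Skorokhod convergence of the top lines. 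This localization is the main obstacle: it requires quantitative last-passage-type tail and modulus bounds on $WG_n$ and on $\scrA$ controlling how far down and how far sideways the optimizer can stray, and it is precisely here that the \emph{reversed} hypothesis is indispensable --- the reversed melon convergence is what pins down the behavior of the optimizer near the bottom line (hence the $x$-dependence, which the forward, base-point-anchored data does not see directly), while the forward one controls its exit at the top. I expect this step to lean on Greene's-theorem monotonicity (the longest-$k$-disjoint-paths inequalities) to sandwich the single-path value between multi-path quantities involving only the converging melon lines, together with the a priori parabolic decay of $\scrA$ and of the pre-limit melons.

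Granting pointwise-in-$(x,y)$ convergence in law, I would upgrade the mode of convergence in two steps. Joint convergence on compact sets of $(x,y)$ follows from equicontinuity: last passage values satisfy Monge/quadrangle inequalities, so a modulus of continuity for $WG_n[(x,n)\to(y-a_n,1)]$ is inherited from one for the finitely many converging melon lines (supplied by the Skorokhod convergence, and in the limit by the absolute continuity of $\scrA$ with respect to Brownian motion on finite intervals); alternatively, the last-passage-across-the-melon map is continuous on the relevant part of line-ensemble space and one transfers the Skorokhod convergence through it directly. Together with the a.s.\ continuity of $\scrS$ on $\R^2$ this yields convergence in law in the compact-uniform topology, and the Skorokhod representation theorem then produces a coupling of the $F_n$ with an Airy sheet $\scrS$ in which $F_n[(x+a_n,n)\to(y,1)]+c_n\to\scrS(x,y)$ compactly almost surely, as claimed. (If the coupling must additionally contain a \emph{given} realization of the $F_n$, one instead runs the localization estimate in probability and uses that $\scrS$ is a measurable function of the limiting $\scrA$ extracted from the melons of $F_n$.)
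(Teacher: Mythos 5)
Your high-level architecture is right and matches the paper: pass through the RSK isometry, work with the melon, use the hypothesized Airy line ensemble convergence of the melon lines, and identify the limiting last passage functional with the Airy sheet. But there are three concrete gaps that keep this from being a proof.

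First, circularity. You appeal to Definition~\ref{D:airy-sheet-mble} to characterize the limit. In the paper, Definition~\ref{D:airy-sheet-mble} is the \emph{conclusion} of Theorem~\ref{T:sheet-map}, whose proof (Section~\ref{SS:symmetries}) proceeds through Corollary~\ref{C:Xiidmaster}, hence through Theorem~\ref{T:lattice-models}, hence through Theorem~\ref{T:airy-sheet-gen} itself, which \emph{is} the statement you are trying to prove. The correct reference is the pre-existing characterization, Definition~\ref{D:airy-sheet}(ii), with the differences $\scrS(x,y)-\scrS(x,z)$; and one must also verify the shift-invariance clause (i), which the paper handles by observing the hypotheses are $z$-translation invariant.

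Second, and this is the real issue: you flag the localization argument as ``the main obstacle'' but leave it entirely unproven, and the identification of the limit cannot be carried out the way you describe. You propose to recognize the limit as ``the value of the last passage entering from infinitely far down in $\scrA$,'' but the paper explicitly points out (near \eqref{E:buse} and Conjecture~14.2 of \cite{DOV}) that the existence of that renormalized limit \emph{as a single value} is an open problem. The Airy sheet is rigorously defined only through \emph{differences} in $y$ at fixed $x>0$. So the paper's proof does the following, none of which appears in your sketch: it uses Lemma~\ref{L:k-line-prior} to convert the melon quantity $F^n_k(x,z)=W^n[(x+a_n,n)\to(z,k)]-W^n_k(z)$ into a first passage value across the \emph{reversed} melon; it then applies the quantitative estimate Theorem~\ref{T:airy-lp} to obtain the asymptotic \eqref{E:Fnk-uni} and the companion bound in Corollary~\ref{C:Gn-upper}; these combine in Lemma~\ref{L:zk-unif-bd} to locate the jump times $Z^n_k(x,y)\approx-\sqrt{k/(2x)}$ of the rightmost geodesic; Lemmas~\ref{L:close-paths}--\ref{L:close-paths-2} and Corollary~\ref{C:tree-struct} (using the Brownian Gibbs property via Proposition~\ref{P:brownian-airy}) then give tightness of the branching index $K_n(x;y,z)$; and only then does one obtain $\tilde\scrS(x,y)-\tilde\scrS(x,z)=\scrS(x,y)-\scrS(x,z)$ for rationals, after which ergodicity of the stationary Airy process (equation~(5.15) of \cite{prahofer2002scale}) is invoked to kill the residual random additive constant $C_x$. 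Your ``truncate the bottom lines and compare near-geodesics'' plan is a reasonable heuristic for why one should believe the answer, but without the specific quantitative input Theorem~\ref{T:airy-lp}, the asymptotics of $Z^n_k$, and the ergodicity step, the identification of the limit does not go through.

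Third, tightness. You invoke equicontinuity and ``a modulus of continuity inherited from the melon lines.'' That gives a one-sided control in $y$ at fixed $x$ (and symmetrically in $x$ at fixed $y$ via the reversed melon), but it does not by itself yield precompactness of the two-parameter process. The paper sidesteps Kolmogorov--Chentsov-type arguments entirely: the quadrangle inequality (Lemma~\ref{L:quadrangle}) makes each $S_n$ the CDF of a nonnegative measure, so Lemma~\ref{L:tight-sheet} reduces tightness of $S_n|_{[-r,r]^2}$ to tightness of the two families of one-dimensional marginals, which is exactly what the hypotheses provide. This ``sheet topology'' mechanism is one of the paper's genuine innovations and it is strictly cleaner than a two-variable modulus bound; your version would still need a nontrivial argument to rule out mass escaping to infinity or degeneracy of the two-variable increments, which the CDF observation gives for free.
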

	Theorem \ref{T:intro-sheet} is an immediate consequence of Theorem \ref{T:airy-sheet-gen}.
	A key ingredient in the proof of this theorem is  tightness for convergence to  the Airy sheet $\mathcal S$. This follows from tightness of the single-variable marginals, and the fact that both $\mathcal S$ and its prelimits share a useful property: they are cumulative distribution functions of the corresponding {\bf shock measures}, random measures on $\R^2$. See Section \ref{S:sheet}  for details. 
	
	\subsection{From two times to all times to geodesics}
	
	Convergence to the Airy sheet requires no additional assumptions beyond Skorokhod convergence to the Airy line ensemble. 
	
	The {\bf directed landscape} $\scrL$ is defined from independent Airy sheets analogously to how Brownian motion is defined from independent normal distributions, see Definition \ref{D:DL-def} for more details. 
	
	Because of this, in many i.i.d\ models convergence to the Airy sheet immediately implies a weak type of finite-dimensional distribution convergence that we refer to as \textbf{multi-time convergence}. This level of convergence does not imply a strong notion of convergence of geodesics, so in Section \ref{S:topologies} we consider two stronger topologies. In Sections \ref{S:hypo} and \ref{S:graph-cvg} we give sufficient conditions for tightness in these topologies. 
	
	The first topology is essentially compact convergence. Since compact convergence is not a separable topology on discontinuous functions, and our prelimits are often discontinuous, we replace it by a {\bf  graph convergence} in the localized Hausdorff metric. This convergence requires an extra moment assumption, but it implies a very strong form of convergence of geodesics. 
	
	A weaker topology is that of {\bf hypograph convergence}. Translated to the language of functions, a sequence $f_n$ converges to a continuous function $f$ in this topology if $(f_n - f)^+ \to 0$ uniformly on compact sets, and if for every $x$ we can find a sequence $x_n \to x$ such that $f_n(x_n) \to f(x)$, see Section \ref{S:topologies} for more precise statements.
	This topology has the advantage that for i.i.d.\ models, we need no extra assumptions in addition to Airy sheet convergence. It also implies a fairly strong form of geodesic convergence, but it allows for `holes' in the prelimit that geodesics tend to avoid. Graph convergence ensures that such holes do not exist, while the hypograph topology ignores them. 
	
	We give an example of a sequence of last passage models satisfying multi-time convergence but not hypograph convergence at the beginning of Section \ref{S:hypo}, and an example satisfying multi-time and hypograph convergence but not graph convergence in Example \ref{Ex:elpp}.
	
	\subsubsection{Hypograph convergence}
	\label{SSS:intro-hypograph}
	The key ingredient for proving hypograph convergence is a last passage version of Doob's classical {\bf maximal inequality}. It is proven in the setting of semidiscrete metrics of negative sign applied to sequences of independent functions with independent increments, see Example \ref{Ex:Brown}. Lattice last passage models can be embedded as semidiscrete metrics, and hence also fit into this setting.
	\begin{theorem}\label{T:intro-maxineq}
		Let $A, B \subset \R \X \Z$ be finite unions of bounded product sets. For  $p, q \in \R \X \Z, c \in \R$ and $\ep > 0$ we have
		\begin{equation}
		\label{E:doop-boot-I}
		\; \p\lf( d(p, q) \le c - 2\ep, c < d(A, B) \rg)\le \;\sup_{a \in A} \p(d(p, a) < -\ep) +  \sup_{b \in B} \p \lf(d(b, q) < -\ep\rg).
		\end{equation}
	\end{theorem}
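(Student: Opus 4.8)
The plan is to deduce \eqref{E:doop-boot-I} from the reverse triangle inequality together with a Doob-type \emph{freezing} argument applied once at each end. I work in the setting of a semidiscrete metric of negative sign (Example~\ref{Ex:Brown}), into which the lattice models embed; thus $d(u,w)\ge d(u,v)+d(v,w)$ for all $u,v,w$ since $-d$ is a directed metric of positive sign, and $d(u,v)=-\infty$ unless $u$ precedes $v$ in the order along which paths increase. Set $E=\{d(p,q)\le c-2\ep\}\cap\{c<d(A,B)\}$, where $d(A,B)=\sup\{d(a,b):a\in A,\ b\in B\}$. On $E$ there is a pair $a_0\in A$, $b_0\in B$ with $d(a_0,b_0)>c$, and since this is a strict inequality one such pair can be chosen as a measurable function of the environment --- here the hypothesis that $A,B$ are finite unions of bounded product sets keeps $\{(a,b)\in A\times B:d(a,b)>c\}$ Borel and hence admitting a measurable selection, with no attainment of the supremum needed. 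Applying the reverse triangle inequality twice, $d(p,q)\ge d(p,a_0)+d(a_0,b_0)+d(b_0,q)>d(p,a_0)+c+d(b_0,q)$, so on $E$ we get $d(p,a_0)+d(b_0,q)<-2\ep$ and therefore $d(p,a_0)<-\ep$ or $d(b_0,q)<-\ep$. (If $p$ does not precede $q$, or there is no admissible chain $p\preceq a_0\preceq b_0\preceq q$, then one of $d(p,a_0)$, $d(b_0,q)$ is $-\infty$ on $E$ and \eqref{E:doop-boot-I} is immediate.) Hence $\p(E)\le\p(d(p,a_0)<-\ep)+\p(d(b_0,q)<-\ep)$.

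The second step turns each term into the claimed supremum. In the semidiscrete model, once a half-open convention is fixed for which endpoint jump is charged to which segment, the distances $d(p,a)$ $(a\in A)$ are measurable with respect to the increments of the $f_i$ ``between $p$ and $A$'', the $d(a,b)$ $(a\in A,\,b\in B)$ with respect to those ``between $A$ and $B$'', and the $d(b,q)$ $(b\in B)$ with respect to those ``between $B$ and $q$''. If the selection $(a_0,b_0)$ is made using only the middle family, then $a_0$ is independent of the process $\{d(p,a):a\in A\}$, so by the freezing lemma $\p(d(p,a_0)<-\ep)=\E[g(a_0)]$ with $g(a)=\p(d(p,a)<-\ep)$; since $g(a_0)\le\sup_{a\in A}g(a)$ pointwise, $\p(d(p,a_0)<-\ep)\le\sup_{a\in A}\p(d(p,a)<-\ep)$. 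Symmetrically, $b_0$ is measurable with respect to the increments ``up to the level of $B$'' (the first two families together), which are independent of $\{d(b,q):b\in B\}$, giving $\p(d(b_0,q)<-\ep)\le\sup_{b\in B}\p(d(b,q)<-\ep)$. Adding the two bounds yields \eqref{E:doop-boot-I}.

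The main obstacle is precisely the independence bookkeeping in the second step. One must pin down exactly which restrictions $f_i|_{[s,t]}$ enter each of $d(p,a)$, $d(a,b)$, $d(b,q)$ and place the shared boundary increments consistently. The delicate point is that when $A$ (or $B$) meets a level in more than one point, the increments ``between $p$ and $A$'' and ``between $A$ and $B$'' overlap along that level, and --- because semidiscrete last-passage values need not be monotone in their endpoints, the $f_i$ being allowed to decrease continuously --- the natural geodesic-endpoint choice of $a_0$ is not obviously measurable with respect to a sigma-field independent of $\{d(p,a):a\in A\}$. Handling this overlap (by a left-to-right exploration producing honest stopping points, or by an appropriate conditioning on the finitely many shared level slices together with a compatible selection rule, after reducing $A,B$ to their finitely many level slices) is where the real work lies; once three (conditionally) independent families of increments are in place, the two steps above are short.
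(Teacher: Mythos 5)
Your plan is genuinely different from the paper's, and the place you flag as delicate is exactly where it breaks.

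You want to select a witness pair $(a_0,b_0)\in A\times B$ with $d(a_0,b_0)>c$ using only the ``middle'' increments, then apply a freezing argument on each side. But the $\sigma$-field generated by $\{d(a,b):a\in A,\,b\in B\}$ is \emph{not} independent of the one generated by $\{d(p,a):a\in A\}$: on a level that $A$ meets in an interval $[x_-,x_+]\times\{m\}$, the collection $\{d(a,b)\}$ uses increments of $F_m$ on $[x_-,\infty)$ (paths starting at various $a=(x,m)$ enter line $m$ at $x$ and exit to the right), while $\{d(p,a)\}$ uses increments of $F_m$ on $(-\infty,x_+]$; these overlap on $[x_-,x_+]$. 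So a measurable selection $a_0$ determined by the middle family need not be independent of $\{d(p,a):a\in A\}$, and the freezing step $\p(d(p,a_0)<-\ep)\le\sup_a\p(d(p,a)<-\ep)$ has no justification. The same issue afflicts $b_0$. This is not a bookkeeping nuisance one can fix by a half-open convention or by conditioning on level slices: after conditioning, the marginal law of $d(p,a)$ changes and the supremum bound is no longer available.

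The paper's proof (Lemma~\ref{L:doob-lpp-boot}, of which Theorem~\ref{T:intro-maxineq} is the $d=0$ case) sidesteps this by never picking a point of $A$ at all. It first proves the set-to-point inequality (Lemma~\ref{L:doob-lpp}) by discretizing $B$ and defining a genuine stopping point $\tau_k=\min\{b\in B_k: d_F^*(p,b)>c\}$ in the total order where higher line index comes first; on $\{\tau_k=b\}$ the relevant increments are all strictly to the ``$p$-side'' of $b$, hence independent of $d_F^*(b,q)$, and the triangle inequality $d_F^*(p,\tau_k)+d_F^*(\tau_k,q)\le d_F^*(p,q)$ closes the estimate. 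For the set-to-set version, it runs the same stopping-time argument with $\tau_k=\min\{b\in B_k:d_F^*(A,b)>c\}$ and the triangle inequality $d_F^*(A,\tau_k)+d_F^*(\tau_k,q)\le d_F^*(A,q)$, which handles the half of the event where $d_F^*(A,q)\le c-\ep/2$; the complementary half $\{d_F^*(p,q)\le c-\ep,\ d_F^*(A,q)>c-\ep/2\}$ is then bounded by a \emph{second} application of the already-proved set-to-point inequality, this time over $A$. The intermediate quantity $d_F^*(A,q)$ is what replaces your random $a_0$ and makes the independence honest: each side of the argument only ever explores one set at a time, and only ever in a direction that reveals information disjoint from the tail it needs to estimate.

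So the idea of exploring a set with a stopping point is the right one (your first suggested fix points toward it), but the symmetric ``pick a witness in $A\times B$ at once'' formulation cannot be completed, and the real content of the theorem is the asymmetric two-pass reduction to the set-to-point case.
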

	The maximal inequality is proven as Lemma \ref{L:doob-lpp-boot}. The version in that lemma is slightly stronger than Theorem \ref{T:intro-maxineq} but more technical.
	Theorem \ref{T:intro-maxineq} reduces hypograph convergence for models satisfying Airy sheet convergence to a simple one-dimensional convergence requirement called {\bf mobile Tracy-Widom convergence}. This requirement is easy to check in all cases we study.
	For this theorem, let $d_n$ be a sequence of rescaled i.i.d.\ lattice last passage models, see Example \ref{Ex:geom-exp}. Extend these models to $\Rd$ using floors.
	
	\begin{theorem}\label{T:intro-hypo}
		We have $d_n \cvgd \scrL$ in the hypograph topology provided that: \begin{itemize}
			\item (Mobile Tracy-Widom convergence) For any sequence $u_n \to u\in \Rd$, we have 
			$
			d_n(u_n) \cvgd \scrL(u).
			$
			\item (Airy sheet convergence) For all $s < t \in \R$ we have $d_n(\cdot, s; \cdot ,t)\cvgd \scrL(\cdot, s; \cdot, t)$ in the graph topology.
		\end{itemize}
	\end{theorem}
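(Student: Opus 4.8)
The plan is to deduce the statement from two ingredients — \emph{multi-time convergence}, extracted from the Airy sheet hypothesis, and control of $\sup_K d_n$ over compacts $K\sset\Rd$, extracted from the maximal inequality — and then to identify the weak limit as $\scrL$. First I would establish multi-time convergence. Fix rescaled times $r_0<r_1<\dots<r_m$. Since the weights of an i.i.d.\ lattice model on disjoint horizontal strips are independent, the slices $d_n(\,\cdot\,,r_{i-1};\,\cdot\,,r_i)$, $1\le i\le m$, are independent, and by the Airy sheet hypothesis each converges in the graph topology to an independent rescaled Airy sheet $\scrS_i$. Because an admissible last passage path is monotone in the time coordinate, $d_n(x,r_0;y,r_m)$ equals, up to lattice rounding of the intermediate times, the composition $\sup_{z_1,\dots,z_{m-1}}\sum_{i=1}^m d_n(z_{i-1},r_{i-1};z_i,r_i)$. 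The metric-composition map is continuous in the graph topology once the maximizing $z_i$ are confined to a compact set (the standard localization argument), so the composed prelimit converges in law to the composition of the $\scrS_i$; that composition is exactly $\scrL$ restricted to the times $r_i$, and since $\scrL$-geodesics exist, refining the mesh recovers all of $\scrL$. Thus every finite multi-time restriction of $d_n$ converges in law to that of $\scrL$.

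Next I would obtain tightness in the hypograph topology. The space of hypographs in the localized Hausdorff metric is compact, so the only thing to rule out is vertical escape, i.e.\ one must show $\sup_{u\in K}d_n(u)$ is tight for each compact $K\sset\Rd$. Here the maximal inequality does the work: over a space-time box $B$, the reverse triangle inequality $d_n(p,q)\ge d_n(p,a)+d_n(a,b)+d_n(b,q)$ together with Theorem \ref{T:intro-maxineq} bounds $\sup_{u\in B}d_n(u)$ by the value of $d_n$ at a bounded number of fixed bracketing points $p,q$ plus boundary terms of the form $\sup_a\p(d_n(p,a)<-\ep)$ and $\sup_b\p(d_n(b,q)<-\ep)$. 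Monotonicity of last passage in its endpoints collapses these suprema to single-point probabilities, and mobile Tracy-Widom convergence then controls the point values and boundary terms uniformly in $n$. Summing over a finite cover of $K$ gives tightness of $\sup_K d_n$, hence tightness of $d_n$ as a random element of the hypograph topology.

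Finally I would identify the limit. Tightness produces subsequential weak limits; by the multi-time convergence above, each such limit has the same finite multi-time marginals as $\scrL$, and since $\scrL$-geodesics exist these marginals determine the law of $\scrL$. Hence every subsequential limit is $\scrL$, so $d_n\cvgd\scrL$ in the hypograph topology. (The ``every point approximated from below'' half of hypograph convergence is then automatic, being an almost sure property of $\scrL$; at the prelimit level it is visible directly, since $d_n$ dominates the composition over any finite mesh, which converges to $\scrL$.)

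The main obstacle is making the maximal-inequality step uniform in $n$. The rescaling is anisotropic — order $\sigma^2$ transversally and order $\sigma^3$ in time — so a box that is small in the limit coordinates is a long, thin, sheared parallelogram in the original lattice, whereas Theorem \ref{T:intro-maxineq} requires honest product sets; and because the metric has negative sign and may take negative values, the bracketing points and the $p\to A$, $B\to q$ connector distances must be chosen carefully so that the boundary probabilities, and not just the main term, vanish. Arranging the cover of $K$, the bracketing points, and the product sets fed into the maximal inequality so that everything is controlled uniformly in $n$ is precisely where \emph{mobile}, rather than fixed-point, Tracy-Widom convergence is indispensable: it is what permits replacing the random location of the supremum over a box by deterministic points at which the one-point limit is known.
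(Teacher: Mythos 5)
Your high-level skeleton -- multi-time convergence from the Airy sheet hypothesis plus temporal independence, then tightness/identification via the maximal inequality powered by mobile Tracy-Widom -- is the correct skeleton, and you correctly identify that mobile Tracy-Widom is what lets you evaluate the boundary terms at deterministic points. However, there is a genuine gap in the identification step, and a missing piece near the boundary of $\Rd$.

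\textbf{The identification gap.} You claim that because $\scrE_*$ is compact, it suffices to show $\sup_{K}d_n$ is tight and then invoke multi-time marginals to pin down the subsequential limit. This does not close the argument. Since $\scrE_*$ is already compact, tightness is free; the problem is entirely one of identification. Coupled along a subsequence, a hypograph limit $\scrK'$ is upper semicontinuous, and multi-time convergence gives $\scrK'|_S=\scrL|_S$ only on a countable union of time slices. Upper semicontinuity plus agreement on a dense set gives $\scrK'\ge\scrL$ for free, but \emph{not} $\scrK'\le\scrL$: an u.s.c.\ function can agree with a continuous one on a dense set yet exceed it at isolated points, which is exactly the spike phenomenon (cf.\ the Poisson-point modification at the start of Section \ref{S:hypo}) that the theorem is designed to rule out. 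Tightness of $\sup_K d_n$ only tells you $\scrK'<\infty$; it does not tell you the spikes are absent. What is actually required is the quantitative estimate of Proposition \ref{P:hypograph-cvg}(iii): the probability that $d_n$ exceeds $d_n(p,q)+\ep$ on a KPZ-scaled box of side $r$ must be $o(r^{10})$, precisely because a compact $K\subset\Rd$ requires $\Theta(r^{-10})$ such boxes to cover, and you are running a union bound against this count. The maximal inequality supplies this decay via the Tracy-Widom tail $\exp(-c\,\ep^{3/2}/r^{3/2})$, but deriving and balancing the $r^{10}$ exponent is the whole content of the step; ``tightness of the supremum'' does not carry it.

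\textbf{The boundary of $\Rd$.} You do not address what happens as $s\to t$, i.e.\ on the set $\{(x,s;y,s):x\ne y\}$ where $\scrL=-\infty$. Condition (iv) of Proposition \ref{P:hypograph-cvg} is a separate argument: one must show that the subsequential hypograph limit is $-\infty$ there, which requires a second application of the maximal inequality together with tail bounds that blow up as the time-gap shrinks. Without this, $\scrK'$ could remain finite near the diagonal, and then $\mathfrak h\scrK'$ would not equal $\mathfrak h\scrL$ even if $\scrK'=\scrL$ on $\Rd$. Your proposal, as written, only controls behavior on compacts strictly inside $\Rd$.

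\end{document}
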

	See Theorem \ref{T:ind-main} for a stronger version and Section \ref{S:hypo} for more precision about the scaling assumptions on $d_n$.
	Hypograph convergence deterministically implies convergence of geodesics, see Theorems \ref{T:geod-cvg} and \ref{T:geod-cvg-stronger}. In particular, Theorem \ref{T:geod-cvg-stronger} implies that if $d_n \to \scrL$ almost surely in the hypograph topology and $u_n\to u \in \Rd$, then on the almost sure set where there is a unique $u$-geodesic $\pi$ in $\scrL$, any sequence of $u_n$-geodesics converges to $\pi$ in the Hausdorff topology. 
	
	\subsubsection{Compact convergence}
	
	Graph convergence is a separable substitute to compact convergence. It is equivalent to compact convergence when the limit is continuous, as is the case in our setting. 
	
	We show this type of convergence in the context of i.i.d.\ lattice last passage environments, Example \ref{Ex:geom-exp}. Let $d_n$ be a sequence of such environments rescaled so that $d_n\cvgd \scrL$ in the multi-time sense. In this setting, we have the following theorem.
	\begin{theorem}\label{T:intro-compact} Let $U=\{(0,0;0,x):-3\le x \le 3\}$. If for some $\delta>0$,
		\begin{equation} 
		\label{E:limsup}
		\limsup_{n \to \infty } \,\sup_{u\in U} \E \left( d_n(u)^-\right)^{5 + \de} < \infty,
		\end{equation}
	then in some coupling $d_n\to \scrL$ compactly on $\Rd$. 
	\end{theorem}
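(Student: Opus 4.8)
The theorem upgrades multi-time convergence to compact convergence, so the plan is to reduce it to a tightness statement. It suffices to prove that the laws of $d_n$ are tight in the graph topology: by the multi-time hypothesis every subsequential limit has the finite-dimensional distributions of $\scrL$, and since $\scrL$ is continuous on $\Rd$ this forces every subsequential limit to equal $\scrL$; hence $d_n \cvgd \scrL$ in the graph topology, and the Skorokhod representation theorem produces a coupling in which $d_n \to \scrL$ almost surely in the graph topology -- which is exactly compact convergence on $\Rd$ since $\scrL$ is continuous there. (Equivalently, one may first invoke Theorem \ref{T:intro-hypo}/Theorem \ref{T:ind-main} to obtain hypograph convergence from the hypothesis, so that the only remaining content is a uniform lower bound preventing $d_n$ from developing deep holes inside compact sets; this is the same tightness statement in disguise.)

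The graph-tightness is established with the general chaining argument of Section \ref{S:graph-cvg}. That argument reduces graph-tightness to uniform-in-$n$ quantitative tail bounds for the local increments of $d_n$, of two kinds. The \emph{upper} kind -- controlling upward oscillation of $d_n$ and the upper tail of $d_n(u)$ on compact $u$-sets -- is already available from the superadditive structure of the negative-sign metric $d_n$ together with the fixed-endpoint, fixed-final-time one-dimensional asymptotics of the companion paper \cite{DNV}, and needs no new assumption. The \emph{lower} kind -- controlling downward oscillation and, crucially, ruling out holes where $d_n(u) \ll -M$ -- is exactly where \eqref{E:limsup} is used: by the KPZ scale and spatial-translation invariance of the rescaled i.i.d.\ model, the downward local increment of $d_n$ across a thin time strip is, up to scaling factors, a vertical passage of the form $d_n(0,0;0,x)$ with $x$ in a bounded range (this is why $U$ ranges over $x \in [-3,3]$, reversed passages included), and Markov's inequality turns \eqref{E:limsup} into the uniform bound $\sup_n \p\bigl(d_n(0,0;0,x) < -M\bigr) \le C M^{-(5+\de)}$ for all but finitely many $n$ (the remaining $n$ being trivially tight). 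This is precisely the quantitative lower-tail input the chaining consumes, and the moment exponent $5+\de$ is exactly what the chaining requires to control the process over compact subsets of the anisotropically scaled ($1/3$ in time, $1/2$ in space) parameter set $\Rd$; the passage from these pointwise bounds to bounds over the small product boxes demanded by the localized-Hausdorff graph topology is handled by the maximal inequality, Theorem \ref{T:intro-maxineq}.

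The main obstacle is feeding \eqref{E:limsup} -- a bound only along the vertical direction -- into the chaining so as to control $d_n$ in \emph{all} directions. The difficulty is that the geodesic between two generic points of $\Rd$ is far from vertical, so inserting vertical reference segments by superadditivity is hopelessly lossy: crossing a time strip of width $\eta$ at spatial slope of order $1$ costs order $-1/\eta$. The resolution is to go through the metric-composition structure, writing $d_n$ on a bounded portion of $\Rd$ as a composition of a bounded number of rescaled short-time models and controlling the construction strip-by-strip, where each strip's downward increment has spatial displacement only $O(\eta^{2/3})$ by the transversal KPZ exponent and is therefore governed by the vertical estimate -- all while tracking the scaling factors carefully enough that the exponent $5+\de$ survives to the end of the chaining. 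Once both the upper and lower increment bounds are in hand, the chaining of Section \ref{S:graph-cvg} delivers graph-tightness and the theorem follows from the reduction above.
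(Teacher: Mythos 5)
Your plan does follow the paper's route through Section~\ref{S:graph-cvg}: start from hypograph convergence (itself from the i.i.d.\ structure plus the maximal inequality), then use the chaining machinery of Lemma~\ref{L:chaining-data} and Proposition~\ref{P:prob-cond} to rule out ``holes,'' feeding in the lower-tail moment bound via translation invariance and the KPZ-anisotropic grid $p_k^\pm$, and finally invoke Proposition~\ref{P:graph-cvg} (and then Theorem~\ref{T:geod-cvg}/Skorokhod) to upgrade to graph/compact convergence. The ``main obstacle'' you describe is somewhat self-inflicted: the paper's chaining never inserts vertical reference segments at all. The grid $p_k^\pm\in 2^{-2k/3}\Z\X 2^{-k}\Z$ is built precisely so that every increment $\scrK_n(p_{i+1}^+,p_i^+)$ already has KPZ-aspect-ratio $O(2^{-2i/3})$ space over $O(2^{-i})$ time (see \eqref{E:p-separation}--\eqref{E:p-cone}); after pulling back by translation invariance these become exactly one-point statistics of the form $\scrA_1^{N}(y)$ with $y\in[-3,3]$, to which Markov with $5+\de$ moments applies directly. (Your stated anisotropy ``$1/3$ in time, $1/2$ in space'' is also wrong: it is $1$ in time and $2/3$ in space, giving grid cardinality $O(2^{5i/3})$ per scale, and it is this $5/3$ that pins the critical moment at $5$.)

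The genuine gap is that you never address the base of the chain. Lemma~\ref{L:chaining-data} produces the decomposition
$$
\inf_{K} d \ \ge\ \inf_{A^+_\ell\X B^-_\ell} d \ +\ \sum_{i=\ell}^{k-1}\bigl(\de^+_i+\de^-_i\bigr)\ +\ \eta^+_k + \eta^-_k,
$$
and your argument bounds only the $\de^\pm_i$ terms. The moment hypothesis \eqref{E:limsup} does not control $\eta^\pm_{k(n)}$ --- these are last-passage increments between a continuum point $p$ and its microscopic grid companion $p^+_{k(n)}$ at scale roughly $n^{-3/4}$, where the fluctuation exponent argument no longer gives a clean reduction. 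In the paper this term is handled by an entirely separate, \emph{deterministic} bound (equation~\eqref{E:detlower}) that uses the nonnegativity of the i.i.d.\ weights: with $k(n)=\ceil{3\log_2 n/4}$, the rescaled increment $\scrK_n(p,p^+_{k(n)})$ is bounded below by a deterministic quantity of order $n^{-1/3}\cdot n^{1/4}\to 0$, so $\p(\eta^\pm_{k(n)}<-\ep)=0$ for all large $n$. Without this (or some substitute for it), the chaining cannot close, and the statement as you have written it is incomplete. Everything else is in order, so the fix is simply to state and use the deterministic control of the innermost chaining term.
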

	
	Theorem \ref{T:intro-compact} follows from Theorem \ref{T:lattice-models-i}, see Remark \ref{R:intro-cct}. Refer also to that theorem for precise details about the scaling of $d_n$. Theorem \ref{T:intro-compact} shows that $5+ \de$ moments are sufficient for compact convergence. This is optimal; in Example \ref{Ex:elpp} we give an example satisfying all the conditions of Theorem \ref{T:intro-compact} but with $\de = 0$ where compact convergence fails.
	
	Again, compact convergence deterministically implies convergence of geodesics. The type of convergence is stronger than in the hypograph setting. Indeed, Theorem \ref{T:geod-cvg} shows that if  $u_n\to u \in \Rd$ and $d_n(u_n) \to \scrL(u)$ then any sequence of $u_n$-geodesics in $d_n$ is precompact in the Hausdorff topology, and all subsequential limits  are $u$-geodesics in $\scrL$.
	
	One important advantage of compact convergence is that we get Airy sheet limits in all transversal directions. For last passage percolation across rectangles, this typically means that in a suitable scaling, convergence of left-to-right last passage values to the Airy sheet also implies convergence of bottom-to-top last passage values to the Airy sheet. 

	\subsection{The longest increasing subsequence}
	
	\cite*{ulam1961monte} asked what the length $L_n$ of the longest increasing subsequence in a uniform random $n$-element permutation is.
	\cite*{hammersley1972few} initiated the study of this problem in earnest, relating this length to a last passage value and established the asymptotics $L_n \sim c\sqrt{n}$.  \cite*{vershik1977asymptotics} and \cite*{logan1977variational} independently determined the value $c=2$, and \cite*{baik1999distribution} identified the limiting fluctuations as the Tracy-Widom random variable $\scrL(0, 0; 0, 1)$. Our main theorem concerns the {\it shape} of the sequence. 
	
	To state this theorem, recall from Example \ref{Ex:landscape-intro} that there is almost surely a unique geodesic in the directed landscape from $(0,0)$ to $(0, 1)$. This geodesic is of the form $\{(\Pi(t), t) : t \in [0, 1]\}$ where $\Pi:[0, 1] \to \R$ is a continuous function, Figure \ref{fig:directed}.
	
	\begin{figure}
		\centering
		\includegraphics[scale=0.4]{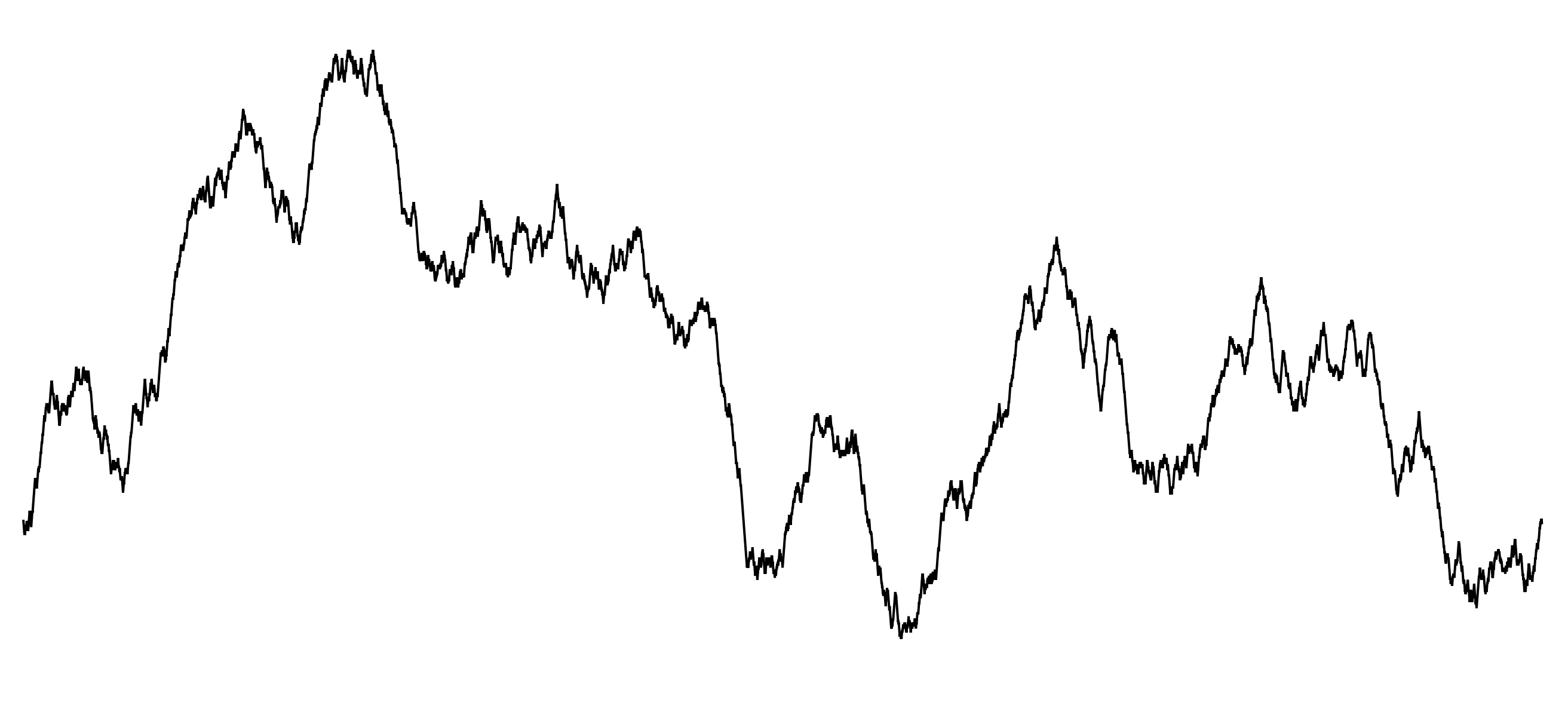}
		\caption{The directed geodesic, the scaling limit of the longest increasing subsequence. }
		\label{fig:directed}
	\end{figure}

	\begin{theorem}\label{T:intro-LIS}
		Let $I_n$ be a longest increasing subsequence in a uniform random permutation of $\{1,\ldots, n\}$. We think of $I_n$ as a function from $\{1, \ldots, L_n\} \to \{1, \ldots, n\}$, and set $I_n(i)=n$ for $i> L_n$. Then in some coupling of the $I_n$ and $\scrL$, 
		$$
		I_n(\lfloor 2t\sqrt{n}\rfloor) = nt + 2n^{5/6} \Pi(t) + n^{5/6}\ep_n(t),
		$$
		where the error $\ep_n \to 0$ uniformly almost surely on $[0, 1]$.
	\end{theorem}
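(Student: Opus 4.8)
\emph{Poissonization.} The plan is to realize the longest increasing subsequence as a geodesic in planar Poisson last passage percolation and then feed this into Theorems~\ref{T:intro-main} and~\ref{T:intro-geod}. Let $P$ be a rate-one Poisson process on $[0,\sqrt n]^2$. Conditionally on $|P|=m$, listing the points of $P$ by increasing first coordinate gives a uniform permutation of $\{1,\dots,m\}$, and a longest increasing subsequence of it is exactly a longest up-right chain of $P$, i.e.\ a geodesic of the Poisson metric $d$ of Example~\ref{Ex:Poi} from $(0,0)$ to $(\sqrt n,\sqrt n)$. Since $|P|\sim\mathrm{Poisson}(n)$, I will run the argument for $P$ and de-Poissonize at the end.

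\emph{Invoking the scaling limit.} Apply Theorem~\ref{T:intro-main} to Poisson last passage in the diagonal direction $\rho=1$, for which the table gives $\|(x,y)\|_d=2\sqrt{xy}$ and parameters $\chi,\tau,\alpha,\beta$ with $v=(1,1)$, and take $\sigma=n^{1/6}$, so $\sigma^3=\sqrt n$, $\sigma^2=n^{1/3}$, and the corner $(\sqrt n,\sqrt n)=\sigma^3v$ lands at directed-landscape coordinates $(0,1)$. Under the linear map of Theorem~\ref{T:intro-geod} composed with the scaling $w\mapsto w/\sqrt n$, a permutation point $(i,\pi(i))$ is carried to a directed-landscape point whose time coordinate is $\pi(i)/n$ and whose space coordinate is a fixed multiple of $(i-\pi(i))/n^{5/6}$. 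Theorem~\ref{T:intro-main} then gives, in a suitable coupling, $n^{1/6}$-order convergence of last passage values to $\scrL$, and Theorem~\ref{T:intro-geod} gives that the image $\pi_\sigma$ of the LIS geodesic converges in the Hausdorff topology to the almost surely unique $(0,0)$--$(0,1)$ geodesic $\{(\Pi(t),t):t\in[0,1]\}$ of $\scrL$ (the endpoints of $\pi_\sigma$ converge to $(0,0)$ and $(0,1)$ because the first and last points of a longest chain of $P$ sit at a lower-order distance from the two corners). As $\Pi$ is continuous and $\pi_\sigma$ is the staircase graph of a path monotone in time, this upgrades to: uniformly over the points of the chain, the rescaled space coordinate converges to $\Pi$ evaluated at the rescaled time coordinate.

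\emph{From rank to landscape time.} Write $a_j$ for $I_n(j)$ and $s_j:=\pi(a_j)/n$ for the landscape time of the $j$-th chain point. The previous step gives $(a_j-\pi(a_j))/n^{5/6}\to c\,\Pi(s_j)$ uniformly for an explicit constant $c$. To convert from the rank $j$ to the landscape time $t$, note that the first $j$ points of the chain are a longest chain to the $j$-th point, so $j$ equals the last passage value from (near) $0$ to that point; by the value part of Theorem~\ref{T:intro-main} this equals $2\sqrt n\,s_j$ plus a transversal term of order $n^{1/3}$ plus a fluctuation of order $n^{1/6}$, hence $s_j=j/(2\sqrt n)+O(n^{-1/6})$. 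Thus for $j=\lfloor 2t\sqrt n\rfloor$ we get $s_j=t+o(1)$ and $n s_j=nt+o(n^{5/6})$, and together with $\Pi(s_j)=\Pi(t)+o(1)$ a direct change of variables yields the identity in the statement, with $\ep_n\to0$ uniformly on $[0,1]$. The endpoints need a separate look: near $t=1$ one uses $\Pi(1)=0$ and $L_n=2\sqrt n+O(n^{1/6})$, so that the convention $I_n(i)=n$ for $i>L_n$ is consistent with the formula on the $O(n^{-1/3})$-window where it is in force, and near $t=0$ both the main and correction terms are $o(n^{5/6})$.

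\emph{De-Poissonization and the main obstacle.} To pass from $P$ to an i.i.d.\ sample of exactly $n$ points one couples them by inserting or deleting $\big||P|-n\big|=O_P(\sqrt n)$ points; by monotonicity of longest chains in the point set and the elementary estimate $\E(L_{m+k}-L_m)=O(k/\sqrt m)$ (\`a la Hammersley and Aldous--Diaconis), this changes the chain length by $o(n^{1/6})$ with high probability, so the $n$-point LIS is an $o(n^{1/6})$-near-geodesic of $P$ whose rescaled image still converges to $\Pi$, transferring the conclusion to the uniform $n$-permutation. The genuinely new input---convergence of the model to $\scrL$ and of geodesics to $\Pi$---is supplied entirely by Theorems~\ref{T:intro-main} and~\ref{T:intro-geod}; the rest is bookkeeping. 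I expect the main difficulty to be making every error uniform in $t\in[0,1]$ up to and including the endpoints (where the linear, fluctuation, and boundary-convention terms must be balanced against each other), together with the de-Poissonization, while the rank-to-landscape-time change of variables in the third step is where the structure of the argument really lives.
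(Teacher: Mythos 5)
Your approach is close in spirit to the paper's, but it diverges on one load-bearing step, and that step has a genuine gap.

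The paper never de-Poissonizes. It realizes the uniform permutation of $\{1,\ldots,n\}$ \emph{exactly} inside a Poisson process $P$ by defining $T_n = \inf\{t : |P\cap([0,t]\times[0,\sqrt n])| \ge n\}$ and reading the permutation off the $n$ points of $P\cap([0,T_n]\times[0,\sqrt n])$. The LIS then corresponds to an honest $d_P$-geodesic from $(0,0)$ to $(T_n,\sqrt n)$, and since $(T_n-\sqrt n)/n^{1/3}\to 0$ almost surely, Theorem~\ref{T:geod-cvg-stronger} (packaged as Proposition~\ref{P:planar-poisson}) applies directly. The rank-to-landscape-time step is handled by Proposition~\ref{P:convergence-alond-geod}. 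Your third step (inverting $j\mapsto s_j$ along the chain) is essentially the paper's $G_n,G_n^{-1}$ bookkeeping, and that part of your argument is correct in outline.

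The gap is in your de-Poissonization. You couple $P$ with the $n$-point sample $Q$ by inserting or deleting $O_P(\sqrt n)$ points, and you assert that the LIS of $Q$ is an $o(n^{1/6})$-near-geodesic of $P$ ``whose rescaled image still converges to $\Pi$.'' Two problems. First, when $|P|<n$ the LIS of $Q$ can pass through inserted points that are not in $P$ at all, so it is not even a chain in $P$, let alone a near-geodesic of $P$. Second, and more fundamentally, Theorems~\ref{T:intro-geod}, \ref{T:geod-cvg}, and \ref{T:geod-cvg-stronger} are proved for \emph{exact} geodesics of the prelimiting directed metrics; there is no near-geodesic convergence statement anywhere in the paper, and the Hausdorff precompactness/ordering arguments in Section~\ref{S:geodesics} use the geodesic identity $d_n(a,c)=d_n(a,b)+d_n(b,c)$ along the chain in an essential way. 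Knowing that the \emph{length} changes only by $O(1)$ (your Hammersley/Aldous--Diaconis estimate controls $\E(L_{m+k}-L_m)$, not chain geometry) does not control the \emph{shape} of the optimizing chain, which is what the theorem is about. To fix this you would need either to prove a near-geodesic convergence lemma (plausible but not written), or to prove directed-landscape convergence for the fixed-$n$ permutation model directly (which re-derives a version of what the $T_n$ trick gives for free). As written the de-Poissonization does not close.
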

	
	Note that since $L_n = 2\sqrt{n} + O(n^{1/6})$, we could alternately replace the $\lfloor 2t\sqrt{n}\rfloor$ in the argument of $I_n$ above by $\floor{L_n t}$.
	Theorem \ref{T:intro-LIS} is proven as part of Theorem \ref{T:subsequence} in Section \ref{SS:LIS}. In that section we also show that two distinct longest increasing subsequences converge to the same geodesic, and also establish a Kolmogorov-Smirnov test for longest increasing subsequences in i.i.d.\ sampling: the limit is the directed geodesic, rather than a Brownian bridge. These theorems are all consequences of convergence of geodesics in Poisson last passage percolation, Theorem \ref{T:intro-geod}.
	
	\subsection{The limit of tasep}
	\label{SS:tasep-intro}
	
	For the classical height function representation, let $\mathcal H$ be the set of functions $h:\mathbb Z \mapsto \mathbb Z$ so that $h(0)$ is even, and $|h(i)-h(i-1)|=1$ for all $i$. Continuous-time tasep $h_t, t \in [0, \infty)$ is a Markov process on $\mathcal H$ in which each local maximum decreases by $2$ independently at rate 1. 
	
	There are several natural couplings of tasep in which all initial conditions are driven by a same noise. In each, the evolution can be interpreted as a discrete Hamilton-Jacobi equation in a noisy environment. Next, we construct the scaling limit of one of these couplings.
	
	Let $X$ be an array of independent, mean $1$ exponential random variables indexed by even lattice points, i.e. $x\in \Z^2$ with $x_1 + x_2 \in 2 \Z$. For any initial condition $h_0$, we can define a tasep evolution $h_t$ started from $h_0$ by letting $X(\kappa,x)$ be the increment between the first time that $h$ has a local maximum of value $\kappa$ at location $x$ (if this ever happens) and the time when $h(x)$ decreases to $\kappa-2$. The evolution of tasep from any initial condition given $X$ is deterministic.   
	We show that in the scaling of functions   $(\iota_nf)(x)=h(2n^{2/3}x)/n^{1/3}$, the limit of coupled tasep is contained in the directed landscape. 
	
	The corresponding limit theorem for a single tasep was shown by \cite*{matetski2016kpz}, Theorem 3.13. We start by stating their theorem, reformulated in our language based on \cite*{nica2020one} and \cite*{DOV}. 
	\begin{theorem}[\cite*{matetski2016kpz}]
		\label{T:matetski-kpz}
		Let $T \sset (0, \infty)$ be finite or countable. Let $h_0: \R \to \R \cup \{-\infty\}$, $h_0\not \equiv -\infty$ be fixed. There is a coupling of copies $X_n$ of $X$ and  a directed landscape $\scrL$ so that on an event of probability $1$ for all $t\in T$ the following holds. 
		
		For any deterministic sequence of functions $h_{n,0}\in \mathcal H$ the tasep evolution $h_{n,t}$ satisfies
		\begin{equation}\label{E:tasep-limit}
		\iota_n (h_{n,2nt}+nt)(y) \to \sup_{x\in \mathbb R} h_0(x)+\scrL(x,0;y,t)
		\end{equation}
		uniformly on compact subsets of $y\in \mathbb R$ as long as
		\begin{itemize}
			\item $ \iota_n h_{n,0}\to  h_0$, in the hypograph topology (see Section \ref{S:topologies}).
			\item there exists $s, a \in \R$ such that for all $n$  large enough, $\iota_n h_{n,0}(x) < a + s |x|$  for all $x$.
		\end{itemize}
	\end{theorem}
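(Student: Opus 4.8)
\emph{Overview.} Although Theorem \ref{T:matetski-kpz} is originally due to Matetski--Quastel--Remenik, it also follows cleanly from the machinery assembled above: TASEP height functions are metric interfaces in the exponential last passage metric, the exponential last passage metric converges to $\scrL$ by Theorem \ref{T:intro-main}, and metric interfaces converge when the underlying metrics and the initial data converge. So the plan is: (1) realize $h_{n,\tau}$ as a supremum over the initial profile of exponential last passage values in the array $X$; (2) rescale and invoke Theorem \ref{T:intro-main}/\ref{T:intro-compact} for exponential last passage at direction $\rho=1$; (3) pass to the limit in the supremum, using hypograph convergence together with the linear-growth bound to keep the maximizer from escaping. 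The main work is in (1), the discrete dictionary, and in the uniformity in (3).

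\emph{Step 1: metric-interface representation.} Using the coupling through the waiting-time array $X$, one verifies the discrete variational identity
\[
h_{n,\tau}(y) \;=\; \sup_{x \in \Z}\bigl(h_{n,0}(x) + G^X_\tau(x,y)\bigr),
\]
where $G^X_\tau(x,y)$ is the exponential last passage value in the environment $X$ from a point on the level-$0$ line above $x$ to a point on the level-$\tau$ line above $y$; equivalently $h_{n,\tau}$ is obtained by propagating $h_{n,0}$ through the corner-growth/exponential last passage metric of Example \ref{Ex:geom-exp}. This is the precise content of the statement that tasep height functions are interfaces in the exponential last passage metric, and its verification (the attractiveness/monotonicity of tasep plus decomposition of a general initial condition into translated narrow wedges) is the combinatorial heart of the argument. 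The height/position dictionary must be pinned down so that, after rescaling, the $\iota_n$, the $nt$ shift, and the $n^{5/6}$ fluctuation scale in \eqref{E:tasep-limit} match the table entry for exponential last passage at $\rho=1$.

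\emph{Step 2: rescaling and metric convergence.} Exponential last passage percolation satisfies mobile Tracy--Widom convergence and Airy sheet convergence, and (having all moments) it satisfies the moment criterion \eqref{E:limsup}; hence by Theorem \ref{T:intro-main} / Theorem \ref{T:intro-compact} there is a coupling of copies $X_n$ of $X$ with a directed landscape $\scrL$ for which the rescaled metrics $d_n$ converge to $\scrL$ compactly on $\Rd$ — in particular in the hypograph topology — almost surely and simultaneously over the countably many time strips indexed by $T$. Applying $\iota_n$ to the identity of Step 1 and absorbing $o(1)$ discretization corrections from the floors, this becomes
\[
\iota_n\bigl(h_{n,2nt}+nt\bigr)(y) \;=\; \sup_{x \in \R}\bigl(\iota_n h_{n,0}(x) + d_n(x,0;y,t)\bigr) + o(1).
\]

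\emph{Step 3: passage to the limit.} Since $d_n(\cdot,0;y,t)\to\scrL(\cdot,0;y,t)$ compactly with continuous limit and $\iota_n h_{n,0}\to h_0$ in the hypograph topology, the sum $\iota_n h_{n,0}(\cdot)+d_n(\cdot,0;y,t)$ converges to $h_0(\cdot)+\scrL(\cdot,0;y,t)$ in the hypograph topology. The bound $\iota_n h_{n,0}(x)<a+s|x|$, uniform in $n$, together with the parabolic decay $\scrL(x,0;y,t)\sim -(x-y)^2/t$, makes the supremum coercive: for $y$ in a fixed compact set and $n$ large, the summand outside a compact $x$-set falls far below its value at a fixed reference point, so the sup is effectively over a compact set. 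On that compact set the two halves of hypograph convergence give $\limsup_n \sup_x(\cdot)\le \sup_x(h_0+\scrL)$ and, by choosing an almost-maximizing $x$ and a recovery sequence $x_n\to x$ (supplied by hypograph convergence of the inputs, and ultimately by the geodesic-convergence output of Theorems \ref{T:geod-cvg}, \ref{T:geod-cvg-stronger}), $\liminf_n\sup_x(\cdot)\ge \sup_x(h_0+\scrL)$; this is pointwise convergence in $y$. Uniformity over $y$ in compacts follows from equicontinuity: the limit $y\mapsto\sup_x(h_0(x)+\scrL(x,0;y,t))$ is continuous (it is the KPZ fixed point), and the prelimiting suprema inherit a uniform modulus of continuity from that of $\scrL$ on the localized region. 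Intersecting over $t\in T$ yields a single almost sure event.

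\emph{Main obstacle.} The delicate points are (i) establishing the exact discrete tasep-to-exponential-LPP dictionary in Step 1 with the normalization that produces \eqref{E:tasep-limit}, and (ii) the uniform coercivity/equicontinuity in Step 3: because hypograph convergence is one-sided and permits ``holes,'' one must rule out that the prelimiting supremum is artificially depressed near the maximizing $x$, which is exactly where the recovery sequences of the hypograph framework and the linear growth hypothesis on $h_{n,0}$ are essential.
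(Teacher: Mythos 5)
Theorem~\ref{T:matetski-kpz} is not proven in this paper; it is cited as a black box from Matetski--Quastel--Remenik (reformulated using \cite{nica2020one} and \cite{DOV}). You are therefore not filling a gap the paper leaves open, but attempting to rederive an external input --- and the route you take is almost exactly the one the paper uses to prove the \emph{stronger} Theorem~\ref{T:tasep-3}, which makes the circularity problem serious.

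The concrete issue is in your Step~3, where you claim coercivity of the prelimiting supremum. Compact convergence of $d_n(\cdot,0;y,t)$ to $\scrL(\cdot,0;y,t)$ plus the parabolic decay of $\scrL$ controls the supremum over any \emph{fixed} compact $x$-window, but it says nothing about whether $\iota_n h_{n,0}(x) + d_n(x,0;y,t)$ might spike upward at rescaled locations $x$ that escape to infinity as $n\to\infty$. This is precisely what Tightness condition~\ref{tightness} is designed to rule out, and it is the substantive obstacle; the linear-growth hypothesis on $h_{n,0}$ alone does not resolve it, because the deterministic parabolic drift in $d_n$ is an asymptotic statement and the exponential-moment estimates of Theorem~\ref{T:geom-lpp-emph} are stated only on compact subsets of $\Rd$. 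In the paper's own argument, Tightness condition~\ref{tightness} is verified in Lemma~\ref{L:assumption-check}(ii) by invoking Theorem~\ref{T:matetski-kpz}: the one-dimensional distributional convergence from a single sloped initial condition is exactly what is used to control the cone barriers $g_n^{s',a}$. So as written your proposal either assumes what it sets out to prove, or leaves a genuine hole where uniform-in-$n$ tail control of $d_n$ at unboundedly large transversal coordinates is needed. (For compactly supported initial data, your argument matches Theorem~\ref{T:tasep-2} and is fine; the gap is specifically the escape-to-infinity control needed for the noncompact case.) A secondary imprecision: the recovery sequence in your lower bound comes directly from Lemma~\ref{L:hypo-equiv} on hypograph convergence, not from the geodesic-convergence Theorems~\ref{T:geod-cvg}, \ref{T:geod-cvg-stronger}.
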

	
	While this theorem is about the Markov process of height functions, our strengthening is about the driving noise of the discrete Hamilton-Jacobi equation.
	\begin{theorem}[Scaling limit of coupled tasep]
		\label{T:intro-tasep} In the setting of Theorem \ref{T:matetski-kpz}, there is a coupling of $X_n, X$ such that almost surely, \eqref{E:tasep-limit} holds simultaneously for all $h_{n, 0}, h_0$ satisfying the bullet points.
\end{theorem}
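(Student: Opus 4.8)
The plan is to upgrade Theorem \ref{T:matetski-kpz} from a statement about a fixed initial condition $h_0$ to one that holds simultaneously for all admissible $(h_{n,0}, h_0)$, using the deterministic structure of the tasep-to-last-passage dictionary together with the \emph{compact} (equivalently, graph) convergence of the exponential last passage metric established in Theorem \ref{T:intro-compact} (via Theorem \ref{T:lattice-models-i}). The key observation is that once we have a coupling of the arrays $X_n$ with a single directed landscape $\scrL$ in which the rescaled exponential last passage metric $d_n$ converges to $\scrL$ compactly on $\Rd$ \emph{on a single almost sure event}, the conclusion \eqref{E:tasep-limit} becomes a deterministic consequence on that event, uniformly over all initial data. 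So the work is (i) to express tasep height functions as last passage values in $X_n$ in a way that does not depend on the initial condition, (ii) to invoke compact convergence of $d_n$, and (iii) to pass the variational formula \eqref{E:tasep-limit} through the limit using only the hypograph convergence of $\iota_n h_{n,0} \to h_0$ and the uniform linear bound.

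First I would recall (from the construction preceding the theorem, and from \cite{matetski2016kpz, nica2020one}) the identity that realizes $h_{n,t}$ as an interface in the exponential last passage metric: for the coupling driven by the array $X$, the height function admits the variational representation
\[
h_{n,2nt}(y) + nt \;=\; \sup_{x} \Big( h_{n,0}(x) + \text{(last passage value in } X_n \text{ from a point encoding } (x,0) \text{ to one encoding } (y,t)) \Big),
\]
up to the deterministic lattice rounding discussed in Section \ref{S:integrable}; crucially the last passage value here is a functional of $X_n$ alone and is \emph{independent of} $h_{n,0}$. Rescaling by $\iota_n$ turns the last passage value into $d_n(x,0;y,t)$ plus the affine correction from \eqref{E:d-sigma-intro} with the exponential parameters ($\alpha,\beta,\tau,\chi$) from the table. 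Next, using Theorem \ref{T:intro-compact} I fix once and for all a coupling of $\{X_n\}$ and $\scrL$ and an almost sure event $\Omega_0$ on which $d_n \to \scrL$ compactly on $\Rd$; the moment hypothesis \eqref{E:limsup} for exponential weights holds since the one-point marginals have GUE Tracy--Widom limits with all moments, so \eqref{E:osig-cvg} (with left tail exponent $3$, right tail exponent $1$) gives the required uniform control. On $\Omega_0$, for each fixed $t \in T$, we then have $d_n(\cdot,0;\cdot,t) \to \scrL(\cdot,0;\cdot,t)$ uniformly on compacts.

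It remains to push the $\sup_x$ through the limit. This is exactly where the hypograph convergence $\iota_n h_{n,0} \to h_0$ and the uniform bound $\iota_n h_{n,0}(x) < a + s|x|$ are used: the linear bound together with the a.s.\ parabolic decay $\scrL(x,0;y,t) \sim -x^2$ confines the near-optimal $x$ to a compact set uniformly in $n$ (for $y$ in a compact set), so only the behaviour of $\iota_n h_{n,0}$ and $d_n$ on a fixed compact region matters. On that region, $(\iota_n h_{n,0} - h_0)^+ \to 0$ uniformly gives the $\limsup \le$ direction of \eqref{E:tasep-limit}, while the existence of $x_n \to x$ with $\iota_n h_{n,0}(x_n) \to h_0(x)$ (the defining property of hypograph convergence) combined with $d_n(x_n,0;y_n,t)\to\scrL(x,0;y,t)$ along any $y_n \to y$ gives the $\liminf \ge$ direction; uniformity in $y$ on compacts follows from equicontinuity of $\scrL(\cdot,0;\cdot,t)$ and the uniform modulus supplied by compact convergence. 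Since $\Omega_0$ and the compact-confinement estimate do not reference $h_0$ or the sequence $h_{n,0}$, the convergence holds simultaneously for all admissible data, and taking a countable intersection over $t \in T$ finishes the proof.

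\medskip

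The main obstacle I anticipate is item (i): carefully setting up the initial-condition-independent last passage representation of coupled tasep driven by the single array $X$, including the precise lattice rounding and the matching of boundary data at time $0$, so that the affine corrections in \eqref{E:d-sigma-intro} telescope correctly and the $\sup_x$ in the tasep formula aligns with the $\sup_x$ in \eqref{E:tasep-limit}. This is the "general framework for moving from convergence of directed metrics to convergence of metric interfaces" advertised in the introduction; the analytic limit-interchange in step (iii) is comparatively routine given compact convergence of $d_n$ and the linear growth bound.
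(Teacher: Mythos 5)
Your overall strategy matches the paper's in Section~\ref{S:interfaces} and Section~\ref{S:tasep}: express the coupled tasep height field as interfaces in the (rescaled, rotated) exponential last passage metric $d_n$, establish a coupling in which $d_n \to \scrL$ compactly on $\Rd$, and then push the variational formula through the limit using hypograph convergence of $\iota_n h_{n,0}\to h_0$ and the linear growth bound. Steps (i) and (ii) of your outline are essentially what Corollary~\ref{C:rotated-ELPP}, Lemma~\ref{L:assumption-check}(i), and Theorems~\ref{T:abstract-dm}--\ref{T:set-cvg-1} do.

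However, there is a genuine gap in step (iii), precisely at the place where you write that ``the linear bound together with the a.s.\ parabolic decay $\scrL(x,0;y,t)\sim -x^2$ confines the near-optimal $x$ to a compact set uniformly in $n$.'' Compact convergence $d_n\to\scrL$ only controls $d_n$ on compact sets; it says nothing about $d_n(x,0;y,t)$ for $|x|$ large, uniformly in $n$. The parabolic decay is a property of the \emph{limit}, not the prelimits, and there is no a priori reason that the supremum in the discrete variational formula is attained in a fixed compact set for all large $n$ simultaneously. Without a separate a priori estimate on the far tails, the $\sup_x$ does not pass through the limit. This is exactly what Tightness condition~\ref{tightness} is designed to supply: it asserts that linearly growing initial data $g_n(x)\ge s'|x|-a$ give height interfaces that remain uniformly bounded on compacts for large $n$, which can then be used to dominate the contribution to $\sup_x$ from outside a fixed compact set (Theorem~\ref{T:noncompact-interfaces}). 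The paper establishes this tightness in Lemma~\ref{L:assumption-check}(ii) by invoking the one-initial-condition theorem of \cite*{matetski2016kpz} (Theorem~\ref{T:matetski-kpz}) for specific near-linear initial conditions $g_n^{s',a}$, together with a subsequence-extraction argument and the landscape growth estimate \eqref{E:LuCu}; Theorem~\ref{T:tasep-3} then upgrades from a subsequence to the full sequence. Your proposal would need to add something of this kind before the limit interchange is justified; as written it is not a consequence of compact convergence plus the linear bound alone.
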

	
	In the discrete setting tasep is the evolution of the boundary of a growing disk in exponential last passage percolation. In the scaling limit, disk boundaries become straight lines, and the shapes of disks become second order effects, captured in the optimization problem \eqref{E:tasep-limit}. This phenomenon of {\bf interfaces} is explored in Section \ref{S:interfaces}. In Section \ref{S:tasep}, we use this interface machinery to translate convergence results about exponential last passage percolation into Theorem \ref{T:intro-tasep}. Section \ref{S:tasep} also contains a few other tasep convergence results, and a discussion of discrete-time tasep.
	
	\subsection{The Airy sheet conjecture}
	
	Convergence of symmetric lattice models to the Airy sheet can be used to extend the natural coupling between the Airy sheet and the  Airy line ensemble used in the proof of Theorem \ref{T:intro-sheet}, see Definition \ref{D:airy-sheet}. In particular, we affirm Conjecture 14.1 in \cite*{DOV} and show the following.
	\begin{theorem}\label{T:sheet-map} The Airy sheet is an explicit deterministic function of the Airy line ensemble. 
	\end{theorem}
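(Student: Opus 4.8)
The plan is to leverage the general Airy sheet convergence result (Theorem~\ref{T:intro-sheet}, i.e.\ Theorem~\ref{T:airy-sheet-gen}) together with a symmetric integrable model, and then use uniqueness of the Airy sheet's joint law with the Airy line ensemble to upgrade the coupling to a deterministic function. First I would recall that Theorem~\ref{T:intro-sheet} produces, for any sequence $F_n$ of cadlag environments whose melons (in both the forward and reflected directions, from every base point $z$) converge in the Skorokhod topology to the parabolic Airy line ensemble $\scrA$, a coupling in which $F_n[(x+a_n,n)\to(y,1)]+c_n \to \scrS(x,y)$ compactly almost surely. The key observation is that the limiting $\scrA$ appearing on the melon side is \emph{not} an independent copy: by the companion paper \cite*{DNV} (and the RSK isometry, Theorem~\ref{T:intro-RSK-isom}), the prelimit melon $WF_n$ converges to $\scrA$ jointly with the convergence of the last passage values to $\scrS$, so in fact the coupling is of the form $(WF_n, F_n\text{-LPP}) \to (\scrA,\scrS)$ with $\scrS$ a \emph{measurable} functional of the limit, provided the last passage functional $f \mapsto f[(x,n)\to(y,1)]$ is continuous (on a set of full measure) in the relevant topology.

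Concretely, the steps I would carry out are: (1) Fix an integrable symmetric model---geometric or exponential i.i.d.\ lattice last passage percolation (Example~\ref{Ex:geom-exp})---which is symmetric under transposing the two coordinate axes, so that convergence of the melon holds in a way compatible with both transversal directions; this is exactly the setting of Theorem~\ref{T:intro-compact}/Theorem~\ref{T:intro-sheet} and the hypotheses are verified via \cite*{DNV}. (2) Apply Theorem~\ref{T:airy-sheet-gen} to get the coupling $(WF_n, \scrS_n) \to (\scrA, \scrS)$ where $\scrS_n$ denotes the rescaled last passage values, noting that $WF_n$ is precisely the (rescaled) nonintersecting random walk ensemble whose limit is $\scrA$. (3) Identify the limiting map: on the melon side the last passage value $WF_n[(x,n)\to(y,1)]$ is a continuous functional of $WF_n$ at scale (by the RSK isometry it equals $F_n[(x,n)\to(y,1)]$, and the parabolic Airy last passage functional $P \mapsto P[(x\sqrt n + \cdot)\to \cdots]$ is continuous in the uniform-on-compacts topology on locally Brownian ensembles), so the limit $\scrS(x,y)$ is a deterministic measurable function $\Phi(\scrA)$ of $\scrA$ alone. (4) Finally, invoke that the joint law of $(\scrA,\scrS)$ is uniquely determined---this is the content of the characterization Definition~\ref{D:airy-sheet-mble} together with Definition~\ref{D:airy-sheet}---so that \emph{every} Airy sheet, built on any Airy line ensemble, satisfies $\scrS = \Phi(\scrA)$ almost surely, giving Theorem~\ref{T:sheet-map}.

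The main obstacle I anticipate is Step~(3): showing that the parabolic-Airy last passage functional is genuinely \emph{continuous} (hence the limit is deterministic rather than merely jointly distributed) at the level of the Airy line ensemble, uniformly enough to pass through the limit. The parabolic Airy line ensemble has countably many lines with parabolic decay, and a $k$-line disjoint-path last passage value involves optimizing over an unbounded collection of curves; one must control the contribution of deep lines (using the parabolic curvature $\scrA_i(x)+x^2$ and the Brownian Gibbs property from \cite*{CH}) to know that the last passage value only depends on finitely many lines on compact regions, and that Skorokhod convergence of the prelimit melons---which is only a statement about finitely many lines at a time---suffices. This requires a tightness/uniform-integrability estimate on the number of lines a near-optimal path can visit, analogous to the estimates underlying the tail bounds \eqref{E:osig-cvg}; the delicate point is making this estimate uniform across the coupling so that the almost-sure compact convergence of Theorem~\ref{T:airy-sheet-gen} genuinely pins down $\scrS$ as $\Phi(\scrA)$. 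Once that measurability/continuity is in hand, Step~(4) is a soft argument about uniqueness in law.
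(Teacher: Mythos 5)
Your high-level instinct---use a symmetric integrable model, feed it through the Airy-sheet convergence theorem and the RSK isometry, and then invoke uniqueness of the joint law---is in the right spirit, but the mechanism you propose in Step (3) is not the one the paper uses and contains genuine gaps that would break the argument.

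First, you assert that the Airy sheet value $\scrS(x,y)$ is directly recovered as a "continuous last passage functional" of $\scrA$. The paper explicitly flags that the renormalized Busemann limit $\lim_k \scrA[(-\sqrt{k/(2x)},k) \to (y,1)] - c_k$ is \emph{not} known to exist (this is Conjecture 14.2 of \cite*{DOV}); what is available from Definition~\ref{D:airy-sheet}(ii) and the coupling of Theorem~\ref{T:airy-sheet-gen} is only the \emph{difference} $\scrS(x,y)-\scrS(x,z)$ as a function of $\scrA$, and only for $x>0$. Your proposal never addresses how to pass from these differences to the individual values. The paper's key missing ingredient here is the ergodicity of the stationary Airy process $\scrA_1(t)+t^2$: the formula \eqref{E:Sxy-lim} recovers $\scrS(x,y)$ as a Ces\`aro average $\E\scrA_1(0) + \lim_n \frac{1}{2n+1}\sum_{z=-n}^n [\scrS'(x,y,z)-(z-x)^2]$, and this ergodic averaging is what turns a family of differences into a determined function.

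Second, even granting the Busemann formula for $x>0$, you have no mechanism for $x<0$. The paper handles this by coupling the geometric environment $X$ with its transpose $\tilde X(a,b)=X(-b,-a)$, passing to a joint limit $(\scrL,\scrA,\tilde\scrL,\tilde\scrA)$ via Corollary~\ref{C:Xiidmaster}, and then proving that $\tilde\scrA(x)=\scrA(-x)$ as functions (not merely in law). That identification of the two limiting line ensembles is a nontrivial step requiring a prelimit disjoint-path triangle inequality (equation~\eqref{E:XXX} and the vanishing error $Y_{n,k}$), and it is precisely what lets the formula \eqref{E:S'-form} for $\scrS(-x,y,z)$ be written in terms of $\scrA^*$---i.e.\ in terms of $\scrA$ alone---rather than in terms of an a priori independent $\tilde\scrA$.

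Third, your Step (4) is logically off: uniqueness of the joint law of $(\scrA,\scrS)$ does not by itself imply that $\scrS$ is a.s.\ a deterministic function of $\scrA$; for that one needs an explicit candidate $\Phi$ and an argument that $\scrS=\Phi(\scrA)$ holds in some coupling satisfying the defining relations. The paper supplies the candidate (Definition~\ref{D:intro-sheet}) and verifies it using the two ingredients above. The deep-line tightness issue you anticipate is not the binding constraint; the real difficulties are the ergodic recovery of values from differences and the reflection identity for the negative half of the $x$-axis.
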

	More precisely, the following construction is equivalent to  Definition \ref{D:airy-sheet}.
	
	\begin{definition}\label{D:intro-sheet}
		\label{D:airy-sheet-mble}
		Let $\scrA$ be a parabolic Airy line ensemble, and let $\scrA^*(x) = \scrA(-x).$ For $(x, y, z) \in \Q^+ \X \Q^2$ let $x_k=(-\sqrt{k/(2x)}, k) $ and let
		\begin{equation*}
		\begin{split}
		\scrS'(x, y, z) &= \scrA[x_k \to (y, 1)]- \scrA[x_k \to (z, 1)],\\
		\scrS'(-x, y, z) &= \scrA^*[x_k \to (-y, 1)]- \scrA^*[x_k \to (-z, 1)],
		\end{split}
		\end{equation*}
		for all  large $k$, as the right hand sides  stabilize when $k \to \infty$. For $(x, y) \in \Q \setminus \{0\} \X \Q$, define
		\begin{equation*}
		\scrS(x, y) = \E\scrA_1(0)+\lim_{n \to \infty} \frac{1}{2n} \sum_{z=-n}^{n}\scrS'(x, y, z) - (z - x)^2.
		\end{equation*}
		The {\bf Airy sheet} is the unique continuous extension of $\scrS$ to $\mathbb R^2$. 
	\end{definition}
	Theorem \ref{T:sheet-map} is proven in Section \ref{SS:symmetries}. 
	
	\subsection{Symmetries of the directed landscape}
	
	Some symmetries of the directed landscape were not shown in  \cite*{DOV} because the corresponding symmetries were not present in the prelimit. We rectify this in Section \ref{SS:symmetries}.
	
	\begin{prop}\label{P:intro-sym}
		As a function of $x, s, y,$ and $t$, the directed landscape $\scrL$ satisfies
		\begin{equation*}
		\scrL(x, s; y, t) \eqd \scrL(-x, s; -y, t) \eqd \scrL(y, -t; x, -s).
		\end{equation*}
	\end{prop}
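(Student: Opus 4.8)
The plan is to obtain both identities by taking scaling limits of two exact symmetries of an integrable lattice model. I will use exponential last passage percolation (geometric, Poisson, or Brownian would work equally well), taken in the diagonal direction $\rho=1$, so that the main direction is $v=(1,-1)$; fix the transversal multiple $u=(\tau,0)$ with $\tau$ from the table, and abbreviate $p_{x,s}=x\sigma^2u+s\sigma^3v$. By Theorem~\ref{T:intro-main} there is, for each $\sigma$, a coupling of a copy $d_\sigma$ of $d$ with a directed landscape $\scrL$ such that, jointly over $(x,s,y,t)$ on compact subsets of $\Rd$,
\begin{equation*}
d_\sigma(p_{x,s},p_{y,t})=\alpha\sigma^3(t-s)+\beta\tau\sigma^2(y-x)+\chi\sigma(\scrL+o_\sigma)(x,s;y,t),\qquad \sup_K|o_\sigma|\to0\text{ a.s.}
\end{equation*}
The i.i.d.\ environment $W$ on $\Z^2$ is invariant in law under the coordinate transpose $T(a,b)=(b,a)$ and under the point reflection $R(a,b)=(-a,-b)$. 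Each of these maps carries a forward last-passage path from $p$ to $q$, after reversing its orientation, to a forward last-passage path from $Tq$ to $Tp$ (respectively from $-q$ to $-p$) using the same multiset of vertex weights; maximizing gives the two process-level identities in law
\begin{equation*}
\big(d_\sigma(p,q)\big)_{p,q}\;\eqd\;\big(d_\sigma(Tq,Tp)\big)_{p,q}\;\eqd\;\big(d_\sigma(-q,-p)\big)_{p,q}.
\end{equation*}

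For the time-reversal identity I feed $p=p_{x,s}$, $q=p_{y,t}$ into the first of these. Since $Tu=(0,\tau)$ and $Tv=(-1,1)$, we have $Tp_{x,s}=x\sigma^2(0,\tau)+(-s)\sigma^3(1,-1)$, so $d_\sigma(Tp_{y,t},Tp_{x,s})$ is again a rescaled last-passage value along the main direction $v$, but now with transversal direction $e=(0,1)$ and with the time coordinates negated and the endpoints swapped. Plugging $v=(1,-1)$ and $u=(0,\tau)$ into the change-of-variables formula \eqref{E:d-sigma-intro2} gives $\tilde\eta=1$, hence $\tilde\omega=1$, $\tilde\chi=\chi$, $\tilde\alpha=\alpha$, and $\tilde\gamma=\beta\tau-\alpha\tau$. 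The crucial point is that $\alpha=2\beta$ at $\rho=1$: writing $\|(x,-y)\|_d=g(x,y)$, Euler's relation for the degree-one homogeneous directed norm gives $g_1(1,1)+g_2(1,1)=g(1,1)$, and symmetry of $g$ (which holds for exponential, geometric, Poisson, and Brownian) forces $g_1(1,1)=g_2(1,1)=\beta$ while $g(1,1)=\alpha$. Hence $\tilde\gamma=-\beta\tau$ and
\begin{equation*}
d_\sigma(Tp_{y,t},Tp_{x,s})=\alpha\sigma^3(t-s)+\beta\tau\sigma^2(y-x)+\chi\sigma(\scrL+\hat o_\sigma)(y,-t;x,-s),
\end{equation*}
with exactly the same deterministic part as in the first display. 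Equating the two laws, cancelling the deterministic terms, dividing by $\chi\sigma$, and letting $\sigma\to\infty$ (so that $o_\sigma,\hat o_\sigma\to0$ uniformly on compacts and $\scrL$ is continuous on $\Rd$) gives $\scrL(x,s;y,t)\eqd\scrL(y,-t;x,-s)$.

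The reflection identity is handled the same way and is easier, since $-p_{x,s}=(-x)\sigma^2u+(-s)\sigma^3v$ keeps both directions $u,v$ fixed; one reads off $d_\sigma(-p_{y,t},-p_{x,s})=\alpha\sigma^3(t-s)+\beta\tau\sigma^2(y-x)+\chi\sigma(\scrL+o_\sigma')(-y,-t;-x,-s)$, again with identical deterministic part, so $\scrL(x,s;y,t)\eqd\scrL(-y,-t;-x,-s)$. Applying the time-reversal identity to the point $(-y,-t;-x,-s)$ turns the right-hand side into $\scrL(-x,s;-y,t)$, which yields the first stated identity; the second was proved directly above. An alternative organization, closer to the placement of this proposition after Theorem~\ref{T:sheet-map}, is to establish the two Airy-sheet identities $\scrS(x,y)\eqd\scrS(-x,-y)$ (available already in the Brownian prelimit via spatial reflection) and $\scrS(x,y)\eqd\scrS(y,x)$ (the genuinely new one, obtained from the transpose-invariant prelimit exactly as above with times frozen at $0$ and $1$, using Theorem~\ref{T:intro-sheet}), and then lift them to $\scrL$ using that $\scrL$ is an inverse limit of compositions of independent Airy sheets: the flip $x\mapsto-x$ commutes with metric composition, and the swap $(x,y)\mapsto(y,x)$ composed with time reversal does too, since it reverses the order in which the sheets are composed.

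The main obstacle is the bookkeeping in the transpose step: one must check that the transposed, rescaled last-passage value really does fall into the scope of \eqref{E:d-sigma-intro2} --- this forces the diagonal choice $\rho=1$, because $Tv$ is a positive multiple of $v$ only there --- and, more importantly, that the deterministic affine corrections on the two sides coincide exactly. This coincidence is precisely the identity $\alpha=2\beta$ at $\rho=1$, the infinitesimal manifestation of the coordinate symmetry of the limit shape, and is the reason the argument must be run with a model whose shape function is symmetric under exchanging its arguments. Everything else --- phrasing the comparison as an equality in law for fixed $\sigma$, verifying the two discrete path bijections, and pushing $o_\sigma\to0$ through the limit --- is routine.
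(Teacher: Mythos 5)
Your proof is correct, but it takes a genuinely different route from the paper's. The paper invokes Corollary~\ref{C:rotated-Poisson}, in which planar Poisson last passage percolation is first rotated by $\pi/4$ (polynuclear growth) and then rescaled. After that rotation, the two reflections of the Poisson point process across the coordinate axes become exactly the spatial flip and the time reversal of $\scrL$; both identities drop out in one line with no change-of-variables bookkeeping and no arctic-curve computation. You instead work with unrotated exponential LPP at $\rho=1$ and use the transpose and point-reflection symmetries of the i.i.d.\ lattice environment. Because $Tv=-v$ and $Tu$ points in a new transversal direction $(0,\tau)$, you have to re-express the transposed last-passage value via the change-of-variables remark \eqref{E:d-sigma-intro2}, and the two deterministic affine corrections coincide only because $\alpha=2\beta$ at $\rho=1$; you correctly identify this as the Euler-relation manifestation of the shape function's coordinate symmetry. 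That computation is exactly the step the paper's rotation is designed to sidestep. Both arguments are valid; the paper's buys brevity, while yours shows explicitly that any integrable prelimit with a symmetric limit shape works, at the cost of verifying one shape-function identity. Two minor points worth being explicit about if you were to write this up formally: (a) the comparison of laws should be phrased at the level of the centred, rescaled processes on each side of the fixed-$\sigma$ distributional identity $d_\sigma(p,q)\eqd d_\sigma(Tq,Tp)$, since the a.s.\ limits on the two sides need not involve the same realization of $\scrL$ --- only equality in law of the limits is claimed; and (b) $Tp_{x,s}$ and $-p_{x,s}$ need not be lattice points, so one should insert the usual floor adjustments, which vanish under the scaling.
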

	
	\subsection{More related work}
	\label{SS:related}
	While the full construction of the directed landscape is quite recent, several aspects of it and of related KPZ models have been studied previously. We give a partial literature review here, focusing on the works most relevant to the current paper that are not discussed earlier in the introduction.
	For a gentle introduction suitable for a newcomer to the area, see  \cite*{romik2015surprising}. Review articles and books focusing on more recent developments include  \cite*{corwin2016kardar};  \cite*{ferrari2010random};  \cite*{quastel2011introduction};  \cite*{weiss2017reflected};  \cite*{zygouras2018some}.
	
	Marginals of the directed landscape have been studied using exact formulas, including the works of \cite*{baik1999distribution}, \cite*{prahofer2002scale}, and \cite*{matetski2016kpz} discussed above. In addition to these, \cite*{johansson2019multi} and \cite*{liu2019multi} independently found formulas for the joint distribution of $\{\scrL(p, q_i) : i \in \{1, \dots, k\}\}$, building on work of  \cite*{johansson2017two, johansson2018two} and  \cite*{baik2017multi}.
	
	These papers provide a strong integrable framework for understanding $\scrL$ and other KPZ models. There has also been a large amount of research investigating probabilistic and geometric properties of these models. For example, prior to \cite*{DOV}, tightness and regularity estimates for the directed landscape were obtained by \cite*{hammond2018modulus, hammond2017modulus, pimentel2017local}. \citet*{corwin2015renormalization} also gave geometric heuristics predicting many properties of the directed landscape.
	
	On of the most important probabilistic contributions to this area is the work of \cite*{CH}, which shows that the parabolic Airy line ensemble satisfies a certain Gibbs resampling property. We used this property in \cite*{DV} to further understand the parabolic Airy line ensemble, and in \cite*{DOV} used this understanding to build the Airy sheet. The works of \cite*{hammond2016brownian, hammond2017modulus, hammond2017patchwork, calvert2019brownian} further refine the understanding of the Airy line ensemble using the Gibbs property.
	
	Since the paper \cite*{DOV} was posted, there has also been significant activity both on understanding the directed landscape itself and on using the directed landscape to better understand the KPZ fixed point. \cite*{bates2019hausdorff} investigated the structure of exceptional points with multiple $\scrL$-geodesics, building on work of \cite*{hammond2017exponents} and \cite*{basu2021fractal}, and \cite*{dauvergne2020three} showed that $\scrL$-geodesics have a deterministic three-halves variation. 
	
	The construction of the Airy sheet in terms of the Airy line ensemble has been used by \cite*{sarkar2020brownian} to prove Brownian absolute continuity of the KPZ fixed point, by \cite*{ganguly2021local} to show that the difference profile in the Airy sheet is related to Brownian local time, and the prelimiting version has been used  by \cite*{corwin2021exceptional} to study exceptional times in the KPZ fixed point. An extension of this construction has also been developed by \cite*{dauvergne2021disjoint} to study disjoint optimizers in $\scrL$ and build a theory of multi-point last passage values in the limit. 
	
	While this paper focuses on understanding limits of random metrics, positive temperature versions of these objects known as random polymers should also converge to the directed landscape. As with random metrics, a small collection of random planar polymer models have an integrable structure related to the geometric RSK correspondence, e.g. see \cite{o2001brownian, seppalainen2012scaling}. Limits of these models yield the continuum random polymer, which is related to the KPZ equation via a Feynman-Kac formula, see \cite{alberts2014intermediate}. While the integrable structure of random polymer models is not as tractable as that of last passage models, \cite{virag2020heat} and \cite{quastel2020convergence} both recently showed convergence in the random polymer setting to the full KPZ fixed point. Combining these results with the Gibbs ensemble characterization theorem in \cite{dimitrov2021characterization} and tightness results from \cite{corwin2016kpz,dimitrov2021tightness} gives Airy line ensemble convergence for these models. Since Airy line ensemble convergence is essentially the only integrable ingredient we use in this paper, directed landscape convergence for these models should be within reach.
	
	\subsection{A short guide to the paper}
	
	There is admittedly a lot to unpack in the paper, so here is a short guide to orient the reader. Much of this is also discussed earlier in the introduction.
	
	Sections \ref{S:lpp-cadlag}-\ref{S:sheet} cover convergence to the Airy sheet. Of these, Sections \ref{S:lpp-cadlag} and \ref{S:ALE-S} are brief and preliminary, and Section \ref{S:sheet} contains the novel ideas and proofs. This part of the paper is essentially independent from the remaining sections. 
	
	The bulk of the paper's new ideas are in Sections \ref{S:directed-metrics}-\ref{S:graph-cvg}. Section \ref{S:directed-metrics} builds a theory of general directed metrics and their geodesics, Section \ref{S:metric-examples} elaborates on the examples described earlier and gives precise ways to embed last passage metrics into $\R^2$, and Section \ref{S:topologies} introduces the topologies we use in the paper. The next three sections give conditions for directed metrics to converge to the directed landscape in the sense of multi-time convergence (Section \ref{S:landscape-cvgfdd}), in the hypograph topology (Section \ref{S:hypo}), and in the graph topology (Section \ref{S:graph-cvg}).
	
	Sections \ref{S:cvg-lattice}-\ref{S:consequences} apply the general theory built up that point. Section \ref{S:cvg-lattice} "puts everything together" for i.i.d.\ lattice last passage models. The proofs in this section are routine. Two results of interest in this section are Theorem \ref{T:lattice-models-i} and Corollary \ref{C:Xiidmaster}, which make precise the idea that for an i.i.d.\ lattice last passage model, convergence to the Airy line ensemble implies convergence to the directed landscape. 
	Section \ref{S:integrable} gives a long-form list of precise theorems for all integrable models, elaborating on the theorems in Section \ref{SS:classical-intro}. Treat this section as a dictionary. Section \ref{S:consequences} proves the consequences of our main theorems: Theorems \ref{T:intro-LIS} and \ref{T:sheet-map}  and Proposition \ref{P:intro-sym}.
	
	The final two sections consider random growth models that can be associated to last passage models. As discussed in Section \ref{SS:tasep-intro}, these two processes are connected by thinking of growth models as interfaces in directed metrics. Section \ref{S:interfaces} builds up a general theory of interface convergence for planar directed metrics, and Section \ref{S:tasep} applies this theory to tasep.
	
\section{Preliminaries: percolation across cadlag functions}
\label{S:lpp-cadlag}

In this section we introduce last passage percolation across cadlag functions. Cadlag last passage percolation is a common generalization of lattice last passage percolation with nonnegative weights and line models of last passage percolation. The Airy line ensemble appears as the common limit of all these models, and also fits within the cadlag framework.
Basic combinatorial results about last passage percolation and the RSK correspondence have analogues in the cadlag setting. We use these results as the starting point for this paper. 

A thorough treatment of cadlag last passage percolation is given in \cite*{DNVcadlag}. In that paper, basic combinatorial and probabilistic properties of cadlag RSK (e.g. isometry, bijectivity, measure preservation) are shown from first principles using the framework of Pitman transforms.

\subsection{Last passage percolation across cadlag functions}
\label{SS:cadlag-lpp}
A function from $f$ from an interval $I \sset \R$ to $\R$ is {\bf cadlag} if $f(y)\to f(x)$ as $y\downarrow x$ in $I$, and the limit as $y\uparrow x$ also exists. This left-sided limit is denoted $f(x^-)$. When $f(x^-) \ne f(x)$, we say that $f(x) - f(x^-)$ is a \textbf{jump} of $f$ and that $x$ is a \textbf{jump location}. Let $\mathcal D^\Z$ be the space of all functions  
$$
f:\R \X \Z \to \R, \qquad (x,i)\mapsto f_i(x),
$$
where each $f_i$ is a cadlag function whose jumps are all positive.
We will often think of $f$ as a sequence of functions $(f_i : i \in \Z)$ and we will refer to $f$ as an environment.

For $(p, q) = (x, n; y, m) \in (\R \X \Z)^2$, we write $p \nearrow q$ if $x \le y$ and $n \ge m$. This directionality is with respect to the coordinates used in Figure \ref{fig:line-defs}, where the line index is increasing as we go down the page. For $p \nearrow q$, a path $\pi$ from $p$ to $q$ is a union of closed intervals 
\begin{equation}
\label{E:intervals}
[t_i,t_{i-1}]\times \{i\}\subset   \mathbb R \times I, \qquad i=m,m+1,\ldots,n,
\end{equation}
where
\begin{equation}
\label{E:jumptimes}
x=t_n\le t_{n-1}\le \cdots \le t_m \le  t_{m-1}=y.
\end{equation}
The points $t_i$ are called the {\bf jump times} of $\pi$. 
For two paths $\pi$ and $\rho$, we say that $\pi$ is {\bf to the left of} $\rho$ if for every $(x,\ell)\in \pi$ there exists $(y,m)\in \rho$ such that $\ell \le m$ and $x\le y$. Equivalently, we say that $\rho$ {\bf is to the right of} $\pi$. 
We say that $\pi$ and $\rho$  are {\bf essentially disjoint} if the set $\pi \cap \rho$ is finite. See Figure \ref{fig:line-defs} for an illustration of these definitions.

\begin{figure}
	\centering
	\includegraphics[scale=1]{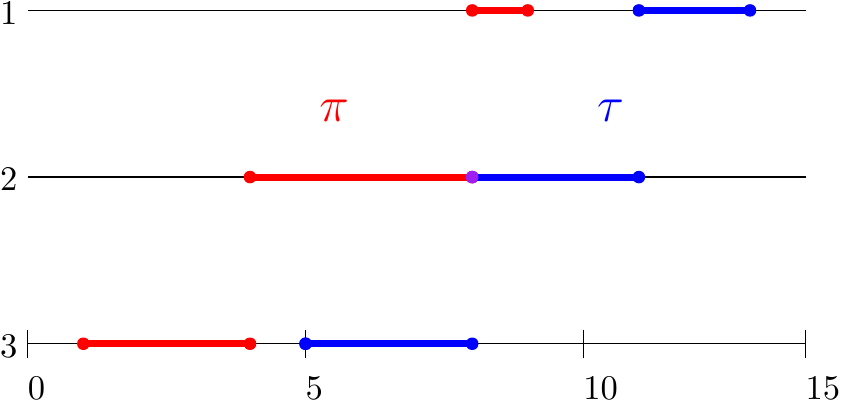}
	\caption{Cadlag paths: here $\pi$ is a path from $(1, 3)$ to $(9, 1)$ with jump times at $1, 4, 8,$ and $9$. The path $\pi$ is to the left of $\tau$ and is essentially disjoint from $\tau$, in spite of the single point of overlap at $(8, 2)$.}
		\label{fig:line-defs}
\end{figure}

For an environment $f \in \scrD^\Z$ and a path $\pi$, define the \textbf{length} of $\pi$ with respect to $f$ by
$$
|\pi|_f =  \sum_{i=m}^{n} f_{i}(t_{i-1}) - f_i(t_{i}^-).
$$
Informally, this definition is chosen so that the all jumps of $f$ that lie along the path $\pi$ are collected.  
An equivalent definition of path length is given through the finitely additive signed measure $df$ on $\mathbb R\times \Z$. Define
$$
df\left([x,y]\times\{i\}\right)=f_{i}(y) - f_i(x^-).
$$ 
Finite additivity defines the measure $df$ of any finite union of finite intervals.  We can then write
$$
|\pi|_f =  df(\pi).
$$
For $u=(p;q)=(x,n;y,m)\in (\R \X \Z)^2$  define the \textbf{last passage value} of $f$ from $p$ to $q$ by
\begin{equation}
\label{E:lpp-def}
f[u]=f[p\to q]=f[(x, n) \to (y, m)] = \sup_{\pi} |\pi|_f,
\end{equation}
where the supremum is taken over all paths $\pi$ from $p$ to $q$. If no path from $p$ to $q$ exists, we set $f[u] = -\infty$. We call a path $\pi$ from $p$ to $q$ a \textbf{geodesic} if $|\pi|_f = f[p \to q]$. The following two lemmas about geodesics in last passage percolation are standard. 
\begin{lemma}
[special case of Lemma 2.6, \cite*{DNVcadlag}]
	\label{L:geod-existlpp} 
Let $f \in \scrD^\Z$, and consider a point $(p, q)\in (\R \X \Z)^2$ with $p \nearrow q$. Then there is a geodesic from $p$ to $q$. Moreover, there are unique geodesics $\pi^-, \pi^+$ from $p$ to $q$ such that $\pi^-$ is to the left of all geodesics from $p$ to $q$ and $\pi^+$ is to the right of all geodesics from $p$ to $q$. We refer to $\pi^-$ and $\pi^+$ as leftmost and rightmost geodesics.
\end{lemma}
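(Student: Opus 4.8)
The plan is to prove existence of a geodesic by a compactness argument, and then to extract the leftmost and rightmost geodesics by taking pointwise suprema/infima of jump times over all geodesics.

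First I would establish existence. Fix $(p,q) = (x,n;y,m)$ with $p \nearrow q$. A path from $p$ to $q$ is determined by its jump times $x = t_n \le t_{n-1} \le \cdots \le t_m \le t_{m-1} = y$, so the set of paths is naturally a compact subset of $[x,y]^{n-m}$ (a closed simplex-like set). The length functional $\pi \mapsto |\pi|_f = \sum_{i=m}^n f_i(t_{i-1}) - f_i(t_i^-)$ is, for fixed $f \in \scrD^\Z$, upper semicontinuous in the jump times: this is exactly where the cadlag hypothesis with only positive jumps is used — as $t_i$ varies, $f_i(t_{i-1})$ is right-continuous with positive jumps (hence upper semicontinuous from the left and continuous from the right in the relevant variable) and $-f_i(t_i^-)$ behaves compatibly, so the sum is u.s.c. and bounded above on the compact parameter set. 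An u.s.c. function on a compact set attains its maximum, giving a geodesic. The only subtlety is degenerate cases where several coordinates must coincide, but these just correspond to faces of the simplex and cause no trouble.

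Next I would construct $\pi^-$ and $\pi^+$. Let $\curlG$ be the set of all geodesics from $p$ to $q$; it is nonempty. For each line index $i$ with $m \le i \le n-1$, define $t_i^- = \inf\{ t_i(\pi) : \pi \in \curlG\}$ and $t_i^+ = \sup\{t_i(\pi):\pi\in\curlG\}$, where $t_i(\pi)$ denotes the $i$-th jump time of $\pi$. Set $t_n^\pm = x$, $t_{m-1}^\pm = y$. The key claims are: (i) the sequences $(t_i^-)$ and $(t_i^+)$ are each monotone in $i$ (so they define genuine paths $\pi^-,\pi^+$), and (ii) $\pi^-$ and $\pi^+$ are themselves geodesics. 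For (i), monotonicity of $t_i^-$ follows because $\curlG$ is closed under a "left-most-so-far" operation: given two geodesics, the path obtained by taking the pointwise minimum of jump times is again a path, and I would show it has length $\ge$ the max of the two (hence is a geodesic) using a crossing/exchange argument — near a point where one geodesic overtakes the other, swapping the two pieces does not decrease total length since the pieces are essentially disjoint and $f$ is additive along them. This "lattice property" of $\curlG$ under coordinatewise min and max is the crux. For (ii), once $\curlG$ is closed under finite mins, $t_i^-$ is a decreasing limit of jump times of geodesics, and upper semicontinuity of length (plus the fact that the infimum is attained along a sequence, then a subsequential limit lands in the compact parameter set) forces the limiting path $\pi^-$ to be a geodesic; since it lies to the left of every geodesic by construction it is the leftmost one, and uniqueness is immediate. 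The argument for $\pi^+$ is symmetric.

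The main obstacle I expect is the exchange/crossing lemma underlying the lattice property — verifying carefully that, for two geodesics $\pi,\rho$, the coordinatewise-minimum path $\pi \wedge \rho$ and coordinatewise-maximum path $\pi\vee\rho$ are paths with $|\pi\wedge\rho|_f + |\pi\vee\rho|_f \ge |\pi|_f + |\rho|_f$. This requires decomposing $df$ over the regions where $\pi$ is to the left of $\rho$ versus to the right, handling the essentially-disjoint overlap set (which is finite, so contributes measure-zero issues only at jump points that must be accounted for), and checking the inequality is in fact an equality when both sides start from geodesics. Since the paper cites this as a special case of Lemma 2.6 of the companion paper \cite{DNVcadlag}, I would either reproduce this exchange argument in the cadlag generality or simply invoke the cited result; in a self-contained treatment the exchange lemma is where all the real work sits, while the compactness and the passage to $\inf$/$\sup$ are routine once it is in hand.
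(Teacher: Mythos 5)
The paper gives no proof of its own here; it simply cites Lemma~2.6 of the companion paper \cite{DNVcadlag}, so there is no internal proof to compare against. Your direct proof is essentially correct, but I want to flag two places where your instincts are either unnecessary or slightly misplaced.

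First, the existence argument is sound. Paths are parametrized by the compact simplex $\{(t_{n-1},\dots,t_m): x\le t_{n-1}\le\cdots\le t_m\le y\}$, and the length functional is a sum of terms each depending on a single coordinate: the $t_j$-dependent part is $f_{j+1}(t_j)-f_j(t_j^-)$. Since $f_{j+1}$ is cadlag with positive jumps it is upper semicontinuous, and $f_j(\cdot^-)$ is lower semicontinuous for the same reason, so each summand is u.s.c.\ in $t_j$ and hence the sum is jointly u.s.c.\ on the simplex; the supremum is attained.

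Second, and this is where you overestimate the difficulty: the "exchange/crossing lemma" you expect to be the crux is in fact a trivial identity, not an inequality, and it requires none of the region decomposition, essential-disjointness bookkeeping, or jump-point accounting you anticipate. Writing $|\pi|_f = f_m(y) - f_n(x^-) + \sum_{j=m}^{n-1}\bigl[f_{j+1}(s_j)-f_j(s_j^-)\bigr]$ in terms of its jump times $s_j$, and similarly for $\rho$ with jump times $u_j$, one has for each $j$ separately that $g(\min(s_j,u_j))+g(\max(s_j,u_j))=g(s_j)+g(u_j)$ for any function $g$ (trivially, since $\{\min,\max\}=\{s_j,u_j\}$). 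Summing over $j$ gives $|\pi\wedge\rho|_f + |\pi\vee\rho|_f = |\pi|_f + |\rho|_f$ exactly, for arbitrary paths, and the monotonicity of $\min(s_j,u_j)$ and $\max(s_j,u_j)$ in $j$ shows $\pi\wedge\rho$ and $\pi\vee\rho$ are genuine paths. If $\pi,\rho$ are both geodesics the identity forces $\pi\wedge\rho$ and $\pi\vee\rho$ to be geodesics too. You conjectured the inequality might be an equality for geodesics; it is an identity always.

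One small misattribution: you invoke the lattice property to get monotonicity of $(t_i^-)_i$, but that follows immediately from the definition (for any geodesic $\sigma$, $\inf_\pi t_i(\pi)\le t_i(\sigma)\le t_{i-1}(\sigma)$, so $\inf_\pi t_i(\pi)\le\inf_\sigma t_{i-1}(\sigma)$). The lattice property is what you really need for the other claim, namely that the infimal path is a geodesic: it lets you produce, via finite $\wedge$'s, a single sequence of geodesics whose jump times decrease simultaneously to the $(t_i^-)$, after which upper semicontinuity of the length finishes the argument. With those adjustments your proposal is a complete and valid proof of the cited lemma.
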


The following simple fact is shown in the continuous setting in \cite*{DOV}, Proposition 3.7. The same proof works in the cadlag setting.
\begin{lemma}
	\label{L:mono-path}
	Let $f \in \scrD^\Z$, and for $(p, q) \in (\R \X \Z)^2$ with $p \nearrow q$, let $\pi^+[p; q]$ denote the rightmost geodesic  from $p$ to $q$. Then for $x \le x', y \le y',$ and $m \le n$, the path $\pi^+[x, n; y, m]$ is to the left of the path $\pi^+[x', n; y',m]$.
	Moreover, for any $(p, q), (p', q')$, the intersection $\pi^+[p; q] \cap \pi^+[p'; q']$ is also a path. Note that this intersection may be empty.
\end{lemma}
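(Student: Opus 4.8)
The plan is to transcribe the proof of Proposition~3.7 of \cite*{DOV} to the cadlag setting: the geometry is unchanged, and the only genuinely new feature is that a point at which two paths are spliced may now be a jump location of $f$, so a spliced path picks up a correction term in its length. I would first record four routine facts, each proved as its continuous analogue in \cite*{DOV}. (i) \emph{Restriction and concatenation.} If a path $\sigma$ from $u$ to $v$ passes through $w=(w_1,\ell)$, with $\pi_1,\pi_2$ its parts before and after $w$, then cutting at $w$ recounts the jump $j(w):=f_\ell(w_1)-f_\ell(w_1^-)$, so $|\sigma|_f=|\pi_1|_f+|\pi_2|_f-j(w)$; hence $f[u\to v]\ge f[u\to w]+f[w\to v]-j(w)$, with equality whenever some geodesic from $u$ to $v$ passes through $w$, in which case both $\pi_1,\pi_2$ are geodesics and, conversely, a geodesic from $u$ to $w$ followed by a geodesic from $w$ to $v$ is a geodesic from $u$ to $v$. (ii) \emph{Order by jump times.} Writing $t^\pi_i$ for the jump times of a path $\pi$, for two paths with the same endpoints $\pi$ is to the left of $\rho$ exactly when $t^\pi_i\le t^\rho_i$ for all $i$; thus this relation is a partial order on the $pq$-geodesics, so two $pq$-geodesics each to the left of the other coincide. (iii) \emph{Sub-geodesics of rightmost geodesics are rightmost.} If $a,b\in\pi^+[p;q]$, then the portion of $\pi^+[p;q]$ between $a$ and $b$ is $\pi^+[a;b]$, since otherwise replacing it by $\pi^+[a;b]$ would, by (i) and (ii), give a $pq$-geodesic strictly to the right of $\pi^+[p;q]$ somewhere, contradicting the characterisation of $\pi^+[p;q]$ in Lemma~\ref{L:geod-existlpp}. (iv) \emph{Intermediate value property.} If two paths occupy a common range of lines, one lying weakly to the left of the other on the topmost line of the range and weakly to the right on the bottommost line, then they share a point; this follows from \eqref{E:jumptimes} because consecutive intervals of a path abut.

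For the first assertion I would set $\pi=\pi^+[x,n;y,m]$ and $\pi'=\pi^+[x',n;y',m]$ and suppose, toward a contradiction, that $\pi$ is not to the left of $\pi'$; then some $(u_1,\ell)\in\pi$ has horizontal coordinate strictly larger than that of every point of $\pi'$ on lines of index $\ge\ell$. Since $x\le x'$ and $y\le y'$, the endpoints of $\pi$ are weakly to the left of those of $\pi'$, so applying (iv) to $\pi$ and $\pi'$ first on the line range between $n$ and $\ell$ and then on the range between $\ell$ and $m$ yields common points $a,b$ of $\pi$ and $\pi'$ with $(u_1,\ell)$ lying between $a$ and $b$ along $\pi$ (if $(u_1,\ell)$ is itself a common point we are immediately done). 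By (iii), $\pi$ and $\pi'$ both restrict to $\pi^+[a;b]$ between $a$ and $b$, hence agree there, so $(u_1,\ell)\in\pi'$ — contradicting the choice of $(u_1,\ell)$.

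For the second assertion, let $a$ and $b$ be the first and last common points of $\pi^+[p;q]$ and $\pi^+[p';q']$ in the shared line order (if there are none, the intersection is empty). By (iii), the portion $\gamma$ of $\pi^+[p;q]$ between $a$ and $b$ equals $\pi^+[a;b]$, and equally the portion of $\pi^+[p';q']$ between $a$ and $b$, so $\gamma\subset\pi^+[p;q]\cap\pi^+[p';q']$; conversely every common point lies between $a$ and $b$, hence on $\gamma$, so the intersection is exactly $\gamma$ — a sub-path of $\pi^+[p;q]$, and therefore a path. The only real chore in all this is bookkeeping: one checks that the jump corrections $j(a),j(b)$ at splice points enter identically on the two sides of each length comparison, so they cancel, and one tracks the index conventions so that ``strictly to the right on the relevant lines'' genuinely contradicts the rightmost property invoked in (iii). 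Given (i)--(iv) these checks are mechanical, and the whole argument is the cadlag version of \cite*{DOV}, Proposition~3.7.
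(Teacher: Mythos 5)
The paper gives no proof of its own here: it simply cites \cite*{DOV}, Proposition 3.7, and asserts the same proof works in the cadlag setting. Your proposal is a correct and faithful transcription of that argument, with the jump correction $j(w)$ in the splicing identity being exactly the point that needs checking in the cadlag case, so it matches the paper's intended approach.
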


As discussed in the introduction, last passage percolation is best thought of as a directed metric on the underlying set (in this case, $\R \X \Z$). One manifestation of this is that last passage percolation exhibits the following \textbf{metric composition law}. The proof is immediate from the definition.

\begin{lemma}[Metric composition law]\label{L:metric}
	Let $(p, q) = (x, n; y, m)$ be such that $p \nearrow q$. Then for any $f \in \scrD^\Z$ and any $k \in \{m + 1, \dots, n\}$, we have
	\begin{align*}
	f[(x, \ell) \to (y, m)] = \max_{z \in [x, y]} f[(x, \ell) \to (z, k)]  + f[(z, k - 1) \to (y, m)]
	\end{align*}
	and for any $k \in \{m, \dots, n\}$, we have
	\begin{align*}
	f[(x, \ell) \to (y, m)] = \max_{z \in [x, y]} f[(x, \ell) \to (z, k)]  + f[(z, k) \to (y, m)] - (f_k(z) - f_k(z^-)).
	\end{align*}
\end{lemma}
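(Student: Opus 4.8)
The plan is to prove both identities by the standard cut-and-paste argument: every path from $p$ to $q$ splits canonically at line $k$ (or at the passage between lines $k$ and $k-1$) into two sub-paths, and conversely any two compatible sub-paths concatenate into a path from $p$ to $q$. Existence of geodesics, Lemma \ref{L:geod-existlpp}, will be used to show the maxima are attained, and the only point requiring care will be the jump-accounting in the second identity.

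For the first identity, take a path $\pi$ from $(x,n)$ to $(y,m)$ with jump times $x=t_n\le\cdots\le t_m=y$ as in \eqref{E:intervals}--\eqref{E:jumptimes}, and set $z:=t_{k-1}\in[x,y]$, the position at which $\pi$ crosses from line $k$ to line $k-1$. Restricting $\pi$ to the lines $\{n,\dots,k\}$ gives a path $\pi_1$ from $(x,n)$ to $(z,k)$, and restricting to $\{k-1,\dots,m\}$ gives a path $\pi_2$ from $(z,k-1)$ to $(y,m)$; since these sub-paths occupy disjoint sets of lines, $|\pi|_f=|\pi_1|_f+|\pi_2|_f$ exactly. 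Applying this to a geodesic $\pi$ gives $f[(x,n)\to(y,m)]\le \max_{z\in[x,y]} f[(x,n)\to(z,k)]+f[(z,k-1)\to(y,m)]$ with the maximum attained at $z=t_{k-1}$. Conversely, for any $z\in[x,y]$ and any paths $\pi_1$ from $(x,n)$ to $(z,k)$ and $\pi_2$ from $(z,k-1)$ to $(y,m)$, gluing $\pi_1$ and $\pi_2$ at $z$ (with $t_{k-1}=z$) produces a legitimate path from $(x,n)$ to $(y,m)$ of length $|\pi_1|_f+|\pi_2|_f$, so taking suprema yields $\ge$.

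For the second identity the argument is identical except for the one bookkeeping subtlety that is the only substantive point: if a path $\pi$ is split through the point $(z,k)$ with $z\in[t_k,t_{k-1}]$, then both resulting sub-paths --- one on lines $\{n,\dots,k\}$, one on lines $\{k,\dots,m\}$ --- traverse a portion of line $k$ abutting $z$, so the jump $f_k(z)-f_k(z^-)$ is collected twice. Hence $|\pi_1|_f+|\pi_2|_f=|\pi|_f+(f_k(z)-f_k(z^-))$, which is precisely the correction term in the statement. Taking $\pi$ a geodesic and $z=t_k$ gives the $\le$ direction with the maximum attained, while gluing arbitrary compatible sub-paths at $(z,k)$ (the glued path has length $|\pi_1|_f+|\pi_2|_f-(f_k(z)-f_k(z^-))$) gives the $\ge$ direction.

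I do not expect a genuine obstacle: the only things to watch are the jump double-counting just described, the degenerate sub-paths whose line-$k$ interval is a single point (handled by the path conventions \eqref{E:intervals}--\eqref{E:jumptimes}), and the bookkeeping convention that a last passage value is $-\infty$ when no path exists, so that both identities continue to hold in $\R\cup\{-\infty\}$ when one side is infinite.
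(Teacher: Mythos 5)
Your argument is correct, and it is precisely the standard decomposition the paper has in mind when it says the proof is ``immediate from the definition'' and omits the details. The first identity splits a path cleanly at the transition from line $k$ to line $k-1$ (disjoint sets of lines, so $|\pi|_f = |\pi_1|_f + |\pi_2|_f$), and the second splits through the point $(z,k)$, where both sub-paths collect the jump $f_k(z)-f_k(z^-)$, which is exactly the correction term. Your jump-accounting computation is right, the supremum is indeed attained at the split point of a geodesic so it is a maximum, and the $-\infty$ convention causes no trouble. One typographical note that you silently and correctly repaired: the lemma's statement writes $f[(x,\ell) \to (y,m)]$ but, given the stated $(p,q)=(x,n;y,m)$, the variable $\ell$ should read $n$.
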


The metric composition law yields a useful \textbf{triangle inequality} for last passage percolation. With all notation as above we have
\begin{equation}
\label{E:triangle-ineq}
f[(x, \ell) \to (y, m)] \ge f[(x, \ell) \to (z, k)] - f_k(z) + f_k(z^-) + f[(z, k) \to (y, m)].
\end{equation}

\subsection{Multi-point last passage values and the RSK correspondence}
\label{S:multipoint}
For two vectors $\mathbf{p} = (p_1, \dots, p_k), \mathbf{q} = (q_1, \dots, q_k) \in (\R \X \Z)^k$ with $p_i \nearrow q_i$ for $i = 1, \dots, k$, a \textbf{disjoint $k$-tuple (of paths)} from $\mathbf{p}$ to $\mathbf{q}$ is a vector $\pi = (\pi_1,\ldots ,\pi_k)$, where 
\begin{itemize}[nosep]
	\item $\pi_i$ is a path from $p_i$ to $q_i$,
	\item $\pi_i$ is to the left of $\pi_j$ for $i < j$,
	\item $\pi_i$ and $\pi_j$ are essentially disjoint for all $i \ne j$.
\end{itemize}
For $f \in \scrD^\Z$ and a disjoint $k$-tuple $\pi$, define its length with respect to $f$ by
$$
|\pi|_f=df(\pi_1\cup \dots\cup \pi_k).
$$
Importantly, with this definition, no jump of any $f_i$ can be counted more than once even if that jump lies on multiple paths $\pi_i$.
For $\mathbf u = (\mathbf p, \mathbf q) \in (\R \X \Z)^{2k}$, we can then define the multi-point last passage value
\begin{equation}\label{E:multipoint}
f[\mathbf u] = f[{\bf p \to \bf q}] = \sup_{\pi} |\pi|_f,
\end{equation}
where the supremum is over disjoint $k$-tuples from $\bf p$ to $\bf q$. Again, if no such $k$-tuples exist, we set $f[\mathbf u] = -\infty$.
We call a $k$-tuple $\pi$ satisfying $f[{\bf p \to \bf q}] = |\pi|_f$ a {\bf disjoint optimizer}. For any $\bp, \bq$ and $f \in \scrD^\Z$, there is a disjoint optimizer from $\bp$ to $\bq$ as long as the supremum \eqref{E:multipoint} is nonempty.


Multi-point last passage values allow us to define the RSK correspondence for cadlag paths. First, let $\scrD^n_0$ be the space of cadlag functions $f:[0, \infty) \X \{1, \dots, n\} \to \R$ with positive jumps and $f_i(0) \ge 0$. By convention, we again set $f_i(0^-) =0$. If $f_i(0) > 0$, we interpret this as $f_i$ having a jump at $0$. Define the \textbf{RSK/melon map} $W$ by the relationship
\begin{equation}
\label{E:melon-def}
\sum_{i=1}^k Wf_i(y) = f[(0, n)^k \to (y, 1)^k].
\end{equation}
Here and in the sequel we write $p^k$ for a vector $(p, \dots, p)$ consisting of $k$ copies of the point $p$. The melon map has the following important properties.

\begin{prop}[Proposition 3.12(i, iii) in \cite*{DNVcadlag}]
	\label{P:W-facts}
	Consider the melon map $W$ on $\scrD^n_0$.
	\begin{enumerate}[nosep, label=(\roman*)]
		\item (Isometry) For any $f \in \scrD^n_0$ and any pair $(\bp, \bq)$ with $p_i = (x_i, n)$ and $q_i = (y_i, 1)$, we have 
		$$
		f[\bp \to \bq] = Wf[\bp \to \bq].
		$$
		\item (Ordering) For any $f \in \scrD^n_0$, the lines in $Wf_i$ are ordered. More precisely, for any $i \ge 2$ and $y \ge 0$, we have $Wf_{i-1}(y^-) \ge Wf_i(y)$.
	\end{enumerate}
\end{prop}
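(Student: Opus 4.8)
Both statements are facts about the Robinson--Schensted--Knuth combinatorics of cadlag last passage percolation, and the natural route is the one carried out from first principles in \cite*{DNVcadlag} (and, in the continuous Brownian setting, in \cite*{o2002representation} and \cite*{DOV}): realize $W$ as a composition of elementary two-line transforms and read both claims off from their properties. Since the parts of $W$ that cause trouble are the jumps, a convenient first move is to reduce to \emph{continuous} environments. Given $f \in \scrD^n_0$, replace each jump of each $f_i$ -- including a jump at $0$, where by convention $f_i(0^-) = 0$ -- by a continuous segment of a large common slope over a short interval, obtaining $f^\ep$ with $f^\ep \to f$ in the Skorokhod sense as $\ep \to 0$. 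Last passage values, their disjoint optimizers, and the melon map all behave continuously under this degeneration, so it suffices to prove (i) and (ii) for continuous $f$; the left limit $y^-$ in (ii) is then the genuine limit $\lim_{w \uparrow y}$.

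For continuous $f$ and $1 \le i \le n-1$, let $\mathcal D_i$ be the two-line Pitman transform acting on lines $i, i+1$ and fixing the others: it replaces the pair of lines $(f_{i+1}, f_i)$ (upper, lower) by $(g', g'')$, where $g'$ is the two-line last passage profile $y \mapsto (f_{i+1}, f_i)[(0,2) \to (y,1)]$ and $g'' = f_{i+1} + f_i - g'$ (the conservation law). Three properties of $\mathcal D_i$ drive everything. (a) \emph{Sorting}: the output pair is ordered, $g' \ge g''$ pointwise, and still lies in the right function class. (b) \emph{Two-line invariance}: $\mathcal D_i$ preserves the two-line multi-point values $(f_{i+1}, f_i)[(0,2)^j \to (y,1)^j]$ for $j = 1, 2$ and all $y$. (c) \emph{Window invariance}: inside the full environment, $\mathcal D_i$ leaves unchanged every multi-point value $f[\bp \to \bq]$ for which no point of $\bp$ lies on line $i$ and no point of $\bq$ lies on line $i+1$ -- because any disjoint tuple of paths between such $\bp, \bq$ traverses the window $\{i, i+1\}$ in full, its contribution there being exactly a two-line multi-point value to which (b) applies. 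Finally, $W$ is the bubble-sort network of the $\binom{n}{2}$ transforms $\mathcal D_i$ applied in the order that moves line $n$ to the top, then line $n-1$ to second place, and so on; matching this network against the defining relation $\sum_{i \le k} Wf_i(y) = f[(0,n)^k \to (y,1)^k]$ confirms that it indeed produces the melon.

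Both claims now follow. For (i): every transform in the network acts on a pair of lines $\{i, i+1\}$ with $1 \le i \le n-1$, so no transform has line $n$ as its lower line nor line $1$ as its upper line; hence for $\bp$ on line $n$ and $\bq$ on line $1$ the hypothesis of (c) holds at every step, and iterating gives $Wf[\bp \to \bq] = f[\bp \to \bq]$. For (ii): each elementary comparator sorts by (a), the network is a valid sorting network, and one checks -- this is the classical step -- that a composition of Pitman comparators along a sorting network outputs a globally ordered tuple, $Wf_1 \ge \cdots \ge Wf_n$ pointwise; reading this back through the continuous approximation upgrades the pointwise inequality to $Wf_{i-1}(y^-) \ge Wf_i(y)$ for the original cadlag $f$.

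The main obstacle is property (c): one must show carefully that the portion of a disjoint $k$-tuple lying in a two-line window is governed by the two-line multi-point value even though the tuple's endpoints on the two window-lines are themselves part of the optimization, and that in the eventual cadlag bookkeeping no jump is collected twice. A secondary difficulty is the sorting step in (ii) -- that a network of Pitman comparators outputs an ordered object -- together with checking that the continuous degeneration collapses no maximizers. Both are routine but lengthy, which is why we invoke \cite*{DNVcadlag}.
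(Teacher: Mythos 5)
The paper gives no proof of Proposition \ref{P:W-facts}: both parts are cited directly from Proposition 3.12 of \cite*{DNVcadlag}, which establishes them from first principles via exactly the Pitman-transform/sorting-network framework you outline. Your sketch faithfully tracks that route; the one tactical difference is your opening reduction to continuous environments by smoothing jumps, whereas \cite*{DNVcadlag} works directly with cadlag paths, but since you defer the degeneration bookkeeping (and the verification that the network produces the melon) to that paper, this is a presentational shortcut rather than a genuinely different argument.
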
 

Proposition \ref{P:W-facts}(i) in the continuous path setting was crucially used in the construction of the Airy sheet in \cite*{DOV}. The proof in the cadlag setting is similar. However, the result goes back much further. Close relatives of Proposition \ref{P:W-facts}(i) were first found by \cite*{noumi2002tropical} in the fully discrete setting and by \cite*{biane2005littelmann} in the continuous setting. Proposition \ref{P:W-facts}(ii) is more classical, and can be shown with a path-crossing argument.

The ordering in Proposition \ref{P:W-facts}(ii) also gives the sequence $Wf_1, \dots,Wf_n$ the appearance of stripes on a watermelon; this is the reason for the name melon map. We call $Wf$ the \textbf{melon} of $f$.

In many natural settings for studying last passage percolation, the map $W$ is almost surely a bijection, see  \cite*{DNVcadlag}. Though we will not use bijectivity explicitly in this paper, this can be thought of as the reason that certain measures on $\scrD^n_0$ have tractable pushforwards under $W$, and hence the reason why studying RSK is interesting probabilistically. 

Moving forward, we will need the following consequence of Proposition \ref{P:W-facts}. Consider a disjoint $k$-tuple $\pi = (\pi_1, \dots, \pi_k)$ from $p^k=(x,n)^k$ to $q^k=(y,m)^k$ where $n-m=k$. Let $u = \pi_1 \cup \dots \cup \pi_k$, and consider the set $[x,y]\times\{m,\dots, n\}\setminus u$. This is given by
$$
[x,s_{m+1})\times \{m\}\,\,\cup\,\,  (s_{m+1},s_{m+2})\times\{m+1\}\,\,\cup \dots\cup \,\,(s_{n},y]\times\{n\}
$$
where $s_{m+1}\le s_{m+2}\le \cdots \le s_n$. If $s_{m+1} = x$ or $s_n = y$, then the first or last term above is omitted. We call such a set a {\bf complementary path} from $(x,m)$ to $(y,n)$. Informally, a complementary path is just a decreasing path that consists of open intervals except the first and last, which are half-closed. 
Define the \textbf{first passage value}
\begin{equation}
\label{E:ka-inf}
f[p\to_{\mathfrak{f}} q] = \inf_\ka df(\ka),
\end{equation}
where the infimum is over all complementary paths from $p$ to $q$. It is easy to check from the construction that
$$
f[p^k\to q^k]+f[(x, m)\to_{\mathfrak{f}} (y, n)]=df([x,y]\times[m,n]).
$$
Next, let $R_{z, n}$ be the rotation in $\R^2$ that sends $(z, 1)$ to $(0, n)$ and $(0, n)$ to $(z, 1)$. Note that $R_{z, n}$ is an involution on  $\R \X \{1, \dots, n\}$. For $f \in \scrD^\Z$,
 the pushforward of the measure $df$ under $R_{z, n}$ is again a finitely additive measure associated to a function $R_{z, n} f \in \scrD^n_0$. Define
$$
W_z^* f = W R_{z, n}f.
$$
\begin{lemma}
	\label{L:k-line-prior}
	Let $f \in \scrD^\Z$ be such that $f_i(0^-) = 0$ for all $i \in \{1,\dots, n\}$.  For every $0 \le x < z$ and $k \in [1, n]$, we have that
	\begin{align*}
	Wf[(x, n) \to (z, k)] = Wf_k(z) - W^*_zf[(z - x, 1)^+ \to_{\mathfrak{f}} (z, k)].
	\end{align*}
\end{lemma}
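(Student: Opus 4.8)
The plan is to unravel both sides of the claimed identity using the three tools already assembled: the RSK isometry (Proposition \ref{P:W-facts}(i)), the duality between $k$-tuple last passage values and first passage values via complementary paths, and the rotation map $R_{z,n}$. The right-hand side contains two pieces, $Wf_k(z)$ and a first-passage value for $W^*_z f$; the first is by Definition \eqref{E:melon-def} the telescoped $k$-tuple last passage value $f[(0,n)^k \to (z,1)^k] - f[(0,n)^{k-1} \to (z,1)^{k-1}]$, and the second, being a first-passage value for $W R_{z,n} f$, should dualize back into a difference of $k$-tuple last passage values for $R_{z,n}f$ on the box $[z-x,z]\times[1,k]$ via the identity $f[p^k\to q^k] + f[p\to_{\mathfrak f} q] = df([x,y]\times[m,n])$ displayed just before the statement of Lemma \ref{L:k-line-prior}. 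So the first concrete step is to rewrite the right-hand side entirely in terms of $(k-1)$-tuple and $k$-tuple ordinary last passage values of $f$ itself, using $W^*_z f = W R_{z,n} f$, the isometry for $W$ applied to $R_{z,n}f$ (which has $(R_{z,n}f)_i(0^-)=0$ so the hypotheses are met), and then the geometric fact that $R_{z,n}$ carries complementary paths from $(z-x,1)$ to $(z,k)$ in the rotated picture to disjoint $k$-tuples (or complementary paths) in the original coordinates.

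Next I would interpret the left-hand side $Wf[(x,n)\to(z,k)]$ directly. Since $Wf$ is an ordered environment (Proposition \ref{P:W-facts}(ii)), a single geodesic from $(x,n)$ to $(z,k)$ in $Wf$ collects jumps from the lines $Wf_k, Wf_{k+1}, \dots, Wf_n$; because the $Wf_i$ are ordered, the natural guess is that the optimal path spends as little time as possible on the lower-indexed lines, i.e. it drops from line $n$ down to line $k$ essentially at $x$ and then runs along $Wf_k$ from $x$ to $z$, except that it must ``pay'' to descend through the stripes $Wf_{k+1},\dots,Wf_n$ near $x$. This descent cost should be exactly a first passage value in the rotated melon near the left edge — which is where $W^*_z f = WR_{z,n}f$ and the term $W^*_zf[(z-x,1)^+\to_{\mathfrak f}(z,k)]$ come from. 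The cleanest route is probably to avoid reasoning pathwise about $Wf$ and instead express $Wf[(x,n)\to(z,k)]$ as a telescoped sum $\sum_{i=k}^n Wf_i$-type quantity using the definition of the melon and Greene-type identities: concretely, combine $Wf[(x,n)\to(z,k)]$ with the isometry $Wf[\bp\to\bq]=f[\bp\to\bq]$ for suitable tuples $\bp,\bq$ to reduce it to multi-point last passage values of $f$ on boxes with corners among $(0,n),(x,n),(z,1),(z,k)$.

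Once both sides are written purely as linear combinations of multi-point last passage values $f[\cdot]$ of the original environment $f$, the identity should reduce to a combinatorial/geometric cancellation: both sides equal $f[(0,n)^k\to(z,1)^k] - f[(0,n)^{k-1}\to(z,1)^{k-1}]$ minus a common box mass $df([0,z]\times[1,n])$ or $df([z-x,z]\times[1,k])$ term, and the remaining discrepancy is accounted for by the rotation $R_{z,n}$ mapping last passage values of $f$ to last passage values of $R_{z,n}f$ and by the translation-type invariance of last passage values under shifting the horizontal coordinate (since only jumps matter, shifting $[0,z]$ to $[x,z]$ drops the contribution of $[0,x]$, which is precisely the first-passage/complementary-path correction). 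I would organize this as: (a) dualize the $\mathfrak f$-term; (b) apply the isometry to $R_{z,n}f$; (c) apply $R_{z,n}$-invariance to return to $f$; (d) recognize the resulting expression as a telescoped difference equal to the left-hand side by another application of the isometry and the definition of $W$.

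The main obstacle I anticipate is the bookkeeping around the boundary conventions: the half-open endpoints in the definition of a complementary path ($[x,s_{m+1})$ and $(s_n,y]$), the $(z-x,1)^+$ notation signalling that the path starts from the point just to the right (so that the jump at $z-x$ in the rotated picture is or is not collected), and the hypothesis $f_i(0^-)=0$ that is needed precisely so that the rotated environment behaves well at its left edge. Getting the $+$ superscript and the left-limit conventions to match on both sides — i.e. verifying that the single jump at the corner $(z,k)$ (or at $(x,n)$) is counted on exactly one side of the equation — is where a careful argument is required; everything else is a rearrangement of identities already available. A secondary check is that the rotation $R_{z,n}$, which is only an isometry of the \emph{set} $\R\times\{1,\dots,n\}$ and reverses the line order, correctly turns ``rightmost complementary path near the left edge'' into the object that $W^*_z$ is built to measure; I would verify this on a small example ($k=n$, or $n=2$) before committing to the general argument.
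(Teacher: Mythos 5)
The paper does not contain a proof of this lemma: the text immediately after the statement reads ``Lemma \ref{L:k-line-prior} is stated and proven in the continuous setting in \cite*{DOV}, Lemma 5.3. There is no essential difference to the proof in the cadlag setting, so we omit it.'' So there is no in-paper argument to compare your sketch against. That said, I can assess the sketch on its own terms.

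Your inventory of ingredients is right — the dualization identity $f[p^k\to q^k]+f[(x,m)\to_{\mathfrak f}(y,n)]=df([x,y]\times[m,n])$, the isometry $Wg[\bp\to\bq]=g[\bp\to\bq]$ applied to $g=R_{z,n}f$, the involutive rotation $R_{z,n}$, and the boundary hypothesis $f_i(0^-)=0$. But there is a real gap in step (b) of your outline. After dualizing, the first-passage term becomes a $(k-1)$-tuple last passage value $W^*_zf[((z-x)^+,k)^{k-1}\to(z,1)^{k-1}]$ \emph{inside the melon $W^*_zf$}, and this quantity lives entirely on lines $1,\dots,k$. The isometry in Proposition \ref{P:W-facts}(i), however, is stated only for tuples $\bp,\bq$ with $p_i=(x_i,n)$ and $q_i=(y_i,1)$: the paths must span all $n$ lines, from line $n$ down to line $1$. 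It therefore does not apply to this dualized quantity, and one cannot conclude $W^*_zf[\cdots]=R_{z,n}f[\cdots]$ directly. (Extending the $(k-1)$ paths from line $k$ up to line $n$ is not free either: those extensions pick up the jumps of $W^*_zf_{k+1},\dots,W^*_zf_n$ at the starting point $(z-x)^+$, which need not vanish, and the starting abscissa is $(z-x)^+>0$ rather than $0$, so the $0$-boundary condition of the melon doesn't make the extension costless.) The same objection applies in milder form to your treatment of the left-hand side: $Wf[(x,n)\to(z,k)]$ is a single-path quantity from line $n$ to line $k$ and again does not match the hypotheses of the isometry, so ``combine with the isometry for suitable tuples'' needs to be made precise.

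In short, your plan relies on applying the RSK isometry to intermediate-line quantities where it does not literally apply, and this is exactly where the actual argument needs a different mechanism — in \cite*{DOV} the computation proceeds by directly expanding $Wf[(x,n)\to(z,k)]-Wf_k(z)$ into a sup over the interlacing jump times and matching it term by term against the expansion of the first-passage value $W^*_zf[(z-x,1)^+\to_{\mathfrak f}(z,k)]$ into an inf over complementary-path jump times, using the ordering of the melon lines (Proposition \ref{P:W-facts}(ii)) and the way $R_{z,n}$ acts on $df$. Your meta-level remarks about the boundary conventions (the $^+$, the half-open endpoints, the role of $f_i(0^-)=0$) are well targeted, but the core reduction you propose — through the isometry — cannot be carried out as stated and needs to be replaced.
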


Lemma \ref{L:k-line-prior} is stated and proven in the continuous setting in \cite*{DOV}, Lemma 5.3. There is no essential difference to the proof in the cadlag setting, so we omit it.

\subsection{Lattice last passage percolation}
\label{S:lattice-lpp}

Many of the models we study in this paper are lattice last passage percolation models. These can be recast as line last passage models across cadlag paths.

For two points $p = (x, n), q = (y, m) \in \Z^2$ with $x \le y$ and $n \ge m$, we say that $\pi = (\pi_1 = p, \dots, \pi_k = q)$ is a \textbf{path from $p$ to $q$} if $\pi_i - \pi_{i-1} \in \{(1, 0), (-1, 0)\}$ for all $i$. For an array $G = \{G_u : u \in \Z^2\}$ of nonnegative numbers, we can define the weight of any path $\pi$ from $p$ to $q$ by
$$
|\pi|_G= \sum_{v\in \pi} G_v
$$
and the last passage value 
$$
G[p\to q]:= \max_{\pi} |\pi|_G,
$$
where the maximum is taken over all possible paths $\pi$ from $p$ to $q$. Again, we refer to a path $\pi \sset \Z^2$ as a \textbf{geodesic} from $p$ to $q$. If $p$ and $q$ are not ordered as above (i.e. no paths from $p$ to $q$ exist), then we define $G[p \to q] = -\infty$. More generally, for vectors $\bp, \bq$ define the {\bf multi-point last passage value}
\begin{equation}
\label{D:LPP-k}
G[\bp \to \bq] :=
\max_{\pi_1,\ldots, \pi_k}|\pi_1|_G+\cdots +|\pi_k|_G
\end{equation}
where the maximum now is taken over all possible $k$-tuples of \emph{disjoint} paths, where each $\pi_i$ is a path from $p_i$ to $q_i$. If no disjoint $k$-tuples exist, set $G[\bp \to \bq] = -\infty$. As in the cadlag setting, we will be particularly concerned with multi-point last passage values with bunched endpoints. With this in mind, we introduce the shorthand $G[p^k \to q^k]$ for the $k$-point last passage value from 
$$
(p - (0, k-1), \dots, p - (0, 1), p) \to (q, q + (0, 1), \dots, q + (0,k-1)).
$$
The value $G[p^k \to q^k]$ is best thought of as a last passage value with $k$ disjoint paths from $p$ to $q$, hence the similar notation to the corresponding object in the cadlag setting. We are forced to stagger the start and end points of the paths to allow for disjoint paths.
We can associate the array $G$ to a cadlag environment $f^G \in \scrD^\Z$ by setting 
\begin{equation}
\label{E:fGkk}
f^G_k(0^-) = 0, \qquad \mathand \qquad f^G_k(t) - f^G_k(s) = \sum_{r \in (s, t]} G_{r, k}.
\end{equation}
Multi-point last passage values are the same in $G$ and $f^G$. 

\begin{prop}[Proposition 8.1, \cite*{DNVcadlag}]
	\label{P:melon-cor}
		Let $G:\Z^2 \to \R$ be a nonnegative array. We have
	$$
	f^G[(x, n)^k \to (y, 1)^k] = G[(\ceil{x}, n)^k \to (\floor{y}, 1)^{k}]
	$$
as long as there exists a disjoint $k$-tuple in $\Z^2$ from $(\ceil{x}, n)^k$ to $(\floor{y}, 1)^{k}$.
\end{prop}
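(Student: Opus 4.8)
The plan is to prove the identity by reading it as a discretization statement. On each line $i$, the function $f^G_i$ is a step function that is constant on every interval $(c, c+1)$, $c \in \Z$, so only integer columns carry mass: for any cadlag path $\pi$ one has $|\pi|_{f^G} = \sum_{(r, i) \in \pi,\ r \in \Z} G_{r, i}$, and for an essentially disjoint $k$-tuple $\pi = (\pi_1, \dots, \pi_k)$, finite additivity of $df^G$ gives $|\pi|_{f^G} = \sum_{(r, i) \in \pi_1 \cup \dots \cup \pi_k,\ r \in \Z} G_{r, i}$, with each integer site counted once. On the other side, since a disjoint lattice $k$-tuple has no repeated vertices, its $G$-weight is also $\sum_{(r,i) \in \sigma_1 \cup \dots \cup \sigma_k} G_{r,i}$. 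So both sides of the claimed equality are maximal-coverage problems, and (using $G \ge 0$ throughout) it suffices to show that each admissible family of site-sets is contained in the other up to sites of nonnegative weight.

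For $f^G[(x, n)^k \to (y, 1)^k] \ge G[(\ceil x, n)^k \to (\floor y, 1)^k]$: given a disjoint staggered lattice $k$-tuple $\sigma$, with $\sigma_j$ running from $(\ceil x, n - k + j)$ to $(\floor y, j)$, read $\sigma_j$ row by row as a cadlag path -- on each occupied row its interval is the closed column-interval of $\sigma_j$, and these glue up because consecutive rows of $\sigma_j$ meet at an integer column -- and then extend it to a path $\tilde \pi_j$ from $(x, n)$ to $(y, 1)$ by adjoining the degenerate segments $\{x\} \X \{i\}$ for $i > n - k + j$ and $\{y\} \X \{i\}$ for $i < j$. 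One checks that $\tilde \pi_j$ is to the left of $\tilde\pi_{j'}$ for $j < j'$ and that the $\tilde\pi_j$ are essentially disjoint -- the $\sigma_j$ were vertex-disjoint and staggered, and the adjoined segments only stack up as nested single points at the extreme columns $x$ and $y$. Since those segments contribute nonnegative weight and the rest of $\tilde\pi_j$ collects all sites of $\sigma_j$, we get $|\tilde\pi|_{f^G} \ge \sum_j |\sigma_j|_G$, and taking the maximum over $\sigma$ proves the inequality.

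For the reverse inequality, start from an essentially disjoint $k$-tuple $\pi$ from $(x, n)^k$ to $(y, 1)^k$. The key first observation is that essential disjointness forces a staggered shape: all $\pi_j$ emanate from $(x, n)$, so comparing the shared initial column-intervals of consecutive paths on each row and using that $\pi_j$ lies to the left of $\pi_{j+1}$ yields, by induction on rows from the top, $t^{(j)}_i = x$ for all $i \ge n - k + j$; symmetrically $t^{(j)}_i = y$ for all $i \le j - 1$. Hence $\pi_j$ is degenerate at column $x$ on rows above $n - k + j$, degenerate at column $y$ on rows below $j$, and ``active'' only on rows $j, \dots, n - k + j$, entering at column $x$ on its top active row and exiting at column $y$ on its bottom active row. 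One then converts $\pi$ into a disjoint lattice $k$-tuple $\sigma$ from $(\ceil x, n)^k$ to $(\floor y, 1)^k$ whose union of vertices contains every integer site of $\pi_1 \cup \dots \cup \pi_k$: along the active rows $\sigma_j$ tracks the integer sites of $\pi_j$, with single sites inserted to bridge the unit-column gaps created by non-integer jump times and single columns appended near the forced endpoints $(\ceil x, n - k + j)$ and $(\floor y, j)$; the forced staggering of $\pi$ together with the bound $t^{(j)}_{i-1} \le t^{(j')}_i$ for $j < j'$ (itself a consequence of essential disjointness) is what provides exactly enough room to keep the $\sigma_j$ vertex-disjoint while still hitting the staggered endpoints. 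Since $G \ge 0$ and $\sigma_1 \cup \dots \cup \sigma_k \supseteq (\pi_1 \cup \dots \cup \pi_k) \cap \Z^2$, we get $|\pi|_{f^G} \le \sum_j |\sigma_j|_G \le G[(\ceil x, n)^k \to (\floor y, 1)^k]$. (The hypothesis that a disjoint lattice $k$-tuple exists makes this maximum finite and the construction non-vacuous.) Combined with the previous paragraph, equality follows.

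I expect the genuine difficulty to be in the reverse inequality, specifically in reconciling ``essentially disjoint'' for cadlag paths -- which tolerates finitely many coincidences -- with honest vertex-disjointness of lattice paths, while landing precisely on the staggered endpoints $(\ceil x, n - k + j)$, $(\floor y, j)$. The bookkeeping around non-integer jump times (handled by single bridging sites of nonnegative weight) and the degenerate corner segments is exactly where the ceilings, floors, and the staggering convention in the statement enter; away from these issues the argument is elementary manipulation of step functions. In the single-path case $k = 1$ everything collapses to the observation that a cadlag path from $(x, n)$ to $(y, 1)$ and a lattice path from $(\ceil x, n)$ to $(\floor y, 1)$ collect the same total weight once column-endpoints are rounded to integers.
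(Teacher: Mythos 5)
This proposition is cited by the paper from its companion paper (Proposition 8.1 there) rather than proved here, so there is no in-paper argument to compare against; I assess your sketch on its own. The framing of both sides as site-coverage optimization over $\Z^2$ (using that $df^G$ is the discrete measure $\sum G_{r,i}\,\delta_{(r,i)}$, so that $|\pi|_{f^G}$ counts each integer site of $\pi_1 \cup \dots \cup \pi_k$ exactly once), the forced-staggering lemma ($t^{(j)}_i = x$ for $i \ge n-k+j$ and the symmetric statement at $y$), and the consequence $t^{(j)}_{i-1} \le t^{(j')}_i$ of essential disjointness together with the left-to-right order are all correct and are exactly the right structural facts; your derivation of the staggering is careful. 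Your $\ge$ direction is essentially complete, granting the standard (unargued) fact that a vertex-disjoint lattice $k$-tuple with the staggered endpoints is automatically left-to-right ordered, which is what makes the extended cadlag paths $\tilde\pi_j$ essentially disjoint.

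The genuine gap is the cadlag-to-lattice conversion in the reverse inequality, which you acknowledge but do not carry out, and the phrase ``single sites inserted to bridge the unit-column gaps created by non-integer jump times'' misidentifies the difficulty: the hard case is \emph{integer} coincidences, not non-integer ones. Essential disjointness permits $t^{(j)}_{i-1} = t^{(j+1)}_i = c \in \Z$, so $\pi_j$ and $\pi_{j+1}$ both contain $(c,i)$, and the naive discretization that sends jump times $t^{(j)}_i$ to $\ceil{t^{(j)}_i}$ puts $\sigma_j$ and $\sigma_{j+1}$ at the same vertex $(c,i)$, violating lattice disjointness. One has to assign $(c,i)$ to one of the two and push the other to column $c+1$; but if $\pi_{j+1}$ stays degenerate at $c$ across several consecutive rows (again allowed by essential disjointness), this push propagates, has to respect the down-right monotonicity of each lattice path, must also avoid $\sigma_{j+2}$, and must still reach the forced corner $(\floor{y},\, j+1)$. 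Showing that the hypothesis --- that \emph{some} disjoint lattice $k$-tuple from $(\ceil{x},n)^k$ to $(\floor{y},1)^k$ exists --- always supplies enough columns to absorb this cascaded adjustment while keeping coverage is the actual content of the reverse inequality, and your sketch asserts it rather than proving it.
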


\section{The Airy line ensemble and the Airy sheet}
\label{S:ALE-S}

We now turn our attention to last passage percolation in random environments. All the models we study in this paper can be recast as sequences of random cadlag environments $L_n \in \scrD^n_0$ whose melons $WL_n$ converge after rescaling to a universal scaling limit: the \textbf{parabolic Airy line ensemble}.

 The parabolic Airy line ensemble is a random continuous function $\scrA: \N \X \R \to \R, (i, x) \mapsto \scrA_i(x)$. Its finite dimensional distributions were first described in \cite*{prahofer2002scale} via a determinantal formula. \cite*{CH} showed that these finite dimensional distributions are associated to a unique continuous function whose lines are almost surely strictly ordered: $\scrA_1 > \scrA_2 > \dots$. 
 
The process $\scrA$ can be loosely viewed as an infinite system of nonintersecting Brownian motions. This intuition was made rigorous in \cite*{CH}, where the authors showed that $\scrA$ satisfies a useful Brownian Gibbs property. This property states that conditioned on the values of $\scrA$ on the complement of a region $\{\ell, \dots, k\} \X [a, b]$, inside that region $\scrA$ consists of independent Brownian bridges, conditioned so that the whole process remains nonintersecting and continuous.
While we will not need to appeal to the Brownian Gibbs property directly in this paper, we will use the following basic consequence of that property. For this proposition and in the sequel, we say that a Brownian motion or Brownian bridge has \textbf{variance $\al$} if its quadratic variation on any interval $[a, b]$ is equal to $\al(b-a)$.

\begin{prop}[\cite*{CH}, Proposition 4.1]
	\label{P:brownian-airy}
	Fix an interval $[a, a + b] \sset \R$ and $k \in \N$, and define $B_i(t) = \scrA_i(a + t) - \scrA_i(a)$ for $i \in \{1, \dots, k\}$. Then on the interval $[0, b]$ the sequence $(B_1, \dots, B_k)$ is absolutely continuous with respect to the law of $k$ independent Brownian motions with variance $2$.
\end{prop}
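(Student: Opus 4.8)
The plan is to derive this entirely from the Brownian Gibbs property of $\scrA$ established in \cite*{CH}, together with the almost sure strict ordering $\scrA_1 > \scrA_2 > \cdots$ and continuity of the lines; no other structure of the Airy line ensemble is needed. Recall the Brownian Gibbs property: for any $k \in \N$ and any interval $[c,d] \sset \R$, writing $\mathcal{F}_{\mathrm{ext}}$ for the $\sigma$-algebra generated by $\scrA$ on $(\N \X \R) \setminus (\{1,\dots,k\} \X (c,d))$ (so $\mathcal{F}_{\mathrm{ext}}$ knows $\scrA_i(c), \scrA_i(d)$ for $i \le k$ and the whole line $\scrA_{k+1}$ on $[c,d]$), the conditional law of $(\scrA_1,\dots,\scrA_k)$ restricted to $[c,d]$ given $\mathcal{F}_{\mathrm{ext}}$ is that of $k$ independent variance-$2$ Brownian bridges $\beta_1,\dots,\beta_k$ with $\beta_i(c) = \scrA_i(c)$ and $\beta_i(d) = \scrA_i(d)$, conditioned on the event $E = \{\beta_1 > \beta_2 > \cdots > \beta_k > \scrA_{k+1} \text{ on } [c,d]\}$.

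I would apply this with $[c,d] = [a, a+b+1]$, one unit longer than the target interval, and then chain together two standard facts. First, $\p(E \mid \mathcal{F}_{\mathrm{ext}}) > 0$ almost surely: because almost surely $\scrA_i(c) > \scrA_{i+1}(c)$ and $\scrA_i(d) > \scrA_{i+1}(d)$ strictly and $\scrA_{k+1}$ is finite and continuous on $[c,d]$, the unconditioned independent bridges already satisfy $E$ with positive probability; this is the non-degeneracy inherent in the Gibbs property. Hence the conditional law of $(\scrA_1,\dots,\scrA_k)|_{[c,d]}$ given $\mathcal{F}_{\mathrm{ext}}$ is absolutely continuous with respect to that of the $k$ independent bridges, with bounded density $\indic_E/\p(E\mid\mathcal{F}_{\mathrm{ext}})$. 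Second, the restriction to the strict sub-interval $[a,a+b] \sset [a, a+b+1)$ of a variance-$2$ Brownian bridge on $[a,a+b+1]$ is absolutely continuous with respect to variance-$2$ Brownian motion started from the same left value, with an explicit Gaussian Radon--Nikodym derivative that is finite precisely because $a+b < a+b+1$ (this is where the one-unit enlargement is used: we keep the pinned right endpoint outside the interval of interest). Composing these two absolute continuities and applying the shift map $f \mapsto f(a+\cdot) - f(a)$ — which preserves absolute continuity under push-forward and carries a variance-$2$ Brownian motion started at $\scrA_i(a)$ to one started at $0$ — shows that, conditionally on $\mathcal{F}_{\mathrm{ext}}$, the law of $(B_1,\dots,B_k)$ on $[0,b]$ is absolutely continuous with respect to the deterministic law of $k$ independent variance-$2$ Brownian motions started at $0$. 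Integrating out $\mathcal{F}_{\mathrm{ext}}$ (a mixture of measures each dominated by a fixed measure is dominated by that measure) gives the claim.

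The only genuinely non-routine point is the positivity $\p(E \mid \mathcal{F}_{\mathrm{ext}}) > 0$ almost surely; this is where the qualitative facts about $\scrA$ — strict ordering of all the lines and a.s.\ finiteness and continuity of $\scrA_{k+1}$ — actually enter, and in the framework of \cite*{CH} it is essentially built into what it means for $\scrA$ to have the Brownian Gibbs property, so in practice one simply invokes it. Everything else is bookkeeping with Gaussian densities and elementary stability of absolute continuity under conditioning, restriction, shifts, and mixtures.
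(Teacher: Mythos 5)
The paper does not give its own proof of this proposition: it is quoted verbatim as Proposition 4.1 of Corwin and Hammond, and used as a black box (its only appearance is in the proof of Lemma \ref{L:close-paths}). So there is no internal proof to compare against. Your argument is essentially a reconstruction of the Corwin--Hammond proof: apply the Brownian Gibbs resampling property on a slightly enlarged interval $[a, a+b+1]$, observe that the conditional law given the external $\sigma$-algebra is an independent Brownian bridge ensemble conditioned on a non-null ordering event, then pass from bridges on the large interval to Brownian motions on the small interval via the standard Radon--Nikodym factor, and finally integrate out the conditioning. The three ingredients you flag — strict ordering of $\scrA_1>\cdots>\scrA_{k+1}$, a.s.\ continuity and finiteness of $\scrA_{k+1}$ on compacts, and stability of absolute continuity under mixtures — are exactly what makes this work, and each is genuinely part of the Corwin--Hammond framework (the first two are built into their definition of a line ensemble admitting the Brownian Gibbs property, and the positivity of the acceptance probability is the normalization requirement implicit in that definition). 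Your sketch is sound as written; the only place one might ask for a few more words is in the positivity claim $\p(E\mid\mathcal{F}_{\mathrm{ext}})>0$, where one should note that independent bridges between strictly ordered endpoints can, with positive probability, stay uniformly within a small tube around the strictly ordered linear interpolations while the bottom one stays above the continuous curve $\scrA_{k+1}$; but this is a standard support argument, and you correctly identify it as the one substantive step.
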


We also require a few symmetries of $\scrA$, which go back to \cite*{prahofer2002scale}.

\begin{prop}
	\label{P:sym}
	The parabolic Airy line ensemble possesses a flip symmetry, $\scrA(\cdot) \eqd \scrA(-\;\cdot)$, and the process $t \mapsto \scrA(t) + t^2$ is stationary. We refer to the top line $\scrA_1$ as the \textbf{parabolic Airy process}.
\end{prop}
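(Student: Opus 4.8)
The statement is classical, going back to \cite*{prahofer2002scale}; I would organize the proof as follows. Since $\scrA$ is a random continuous function on $\N\X\R$, its law is determined by its finite-dimensional distributions (f.d.d.'s), so it is enough to show: (a) $\scrA(-\cdot)$ has the same f.d.d.'s as $\scrA(\cdot)$; and (b) the ensemble $\mathfrak B(t):=\scrA(t)+t^2$ has time-translation-invariant f.d.d.'s. The one input I would take for granted — precisely the content of \cite*{prahofer2002scale}, in the line-ensemble language of \cite*{CH} — is that for any times $t_1<\dots<t_m$ the configuration $\{\scrA_i(t_j):i\in\N,\ 1\le j\le m\}\subset\{1,\dots,m\}\X\R$ is a determinantal point process whose correlation functions, for arbitrary (not necessarily time-ordered) arguments, are
\[
\rho_p\big((t_1,\xi_1),\dots,(t_p,\xi_p)\big)=\det\big[\mathcal{K}(t_a,\xi_a;t_b,\xi_b)\big]_{a,b=1}^{p},\qquad \mathcal{K}(t,\xi;t',\xi')=K_{\mathrm{Ai}}\big(t,\xi+t^2;\,t',\xi'+t'^2\big),
\]
where $K_{\mathrm{Ai}}$ is the extended Airy kernel. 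The only features of $K_{\mathrm{Ai}}(t,\eta;t',\eta')$ I would use are that it depends on $(t,t')$ only through the difference $t-t'$, and that on each branch of its defining case split (whether $t\ge t'$ or $t<t'$) its integrand is symmetric in $\eta\leftrightarrow\eta'$.

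For (b): passing from $\scrA$ to $\mathfrak B$ exactly cancels the parabolic gauge in $\mathcal{K}$, so the f.d.d.'s of $\mathfrak B$ at times $t_1<\dots<t_m$ are the determinantal law with kernel $K_{\mathrm{Ai}}$ itself. Since $K_{\mathrm{Ai}}(t+c,\eta;t'+c,\eta')=K_{\mathrm{Ai}}(t,\eta;t',\eta')$, the f.d.d.'s of $\mathfrak B(\cdot+c)$ coincide with those of $\mathfrak B(\cdot)$; as $\mathfrak B$ is continuous this gives $\mathfrak B(\cdot+c)\eqd\mathfrak B(\cdot)$, i.e.\ $t\mapsto\scrA(t)+t^2$ is stationary.

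For (a): the correlation functions of $\scrA(-\cdot)$ are $\det\big[\mathcal{K}(-t_a,\xi_a;-t_b,\xi_b)\big]$, and since determinants are transposition-invariant this equals $\det\big[\mathcal{K}(-t_b,\xi_b;-t_a,\xi_a)\big]$. Hence it suffices to check the elementary identity $\mathcal{K}(-t',\xi';-t,\xi)=\mathcal{K}(t,\xi;t',\xi')$. Time reflection leaves $t-t'$ unchanged, so the same branch of the case split applies and the exponential weight is untouched; the two Airy factors just switch roles (carrying along $t^2$ and $t'^2$), so the identity follows from the $\eta\leftrightarrow\eta'$ symmetry of the integrand. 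Thus $\scrA(-\cdot)$ and $\scrA(\cdot)$ have identical f.d.d.'s, and by continuity $\scrA(-\cdot)\eqd\scrA(\cdot)$.

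I do not anticipate a real obstacle: granted the determinantal description of the f.d.d.'s with the correct parabolic gauge, both assertions collapse to one-line kernel manipulations, the only care needed being consistency of the $t\ge t'$ versus $t<t'$ branch under time reflection and keeping track of the parabola. If one preferred an argument that avoids the kernel altogether, I would instead realize $\mathfrak B$ (the stationary Airy line ensemble) as the edge scaling limit of the Dyson Ornstein--Uhlenbeck process, which is stationary and reversible; both properties survive the distributional limit, yielding stationarity of $\mathfrak B$ and — combining reversibility with stationarity — the flip symmetry of $\scrA$. Of the two, I would write up the determinantal version, as it is the most self-contained.
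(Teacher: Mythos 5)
The paper does not actually prove Proposition~\ref{P:sym}: it states it as a known fact and cites \cite*{prahofer2002scale}, so there is no ``paper proof'' to compare against. Your determinantal argument is a correct derivation of what that citation delivers. The two reductions are sound: passing to $\mathfrak B(t)=\scrA(t)+t^2$ removes the parabolic gauge so that the finite-dimensional correlation functions are given by $\det[K_{\mathrm{Ai}}(t_a,\eta_a;t_b,\eta_b)]$, and $K_{\mathrm{Ai}}$ depends on time only through $t-t'$, giving stationarity of $\mathfrak B$; for the flip, the transposition invariance of the determinant reduces the claim to the pointwise kernel identity $\mathcal K(-t',\xi';-t,\xi)=\mathcal K(t,\xi;t',\xi')$, which indeed follows because negating both times keeps $t-t'$ fixed (so the same branch of the case split applies) and the Airy factors in the integrand are symmetric under $\eta\leftrightarrow\eta'$ once the parabolas are carried along. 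One point worth making explicit if you write this up: the determinantal structure specifies the law of the point process $\{\scrA_i(t_j)\}_i$ at each fixed time, and you recover the labelled line ensemble by ordering; since ordering is preserved under $t\mapsto -t$ and under time shifts, matching the point-process correlation functions does pin down the f.d.d.'s of the line ensemble itself, and continuity of $\scrA$ then upgrades this to equality in law of the processes. Your alternative via the Dyson Ornstein--Uhlenbeck limit is also standard and equally valid; the determinantal version is indeed the more self-contained choice here.
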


If a sequence of melons $WL_n$ converges in distribution after rescaling to the parabolic Airy line ensemble, then the last passage processes $L_n[(0,n) \to (\cdot, 1)]$ converge in distribution after rescaling to the parabolic Airy process $\scrA_1$. 

Moreover, by Proposition \ref{P:W-facts}(i), last passage values of the form $L_n[(x,n) \to (y, 1)]$ for $x \ge 0$ are contained as last passage values in the melon $WL_n$. This suggests that the scaling limit of these values could be contained in terms of a last passage problem involving the Airy line ensemble. This was shown in \cite*{DOV} for \textbf{Brownian last passage percolation}, where each $L_n$ consists of independent two-sided Brownian motions. When combined with translation invariance, this uniquely identifies the scaling limit of the two-parameter processes $L_n[(\cdot,n) \to (\cdot, 1)]$, known as the Airy sheet.

\begin{definition}
\label{D:airy-sheet}
The \textbf{Airy sheet} is a random continuous function $\scrS:\mathbb R^2\to \mathbb R$ so that
\begin{enumerate}[label=(\roman*),nosep]
	\item
	$\scrS$ has the same law as  $\scrS(\cdot+t, \cdot+t)$ for all $t\in \mathbb R$.
	\item $\scrS$ can be coupled with an Airy line ensemble so that $\scrS(0,\cdot)=\scrA_1(\cdot)$ and for all $(x, y, z) \in \Q^+ \X \Q^2$ there exists a random variable $K_{x, y,z} \in \N$ so that for all $k \ge K_{x, y,z}$, almost surely
	$$
	\scrA[(-\sqrt{k/(2x)}, k) \to (z, 1)]- \scrA[(-\sqrt{k/(2x)}, k) \to (y, 1)] = \scrS(x, z) - \scrS(x, y).
	$$
\end{enumerate}
\end{definition} 

The Airy sheet exists and is unique in law, see Section 8 of \cite*{DOV}. Part (ii) of the above definition can also be replaced by the following Busemann function definition:
\begin{itemize}
	\item [(ii')] The Airy sheet $\scrS$ can be coupled with a parabolic Airy line ensemble so that almost surely, $\scrS(0,\cdot)=\scrA_1(\cdot)$ and for all $x > 0$ and $y, z \in \R$, we have that
	\begin{equation}
	\label{E:buse}
	\lim_{k \to \infty} \scrA[(-\sqrt{k/(2x)}, k) \to (z, 1)] - \scrA[(-\sqrt{k/(2x)}, k) \to (y, 1)]= \scrS(x, z) - \scrS(x, y).
	\end{equation}
\end{itemize}

Definition \ref{D:airy-sheet}(ii') essentially says that the Airy sheet value $\scrS(x, y)$ is the renormalized limit, as $k \to \infty$, of the last passage value in $\scrA$ from $(-\sqrt{k/(2x)}, k)$ to $(y, 1)$. Rigorously showing that such a renormalized limit exists is an open problem, see Conjecture 14.2 in \cite*{DOV}. Studying the differences $\scrS(x,y) - \scrS(x,z)$ instead gets around this issue.

While the proof in \cite*{DOV} of convergence to the Airy sheet for Brownian last passage percolation uses a few properties that are specific to that model, the most delicate parts of the proof only involve estimates about the Airy line ensemble and translation and reflection invariance properties of the underlying model. Because of this, we can adapt that proof to a much more general setting, see Theorem \ref{T:airy-sheet-gen} below. Our generalization relies on some results from \cite*{DOV}, but also simplifies some steps.

\section{Convergence to the Airy sheet}
\label{S:sheet}

In this section we prove Theorem \ref{T:intro-sheet}.
We first prove tightness of the prelimiting sheets.
Rather than appealing to the Kolmogorov-Centsov criterion as in \cite*{DOV} to prove tightness, we set up a general topological framework which requires fewer underlying assumptions on the model. This framework exploits a fundamental \textbf{quadrangle inequality} for last passage models.

\begin{lemma}[Quadrangle inequality, special case of Lemma 2.5 in \cite*{DNVcadlag}]
	\label{L:quadrangle}
	Let $f \in \scrD^\Z$. For any $x_1 \le x_2 \le y_1 \le y_2$, and $n \ge m$, we have that
	\begin{align}
	\nonumber
	f[(x_1, n) \to (y_1, m)] + &f[(x_2, n) \to (y_2, m)] \\ \ge& f[(x_1, n) \to (y_2, m)] + f[(x_2, n) \to (y_1, m)]. 	\label{E:f-y-1}
	\end{align}
\end{lemma}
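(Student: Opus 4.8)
The plan is to run the classical path-swapping (``crossing'') argument, being careful about the jump conventions of the cadlag setting; alternatively one can simply quote Lemma 2.5 of \cite*{DNVcadlag}, but the proof is short. First note that, because $x_1 \le x_2 \le y_1 \le y_2$ and $n \ge m$, each of the four pairs of points appearing in \eqref{E:f-y-1} is in the correct order, so all four last passage values are finite and, by Lemma \ref{L:geod-existlpp}, each is attained by a geodesic. I would fix geodesics $\pi$ from $(x_1,n)$ to $(y_2,m)$ and $\rho$ from $(x_2,n)$ to $(y_1,m)$ realizing the two terms on the right-hand side of \eqref{E:f-y-1}.

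The first real step is to show $\pi$ and $\rho$ must share a point. Writing $\pi \cap (\R \X \{\ell\}) = [a_\ell, b_\ell] \X \{\ell\}$ and $\rho \cap (\R \X \{\ell\}) = [c_\ell, d_\ell] \X \{\ell\}$ for $\ell \in \{m, \dots, n\}$, connectivity of the paths gives $a_{\ell-1} = b_\ell$ and $c_{\ell-1} = d_\ell$. If $\pi \cap \rho = \emptyset$, then on each line $\ell$ either $b_\ell < c_\ell$ (``$\pi$ is left of $\rho$ there'') or $a_\ell > d_\ell$ (``$\pi$ is right of $\rho$ there''). On line $n$ the former holds, since $a_n = x_1 \le x_2 = c_n \le d_n$; on line $m$ the latter holds, since $b_m = y_2 \ge y_1 = d_m \ge c_m$. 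Hence there is a line $\ell$ on which $\pi$ is left of $\rho$ while on line $\ell-1$ it is right of $\rho$. But then $a_{\ell-1} = b_\ell < c_\ell \le c_{\ell-1} = d_{\ell-1}$, contradicting $a_{\ell-1} > d_{\ell-1}$. So there is a point $(z,\ell) \in \pi \cap \rho$.

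The second step is the surgery. Split $\pi = \pi_1 \cup \pi_2$ at $(z,\ell)$, with $\pi_1$ the portion from $(x_1,n)$ to $(z,\ell)$ and $\pi_2$ the rest, and likewise $\rho = \rho_1 \cup \rho_2$; set $\pi' = \pi_1 \cup \rho_2$, a path from $(x_1,n)$ to $(y_1,m)$, and $\rho' = \rho_1 \cup \pi_2$, a path from $(x_2,n)$ to $(y_2,m)$. That these satisfy \eqref{E:intervals} and \eqref{E:jumptimes} is a short check: one only needs monotonicity of the jump times across level $\ell$, which holds because $(z,\ell)$ lies on both $\pi$ and $\rho$, i.e. $a_\ell \le z \le b_\ell$ and $c_\ell \le z \le d_\ell$. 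For the lengths I would use that $|\cdot|_f = df(\cdot)$ is a finitely additive set function: cutting a path at a point $(z,\ell)$ of it counts the jump $J := f_\ell(z) - f_\ell(z^-) \ge 0$ (positive since $f_\ell$ has only positive jumps) in both resulting pieces, while gluing two paths that meet exactly at $(z,\ell)$ omits $J$ once. Thus $|\pi_1|_f + |\pi_2|_f = |\pi|_f + J$, $|\rho_1|_f + |\rho_2|_f = |\rho|_f + J$, $|\pi'|_f = |\pi_1|_f + |\rho_2|_f - J$, and $|\rho'|_f = |\rho_1|_f + |\pi_2|_f - J$, which combine to give $|\pi'|_f + |\rho'|_f = |\pi|_f + |\rho|_f$. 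Therefore $f[(x_1,n)\to(y_1,m)] + f[(x_2,n)\to(y_2,m)] \ge |\pi'|_f + |\rho'|_f = |\pi|_f + |\rho|_f = f[(x_1,n)\to(y_2,m)] + f[(x_2,n)\to(y_1,m)]$, which is \eqref{E:f-y-1}.

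The part I expect to require the most care is this last bookkeeping: pinning down the jump conventions at the crossing point so the contributions of $J$ cancel exactly, and confirming that $\pi'$ and $\rho'$ are genuine paths. Everything else — existence of geodesics and the crossing argument — is routine.
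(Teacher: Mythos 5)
Your proof is correct, and since the paper only cites Lemma~2.5 of \cite*{DNVcadlag} rather than reproving the quadrangle inequality, there is no in-paper proof to compare against; the path-swapping argument you give is the standard one and it goes through in the cadlag setting exactly as you describe. One small bookkeeping slip in the crossing step: after choosing $\ell$ so that $b_\ell < c_\ell$ while $a_{\ell-1} > d_{\ell-1}$, you write the chain as $a_{\ell-1} = b_\ell < c_\ell \le c_{\ell-1} = d_{\ell-1}$, but the final equality should be an inequality: from $c_{\ell-1} = d_\ell$ (connectivity of $\rho$) and $c_\ell \le d_\ell$ you get $c_\ell \le c_{\ell-1}$, and then $c_{\ell-1} \le d_{\ell-1}$, so the correct chain is $a_{\ell-1} = b_\ell < c_\ell \le c_{\ell-1} \le d_{\ell-1}$, still contradicting $a_{\ell-1} > d_{\ell-1}$. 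The rest — the jump-time monotonicity check across level $\ell$ and the $J$-cancellation in $|\pi_1|_f + |\pi_2|_f = |\pi|_f + J$, $|\pi'|_f = |\pi_1|_f + |\rho_2|_f - J$, etc. — is exactly right, and it is precisely the delicate part of adapting the classical crossing argument to the cadlag length convention \eqref{E:pif-sumim}.
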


Our Airy sheet prelimits will be of the form
\begin{equation}
\label{E:SnLn}
S_n(x, y) = L_n[(x + a_n, n)^+ \to (y, 1)] - c_n,
\end{equation}
for a sequence of environments $L_n \in \scrD^\Z$, and constants $c_n, a_n$ with $a_n \to -\infty$ with $n$. Here
$$
L_n[(x + a_n, n)^+ \to (y, 1)] := \lim_{z \downarrow x+ a_n} L_n[(z, n) \to (y, 1)]
$$ The functions $S_n$ are cadlag in each variable (this is the reason for the $+$ in the first coordinate in \eqref{E:SnLn}), and by Lemma \ref{L:quadrangle}, for $x_1 \le x_2$ and $y_1 \le y_2$ we have
\begin{equation}
\label{E:quad-re}
S_n(x_1, y_1) + S_n(x_2, y_2) - S_n(x_1, y_2) - S_n(x_1, y_2) \ge 0
\end{equation}
as long as all points $(x_i, y_j)$ are in the open hyperplane
$
\scrH_{a_n} = \{(x, y) \in \R^2 : x + a_n < y\}.
$
Outside of this hyperplane, $S_n = -\infty$.
For $r > 0$, let $\CDF_r$ be the space of functions $s:[-r, r]^2 \to \R^2$ which are cadlag in each variable and satisfy the inequality \eqref{E:quad-re} with $s$ in place of $S_n$. Any such function $s$ is the cumulative distribution function of a Borel measure $\mu_s$ on $(-r, r]^2$: that is, $\mu_s(a, b] \X (c, d]$ is given by the left hand side of \eqref{E:quad-re} with $s$ in place of $S_n$. 

Any $s \in \CDF_r$ is uniquely determined by the triple $(s(0, \cdot), s(\cdot, 0), \mu_s)$. We define the {\bf sheet topology} on $\CDF_r$ as the product topology given by Skorokhod topologies on cadlag functions in the first two coordinates, and the vague topology on measures in the third coordinate. Background on the Skorokhod topology on cadlag functions can be found in \cite*{kallenberg2006foundations}, Chapter 14. For us, the main necessary facts are that the Skorokhod topology is Polish and that if $f_n \to f$ in the Skorokhod topology and $f$ is continuous, then $f_n \to f$ uniformly. 

This topology makes $\CDF_r$ a Polish space. It is a partial analogue of the Skorokhod topology for two-variable functions. 
The following lemma is one reason for introducing this topology. We omit its straightforward proof.

\begin{lemma}
	\label{L:sheet-topology}
	\begin{enumerate}
		\item Let $r \in \N$. If a sequence $s_n \in \CDF_r$ converges to a continuous limit $s$ in the sheet topology, then $s_n \to s$ uniformly on $[-r, r]^2$.
		\item Let $s \in \CDF_r$. If for all rational points $x \in [-r, r]$, the functions $s(x,\cdot), s(\cdot,x)$ are continuous, then $s$ is continuous. 
	\end{enumerate}
\end{lemma}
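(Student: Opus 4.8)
\textbf{Proof plan for Lemma \ref{L:sheet-topology}.}

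The plan is to handle the two parts separately, since they rely on different classical facts about the Skorokhod topology and the vague topology. For part (1), the key observation is that a function $s \in \CDF_r$ is determined by the triple $(s(0,\cdot), s(\cdot,0), \mu_s)$, and that the quadrangle inequality makes $s$ a genuine (signed, but locally of bounded variation) cumulative distribution function. If $s_n \to s$ in the sheet topology with $s$ continuous, then $s_n(0,\cdot) \to s(0,\cdot)$ and $s_n(\cdot, 0) \to s(\cdot, 0)$ in the Skorokhod topology; since the limits $s(0,\cdot)$ and $s(\cdot, 0)$ are continuous, Skorokhod convergence upgrades to uniform convergence on $[-r,r]$. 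Simultaneously $\mu_{s_n} \to \mu_s$ vaguely on $(-r,r]^2$. The plan is then to reconstruct $s_n(x,y) - s_n(0,y) - s_n(x,0) + s_n(0,0)$ as $\mu_{s_n}$ applied to the rectangle with corners $(0,0)$ and $(x,y)$ (with appropriate sign conventions for the four quadrants), and to argue that vague convergence plus tightness of the masses gives uniform control of these rectangle masses. The main subtlety here — and I expect this to be the principal obstacle — is that vague convergence alone does not control mass near the boundary, and rectangles $[0,x]\times[0,y]$ have boundaries that can carry atoms of $\mu_s$; one fixes this by noting that $s$ continuous forces $\mu_s$ to have no atoms on lines $\{x\}\times \R$ or $\R \times \{y\}$, so the boundaries of these rectangles are $\mu_s$-null, and then a standard Portmanteau-type argument for vague convergence gives convergence of $\mu_{s_n}$ on each such rectangle, uniformly in $(x,y)$ by an equicontinuity/monotonicity argument (the rectangle mass is monotone in the corner, and the limit is continuous, so Dini-type reasoning applies on the compact square).

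For part (2), suppose $s \in \CDF_r$ and $s(x,\cdot)$, $s(\cdot,x)$ are continuous for every rational $x \in [-r,r]$. I would first show that this forces $\mu_s$ to be non-atomic: any atom of $\mu_s$ at a point $(p,q)$ would create a jump in $s(\cdot, y)$ for $y > q$ at the location $p$, hence at some rational $y$, contradicting continuity in the first variable (and the cadlag + quadrangle structure means jumps of $s(\cdot,y)$ in the $x$-direction are exactly controlled by mass of $\mu_s$ on vertical segments). More carefully, one shows $\mu_s$ assigns zero mass to every vertical line $\{x_0\}\times[-r,r]$ and every horizontal line $[-r,r]\times\{y_0\}$: if some vertical line carried positive mass, then $y \mapsto s(x_0, y) - s(x_0^-, y)$ would be nondecreasing and nonconstant, so $s(x_0,\cdot)$ or $s(x_0^-,\cdot)$ would fail to be continuous, and by a density argument this propagates to a rational coordinate. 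Once $\mu_s$ has no mass on any axis-parallel line, the cadlag function $s$ has no jumps in either variable anywhere, and combined with continuity of the two marginal slices $s(0,\cdot)$, $s(\cdot,0)$ (which we may assume, or derive from continuity at nearby rationals and the no-jump property) the identity $s(x,y) = s(0,y) + s(x,0) - s(0,0) + \mu_s\big((0,x]\times(0,y]\big)$ exhibits $s$ as a sum of continuous functions, since a finite measure with no atoms and no mass on axis-parallel lines has a jointly continuous cumulative distribution function.

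The routine parts — the precise sign bookkeeping for the four quadrants around the origin, verifying that the quadrangle inequality implies $\mu_s$ is a genuine positive Borel measure, and the elementary fact that a cadlag function in two variables with a non-atomic defining measure supported off axis-parallel lines is continuous — I would state and defer, as the excerpt itself calls the proof "straightforward." The only place where genuine care is needed is the interplay between vague convergence of measures and uniform convergence of their distribution functions in part (1); I would isolate that as a short auxiliary claim: if $\mu_n \to \mu$ vaguely on $(-r,r]^2$ with $\sup_n \mu_n((-r,r]^2) < \infty$ and $\mu$ has continuous distribution function, then the distribution functions converge uniformly. This claim follows from Helly-type compactness and the fact that a monotone (in the partial order on corners) sequence of functions converging pointwise to a continuous limit on a compact set converges uniformly.
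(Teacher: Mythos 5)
The paper does not give a proof of this lemma (it is declared ``straightforward'' and omitted), so there is nothing to compare against directly; here is an evaluation of the proposal on its own terms. Part~(2) is handled correctly: showing $\mu_s$ puts no mass on axis-parallel lines from continuity of the rational slices, and then observing that $s(0,\cdot)+s(\cdot,0)-s(0,0)+\mu_s(\text{rectangle})$ is jointly continuous, is a complete argument.

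For part~(1) there is a genuine gap in the proposed fix for the boundary issue. You correctly identify that the crux is mass near the boundary under vague convergence, and you propose to dismiss it by noting that continuity of $s$ makes $\mu_s$ vanish on axis-parallel lines and then invoking Portmanteau. But the rectangle $(-r,0]\times(0,y]$ (and its analogues) is \emph{not} relatively compact in $(-r,r]^2$, because the left edge $\{-r\}\times[-r,r]$ has been deleted from the domain where $\mu_s$ lives. Portmanteau for vague convergence only applies to relatively compact sets with $\mu_s$-null boundary, so it does not yield $\mu_{s_n}((-r,0]\times(0,y])\to\mu_s((-r,0]\times(0,y])$. If ``vague'' is taken in the strict $C_c$ sense, allowing mass to escape, the statement actually fails: take $s_n\in\CDF_r$ determined by $s_n(0,\cdot)\equiv 0$, $s_n(\cdot,0)\equiv 0$, and $\mu_{s_n}=\delta_{(-r+1/n,\,1/2)}$. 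Each marginal is identically zero and $\mu_{s_n}\to 0$ vaguely (the atom leaves every compact subset of $(-r,r]^2$), so $s_n\to 0$ in the sheet topology and the limit is continuous; but $s_n(-r,r)=-1$ for every $n$. Your Dini/Polya-type appeal does not rescue this, because pointwise convergence on $(-r,r]^2$ to a continuous limit does not propagate to the closed square for merely cadlag $s_n$. To make the argument work you need one additional input: either (i) interpret the third coordinate as weak convergence (equivalently, add that total mass cannot be lost), in which case Portmanteau applies to any Borel set whose boundary within $(-r,r]^2$ is $\mu_s$-null with no compactness hypothesis and the rest of your outline goes through; or (ii) note that in the paper's actual application the $S_n$ are tight in $\CDF_{r'}$ for every $r'\geq r$ simultaneously and the one-dimensional slices at all rational arguments converge, which forecloses escape of mass. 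As written the proposal does neither, so part~(1) is not yet a proof.
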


We can translate Lemma \ref{L:sheet-topology} (2) into a condition for tightness of random functions $S_n$.
\begin{lemma}
	\label{L:tight-sheet}
	Let $S_n$ be a sequence of random variables in $\CDF_r$ for some $r \in \N$.  Suppose that for every rational $x \in [-r, r]$,  $S_n(x, \cdot)$ and $S_n(\cdot, x)$ are tight in the Skorokhod topology and all limit points are supported on continuous functions. Then $S_n$ is tight in the sheet topology and all limit points are supported on continuous functions.
\end{lemma}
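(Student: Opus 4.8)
The plan is to prove tightness coordinate-by-coordinate and then glue using the Polish product structure of the sheet topology. Recall that an element $s \in \CDF_r$ is determined by the triple $(s(0,\cdot), s(\cdot,0), \mu_s)$, and that the sheet topology is the product of two Skorokhod topologies and the vague topology on measures on $(-r,r]^2$. Since $\CDF_r$ is Polish, tightness in the sheet topology is equivalent to tightness of each of the three marginal processes $S_n(0,\cdot)$, $S_n(\cdot,0)$, and $\mu_{S_n}$ in their respective topologies, together with the (automatic) closedness of $\CDF_r$. The first two marginals are tight by hypothesis. So the crux is tightness of the random measures $\mu_{S_n}$ in the vague topology on $(-r,r]^2$, plus the identification of limit points with continuous functions.

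First I would handle the measures. By Lemma \ref{L:quadrangle} (equivalently \eqref{E:quad-re}) each $\mu_{S_n}$ is a genuine nonnegative Borel measure on $(-r,r]^2$, with total mass
$$
\mu_{S_n}\big((-r,r]^2\big) = S_n(r,r) - S_n(-r,r) - S_n(r,-r) + S_n(-r,-r).
$$
Each of the four terms on the right is one of the values $S_n(\pm r, \cdot)$ or $S_n(\cdot, \pm r)$ evaluated at $\pm r$; since $S_n(x,\cdot)$ and $S_n(\cdot,x)$ are tight in Skorokhod for rational $x$ (in particular for $x = \pm r$, which we may assume are within range, or pass to a slightly larger rational radius), and Skorokhod-tight sequences have tight one-point evaluations at continuity points of the limits — and limit points are continuous by hypothesis — the total masses $\mu_{S_n}((-r,r]^2)$ form a tight sequence of real random variables. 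Tightness of total mass is exactly tightness in the vague topology on the fixed compact-ish space $(-r,r]^2$ (more precisely: uniform tightness of masses gives relative compactness in the vague topology, and since the masses do not escape to infinity we also get weak relative compactness). Thus $\mu_{S_n}$ is tight, and combined with the two Skorokhod marginals, $S_n$ is tight in $\CDF_r$.

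It remains to check that every subsequential limit $S_\infty$ is supported on continuous functions. Pass to a subsequence along which $S_n \to S_\infty$ in distribution; by Skorokhod representation we may assume this holds almost surely in the sheet topology. Then $S_n(x,\cdot) \to S_\infty(x,\cdot)$ and $S_n(\cdot,x) \to S_\infty(\cdot,x)$ in Skorokhod for every rational $x$. By hypothesis the laws of the limits of $S_n(x,\cdot)$ and $S_n(\cdot,x)$ are supported on continuous functions, so for each fixed rational $x$, almost surely $S_\infty(x,\cdot)$ and $S_\infty(\cdot,x)$ are continuous; taking a countable intersection over rational $x$, almost surely $S_\infty(x,\cdot)$ and $S_\infty(\cdot,x)$ are continuous for all rational $x$ simultaneously. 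Since $S_\infty \in \CDF_r$ almost surely (this class is closed in the sheet topology — the quadrangle inequality \eqref{E:quad-re} passes to limits, being a closed condition on the defining triple), Lemma \ref{L:sheet-topology}(2) applies and gives that $S_\infty$ is continuous almost surely.

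The main obstacle I anticipate is the bookkeeping around the vague topology: making sure that tightness of the total mass of $\mu_{S_n}$ genuinely yields relative compactness of $\{\mu_{S_n}\}$ as a sequence of measures on $(-r,r]^2$ (one must be a little careful because $(-r,r]^2$ is not compact, only locally compact, so vague limits could a priori lose mass near the open boundary — but since we are only asked for tightness, i.e. relative compactness of the laws in the sheet topology, and any vague subsequential limit remains a bounded measure, this is not an actual problem), and secondly the passage of the quadrangle inequality through the limit to conclude $S_\infty \in \CDF_r$. Both are routine once set up carefully; no new ideas beyond the hypothesis and Lemma \ref{L:sheet-topology} are needed, which is why the paper calls the proof straightforward.
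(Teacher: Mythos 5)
Your proof is correct and takes essentially the same approach as the paper's: tightness of the measure marginal $\mu_{S_n}$ is obtained from tightness of the total mass $\mu_{S_n}([-r,r]^2)$ expressed via corner values of $S_n$, and continuity of limit points is deduced from Lemma \ref{L:sheet-topology}(2). The paper's proof is considerably terser but identical in substance; like the paper, you assert without a full argument that sheet-topology convergence together with the hypothesis gives $S_n(x,\cdot)\to S_\infty(x,\cdot)$ in Skorokhod for rational $x\neq 0$ (this is not a direct marginal of the sheet topology and requires a short identification argument showing that $\mu_{S_\infty}$ puts no mass on rational horizontal or vertical lines), but this level of detail is omitted in the paper as well.
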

\begin{proof}
	Tightness of $\mu_{S_n}$ in the vague topology follows since
	$$
	\mu_{S_n}([-r, r]^2) = S_n(r, r) + S_n(-r, -r) - S_n(r, -r) - S_n(-r, r),
	$$
	and the random variables on the right side are all tight by assumption. The sequence $S_n$ is therefore tight and Lemma \ref{L:sheet-topology} (2) implies that all limit points are supported on continuous functions. 
\end{proof}

 Since $a_n \to -\infty$, our Airy sheet prelimits $S_n|_{[-r, r]^2}$ are random elements of $\CDF_r$ for all large enough $n$. Moving forward, we will ignore the minor issue that $S_n|_{[-r, r]^2}$ may take on the value $-\infty$ when $n$ is small.

We can now state our main Airy sheet convergence theorem. For $k \ge 1$ we use the increment notation
$$
f[u \to_{\Delta_k} v ] = f[u^k \to v^k ] - f[u^{k-1} \to v^{k-1} ],
$$
where $f[u \to_0 v] :=0$. Because prelimits of the Airy line ensemble are not necessarily continuous functions, we will put the product of Skorokhod topologies on this function space. 

\begin{theorem}
	\label{T:airy-sheet-gen}
	Let $L_n \in \scrD^\Z$ be a sequence of environments, let $a_n, c_n \in \R$ be sequences with $a_n \to -\infty$. Assume that for every $z \in \R$, the sequences
	\begin{align*}
	A^z_{n, k}(\cdot) &= L_n[(z + a_n, n) \to_{\Delta_k} (z + \cdot, 1)] + c_n, \quad \mathand \\
	\tilde A^z_{n, k}(\cdot) &= L_n[(z + a_n - \cdot, n)  \to_{\Delta_k} (z, 1)] + c_n,
	\end{align*}
	converge in distribution in the product-of-Skorokhod topologies as $n \to \infty$ to the parabolic Airy line ensemble $\scrA_{k}(\cdot)$. Define
	$$
	S_n(x, y) =L_n[(x+ a_n, n)^+ \to (y, 1)] + c_n.
	$$
	Then for any $r \in \N$, we have that $S_n|_{[-r, r]^2} \cvgd \scrS|_{[-r, r]^2}$ as random elements of $\CDF_r$. Moreover, there exists a coupling where $S_n \to \scrS$ compactly a.s.
\end{theorem}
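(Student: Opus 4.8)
The plan is to use the sheet-topology framework to reduce the problem to identifying subsequential limits, establish tightness from the assumed one-dimensional convergences, and then identify every subsequential limit with the Airy sheet by checking the Busemann relations of Definition \ref{D:airy-sheet} via the RSK isometry. Since $\scrS$ is continuous, once $S_n|_{[-r,r]^2}\cvgd\scrS|_{[-r,r]^2}$ in $\CDF_r$, Lemma \ref{L:sheet-topology}(1) upgrades this to convergence in law in $C([-r,r]^2)$ with the uniform topology; a diagonal argument over $r\in\N$ together with the Skorokhod representation theorem then produces a coupling in which $S_n\to\scrS$ compactly a.s. So it suffices to prove $S_n|_{[-r,r]^2}\cvgd\scrS|_{[-r,r]^2}$ in $\CDF_r$, and by the subsequence principle it is enough to show every subsequential limit is an Airy sheet.

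\textbf{Tightness.} For rational $x$, the process $S_n(x,\cdot)$ equals a spatial translate of $A^x_{n,1}$ up to an additive constant (the `$+$' in \eqref{E:SnLn} only drops a possible jump of $(L_n)_n$ at the far endpoint, which is harmless), so the hypothesis $A^x_{n,1}\cvgd\scrA_1$ in the Skorokhod topology together with continuity of $\scrA_1$ makes $S_n(x,\cdot)$ tight with continuous subsequential limits; the same holds for $S_n(\cdot,x)$ via $\tilde A^x_{n,1}$. Lemma \ref{L:tight-sheet} then gives tightness of $S_n|_{[-r,r]^2}$ in $\CDF_r$ with continuous subsequential limits. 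Passing to a subsequence (diagonally, so it works for every $r$) and invoking Skorokhod representation, I would work on a space where $S_n\to S$ compactly a.s.\ for a continuous field $S$, and simultaneously $A^z_{n,k}\to\scrA^{(z)}_k$, $\tilde A^z_{n,k}\to\tilde\scrA^{(z)}_k$ a.s.\ for every rational $z$ and every $k$, each $\scrA^{(z)},\tilde\scrA^{(z)}$ a parabolic Airy line ensemble. Write $\scrA:=\scrA^{(0)}$. Passing to the limit in $S_n(x,\cdot)$ gives $S(0,\cdot)=\scrA_1$ and, for each rational $x$, that $z\mapsto S(x,z)+(z-x)^2$ is stationary with mean $\E\scrA_1(0)$ — a shape identity needed at the end.

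\textbf{The Busemann identity (main step).} Fix rationals $x>0$ and $y,z$. Restricting $L_n$ to $[a_n,\infty)$ and translating the left edge to the origin yields $\bar L_n\in\scrD^n_0$ with $S_n(x,w)-c_n=\bar L_n[(x,n)^+\to(w-a_n,1)]$; the RSK isometry (Proposition \ref{P:W-facts}(i)) then rewrites the difference, with the unknown additive normalizations cancelling, as
\begin{equation*}
S_n(x,z)-S_n(x,y)=W\bar L_n[(x,n)\to(z-a_n,1)]-W\bar L_n[(x,n)\to(y-a_n,1)].
\end{equation*}
Following \cite*{DOV}, Section 8, I would then use the ordering of melon lines (Proposition \ref{P:W-facts}(ii)), the identity of Lemma \ref{L:k-line-prior}, and Airy-line-ensemble estimates on how deep an optimal path started on line $n$ reaches, to show that for $k\ge K(x,y,z)$ the right side agrees up to $o(1)$ with the analogous difference of last passage values in $W\bar L_n$ started from a point on line $k$ whose rescaled spatial coordinate converges to $-\sqrt{k/(2x)}$. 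Sending $n\to\infty$ along the subsequence and using the joint convergence $A^0_{n,k}\to\scrA_k$ over all $k$ then gives, for every large $k$,
\begin{equation*}
S(x,z)-S(x,y)=\scrA\big[(-\sqrt{k/(2x)},k)\to(z,1)\big]-\scrA\big[(-\sqrt{k/(2x)},k)\to(y,1)\big],
\end{equation*}
which is Definition \ref{D:airy-sheet}(ii). The relations for $x<0$ (involving $\scrA^*=\scrA(-\cdot)$) come out the same way from the hypothesis on $\tilde A^z_{n,k}$ applied to the rotated environments, once one identifies the melon of a rotated environment with the reflection of $\scrA$ in the limit — a reflection-invariance step, also as in \cite*{DOV}.

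\textbf{Conclusion and the hard part.} The Busemann relations above, together with the shape identity from the tightness step and continuity, pin down the law of the Airy sheet (\cite*{DOV}, Section 8); equivalently one may record that $S$ is translation invariant by running the tightness and Busemann steps for the spatial shifts $L_n^{(t)}$ of $L_n$, which satisfy identical hypotheses (as $A^z_{n,k}[L_n^{(t)}]=A^{z+t}_{n,k}[L_n]$) and whose prelimiting sheet is $(x,y)\mapsto S_n(x+t,y+t)$. Either way $S\eqd\scrS$, finishing the proof. I expect the genuine obstacle to be the Busemann step: quantifying the error when localizing the two-point difference $S_n(x,z)-S_n(x,y)$ to a bounded number of top melon lines, and matching the deep starting point $(x+a_n,n)$ to the parabolic location $(-\sqrt{k/(2x)},k)$ under rescaling. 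This is the technical heart of the Airy sheet construction in \cite*{DOV}; the new content is verifying that its Airy-line-ensemble estimates and rearrangement arguments go through in the cadlag framework rather than only for Brownian last passage percolation.
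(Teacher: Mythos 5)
Your outline mirrors the paper's proof at every structural level: tightness via the $\CDF_r$/sheet topology (Lemma \ref{L:tight-sheet}), identification of subsequential limits through the RSK isometry and the melon $W_{a_n}L_n$, localization of the two-point difference via geodesic jump times, and removal of the residual additive constant via ergodicity of $t\mapsto\scrA_1(t)+t^2$ (what you call the shape identity). The conclusion and the tightness step are correct as sketched.

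The gap is in the Busemann step, which you yourself flag as ``the technical heart'' and then defer to \cite*{DOV}, Section 8. That deferral does not discharge the obligation: what makes Theorem~\ref{T:airy-sheet-gen} a theorem and not a corollary is precisely that the DOV geodesic-localization machinery has to be re-proved in the cadlag setting and without the stationarity of Brownian LPP. The paper does this work in Proposition~\ref{P:fn-cvg}, Corollary~\ref{C:Gn-upper}, Lemma~\ref{L:zk-unif-bd}, Lemmas~\ref{L:close-paths} and \ref{L:close-paths-2}, and Corollary~\ref{C:tree-struct}; the disjointness arguments in particular are genuinely different because the prelimit is not translation invariant, and instead one must pass to the reversed environment $W^*_z L^n$ and exploit that its hypotheses match those of the original after a shift by $a_n$. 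None of this appears in your sketch.

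There is also a concrete misstep in the claimed output of that step. You assert that sending $n\to\infty$ yields, for all large $k$,
\begin{equation*}
S(x,z)-S(x,y)=\scrA\bigl[(-\sqrt{k/(2x)},k)\to(z,1)\bigr]-\scrA\bigl[(-\sqrt{k/(2x)},k)\to(y,1)\bigr].
\end{equation*}
What the geodesic argument actually produces is the identity with a \emph{random} starting location $Z_k(x,z)$ in place of $-\sqrt{k/(2x)}$, where $Z_k(x,z)$ only satisfies $Z_k(x,z)/\sqrt{k}\to-1/\sqrt{2x}$ as $k\to\infty$ (and is valid only once $k\ge K(x;y,z)$, the limiting branching level whose tightness is Corollary~\ref{C:tree-struct}). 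The bridge from this to Definition~\ref{D:airy-sheet}(ii') is not a limit passage at fixed $k$ but a two-sided bound: one applies the quadrangle inequality (Lemma~\ref{L:quadrangle}) to compare the random start $Z_k(x,z)$ with the deterministic points $-\sqrt{k/(2(x\pm\ep))}$, sends $k\to\infty$ to recover $\scrS(x\pm\ep,y)-\scrS(x\pm\ep,z)$ via the Busemann formula \eqref{E:buse}, and then lets $\ep\to0$ using continuity of $\scrS$ and $\tilde\scrS$. This sandwich is the logical hinge of the identification and is absent from your argument.
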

Theorem \ref{T:intro-sheet} is an immediate consequence of Theorem \ref{T:airy-sheet-gen}.

\begin{remark}
	\label{R:shock-measure}
	The Airy sheet $\scrS$ is itself associated to a random measure $\mu_{\scrS}$ on $\R^2$ via the limiting analogue of \eqref{E:quad-re}. We call the measure $\mu_\scrS$ the \textbf{shock measure} of the Airy sheet. This measure has a beautiful structure that is related to exceptional geodesic behaviour in the directed landscape and Brownian local time. This measure has been previously studied by \cite*{basu2021fractal, bates2019hausdorff, ganguly2021local, dauvergne2021last}. We explore the shock measure more in  upcoming work.
\end{remark}

We prove Theorem \ref{T:airy-sheet-gen} by following the strategy from \cite*{DOV}. For the remainder of this section, all sequences will be as in Theorem \ref{T:airy-sheet-gen} with $c_n = 0$. The general $c_n$ case can reduced to the $c_n = 0$ case by replacing the environment $L_n$ with an environment $L_n' = L_n + c_n f_n$, where $f_n$ is the CDF of uniform measure on the interval $[a_n/2, a_n/2 + 1]$.

Define $T_a:\scrD^\Z \to \scrD^\Z$ by $T_a f = f(a + \cdot) - f(a^-)$, and let 
$$
W_a f(x) =\begin{cases} 
(W T_a f)(x-a) & \text{ for } x\ge a\\
0 & \text{ for } x<a
\end{cases}
$$
be the {\bf melon of $f$ opened up at $a$}.
In the remainder of this section we write 
$$
W^n := W_{a_n} L_n.
$$
The main step in the proof of Theorem \ref{T:airy-sheet-gen} involves locating the rightmost geodesic from $(x + a_n, n)$ to $(y, 1)$ in melon $W^n$. This is the goal of the next few propositions.

In the remainder of this section, for a random array $\{R_{n, k} :n, k \in \N\}$ we will write
\begin{equation}\label{E:onotation}
R_{n,k} = \oo(r_{k})  \qquad \text{ if for all }\epsilon>0 \qquad  \;\; \sum_{k=1}^\infty \limsup_{n\to\infty} \prob(|R_{n,k}/r_{k}|>\epsilon) < \infty.
\end{equation}
The idea behind this notation is that if we can pass to a limit in $n$ to get a sequence $R_k$, then by the Borel-Cantelli lemma, $R_k/r_k \to 0$ almost surely.

\begin{prop}
	\label{P:fn-cvg}
	Define
	$$
	F^n_k(x, z) = W^n[(x + a_n, n) \to (z, k)] - W^n_k(z).
	$$
	For any $x \in \R$, the function $F^n_k(x, z)$ is monotone increasing in $z$, and for any interval $[b, c] \sset \R$ we have
	\begin{equation}
	\label{E:Fnk-uni}
	\sup_{a \in [b, c]} \lf|F^n_k(x, a \sqrt{k}) - 2\sqrt{kx}(\sqrt{2} +  a\sqrt{x}) \rg| = \oo(\sqrt{k}).
	\end{equation}
\end{prop}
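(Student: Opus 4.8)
The plan is to prove monotonicity by a soft argument, reduce the asymptotic statement to a first-passage value via the RSK identity of Lemma~\ref{L:k-line-prior}, and then extract the $\sqrt k$-order parabola from a law-of-large-numbers computation inside the parabolic Airy line ensemble $\scrA$; this last step will be the main obstacle.

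\emph{Monotonicity.} For $z \le z'$, take any $W^n$-geodesic from $(x+a_n,n)$ to $(z,k)$ and extend it along line $k$ to $(z',k)$; this increases its length by exactly $W^n_k(z') - W^n_k(z)$, so $W^n[(x+a_n,n)\to(z',k)] \ge W^n[(x+a_n,n)\to(z,k)] + W^n_k(z') - W^n_k(z)$, which rearranges to $F^n_k(x,z') \ge F^n_k(x,z)$. The same remark shows $F^n_k(x,z)$ is finite and a geodesic exists once $x + a_n \le z$, which holds for all large $n$ since $a_n \to -\infty$.

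\emph{Reduction.} Put $g = T_{a_n} L_n$, so $g_i(0^-) = 0$, and note $W^n = W_{a_n} L_n$ satisfies $W^n[(x+a_n,n)\to(z,k)] = Wg[(x,n)\to(w,k)]$ and $W^n_k(z) = Wg_k(w)$ with $w := z - a_n$. Lemma~\ref{L:k-line-prior} applied to $g$ then gives
\begin{equation*}
F^n_k(x,z) = -\,W^*_w g\bigl[(w-x,1)^+ \to_{\mathfrak f} (w,k)\bigr],
\end{equation*}
a first-passage value in the reversed melon $W^*_w g = W R_{w,n} g$, touching only its lines $1,\dots,k$. Unwinding the rotation $R_{w,n}$ and the shift $T_{a_n}$ and using the isometry of Proposition~\ref{P:W-facts}(i), those lines are exactly translates of the multi-point last passage processes $A^{z}_{n,i}$, $\tilde A^{z}_{n,i}$ of the hypothesis, read over a spatial window of fixed width $x$, so the right-hand side is a continuous functional of finitely many of these processes. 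For each fixed $k$ one may therefore let $n \to \infty$, using the assumed product-of-Skorokhod convergence to $\scrA$ (and, where joint control over the width-$x$ window is needed, the convergence of the prelimit melons from \cite*{DNV}); since the limiting functional is continuous in its spatial arguments, Skorokhod convergence upgrades to uniform convergence on compacts. This identifies $\lim_n F^n_k(x,\cdot)$ with an explicit, model-independent first-passage functional $F_k(x,\cdot)$ of $\scrA$.

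\emph{The $\sqrt k$-asymptotics, and the obstacle.} It remains to show $\sup_{a\in[b,c]}\bigl|F_k(x,a\sqrt k) - 2\sqrt{kx}(\sqrt2 + a\sqrt x)\bigr| = o(\sqrt k)$ with the probability of exceeding $\epsilon\sqrt k$ summable in $k$; combined with the previous step (passing the $n$-limit through the events defining $\oo$) this yields the $\oo(\sqrt k)$ bound. Here the parabolic geometry of $\scrA$ enters: since $\scrA_i(y) + y^2$ is stationary (Proposition~\ref{P:sym}) and the mean profiles of $\scrA_1,\dots,\scrA_k$ are known, the means of the increments entering the first-passage functional over a width-$x$ window at transversal position $a\sqrt k$, optimized over the break-points of the complementary path, solve a deterministic variational problem whose value is $2\sqrt{kx}(\sqrt2 + a\sqrt x)$; the optimizer sits at transversal scale $\sqrt k$, which is why a width-$x$ window contributes only at order $\sqrt{kx}$. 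The fluctuation of $F_k(x,a\sqrt k)$ around this mean is of lower order with Gaussian-type tails, controlled via Brownian comparison for boundedly many lines over a bounded window (Proposition~\ref{P:brownian-airy}); this gives summability in $k$, while uniformity over $a \in [b,c]$ follows from the monotonicity of $F^n_k(x,\cdot)$, which lets one discretize $[b,c]$. The main obstacle is precisely this step: $F^n_k$ is a difference of two quantities of order $k$ whose leading parabolic and edge terms must cancel, and isolating the surviving $O(\sqrt k)$ term with the correct constant requires quantitative shape control of the Airy line ensemble down to depth $k$ — information not contained in the finite-$k$ distributional input of the hypothesis — carried out along the lines of \cite*{DOV}, Section~5.
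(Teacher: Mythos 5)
Your monotonicity argument (geodesic extension along line $k$) is correct and equivalent to the paper's (which instead inspects the formula as a supremum over an interval that grows with $z$), and your reduction via Lemma~\ref{L:k-line-prior} to a first-passage value in the reversed melon is the same reduction the paper uses. You also correctly and honestly flag the final asymptotic step as the main obstacle. Where your proposal diverges from the paper, and would not close as stated, is precisely there.

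Your plan for the asymptotics is to pass to the $n\to\infty$ limit and identify $F_k(x,\cdot)$ as a first-passage functional of $\scrA$, then solve a deterministic variational problem for the mean profile and control fluctuations ``via Brownian comparison for boundedly many lines over a bounded window (Proposition~\ref{P:brownian-airy})''. This will not work: the first-passage quantity $\scrA[(z-x,1)\to_{\mathfrak f}(z,k)]$ involves a complementary path traversing all $k$ lines of the ensemble, with $k\to\infty$, whereas Proposition~\ref{P:brownian-airy} only gives absolute continuity to Brownian motion for a \emph{bounded} number of lines. So the proposed fluctuation control does not apply, and you would be left facing exactly the ``quantitative shape control down to depth $k$'' you identify as the gap. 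The paper's route is different and more efficient: after passing to $\scrA$ and using continuity, it applies the flip symmetry $\scrA(\cdot)\eqd\scrA(-\cdot)$ (turning first passage into last passage) and then the stationarity of $\scrA(t)+t^2$ to obtain the distributional identity
$$
-\scrA[(z-x,1)\to_{\mathfrak f}(z,k)]\ \eqd\ \scrA[(0,k)\to(x,1)]+2xz,
$$
where the linear term $2xz$ yields the $2ax\sqrt k$ piece of the target when $z=a\sqrt k$, and the term $\scrA[(0,k)\to(x,1)]$ is precisely what Theorem~\ref{T:airy-lp} (DOV, Theorem~6.3) controls: $\scrA[(0,k)\to(x,1)]=2\sqrt{2kx}+\oo(\sqrt k)$ with summable tails. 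In other words, the needed depth-$k$ input is imported as a black box, \emph{after} a symmetry reduction that converts the awkward parabola-with-window first-passage object into a single canonical last-passage value. Without that reduction, there is no finite-line Brownian surrogate for the object you are trying to control, and no ready-made reference estimate to cite.

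One more small point: the paper does not actually construct a limiting functional $F_k(x,\cdot)$ of $\scrA$ and then estimate it; because of how the $\oo(\cdot)$ notation in~\eqref{E:onotation} is set up (a $\limsup_{n\to\infty}$ inside a sum over $k$), it suffices to identify the limiting \emph{distribution} of $F^n_k(x,a\sqrt k)$ for each fixed $k$, which is lighter than the uniform-on-compacts upgrade of Skorokhod convergence you invoke.
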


Proposition \ref{P:fn-cvg} is the analogue of Proposition 6.1 in \cite*{DOV}. It follows from Lemma \ref{L:k-line-prior} and the following estimate for last passage percolation across the Airy line ensemble. 

\begin{theorem}[\cite*{DOV}, Theorem 6.3]
	\label{T:airy-lp} There exists a constant $d \in \N$ such that for every $\ep > 0$ and $x > 0$, we have
	$$
	\sum_{k \in \N} \p \lf(\frac{\scrA[(0, k) \to (x, 1)] -{2\sqrt{2kx}}}{k^{9/21}\log^dk} > \ep\rg) < \infty.
	$$
\end{theorem}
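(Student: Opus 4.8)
Theorem \ref{T:airy-lp} is Theorem 6.3 of \cite*{DOV}, so in the present paper it is simply quoted; here I sketch how one proves it. Fix $x>0$. Since $\scrA$ is a.s.\ continuous, the last passage value is an honest supremum over jump times,
$$
\scrA[(0,k)\to(x,1)]=\sup_{0=t_k\le\cdots\le t_1\le t_0=x}\ \sum_{i=1}^{k}\bigl(\scrA_i(t_{i-1})-\scrA_i(t_i)\bigr)=\sup_{t}\ \sum_{i=1}^{k}\bigl(A_i(t_{i-1})-A_i(t_i)\bigr),
$$
where $A_i:=\scrA_i-\scrA_i(0)$, the constants $\scrA_i(0)$ cancelling termwise. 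By Proposition \ref{P:brownian-airy}, the vector $(A_1,\dots,A_k)$ restricted to $[0,x]$ is absolutely continuous with respect to a family of $k$ independent Brownian motions of variance $2$, and the corresponding Brownian supremum is $\sqrt2$ times the $k$-line Brownian last passage value across width $x$, which has mean $2\sqrt{2kx}$ and standard Tracy--Widom-type upper tails. Thus the content of the theorem is that the $\scrA$-supremum overshoots this Brownian mean by at most $k^{9/21}\log^{d}k$, with probability summable in $k$.

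The plan for the tail bound is a multi-scale union bound. Partition $[0,x]$ into $N_k$ equal subintervals; the vector of jump times of any path coarse-grains to a monotone step profile, which splits $\{1,\dots,k\}$ into at most $N_k$ consecutive \emph{blocks}, the lines of each block being confined to a single subinterval. There are $\exp(O(N_k\log k))$ such profiles, and the passage value along any path is bounded by the sum, over its blocks, of the last passage value within the block, i.e.\ across a consecutive run of $\scrA$-lines over one subinterval of length $x/N_k$. Each within-block value is controlled by comparison with independent variance-$2$ Brownian motions on $[0,x/N_k]$, using the Brownian Gibbs property of $\scrA$ (\cite*{CH}), together with one-point and modulus-of-continuity estimates for the individual lines $\scrA_i$ (\cite*{CH,DV}). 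The Radon--Nikodym cost of this comparison grows with the number of lines in the block and with the block's depth (the narrowness of the tube it must fit in), which forces the blocks to be short -- in particular the deepest lines must be handled essentially one at a time. Balancing the union-bound entropy $N_k\log k$ against this comparison cost, and against the fact that resolving the deep lines too finely inflates the main term $2\sqrt{2kx}$, pins down $N_k$ and the block sizes as specific powers of $k$ and yields a bound of the shape $\prob\bigl(\scrA[(0,k)\to(x,1)]-2\sqrt{2kx}>\epsilon\,k^{9/21}\log^{d}k\bigr)\le C_{x,\epsilon}\,k^{-2}$ for a suitable universal $d$. Summing over $k$ gives the theorem.

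The main obstacle is precisely this balancing, and within it the quantitative Brownian comparison for strips of many Airy lines via the Gibbs property. The true fluctuation scale of $\scrA[(0,k)\to(x,1)]$ about its refined centering is only of order $k^{1/6}$, but extracting it would require an essentially optimal many-line comparison; since every downstream use of the estimate -- in particular Proposition \ref{P:fn-cvg}, its only consumer -- needs merely a bound that is $o(\sqrt k)$ and stays summable after division by a small power of $k$, it is cleaner to run the crude multi-scale argument and accept the non-optimal exponent $9/21$ and the polylogarithmic factor.
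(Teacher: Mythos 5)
This statement is imported verbatim from \cite{DOV} (their Theorem 6.3); the present paper supplies no proof of it, only the citation, and you correctly recognize that. Your sketch of how the result is actually proved in \cite{DOV} is consistent with the argument there: the last passage value depends only on line increments, the main term $2\sqrt{2kx}$ is the Brownian (variance-$2$) last passage asymptotic, and the error is controlled by a block decomposition of the lines and of $[0,x]$, comparing each block to independent Brownian motions via the Brownian Gibbs property and balancing the Radon--Nikodym cost against the union-bound entropy, which is where the non-optimal exponent $9/21$ and the $\log^d k$ factor come from. Since the paper itself treats this as a black box, there is nothing to reconcile beyond noting that your outline is a fair account of the cited proof.
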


\begin{proof}[Proof of Proposition \ref{P:fn-cvg}] 
	By explicitly writing out the definition of last passage percolation in \eqref{E:lpp-def}, we have
	\begin{equation}
	\label{E:sum-of-gaps}
F^n_k(x, z) = - W^n_n((x+ a_n)^-) + \sup \sum_{i=k}^{n-1} (W^n_{i+1}(t_i) - W^n_i(t_i^-)).
	\end{equation}
	Here the supremum is over all sequences $t_{n-1} \le \dots \le t_k \in [x + a_n, z]$. As we increase $z$, we are taking the supremum over a larger interval, so $F^n_k(x,z)$ can only increase.
	
	We now prove \eqref{E:Fnk-uni}. By Lemma \ref{L:k-line-prior}, we can write
	\begin{align}
	\label{E:FP-melon}
	F^n_k(x, z) &= -W^*_{z} L^n[(z-a_n - x, 1)^+ \to_{\mathfrak{f}} (z-a_n, k)].
	\end{align}
	Rephrased in terms of $W^*_{z} L^n$, the compact convergence of $\tilde A_{n, k}$ in Theorem \ref{T:airy-sheet-gen} states that
	$$
	W^*_{z} L^n(-a_n + \cdot)
	$$
	converges in the Skorokhod topology to $\scrA$. By the continuity of $\scrA$, the first passage value in \eqref{E:FP-melon} therefore converges to a first passage value across $\scrA$:
	$$
	-W^*_{z} L^n[(z-a_n - x, 1)^+ \to_{\mathfrak{f}} (z-a_n, k)] \cvgd -\scrA[ (z -  x, 1) \to_{\mathfrak{f}} (z, k)].
	$$
	By the flip symmetry in Proposition \ref{P:sym}, and the equivalence between the definitions of first and last passage for continuous functions, the right hand side above is equal in distribution to
	\begin{eqnarray}
	\label{E:Aykf2}
	\scrA[(-z, k) \to (-z + x, 1)].
	\end{eqnarray}
	By the stationarity of $\scrA(t) + t^2$ (Proposition \ref{P:sym}), \eqref{E:Aykf2} is in turn equal in distribution to
	$$
	\scrA[(0, k) \to (x, 1)] + 2xz.
	$$
	Plugging in $z = a \sqrt{k}$ for a fixed $a \in \R$, Theorem \ref{T:airy-lp} now implies that
	$$
	F^n_k(x, a \sqrt{k}) = 2\sqrt{kx}(\sqrt{2} + a \sqrt{x}) +  \oo(\sqrt{k}).
	$$
This gives \eqref{E:Fnk-uni} when $b = c$. The case when $b < c$ follows from this case by monotonicity of $F^n_k$.
\end{proof}

\begin{corollary}
	\label{C:Gn-upper}
	With $L^n, a_n$ as in Theorem \ref{T:airy-sheet-gen}, and $W^n=W_{a_n}L^n$, for $z\le y$ define
	$$
	G^n_k(z, y) = W^n_k(z^-) +W^n[(z, k) \to (y, 1)].
	$$
	Then $G^n_k$ is monotone decreasing in $z$, and for any fixed $y \in \R$ and $[b, c] \sset (-\infty, 0)$ we have
	\begin{align}
	\label{E:Gnk-easy}
		G^n_k(a_n, y) & \le \oo(\sqrt{k}) \qquad \mathand \\
		\label{E:supG1}
		\sup_{a \in [b, c]} G^n_k(a \sqrt{k}, y)  + \frac{\sqrt{k}}{|a|}  &\le  \oo(\sqrt{k}).
	\end{align}
\end{corollary}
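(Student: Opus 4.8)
The plan is to follow the same route as for Proposition \ref{P:fn-cvg}, but now applied to last passage from the lower-left point $(z,k)$ to the fixed top point $(y,1)$ rather than from the open-left point $(x+a_n,n)$. First I would rewrite $G^n_k(z,y)$ using the melon/RSK machinery. By the isometry Proposition \ref{P:W-facts}(i) and the definition of $W^n_k$, the quantity $W^n_k(z^-) + W^n[(z,k)\to(y,1)]$ is exactly the $k$-point last passage value $W^n[(a_n, n)^k \to (y,1)^k]$ minus the $(k-1)$-point value, i.e. $W^n[(a_n,n)\to_{\Delta_k}(y,1)]$; in other words $G^n_k$ is (up to the parabolic centering implicit in the $W^n_k(z^-)$ term) the $k$-th increment of the multipoint last passage process that by hypothesis converges to $\scrA_k$. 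The monotonicity of $G^n_k$ in $z$ is immediate from the interval-supremum representation analogous to \eqref{E:sum-of-gaps}: decreasing $z$ enlarges the range over which the supremum defining the passage value is taken, so $W^n[(z,k)\to(y,1)]$ increases, and one checks that the compensating $W^n_k(z^-)$ term makes the sum monotone — this is the content of the ordering in Proposition \ref{P:W-facts}(ii) combined with the fact that $W^n_k$ is itself monotone after the melon operation.

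Next, for the estimate \eqref{E:Gnk-easy}, I would take $z = a_n$. Then $G^n_k(a_n, y) = W^n_k(a_n^-) + W^n[(a_n,k)\to(y,1)]$, and by the isometry this equals $W^n[(a_n,n)\to_{\Delta_k}(y,1)] = A^y_{n,k}(0)$ in the notation of Theorem \ref{T:airy-sheet-gen} (with the roles of the endpoints arranged so that the left endpoint sits at $a_n$). By the assumed Skorokhod convergence, $A^y_{n,k}(0) \cvgd \scrA_k(0)$ as $n \to \infty$, and by the known one-point tail bounds for the Airy line ensemble (the lower line $\scrA_k(0)$ concentrates around $-k^{2/3}$ up to lower-order fluctuations; more crudely, $\scrA_k(0) = \oo(\sqrt k)$ since $k^{2/3} = o(\sqrt k)$ is false — in fact $k^{2/3} \gg \sqrt k$, so one needs the slightly more careful statement that $\scrA_k(0) \le \oo(\sqrt k)$ holds in the upper direction only). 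Here the crucial point is that \eqref{E:Gnk-easy} is only an upper bound: $G^n_k(a_n,y) \le \oo(\sqrt k)$, which follows because $\scrA_k(0) \le 0$ deterministically fails but $\scrA_k(0)$ has uniformly-in-$k$ controlled upper tails — I would invoke Theorem \ref{T:airy-lp} (or the one-point version) together with the identification $\scrA_k(0)+$ parabola $= \scrA[(0,k)\to(0,1)]$ type passage value, exactly as in the proof of Proposition \ref{P:fn-cvg}, to get summable-in-$k$ upper deviation probabilities.

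For the quantitative bound \eqref{E:supG1}, I would set $z = a\sqrt k$ with $a$ ranging over a compact interval $[b,c]\subset(-\infty,0)$, and apply Lemma \ref{L:k-line-prior} in the form dual to the one used for $F^n_k$, rewriting $G^n_k(a\sqrt k, y)$ as a (last or first) passage value in the shifted melon $W^*$ and then, via the flip symmetry and stationarity of $\scrA(t)+t^2$ in Proposition \ref{P:sym}, as $\scrA[(0,k)\to(x',1)] - \sqrt k/|a| + \oo(\sqrt k)$ for the appropriate $x'$, where the $-\sqrt k/|a|$ comes from the parabolic shift $2xz$ evaluated at $z = a\sqrt k$ exactly as the term $2\sqrt{kx}\cdot a\sqrt x$ appeared in \eqref{E:Fnk-uni}. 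Theorem \ref{T:airy-lp} then upgrades the pointwise-in-$a$ estimate to the uniform one over $[b,c]$ by monotonicity of $G^n_k$ in $z$, just as in the last line of the proof of Proposition \ref{P:fn-cvg}.

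The main obstacle I anticipate is bookkeeping the signs and the parabolic shifts correctly: because $G^n_k$ involves $W^n_k(z^-)$ with a plus sign (whereas $F^n_k$ had $-W^n_k(z)$), the direction of monotonicity and the sign of the leading $\sqrt k/|a|$ term must be tracked with care, and one must confirm that the hypotheses of Theorem \ref{T:airy-sheet-gen} — stated for the forward process $A^z_{n,k}$ and the reflected process $\tilde A^z_{n,k}$ — are exactly what is needed here, since $G^n_k$ reads off last passage to a fixed top point $(y,1)$ from a moving lower point, which is the "reflected" orientation. Apart from that, the proof is essentially a transcription of the proof of Proposition \ref{P:fn-cvg} with the endpoints swapped, and the only genuinely new input is the elementary observation that we need only an upper bound, so the (harder) lower tail of $\scrA_k$ never enters.
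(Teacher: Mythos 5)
Your monotonicity argument is essentially right in spirit, but the other two parts have real problems.

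For \eqref{E:Gnk-easy}, your identification $G^n_k(a_n,y)=W^n[(a_n,n)\to_{\Delta_k}(y,1)]=A^y_{n,k}(0)$ "by the isometry" is simply incorrect. With $z=a_n$ we have $W^n_k(a_n^-)=0$ by the definition of $W_{a_n}$, so $G^n_k(a_n,y)=W^n[(a_n,k)\to(y,1)]$. Rearranging the last passage sum as in \eqref{E:sum-of-gaps}--style telescoping gives
\begin{equation*}
G^n_k(z,y)=W^n_1(y)+\sup\sum_{i=1}^{k-1}\bigl(W^n_{i+1}(t_i)-W^n_i(t_i^-)\bigr),
\end{equation*}
and by the ordering property in Proposition \ref{P:W-facts}(ii) every summand is $\le 0$, so $G^n_k(a_n,y)\le W^n_1(y)$ (in fact $=W^n_1(y)$, since $t_i=a_n$ achieves equality). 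Thus $G^n_k(a_n,y)$ is $A^0_{n,1}(y)$, a tight sequence in $n$ with \emph{no} $k$-dependence, hence trivially $\oo(\sqrt k)$. Your $\Delta_k$ quantity instead converges to $\scrA_k(0)\sim -k^{2/3}$; the ensuing confusion in your write-up about whether $\scrA_k(0)=\oo(\sqrt k)$ makes sense is a symptom of having misidentified the quantity, not a subtlety to be handled. The content of the paper's proof at this step is the one-line deterministic bound from the ordering property — no RSK isometry, no lower-tail control of $\scrA_k$ enters.

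For \eqref{E:supG1}, you propose a "dual form of Lemma \ref{L:k-line-prior}" that would rewrite $G^n_k(a\sqrt k,y)$ directly as a first passage in a reversed melon. No such dual form is available: Lemma \ref{L:k-line-prior} governs passage \emph{from the bottom line $n$ to a middle line $k$}, whereas $G^n_k$ involves passage \emph{from a middle line $k$ to the top line $1$}, and the proof of the lemma relies on complementary paths in the full $n$-line melon. The paper avoids this entirely with the triangle inequality \eqref{E:triangle-ineq} at the three points $(a_n+1/(2b^2),\,n)$, $(b\sqrt k,\,k)$, $(y,1)$, which yields
\begin{equation*}
G^n_k(b\sqrt k,y)\le W^n[(a_n+x,n)\to(y,1)]-F^n_k(x,b\sqrt k)
\end{equation*}
with $x=1/(2b^2)$, after which the already-established asymptotics of $F^n_k$ from Proposition \ref{P:fn-cvg} deliver the $-\sqrt k/|b|$ leading term. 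That triangle-inequality reduction to $F^n_k$ is the genuinely new input here, and it is missing from your proposal.
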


\begin{proof}
Just as in \eqref{E:sum-of-gaps}, we can rearrange the terms in the definition \eqref{E:lpp-def} to get that
\begin{equation}
\label{E:mono-G}
G^n_k(z, y) = W^n_1(y) +\sup \sum_{i=1}^{k-1} W^n_{i+1}(t_i) - W^n_i(t_i^-),
\end{equation}
where the supremum is over all sequences of times $t_{k-1} \le \dots \le t_1 \in [z, y]$. Since this supremum can only get smaller as we increase $z$, $G^n_k(\cdot, y)$ is monotone decreasing. Moreover, by Proposition \ref{P:W-facts}(ii), the sum under the supremum is always nonpositive. Therefore 
$$
G^n_k(a_n, y) \le W^n_1(y) = \oo(\sqrt{k}).
$$
The equality above follows from the much stronger fact that $W^n_1(y) = A^0_{n, 1}(y)$, and hence converges in distribution. It just remains to prove \eqref{E:supG1}. By the monotonicity of $G^n_k$, it is enough to prove \eqref{E:supG1} when $b = c$.
	
	With $F^n_k$ as in Proposition \ref{P:fn-cvg} and $x = 1/(2b^2)$, the triangle inequality \eqref{E:triangle-ineq} applied at the points $(a_n+x,n)$, $(b\sqrt{k},k)$, and $(y,1)$ gives
	\begin{equation}
	\label{E:Gn-Fn-Tri}
	G^n_k(b\sqrt{k}, y) \le W^n[(a_n + x, n) \to (y, 1)] - F^n_k(x, b \sqrt{k}).
	\end{equation}
	The first term on the right hand side of \eqref{E:Gn-Fn-Tri} equals $A^x_{n, 1}(y - x)$, and hence converges in distribution and is $\oo(\sqrt{k})$. The second term can be bounded by Proposition \ref{P:fn-cvg}, yielding the desired bound.
\end{proof}

We can use Proposition \ref{P:fn-cvg} and Corollary \ref{C:Gn-upper} to locate jump times for melon geodesics. For the remainder of the section, we let $Z^n_k(x, y)$ be the unique time when the rightmost geodesic in $W^n$ from $(a_n + x, n)$ to $(y, 1)$ intersects both line $k$ and line $k-1$. We note that $Z^n_k(x, y)$ is nonincreasing in $k$, and that for a fixed $k$, $Z^n_k(x,y)$ is nondecreasing in $x$ and $y$ by monotonicity of geodesics, Lemma \ref{L:mono-path}.

\begin{lemma}
	\label{L:zk-unif-bd}
	For every compact set $K \sset (0, \infty) \X\R$ we have
	\begin{equation}
	\label{E:ZnkxyKxy}
		\sup_{(x, y) \in K} \lf|Z^n_k(x, y) + \sqrt{\frac{k}{2x}} \rg| = \oo(\sqrt{k}).
	\end{equation}
	Moreover, for any fixed $x > 0, y \in \R,$ and $k \in \N$, the sequence $Z^n_k(x, y)$ is tight in $n$.
\end{lemma}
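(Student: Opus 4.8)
The plan is to deduce the location of the melon geodesic endpoint $Z^n_k(x,y)$ by playing off the two one-sided last passage profiles $F^n_k$ (controlled by Proposition \ref{P:fn-cvg}) and $G^n_k$ (controlled by Corollary \ref{C:Gn-upper}). The key identity is the metric composition law in the melon, Lemma \ref{L:metric}: for the rightmost geodesic from $(a_n + x, n)$ to $(y, 1)$ passing through lines $k$ and $k-1$ at time $Z := Z^n_k(x,y)$, one has
$$
W^n[(a_n + x, n) \to (y, 1)] = F^n_k(x, Z) + W^n_k(Z) + W^n_k(Z^-) + W^n[(Z, k) \to (y, 1)] - (W^n_k(Z) - W^n_k(Z^-))
$$
up to the bookkeeping of jump terms; more cleanly, $W^n[(a_n+x,n)\to(y,1)] = F^n_k(x,Z) + G^n_k(Z,y)$ (with the convention that the shared value on line $k$ at time $Z$ is not double counted, which is exactly how $F^n_k$ and $G^n_k$ were defined). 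So $Z^n_k(x,y)$ is the argmax over $z$ of $F^n_k(x,z) + G^n_k(z,y)$.

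First I would substitute the asymptotics: by Proposition \ref{P:fn-cvg}, $F^n_k(x, a\sqrt k) = 2\sqrt{kx}(\sqrt 2 + a\sqrt x) + \oo(\sqrt k)$ uniformly for $a$ in compacts, and by \eqref{E:supG1} in Corollary \ref{C:Gn-upper}, $G^n_k(a\sqrt k, y) \le -\sqrt k/|a| + \oo(\sqrt k)$ uniformly for $a$ in compact subsets of $(-\infty,0)$. Writing $z = a\sqrt k$, the profile $F^n_k(x,z) + G^n_k(z,y)$ is, up to $\oo(\sqrt k)$, at most $\sqrt k\,(2\sqrt x\cdot a\sqrt x - 1/|a|) + \text{const} = \sqrt k\,(2ax - 1/|a|) + \text{const}$ for $a < 0$. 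The function $a \mapsto 2ax + 1/a$ on $a<0$ is maximized at $a = -1/\sqrt{2x}$, i.e. $z = -\sqrt{k/(2x)}$, and it is strictly concave there with a nondegenerate second derivative. Since $F^n_k + G^n_k$ evaluated at $Z^n_k$ must equal the true last passage value $W^n[(a_n+x,n)\to(y,1)] = A^x_{n,1}(y-x) = \oo(\sqrt k)$ — which matches the maximum of the deterministic profile up to $\oo(\sqrt k)$ — any near-maximizer must lie within $o(\sqrt k)$ of $-\sqrt{k/(2x)}$, giving \eqref{E:ZnkxyKxy}. To promote this to a uniform-over-$K$ statement I would exploit monotonicity of $Z^n_k$ in $x$ and $y$ (noted before the lemma, from Lemma \ref{L:mono-path}), sandwiching $Z^n_k(x,y)$ between its values at finitely many grid points covering $K$, exactly as in the $b<c$ reductions in Proposition \ref{P:fn-cvg} and Corollary \ref{C:Gn-upper}.

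For the second claim — tightness of $Z^n_k(x,y)$ in $n$ for fixed $x,y,k$ — I would argue that $Z^n_k$ cannot escape to $-\infty$ or to $x + a_n$: if $Z^n_k \le a\sqrt k$ with $a$ very negative then $G^n_k(Z^n_k, y) \le G^n_k(a\sqrt k, y)$ by monotonicity, which by \eqref{E:supG1} is at most $-\sqrt k/|a| + \oo(\sqrt k) \to -\infty$ faster than $F^n_k(x, \cdot) = O(\sqrt k)$ can compensate, contradicting that the sum equals the tight quantity $A^x_{n,1}(y-x)$; and $Z^n_k$ is bounded above since $F^n_k$ is increasing while the true passage value is tight, forcing $G^n_k(Z^n_k,y)$, hence $Z^n_k$, to stay bounded above (using $G^n_k(a_n,y)\le\oo(\sqrt k)$ and monotonicity). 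Combined with the Skorokhod convergence hypotheses, which give tightness of all the relevant finite-$k$ passage values, this pins $Z^n_k(x,y)$ to a tight window.

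The main obstacle I anticipate is the careful bookkeeping of the jump terms in the metric composition law: $F^n_k$ and $G^n_k$ are defined with specific one-sided-limit conventions ($W^n_k(z)$ versus $W^n_k(z^-)$), and the rightmost geodesic spends a full interval on line $k$, so one must verify that $W^n[(a_n+x,n)\to(y,1)]$ really does decompose exactly as $F^n_k(x, Z^n_k) + G^n_k(Z^n_k, y)$ with no leftover jump, and that at a near-maximizer $z$ the value $F^n_k(x,z)+G^n_k(z,y)$ is a genuine lower bound for the last passage value (which follows from the triangle inequality \eqref{E:triangle-ineq}, the same tool used in Corollary \ref{C:Gn-upper}). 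Once this decomposition is nailed down, the rest is the concavity/uniqueness-of-argmax argument sketched above, which is routine.
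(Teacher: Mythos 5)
Your argument for \eqref{E:ZnkxyKxy} is essentially the paper's proof. You correctly identify the key decomposition $W^n[(a_n+x,n)\to(y,1)] = F^n_k(x, Z^n_k(x,y)) + G^n_k(Z^n_k(x,y), y)$ coming from the triangle inequality (equality at the geodesic jump time), plug in the asymptotics of Proposition~\ref{P:fn-cvg} and Corollary~\ref{C:Gn-upper}, and conclude that $Z^n_k/\sqrt{k}$ must be pinned near the unique maximizer $-1/\sqrt{2x}$ of the deterministic profile since the actual maximum is $\oo(\sqrt{k})$. The paper is somewhat more explicit about the regime $Z^n_k/\sqrt{k}\notin[b,c]$ via the bounds \eqref{E:abc-2}; your remark about strict concavity with nondegenerate second derivative gestures at this but leaves it to the reader. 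That is a minor stylistic difference, not a gap.

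Your treatment of the second claim (tightness of $Z^n_k(x,y)$ in $n$ for fixed $x,y,k$), however, has concrete errors. First, $G^n_k(\cdot,y)$ is monotone \emph{decreasing}, so $Z^n_k\le a\sqrt{k}$ gives $G^n_k(Z^n_k,y)\ge G^n_k(a\sqrt{k},y)$, the opposite of what you wrote. Second, $-\sqrt{k}/|a|\to 0$ as $|a|\to\infty$, not $-\infty$; the term that actually drives the profile to $-\infty$ as $a\to-\infty$ is the $2ax\sqrt{k}$ piece in the $F^n_k(x,a\sqrt{k})$ asymptotic, so $F^n_k$ is not merely ``$O(\sqrt{k})$'' as you state, and it is $F^n_k$, not $G^n_k$, that should supply the contradiction. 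Third, and more fundamentally, Proposition~\ref{P:fn-cvg} and Corollary~\ref{C:Gn-upper} are $\oo(\sqrt{k})$ statements in the sense of \eqref{E:onotation}; for \emph{fixed} $k$ they contribute nothing as $n\to\infty$, so they cannot directly control $Z^n_k$ at fixed $k$. The paper's argument for this part is much shorter and avoids all of this: once \eqref{E:ZnkxyKxy} is established, pick any large $k'>k$; then $\limsup_n\p(|Z^n_{k'}+\sqrt{k'/(2x)}|>\ep\sqrt{k'})$ is small, so $Z^n_{k'}$ is tight in $n$, and the deterministic monotone chain $y\ge Z^n_2\ge Z^n_3\ge\cdots$ squeezes $Z^n_k$ between $Z^n_{k'}$ (tight below) and $y$ (fixed above). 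You should replace your tightness paragraph with this observation.
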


This is the analogue of Lemma 7.1 in \cite*{DOV}.

\begin{proof}
	We prove \eqref{E:ZnkxyKxy} for a fixed $x, y$. The extension to compact sets follows by monotonicity of rightmost geodesics, Lemma \ref{L:mono-path}. With $F^n_k$ and $G^n_k$ as in Proposition \ref{P:fn-cvg} and Corollary \ref{C:Gn-upper}, by the triangle inequality \eqref{E:triangle-ineq} we have 
	\begin{equation}
	\label{E:Hn-Fn-Gn}
	W^n[(a_n + x, n) \to (y, 1)] \ge F^n_k(x, w) + G^n_k(w, y).
	\end{equation}
	We have equality in \eqref{E:Hn-Fn-Gn} at the point $Z^n_k(x, y)$. Therefore we can bound the location $Z^n_k(x, y)$ by showing that \eqref{E:Hn-Fn-Gn} is strict away from the point $w = -\sqrt{k/(2x)}.$ By Proposition \ref{P:fn-cvg} and Corollary \ref{C:Gn-upper}, for any compact interval $[b, c] \sset (-\infty, 0)$ we have the following bound:
	\begin{align}
	\label{E:abc}
	\sup_{a \in [b, c]} F^n_k(x, a \sqrt{k}) + G^n_k(a \sqrt{k}, y) +\sqrt{k} \lf(\sqrt{2|a|x} - \sqrt{1/|a|}\rg)^2 &\le \oo(\sqrt{k}).
	\end{align}
	Moreover, we can use the monotonicity of $F^n_k$ and $G^n_k$ with the bounds from Proposition \ref{P:fn-cvg} and Corollary \ref{C:Gn-upper} to get bounds outside of $[b, c]$:
	\begin{equation}
	\label{E:abc-2}
	\begin{split}
	\sup_{w\in [a_n, b\sqrt{k}]} F^n_k(x, w) + G^n_k(w, y) &\le F^n_k(x, b \sqrt{k}) + G^n_k(a_n, y) \le 2\sqrt{k}(\sqrt{2x} + bx) + \oo(\sqrt{k})\\
	\sup_{w \in [c\sqrt{k}, z]} F^n_k(x, w) + G^n_k(w, y) &\le F^n_k(x, z) + G^n_k(c \sqrt{k}, z) \le 2\sqrt{2kx} + \sqrt{k}/c + \oo(\sqrt{k})  
	\end{split}
	\end{equation}
	Given $x>0$, we pick $b,c<0$ so that $\sqrt{2x} + bx$ and $\sqrt{2x} + 1/(2c)$ are negative. 
	With these choices, combining the bounds in \eqref{E:abc} and \eqref{E:abc-2} with the fact that the left hand side of \eqref{E:Hn-Fn-Gn} converges to a shifted Tracy-Widom random variable (and hence is $\oo(\sqrt{k})$) implies \eqref{E:ZnkxyKxy}.
	
	For fixed $x, y,$ and $k$, the tightness of $Z^n_k(x, y)$ in $n$ follows from \eqref{E:ZnkxyKxy} and the monotonicity $y \ge Z^n_2(x, y) \ge Z^n_3(x, y) \ge \dots.$ 
\end{proof}

Let $\pi_n(x, y)$ be the rightmost geodesic in $W^n$ from $(a_n + x, n)$ to $(y, 1)$. We can use Lemma \ref{L:zk-unif-bd} to prove a result about disjointness of geodesics in $W^n$. 

\begin{lemma}
	\label{L:close-paths}
	For every $x> 0$ and $y \in \R$ we have
	\begin{equation}
	\label{E:limepsilon}
	\lim_{\epsilon \downarrow 0} \limsup_{n \to \infty} \mathbb{P}(\pi_n(0, y - \ep) \text{ and } \pi_n(x, y) \text{ are essentially disjoint}) = 0.
	\end{equation}
\end{lemma}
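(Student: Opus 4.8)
The plan is to show that, with probability tending to $1$ as $\ep\downarrow 0$ (uniformly in $n$, in the $\limsup_n$ sense), the geodesics $\pi_n(0,y-\ep)$ and $\pi_n(x,y)$ share a horizontal segment of positive length, which forces their intersection to be infinite. I would begin by recording the monotonicity input: since $(a_n,n)$ lies to the left of $(a_n+x,n)$ and $(y-\ep,1)$ to the left of $(y,1)$, Lemma~\ref{L:mono-path} gives that $\pi_n(0,y-\ep)$ lies to the left of $\pi_n(x,y)$ and that $\pi_n(0,y-\ep)\cap\pi_n(x,y)$ is again a path. A path is infinite precisely when one of its horizontal segments is nondegenerate, so it suffices to produce such a segment on some line.

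The natural place is line $1$, where both geodesics terminate, at the nearby coordinates $y-\ep$ and $y$. On line $1$, $\pi_n(0,y-\ep)$ occupies $[Z^n_2(0,y-\ep),\,y-\ep]\times\{1\}$ and $\pi_n(x,y)$ occupies $[Z^n_2(x,y),\,y]\times\{1\}$; by monotonicity of rightmost geodesics one has $Z^n_2(0,y-\ep)\le Z^n_2(x,y)$, so the common part on line $1$ is $[Z^n_2(x,y),\,y-\ep]\times\{1\}$, which is nondegenerate exactly when $Z^n_2(x,y)<y-\ep$. (One also checks that, since $\pi_n(0,y-\ep)$ reaches line $1$ from far out to the left --- its starting point $(a_n,n)$ drifts to $-\infty$ --- there is no alternative place higher up for the two geodesics to meet, so line $1$ is the only chance.) Therefore
$$
\p\big(\pi_n(0,y-\ep)\text{ and }\pi_n(x,y)\text{ are essentially disjoint}\big)\ \le\ \p\big(Z^n_2(x,y)\ge y-\ep\big),
$$
and the lemma reduces to showing $\lim_{\ep\downarrow 0}\limsup_n\p(Z^n_2(x,y)\ge y-\ep)=0$.

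For this last step I would use that $Z^n_2(x,y)\le y$ always, together with Lemma~\ref{L:zk-unif-bd}, which gives tightness of $Z^n_2(x,y)$ in $n$; the point to establish is that $Z^n_2(x,y)$ converges in distribution to a limit $\zeta$ with $\zeta<y$ almost surely, so that $\limsup_n\p(Z^n_2(x,y)\ge y-\ep)\le\p(\zeta\ge y-\ep)\to\p(\zeta=y)=0$ as $\ep\downarrow0$. The heuristic for $\zeta<y$ is local: near the endpoint $(y,1)$ the rightmost geodesic in $W^n$ chooses its transition time $t_1=Z^n_2(x,y)$ as the largest maximizer of a functional of the form $g(t_1)-W^n_1(t_1^-)$, where $g$ and $W^n_1$ are nondecreasing and, after the Airy rescaling, both look locally Brownian with the same parabolic trend near $y$, so the maximizer lands strictly before $y$ almost surely. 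I expect the main obstacle to be making this step uniform in $n$: one needs the genuine in-distribution convergence of $Z^n_2(x,y)$ (not merely tightness), which requires identifying the local limit of the bottom of $W^n$ with the Airy line ensemble and arguing that the map ``environment $\mapsto$ rightmost transition position near the endpoint'' is almost everywhere continuous with respect to the Airy line ensemble, so that $Z^n_2(x,y)$ does not build up mass at the point $y$ along any subsequence.
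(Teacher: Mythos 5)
Your reduction to the event $\{Z^n_2(x,y)\ge y-\ep\}$ is correct and is essentially the paper's first step, though the paper reaches it more cleanly: it replaces the rightmost geodesic $\pi_n(0,y-\ep)$ by the \emph{leftmost} geodesic $\bar\pi_n(0,y-\ep)$ (which is to the left, so this proves a stronger claim), and observes that since $W^n_i(a_n^-)=0$ for all $i$ and the melon is ordered (Proposition \ref{P:W-facts}(ii)), $\bar\pi_n(0,y-\ep)$ simply jumps to line~$1$ immediately at $a_n$ and runs along it. This makes the reduction immediate and sidesteps your "line~$1$ is the only chance" assertion, which for the \emph{rightmost} geodesic $\pi_n(0,y-\ep)$ is not actually justified: the two rightmost geodesics could a priori share a nondegenerate segment on line~$2$ without sharing one on line~$1$. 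Your bound $\p(\text{ess.\ disjoint})\le\p(Z^n_2(x,y)\ge y-\ep)$ survives regardless, because $Z^n_2(x,y)<y-\ep$ forces a common segment $[Z^n_2(x,y),y-\ep]\times\{1\}$, but the stated equivalence is not established.

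The genuine gap is in the second half. You correctly identify what is needed, namely that $\limsup_n\p(Z^n_2(x,y)\ge y-\ep)\to 0$, and you correctly flag that tightness alone is insufficient, but the mechanism you sketch does not hold up, and you do not supply the actual argument. Two specific problems. First, you posit that $Z^n_2(x,y)$ converges in distribution to a single limit $\zeta$; this is more than needed and is not readily available. The paper instead considers an arbitrary joint subsequential limit $(\scrA_k,Z_i)$ of $(W^n_k,Z^n_i(x,y))$ (joint tightness comes from Lemma \ref{L:zk-unif-bd} plus the Airy line ensemble hypothesis), passes to an a.s.\ coupling by Skorokhod's representation theorem, and proves $Z_2<y$ a.s.\ for every such subsequential limit; that suffices. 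Second, your heuristic --- that $Z^n_2$ is the argmax of a one-variable functional $g(t_1)-W^n_1(t_1^-)$ with both pieces ``locally Brownian with the same parabolic trend,'' so the maximizer falls strictly before $y$ --- isn't the right mechanism and is not easy to make rigorous as stated, since $g$ depends on the whole environment and the argmax could a priori sit at the boundary. The paper's actual route is: identify $\{Z_i\}$ as jump times of a geodesic in $\scrA$; use the asymptotics $Z_k\sim-\sqrt{k/(2x)}$ from Lemma \ref{L:zk-unif-bd} to conclude $K:=\inf\{k\ge 2: Z_k<y-1\}<\infty$ a.s.; then, on the event $K>2$, note $\{Z_i:2\le i\le K-1\}$ are jump times of a geodesic in $\scrA$ from $(y-1,K-1)$ to $(y,1)$, and apply the Brownian absolute continuity of the top $k$ lines of $\scrA$ on $[y-1,y]$ (Proposition \ref{P:brownian-airy}) to conclude that, for each fixed $k$, a.s.\ all jump times of any geodesic from $(y-1,k-1)$ to $(y,1)$ lie in the open interval $(y-1,y)$. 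This multi-line, compact-window argument is what replaces your one-variable heuristic, and it is the content you would need to supply to close the proof.
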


This and the next lemma are analogues of Lemma 7.2 in \cite*{DOV}. The proof is slightly different, since the prelimiting last passage percolation here is not necessarily stationary. 

\begin{proof}
	We will prove the lemma with the leftmost geodesic $\bar \pi_n(0, y -\ep)$ in place of the rightmost one $\pi_n(0, y-\ep).$ This is a stronger statement by monotonicity of last passage geodesics, Lemma \ref{L:mono-path}.
	
	The path $\bar \pi_n(0, y-\ep)$ simply follows the top line in $W^n$ since $W^n_i(0^-) = 0$ for all $i$ and $W^n$ is ordered, Proposition \ref{P:W-facts}(ii). Therefore it is disjoint from $\pi_n(x, y)$ if $\pi_n(x, y)$ has its final jump after time $y$. The final jump of $\pi_n(x, y)$ occurs at the time $Z^n_2(x, y)$, so the probability in \eqref{E:limepsilon} is bounded above by
	$$
	\mathbb{P}(Z^n_2(x, y) > y -\ep).
	$$
	Lemma \ref{L:zk-unif-bd} implies that the sequence 
	\begin{equation}
	\label{E:Zn2x}
	(Z^n_i(x, y), i = 2, 3, \dots)
	\end{equation}
	is tight in $n$. Moreover, $W_k^n(\cdot) = A^0_{n, k}(\cdot)$, where $A^0_{n, k}$ is as in the statement of Theorem \ref{T:airy-sheet-gen}. Therefore by the assumption in Theorem \ref{T:airy-sheet-gen}, the pair $(W^n_k : k \in \N, Z^n_i(x, y), i \ge 2)$ is jointly tight, where the underlying topology is the Skorokhod topology for the paths $W^n_k$. Subsequential limits are of the form $(\scrA_k, k \in \N , Z_i, i \ge 2)$, where $\scrA$ is a parabolic Airy line ensemble.
	To complete the proof of the lemma, it suffices to show that in such a subsequential limit, $Z_2 < y$ almost surely.
	
	Consider such a subsequential limit. By Skorokhod's representation theorem, we can find a coupling of the environments such that along some subsequence $(W^n_k : k \in \N, Z^n_i(x, y), i \ge 2) \to (\scrA_k, k \in \N , Z_i, i \ge 2)$ almost surely. Since Skorokhod convergence implies uniform convergence on compact sets when the limit is continuous, along this subsequence $W^n_k \to \scrA_k$ compactly for all $k \in \N$. Therefore for any $k \in \N$, almost surely
	$$
	W^n[(Z^n_k(x, y), k) \to (y, 1)] \to \scrA[(Z_k, k) \to (y, 1)].
	$$
	In particular, since the points $Z^n_i(x, y), i = 2, \dots, k$ are jump times on a geodesic in $W^n$ from $(Z^n_k(x, y), k)$ to $(y, 1)$, the points $\{Z_i : i \in [2, k]\}$ are the jump times on a geodesic in $\scrA$ from $(Z_k, k)$ to $(y, 1)$.
	
	The asymptotics of $Z^n_k(x, y)$ from Lemma \ref{L:zk-unif-bd} imply that almost surely, the infimum 
	$$
	K = \inf \{k \in \{2, 3, \dots\} : Z_k < y - 1\}
	$$
	is finite. If $K = 2$, then $Z_2 < y$ as desired. If not, then the points $\{Z_i : i \in \{2, \dots K-1\}\}$ are jump times on a geodesic in $\scrA$ from $(y-1, K-1)$ to $(y, 1)$. Now, by Proposition \ref{P:brownian-airy}, for any $k \in \N$ the top $k$ lines of $\scrA$ restricted to the interval $[y-1, y]$ are absolutely continuous with respect to $k$ independent Brownian motions. Therefore for any $k \in \N$, almost surely all jump times on any geodesic in $\scrA$ from $(y-1, k-1)$ to $(y, 1)$ are contained in the open interval $(y-1, y)$.  In particular, $Z_2 < y$ almost surely on the event $K > 2$. 
\end{proof}

\begin{lemma}
	\label{L:close-paths-2}
	For every $x > 0$ and $y < z \in \R$ we have
	$$
	\lim_{\epsilon \to 0} \limsup_{n \to \infty} \mathbb{P}(\pi_n(x, y) \text{ and } \pi_n(x + \epsilon, z) \text{ are essentially disjoint}) = 0.
	$$
\end{lemma}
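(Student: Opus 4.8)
The plan is to adapt the proof of Lemma \ref{L:close-paths} (both are analogues of the same result in \cite{DOV}), replacing its use of the leftmost geodesic along the top line by a direct comparison of jump locations. By Lemma \ref{L:mono-path}, $\pi_n(x,y)$ lies to the left of $\pi_n(x+\ep,z)$ (here $a_n+x\le a_n+x+\ep$ and $y<z$), so the two will fail to be essentially disjoint as soon as they traverse a common positive-length segment of some line $\ell$. On line $\ell$ the geodesic $\pi_n(x,y)$ covers the interval $[Z^n_{\ell+1}(x,y),Z^n_\ell(x,y)]$ and $\pi_n(x+\ep,z)$ covers $[Z^n_{\ell+1}(x+\ep,z),Z^n_\ell(x+\ep,z)]$; since each $Z^n_k$ is nondecreasing in the start and endpoint (Lemma \ref{L:mono-path}), the second interval lies weakly to the right of the first, and the two overlap in positive length unless $Z^n_\ell(x,y)\le Z^n_{\ell+1}(x+\ep,z)$. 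Hence, for every fixed $\ell\ge2$,
\[
\mathbb{P}\bigl(\pi_n(x,y)\text{ and }\pi_n(x+\ep,z)\text{ are essentially disjoint}\bigr)\le\mathbb{P}\bigl(Z^n_\ell(x,y)\le Z^n_{\ell+1}(x+\ep,z)\bigr).
\]

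Next I would pass to the limit. By Lemma \ref{L:zk-unif-bd} the array $\bigl(W^n_k,\,Z^n_i(x,y),\,Z^n_i(x,z),\,Z^n_i(x+\ep,z)\bigr)_{k\ge1,\,i\ge2}$ is jointly tight, and in any subsequential limit the $Z$-coordinates are, exactly as in the proof of Lemma \ref{L:close-paths}, the jump times of geodesics in $\scrA$ that for each fixed $k$ run from $(Z^{x,y}_k,k)$, $(Z^{x,z}_k,k)$, $(Z^{x+\ep,z}_k,k)$ --- with these starting points respectively near $-\sqrt{k/(2x)}$, $-\sqrt{k/(2x)}$, $-\sqrt{k/(2(x+\ep))}$ --- to $(y,1)$, $(z,1)$, $(z,1)$, all inside the same $\scrA$. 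Thus $\limsup_n\mathbb{P}\bigl(Z^n_\ell(x,y)\le Z^n_{\ell+1}(x+\ep,z)\bigr)\le\mathbb{P}\bigl(Z^{x,y}_\ell\le Z^{x+\ep,z}_{\ell+1}\bigr)$. Letting $\ep\downarrow0$, the starting direction of the geodesic to $(z,1)$ converges, so $Z^{x+\ep,z}_{\ell+1}\to Z^{x,z}_{\ell+1}$, the $(\ell+1)$-st jump time of the geodesic in $\scrA$ to $(z,1)$ coming from direction $x$. The geodesics to $(y,1)$ and to $(z,1)$ emanate from the same asymptotic direction, hence coalesce and in particular agree on all but finitely many lines; writing $L$ for the (a.s.\ finite) largest line on which they differ, on the event $\{L<\ell+1\}$ we have $Z^{x,z}_{\ell+1}=Z^{x,y}_{\ell+1}<Z^{x,y}_\ell$, where the strict inequality holds because the jump times of a geodesic in $\scrA$ are a.s.\ distinct by the Brownian Gibbs property (Proposition \ref{P:brownian-airy}). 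Therefore $\mathbb{P}\bigl(Z^{x,y}_\ell\le Z^{x,z}_{\ell+1}\bigr)\le\mathbb{P}(L\ge\ell+1)$.

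Putting this together: given $\delta>0$, since $L<\infty$ a.s.\ we may fix $\ell$ with $\mathbb{P}(L\ge\ell+1)<\delta$; combining the two displays gives $\lim_{\ep\downarrow0}\limsup_n\mathbb{P}\bigl(\pi_n(x,y)\text{ and }\pi_n(x+\ep,z)\text{ are essentially disjoint}\bigr)\le\delta$, and letting $\delta\downarrow0$ completes the proof. The hard part will be the two properties of $\scrA$ invoked in the second step, together with the uniformity of the $\ep\downarrow0$ passage: that the jump locations of the limiting (semi-infinite) Airy geodesics depend continuously on their asymptotic direction, and that two such geodesics with a common direction coalesce at an almost surely finite line. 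These are coalescence statements for semi-infinite geodesics in the parabolic Airy line ensemble; I expect them to follow, as in \cite{DOV}, from the local Brownian regularity of $\scrA$ (Proposition \ref{P:brownian-airy}) and the monotonicity and leftmost/rightmost structure of last passage geodesics, but phrasing them correctly as limits of the prelimiting melon geodesics $\pi_n$ --- in particular, realizing ``the geodesic in $\scrA$ coming from direction $x$'' as a genuine limit jointly with the three families above --- is where the care is needed.
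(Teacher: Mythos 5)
There is a genuine gap, and it is exactly where you flag it. The key step of your argument --- that the two semi-infinite geodesics in $\scrA$ aimed at $(y,1)$ and $(z,1)$ from the same direction $x$ coalesce at an almost surely finite line $L$ --- is not available at this point in the paper. In fact this coalescence is precisely what Corollary \ref{C:tree-struct} establishes, and the proof of Corollary \ref{C:tree-struct} invokes Lemma \ref{L:close-paths-2}, so using it here is circular. Nor does Proposition \ref{P:brownian-airy} give it to you: local Brownian absolute continuity controls finitely many lines on a compact interval, which does suffice for the non-degeneracy of jump times you also need, but says nothing about forcing two semi-infinite geodesics together. Your first reduction (to $\p\bigl(Z^n_\ell(x,y)\le Z^n_{\ell+1}(x+\ep,z)\bigr)$ via geodesic monotonicity, Lemma \ref{L:mono-path}) is correct, but it converts the lemma into a coalescence statement that must then be proved from scratch. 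The continuity $Z^{x+\ep,z}_{\ell+1}\to Z^{x,z}_{\ell+1}$ has the same difficulty: the monotone limit exists, but identifying it as the jump time of a canonical limiting geodesic is an object you have not constructed --- all your limiting jump times are only defined as subsequential limits, and a priori different subsequences could give different geodesics.

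The paper's proof takes a completely different and much shorter route. It notes that essentially disjoint geodesics from $p_1\to q_1$ and $p_2\to q_2$ exist in an environment $f$ if and only if $f[p_1\to q_1]+f[p_2\to q_2]=f[(p_1,p_2)\to(q_1,q_2)]$, that this identity is preserved between $L^n$ and $W^n$ by the RSK isometry, Proposition \ref{P:W-facts}(i), and that passing to the reverse melon $W^*_z L^n$ sends the pair of geodesics under consideration to a pair of the form appearing in Lemma \ref{L:close-paths}, with one of them started at the left edge. Since the reversed environment satisfies the same hypotheses after a shift, Lemma \ref{L:close-paths} applies directly and no new estimate is needed. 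The algebraic symmetry argument thus sidesteps the coalescence question entirely, whereas your geometric approach runs into it head on.
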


\begin{proof}
	There exist essentially disjoint geodesics from $p_1\to q_1$ and $p_2 \to q_2$ in any environment $f \in \scrD^\Z$ if and only if 
	$$
	f[p_1 \to q_1] + f[p_2 \to q_2] = f[(p_1, p_2) \to (q_1, q_2)].
	$$
	Since last passage values for arbitrary disjoint paths from line $n$ to line $1$ are preserved by $W$ by Proposition \ref{P:W-facts}(i), this implies that there exist essentially disjoint geodesics from $(a_n + x, n) \to (y, 1)$ and $(a_n + x + \ep, n) \to (z, 1)$ in $W^n$ if and only if this holds in $L^n$. By the same argument, this holds if and only if there exist essentially disjoint geodesics in the reverse melon $W^*_{z} L^n$ from $(z-y,n) \to (-a_n + z - x,1)$ and $(0,n)\to(-a_n + z - x - \ep,1)$.

Since the environment reversed at $z$ satisfies the same assumptions as the original environment after a shift by $a_n$, we can apply Lemma \ref{L:close-paths} to $W^*_{z} L^n$ to complete the proof.
\end{proof}

Lemma \ref{L:close-paths-2} and Lemma \ref{L:zk-unif-bd} can be combined to give bounds on the branching points of geodesics in $W^n$.

\begin{corollary}
	\label{C:tree-struct}
Define
	$
	K_n(x; y, z) = \inf \{k \in \N : Z^n_k(x, y) = Z^n_k(x, z)\}.
	$
	For every triple $x > 0, y < z$, the sequence $K_n(x; y, z)$ is tight.
\end{corollary}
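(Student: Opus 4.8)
The plan is to show that for every $\delta>0$ there is $k\in\N$ with $\limsup_{n\to\infty}\p\big(K_n(x;y,z)>k\big)<\delta$. Fix $\epsilon>0$, to be chosen, and write $\pi_n(x,y)$, $\pi_n(x,z)$, $\pi_n(x+\epsilon,z)$ for the rightmost geodesics in $W^n$ from $(a_n+x,n)$, $(a_n+x,n)$, $(a_n+x+\epsilon,n)$ to $(y,1)$, $(z,1)$, $(z,1)$ respectively. By Lemma~\ref{L:mono-path} these three are ordered from left to right in this order, so at every line $\ell$ their jump times satisfy $Z^n_\ell(x,y)\le Z^n_\ell(x,z)\le Z^n_\ell(x+\epsilon,z)$.

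Two structural observations drive the argument. First, since $\pi_n(x,y)$ and $\pi_n(x,z)$ share the source $(a_n+x,n)$, their intersection is a path by Lemma~\ref{L:mono-path}; a path containing that source together with a nondegenerate interval on some line $j$ must contain a nondegenerate interval on every line between $j$ and $n$, and comparing the endpoints of those intervals with the line-$\ell$ segments $[Z^n_{\ell+1},Z^n_{\ell}]$ of the two geodesics forces $Z^n_m(x,y)=Z^n_m(x,z)$ for all $m\ge j+1$; in particular $K_n(x;y,z)\le j+1$. Second, a sandwiching step: because on line $j$ the segments of $\pi_n(x,y)$, $\pi_n(x,z)$, $\pi_n(x+\epsilon,z)$ are intervals whose endpoints obey the inequalities above, any nondegenerate interval lying in both the $\pi_n(x,y)$ and the $\pi_n(x+\epsilon,z)$ segment of line $j$ also lies in the $\pi_n(x,z)$ segment of line $j$. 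Combining the two: if $\pi_n(x,y)$ and $\pi_n(x+\epsilon,z)$ coincide on a nondegenerate interval of some line $j$, then $K_n(x;y,z)\le j+1$.

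It remains to control, uniformly in $n$, the line at which $\pi_n(x,y)$ and $\pi_n(x+\epsilon,z)$ coalesce. By Lemma~\ref{L:close-paths-2}, given $\delta$ I would choose $\epsilon>0$ so small that $\limsup_n\p\big(\pi_n(x,y)\text{ and }\pi_n(x+\epsilon,z)\text{ are essentially disjoint}\big)<\delta/2$; since the two paths have only finitely many lines, on the complementary event they coincide on a nondegenerate interval of some line. By Lemma~\ref{L:zk-unif-bd} the line-$j$ segment of $\pi_n(x,y)$ sits near $-\sqrt{j/(2x)}$ and that of $\pi_n(x+\epsilon,z)$ near $-\sqrt{j/(2(x+\epsilon))}$, a deterministic separation of order $c_\epsilon\sqrt{j}$ with $c_\epsilon=\tfrac{1}{\sqrt{2}}\big(x^{-1/2}-(x+\epsilon)^{-1/2}\big)>0$, whereas the fluctuations around these locations are $\oo(\sqrt{j})$ --- of strictly smaller order and with summable-in-$j$ error probabilities. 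Hence for all large $k$ the event $F_n$ that $\pi_n(x,y)$ and $\pi_n(x+\epsilon,z)$ have disjoint segments on every line $j\ge k$ satisfies $\limsup_n\p(F_n^c)<\delta/2$. On the intersection of the two good events the common nondegenerate interval must occur at a line $j<k$, so by the previous paragraph $K_n(x;y,z)\le j+1\le k$, giving $\limsup_n\p\big(K_n(x;y,z)>k\big)<\delta$.

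I expect the main obstacle to be the uniform-in-$n$ bound on $\p(F_n^c)$: one needs the two geodesics separated not at a single line but at every line $j\ge k$ at once, which forces one to use the precise summable form of the $\oo(\sqrt{j})$ estimate in Lemma~\ref{L:zk-unif-bd} rather than a bare convergence-in-probability statement; alternatively, as in the proof of Lemma~\ref{L:close-paths}, one passes to a subsequential Skorokhod limit of $\big(W^n_k,\,(Z^n_i(x,y))_i,\,(Z^n_i(x,z))_i,\,(Z^n_i(x+\epsilon,z))_i\big)$, where the Borel-Cantelli content of the $\oo$ notation applies directly to confine the coalescence of the limiting geodesics. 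A secondary technical point is the line-by-line endpoint comparison extracting the first structural observation from the ``intersection is a path'' clause of Lemma~\ref{L:mono-path}.
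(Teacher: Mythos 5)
Your overall strategy mirrors the paper's: use Lemma~\ref{L:close-paths-2} to choose $\ep$ so that $\pi_n(x,y)$ and $\pi_n(x+\ep,z)$ are almost surely not essentially disjoint, use Lemma~\ref{L:mono-path} to convert an overlap event into a bound on $K_n$, and use Lemma~\ref{L:zk-unif-bd} to confine the overlap to low lines. The difference is in the last step, and that is exactly where the gap sits.

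You control the overlap of the \emph{ordered} pair $(\pi_n(x,y),\pi_n(x+\ep,z))$, and to show that any nondegenerate overlap occurs on a line $j<k$ you must rule out overlap simultaneously on every line $j\ge k$. This forces a union bound whose cost you try to pay with the summability in the $\oo(\sqrt{j})$ estimate of Lemma~\ref{L:zk-unif-bd}. But recall the definition of $\oo$ in~\eqref{E:onotation}: it is $\sum_j \limsup_n \p(\cdot)<\infty$, i.e.\ the sum is taken \emph{after} the $\limsup$ in $n$. A union bound in the prelimit gives $\p(\bigcup_{j\ge k}A_{n,j})\le\sum_{j\ge k}\p(A_{n,j})$, and one then needs $\limsup_n\sum_{j\ge k}\p(A_{n,j})$ to be small; this does \emph{not} follow from $\sum_{j\ge k}\limsup_n\p(A_{n,j})$ being small, since $\limsup_n$ and an unbounded sum over $j$ do not commute (consider $a_{n,j}=\indic(j=n)$). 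You flag this as ``the main obstacle,'' and indeed it is a genuine gap, not a matter of being more careful with the $\oo$ notation.

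The paper sidesteps the union bound entirely by working with the \emph{crossing} pair $(\pi_n(x,z),\pi_n(x+\ep,y))$, which have swapped endpoint order ($z>y$ but $x<x+\ep$) and therefore must cross. The key point is that the single event $\{Z^n_{k-1}(x,z)<Z^n_k(x+\ep,y)\}$ places $\pi_n(x,z)$ strictly to the left of $\pi_n(x+\ep,y)$ on line $k-1$; since they end in the opposite order at line $1$, the crossing is forced into the top $k-1$ lines, hence the intersection path $\pi^*_n(x,z;x+\ep,y)$ reaches the top $k-1$ lines. Combined with the tree-structure identity $\pi^*_n(x,y;x,z)=\pi^*_n(x,z;x+\ep,y)\cup(\text{initial segment of }\pi_n(x,z))$ on the not-disjoint event, this gives $K_n\le k$ from a \emph{single} level-$k$ comparison, for which Lemma~\ref{L:zk-unif-bd} applies directly without any union bound. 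Your alternative suggestion of passing to a Skorokhod limit as in Lemma~\ref{L:close-paths} does not fix the issue either, since the same nonuniformity in $j$ reappears there (the convergence $Z^n_j\to Z_j$ is pointwise in $j$). I'd recommend replacing your step 2 with the paper's crossing-pair argument; your sandwiching and path-intersection observations are fine but are more work than needed once one sees the crossing trick.
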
 

\begin{proof} Recall the notation $\pi(x, y)$ for the rightmost geodesic in $W^n$ from $(a_n + x, n)$ to $(y, 1)$. We also let $\pi^*_n(x, y; x', y') = \pi(x, y) \cap \pi(x', y')$. This intersection is a (possibly empty)  path by Lemma \ref{L:mono-path}.
	
	Let $\ep > 0$. Observe that $\pi^*_n(x, z; x + \ep, y) = \pi^*_n(x, y; x + \ep, z)$ whenever the rightmost geodesics $\pi_n(x, y)$ and $\pi_n(x+ \ep, z)$ are not disjoint. Moreover, on this event, $\pi^*_n(x, y; x, z)$ is given by the union of $\pi^*_n(x, z; x + \ep, y)$ with the initial segment of the geodesic $\pi_n(x, z)$. That is
	$$
	\pi^*_n(x, y; x, z) = \pi^*_n(x, z; x + \ep, y) \cup \{(r, m) \in \pi_n(x, z) : \exists (r', m') \in \pi^*_n(x, z; x + \ep, y) \text{ s.t. } r \le r'\}.
	$$
	This uses both the monotonicity and tree structure of rightmost geodesics established in Lemma \ref{L:mono-path}.
	Therefore $K_n(x, y, z) \le k$ on the event where
	\begin{itemize}[nosep]
		\item $\pi^*_n(x, z; x + \ep, y)$ intersects the top $k-1$ lines in $W^n$, and 
		\item $\pi_n(x, y)$ and $\pi_n(x+ \ep, z)$ are not disjoint.
	\end{itemize}
	Now fix $\de > 0$. The second condition holds with probability at least $1 - \de$ for all large enough $n$ as long as $\ep$ is small enough by Lemma \ref{L:close-paths-2}. The first condition holds whenever 
	\begin{equation}
	\label{E:Znk1}
Z^n_{k-1}(x, z) < Z^n_k(x + \ep, y),
	\end{equation} as this implies that the paths $\pi(x, z)$ and $\pi(x + \ep, y)$ must cross in the top $k$ lines of $W^n$. For any fixed $\ep > 0$, the asymptotics in Lemma \ref{L:zk-unif-bd} imply that \eqref{E:Znk1} happens with probability at least $1 - \de$ for all large enough $n$ as long as $k$ is large enough. Since $\de > 0$ was arbitrary, $K_n(x, y, z)$ is tight.
\end{proof}

We are now ready to prove Theorem \ref{T:airy-sheet-gen}.

\begin{proof}[Proof of Theorem \ref{T:airy-sheet-gen}]
The assumptions of the theorem guarantee that
	$$
	S_n(a, \cdot) \cvgd \scrA_1(\cdot - a) \quad \mathand \quad S_n(\cdot, a) \cvgd \scrA_1(a - \cdot)
	$$
	for every $a \in \Q$.
	Since the process $\scrA_1$ is continuous, Lemma \ref{L:tight-sheet} then implies that $S_n|_{[-r, r]^2}$ is tight in $\CDF_r$ for every $r \in \N$, and all distributional limits of $S_n|_{[-r, r]^2}$ are continuous. Consider any joint distributional subsequential limit of the sequence $(S_n|_{[-r, r]^2}, r \in \N), n \in \N$. This limit must be of the form $(\tilde \scrS|_{[-r, r]^2}, r \in \N)$ for some continuous function $\tilde \scrS$.
	We will show that $\tilde \scrS$ is an Airy sheet. The compact convergence in Theorem \ref{T:airy-sheet-gen} then follows from Skorokhod's representation theorem, Lemma \ref{L:sheet-topology} and continuity of the Airy sheet. 
	
	To prove that $\tilde \scrS$ is an Airy sheet, we just need to show that 
	$$
	\tilde \scrS|_{[z, \infty) \X \R} \eqd \scrS_{[z, \infty) \X\R}
	$$
	for every $z \in \R$ where $\scrS$ is an Airy sheet. Since the assumptions of Theorem \ref{T:airy-sheet-gen} are invariant with respect to shifting $z$, and since $\scrS$ is shift invariant by definition, it is enough to prove this for $z = 0$.
	
	By Skorokhod's representation theorem, we can find a subsequence of $\{S_n\}$ and a coupling of the corresponding environments $L_n$ for which the following convergences all hold almost surely:
	\begin{itemize}[nosep]
		\item The functions $A^0_{n, k}(\cdot) = L_n[(a_n, n) \to_{\Delta_k} (\cdot, 1)]$ converge compactly to the Airy line ensemble $\scrA_k(\cdot)$.
		\item $S_n \to \tilde \scrS$ compactly and $\tilde \scrS(x, x + \cdot)$ is a parabolic Airy process for  rational $x$.
		\item For every $x \in \Q^+$ and $y, z \in \Q$, each of the jump times $Z^n_k(x, y)$ converges to a limit $Z_k(x, y)$ and $K_n(x; y, z)$ converges to a limit $K(x; y, z)$. Moreover,
		\begin{equation}
		\label{E:Zk-limit}
		\lim_{k \to \infty} \frac{Z_k(x, y)}{\sqrt{k}} = \frac{-1}{\sqrt{2x}}.
		\end{equation}
		This uses Lemma \ref{L:zk-unif-bd} and Corollary \ref{C:tree-struct}.
	\end{itemize} 
	Let $\scrS$ be an Airy sheet coupled to $\scrA$ by the relationship \eqref{E:buse}. By continuity of $\tilde \scrS, \scrS$, it is enough to show that $\tilde \scrS(x, y) = \scrS(x, y)$ for all $(x, y) \in \Q^+ \X \R$. For $x \in \Q^+, y < z \in \Q$ the third condition above guarantees that for all $k \ge K(x; y, z)$ we have
	$$
	\tilde \scrS(x, y) - \tilde \scrS(x, z) = \scrA[(Z_k(x, z), k) \to (z, 1)] - \scrA[(Z_k(x, z), k) \to (y, 1)].
	$$
	By the asymptotics in \eqref{E:Zk-limit}, for any $\ep > 0$, for all large enough $k$
	we can apply the quadrangle inequality, Lemma \ref{L:quadrangle}, to the points $(Z_k(x,z),k), (-\sqrt{k/(2(x + \ep))}, k)$ and $(z,1)$,$(y,1)$ in the environment $\scrA$. This gives the lower bound
	$$ 
	\tilde \scrS(x, y) - \tilde \scrS(x, z) \ge \scrA[(-\sqrt{k/(2(x + \ep))}, k) \to (z, 1)] - \scrA[(-\sqrt{k/(2(x + \ep))}, k) \to (y, 1)].
	$$
	Formula \eqref{E:buse} for $\scrS$ then implies that
	$$
	\tilde \scrS(x, y) - \tilde \scrS(x, z) \ge \scrS(x + \ep, y) -  \scrS(x +\ep, z) 
	$$
	for all $\ep > 0$. By the same reasoning, the opposite inequality holds for all $\ep < 0$. The continuity of $\scrS, \tilde \scrS$ then implies that $\tilde \scrS(x, y) - \tilde \scrS(x, z) = \scrS(x, y) - \scrS(x, z)$ for rational $x, y, z$. This extends to all $x \in \Q^+, y, z \in \R$, since both $\tilde \scrS(x, \cdot)$ and $\scrS(x, \cdot)$ are continuous for rational $x$, and hence
	$$
	\scrS(x, \cdot) = \tilde \scrS(x, \cdot) + C_x
	$$
	for random constants $C_x, x \in \Q^+$. Since $\scrS(x, \cdot) \eqd \tilde \scrS(x, \cdot) \eqd \scrA_1(\;\cdot \;- x)$, ergodicity of the Airy process $\scrA_1(t) + t^2$ (equation (5.15) in \cite*{prahofer2002scale}), implies that $\scrS(x, \cdot) = \tilde \scrS(x, \cdot)$ for rational $x$, as desired.
\end{proof}

\begin{remark}
	\label{R:airy-sheet-coup}
	The proof of Theorem \ref{T:airy-sheet-gen} actually shows something stronger than just convergence of $S_n$ to $\scrS$. Namely, letting $A_{n, k}(\cdot) = L_n[(a_n, n) \to_{\Delta_k} (\cdot, 1)]$, it shows that there is a coupling of the environments $L_n$ such that the functions $(S_n,A_n)$ converge compactly to $(\scrS, \scrA)$, where $\scrS$ is an Airy sheet and $\scrA$ is a parabolic Airy line ensemble, coupled via the relationship in Definition \ref{D:airy-sheet}.
\end{remark}

\section{Directed metrics}
\label{S:directed-metrics}
In Sections \ref{S:directed-metrics} to \ref{S:graph-cvg}, we develop an abstract framework for deciding when a sequence of last passage percolation models that converges to Airy sheet also converges to the directed landscape. To study this question, we introduce directed metric spaces, which generalize metric spaces. In contrast with the metric property, the directed metric property is preserved under certain natural scaling operations. The directed landscape and all last passage percolation models are random directed metrics.

\begin{definition}
A \textbf{directed metric of positive sign} on a set $S$ is a function $d:S \X S \to \R \cup \{\infty\}$ satisfying: 
\begin{itemize}[nosep]
    \item $d(x,x)=0$ for all $x\in S$, 
    \item (Triangle Inequality) $d(x, z) \le d(x,y)+d(y,z)$ for all  $x,y,z\in S$.
\end{itemize}
We call the pair $(S, d)$ a \textbf{directed metric space}.
\end{definition}
A metric is always a directed metric of positive sign. A directed metric of positive sign is a generalization of a metric without the symmetry condition $d(x, y) = d(y, x)$ and the positivity condition $d(x, y) > 0$ whenever $x \ne y.$

A \textbf{directed metric of negative sign} is a function $d:S \X S \to \R\cup \{-\infty\}$ satisfying $d(x, x) = 0$ for all $x \in S$ and the reverse triangle inequality $d(x, z) \ge d(x, y) + d(y, z)$ for all $x, y, z \in S$. Equivalently, a function $d$ is a directed metric of negative sign if $-d$ is a directed metric of positive sign.

Directed metrics of negative sign will be important to us later on because last passage models can be viewed in this way. However, for the remainder of this section, we restrict our attention to directed metrics of positive sign since the two notions are equivalent up to a sign change, and directed metrics of positive sign can more naturally be thought of as distance functions.

There are a few standard methods for building new directed metrics from old ones. These methods are summarized by the following lemma, whose proof we leave as a straightforward exercise.

\begin{lemma}
\label{L:straightforward-dms}\ \\ \vspace{-1.5em}
\begin{enumerate}[nosep]
	\item Let $g$ be a function from a set $R$ to a directed metric space $(S,d)$. Then the {\bf pullback} of $d$, defined by 
	$$
	d'(x,y)=d(g(x),g(y))
	$$ 
	is a directed metric on $R$.
	\item If $d_1, \dots d_k$ are directed metrics on a space $S$ and $c_1, \dots, c_k \ge 0$, then $c_1 d_1 + \dots + c_k d_k$ is also a directed metric on $S$.
	\item If $d_1, d_2$ are directed metrics on $S$, then $\max(d_1, d_2)$ is also a directed metric on $S$.
	\item If $\{d_n\}$ is a sequence of directed metrics on $S$ with a pointwise limit $d:S \X S \to \R \cup \{\infty\}$, then $d$ is a directed metric on $S$.
	\item If $I$ is a collection of directed metrics on $S$ then the function $d:S \X S \to \R \cup \{\infty\}$ given by $d(u) = \sup \{e(u) : e \in I\}$ is also a directed metric.
	\item For any function $h:S \to \R$, the function $d_h(x, y) = h(x) - h(y)$ is a directed metric of both positive and negative sign.
\end{enumerate}
\end{lemma}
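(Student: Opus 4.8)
The plan is to verify, for each of the six items, the two axioms defining a directed metric of positive sign: the diagonal condition $d(x,x)=0$ and the triangle inequality $d(x,z)\le d(x,y)+d(y,z)$. In every case this reduces to a one-line manipulation, so I would present the proof as a short list running parallel to the statement.

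For item (1), $d'(x,x)=d(g(x),g(x))=0$ by the diagonal condition for $d$, and the triangle inequality for $d$ applied at $g(x),g(y),g(z)$ gives $d'(x,z)=d(g(x),g(z))\le d(g(x),g(y))+d(g(y),g(z))=d'(x,y)+d'(y,z)$. For item (2), each $d_i(x,x)=0$, so $\sum_i c_i d_i(x,x)=0$, and multiplying each inequality $d_i(x,z)\le d_i(x,y)+d_i(y,z)$ by $c_i\ge 0$ and summing yields the triangle inequality — the nonnegativity of the $c_i$ is exactly what preserves the direction of the inequality. For item (3), $\max(d_1(x,x),d_2(x,x))=0$, and for each $i\in\{1,2\}$ one has $d_i(x,z)\le d_i(x,y)+d_i(y,z)\le\max(d_1(x,y),d_2(x,y))+\max(d_1(y,z),d_2(y,z))$, so taking the maximum over $i$ on the left keeps the bound. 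Item (6) is the same computation with equality: $d_h(x,x)=0$ and $d_h(x,z)=(h(x)-h(y))+(h(y)-h(z))=d_h(x,y)+d_h(y,z)$, so $d_h$ satisfies the triangle inequality and its reverse, hence is a directed metric of both signs.

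Items (4) and (5) I would handle together, since each passes a family of inequalities to a limit or a supremum. For the pointwise limit $d=\lim_n d_n$: $d(x,x)=\lim_n 0=0$, and since non-strict inequalities are preserved under limits in $\R\cup\{\infty\}$, we get $d(x,z)=\lim_n d_n(x,z)\le\lim_n\bigl(d_n(x,y)+d_n(y,z)\bigr)=d(x,y)+d(y,z)$, all sums and limits being interpreted in $\R\cup\{\infty\}$. For the supremum $d(u)=\sup\{e(u):e\in I\}$: $d(x,x)=\sup\{0\}=0$, and for each $e\in I$, $e(x,z)\le e(x,y)+e(y,z)\le d(x,y)+d(y,z)$, so taking the supremum over $e\in I$ on the left gives $d(x,z)\le d(x,y)+d(y,z)$.

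There is no genuine obstacle here; the only points worth a word of care are the bookkeeping with the value $\infty$ in (4) and (5) — one should note that the relevant sums, limits, and suprema are all well defined in $\R\cup\{\infty\}$ and that $\le$ survives these operations — and the tacit assumption in (5) that the family $I$ is nonempty, since otherwise $\sup\emptyset=-\infty$ would violate the diagonal condition.
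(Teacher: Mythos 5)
Your proof is correct. The paper explicitly leaves this lemma as ``a straightforward exercise'' and provides no proof of its own, so there is nothing to compare against; your verification of the two directed-metric axioms item by item is exactly the intended routine argument, and your brief remarks on the $\R\cup\{\infty\}$ bookkeeping in items (4)--(5) and on nonemptiness of $I$ in item (5) address the only points that merit any care.
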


We will combine points 2, 4, and 6 later to take limits of directed metrics after centering and rescaling.

The method of building directed metrics in Lemma \ref{L:straightforward-dms}.5 allows us to associate a canonical directed metric to any function from $S \X S$ to $\R$. 

\begin{definition}
\label{D:induced-dm}
Let $S$ be a set, $A \sset S \X S$, and $d_0:A \to \R \cup \{\infty\}$. If there is a directed metric $d$ so that  $d|_A \le d_0$, then we can define a directed metric on $S$ by
$$
e(d_0)(u) = \sup \{d(u)\}
$$
where the supremum is over all directed metrics satisfying $d|_A\le d_0$.
This \textbf{induced directed metric} $e(d_0)$ is the maximal directed metric on $S$ that is bounded above by $d_0$ on $A$.
\end{definition}

We will also want to study short paths in directed metric spaces. To do this, we introduce an abstract definition of geodesics. This definition has the advantage that it does not require any extra structure on the space $(S, d)$. 

\begin{definition}
	\label{D:geodesic}
Let $(S, d)$ be a directed metric space and let $x, y \in S$ be such that $d(x, y)$ is finite. A subset $A \sset S$ is a \textbf{geodesic set} from $x$ to $y$ if there exists a total order $\preceq$ on $A$ such that 
\begin{itemize}[nosep]
	\item $x \preceq z \preceq y$ for all $z \in A$. In other words, $x$ and $y$ are minimal and maximal elements in $(A, \prec)$.
	\item $d(a,c)=d(a, b)+d(b, c)$ for all triples $a \preceq b \preceq c\in A$.
\end{itemize}
We call $\preceq$ a compatible total order on $A$.
We can put a partial order on all geodesic sets from $x$ to $y$ by inclusion. Maximal elements in this partial order are called \textbf{geodesics}. 
\end{definition}

\begin{prop}
\label{P:geod-exist}
Let $(S, d)$ be a directed metric space, and let $x, y \in S$ be points with $d(x, y) < \infty$. Let $A$ be a geodesic set from $x$ to $y$ in $(S, d)$. Then $A$ is contained in a geodesic $\pi$ from $x$ to $y$.

Since $\{x, y\}$ is always a geodesic set from $x$ to $y$ in $(S, d)$, this implies that there is a geodesic between every pair of points $x, y \in S$ with $d(x, y) < \infty$. 
\end{prop}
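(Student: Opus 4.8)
The plan is to apply Zorn's lemma. Fix $x,y\in S$ with $d(x,y)<\infty$ and let $A$ be a geodesic set from $x$ to $y$. Let $\mathcal{P}$ be the collection of all geodesic sets from $x$ to $y$ that contain $A$, partially ordered by inclusion; it is nonempty since $A\in\mathcal{P}$. Once every chain in $\mathcal{P}$ is shown to have an upper bound in $\mathcal{P}$, Zorn's lemma produces a maximal element $\pi\supseteq A$. This $\pi$ is automatically maximal among \emph{all} geodesic sets from $x$ to $y$: if $\pi\subsetneq\pi'$ for a geodesic set $\pi'$, then $\pi'\supseteq\pi\supseteq A$, so $\pi'\in\mathcal{P}$, contradicting maximality. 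Hence $\pi$ is a geodesic containing $A$. Taking $A=\{x,y\}$, which is trivially a geodesic set from $x$ to $y$ (with any order), then yields the final assertion.

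All the content is in the upper-bound step: given a chain $\mathcal{C}\subseteq\mathcal{P}$, I claim $B:=\bigcup_{C\in\mathcal{C}}C$ is again a geodesic set from $x$ to $y$; it visibly contains $A$ and dominates $\mathcal{C}$. The only issue is to produce a single compatible total order $\preceq$ on $B$ with $x,y$ minimal/maximal and $d(a,c)=d(a,b)+d(b,c)$ for all $a\preceq b\preceq c$ in $B$. Each $C\in\mathcal{C}$ carries such an order; and since $\mathcal{C}$ is a chain, every finite subset of $B$ lies inside a single member of $\mathcal{C}$, hence is itself a geodesic set (restrict that member's order). The remaining task is to choose these orders coherently so that they patch together.

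I would do this by an inverse-limit argument. For a finite $F\subseteq B$ containing $x$ and $y$, let $\mathcal{O}_F$ be the set of compatible total orders on $F$; it is finite and, by the previous paragraph, nonempty. Restriction gives maps $\mathcal{O}_{F'}\to\mathcal{O}_F$ for $F\subseteq F'$ (restriction of a compatible order is compatible, since $F$'s triples are $F'$'s triples and $x,y\in F$), and any finite collection of these restriction constraints is simultaneously satisfiable: the union of the finitely many finite sets involved again lies in a single member of $\mathcal{C}$, whose order restricts consistently to all of them. Thus $(\mathcal{O}_F)$ is an inverse system of nonempty finite sets over a directed set, so its inverse limit is nonempty; a point of the limit is a coherent family $(\preceq_F)$, and $\preceq:=\bigcup_F\preceq_F$ is then a total order on $B$ with the required properties, since any pair or triple of points of $B$ is contained in some such $F$. (Alternatively, fix once and for all a linear order $\lessdot$ of $S$ with $x$ minimal and $y$ maximal; on any geodesic set the relation ``$d(x,a)+d(a,b)+d(b,y)=d(x,y)$'' is a total preorder whose refinement by $\lessdot$ is a compatible total order depending only on $d$, so these canonical orders are coherent under restriction by construction.)

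The main obstacle is precisely this coherence. The compatible order on a geodesic set is \emph{not} unique: one may permute ``null pairs'' $a\neq b$ with $d(a,b)+d(b,a)=0$, so the orders attached to different members of the chain need not agree on overlaps, and they must be coordinated — the triangle inequality is what makes the preorder above transitive and rules out incompatible permutations. Everything else (restriction of a compatible order is compatible, finite subsets of $B$ live in a single member of $\mathcal{C}$, the patched order keeps $x,y$ as endpoints, and a maximal element of $\mathcal{P}$ is a genuine geodesic) is routine.
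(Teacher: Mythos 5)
Your proof is correct, and it takes a genuinely different route from the paper's. Both use Zorn's lemma, but the paper avoids your chain-union difficulty entirely by applying Zorn to \emph{pairs} $(A,\prec)$ of a geodesic set together with a chosen compatible order, ordered by ``$\prec'$ extends $\prec$''; the union of a chain of such pairs is trivially an upper bound, so the hard work is shifted to the end, where Lemma~\ref{L:geodesic-compat} (the structure theorem characterizing all compatible orders via the quotient by $d$-equivalence) is invoked to show that a Zorn-maximal pair yields a maximal \emph{set}. You instead apply Zorn to sets alone, which makes the final maximality step trivial but forces you to produce a compatible order on the union of a chain — the point you correctly identify as the only real content. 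Your two mechanisms for doing so are both sound: the inverse-limit/compactness argument works because you verify the finite-intersection property by pushing any finitely many finite subsets into a single chain member; and your parenthetical canonical-order argument is essentially the observation, which the paper makes independently inside the proof of Proposition~\ref{P:convergence}, that on a geodesic set the relation $d(x,a)+d(a,b)+d(b,y)=d(x,y)$ is a total preorder whose tie-classes are exactly the $d$-equivalence classes, so that any fixed linear refinement gives a compatible total order that is \emph{functorial} under restriction. That canonical construction is arguably the cleanest of the three; the paper's version via Lemma~\ref{L:geodesic-compat} has the advantage that the lemma is reused later (in Lemma~\ref{L:pullback-geod}), so it incurs no extra cost there.

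One small thing worth spelling out if you write this up: you assert the preorder is transitive and attribute it to the triangle inequality without showing it. The check (sandwich $d(x,y)\le d(x,a)+d(a,y)\le d(x,a)+d(a,b)+d(b,y)=d(x,y)$ to force $d(x,a)+d(a,b)=d(x,b)$, then substitute into the second relation and apply $d(a,b)+d(b,c)\ge d(a,c)$) is short but not entirely obvious, and it is also what shows $x$ and $y$ really are minimal and maximal in the refined order.
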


To prove Proposition \ref{P:geod-exist}, we need to investigate how different compatible total orders on geodesic sets are related. For this, we introduce a fundamental equivalence relation.

\begin{definition}
	\label{D:dm-equivalence}
	Let $(S, d)$ be a directed metric space. We say that $x$ and $y$ are $d$-equivalent and write $x \sim y$ if $d(x, y) + d(y, x) = 0$.
\end{definition}

By the triangle inequality, $d(x, y) + d(y, x) \le d(x, x) = 0$ for every pair $x, y$. If two points $x$ and $y$ are $d$-equivalent then this is an equality, and we can move back and forth between $x$ and $y$ at zero cost. Note that if $x \sim y$, then both $d(x, y)$ and $d(y, x)$ are finite.
If $d$ is a true metric, then $d$-equivalence is a trivial relation. The next lemma gives two important properties of $d$-equivalence.

\begin{lemma}
\label{L:dequiv-relate}
Let $(S, d)$ be a directed metric space. 
\begin{enumerate}[nosep]
	\item  If $x, y, z \in S$ and $y \sim z$, then
	\begin{equation}
	\label{E:forfree-moves}
	d(x, z) = d(x, y) + d(y, z) \quad \mathand \quad d(z, x) = d(z, y) + d(y, x).
	\end{equation}
	\item The relation $\sim$ is an equivalence relation.
\end{enumerate}
\end{lemma}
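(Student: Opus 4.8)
The plan is to prove both parts of Lemma~\ref{L:dequiv-relate} directly from the triangle inequality and the definition of $d$-equivalence, with part (1) doing the real work and part (2) following essentially as a corollary.

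\textbf{Part (1).} Suppose $y \sim z$, i.e.\ $d(y,z) + d(z,y) = 0$. First I would prove the identity $d(x,z) = d(x,y) + d(y,z)$. The triangle inequality immediately gives $d(x,z) \le d(x,y) + d(y,z)$, so only the reverse inequality needs work. For that, apply the triangle inequality along $x \to z \to y$ to get $d(x,y) \le d(x,z) + d(z,y)$, hence $d(x,y) - d(z,y) \le d(x,z)$. Now use $d(y,z) = -d(z,y)$ (which holds because $d(y,z)+d(z,y)=0$, and both are finite since their sum is $0$ and each is $\le 0$ by the triangle inequality applied with the middle point equal to the endpoint) to rewrite the left side as $d(x,y) + d(y,z) \le d(x,z)$. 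Combining the two inequalities yields $d(x,z) = d(x,y) + d(y,z)$. The second identity $d(z,x) = d(z,y) + d(y,x)$ is proved the same way with the roles of the "forward" direction reversed: triangle inequality $z \to y \to x$ gives $\le$, and triangle inequality $y \to z \to x$ together with $d(z,y) = -d(y,z)$ gives $\ge$.

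\textbf{Part (2).} Reflexivity is immediate since $d(x,x) + d(x,x) = 0$. Symmetry is built into the definition, since $d(x,y) + d(y,x)$ is symmetric in $x$ and $y$. For transitivity, suppose $x \sim y$ and $y \sim z$. I would compute $d(x,z) + d(z,x)$ by applying part (1) twice: since $y \sim z$, part (1) gives $d(x,z) = d(x,y) + d(y,z)$ and $d(z,x) = d(z,y) + d(y,x)$; adding these and regrouping gives $d(x,z) + d(z,x) = \big(d(x,y)+d(y,x)\big) + \big(d(y,z)+d(z,y)\big) = 0 + 0 = 0$, so $x \sim z$.

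There is essentially no serious obstacle here; the only subtlety to be careful about is the finiteness bookkeeping — one should note up front that for any $p,q$ the triangle inequality forces $d(p,q) + d(q,p) \le d(p,p) = 0$, so $d(p,q) + d(q,p) = 0$ (the definition of $\sim$) is equivalent to $d(q,p) = -d(p,q)$ with both quantities finite. With that observation in hand, all the additions above are additions of real numbers and the rearrangements are legitimate. I would state this finiteness remark once at the start of the proof and then use it freely.
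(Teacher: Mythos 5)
Your proof is correct and follows the same route as the paper: Part (1) rests on the two triangle inequalities along $x \to y \to z$ and $x \to z \to y$ (the paper chains them into a single two-step inequality and observes the ends coincide, you split them into matching upper and lower bounds — same computation either way), and Part (2) is obtained by adding the two identities from Part (1), exactly as the paper does. The one thing worth fixing is the parenthetical justification for finiteness. The claim that $d(y,z)$ and $d(z,y)$ are ``each $\le 0$ by the triangle inequality'' is not right: the triangle inequality with both endpoints at $y$ gives $0 = d(y,y) \le d(y,z) + d(z,y)$, a bound on the \emph{sum}, not on each term; and if both terms were $\le 0$ with sum zero they would both have to vanish, which is strictly stronger than what $y \sim z$ asserts. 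The finiteness you want follows for a simpler reason: $d$ takes values in $\R \cup \{\infty\}$, so if $d(y,z) + d(z,y) = 0$ then neither summand can equal $\infty$ and both are finite, whence $d(y,z) = -d(z,y) \in \R$. (Incidentally, the inequality $d(p,q)+d(q,p) \le d(p,p)$ in your final paragraph, which the paper's prose before the lemma also states, has the wrong direction — it should be $\ge$ — but this is harmless since only the equality case is ever used.)
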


\begin{proof}
For any points $x, y,$ and $z$, by two triangle inequalities we have that
	\begin{equation}
	\label{E:2tri}
	d(x, z) \le d(x, y) + d(y, z) \le d(x, z) + d(z, y) + d(y, z).
	\end{equation}
If $y \sim z$, then the right and left hand sides of \eqref{E:2tri} are equal. Hence all inequalities in \eqref{E:2tri} are in fact equalities, yielding the first equation in \eqref{E:forfree-moves}. The second equation follows by symmetric reasoning.

All parts of checking that $\sim$ is an equivalence relation are self-evident except for transitivity. For transitivity, if $x \sim y$ and $y \sim z$, then both equalities in \eqref{E:forfree-moves} hold. Adding these two equations together and using that $x \sim y$ and $y \sim z$ gives that $d(x, z) + d(z, x) = 0$.
\end{proof}

\begin{lemma}
\label{L:geodesic-compat} Let $(S, d)$ be a directed metric space, and let $x, y\in S$ with $d(x, y) < \infty$.
Let $A$ be a geodesic set from $x$ to $y$ and let $q$ be the quotient map from $A$ to $\bigslant{A}{\sim}$. Then there exists a unique total order $\preceq$ on $\bigslant{A}{\sim}$ such that $\le$ is a compatible order on $A$ if and only if $x \le z \le y$ for all $z \in A$ and
\begin{equation}
\label{E:abiff}
a \le b \qquad \text{ implies } \qquad qa \preceq qb. 
\end{equation}

\end{lemma}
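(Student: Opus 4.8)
The plan is to identify $\preceq$ on $\bigslant{A}{\sim}$ as the descent of a single compatible total order, and then show that this descent is forced. So first I would fix a compatible total order $\preceq_0$ on $A$ (which exists because $A$ is a geodesic set from $x$ to $y$) and record one structural fact: \emph{each $\sim$-class is $\preceq_0$-convex}. Indeed, if $a \sim a'$ with $a \preceq_0 a'$ and $a \preceq_0 c \preceq_0 a'$, then compatibility at the triple $(a,c,a')$ gives $d(a,a') = d(a,c) + d(c,a')$; substituting this and $d(a',a) = -d(a,a')$ into the triangle inequality $d(c,a) \le d(c,a') + d(a',a)$ yields $d(a,c)+d(c,a) \le 0$, hence $=0$, so $c \sim a$ by Lemma \ref{L:dequiv-relate}. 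Since disjoint order-convex subsets of a linearly ordered set are comparable (an elementary fact), $\preceq_0$ descends to a total order $\preceq$ on $\bigslant{A}{\sim}$ characterized by $qa \preceq qb \iff a \preceq_0 b$, and $qx, qy$ are its minimum and maximum because $x \preceq_0 z \preceq_0 y$ for all $z \in A$. This already gives existence of a candidate $\preceq$.

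The heart of the argument is that $\preceq$ does not depend on the choice of $\preceq_0$: \emph{any two compatible total orders on $A$ induce the same relative order on any pair of non-$\sim$-equivalent points}. Suppose $\preceq_0, \preceq_0'$ are compatible, $a \not\sim b$, $a \prec_0 b$ and $b \prec_0' a$. Minimality of $x$ and maximality of $y$ force $a,b \notin \{x,y\}$ here, so $(x,a,b)$ is a genuine $\preceq_0$-ordered triple and $(x,b,a)$ a genuine $\preceq_0'$-ordered triple; compatibility gives $d(x,b) = d(x,a)+d(a,b)$ and $d(x,a) = d(x,b)+d(b,a)$, and adding these forces $d(a,b)+d(b,a)=0$, i.e.\ $a \sim b$, a contradiction. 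From this the forward implication and uniqueness follow at once. Forward: if $\le$ is a compatible total order then $x \le z \le y$ by the definition of geodesic set, and for $a \le b$ with $qa \ne qb$ the displayed rigidity (applied to $\le$ and $\preceq_0$) gives $a \prec_0 b$, hence $qa \prec qb$. Uniqueness: if $\preceq'$ is any total order on $\bigslant{A}{\sim}$ satisfying the stated equivalence, then applying that equivalence to the compatible order $\preceq_0$ shows $a \preceq_0 b \Rightarrow qa \preceq' qb$, so $\preceq'$ refines, hence equals, $\preceq$.

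For the reverse implication I would take a total order $\le$ on $A$ with $x \le z \le y$ for all $z \in A$ and with $a \le b \Rightarrow qa \preceq qb$, and verify compatibility, i.e.\ $d(a,c) = d(a,b)+d(b,c)$ for every triple $a \le b \le c$ in $A$; the endpoint conditions are immediate from the hypotheses. Descending the hypothesis gives $qa \preceq qb \preceq qc$, and there are three cases. If $a \sim b$, the second identity of Lemma \ref{L:dequiv-relate}(1) (applied to the equivalent pair $\{a,b\}$ against $c$) gives the claim; if $b \sim c$, the first identity of Lemma \ref{L:dequiv-relate}(1) gives it; and if neither, then $qa \prec qb \prec qc$, so $a \prec_0 b \prec_0 c$ and compatibility of $\preceq_0$ applies. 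These cases are exhaustive because $qa = qc$ forces $qa = qb = qc$. The main obstacle is the canonicity step together with the convexity of $\sim$-classes: everything hinges on the observation that anchoring a compatibility identity at the global minimum $x$ rigidifies the order on inequivalent points, while the remaining steps are routine applications of the triangle inequality and Lemma \ref{L:dequiv-relate}.
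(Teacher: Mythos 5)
Your proof is correct, but it takes a genuinely different route from the paper's. The paper's central tool is the ``replacement lemma'' (equation \eqref{E:three-prong}): for $a'\sim a,\,b'\sim b,\,c'\sim c$, the compatibility identity $d(a,c)=d(a,b)+d(b,c)$ holds iff $d(a',c')=d(a',b')+d(b',c')$. Well-definedness of the descent, the forward implication, and the reverse implication are then all read off from \eqref{E:three-prong} directly. You instead establish two structural facts: (a) each $\sim$-class is order-convex in any compatible order (hence classes are comparable and $\preceq_0$ descends), and (b) a rigidity statement, namely that any two compatible orders anchor to the global minimum $x$ to give the same relative order on inequivalent points, so the descent is canonical. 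For the reverse implication you then split into cases and apply Lemma \ref{L:dequiv-relate}(1) directly rather than \eqref{E:three-prong}. Both routes are sound; the paper's replacement lemma is a single more compact tool, while your convexity and rigidity observations expose a bit more of the order-theoretic structure of geodesic sets (e.g.\ that $\sim$-classes are intervals). Two small points: in the convexity step your chain yields $d(a,c)+d(c,a)\le 0$, and the conclusion ``hence $=0$'' also uses the reverse inequality $d(a,c)+d(c,a)\ge d(a,a)=0$ coming from the triangle inequality for a positive-sign metric, which is worth saying explicitly; and the characterization ``$qa\preceq qb \iff a\preceq_0 b$'' should be read with the caveat that for $qa=qb$ the backward implication is vacuous, which does not affect the argument.
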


In other words, Lemma \ref{L:geodesic-compat} says that compatible total orders on a geodesic set $A$ from $x$ to $y$ are all equivalent up to rearranging points in the same equivalence class.
\begin{proof}
Consider points $a, b, c \in A$, and $a' \sim a, b' \sim b, c' \sim c$. By repeated applications of Lemma \ref{L:dequiv-relate}.1, we have
\begin{align}
d(a, c) &= d(a, a') + d(a', c') + d(c', c), \qquad \mathand \\
d(a, b) + d(b, c) &= d(a, a') + d(a', b') + d(b', c') + d(c', c). 
\end{align}
Moreover, since $a \sim a'$ and $c \sim c'$, both $d(a, a')$ and $d(c, c')$ are finite. Therefore 
\begin{align}
\label{E:three-prong}
d(a, c) = d(a, b) + d(b, c) \qquad \text{if and only if} \qquad d(a',c') = d(a', b') + d(b',c').
\end{align}
We use this to prove the lemma. 
Let $\le$ be a compatible total order on $A$. We can construct an induced order $\preceq$ on $\bigslant{A}{\sim}$ by setting $w \prec z$ for $w \ne z$ whenever $a < b$ for some $a \in q^{-1} w, b \in q^{-1} z$. We first check that $\prec$ is well-defined. For this, suppose that $a, a', b, b' \in A$ with $a < b, a' > b'$ with $a \sim a'$ and $b \sim b'$. We just need to check that $a' \sim b'$. Since $a < b$ and $a' > b'$, we have
\begin{equation}
\label{E:dxb}
d(x, b) = d(x, a) + d(a, b) \qquad \mathand \qquad d(x, a') = d(x, b') + d(b', a').
\end{equation}
Applying \eqref{E:three-prong} with the points $x, b, a$ and $x, b', a'$ to the first equation above, we get that
$$
d(x, b') = d(x, a') + d(a', b').
$$
Combining this with the second equation in \eqref{E:dxb} gives that $d(a', b') = - d(b', a')$, as long as the distances $d(x, b'), d(x, a')$ are both finite. This is guaranteed by the assumption that $d(x, y) < \infty$ and the second condition on geodesic sets applied to the triples $x, a', y$ and $x, b', y$.

We now show that $\preceq$ satisfies the if and only if statement in the theorem. First suppose that $\le'$ is a total order on $A$ with $x \le' y \le' z$ for all $z$, satisfying \eqref{E:abiff}. We show that $\le'$ is compatible.
 Let $a \le' b \le' c$. By the construction of $\preceq$, we can find $\hat a \in q^{-1}q a, \hat b \in q^{-1}q b, \hat c \in q^{-1}q c$ such that $\hat a \le \hat b \le \hat c$, and hence
$$
d(\hat a, \hat c) = d(\hat a, \hat b) + d(\hat b, \hat c).
$$
Applying \eqref{E:three-prong} to the points $a, b, c$ and $\hat a, \hat b, \hat c$ then shows that $\le'$ is compatible.

Now, suppose that $\le'$ is any total order on $A$. For $\le'$ to be compatible, we need $x$ and $y$ to be minimal and maximal elements in $\le'$. We check that $\le'$ must satisfy \eqref{E:abiff} by contradiction. Suppose that \eqref{E:abiff} fails. Then there exists $a <' b \in A$ with $qb \prec qa$. This implies that $b < a$ in the original order on $A$, and so 
$$
d(x, a) = d(x, b) + d(b, a).
$$
Moreover, since $qb \prec qa$, we have $a \not\sim b$ and hence $d(a, b) > -d(b, a)$. Therefore using that $d(x, a)$ and $d(x, b)$ are finite, we get that $d(x, b) < d(x, a) + d(a, b)$. Thus $<'$ is not compatible.
\end{proof}

We can use this structure of compatible orders in geodesic sets to find geodesics in directed metric spaces.

\begin{proof}[Proof of Proposition \ref{P:geod-exist}]
Let $I$ be the set of all geodesic sets from $x$ to $y$ containing $A$, and let 
$$
I' = \{(A, \prec) : A \in I, \prec \text{ compatible}\}.
$$
We can put a partial order on $I'$ by saying that $(A, \prec) \le (A', \prec')$ if $A \sset A'$ and if $\prec'|_{A \X A} = \; \prec$. Any totally ordered subset $J \sset I'$ has an upper bound in $I'$ given by the union of all sets in $J$ with the union of all the total orders. Hence by Zorn's lemma, $I'$ contains a maximal element. We will show that for any two maximal elements $(A, \prec), (A', \prec')$ in $I'$, that if $A \sset A'$ or $A' \sset A$, then $A = A'$. This will imply that $A$ and $A'$ are maximal elements in the original set $I$, and are therefore geodesics.

For this, suppose $A \sset A'$. 
Let $\prec'_*$ be the total order induced on $ \bigslant{A'}{\sim}$ by $\prec'$ via the quotient map $q:A' \to \bigslant{A'}{\sim}$ and consider an arbitrary total order $\triangleleft$ on $A'$ extending $\prec$. Define a new total order $\prec''$ on $A'$ where $a \prec'' b$ if $qa \prec'_* qb$ and $a \triangleleft b$. The `only if' part of Lemma \ref{L:geodesic-compat} implies that $\prec''$ is an extension of $\prec$, and the `if' part of Lemma \ref{L:geodesic-compat}, implies that $\prec''$ is compatible on $A'$. Hence $(A, \prec) \le (A', \prec''')$ so by maximality, $A = A'$. By symmetry, if $A' \sset A$ then $A' = A$ as well.
\end{proof}

By Proposition \ref{P:geod-exist}, for any pair of points $x, y \in S$ for which $d(x, y)$ is finite, there is at least one geodesic from $x$ to $y$ since $\{x, y\}$ is always a geodesic set from $x$ to $y$. Note that geodesics may not always yield any interesting information about the space (i.e. the pair $\{x, y\}$ could be the only geodesic from $x$ to $y$, or the entire space $S$ could be a geodesic between any pair of points).

We can use Proposition \ref{P:geod-exist} to show that geodesics behave well under pullbacks.

\begin{lemma}
	\label{L:pullback-geod}
	Let $g$ be a surjective function from a set $R$ to a directed metric space $(S,d)$, and let $e$ be the pullback metric on $R$. Let $x, y \in R$ be such that $e(x, y) < \infty$. Then
	$$
	\{ A: A \sset R \text{ is an $e$-geodesic from $x$ to $y$} \} = \{g^{-1} B : B \sset S \text{ is a $d$-geodesic from $gx$ to $gy$} \}.
	$$
\end{lemma}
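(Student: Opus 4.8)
The plan is to unwind the definitions and establish a set equality by double inclusion, using that $g$ is surjective so that preimages behave predictably. First I would record the basic relationship: for $a, b \in R$, $e(a,b) = d(ga, gb)$, so that for any subset $A \subset R$ and any $a,b,c \in A$, the additivity condition $e(a,c) = e(a,b) + e(b,c)$ is equivalent to $d(ga, gc) = d(ga, gb) + d(gb, gc)$. Thus the image $gA$ carries the relevant additive structure whenever $A$ does, and conversely a subset of $S$ containing $gx, gy$ pulls back to a subset of $R$ with the corresponding structure. I would also note that $g(g^{-1} B) = B$ by surjectivity, which we will need repeatedly.

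Next I would prove the easier inclusion ``$\supseteq$'': given a $d$-geodesic $B$ from $gx$ to $gy$ with compatible total order $\preceq_B$, I want to show $g^{-1}B$ is an $e$-geodesic from $x$ to $y$. First check $g^{-1}B$ is an $e$-geodesic \emph{set}: define a total order on $g^{-1}B$ by picking any total order refining the pullback of $\preceq_B$ (i.e. $a \prec a'$ if $qb \prec_B q b'$ where $b = ga, b' = ga'$; break ties arbitrarily but with $x$ minimal and $y$ maximal). Using the equivalence from the previous paragraph together with Lemma \ref{L:geodesic-compat} (which tells us compatibility only depends on the order up to $\sim$-classes), this is a compatible order, so $g^{-1}B$ is a geodesic set. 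By Proposition \ref{P:geod-exist} it extends to a geodesic $\pi \supseteq g^{-1}B$. Applying $g$, we get $g\pi \supseteq B$, and $g\pi$ is a geodesic set from $gx$ to $gy$ (again by the additivity equivalence); since $B$ is a geodesic (maximal), $g\pi = B$, hence $\pi \subseteq g^{-1}(g\pi) = g^{-1}B$, forcing $\pi = g^{-1}B$. So $g^{-1}B$ is itself a geodesic.

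For the reverse inclusion ``$\subseteq$'': let $A$ be an $e$-geodesic from $x$ to $y$. Set $B = gA$; by the additivity equivalence and the existence of a compatible order on $A$ (whose pushforward descends to a compatible order on $B$, using surjectivity and Lemma \ref{L:geodesic-compat} to handle the fact that distinct points of $A$ may map to the same point of $B$), $B$ is a $d$-geodesic set. Extend it to a $d$-geodesic $B' \supseteq B$ via Proposition \ref{P:geod-exist}. Then $g^{-1}B' \supseteq g^{-1}B \supseteq A$, and by the ``$\supseteq$'' direction just proved, $g^{-1}B'$ is an $e$-geodesic; by maximality of $A$ we get $A = g^{-1}B'$, so in particular $A$ has the form $g^{-1}(\text{$d$-geodesic})$, and moreover $B' = g(g^{-1}B') = gA = B$, so $B$ was already a $d$-geodesic. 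This completes both inclusions.

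The main obstacle I anticipate is the bookkeeping around compatible total orders when $g$ is not injective: a compatible order on $A \subset R$ need not descend to a well-defined order on $gA$ unless we know that $a \sim a'$ in $R$ whenever $ga = ga'$ — but this is immediate since $e(a,a') + e(a',a) = d(ga,ga') + d(ga',ga) = d(ga,ga) = 0$. Combined with Lemma \ref{L:geodesic-compat}, which says compatibility is insensitive to reordering within $\sim$-classes, this makes the passage of orders through $g$ and $g^{-1}$ routine, though it should be written carefully. Everything else is a direct application of Proposition \ref{P:geod-exist} and the definition of the pullback metric.
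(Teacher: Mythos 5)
Your proof is correct and follows essentially the same route as the paper's: pushforwards and pullbacks of geodesic sets are geodesic sets (with order compatibility handled through Lemma \ref{L:geodesic-compat} and the observation that $ga=ga'$ forces $a\sim a'$), and then Proposition \ref{P:geod-exist} plus maximality closes both inclusions. The only difference is cosmetic---you prove the two inclusions in the opposite order, and you invoke the already-proven direction where the paper simply cites that the relevant preimage is a geodesic set.
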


\begin{proof}
	The inverse image of a geodesic set $B$ in $(S, d)$ is a geodesic set in $(R, e)$. To see this, observe that if $\prec$ is a compatible total order on $B$, then by breaking ties in an arbitrary way we can find a total order $\preceq'$ on $g^{-1}B$ such that $x \prec' y$ whenever $gx \prec gy.$ The order $\prec'$ makes $g^{-1}B$ a geodesic set. 
	
	On the other hand, if $A$ is a geodesic set from $x$ to $y$, then we claim that $gA$ is a geodesic set from $gx$ to $gy$. To do this, we just need to find a compatible total order on $A$ whose pushforward onto $gA$ is also a well-defined total order. 
Let $q:S \to \bigslant{S}{\sim}$ be the quotient map on $S$ for $d$-equivalence. Then $q' = q g$ is the quotient map on $R$ for $e$-equivalence. Let $\preceq$ be the total order on $q' A$ identified via Lemma \ref{L:geodesic-compat}. Let $\preceq_S$ be any partial order on $gA$ such that $a \preceq_S b$ implies $q a \preceq qb$, and let $\preceq_R$ be any partial order on $A$ such that $a \preceq_R b$ implies $ga \preceq_S gb$. Then by Lemma \ref{L:geodesic-compat}, $\preceq_R$ is a compatible order on $A$. Its pushforward onto $S$ is then well-defined: it is simply $\preceq_S$.

	Now, let $A$ be a geodesic from $x$ to $y$ in $R$. Then $gA$ is a geodesic set from $gx$ to $gy$. Since $e(x, y) = d(gx, gy)$, we have that $d(gx, gy) < \infty$. Therefore by Proposition \ref{P:geod-exist}, there is a geodesic  $B$ from $gx$ to $gy$ so that $gA \sset B$. We want to show that $A = g^{-1} B$. For this, observe that $g^{-1} B$ is a geodesic set from $x$ to $y$ in $S$. Since $A\subset g^{-1}B$, maximality implies  $A = g^{-1} B.$
	
	On the other hand, let $B$ be a geodesic from $gx$ to $gy$ in $S$, and let $A$ be a geodesic between points $x$ to $y$ containing the geodesic set $g^{-1} B$. Again, $A$ exists by Proposition \ref{P:geod-exist}. Then $B \sset gA$, and $gA$ is a geodesic set from $gx$ to $gy$. Therefore $gA = B$ by maximality and thus $g^{-1} B = A$. 
\end{proof}

We end this section by noting that certain subsets of geodesics are geodesics. The proof of this lemma is straightforward and left for the reader.
\begin{lemma}
	\label{L:sub-geod}
	Let  $(S, d)$ be a directed metric and let $A$ be a geodesic in $(S, d)$ with compatible order $\le$. Let $[a, b] \sset A$ be an interval in the total order $\le$. Then $[a, b]$ is a geodesic from $a$ to $b$.
	\end{lemma}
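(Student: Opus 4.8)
Two things must be established: that $[a,b]$ equipped with the restriction of the compatible order $\le$ is a geodesic \emph{set} from $a$ to $b$, and that it is maximal, i.e. a \emph{geodesic}.

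The first is a restriction statement. Let $x,y$ be the endpoints of the geodesic $A$, so that $x\le a\le b\le y$ in $A$'s compatible order and $d(x,y)<\infty$. With $\le$ restricted to $[a,b]$, the points $a,b$ are the minimum and maximum of $[a,b]$ by the very definition of an interval, and for any triple $p\le q\le r$ in $[a,b]$ the identity $d(p,r)=d(p,q)+d(q,r)$ is simply the corresponding identity for a triple in $A$. The one point needing a word is that $d(a,b)$ is finite: applying the geodesic-set property of $A$ to $x\le a\le y$ and then to $a\le b\le y$ gives $d(x,y)=d(x,a)+d(a,b)+d(b,y)$, and since a directed metric of positive sign takes values in $\R\cup\{\infty\}$, a sum of three such terms equal to the finite number $d(x,y)$ forces every term, in particular $d(a,b)$, to be finite.

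For maximality, suppose $C$ is a geodesic set from $a$ to $b$ with $[a,b]\sset C$, and fix a compatible order $\preceq_C$ on $C$. I would splice $C$ into $A$ in place of the segment $[a,b]$: declare everything in $A$ that is $<_A a$ to come first (keeping $\le$), then all of $C$ (ordered by $\preceq_C$), then everything in $A$ that is $>_A b$ (keeping $\le$), with any point lying both in $C$ and in $A\smin[a,b]$ placed only according to $\preceq_C$. Call this order $\trianglelefteq$ on $A\cup C$. The claim is that $(A\cup C,\trianglelefteq)$ is a geodesic set from $x$ to $y$. Granting this, maximality of the geodesic $A$ together with $A\sset A\cup C$ forces $A\cup C=A$, hence $C\sset A$; then, combining $C\sset A$ with the facts that $C$ is a geodesic set from $a$ to $b$ and $[a,b]\sset C$, Lemma \ref{L:geodesic-compat} (compatible orders are unique up to rearranging $\sim$-equivalence classes) yields $C=[a,b]$.

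The heart of the argument, and the step I expect to be the main obstacle, is verifying that $\trianglelefteq$ is compatible, i.e. that $d(p,r)=d(p,q)+d(q,r)$ for every triple $p\trianglelefteq q\trianglelefteq r$ in $A\cup C$. When all three points lie in $A$, or all three lie in $C$, this is already known. When the triple straddles the boundary between the ``$A$-part'' and the inserted copy of $C$, the point on the boundary is, by Lemma \ref{L:dequiv-relate}, $d$-equivalent to $a$ or to $b$ (concretely: an element of $A$ that is $<_A a$ yet sits between $a$ and $b$ in $\preceq_C$ satisfies $d(a,\cdot)+d(\cdot,a)=0$, by subtracting the $A$-additivity identity from the $C$-additivity identity), so one may slide it onto $a$ or $b$ at no cost and reduce to a triple lying inside $A$ or inside $C$. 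This use of $d$-equivalence, together with the bookkeeping about points that $A$ and $C$ share outside $[a,b]$, is the only genuinely delicate part; everything else, including the reduction via maximality of $A$, is routine.
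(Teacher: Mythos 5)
Your first two steps are sound, and the splicing idea is a good one. For the record, the compatibility of your spliced order $\trianglelefteq$ is best verified directly rather than by the vague ``slide it onto $a$ or $b$'' heuristic: for $p$ in the first block (i.e.\ $p\in A$, $p<a$, $p\notin C$) and $r\in C$, one computes $d(p,b)=d(p,a)+d(a,b)=d(p,a)+d(a,r)+d(r,b)$ by $A$-additivity then $C$-additivity, compares with the triangle inequality $d(p,b)\le d(p,r)+d(r,b)$, and uses finiteness of $d(r,b)$ to conclude $d(p,r)=d(p,a)+d(a,r)$; the remaining straddling cases reduce to this. So you do get $C\sset A$.

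The final step, however, is wrong, and the error is not cosmetic. From $C\sset A$, $[a,b]\sset C$, and the geodesic property of $C$, the most you can extract for a point $z\in C\smin[a,b]$ with, say, $z<a$ in $A$ is $d(a,b)=d(a,z)+d(z,b)$ together with $d(z,b)=d(z,a)+d(a,b)$, hence $d(a,z)+d(z,a)=0$, i.e.\ $z\sim a$. Lemma \ref{L:geodesic-compat} does not promote $z\sim a$ to $z\in[a,b]$; on the contrary, it says compatible orders on $A$ are interchangeable exactly by permuting points within $\sim$-classes, so a $\sim$-twin of $a$ is permitted to sit strictly below $a$ in the chosen $\le$ and hence outside $[a,b]$. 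Concretely the lemma fails as stated: take the induced graph metric on $\{x,c,a,m,b,y\}$ with directed edges $x\to c$ of weight $1$, $c\to a$ and $a\to c$ of weight $0$, and $a\to m$, $m\to b$, $b\to y$ of weight $1$. Then $c\sim a$, the whole vertex set $A$ is a geodesic from $x$ to $y$ with compatible order $x\le c\le a\le m\le b\le y$, so $[a,b]=\{a,m,b\}$; but $\{a,c,m,b\}$ with the order $a\preceq c\preceq m\preceq b$ is a strictly larger geodesic set from $a$ to $b$, so $[a,b]$ is not maximal. Your argument therefore cannot be repaired without a stronger hypothesis. The natural fix, under which your strategy goes through verbatim, is to add the assumption that $a$ is $\le$-minimal in its $\sim$-class in $A$ and $b$ is $\le$-maximal in its; by Lemma \ref{L:geodesic-compat} one can always pick such a compatible order, so this is a harmless restatement, and then $z\sim a$ with $z\in A$ forces $a\le z$, giving $C\sset[a,b]$ as you wanted.
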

\section{Examples of directed metrics}
\label{S:metric-examples}
In this section, we give some important examples of directed metrics, elaborating on Section \ref{SS:classical}. We start with general constructions, then move to constructions related to last passage percolation. Our most important and complex example is left until the end: the directed landscape. 

\begin{example}
\label{Ex:additive-metrics} Let $S$ be a set, and consider a function $h:S\to \mathbb R$. The {\bf additive metric of $h$} is the directed metric $d_h(x,y)=h(x)-h(y)$. Note that $d_h$ is a directed metric of both positive and negative sign. We will frequently just write $h$ in place of $d_h$ to refer to the additive metric when there is no ambiguity. 
\end{example}

Additive metrics are trivial in a certain sense. For example, any two points in $S$ are $d_h$-equivalent, so the whole space $S$ is a geodesic between any pair of points. Moreover, if $d$ is another directed metric on $S$ (of either sign), then $d + d_h$ is a directed metric with the same geodesics as $d$, and $d$-equivalence is the same as $(d + d_h)$-equivalence.

Our next set of examples comes from generalizing the notion of graph distance.

\begin{example}
	\label{Ex:graphs} Let $G = (V, E)$ be a directed graph. We can construct the following directed metrics of positive sign from data associated to $G$. 
\begin{enumerate}[label=(\roman*)]
	\item  Define $d_0(x,y)=1$ for all $(x,y) \in E$. The induced directed metric $e(d_0)$ can be thought of as a directed graph distance on $V$. If $G$ is a directed version of an undirected graph (i.e $(x, y) \in E$ whenever $(y, x) \in E$), then this gives the ordinary graph distance. This construction can also be applied to weighted finite graphs, where we set $d_0(x, y)$ to be the weight of the edge $(x, y)$.
	\item Consider a function $h:V \to \R$.  We can consider a directed metric on the edge set $E$ induced by the function $d_0((x,y),(y,z))=h(y)$ for all $(x,y),(y,z)\in E$. Note that this induced metric may not be defined for certain graphs. If $G$ is a directed version of an undirected graph and $h>0$, then this defines an ordinary metric on the edges of the undirected graph. 
	\item In the setting of (ii), we can also consider the metric on $V\cup E$ induced by $d_0((x,y),y)=h(y)$ and  $d_0(x,(x,y))=0$ for all $(x,y)\in E$. 
\end{enumerate}
\end{example}

All of the constructions in Example \ref{Ex:graphs} have clear analogues for directed metrics with negative sign. In particular, last passage percolation on $\Z^2$ can be interpreted in the context of (iii) above.

\begin{example}
\label{Ex:LPP}
Consider a directed graph on the vertex set $V = \Z^2$ where $(x, y) \in E$ whenever $y - x \in \{(0, -1), (1, 0)\}$. Then the negative-sign version of the construction in Example \ref{Ex:graphs}(iii) gives a directed metric $d$ on $\Z^2 \cup E$ corresponding to last passage percolation. We can explicitly write out distances as follows. Let $u, v \in \Z^2$ with $u_1 \le v_1$ and $u_2 \ge v_2$, and let
$u^- = u - (1,0), u_- = u - (0, -1)$, and $v^+ = v + (1,0), v_+ = v + (0, -1)$. Then
\begin{align*}
&d(u, x) + h(u) = d(y, x) = \max_\pi \sum_{w \in \pi} h(w), \\
&\quad \text{ for } \quad x \in \{v, (v, v^+), (v, v_+)\}, \quad y \in \{(u^-, u), (u_-, u)\}.
\end{align*}
Here the maximum is over all up-right lattice paths $\pi$ from $u$ to $v$. In the language of Section \ref{S:lattice-lpp}, the right hand side above is equal to the last passage value $h[u \to v]$. We call $d$ the \textbf{lattice last passage metric} defined by $h$. 
\end{example}

We have defined the lattice last passage metric on $\Z^2 \cup E$, rather than just $\Z^2$, in order to make it easier to embed into the plane later on.
We can similarly relate last passage across cadlag paths to an induced metric on $\R \X \Z$. We define this metric on $\R \X (\Z/2)$, where $\R \X (\Z + 1/2)$ will play the role of an edge set making this directed metric easier to embed into the plane.

\begin{example}
\label{Ex:LPP-lines}
Consider $f \in \scrD^\Z$, and let $d$ be the directed metric of negative sign on $\R \X (\Z/2)$ induced by the function
\begin{equation}
\label{E:line-edge-set}
\begin{array}{ll}
d_0((x,n),(y,n))&=f_n(y)-f_n(x) \qquad \mbox{for }\\
d_0((x,n),(x,n -1/2))&= 0,\\
d_0((x,n+1/2),(x,n))&=f_n(x)-f_n(x^-),
\end{array}
\end{equation}
for $x  < y \in \R$ and $n \in \N$.
The directed metric $d$ corresponds to last passage across functions as defined in Section \ref{S:lpp-cadlag}. Namely,
\begin{align*}
d((x, n+1/2), u) = f_n(x)-f_n(x^-) + d((x,n),u) &= f[(x,n)\to (y,m)],\\ \qquad \text{ for } \qquad u &\in \{(y, m), (y, m-1/2)\}.
\end{align*}
We call $d$ the \textbf{line last passage metric} defined by $f$.
\end{example}

\textbf{Important pullbacks:} \qquad When taking limits, we will have to embed last passage percolation on the lattice and across lines into a continuous setting. This can be done by looking at pullbacks of last passage metrics into the plane. One nice feature of this pullback approach is that we can easily pass between geodesics in the two models via Lemma \ref{L:pullback-geod}.

We first embed the lattice $\Z^2$ into $\R \X (\Z/2)$.
Define $f_{\R \X \Z \to \Z^2}: \R \X (\Z/2) \to \Z^2 \cup E$ by
\begin{equation}
\label{E:embed-1}
f_{\R \X \Z \to \Z^2}(x, n) = \begin{cases}
(x, n), \quad &x, n \in \Z, \\
((\floor{x}, n), (\ceil{x}, n)), \quad &x \notin \Z, n \in \Z, \\
((x, \ceil{n}), (x, \floor{n})), \quad &x \in \Z, n \in \Z + 1/2, \\
((\ceil{x}, \ceil{n}), (\ceil{x}, \floor{n})), \quad &x \notin \Z, n \in \Z + 1/2.
\end{cases}
\end{equation}
If $d$ is a metric defined from a function $h$ as in Example \ref{Ex:LPP}, then the pullback onto $\R \X (\Z/2)$ of $h$ is simply the line last passage metric defined by the cadlag functions $h_n$ given by the translation \eqref{E:fGkk}:
$$
h_n(0) = 0, \quad h_n(x) - h_n(y) = \sum_{u \in (y, x] \X \{n\}} h(u).
$$
Next, we embed $\R\times \Z$ into the plane $\mathbb R^2$. Define $f_{\R^2 \to \R \X \Z}: \R^2 \to \R \X (\Z/2)$ by
\begin{equation}
\label{E:line-plane}
f_{\R^2 \to \R \X \Z}(x, y) = \begin{cases}
(x, -y), \quad &y \in \Z, \\
(x, \floor{-y} +1/2), \quad &y \notin \Z.
\end{cases}
\end{equation}
Note the sign change in \eqref{E:line-plane}. Our convention for last passage across lines means that finite distances can only be assigned for pairs of points $(u, v)$ where $u$ has a higher line index than $v$. However, in the directed landscape finite distances can only be assigned to a pair of points $(x, s; y, t)$ when $s < t$. Since line index corresponds to time in the directed landscape, we need to reverse the line ordering to prove convergence.
For future use in Section \ref{S:cvg-lattice}, we write out the composition
\begin{equation}
\label{E:lat-to-plane}
f_{\R^2 \to \Z^2} := f_{\R \X \Z \to \Z^2} \circ f_{\R^2 \to \R \X \Z}
\end{equation}
explicitly.
\begin{equation}
\label{E:embed-3}
f_{\R^2\to \Z^2}(x, y) = \begin{cases}
(x, -y), \quad &x, y \in \Z, \\
((\floor{x}, -y), (\ceil{x}, -y)), \quad &x \notin \Z, y \in \Z, \\
((x, \ceil{-y}), (x, \floor{-y})), \quad &x \in \Z, y \notin \Z, \\
((\ceil{x}, \ceil{-y}), (\ceil{x}, \floor{-y})), \quad &x \notin \Z, y \notin \Z.
\end{cases}
\end{equation}

\begin{remark}
	\label{R:planar-embedding} It may be helpful for the reader to keep the following construction in mind when thinking about lattice last passage metrics.
	If $d$ is a lattice last passage metric generated from a  \textbf{nonnegative} weight function $h:\Z^2 \to \R$, and $d'$ is the pullback of $d$ under the map $f_{\R^2\to \Z^2}$, then we can alternately describe $d'$ similarly to Example \ref{Ex:Poi}, as the negative sign metric on $\R^2$ induced by the function
	$
	d_0(a, c) = h(c)\indic(c \in \Z^2),
$
where $d_0$ is defined for all $a, c \in \R^2$ with $a \ne c, a_1 \le c_1, a_2 \le c_2$.
\end{remark}
\subsection*{The directed landscape}
\label{SS:DL}

Our most important example of a (random) directed metric is the directed landscape. To define the directed landscape, let $\scrS$ be an Airy sheet as in Definition \ref{D:airy-sheet}. The rescaled function
\begin{equation}
\label{E:airy-sheet-scale}
\scrS_s(x, y) = s\scrS(x/s^2, y/s^2)
\end{equation}
is an \textbf{Airy sheet of scale $s$}. We refer to the process $\scrS(0, \cdot)$ as a \textbf{parabolic Airy process} of scale $s$. The directed landscape is built out of independent Airy sheets of different scales.

\begin{definition}
	\label{D:DL-def} The directed landscape is a random function $\scrL:\R^2 \X \R^2 \to \R \cup \{-\infty\}$ satisfying the following properties:
	\begin{enumerate}[label=\Roman*.]
		\item (Domain) Let $\Rd = \{(x, s; y, t) \in \R^4 : s < t\}$ and  $\Delta = \{(u, u) \in \R^4 \}$. The function $\scrL$ is continuous and finite on $\Rd$, equal to zero on the diagonal $\Delta = \{(u, u) \in \R^4 \}$, and equal to $-\infty$ on $\R^4\smin (\Rd \cup \Delta)$.
		\item (Independent Airy sheet increments) For any disjoint time intervals $\{(t_i, s_i) : i \in \{1, \dots k\}\}$, the random functions
		$$
		\scrL(\cdot, t_i ; \cdot, s_i), \quad  i \in \{1, \dots, k \}
		$$
		are independent Airy sheets of scale $(t_i - s_i)^3$.
		\item (Metric composition law) Almost surely, for any $r<s<t$ and $x, y \in \R$ we have
		\begin{equation}
		\label{E:M-comp}
		\scrL(x,r;y,t)=\max_{z \in \mathbb R} [\scrL(x,r;z,s)+\scrL(z,s;y,t)].
		\end{equation}
	\end{enumerate}
\end{definition}
Formally, we need to define a space of functions for $\scrL$ to take values in. We will address this issue in detail in Section \ref{S:topologies}. For now, $\scrL$ can be viewed as a random element in the space of continuous functions from $\Rd$ to $\R$ with the topology of uniform convergence on compact sets, since $\scrL$ is deterministic off of $\Rd$. The directed landscape $\scrL$ is unique in law by Theorem 10.9 in \cite*{DOV}, and properties I and III imply that it is almost surely a directed metric of negative sign on $\R^2$.

We record two properties of $\scrL$ that will be necessary for establishing convergence.

\begin{prop}
\label{P:from-DOV} Almost surely, $\scrL$ satisfies the following.
\begin{enumerate}
	\item $\scrL$ is continuous on $\Rh := \R^4 \smin \Delta$.
	\item For any $(x, s; y, t) \in \Rd$, there exists a (random) compact set $[A, B] \sset \R$ such that all geodesics from $(x, s)$ to $(y, t)$ are contained in $[A, B] \X [s, t]$.
\end{enumerate}
\end{prop}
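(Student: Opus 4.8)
The plan is to derive both statements from the construction and regularity theory of the directed landscape developed in \cite*{DOV}; only short arguments are needed on top of the definition of $\scrL$ and the facts recalled in this section.

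For part 1, continuity on $\Rd$ is built into the definition of $\scrL$ (property I), and $\scrL\equiv-\infty$ on the open set $\R^4\smin(\Rd\cup\Delta)$, where it is trivially continuous. The only content is continuity at a boundary point $(x,r;y,r)$ with $x\ne y$: one must show $\scrL(x_n,s_n;y_n,t_n)\to-\infty$ whenever $(x_n,s_n;y_n,t_n)\to(x,r;y,r)$ with $x\ne y$. Along any subsequence with $s_n\ge t_n$ this is automatic, so we may assume $s_n<t_n$, in which case $(y_n-x_n)^2/(t_n-s_n)\to\infty$. Using the scaling relation for $\scrL$, spatial stationarity of the Airy sheet (Definition \ref{D:airy-sheet}(i)), the identification $\scrS(0,\cdot)=\scrA_1$, and the Tracy--Widom upper tail, $\scrL$ obeys a parabolic upper-tail estimate: on a neighbourhood of $(x,r;y,r)$ there are constants $c,C>0$ with
$$
\p\Big(\scrL(x',s';y',t')\ \ge\ -\frac{(y'-x')^2}{t'-s'}+(t'-s')^{1/3}u\Big)\ \le\ Ce^{-cu^{3/2}}.
$$
This forces $\scrL(x_n,s_n;y_n,t_n)\to-\infty$ in probability, and almost sure convergence follows either by quoting the modulus-of-continuity estimate of \cite*{DOV} or by a Borel--Cantelli argument along a dyadic mesh.

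For part 2, fix $(x,s;y,t)\in\Rd$ and let $\pi$ be any geodesic from $(x,s)$ to $(y,t)$. By the geodesic-set axioms, for every $(z,r)\in\pi$,
$$
\scrL(x,s;z,r)+\scrL(z,r;y,t)=\scrL(x,s;y,t),
$$
and since the left side is a sum of two terms in $\R\cup\{-\infty\}$ whose total is finite, both are finite; by property I this forces $r\in[s,t]$. Now for fixed $x,s,r$ the map $z\mapsto\scrL(x,s;z,r)$ is, up to deterministic rescaling and a spatial shift, a parabolic Airy process, so it decays quadratically, $\scrL(x,s;z,r)\sim -(z-x)^2/(r-s)$; more precisely, since $\scrA_1(w)+w^2$ is stationary, $V(r):=\sup_z\big[\scrL(x,s;z,r)+\tfrac{(z-x)^2}{2(r-s)}\big]$ is a.s.\ finite, and likewise $\tilde V(r):=\sup_z\big[\scrL(z,r;y,t)+\tfrac{(z-y)^2}{2(t-r)}\big]$. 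Hence, for $(z,r)\in\pi$,
$$
\frac{(z-x)^2}{2(r-s)}+\frac{(z-y)^2}{2(t-r)}\ \le\ V(r)+\tilde V(r)-\scrL(x,s;y,t),
$$
which bounds $|z|$ in terms of $r$. Taking $r=(s+t)/2$ already confines the midpoint of $\pi$; to confine $\pi$ at every intermediate time simultaneously one applies the inequality over the dyadic times in $(s,t)$, using Lemma \ref{L:sub-geod} to see that the two pieces of $\pi$ meeting such a time are themselves geodesics, and controls $\sup_r[V(r)+\tilde V(r)]$ using the exponential-type tail of the Airy marginal from \cite*{DOV}, the contribution at scale $2^{-n}$ being summably small after Borel--Cantelli. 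Alternatively one quotes directly from \cite*{DOV} the existence of leftmost and rightmost $\scrL$-geodesics, their continuity, and the monotonicity sandwiching every geodesic between them. Either way one obtains a random compact $[A,B]\sset\R$ with $\pi\sset[A,B]\X[s,t]$.

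The step I expect to be the main obstacle is the uniformity in part 2: producing a single box that works at all intermediate times and for all geodesics at once. The single-time confinement is soft, but the passage to all times needs the quantitative fluctuation tail bounds of \cite*{DOV} (fed into a chaining/Borel--Cantelli argument), or equivalently their geodesic-regularity results — which is precisely why this proposition is stated as a record of facts from that paper.
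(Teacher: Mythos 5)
Your argument is correct in substance, but you take a longer, more hands-on route than the paper does. The paper simply invokes Corollary 10.7 of \cite*{DOV}, which gives a \emph{single} random constant $C>0$ such that almost surely, for all $u=(x,t;y,t+s)\in\Rd$,
$$
\left|\scrL(u)+\frac{(x-y)^2}{s}\right|\le C\,s^{1/3}\log^{4/3}\!\left(\frac{2(\|u\|+2)^{3/2}}{s}\right)\log^{2/3}(\|u\|+2).
$$
This one global estimate immediately yields part~1 (since $s\to 0$ with $x\ne y$ forces $\scrL\to-\infty$ uniformly on compacts, and off $\Rd\cup\Delta$ the function is identically $-\infty$) and part~2 (plug the estimate into the geodesic identity $\scrL(x,s;z,r)+\scrL(z,r;y,t)=\scrL(x,s;y,t)$ to get a uniform-in-$r$ bound on $|z|$). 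Your proposal re-derives what is essentially the content of that corollary: for part~1 you go from one-point Tracy--Widom tails to a.s.\ convergence via a dyadic Borel--Cantelli plus a modulus of continuity, and for part~2 you introduce $V(r)$ and $\tilde V(r)$ and then invoke chaining over dyadic times to control $\sup_r[V(r)+\tilde V(r)]$. Both steps are correct in outline and you correctly flag the uniformity over $r$ as the genuine obstacle. Note, however, that the dyadic Borel--Cantelli by itself only controls mesh points; you do still need the modulus-of-continuity input to pass to the continuum in both parts, so the two ``alternatives'' you offer in part~1 are not truly independent. The paper's approach is tidier because the chaining is already packaged inside the quoted estimate; yours has the merit of making explicit exactly what regularity is being used, and of noting that one could instead quote the leftmost/rightmost geodesic sandwiching from \cite*{DOV}, which is another clean route to part~2.
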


\begin{proof}
Both of these results follow straightforwardly from the estimate on $\scrL$ in Corollary 10.7 in \cite*{DOV}. This corollary states that there exists a random constant $C>0$ such that almost surely, for all $u = (x, s; y, t + s) \in \Rd$ we have
\begin{equation}
\label{E:LuCu}
\lf|\scrL(u) + \frac{(x - y)^2}{s} \rg|\le C s^{1/3} \log^{4/3}\lf(\frac{2(||u|| + 2)^{3/2}}{s}\rg)\log^{2/3}(||u|| + 2). \qedhere
\end{equation}
\end{proof}

We will also use well-known tail bounds on the one-point distributions of $\scrL$.

\begin{theorem}[\cite*{tracy1994level}]
\label{T:tracy-widom}
Let $u = (x, t; y, t + s^3) \in \Rd$. Then the random variable
$
Y = \scrL(u)/s + (x - y)^2/s^3
$
is a standard GUE Tracy-Widom random variable. In particular, it satisfies the tail bound
$$
\p\lf(|Y| \ge m\rg) \le ce^{-dm^{3/2}}
$$
for constants $c, d> 0$.
\end{theorem}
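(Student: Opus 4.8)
The plan is to reduce the theorem to invariance properties of $\scrL$ and of the Airy sheet that are already recorded above, together with the classical one-point tail estimates for the GUE Tracy--Widom law; in particular no new estimate on the directed landscape is needed.

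\emph{Distributional identity.} I would chain three symmetries. First, by the independent Airy sheet increment property in Definition \ref{D:DL-def} and the scaling \eqref{E:airy-sheet-scale}, the time increment of length $s^3$ in $u$ means that $\scrL(x, t; y, t + s^3)$ has the law of a scale-$s$ Airy sheet evaluated at $(x, y)$, so $\scrL(u)/s$ has the law of $\scrS(x/s^2, y/s^2)$ for a standard Airy sheet $\scrS$. Second, by translation invariance of the Airy sheet, Definition \ref{D:airy-sheet}(i), together with $\scrS(0, \cdot) = \scrA_1(\cdot)$ from Definition \ref{D:airy-sheet}(ii), a shift puts this in the law of $\scrA_1(w)$ for $w$ the appropriate multiple of $y - x$. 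Third, by stationarity of the process $v \mapsto \scrA_1(v) + v^2$ (Proposition \ref{P:sym}), the parabolic term in the definition of $Y$ precisely offsets this shift, so $Y \eqd \scrA_1(0)$. Finally, $\scrA_1(0)$ is a standard GUE Tracy--Widom random variable by the determinantal one-point formula of \cite*{prahofer2002scale} (equivalently, its distribution function is the Fredholm determinant of the Airy kernel on a half-line). The only care needed is to keep track of the scaling constants across the three symmetries, which is routine.

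\emph{Tail bound.} Since $Y \eqd \scrA_1(0)$ it suffices to quote the estimates of \cite*{tracy1994level}: for $m \ge 1$ one has $\p(Y \ge m) \le c\exp(-\tfrac{4}{3} m^{3/2})$, from the Painlev\'e~II representation of the Tracy--Widom distribution function, and $\p(Y \le -m) \le c\exp(-\tfrac{1}{12} m^3)$, from a steepest-descent bound on the Airy-kernel Fredholm determinant. Taking the weaker of the two decay exponents and adjusting the constant gives $\p(|Y| \ge m) \le c e^{-d m^{3/2}}$. There is no genuine obstacle here: the argument uses only symmetries built into the definitions plus the known Tracy--Widom asymptotics, and none of the harder machinery developed elsewhere in the paper — tightness, the metric composition law, geodesic estimates — is needed for a one-point statement.
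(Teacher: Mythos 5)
The paper does not actually supply a proof of this theorem: it is stated as a citation to \cite*{tracy1994level} for the tail estimate, with the identification of the one-point law taken for granted. So there is no "paper proof" to match; what can be assessed is whether your derivation from the symmetries of $\scrL$ is sound. Your outline — use the Airy-sheet increment property and the scaling \eqref{E:airy-sheet-scale} to reduce to the standard Airy sheet, then translation invariance to reduce to $\scrA_1$, then stationarity of $\scrA_1(v)+v^2$ to reduce to $\scrA_1(0)$, and finally quote the classical Tracy--Widom tails — is exactly the right chain of ideas.

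However, the "routine" constant-tracking you deferred is precisely where the argument doesn't close with the theorem as stated. Carrying it out: the time gap is $s^3$, so $\scrL(u)\eqd \scrS_s(x,y)=s\scrS(x/s^2,y/s^2)$; translation invariance gives $\scrS(x/s^2,y/s^2)\eqd \scrA_1\bigl((y-x)/s^2\bigr)$; and stationarity of $v\mapsto\scrA_1(v)+v^2$ gives $\scrA_1\bigl((y-x)/s^2\bigr)\eqd \scrA_1(0)-(y-x)^2/s^4$. Hence
$$
\frac{\scrL(u)}{s}\;\eqd\;\scrA_1(0)-\frac{(x-y)^2}{s^4},
$$
so the quantity that is a standard GUE Tracy--Widom variable is $\scrL(u)/s+(x-y)^2/s^4$, not $\scrL(u)/s+(x-y)^2/s^3$. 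The two agree only when $s=1$. Your assertion that "the parabolic term in the definition of $Y$ precisely offsets this shift" is therefore false for the paper's displayed $Y$. The discrepancy is in fact a typo in the theorem (note that Definition \ref{D:DL-def}\,II also has a misprinted scale, and that the mobile Tracy--Widom condition in Section \ref{SS:ind-hypo} correctly records $\scrL(u)\eqd sT-(x-y)^2/s^3$, which is exactly what the computation above gives); nonetheless, the burden of a proof that claims to track constants is to notice this, and instead the gap is waved through. The tail-bound half of your proposal is fine: the upper tail $\exp(-\tfrac{4}{3}m^{3/2})$ is the weaker of the two and gives the stated $ce^{-dm^{3/2}}$ bound on $\p(|Y|\ge m)$.
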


\section{Topologies for planar directed metrics}
\label{S:topologies}

The goal of this section is to define topologies which are suitable for proving convergence of lattice and line last passage metrics to the directed landscape.

In the previous section we saw that last passage metrics can be embedded in the plane in a natural way. This embedding yields discontinuous metrics. On the other hand, the directed landscape itself is continuous on $\Rh = \{(u, v) \in \R^4 : u \ne v\}$. One approach to proving convergence would be to convert all directed metrics into continuous ones, and then appeal to a standard topology on a space of continuous functions.

We do not pursue this approach, since the necessary modifications of discontinuous last passage metrics do not necessarily preserve the triangle inequality or the geodesic structure. Instead we will study directed metrics up to different notions of equivalence.
 
For a given function $f:S\to R$ between two metric spaces define the \textbf{ closed graph } of $f$ as
\begin{equation}\label{E:graph}
\mathfrak{g} f=\text{closure of }\{(x,f(x)):x\in S\}\subset S\times R.
\end{equation} If $R$ has total order, then the \textbf{closed hypograph} $\mathfrak{h} f$ of $f$ is the closure of the set $\{(x,y):x\le f(y)\}$ in $S\times R$.

We will study functions from $\Rh$ to $\bar \R := \R \cup \{\pm \infty\}$ up to equivalence of closed graphs or hypographs. The reason for working on $\Rh$, rather than all of $\R^4$, is twofold. First, planar directed metrics are equal to $0$ on the diagonal $\Delta = \R^4 \smin \Rh$, so removing $\Delta$ does not change what information we have about the metrics. Second, the directed landscape $\scrL$ is discontinuous on $\Delta$ and so notions of convergence become unnecessarily complicated if we start considering what happens there. Define sets
$$
\scrE_g = \{\mathfrak{g} f : f:\Rh\to \bar \R\}, \quad \mathand \quad \scrE_h = \{\mathfrak{h} f : f:\Rh \to \bar \R\}. 
$$
To put topologies on $\scrE_h$ and $\scrE_g$, we embed them in a larger space $\scrE_*$. 
 
 Let $\scrE_*$ be the set of all closed subsets $\Ga \sset \Rh \times \bar \R$ such that $\Ga \cap (\{x\} \X \bar \R)$ is nonempty for all $x \in \Rh$. For $(r, s) \in \Rh \X \bar \R$ with $r = (u, v)$, define
\begin{equation}
\label{E:map-E}
E(r, s) = \lf(r, \frac{|u- v|e^{-|r|}s}{1 + |s|} \rg).
\end{equation}
Here we use the convention that $E(r, \pm \infty) = (r, \pm |u-v|e^{-|r|})$. The image of $E$ is contained in the set $\Rh \X [-1, 1]$.
For two sets $e_1, e_2 \in \scrE_*$, let $d(e_1, e_2)$ be the Hausdorff distance between $E(e_1)$ and $E(e_2)$. That is, 
\begin{equation}
\label{E:hausdorff}
\begin{split}
d(e_1, e_2) &= \inf \{r > 0 : E(e_1) \sset E(e_2)_{+r}, \; E(e_2) \sset E(e_1)_{+r}\} \qquad \text{where}\\
 \quad A_{+r} &= \{x \in \Rh \X [-1, 1]:   \inf_{a \in A} |a - x|  < r\}.
\end{split} 
\end{equation}
The function $E$ maps closed sets to closed sets since it has a continuous inverse. 

\begin{lemma}
\label{L:D-polish}
The metric space $(\scrE_*, d)$ is a compact. In particular, it is a Polish space.
\end{lemma}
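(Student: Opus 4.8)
\emph{Plan.} The goal is to show $(\scrE_*,d)$ is compact; since it is a metric space this is equivalent to sequential compactness, and a compact metric space is automatically separable and complete, hence Polish. So the plan is: given an arbitrary sequence $\Gamma_n\in\scrE_*$, extract a $d$-convergent subsequence. Write $w(r)=|u-v|e^{-|r|}$ for $r=(u,v)\in\Rh$. Then $E$ is a homeomorphism of $\Rh\times\bar\R$ onto $G:=\{(r,t)\in\Rh\times[-1,1]:|t|\le w(r)\}$, which is closed in $\Rh\times[-1,1]$ (it is a sublevel set of the continuous map $(r,t)\mapsto|t|-w(r)$), and under $E$ the space $\scrE_*$ corresponds precisely to the closed subsets $K\subseteq G$ whose projection to $\Rh$ is all of $\Rh$. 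The single structural fact driving the argument is that $G$ is uniformly thin near the diagonal and near infinity: since $|u-v|\le\sqrt2\,|r|$ one has $0\le w(r)\le\sqrt2\,|r|e^{-|r|}\le\sqrt2/e=:M<1$, and $\{r\in\Rh:w(r)\ge\delta\}$ is a compact subset of $\Rh$ for every $\delta>0$.

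\emph{Extraction of a candidate limit.} Set $K_n=E(\Gamma_n)\subseteq G$, and exhaust $\Rh$ by compacts $Q_1\subseteq\operatorname{int}Q_2\subseteq\cdots$ with $\bigcup_j Q_j=\Rh$. For each fixed $j$ the sets $K_n\cap(Q_j\times[-M,M])$ are nonempty (they project onto $Q_j$) compact subsets of the compact metric space $Q_j\times[-M,M]$, so by compactness of the hyperspace of a compact metric space together with a diagonal argument over $j$ we may pass to a subsequence along which $K_n\cap(Q_j\times[-M,M])$ converges in the Hausdorff metric to a compact $L_j$ for every $j$. Define $L:=\{x\in\Rh\times[-1,1]:\liminf_n\operatorname{dist}(x,K_n)=0\}$. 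This set is closed; it is contained in $G$ by continuity of $w$; and it projects onto all of $\Rh$, since each $K_n$ does and the vertical fibres $[-w(r),w(r)]$ are bounded, so a Bolzano--Weierstrass argument produces a point of $L$ over any prescribed base point. Hence $L=E(\Gamma)$ for a unique $\Gamma\in\scrE_*$, the candidate limit.

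\emph{The core estimate.} It remains to prove $d(\Gamma_n,\Gamma)=d_H(K_n,L)\to0$ in $\Rh\times[-1,1]$. For $\sup_{x\in K_n}\operatorname{dist}(x,L)$: a failure yields $x_n=(r_n,t_n)\in K_n$ with $\operatorname{dist}(x_n,L)\ge\epsilon$; if $(r_n)$ has a limit point in $\Rh$ then $(x_n)$ has a limit point in $L$, a contradiction, while if $(r_n)$ escapes every compact subset of $\Rh$ then eventually $w(r_n)$ is arbitrarily small, so both $|t_n|\le w(r_n)$ and $\operatorname{dist}((r_n,0),L)\le w(r_n)$ are small, forcing $\operatorname{dist}(x_n,L)$ small, again a contradiction. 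For $\sup_{y\in L}\operatorname{dist}(y,K_n)$: given $\delta>0$ choose $j$ with $\{w\ge\delta\}\subseteq\operatorname{int}Q_j$; then $L\cap(\{w\ge\delta\}\times[-1,1])\subseteq L_j$, so the Hausdorff convergence $K_n\cap(Q_j\times[-M,M])\to L_j$ gives distance $<\delta$ to $K_n$ for all large $n$ on this piece, and any $y\in L$ over a base point with $w<\delta$ has height $<\delta$ and lies within $2\delta$ of a point of $K_n$ over the same base point. Both suprema tend to $0$, so $\Gamma_n\to\Gamma$ in $d$, and $(\scrE_*,d)$ is (sequentially) compact.

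\emph{Main obstacle.} The delicate point is exactly this last estimate: $d$ is the Hausdorff distance computed inside $\Rh\times[-1,1]$, a space that is neither complete nor globally locally compact, so one cannot simply invoke compactness of the hyperspace of a compact space. What rescues the argument --- and the reason $E$ is built with the weight $|u-v|e^{-|r|}$ --- is the thinness of $G$: it forces the vertical fibres of every element of $\scrE_*$ to collapse toward height $0$ near $\Delta$ and near infinity, so those regions contribute nothing to the limit, and one needs genuine Hausdorff control only on the compact sets $\{w\ge\delta\}$, which eventually sit inside the interior of some $Q_j$. The only bookkeeping is to keep the boundaries of the $Q_j$ out of the way: this is done by using the strong convergence solely where $L$ is safely interior and bounding the rest crudely via $|t|\le w(r)<\delta$.
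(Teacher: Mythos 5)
Your argument is correct and follows essentially the same route as the paper's: both proofs exploit the fact that the image of $E$ is uniformly thin near the diagonal $\Delta$ and near infinity (you make this explicit via the weight $w(r)=|u-v|e^{-|r|}$ and the compactness of $\{w\ge\delta\}$, while the paper packages it in the sets $O_i$), extract Hausdorff limits on an exhaustion of $\Rh$ by compacts, glue them into a global candidate, and then verify $d$-convergence by splitting into a compact region (handled by hyperspace compactness) and a thin remainder (handled by the bound on $w$). Your extraction and verification are written out in slightly more detail but the underlying decomposition and estimates match the paper's.
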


\begin{proof}
Let $\Ga_n \in \scrE_*$ be a sequence. Since the Hausdorff topology on compact subsets of any compact space is compact, for any compact set $K \sset \Rh$, the sequence $E \Ga_n \cap (K \X [-1, 1])$ is precompact. Therefore letting 
$$
O_i = \{r = (u, v) \in \Rh : |u-v| > 1/i, |r| < i\} \X [-1, 1],
$$
by a diagonalization argument we can find a subsequence $N$ such that $\{E\Ga_n \cap \close{O_i} : n \in N\}$ converges to a limit $\Lambda^i$ for all $i$. Define
$$
\Lambda = \bigcup_{i=1}^\infty \Lambda^i \cap O_i.
$$
Note that $\Lambda = \Lambda^i$ on  $O_i$.
To complete the proof, we show that $E^{-1}\Lambda$ is the limit of
$\{\Ga_n : n \in N\}$. Recalling the fattening notation $\Lambda_{+r}$ from \eqref{E:hausdorff}, for any fixed $i$ we have
\begin{equation}
\label{E:Garrr}
E\Ga_n \sset \Lambda_{+r} \qquad \text{ if and only if } \qquad E\Ga_n \cap O_i \sset \Lambda_{+r} \quad \mathand \quad E\Ga_n \cap O_i^c \sset \Lambda_{+r}.
\end{equation}
The second containment on the right hand side of \eqref{E:Garrr} holds for all $n$ whenever $r \ge 2/i$ since $\Lambda_{+r}$ intersects every slice $\{x\} \X \bar \R$ and $$
E O_i^c \sset \Rh \X (-1/i, 1/i).
$$
We now analyze the first containment on the right hand side in \eqref{E:Garrr}. Hausdorff convergence of $E\Ga_n \cap \close{O_{i}}$ to $\Lambda^{i}$ implies that for any $r > 0$, for all large enough $n \in \N$ we have 
$$
E\Ga_n \cap O_i \sset \Lambda^{i}_{+r} \sset \Lambda_{+r}.
$$
Therefore for all large enough $n\in N$, we have
\begin{equation}
\label{E:Garrr2}
E\Ga_n \sset \Lambda_{+r} \quad \text{for all}  \quad r \ge 2/i.
\end{equation}
By a similar argument we can also show that for all large enough $n\in N$, we have $\Lambda \sset (E\Ga_n)_{+r}$ for all $r \ge 2/i$. Since $i$ was arbitrary, combining these two containment statements, and using the definition \eqref{E:hausdorff}, implies that $\Ga_n \to E^{-1}\Lambda$ along $N$.
\end{proof}

 Convergence statements about graphs and hypographs can be converted into more natural convergence statements about the underlying functions when the limit is continuous.
 
 For the first lemma, we will use a notion of uniform convergence for functions whose range is all of $\bar \R$, rather than just $\R$. For such functions, we say that $f_n \to f$ uniformly if $\frac{f_n}{1 + |f_n|} \to \frac{f}{1 + |f|}$ uniformly as functions with range in $[-1, 1]$. When the limit $f$ is continuous, this is equivalent to the statement that $f_n(x_n) \to f(x)$ for any $x_n \to x$.
 
 \begin{lemma}
 	\label{L:graph-equiv}
 	Consider a sequence of functions $f_n:\Rh \to \bar \R$ and a continuous function $f:\Rh \to \bar \R$. Then $\mathfrak{g}f_n \to \mathfrak{g}f$ in $\mathcal E^*$ if and only if
 	$$
 	f_n \to f
 	$$
 	compactly as functions from $\Rh \to \bar \R$.
 \end{lemma}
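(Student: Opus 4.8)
The plan is to prove both directions by working with the embedded picture under the map $E$, since the topology on $\scrE_*$ is the Hausdorff metric on $E$-images. First I would reduce to the compactified range: replace each $f_n$ and $f$ by their compositions $g_n = f_n/(1+|f_n|)$ and $g = f/(1+|f|)$ with range in $[-1,1]$, noting that $g$ is still continuous (the map $t \mapsto t/(1+|t|)$ extended to $\bar\R$ by $\pm 1$ is a homeomorphism of $\bar\R$ onto $[-1,1]$), and that compact convergence $f_n \to f$ in the $\bar\R$ sense is by definition compact convergence $g_n \to g$ in $[-1,1]$. Since $E$ involves only multiplication by the fixed continuous positive weight $|u-v|e^{-|r|}$ in the second coordinate, $\mathfrak{g}f_n \to \mathfrak{g}f$ in $\scrE_*$ is equivalent to Hausdorff convergence of $\mathfrak{g}g_n$ to $\mathfrak{g}g$ locally (i.e.\ after intersecting with the exhausting sets $\close{O_i}$ from the proof of Lemma \ref{L:D-polish}), because the weight is bounded above and below on each $\close{O_i}$.

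For the forward direction ($\mathfrak{g}f_n \to \mathfrak{g}f$ implies compact convergence), I would argue by contradiction: if $g_n \not\to g$ compactly, there is a compact $K \subset \Rh$, an $\ep > 0$, a subsequence, and points $x_{n} \in K$ with $|g_{n}(x_{n}) - g(x_{n})| \ge \ep$. Passing to a further subsequence, $x_{n} \to x_* \in K$. The point $(x_{n}, g_{n}(x_{n})) \in \mathfrak{g}g_{n}$, and by Hausdorff convergence it is within $o(1)$ of $\mathfrak{g}g$, so there are $(y_n, z_n) \in \mathfrak{g}g$ with $(y_n,z_n) \to (x_*, g(x_*))$ (using continuity of $g$ at $x_*$ to identify the limiting second coordinate — the closed graph of a continuous function is just the graph, so $z_n = g(y_n) \to g(x_*)$). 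But then $g_{n}(x_{n}) \to g(x_*)$, while also $g(x_{n}) \to g(x_*)$ by continuity, contradicting $|g_{n}(x_{n}) - g(x_{n})| \ge \ep$. This handles the part of $\Rh$ where both sides stay finite; the compactified formulation makes the $\pm\infty$ values a non-issue since they have simply become $\pm 1$.

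For the reverse direction (compact convergence implies $\mathfrak{g}f_n \to \mathfrak{g}f$), fix a compact exhausting set $\close{O_i}$ and $r>0$; I need $\mathfrak{g}g_n \cap \close{O_i}$ and $\mathfrak{g}g$ to be within Hausdorff distance $r$ for large $n$, using that $g$ is continuous so $\mathfrak{g}g = \{(x,g(x)) : x \in \Rh\}$ is genuinely the graph. One inclusion: any $(x, g_n(x))$ with $x$ in (a slight enlargement of) $\close{O_i}$ is close to $(x, g(x)) \in \mathfrak{g}g$ by uniform convergence on the compact set. Other inclusion: any $(x, g(x)) \in \mathfrak{g}g$ with $x$ near $\close{O_i}$ is close to $(x, g_n(x)) \in \mathfrak{g}g_n$, again by uniform convergence; points of $\mathfrak{g}g$ far from $\close{O_i}$ are handled by the same trick as in Lemma \ref{L:D-polish}, where the $E$-weight forces everything outside $O_i$ into a thin slab. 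The only mild technical care needed is the boundary behavior — a point of $\mathfrak{g}g_n$ could lie just outside $O_i$ but project into $\close{O_i}$; this is absorbed by taking the enlargement and by the $2/i$-type slack already used in Lemma \ref{L:D-polish}.

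The main obstacle, such as it is, is purely bookkeeping: keeping track of the weight $|u-v|e^{-|r|}$ in $E$ and the exhausting sets $O_i$ so that "Hausdorff convergence in $\scrE_*$" is correctly translated into "uniform convergence on each $\close{O_i}$," and making sure the argument degrades gracefully near $\partial O_i$ and at points mapped to the endpoints $\pm 1$. There is no real analytic difficulty — the continuity of $f$ is exactly what rules out the pathology (a graph converging in Hausdorff sense to something that is not a graph of a function), and once the problem is phrased in the compactified coordinates the proof is the standard equivalence between Hausdorff convergence of graphs and uniform convergence, localized.
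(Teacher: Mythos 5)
The paper leaves this lemma as an exercise, so there is no official proof to compare against; your argument is sound and is the natural one. Two things worth tightening up in a final write-up:

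First, your framing claim — that Hausdorff convergence of $E(\mathfrak{g}f_n)$ to $E(\mathfrak{g}f)$ is \emph{equivalent} to Hausdorff convergence of the restrictions to each $\close{O_i}$ — is not quite right as stated: boundary effects at $\partial O_i$ can break the "restriction" direction in general (a matching point for $(x,f(x))$ with $x \in \partial O_i$ could have first coordinate just outside $\close{O_i}$). Fortunately your actual arguments never use this equivalence. In the forward direction you match a single sequence of graph points directly using the global Hausdorff bound; in the reverse direction you verify the two-sided Hausdorff containment from scratch by splitting first coordinates into "inside $\close{O_i}$" (controlled by uniform convergence of $g_n \to g$ and boundedness of the weight on the compact $\close{O_i}$) and "outside $O_i$" (controlled by the $E$-weight slab bound, exactly as in the proof of Lemma \ref{L:D-polish}). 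So this is a framing inaccuracy, not a gap. Second, $\mathfrak{g}f_n$ is by definition a \emph{closure}, so it may contain limit points beyond $\{(x,f_n(x))\}$; this causes no harm because Hausdorff distance is unchanged under passing to closures, and all your matching arguments produce a nearby point of $E(\mathfrak{g}f)$ for every point of $E(\mathfrak{g}f_n)$ (including closure points, by continuity of $w$ and $g$). The use of continuity of $f$ to guarantee $\mathfrak{g}f$ is exactly the graph of $f$ (so matching points are of the form $(y_n, f(y_n))$ with $y_n \to x_*$) is precisely the step that makes the proof work and rules out the Hausdorff limit being a non-graph, as you note.
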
 

We leave the straightforward proof of Lemma \ref{L:graph-equiv} as an exercise for the reader.
 
\begin{lemma}
	\label{L:hypo-equiv}
Consider a sequence $f_n:\Rh \to \bar \R$ and a continuous function $f:\Rh \to \bar \R$. 
\begin{enumerate}
	\item Suppose $\mathfrak{h}f_n \to \mathfrak{h} f$ in $\scrE_*$. Then for any sequence $x_n$ converging to a point $x \in \Rh$, we have
	\begin{equation}
	\label{E:limsupp}
	\limsup_{n \to \infty} f_n(x_n) \le f(x).
	\end{equation}
	In particular, if $f$ has range contained in $\R$ on some compact set $K \sset \Rh$, then
	$$
	(f_n - f)^+ \to 0 \text{ uniformly on } K.
	$$
	\item Suppose that there exists a dense set $D$ such that $f_n(y) \to f(y)$ for all $y \in D$, and that for every convergent sequence $x_n \to x \in \Rh$, \eqref{E:limsupp} holds.
	Then $\mathfrak{h}f_n \to \mathfrak{h} f$ in $\scrE_*$.
\end{enumerate}
\end{lemma}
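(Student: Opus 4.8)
Recall that the topology on $\scrE_*$ is the one induced by the metric $d$ of \eqref{E:hausdorff}, i.e.\ $\mathfrak{h}f_n\to\mathfrak{h}f$ means that $E(\mathfrak{h}f_n)\to E(\mathfrak{h}f)$ in Hausdorff distance inside $\Rh\times[-1,1]$, with $E$ the embedding of \eqref{E:map-E}. The only structural facts I will use are: $E$ is a homeomorphism from $\Rh\times\bar\R$ onto a closed subset of $\Rh\times[-1,1]$ (continuity of $E$ and of $E^{-1}$, together with the elementary check that the image is closed), and, consequently, that $E$ carries closed sets to closed sets and that convergence of sets may be tested by pulling back sequences of points along $E$. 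Both parts then reduce to unwinding the definition of $\mathfrak{h}$ and chasing points through $E$ and the Hausdorff metric; for part (2) I will also use that $(\scrE_*,d)$ is compact (Lemma \ref{L:D-polish}).

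\textbf{Part (1).} Assume $\mathfrak{h}f_n\to\mathfrak{h}f$, fix $x_n\to x\in\Rh$, and suppose toward a contradiction that $\limsup_n f_n(x_n)>f(x)$; passing to a subsequence, $f_n(x_n)\to L$ for some $L\in(f(x),+\infty]$. Since $f_n(x_n)\le f_n(x_n)$, the point $(x_n,f_n(x_n))$ lies in $\mathfrak{h}f_n$, so $E(x_n,f_n(x_n))\in E(\mathfrak{h}f_n)$; by continuity of $E$ this converges to $E(x,L)$, and a limit of points of $E(\mathfrak{h}f_n)$ must lie in the Hausdorff limit $E(\mathfrak{h}f)$, which is closed. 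Hence $(x,L)\in\mathfrak{h}f$. Writing $\mathfrak{h}f$ as the closure of $\{(a,b):b\le f(a)\}$, we get $(a_k,b_k)\to(x,L)$ with $b_k\le f(a_k)$; continuity of $f$ gives $f(a_k)\to f(x)$, so $L\le f(x)$, a contradiction. This is \eqref{E:limsupp}. For the uniform statement on a compact $K$ on which $f$ is $\R$-valued: if $(f_n-f)^+\not\to0$ on $K$, choose $\epsilon>0$ and (along a subsequence) $x_n\in K$ with $f_n(x_n)\ge f(x_n)+\epsilon$; extract $x_n\to x\in K$, and continuity and finiteness of $f$ at $x$ give $\liminf_n f_n(x_n)\ge f(x)+\epsilon$, contradicting \eqref{E:limsupp}.

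\textbf{Part (2).} By Lemma \ref{L:D-polish}, $(\scrE_*,d)$ is compact, so it suffices to show that every subsequential limit of $\mathfrak{h}f_n$ equals $\mathfrak{h}f$; suppose $\mathfrak{h}f_{n_k}\to\Gamma$. For $\Gamma\subseteq\mathfrak{h}f$: given $(x,y)\in\Gamma$, pulling the Hausdorff convergence back through $E$ yields $(x_k,y_k)\in\mathfrak{h}f_{n_k}$ with $(x_k,y_k)\to(x,y)$; perturbing within the (non-closed) hypograph of $f_{n_k}$ we get $(x_k',y_k')\to(x,y)$ with $y_k'\le f_{n_k}(x_k')$, and the hypothesis applied to $x_k'\to x$ gives $y=\lim_k y_k'\le\limsup_k f_{n_k}(x_k')\le f(x)$, so $(x,y)\in\mathfrak{h}f$. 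For $\mathfrak{h}f\subseteq\Gamma$: since $\Gamma$ is closed, it is enough to exhibit a subset of $\Gamma$ that is dense in $\mathfrak{h}f$. First, $(x,-\infty)\in\mathfrak{h}f_n$ for every $n$ and every $x$ (as $-\infty\le f_n(x)$), hence $(x,-\infty)\in\Gamma$. Second, for $x$ in the dense set $D$ and finite $y<f(x)$, pointwise convergence $f_{n_k}(x)\to f(x)$ gives $y<f_{n_k}(x)$ for large $k$, so $(x,y)\in\mathfrak{h}f_{n_k}$ eventually and thus $(x,y)\in\Gamma$. Finally, the union of these two families is dense in $\mathfrak{h}f$: any $(x_0,y_0)$ with $y_0\le f(x_0)$ is either of the form $(x_0,-\infty)$, or is approximated by points $(x_k,z_k)$ with $x_k\in D$, $x_k\to x_0$, and $z_k$ finite, chosen just below $y_0$ and (by continuity of $f$) below $f(x_k)$; and $\mathfrak{h}f$ is by definition the closure of the set of such $(x_0,y_0)$.

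\textbf{Main obstacle.} There is no genuine conceptual difficulty here: the proof is a careful unfolding of the definitions of $\mathfrak{h}$, $E$, and the Hausdorff metric $d$. The steps requiring the most care are the bookkeeping around the values $\pm\infty$ --- in particular the observation that $(x,-\infty)$ lies automatically in every hypograph, which is exactly what makes the inclusion $\mathfrak{h}f\subseteq\Gamma$ work near the bottom of the hypograph --- and, in the inclusion $\Gamma\subseteq\mathfrak{h}f$, the passage from a point of the \emph{closed} set $\mathfrak{h}f_{n_k}$ to a nearby genuine subgraph point $(x_k',y_k')$ so that the $\limsup$ hypothesis of the lemma can be applied.
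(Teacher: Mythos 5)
Your proof is correct and follows essentially the same route as the paper's: in part (1) you note $(x_n,f_n(x_n))\in\mathfrak{h}f_n$, pass the limit into the Hausdorff limit, and use continuity of $f$ to identify the slice $\{x\}\times\bar\R\cap\mathfrak{h}f$; in part (2) you invoke compactness of $\scrE_*$ and identify the subsequential limit $\Gamma$ with $\mathfrak{h}f$. The only presentational difference is in part (2): the paper writes $\Gamma=\mathfrak{h}g$ for an upper semicontinuous $g$ and proves $g=f$ as two function inequalities (with the inclusion $\mathfrak{h}f\subseteq\Gamma$ handled in one line via upper semicontinuity of $g$, continuity of $f$, and density of $D$), whereas you prove the set inclusion $\mathfrak{h}f\subseteq\Gamma$ directly by exhibiting a dense subset --- noting that $(x,-\infty)$ always lies in every hypograph and that pointwise convergence on $D$ supplies the rest --- which is equivalent but slightly more hands-on at the $\pm\infty$ boundary.
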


\begin{proof} We first prove 1.
Let $x_n \to x$ in $\Rh$. The points $(x_n, f_n(x_n)) \in \mathfrak{h} f_n$ for all $n$, so the point $y = (x, \limsup f_n(x_n)) \in \mathfrak{h}f$. Moreover, the continuity of $f$ implies that
$$
\{x\} \X \bar \R \cap \mathfrak{h}f = \{(x, z) \in \{x\} \X \bar \R : z \le f(x) \},
$$
so $\limsup_n f_n(x_n) \le f(x)$. 

For 2, since $\scrE_*$ is compact, we can pick out a subsequential limit $\Ga$ in $\scrE_*$. For $x \in \Rh$, define
$$
g(x) = \sup \{ z \in \bar \R : (x, z) \in \Ga\}.
$$
The function $g$ is upper semicontinuous since $\Ga$ is closed, and $\mathfrak{h} g = \Ga$. We want to show that $g = f$. First, we can find a sequence $x_n$ converging to $x$ such that 
$$
\limsup_{n \to \infty} f_n(x_n) = g(x).
$$
Hence $f \ge g$ by our assumption that \eqref{E:limsupp} holds for all convergent sequences. On the other hand, we have that $g(x) \ge \limsup f_n(x)$ for all $x \in D$, so $g|_D \ge f|_D$. Upper semicontinuity of $g$ and continuity of $f$ and $g$ then implies that $g \ge f$.
\end{proof}

\begin{remark}
	\label{R:other-domains}	
	In later sections, we will also consider graph convergence of functions defined on slices
	$$
	S_{s, t} = \{(x, s; y, t) \in \R^4 : x, y \in \R\}
	$$
	for fixed $s < t$, or on $\R^2$ or $\R$.
	We similarly embed the closed graphs/hypographs of $\bar \R$-valued functions $f$ with domain $S_{s, t}, \R^2,$ or $\R$ into spaces $\scrE_*(S_{s, t}), \scrE(\R^2), \scrE_*(\R)$, where $\scrE_*(C)$ consists of all closed subsets of $C \X \bar \R$ that intersect $\{x\} \X \bar \R$ for all $x \in C$. We say that $\Ga_n \to \Ga$ in one of these three spaces $\scrE_*(C)$ if $E' \Ga_n \to E' \Ga$ in the Hausdorff topology, where $E':C \X \bar \R \to \R \X [-1, 1]$ is defined similarly to $E$:
	$
	E'(x, s) = (x, e^{-|x|} s/(1 + |s|) 
	$.
	 It is easy to check that Lemmas \ref{L:D-polish}, \ref{L:graph-equiv}, and \ref{L:hypo-equiv} still hold in this context.
\end{remark}

\section{Convergence of geodesics} 
\label{S:geodesics}

Convergence of hypographs, and hence also convergence of graphs, is a strong enough notion to yield convergence of geodesics for planar directed metrics with a small amount of natural additional structure. 

We start with a proposition showing convergence of geodesic sets. While this proposition is stated for planar metrics, the proof does not require anything about the underlying set being $\R^2$. We work with directed metrics of negative sign with an eye to proving convergence of geodesics in last passage percolation; an analogous statement holds for directed metrics of positive sign.

\begin{prop}
\label{P:convergence}
Let $d_n$ be a sequence of directed metrics of negative sign on $\R^2$ whose hypographs converge in $\scrE_*$ to a limiting directed metric $d$ which is continuous on $\Rh$. Let $(x_n, y_n) \in \Rh$ be a sequence converging to $(x, y) \in \Rh$ such that 
$$
d_n(x_n, y_n) \to d(x, y) > -\infty.
$$
Let $A_n$ be a sequence of bounded $d_n$-geodesic sets from $x_n$ to $y_n$. Then all subsequential limits of $\bar A_n$ in the Hausdorff topology on closed subsets of $\R^2$ are $d$-geodesic sets from $x$ to $y$.
\end{prop}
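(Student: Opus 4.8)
The plan is to extract a subsequential Hausdorff limit $A$ of $\bar A_n$ and verify directly that it is a $d$-geodesic set from $x$ to $y$, using the two defining properties of geodesic sets. First I would note that since the $A_n$ are bounded $d_n$-geodesic sets from $x_n$ to $y_n$ and $(x_n,y_n)\to(x,y)$, the points $x_n,y_n$ themselves lie in $\bar A_n$, so any Hausdorff subsequential limit $A$ contains both $x$ and $y$; after passing to the subsequence we may assume $\bar A_n \to A$. The key structural input is Lemma \ref{L:hypo-equiv}(1): since $\mathfrak h d_n \to \mathfrak h d$ in $\scrE_*$, for any convergent sequence $(p_n,q_n)\to(p,q)$ in $\Rh$ we have $\limsup_n d_n(p_n,q_n)\le d(p,q)$, and moreover $(d_n-d)^+\to 0$ uniformly on compacta where $d$ is finite. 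Combined with the hypothesis $d_n(x_n,y_n)\to d(x,y)>-\infty$, this is exactly the leverage needed to push the additivity identities through the limit.

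The heart of the argument is to produce a compatible total order on $A$. Here I would exploit the planar structure in the way the rest of the paper does: for the last passage metrics in question geodesic sets are naturally ordered by the partial order $\nearrow$ on $\R^2$ (i.e. $p\preceq q$ iff $p_1\le q_1$ and $p_2\ge q_2$, the time-increasing direction), and on a geodesic this partial order is in fact total up to $d$-equivalence. Concretely: given $a,b,c\in A$ with $a\preceq b\preceq c$ in this planar order, choose $a_n,b_n,c_n\in A_n$ with $a_n\to a$, $b_n\to b$, $c_n\to c$; by monotonicity of geodesics (Lemma \ref{L:mono-path}, transported to the planar metric) we may choose them $\preceq$-ordered inside $A_n$, so the $d_n$-additivity $d_n(a_n,c_n)=d_n(a_n,b_n)+d_n(b_n,c_n)$ holds. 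Now I would take limits: the $\limsup$ bound from Lemma \ref{L:hypo-equiv}(1) gives $d(a,c)\le d(a,b)+d(b,c)$ automatically from the triangle inequality for $d$, while for the reverse inequality I would use that $d_n(a_n,c_n)\to d(a,c)$ — which follows because $d(a,c)$ is finite (it lies between the finite $d(x,y)$ and the two triangle-inequality bounds via $x\preceq a\preceq c\preceq y$, and $(d_n-d)^+\to 0$ uniformly there together with the $\limsup$ bound forces convergence) — and similarly $\liminf_n[d_n(a_n,b_n)+d_n(b_n,c_n)] \le d(a,b)+d(b,c)$ by the $\limsup$ bound applied to each term. Chaining these gives $d(a,c)=d(a,b)+d(b,c)$, so $A$ with the planar order (restricted to $A$, and totalized arbitrarily within $d$-equivalence classes) is a geodesic set from $x$ to $y$.

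The main obstacle I anticipate is the passage from the one-sided $\limsup$ control furnished by hypograph convergence to genuine two-sided convergence $d_n(a_n,c_n)\to d(a,c)$ along the chosen points — hypograph convergence alone only prevents the prelimit distances from being too large, and one must rule out a downward drop. This is where finiteness of $d(x,y)$ is essential: the bounded geodesic set $A_n$ sits inside a fixed compact box (using boundedness of the $A_n$ and convergence of endpoints), $d$ is finite on the relevant sub-box since $d(x,y)>-\infty$ and $d$ is a directed metric of negative sign with $x\preceq a\preceq c\preceq y$, so the uniform statement $(d_n-d)^+\to 0$ on that box combines with $\limsup d_n \le d$ to pin down the limit. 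A secondary technical point is that the planar order need not be total on $A$ as a set of points (two points with the same coordinates do not arise, but nearly-collinear configurations require care), which is handled exactly as in Lemma \ref{L:geodesic-compat}: one only needs a compatible order, and ties are broken within $d$-equivalence classes. I would also remark that boundedness of $A_n$ is used only to guarantee the Hausdorff limit stays in $\R^2$ and inside a compact set; in the directed landscape the relevant a priori compactness comes from Proposition \ref{P:from-DOV}(2).
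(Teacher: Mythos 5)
Your overall strategy (extract a Hausdorff subsequential limit $A$, feed the geodesic identities through the hypograph $\limsup$ bound, then build a compatible order) is the same as the paper's, but there is a genuine gap in the middle that is not easily repaired in your framing.

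The gap is the source of the total order. Proposition~\ref{P:convergence} is stated for \emph{arbitrary} directed metrics of negative sign on $\R^2$, and the paper's proof explicitly uses nothing about the underlying set being $\R^2$. You instead import the planar partial order $\nearrow$ and cite Lemma~\ref{L:mono-path}, which is a statement about rightmost \emph{paths} in cadlag last passage percolation, not about abstract geodesic sets in a general directed metric. For a general $d_n$ there is no reason the compatible total order $\le_n$ on $A_n$ (which exists by Definition~\ref{D:geodesic}) should have anything to do with the planar order, and the additivity identity $d_n(a_n,c_n)=d_n(a_n,b_n)+d_n(b_n,c_n)$ that you invoke is only guaranteed if $a_n \le_n b_n \le_n c_n$ in \emph{that} order, not in $\nearrow$. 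The fix --- and this is the paper's key move --- is to forget the plane entirely: for any pair $a_n, b_n \in A_n$, dichotomy in $\le_n$ gives $a_n \le_n b_n$ or $b_n \le_n a_n$; after passing to a subsequence you may assume one case holds for all $n$, and then the full four-term chain $x_n \le_n a_n \le_n b_n \le_n y_n$ gives
$d_n(x_n,a_n)+d_n(a_n,b_n)+d_n(b_n,y_n)=d_n(x_n,y_n)$ deterministically, with no planar input.

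A second, related weakness is your handling of the additivity in the limit. You try to show $d_n(a_n,c_n)\to d(a,c)$ directly, citing $(d_n-d)^+\to 0$ uniformly \emph{and} the $\limsup$ bound, but these are the same one-sided estimate; neither rules out a downward drop. The paper avoids this by never proving two-sided convergence along intermediate pairs: since $d_n(x_n,a_n)+d_n(a_n,b_n)+d_n(b_n,y_n)=d_n(x_n,y_n)\to d(x,y)$ and each $\limsup d_n(\cdot,\cdot)\le d(\cdot,\cdot)$, subadditivity of $\limsup$ plus the triangle inequality for $d$ sandwiches everything, forcing $d(x,y)=d(x,a)+d(a,b)+d(b,y)$ without ever establishing a lower bound on an individual $\liminf$. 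Finally, your remark that the planar order ``totalized arbitrarily within $d$-equivalence classes'' is automatically compatible glosses over the real work in the paper's construction of $\preceq'$ in \eqref{E:xprecy}: one must check that the relation defined by the additivity identity is transitive and that a tiebreaking order preserves compatibility, which requires the reasoning around \eqref{E:dxy-ex}. I'd recommend dropping the planar order and $\nearrow$ entirely, working with the abstract compatible order $\le_n$ on $A_n$, and replacing the attempted two-sided limit with the sandwich argument.
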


In Proposition \ref{P:convergence}, we work with closures of geodesic sets rather than geodesic sets themselves so that convergence in the Hausdorff topology is well-defined. In the sequel, we will frequently make this exchange between sets and their closures.

\begin{proof}
Let $A$ be any subsequential limit of the sequence of closures $\bar A_n$. We will show that for every pair of points $a, b \in A$, that either
\begin{equation}
\label{E:d-sigsig}
d(x, y) = d(x, a) + d(a, b) + d(b, y),
\end{equation}
or else \eqref{E:d-sigsig} holds with the roles of $a$ and $b$ reversed. 

For this, fix $a, b \in A \smin \{x, y\}$. First assume $a \ne b$. There exists $a_n, b_n \in A_n$ such that $a_n \to a, b_n \to b$. For each $n$, either $a_n \le_n b_n$ or $b_n \le_n a_n$, where $\le_n$ is a compatible order on $A_n$. By possibly passing to a subsequence and relabelling the points, we may assume $a_n \le b_n$ for all $n$. Then Lemma \ref{L:hypo-equiv} implies 
\begin{equation}
\label{E:d-lim}
\limsup_{n \to \infty} d_n(x_n, a_n) + d_n(a_n, b_n) + d_n(b_n, y_n) \le d(x, a) + d(a, b) + d(b, y) \le d(x, y).
\end{equation}
The second inequality in \eqref{E:d-lim} follows from the triangle inequality for $d$. 
The same inequality chain holds when $a = b$ when we take $a_n = b_n$ since then $d(a_n, b_n) = d(a, b) = 0$. On the other hand, since $A_n$ was a geodesic set, the left hand side of \eqref{E:d-lim} equals 
$$
\limsup_{n \to \infty} d_n(x_n, y_n).
$$
This equals $d(x, y)$ by the assumption of the lemma, so the inequalities in \eqref{E:d-lim} must all be equalities. This proves \eqref{E:d-sigsig} when $a, b \notin \{x, y\}$. If $\{a, b\} = \{x, y\}$, then \eqref{E:d-sigsig} is trivially true. If only one of either $a$ or $b$ is in the set $\{x, y\}$, say, $a$, then \eqref{E:d-sigsig} for the pair $a, b$ follows from \eqref{E:d-sigsig} applied to the pair $b, b$.  

Now let $\preceq$ be the relation on $A$ where we say that $a \preceq b$ if \eqref{E:d-sigsig} holds. We want to modify $\preceq$ to create a compatible total order on $A$. For this, let $\le$ be an arbitrary total order on $A$ with least element $x$ and greatest element $y$. Define a new relation $\preceq'$ given by
\begin{equation}
\label{E:xprecy}
a \preceq' b \text{ if } \begin{cases}
&a \preceq  b \mathand a \not\preceq b, \text{ or} \\
&a \preceq b \mathand b \preceq a, \text{ and }a \le b.
\end{cases}
\end{equation}
We claim that the new relation $\preceq'$ is a compatible total order on $A$. Since for any $a, b$, \eqref{E:d-sigsig} either holds for $a, b$ or for $b, a$, we have $a \preceq' b$ or $b \preceq' a$ for any $a, b$. Moreover, the construction in \eqref{E:xprecy} ensures that if $a \preceq' b$ and $b \preceq' a$ then $a = b$. Next, we check transitivity. Suppose $a \preceq' b \preceq' c$. By \eqref{E:d-sigsig} for the pair $a, b$ and two triangle inequalities, we have that
\begin{equation}
\label{E:dxy-ex}
d(x, y) = d(x, a) + d(a, b) + d(b, y) \le d(x, b) + d(b, y) \le d(x, y).
\end{equation}
All the inequalities above must be equalities, implying that $d(x, a) + d(a, b) = d(x, b)$. Combining this with \eqref{E:d-sigsig} for the pair $b, c$ gives that
\begin{equation*}
\label{E:dxy-2ex}
d(x, y) = d(x, a) + d(a, b) + d(b, c) + d(c, y).
\end{equation*}
Again, by appealing to the triangle inequality as in \eqref{E:dxy-ex}, this implies that \eqref{E:d-sigsig} holds for the pair $a, c$, so $\preceq'$ is transitive, and hence a total order. This reasoning also implies that $d(a, c) = d(a, b) + d(b, c)$, so the total order satisfies the second condition of Definition \ref{D:geodesic}. Finally, for any $a$, \eqref{E:d-sigsig} holds for the pairs $x, a$ and $a, y$, so the construction \eqref{E:xprecy} implies that $x \preceq' a \preceq' y$. Hence $\preceq'$ is a compatible order on $A$ and so $A$ is a geodesic set.
\end{proof}

If we impose some structure on the limiting metric $d$ then we can improve Proposition \ref{P:convergence} to give convergence of geodesics.
The following structure is natural in light of the fact that we are interested in proving convergence to the directed landscape.

\begin{definition}
\label{D:spacetime}
Let $d$ be a directed metric of negative sign on $\R^2$. We say that $d$ is a \textbf{spacetime metric} if $d$ is continuous on $\Rh$ and satisfies
\begin{enumerate}
	\item (Domain) $d > -\infty$ on $\Rd$, $d = 0$ on $\Delta$, and $d = -\infty$ everywhere else.
	\item (Metric composition) For all $s < r < t$ and $x, y \in \R$, we have 
	$$
	d(x, s; y, t) = \max_{z \in \R} d(x, s; z, r) +	d(z, r; y, t).
	$$
	\item For any $(x, s; y, t) \in \Rd$, there exists a compact set $K = [a, b] \X [s, t] \sset \R^2$ such that all geodesics from $(x, s)$ to $(y, t)$ are contained in $K$.
\end{enumerate}
\end{definition}

By Proposition \ref{P:from-DOV} and Definition \ref{D:DL-def}, the directed landscape $\scrL$ is almost surely a spacetime metric. We start by proving two basic properties of spacetime metrics. 

\begin{prop}
\label{P:spacetime-properties}
Let $d$ be a spacetime metric on $\R^2$.
\begin{enumerate}[label=(\roman*), nosep]
	\item For every $(x, s; y, t) \in \Rd$, all geodesics from $(x, s)$ to $(y, t)$ are of the form 
	$$
	\Ga(\pi) = \{(\pi(r), r) : r\in [s, t]\}
	$$
	for some continuous function $\pi:[s, t] \to \R$ with $\pi(s) = x, \pi(t) = y$. Moreover, the only compatible order on each geodesic is the order induced by the time coordinate $r \in [s, t]$.
	\item For any $(u; v) \in \Rd$, a geodesic set from $u$ to $v$ is a geodesic if and only if it is a connected subset of $\R^2$. 
\end{enumerate}
\end{prop}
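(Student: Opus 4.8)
\textbf{Part (i).} The plan is to first locate geodesics in the right ``time slab'' and then extract the continuous function $\pi$. Fix $(x,s;y,t)\in\Rd$ and let $A$ be a geodesic from $(x,s)$ to $(y,t)$ with compatible order $\preceq$. Since $d(p,q)=-\infty$ whenever $p,q$ are not in strict time order or not on the diagonal, the requirement $d(a,c)=d(a,b)+d(b,c)$ for $a\preceq b\preceq c$ forces, for any $a\in A\setminus\{(x,s),(y,t)\}$, that the time coordinate of $a$ lies strictly between $s$ and $t$ (otherwise one of the two distances to the endpoints would be $-\infty$ while the other is finite, contradicting finiteness of $d((x,s),(y,t))$). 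Moreover for any two points $a,b\in A$ with $a\preceq b$, we need $d(a,b)>-\infty$, hence $a$ has strictly smaller time coordinate than $b$ (or $a=b$; but distinct points with equal time coordinate would give $d=-\infty$ unless they are the same point, since $d>-\infty$ only on $\Rd$ off the diagonal). So the map sending a point of $A$ to its time coordinate is strictly increasing with respect to $\preceq$, which shows the only possible compatible order is the one induced by time, and that $A$ is the graph of a partial function on a subset of $[s,t]$ containing $s$ and $t$.

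It remains to see that this partial function extends to a continuous $\pi$ on all of $[s,t]$ with the graph being exactly $\Ga(\pi)$. First I would use the metric composition law (property 2 of Definition \ref{D:spacetime}): for any $r\in(s,t)$, there is $z_r\in\R$ with $d(x,s;y,t)=d(x,s;z_r,r)+d(z_r,r;y,t)$, and then $A\cup\{(z_r,r)\}$ is still a geodesic set (the defining additivity relations follow from the composition identity and the triangle inequality, exactly as in Lemma \ref{L:dequiv-relate}-style arguments). By maximality of the geodesic $A$, the point $(z_r,r)$ already lies in $A$; hence $A$ meets every time level $r\in[s,t]$, so $\pi$ is defined on all of $[s,t]$. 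Uniqueness of $z_r$ at a given level follows because two points at the same time level cannot both be in a geodesic set (they are not $d$-comparable with finite distance), so $\pi$ is a genuine function. Continuity: property 3 of Definition \ref{D:spacetime} confines $A$ to a compact box $[a,b]\times[s,t]$, and if $\pi$ failed to be continuous at some $r_0$ we could take $r_n\to r_0$ with $\pi(r_n)\to w\neq\pi(r_0)$; then $(w,r_0)$ is a limit of points of the closed set $\bar A$, and since geodesics in a metric space need not be closed I would instead argue directly: the additivity $d(\pi(r_n),r_n;\,\pi(r_m),r_m)$-relations pass to the limit using continuity of $d$ on $\Rh$, showing $(w,r_0)$ together with $A$ still forms a geodesic set, so $(w,r_0)\in A$ by maximality, contradicting that $\pi$ is single-valued at $r_0$. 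This gives continuity of $\pi$ and the form $\Ga(\pi)$.

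\textbf{Part (ii).} One direction is immediate from (i): a geodesic from $u=(x,s)$ to $v=(y,t)$ equals $\Ga(\pi)$ for continuous $\pi$, hence is connected. For the converse, suppose $A$ is a geodesic set from $u$ to $v$ that is connected as a subset of $\R^2$; I want to show it is maximal, i.e.\ a geodesic. Extend $A$ to a geodesic $A'\supseteq A$ using Proposition \ref{P:geod-exist}. By part (i), $A'=\Ga(\pi')$ meets every time level in $[s,t]$ exactly once. Since $A\subseteq A'$, the set $A$ is the graph of $\pi'$ restricted to $T:=\{r:(\pi'(r),r)\in A\}\subseteq[s,t]$, with $s,t\in T$. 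If $T\neq[s,t]$, pick $r_0\in[s,t]\setminus T$; then $\{r<r_0\}\cap T$ and $\{r>r_0\}\cap T$ are both nonempty (they contain $s$ and $t$) and disconnect $A=\Ga(\pi'|_T)$ in $\R^2$, using that $\pi'$ is continuous so these two pieces have disjoint closures away from the missing fiber --- more carefully, the two relatively open sets $A\cap(\R\times(-\infty,r_0))$ and $A\cap(\R\times(r_0,\infty))$ partition $A$ nontrivially, contradicting connectedness. Hence $T=[s,t]$, so $A=A'$ is a geodesic. The main obstacle I anticipate is the handling of closure/limit issues in Part (i) continuity argument, since geodesic sets as defined need not be closed; the fix above is to transfer additivity relations to limit points via continuity of $d$ and invoke maximality, rather than reasoning about $\bar A$ directly.
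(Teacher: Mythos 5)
Your proposal is essentially correct and follows the same overall strategy as the paper: show $A$ is the graph of a function ordered by time, use metric composition and maximality to fill in every time level, and use continuity of $d$ on $\Rh$ together with the $-\infty$ off $\Rd$ to get continuity of $\pi$; Part (ii) then follows from Part (i) and Proposition \ref{P:geod-exist}. Two small remarks. First, you apply metric composition directly to the endpoints $(u,v)$ to produce $(z_r,r)$ at each level $r$, whereas the paper applies it across a putative gap between two consecutive points of $A$. Both work, but your version requires a short triangle-inequality chase (which you gesture at but do not spell out) to confirm that the additivity relations for $A\cup\{(z_r,r)\}$ hold for triples involving other points of $A$, not just $u$ and $v$. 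Second — and this is the one genuine slip — in the continuity step the intermediate claim that ``$(w,r_0)$ together with $A$ still forms a geodesic set'' is false: that set contains both $(w,r_0)$ and $(\pi(r_0),r_0)$ at the same time level, so whichever order you place them in, one of the required additivity triples forces a term $d\bigl((\pi(r_0),r_0),(w,r_0)\bigr)=-\infty$, which cannot balance a finite $d(u,v)$. That $-\infty$ is precisely the contradiction you want; the correct formulation (and the one the paper uses) is to pass the additivity identity $d(u,v)=d(u,\bar\pi(r_0))+d(\bar\pi(r_0),\bar\pi(r_n))+d(\bar\pi(r_n),v)$ to the limit and observe directly that the middle term would tend to $-\infty$, contradicting finiteness of $d(u,v)$, rather than to try to enlarge $A$ by $(w,r_0)$ and invoke maximality. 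Your Part (ii) argument is correct and matches what the paper sketches.
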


\begin{proof}
We first prove (i). Let $A$ be a geodesic from $u = (x, s)$ to $v = (y, t)$. Since $d = -\infty$ off of $\Rd \cup \Delta$, $A$ must be the graph of a function $\pi$ from some subset $D \sset [s, t]$ to $\R$ and the only compatible order on $A$ is the order induced by the time coordinate. Next, $D$ is closed by continuity of $d$ on $\Rh$ and maximality of $A$. Finally, suppose $(a, r_1), (b, r_2) \in A$ and $(r_1, r_2) \cap D = \emptyset$. Then by metric composition, there is some $r_3 \in (r_1, r_2)$ and some $c \in \R$ such that 
$$
d(a, r_1; b, r_2) = d(a, r_1; c, r_3) + d(c, r_3; b, r_2).
$$
We can add the point $(c, r_3)$ to the geodesic $A$, contradicting maximality. Therefore $D = [s, t]$. 

Next, we show that $\pi$ is right continuous. For this, we let $\bar \pi (r) = (\pi(r), r)$. Let $r_n \in [s, t)$ be any nonincreasing sequence converging to $r \in [s, t)$. By Definition \ref{D:spacetime}.3, $\bar \pi(r_n)$ is necessarily a bounded sequence in $\R^2$ so it has a subsequential limit $y$. Suppose that $y \ne \bar \pi(r)$. By continuity of $d$ on $\Rh$, we have 
$$
d(u, v) = d(u, \bar \pi(r)) + d(\bar \pi(r), \bar \pi(r_n)) + d(\bar \pi(r_n), v) \to d(u, \bar \pi(r)) + d(\bar \pi(r), y) + d(y, v). 
$$
Since $d$ is finite on $\Rd$ and equal to $-\infty$ on $\Rh \smin \Rd$, we have $d(\bar \pi(r), y) = -\infty$, whereas all other terms above are finite. This is a contradiction, so $y = \bar \pi(r)$ and therefore $\pi(r_n) \to \pi(r)$. We can similarly show that $\pi$ is left continuous.

Part (ii) follows from part (i), and the fact that any geodesic set is contained in a geodesic (Proposition \ref{P:geod-exist}).
\end{proof}

\begin{remark}
	\label{R:landscape-geodesics}
	In \cite*{DOV}, geodesics in $\scrL$ are defined differently. Namely, instead of a geodesic being a subset $\Ga(\pi) = \{(\pi(r), r) : r \in [s, t]\}$ of $\R^2$, a geodesic is simply the function $\pi$ itself. By Proposition \ref{P:spacetime-properties}(i) and the fact that $\scrL$ is almost surely a spacetime landscape, the two notions are equivalent, so all geodesic results from \cite*{DOV} apply here. For example, for any $(u; v) \in \Rd$, there is almost surely a unique $\scrL$-geodesic $\Ga$ from $u$ to $v$ by \cite*{DOV}, Theorem 1.7.
\end{remark}

We can use Proposition \ref{P:spacetime-properties} to give our first geodesic convergence theorem.

\begin{theorem}
\label{T:geod-cvg}
Let $d_n$ be a sequence of directed metrics of negative sign whose hypographs converge in $\scrE_*$ to the hypograph of a spacetime metric $d$. Let $(u_n; v_n) \to (u; v) \in \Rd$. Suppose that $\Ga_n$ is a sequence of $d_n$-geodesics from $u_n$ to $v_n$ which are connected subsets of $\R^2$, and that
$$
d_n(u_n, v_n) \to d_n(u, v).
$$
Then the sequence of closures $\bar \Ga_n$ is precompact in the Hausdorff topology and all subsequential limits of $\bar \Ga_n$ are $d$-geodesics from $u$ to $v$.
\end{theorem}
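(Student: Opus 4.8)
The plan is to split the statement into two parts: precompactness of $\{\bar\Ga_n\}$ in the Hausdorff topology, and the identification of every subsequential limit as a $d$-geodesic from $u$ to $v$. The engine for both parts is a single \emph{three-point observation}: if $p_n\in\Ga_n$ and $p_n\to p$, then since $\Ga_n$ is a $d_n$-geodesic set from $u_n$ to $v_n$ with some compatible order $\preceq_n$ (and $u_n,v_n$ are its minimum and maximum), we have $d_n(u_n,v_n)=d_n(u_n,p_n)+d_n(p_n,v_n)$. Taking $\limsup$ in $n$, using that hypograph convergence gives $\limsup_n d_n(x_n)\le d(x)$ whenever $x_n\to x$ (Lemma \ref{L:hypo-equiv}(1)) applied to the pairs $(u_n,p_n)$ and $(p_n,v_n)$, and combining with the (negative-sign) triangle inequality $d(u,v)\ge d(u,p)+d(p,v)$ and the hypothesis $d_n(u_n,v_n)\to d(u,v)>-\infty$, we are forced to the equality $d(u,v)=d(u,p)+d(p,v)$ with both summands finite. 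Thus $\{u,p,v\}$ is a $d$-geodesic set from $u$ to $v$; extending it to a $d$-geodesic by Proposition \ref{P:geod-exist} and invoking Definition \ref{D:spacetime}.3 for the limiting spacetime metric $d$, we conclude $p\in K$, where $K=[a,b]\times[s,t]$ is the fixed compact set containing all $d$-geodesics from $u$ to $v$.

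For precompactness, fix $R_0>0$ so large that the closed ball $\bar B(u,R_0)$ contains $K$ in its interior, and set $K'=\bar B(u,R_0)$ (note $u,v\in K\subset\interior K'$). I claim $\Ga_n\subset K'$ for all large $n$; since then the sets $\bar\Ga_n$ all lie in the fixed compact set $K'$, whose closed subsets form a compact hyperspace under the Hausdorff metric, precompactness follows. Suppose not; then along a subsequence there is $p_n\in\Ga_n$ with $|p_n-u|>R_0$. Since $\Ga_n$ is connected and $|u_n-u|\to 0<R_0$, the continuous function $z\mapsto|z-u|$ on $\Ga_n$ takes values both below and above $R_0$ for large $n$, hence takes the value $R_0$: there are $q_n\in\Ga_n$ with $|q_n-u|=R_0$. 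This is a bounded sequence, so along a further subsequence $q_n\to q$ with $|q-u|=R_0$. But $q_n\in\Ga_n$ and $q_n\to q$, so by the three-point observation $q$ lies on a $d$-geodesic from $u$ to $v$, hence $q\in K\subset\interior K'$, i.e.\ $|q-u|<R_0$ --- a contradiction.

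With precompactness in hand, let $A$ be any subsequential Hausdorff limit of $\bar\Ga_n$. Since for large $n$ the $\Ga_n$ are bounded $d_n$-geodesic sets from $u_n$ to $v_n$ and $d_n(u_n,v_n)\to d(u,v)>-\infty$, Proposition \ref{P:convergence} shows that $A$ is a $d$-geodesic set from $u$ to $v$. It remains to upgrade ``geodesic set'' to ``geodesic''. By hypothesis each $\Ga_n$, and hence each $\bar\Ga_n$, is connected, and a Hausdorff limit of connected closed subsets of a compact set is connected: a disconnection $A=A'\sqcup A''$ into disjoint nonempty compacts at distance $\delta>0$ would force, for large $n$, $\bar\Ga_n\subset (A')_{+\delta/3}\cup (A'')_{+\delta/3}$ while meeting both pieces, contradicting connectedness of $\bar\Ga_n$. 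So $A$ is a connected $d$-geodesic set from $u$ to $v$, and by Proposition \ref{P:spacetime-properties}(ii) it is a $d$-geodesic from $u$ to $v$, as desired.

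The main obstacle is the precompactness step --- ruling out that pieces of $\Ga_n$ escape to infinity. Hypograph convergence yields only one-sided (upper semicontinuous) control on $d_n$, so naive length estimates are unavailable; the argument above instead exploits connectedness of $\Ga_n$ to manufacture a bounded ``trapped'' point on a large sphere and then derives a contradiction from the rigidity of spacetime geodesics in Definition \ref{D:spacetime}.3. Once precompactness is secured, the remaining steps are soft, relying on Proposition \ref{P:convergence}, Proposition \ref{P:spacetime-properties}(ii), and the stability of connectedness under Hausdorff limits.
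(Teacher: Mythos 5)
Your proof is correct and follows essentially the same approach as the paper: use Definition \ref{D:spacetime}.3 to obtain a compact set $K$ containing all $d$-geodesics from $u$ to $v$, exploit connectedness of $\Ga_n$ to show $\Ga_n$ eventually lies in a slightly larger fixed compact set, and then apply Proposition \ref{P:convergence} together with Proposition \ref{P:spacetime-properties}(ii). The only (inessential) organizational difference is that the paper first applies Proposition \ref{P:convergence} to the truncated sets $\bar\Ga_n\cap K$, observes that subsequential limits avoid $\del K$, and then concludes $\bar\Ga_n\subset K$ by connectedness, whereas you establish boundedness directly via your ``three-point observation'' (which is precisely the core computation of Proposition \ref{P:convergence}) plus an intermediate-value argument on a large sphere --- the same chain of ideas packaged a bit more cleanly.
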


\begin{proof} Let $K$ be a compact set in $\R^2$ containing all $u_n$ and $v_n$, and all $d$-geodesic sets from $u$ to $v$ in its interior. Such a $K$ exists by Definition \ref{D:spacetime}.3. The sequence $\bar \Ga_n \cap K$ is a precompact sequence of sets in the Hausdorff topology. All subsequential limits of $\bar \Ga_n \cap K$ are geodesic sets from $u$ to $v$ by Proposition \ref{P:convergence}. In particular, any such limit is contained in $\Int(K)$, so $\bar \Ga_n \cap \del K = \emptyset$ for all large enough $n$. By the connectedness of $\bar \Ga_n$, for such $n$ this implies that either 
$$
\bar \Ga_n \cap \Int (K) \qquad \text{ or } \qquad \bar \Ga_n \cap K^c
$$
is empty. Since $(u_n, v_n) \in \bar \Ga_n \cap \Int (K)$ for large enough $n$, this implies that
$
\bar \Ga_n \cap K^c = \emptyset
$
for all large enough $n$. Therefore $\bar \Ga_n$ is a sequence of bounded $d_n$-geodesic sets, so all subsequential limits are $d$-geodesic sets by Proposition \ref{P:convergence}. Connectedness of the sets $\bar \Ga_n$ implies that all subsequential limits  are connected, and hence are geodesics by Proposition \ref{P:spacetime-properties} (ii). 
\end{proof}

We have the following straightforward corollary of Theorem \ref{T:geod-cvg} in the $d_n = d$ case.
\begin{corollary}
	\label{C:geod-cvg-2} 
	Let $d$ be a spacetime metric on $\R^2$.
	If $(u_n; v_n) \to (u; v)\in \Rd$ and $\Ga_n$ is a sequence of geodesics from $u_n$ to $v_n$, then $\Ga_n$ is precompact in the Hausdorff topology, and all subsequential limits of $\Ga_n$ are geodesics from $u$ to $v$. 
\end{corollary}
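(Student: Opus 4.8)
The plan is to derive this directly from Theorem \ref{T:geod-cvg}, applied to the constant sequence $d_n \equiv d$. First I would observe that for a constant sequence the hypographs $\mathfrak{h} d_n = \mathfrak{h} d$ converge trivially in $\scrE_*$ to $\mathfrak{h} d$, and that $d$ is a spacetime metric by assumption, so the structural hypothesis of Theorem \ref{T:geod-cvg} is satisfied. It then remains to check the two remaining inputs of that theorem: that each $\Ga_n$ is a connected subset of $\R^2$, and that $d(u_n, v_n) \to d(u, v)$.

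For connectedness: since $\Ga_n$ is a geodesic from $u_n$ to $v_n$ in the spacetime metric $d$, Proposition \ref{P:spacetime-properties}(ii) says that a geodesic set is a geodesic exactly when it is a connected subset of $\R^2$, so $\Ga_n$ is connected. For the convergence of distances: because $(u; v) \in \Rd$ and $\Rd$ is open, we have $(u_n; v_n) \in \Rd$ for all large $n$; since $d$ is continuous on $\Rh \supseteq \Rd$, it follows that $d(u_n, v_n) \to d(u, v)$, and this limit is finite because $(u;v) \in \Rd$. In particular $d(u,v) > -\infty$, so the hypothesis $d_n(u_n, v_n) \to d_n(u, v)$ of Theorem \ref{T:geod-cvg} holds.

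Applying Theorem \ref{T:geod-cvg} then gives that the closures $\bar \Ga_n$ are precompact in the Hausdorff topology and that every subsequential limit of $\bar \Ga_n$ is a $d$-geodesic from $u$ to $v$. To finish, I would note that a geodesic in a spacetime metric is already closed: by Proposition \ref{P:spacetime-properties}(i) it has the form $\{(\pi(r), r) : r \in [s, t]\}$ for a continuous function $\pi$ on the compact interval $[s,t]$, hence is compact. Therefore $\bar \Ga_n = \Ga_n$ for each $n$, and the conclusion of the corollary follows verbatim from that of Theorem \ref{T:geod-cvg}.

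There is no serious obstacle here; the real content lives in Theorem \ref{T:geod-cvg} and Proposition \ref{P:spacetime-properties}. The only steps requiring any care are the bookkeeping ones: confirming that $(u_n;v_n)$ eventually lands in the open set $\Rd$ so that continuity of $d$ can legitimately be invoked, and recalling that geodesics of a spacetime metric are both connected and closed, so that the hypotheses and the conclusion of Theorem \ref{T:geod-cvg} transfer directly to the $\Ga_n$ themselves rather than merely to their closures.
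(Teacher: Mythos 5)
Your proposal is correct and matches the paper's own (very terse) proof: both invoke continuity of $d$ on $\Rh$ to get $d(u_n,v_n)\to d(u,v)$, invoke Proposition \ref{P:spacetime-properties} for connectedness and closedness of the $\Ga_n$, and then apply Theorem \ref{T:geod-cvg}. Your version simply fills in the bookkeeping details more explicitly.
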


\begin{proof}
Continuity of $d$ on $\Rh$ implies that $d(u_n, v_n) \to d(u, v)$, and Proposition \ref{P:spacetime-properties} implies that each $\Ga_n$ is closed and connected. Therefore we can apply Theorem \ref{T:geod-cvg} to complete the proof.
\end{proof}

In some cases we will not be able to check the condition that $d_n(u_n, v_n) \to d_n(u, v)$. We can loosen this condition by introducing a bit more structure on the prelimiting directed metrics $d_n$. This structure will always be satisfied by our rescaled planar embeddings of last passage metrics.

\begin{theorem}
	\label{T:geod-cvg-stronger}Let $d_n$ be a sequences of directed metrics of negative sign in $\R^2$ satisfying the following conditions.
\begin{enumerate}[nosep]
	\item Let $\le$ be the total order on $\R^2$ given by $(x, s) \le (y, t)$ if $s < t$ or if $s =t$ and $x < y$. Then $d_n(u; v) =-\infty$ for $v < u$. This means that the only compatible order on any $d_n$-geodesic is the order $\le$.
	\item Any $d_n$-geodesic is the image of a continuous curve $\pi:[0,1] \to \R^2$ satisfying $\pi(s) \le \pi(t)$ for $s \le t$.
	\item The sequence $\mathfrak{h} d_n$ converges to the closed hypograph of a spacetime metric $d$ in $\scrE_*$.
\end{enumerate}
	Suppose that $(u_n, v_n) \to (u, v) \in \Rd$ and that $d_n(u_n, v_n) > -\infty$ for all large enough $n$. Then any sequence of geodesic closures $\bar \Ga_n$ is precompact in the Hausdorff topology. Moreover, if there is a unique $d$-geodesic $\Ga$ from $u$ to $v$ then $\bar \Ga_n \to \Ga$.
\end{theorem}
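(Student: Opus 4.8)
The plan is to reduce to Theorem~\ref{T:geod-cvg}, whose proof uses only one ingredient beyond what is assumed here, namely $d_n(u_n, v_n) \to d(u, v)$; conditions 1 and 2 are precisely what let us recover this convergence when it is not postulated. First I would record the geometry forced by 1 and 2. Writing $u_n = (x_n, s_n)$, $v_n = (y_n, t_n)$, we have $s_n < t_n$ for large $n$ since $(u; v) \in \Rd$. By condition 1 the only compatible order on the $d_n$-geodesic $\Ga_n$ is the restriction of $\le$, so by condition 2, $\Ga_n$ is the image of a continuous curve $\pi_n:[0,1]\to\R^2$ with $\pi_n(0) = u_n$, $\pi_n(1) = v_n$, and $\pi_n$ nondecreasing for $\le$; in particular $\bar\Ga_n = \Ga_n$ is compact and connected, its time coordinate increases continuously from $s_n$ to $t_n$, so $\Ga_n \subseteq \R\times[s_n, t_n]$ and $\Ga_n$ meets every line $\R\times\{r\}$ with $r\in[s_n,t_n]$. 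By Lemma~\ref{L:sub-geod} the portion of $\Ga_n$ between any two of its points is again a $d_n$-geodesic, hence has finite $d_n$-length. Finally, the upper bound $\limsup_n d_n(u_n, v_n) \le d(u, v)$ is immediate from $\mathfrak h d_n \to \mathfrak h d$ and Lemma~\ref{L:hypo-equiv}(1) applied to $(u_n; v_n) \to (u;v) \in \Rh$.

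The heart of the argument is the matching lower bound $\liminf_n d_n(u_n,v_n) \ge d(u,v)$. For this I would exploit the ``inner approximation'' half of hypograph convergence: since $(u, v, d(u,v)) \in \mathfrak h d = \lim_n \mathfrak h d_n$, there is a sequence $(p_n, q_n) \to (u, v)$ with $\liminf_n d_n(p_n, q_n) \ge d(u,v)$, which combined with the upper bound gives $d_n(p_n, q_n) \to d(u,v)$. As $d_n(p_n,q_n) > -\infty$ for large $n$, there is a $d_n$-geodesic $\Ga_n'$ from $p_n$ to $q_n$, again a monotone curve by condition 2, and by Theorem~\ref{T:geod-cvg} (whose hypotheses now hold for the pair $(p_n,q_n)$) $\bar\Ga_n' \to \Ga$, the unique $d$-geodesic from $u$ to $v$. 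Fix small $\delta > 0$ and, for large $n$, read off the points $\tilde p_n, \tilde q_n$ where $\Ga_n'$ crosses the lines $\R\times\{s+\delta\}$ and $\R\times\{t-\delta\}$; these converge to the points $\tilde p, \tilde q \in \Ga$ on the geodesic at those times, and using additivity of $d_n$ along $\Ga_n'$ together with Lemma~\ref{L:hypo-equiv}(1) one gets $d_n(\tilde p_n, \tilde q_n) \to d(\tilde p, \tilde q)$, while continuity of $d$ on $\Rd$ and additivity along $\Ga$ give $d(u, \tilde p), d(\tilde q, v) \to 0$ and hence $d(\tilde p, \tilde q) \to d(u,v)$ as $\delta\to 0$. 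For $n$ large the time coordinates force $u_n \le \tilde p_n \le \tilde q_n \le v_n$, so the triangle inequality for the negative-sign metric $d_n$ gives $d_n(u_n,v_n) \ge d_n(u_n,\tilde p_n) + d_n(\tilde p_n,\tilde q_n) + d_n(\tilde q_n, v_n)$; taking $\liminf$ and then $\delta \downarrow 0$, after checking that the short-time terms $d_n(u_n,\tilde p_n)$ and $d_n(\tilde q_n, v_n)$ do not contribute in the limit (the delicate step — one runs the same construction on the three time sub-intervals simultaneously, or uses additivity along $\Ga_n$ itself once boundedness is available), yields $\liminf_n d_n(u_n,v_n) \ge d(u,v)$, hence $d_n(u_n,v_n) \to d(u,v)$.

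With this convergence in hand the remainder follows the proof of Theorem~\ref{T:geod-cvg}. Choose a compact box $K = [a,b]\times[s',t']$ with $s' < s < t < t'$ whose interior contains $u$, $v$, and every $d$-geodesic set from $u$ to $v$; such a $K$ exists by Definition~\ref{D:spacetime}(3). By Proposition~\ref{P:convergence}, every Hausdorff subsequential limit of $\bar\Ga_n \cap K$ is a $d$-geodesic set from $u$ to $v$ and hence lies in $\Int K$; since $\Ga_n$ is connected and $u_n \in \Ga_n \cap \Int K$ for large $n$, it follows that $\Ga_n \cap K^c = \emptyset$ for all large $n$, so $\{\bar\Ga_n\}$ is uniformly bounded — this is the asserted precompactness. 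Applying Proposition~\ref{P:convergence} once more to the now-bounded geodesic sets $\bar\Ga_n$, every Hausdorff subsequential limit is a $d$-geodesic set from $u$ to $v$, and being a Hausdorff limit of connected compact sets it is connected, hence a $d$-geodesic by Proposition~\ref{P:spacetime-properties}(ii). If the $d$-geodesic $\Ga$ from $u$ to $v$ is unique, every subsequential limit of $\bar\Ga_n$ equals $\Ga$, and since the $\bar\Ga_n$ live in the compact space of closed subsets of $K$, this forces $\bar\Ga_n \to \Ga$.

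The main obstacle is the lower bound in the middle paragraph, and within it the verification that the short-time terms near $u$ and near $v$ vanish in the limit: hypograph convergence supplies only upper semicontinuity of the $d_n$ (Lemma~\ref{L:hypo-equiv}(1)) together with good approximating sequences at fixed points, so one cannot control $d_n$ directly along the given sequences $u_n, v_n$ and must bootstrap through the exact additivity of $d_n$ along geodesics. This is exactly where conditions 1 and 2 are indispensable: they guarantee that every $d_n$-geodesic is an honest monotone time-curve, can be subdivided at any intermediate time, has subsegments that are themselves geodesics of finite length, and — crucially — that all of the order relations invoked in the triangle-inequality chaining hold for large $n$, so that no term silently degenerates to $-\infty$.
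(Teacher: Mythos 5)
Your reduction to Theorem~\ref{T:geod-cvg} fails at exactly the step you flag as delicate, and there is no way to repair it: the conclusion $d_n(u_n, v_n) \to d(u,v)$ is simply \emph{not true} under the hypotheses of Theorem~\ref{T:geod-cvg-stronger}, and proving it is not what the paper does. Hypograph convergence supplies $\limsup_n d_n(u_n, v_n) \le d(u,v)$ and the existence of \emph{some} approximating sequence $(p_n, q_n)$ along which $d_n \to d(u,v)$, but it supplies no lower semicontinuity along the given sequence $(u_n, v_n)$. Example~\ref{Ex:elpp} illustrates this concretely: if the prelimit has sparse ``holes'' where the last-passage value is abnormally small, a sequence $u_n$ can be chosen to land on those holes, in which case $d_n(u_n, v_n)$ stays bounded away from $d(u,v)$. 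The only assumption in the statement is $d_n(u_n, v_n) > -\infty$, which guarantees a geodesic \emph{exists}; it does not control its length. Your attempt to bootstrap via additivity along $\Gamma_n'$ from the auxiliary pair $(p_n, q_n)$ cannot transfer back to $(u_n, v_n)$: in the chain $d_n(u_n, v_n) \ge d_n(u_n, \tilde p_n) + d_n(\tilde p_n, \tilde q_n) + d_n(\tilde q_n, v_n)$, the short-time term $d_n(u_n, \tilde p_n)$ carries the entire hole defect, which does not vanish as $\delta \downarrow 0$ or $n \to \infty$.

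This also shows your third paragraph overreaches. You conclude that \emph{every} Hausdorff subsequential limit of $\bar\Gamma_n$ is a $d$-geodesic, but the theorem deliberately does not claim this; it claims only precompactness, plus convergence when the limiting geodesic is \emph{unique}. Compare Theorem~\ref{T:intro-geod}, which explicitly reserves the ``all subsequential limits are geodesics'' conclusion for the four cases with the stronger graph/compact convergence. Under mere hypograph convergence, subsequential limits need not be geodesics, and Proposition~\ref{P:convergence} cannot be invoked on $\bar\Gamma_n$ directly because its hypothesis $d_n(u_n, v_n) \to d(u,v)$ is exactly what is unavailable.

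The paper's proof takes an entirely different route that sidesteps the length convergence. It constructs auxiliary endpoint sequences $(u_n^-, v_n^-)$ and $(u_n^+, v_n^+)$ (shifted slightly to the left/right in space and enlarged slightly in time) chosen so that (a) $d_n(u_n^\pm, v_n^\pm) \to d(u,v)$, achievable via hypograph convergence since these points are not prescribed, and (b) any $d_n$-geodesic from $u_n^\pm$ to $v_n^\pm$ passes strictly to the left/right of $u_n$ and $v_n$ at the relevant time slices. Lemma~\ref{L:ordering}, a splice-and-compare argument relying crucially on conditions 1 and 2 of the theorem, then shows that every point $(w, r) \in \Gamma_n$ is matched by a point $(w^\pm, r)$ with $w^- \le w \le w^+$ on some geodesic $\Gamma_n^\pm$ from $u_n^\pm$ to $v_n^\pm$. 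Since Theorem~\ref{T:geod-cvg} applies to $\Gamma_n^\pm$ (they do satisfy the length hypothesis), their limits are $d$-geodesics from $u$ to $v$, and this geometric sandwich gives both precompactness and, when $\Gamma$ is unique, $\bar\Gamma_n \to \Gamma$. Your proposal never engages with Lemma~\ref{L:ordering}, which is the technical heart of the argument; conditions 1 and 2 are what make the geodesic splicing work, not (as you suggest) what makes the length convergence come out.
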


To prove Theorem \ref{T:geod-cvg-stronger}, we first show the following lemma about geodesic ordering.
\begin{lemma}
	\label{L:ordering}
Let all notation and assumptions be as in Theorem \ref{T:geod-cvg-stronger}.
Let $(u', v') = (x', s'; y', t'), (u, v) = (x, s; y, t)$ and suppose that $s' \le s < t \le t'$. Suppose also that there is a geodesic $\Lambda$ from $u'$ to $v'$ and points  $x^- < x, y^- < y$ such that $(x', s), (y', t) \in \Lambda$. 

Then for any $d_n$-geodesic $\Ga$ from $u$ to $v$ and any $(w, r) \in \Ga$, there exists a geodesic $\Ga^-$ from $u'$ to $v'$ with $(w^-, r) \in \Ga^-$ for some $w^- \le w$.  
\end{lemma}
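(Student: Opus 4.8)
The plan is to use the quadrangle inequality together with the monotonicity and tree structure of geodesics to squeeze $\Ga$ against $\Lambda$. Fix a $d_n$-geodesic $\Ga$ from $u$ to $v$ and a point $(w,r)\in\Ga$. By assumption $\Lambda$ passes through $(x',s)$ and $(y',t)$, and by Lemma \ref{L:sub-geod} the portion of $\Lambda$ between these two points is a geodesic from $(x',s)$ to $(y',t)$; call it $\Lambda'$. Both $\Ga$ and $\Lambda'$ are geodesics between points on the two time-lines $\{s\}\times\R$ and $\{t\}\times\R$, with $\Ga$ starting at $(x,s)$ and $\Lambda'$ at $(x',s)$ where $x' > x^- $ — but more importantly I will want to produce $\Ga^-$ ending at $v'=(y',t')$, so I first extend: consider the concatenation of $\Lambda$ restricted to $[s',s]$ with $\Ga$ with $\Lambda$ restricted to $[t,t']$. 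The first step is to check this concatenation is a geodesic from $u'$ to $v'$ whenever $\Ga$ lies weakly to the left of $\Lambda'$ on the strip $[s,t]$; this follows from the metric composition law (condition 2 of Definition \ref{D:spacetime}, inherited by $d_n$ via its own composition structure — here I should instead argue directly at the level of the prelimit $d_n$, using that $d_n$-geodesics concatenate), so the real content is the leftward comparison.

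The key step is therefore: \emph{some} $d_n$-geodesic from $u'$ to $v'$ passes to the left of $(w,r)$, i.e.\ through a point $(w^-,r)$ with $w^-\le w$. I would establish this by the standard crossing argument. Suppose $\Ga$ and $\Lambda'$ cross: $\Ga$ starts to the right of $\Lambda'$ (since $x' < x^- < x$... wait — here I should use $x^- < x$ and that $(x',s)\in\Lambda$ means $\Lambda'$ starts at abscissa $x'$; the hypothesis $x^-<x$ is the slack that lets the endpoints of $\Lambda$ straddle those of $\Ga$, and similarly $y^-<y$ at the top). If the leftmost geodesic from $u'$ to $v'$ through the points $(x',s),(y',t)$ ever lies weakly left of $\Ga$ at time $r$, we are done; otherwise $\Ga$ is weakly left of $\Lambda'$ throughout $[s,t]$ and in particular at time $r$, so the point $(w^-,r):=$ the intersection of $\Lambda'$ with time-line $r$ satisfies $w^-\ge w$, and then we instead build $\Ga^-$ by \emph{swapping}: replace the portion of $\Lambda$ strictly inside $(s,t)$ that lies left of $\Ga$ by the corresponding portion of $\Ga$. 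Monotonicity of rightmost/leftmost geodesics (Lemma \ref{L:mono-path}, or its planar analogue obtained by pullback via Lemma \ref{L:pullback-geod}) and the quadrangle inequality (Lemma \ref{L:quadrangle}) guarantee that this cut-and-paste produces a genuine $d_n$-geodesic from $u'$ to $v'$, because at each crossing point the quadrangle inequality forces the two possible continuations to have equal length. The resulting $\Ga^-$ then contains a point $(w^-,r)$ with $w^-\le w$ by construction.

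I expect the main obstacle to be bookkeeping rather than conceptual: making the cut-and-paste precise for directed metrics that are only assumed to satisfy conditions 1--3 of Theorem \ref{T:geod-cvg-stronger} (in particular, geodesics here are abstract maximal geodesic sets in the sense of Definition \ref{D:geodesic}, not a priori curves, though condition 2 upgrades them to monotone curves), and verifying that the spliced object is again a maximal geodesic set and not merely a geodesic set — this uses Proposition \ref{P:geod-exist} to extend to a maximal one and then the monotonicity to check no point was lost. A secondary subtlety is that $d_n$ need not itself satisfy a metric composition law as stated in Definition \ref{D:spacetime} (that is a property of the limit $d$); for the prelimit one only needs the weaker fact that concatenations of $d_n$-geodesics along a shared point are $d_n$-geodesics and that sub-geodesics of geodesics are geodesics (Lemma \ref{L:sub-geod}), both of which hold in any directed metric space. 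Once the swap is set up, the inequality $w^-\le w$ and membership $(w^-,r)\in\Ga^-$ are immediate.
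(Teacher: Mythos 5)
Your high-level idea — produce the target geodesic by crossing $\Ga$ with $\Lambda$ and splicing — is the right one, and it is essentially what the paper does. But the way you set up the case analysis has a logical error, and your proposed splice is described incorrectly as a consequence.

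The paper's argument is: since $\Lambda$ is the image of a continuous monotone curve (assumption 2), it meets the line $\R\times\{r\}$ at some $(\la,r)$. If $\la\le w$ you are done immediately. If $\la>w$, note that $\Lambda$ is \emph{left} of $\Ga$ at both times $s$ and $t$ (that is what $x^-<x$ and $y^-<y$ are for), yet \emph{right} of $\Ga$ at time $r$; so the curves cross at points $p,q\in\Ga\cap\Lambda$ with $p\le(w,r)\le q$, and one splices $\Lambda\cap[u',p]\cup\Ga\cap[p,q]\cup\Lambda\cap[q,v']$. This passes through $(w,r)$ itself. Your ``otherwise'' branch instead asserts that if $\Lambda$ is not weakly left of $\Ga$ at time $r$, then ``$\Ga$ is weakly left of $\Lambda'$ throughout $[s,t]$.'' This does not follow and is actually false in the relevant case, because $\Ga$ is strictly \emph{right} of $\Lambda$ at the endpoints $s$ and $t$. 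Having misidentified the configuration, your swap — ``replace the portion of $\Lambda$ strictly inside $(s,t)$ that lies left of $\Ga$'' — is vacuous under your own premise (there would be no such portion) and does not produce the right object; the correct splice replaces the portion of $\Lambda$ that lies \emph{right} of $\Ga$ between two crossing points.

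A second, independent gap: to justify that the spliced set is a geodesic set, you invoke the quadrangle inequality (Lemma \ref{L:quadrangle}) and monotonicity of leftmost/rightmost geodesics (Lemma \ref{L:mono-path}). Those are facts about last passage percolation over cadlag environments and are not available here — the lemma is stated for abstract directed metrics $d_n$ satisfying only the structural hypotheses 1--3 of Theorem \ref{T:geod-cvg-stronger}, which do not include any quadrangle or uniqueness-of-extreme-geodesics property. The paper instead verifies the geodesic-set condition directly from the definitions: for any $a\le b\le c$ in the splice, it writes $d_n(u',v')$ as the telescoping sum over the order statistics of $\{u',a,b,c,p,q,v'\}$ using the geodesic property of $\Lambda$ and $\Ga$ on the three sub-intervals, and then reads off $d_n(a,c)=d_n(a,b)+d_n(b,c)$ from the triangle inequality. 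That argument needs no extra structure, and it is what makes the lemma work at this level of generality. You do flag this worry at the end (``directed metrics that are only assumed to satisfy conditions 1--3''), which is the right instinct, but the tools you reach for do not resolve it.

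As a minor point: the hypothesis as printed says $(x',s),(y',t)\in\Lambda$, which is surely a typo for $(x^-,s),(y^-,t)\in\Lambda$ — your initial hesitation over $x'$ versus $x^-$ is a reaction to that, not a flaw in your reasoning.
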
 

\begin{proof} Since $\Lambda$ is connected by assumption $2$ of Theorem \ref{T:geod-cvg-stronger}, there must exist a point $(\la, r) \in \Lambda$. If $\la \le w$, then we're done. Suppose $\la > w$. Since $x^- < x$ and $y^- < y$, Condition $2$ in Theorem \ref{T:geod-cvg-stronger} implies that there must exist points $p, q \in \Ga \cap \Lambda$ with $p \le (w, r) \le q$ in the order in Theorem \ref{T:geod-cvg-stronger}.1.

Letting $[a, b]$ denote an interval in the order $\le$, let $\Lambda'$ denote the union of the three sets $\Lambda \cap [u', p], \Ga \cap [p, q]$ and $\Lambda \cap [q, v']$. We claim that $\Lambda'$ is a geodesic set from $u'$ to $v'$. Let $a \le b \le c \in \Lambda'$.
Let $e_0 \le \dots \le e_6$ be the order statistics of the set $\{u', a, b, c, p, q, v'\}$. Then we have
\begin{equation}
\label{E:u-v-}
d_n(u', v') = d_n(u', p) + d_n(p,q) + d_n(q, v') = \sum_{i=1}^6 d_n(e_{i-1}, e_i).
\end{equation}
The first equality follows by the geodesic property of $\Lambda$, and the second equality follows by applying the geodesic property of $\Lambda$ and $\Ga$ to each of the intervals $[u', p], [p,q],$ and $[q, v']$. Equation \eqref{E:u-v-} and the triangle inequality for $d_n$ implies that $d_n(a, c) = d_n(a, b) + d_n(b, c)$. Hence $\Lambda'$ is a geodesic set. Moreover, $(w, r) \in \Lambda'$ since $(w, r) \in \Ga \cap [p, q]$. Since every geodesic set is contained in a geodesic (Proposition \ref{P:geod-exist}), there exists some geodesic $\Ga^-$ from $u'$ to $v'$ with $(w, r) \in \Ga^-$, as desired. 
\end{proof}

\begin{proof}[Proof of Theorem \ref{T:geod-cvg-stronger}]
Let $(u_n, v_n) = (x_n, s_n; y_n, t_n), (u, v) = (x, s; y, t)$ be as in the statement of the theorem. We will construct a sequence $(u_n^-, v_n^-) = (x^-_n, s^-_n; y^-_n, t^-_n)$ such that
\begin{enumerate}[nosep]
	\item $(u_n^-, v_n^-) \to (u,v)$ and $d_n(u_n^-, v_n^-) \to d(u,v)$.
	\item For all large enough $n$, $[s_n, t_n] \sset [s^-_n, t^-_n]$.
	\item For all large enough $n$, any $d_n$-geodesic $\Ga$ from $u_n^-$ to $v_n^-$ satisfies
	$$
	\Ga \cap \R \X \{s_n\} \sset (-\infty, x_n) \X \{s_n\}, \quad \mathand \quad \Ga \cap \R \X \{t_n\} \sset (-\infty, y_n) \X \{s_n\}. 
	$$
\end{enumerate}
Let $i \in \N$, and let $\Ga^m_i$ be any sequence of $d$-geodesics from $(x-1/i - 1/m, s - 1/m)$ to  $(y-1/i - 1/m, t + 1/m)$.
 By Corollary \ref{C:geod-cvg-2} and Proposition \ref{P:spacetime-properties}, the sequence $\Ga^m_i, m \in \N$ is precompact and subsequential limits are graphs of continuous paths from $(x - 1/i, s)$ to $(y - 1/i, t)$. In particular, there exists $m_i \ge i$ such that any $d$-geodesic $\Lambda$ from  $u^i := (x-1/i - 1/m_i, s - 1/m_i)$ to  $v^i := (y-1/i - 1/m_i, t + 1/m_i)$ satisfies 
\begin{equation}
\label{E:lambda}
\begin{split}
&\Lambda \cap \R \X [s-1/(2i), s] \sset (-\infty, x - 1/(2i)] \X [s-1/(2i), s] \qquad \mathand \\
&\Lambda \cap \R \X [t, t + 1/(2i)] \sset (-\infty, y - 1/(2i)] \X [t, t + 1/(2i)].
\end{split}
\end{equation}
Next, by hypograph convergence of $d_n \to d$, we can find a sequence of points $(u_n^i, v_n^i) \to (u^i, v^i)$ such that $d_n(u_n^i, v_n^i) \to d(u^i, v^i)$ as $n \to \infty$.
By Theorem \ref{T:geod-cvg}, there exists $n(i) \in \N$ such that for all $n \ge n(i)$, any $d_n$-geodesic $\Lambda$ from $u_n^i$ to $v_n^i$ satisfies \eqref{E:lambda} with $x -1/(2i), y - 1/(2i)$ replaced by $x - 1/(3i), y - 1/(3i)$. We can also choose $n(i)$ large enough so that for $n \ge n(i)$, we have
\begin{align*}
[s_n, t_n] \sset [s^i_n, t^i_n] \qquad \mathand \qquad |d_n(u_n^i, v_n^i) - d(u^i, v^i)| \le 1/i.
\end{align*}
We can then set $(u^-_n, v^-_n) = (u^{j(n)}_n, v^{j(n)}_n)$ for any increasing sequence $j(n)$ with $n(j(n)) \le n$ for all large enough $n$. To guarantee the condition (i) we have used that $d(u^i, v^i) \to d(u,v)$. This uses the fact that $m_i \ge i$ and the continuity of $d$.

Now let $\Ga_n$ be any sequence of $d_n$-geodesics from $u_n$ to $v_n$. We first check that $\bar \Ga_n$ is precompact in the Hausdorff topology. Consider any sequence $(w_n, r_n) \in \Ga_n$ with $r_n \in [s_n, t_n]$ converging to a point $r \in [s, t]$.

By conditions 2 and 3 satisfied by $u_n^-, v_n^-$ and Lemma \ref{L:ordering}, for large enough $n$ we can find geodesics $\Ga_n^-$ from $u_n^-$ to $v_n^-$ with $(w_n^-, r_n) \in \Ga_n^-$ for some $w_n^- \le w_n$. By condition 1 and Theorem \ref{T:geod-cvg}, the sequence $\Ga_n^-$ converges to a geodesic $\Lambda$ from $u$ to $v$, so all subsequential limits of $(w_n^-, r_n)$ must lie on a geodesic $\Lambda$ from $u$ to $v$. Therefore since $w_n^- \le w_n$, all subsequential limits of $w_n$ are bounded below by some $w \in \R$ with $(w, r)$ in a $d$-geodesic from $u$ to $v$. 

By a symmetric argument, all subsequential limits of $w_n$ are bounded above by some $w' \in \R$ with $(w', r)$ in a $d$-geodesic from $u$ to $v$. Since all $d$-geodesics from $u$ to $v$ are contained in a common compact set by Definition \ref{D:spacetime}.3, the sequence $\bar \Ga_n$ is precompact. Now suppose there is a unique $d$-geodesic $\Ga$ from $u$ to $v$. The set $\Ga$ is equal to $\{(\pi(r), r) :r \in [s, t]\}$ for some continuous function $\pi$ by Proposition \ref{P:spacetime-properties}, so the bounds above guarantee that all subsequential limits of $\bar \Ga_n$ are subsets of $\Ga$. The connectivity of each $\bar \Ga_n$ then guarantees that $\bar \Ga_n \to \Ga$.
\end{proof}

The condition of uniqueness of $\Ga$ in Theorem \ref{T:geod-cvg-stronger} about the limiting geodesic being unique is almost surely satisfied for the directed landscape for any fixed $(u, v) \in \Rd$ by Remark \ref{R:landscape-geodesics}.

We finish this section with a proposition about convergence of directed metrics along geodesics. This will be used in Section \ref{S:consequences} to prove convergence of the longest increasing subsequence in a uniform permutation. 

\begin{prop}
\label{P:convergence-alond-geod}
Let $d_n$ be a sequence of directed metrics of negative sign whose graphs converge to a spacetime metric $d$ in $\scrE_*$. Let $(u_n;v_n) \to (u; v) \in \Rd$, and consider a sequence of $d_n$-geodesics $\Ga_n$ from $u_n$ to $v_n$ converging to a $d$-geodesic $\Ga$ from $u$ to $v$.
Then for any $x_n \in \Ga_n$ converging to a point $x \in \Ga$ we have $d_n(u_n, x_n) \to d(u, x).$ Moreover,
$$
\limsup_{n \to \infty} \sup_{x \in \Ga_n} |d_n(u_n, x)| \le \sup_{x \in \Ga} |d(u_n, x)|< \infty.
$$
\end{prop}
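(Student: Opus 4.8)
The plan is to establish the two assertions—pointwise convergence $d_n(u_n,x_n)\to d(u,x)$ along the geodesics, and the $\limsup$ bound on $\sup_{x\in\Ga_n}|d_n(u_n,x)|$—separately, using in both cases that $\Ga$ is a geodesic parametrized by the time coordinate (Proposition \ref{P:spacetime-properties}(i)) and that graph convergence in $\scrE_*$ to a continuous limit is equivalent to compact convergence of the functions (Lemma \ref{L:graph-equiv}).

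First I would prove the pointwise statement. Write $u=(x_0,s_0)$, $v=(y_0,t_0)$, $x=(\pi(r),r)$ for the limit point on $\Ga$, and $x_n=(w_n,r_n)$ with $(w_n,r_n)\to(\pi(r),r)$. Since $\Ga_n$ is a $d_n$-geodesic from $u_n$ to $v_n$ and $x_n\in\Ga_n$, the geodesic identity gives $d_n(u_n,v_n)=d_n(u_n,x_n)+d_n(x_n,v_n)$. Applying Lemma \ref{L:graph-equiv} (graph convergence $\Rightarrow$ compact convergence, and $d$ is continuous on $\Rh$), and noting $x\in\Rd$-compatible position relative to $u,v$ (i.e.\ $(u,x),(x,v)\in\Rd$ since $s_0<r<t_0$, which holds because $x$ lies on a $d$-geodesic from $u$ to $v$ and $d$ is a spacetime metric), we get $d_n(u_n,x_n)\to d(u,x)$ and $d_n(x_n,v_n)\to d(x,v)$ and $d_n(u_n,v_n)\to d(u,v)$, with all three limits finite. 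The geodesic identity passes to the limit to give $d(u,v)=d(u,x)+d(x,v)$, consistent with $x\in\Ga$; more to the point, $d_n(u_n,x_n)\to d(u,x)$ is exactly what compact convergence at the convergent sequence of arguments $((u_n;x_n))\to((u;x))$ yields, and the endpoint lies in $\Rd$ so the limit is the finite value $d(u,x)$. This is the easy half.

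The substantive part is the uniform $\limsup$ bound over all of $\Ga_n$, not just along convergent subsequences. Here the main obstacle is ruling out that $\Ga_n$ develops a point $x\in\Ga_n$ whose $d_n(u_n,\cdot)$-value blows up even though the endpoints behave; compact convergence only controls things on a fixed compact set, so I first need that $\bar\Ga_n$ stays in a fixed compact $K\sset\R^2$. This follows from Theorem \ref{T:geod-cvg} (or its hypotheses): under the stated assumptions $\bar\Ga_n$ is precompact in the Hausdorff topology and all subsequential limits are $d$-geodesics from $u$ to $v$, hence contained in the compact $K$ from Definition \ref{D:spacetime}.3; combined with connectedness (as in the proof of Theorem \ref{T:geod-cvg}) one gets $\bar\Ga_n\sset K$ for all large $n$. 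Now suppose toward contradiction that $\limsup_n\sup_{x\in\Ga_n}|d_n(u_n,x)|>\sup_{x\in\Ga}|d(u,x)|$; pick along a subsequence points $x_n\in\Ga_n$ with $|d_n(u_n,x_n)|$ exceeding $\sup_{x\in\Ga}|d(u,x)|+\eps$. Since $x_n\in K$ compact, pass to a further subsequence so $x_n\to x_*\in\R^2$; since $x_n\in\Ga_n$ and $\bar\Ga_n\to$ some $d$-geodesic $\Ga'$ from $u$ to $v$ (precompactness plus Proposition \ref{P:convergence}), $x_*\in\Ga'$. If $x_*\ne u$ and $x_*\ne v$ then $x_*\in\Rd$-position relative to $u$, so by Lemma \ref{L:graph-equiv}, $d_n(u_n,x_n)\to d(u,x_*)$, which is finite and of modulus at most $\sup_{x\in\Ga'}|d(u,x)|$. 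The only remaining difficulty is when $x_*=u$ or $x_*=v$: near $u$ the values $d_n(u_n,x_n)$ should tend to $d(u,u)=0$, and near $v$ they should tend to $d(u,v)$; both are handled by continuity of $d$ on $\Rh$ together with the observation that $(u_n,x_n)$ stays off the diagonal for $x_n\ne u_n$, so compact convergence still applies (taking a slightly larger compact neighborhood in $\Rh$ that contains the tails $(u_n;x_n)$ for all large $n$ once we exclude the finitely many degenerate indices). Hence $\limsup_n d_n(u_n,x_n)\le\sup_{x\in\Ga'}|d(u,x)|$ in every case, contradicting the choice of $x_n$. Since $\sup_{x\in\Ga'}|d(u,x)|<\infty$ by continuity of $d$ on the compact $\Ga'$, this also yields the claimed finiteness, and replacing the specific limiting geodesic $\Ga'$ by $\Ga$ (they agree when $\Ga$ is the limit of $\Ga_n$, which is the hypothesis) gives the stated bound with $\sup_{x\in\Ga}$ on the right-hand side.
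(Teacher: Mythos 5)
There is a genuine gap in your argument: the case $x = u$. In Part 1 you assert that $(u,x)\in\Rd$ ``since $s_0 < r < t_0$,'' but this is false when $x=u$, which the proposition allows ($x\in\Ga$ includes the endpoint $u$). When $x_n\to u$, the sequence $(u_n;x_n)$ converges to $(u;u)\in\Delta$, and there is \emph{no} compact subset of $\Rh$ containing its tail --- so Lemma~\ref{L:graph-equiv} cannot be applied to conclude $d_n(u_n,x_n)\to 0$. Worse, in Part 2 you assert that the case $x_*=u$ is ``handled by continuity of $d$ on $\Rh$ together with... taking a slightly larger compact neighborhood in $\Rh$ that contains the tails $(u_n;x_n)$,'' but a spacetime metric is \emph{not} continuous at points of $\Delta$ (the directed landscape jumps between $0$ and $-\infty$ there), and the required compact neighborhood inside $\Rh$ does not exist. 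This is exactly the case the paper's proof is built around: the paper uses the geodesic identity $d_n(u_n,x_n) = d_n(u_n,v_n) - d_n(x_n,v_n)$ and observes that \emph{both} terms on the right converge to $d(u,v)$ (since $(u_n;v_n)\to(u;v)$ and $(x_n;v_n)\to(u;v)$ both have limits in $\Rd$, where $d$ is finite and continuous), forcing $d_n(u_n,x_n)\to 0$. You write down this identity but then discard it in favor of direct compact convergence, which is precisely where the argument fails.

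Your Part 2 is also more elaborate than it needs to be: you invoke Theorem~\ref{T:geod-cvg} and Hausdorff precompactness to produce a subsequential limit geodesic $\Ga'$, but the hypothesis already gives $\Ga_n\to\Ga$ in the Hausdorff topology, so any $x_n\in\Ga_n$ has all its subsequential limits in $\Ga$ directly; there is no need for the precompactness machinery or the distinction between $\Ga'$ and $\Ga$. The paper deduces the $\limsup$ bound directly from Part 1 plus compactness of $\Ga$, and deduces the finiteness $\sup_{x\in\Ga}|d(u,x)|<\infty$ by applying Part 1 in the degenerate case $d_n\equiv d$, $\Ga_n\equiv\Ga$. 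Once you repair the $x=u$ case via the geodesic identity, this streamlined route is available to you too.
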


More general statements than Proposition \ref{P:convergence-alond-geod} hold regarding convergence along geodesics, though we do not pursue these here.

\begin{proof}
If $(u, x) \in \Rh$, then $d_n(u_n, x_n) \to d(u, x)$ by Lemma \ref{L:graph-equiv} and the continuity of $d$ on $\Rh$. The only other possibility is $x = u$.
We can use the geodesic property of $\Ga_n$ to write
$$
d_n(u_n, x_n) = d_n(u_n, v_n) - d_n(x_n, v_n).
$$
By graph convergence of $d_n \to d$, both terms on the right hand side converge to $d(u, v)$, again by Lemma \ref{L:graph-equiv}. Since $d$ is finite on $\Rd$, this implies that $d_n(u_n, x_n) \to 0 = d(u, u)$. 

We turn to the second part of the proposition. The first inequality is immediate from the first part of the proposition. If the second inequality fails, then we could find a convergent sequence $x_n \in \Ga$ such that $d(u, x_n)$ would not converge, contradicting the first part of the proposition in the special case when $d_n = d$ for all $n$. 
\end{proof}

\section{Multi-time convergence to $\scrL$}
\label{S:landscape-cvgfdd}
In the next three sections, we give criteria for random last passage metrics  to converge to $\scrL$. While our main goal is to develop criteria for hypograph and graph convergence, we start with a weaker notion of multi-time convergence. 

We will discretize time while keeping space continuous. Recall from Remark \ref{R:other-domains} the spaces $S_{s, t}, \scrE_*(S_{s, t})$.
The results of Section \ref{S:sheet} imply that certain rescaled last passage metrics $\scrL^n|_{S_{t, t + s^3}}$ can be coupled to converge compactly a.s.\ to an Airy sheet of scale $s$. Since Airy sheets are continuous, this is equivalent to graph convergence in $\scrE_*(S_{s, t})$ by Lemma \ref{L:graph-equiv}. Because of this, graph convergence in each of the spaces $\scrE_*(S_{s, t})$ will be our main assumption in this section.

Next, for sequences of random functions $\scrK_n, \scrK:\Rd \to \bar \R$, we say that $\scrK_n$ converges to $\scrK$ in the \textbf{multi-time sense} if for any set $S= \bigcup_{i=1}^\infty S_{t_i, t_i + s_i^3}$ for $t_i \in \R, s_i > 0$,  we have
\begin{equation}
\mathfrak{g}\scrK_n|_S \cvgd \mathfrak{g}\scrK|_S
\end{equation}
in the product space $\prod_{i=1}^\infty \scrE_*(S_{t_i, t_i + s_i^3})$.

We say that a function $W:\R^4 \to \R$ is \textbf{temporally independent} if the restrictions
$
W|_{(\R \X I_1)^2},\dots, W|_{(\R \X I_k)^2}
$
are independent for disjoint time intervals $I_1, \dots, I_k$. The next theorem shows that Airy sheet convergence for temporally independent metrics implies multi-time convergence.

\begin{theorem}
	\label{T:rational-cvge}
	Let $W^n$ be a sequence of temporally independent directed metrics on $\R^2$ of negative sign.
	Suppose that for any $t\in \R, s > 0$, the random function $W^n|_{S_{t, t + s^3}}$ converges to an Airy sheet of scale $s$ in $\scrE_*(S_{t, t + s^3})$.
	Then $W^n$ converges to $\scrL$ in the multi-time sense.
\end{theorem}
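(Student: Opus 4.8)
The strategy is to build the directed landscape $\scrL$ from the prelimits $W^n$ in exactly the same way that $\scrL$ is assembled from independent Airy sheets in Definition \ref{D:DL-def}, namely as an inverse (projective) limit of compositions of metrics over a fine mesh of time levels. Fix a countable dense set $T_0 \sset \R$ of candidate time levels (for definiteness, take $T_0 = \Q$, or any set whose pairwise differences are dense). For a finite ordered tuple of times $t_0 < t_1 < \dots < t_m$ in $T_0$, define the \textbf{mesh metric} $W^n_{t_0, \dots, t_m}$ on $\R^2$ by restricting $W^n$ to the slabs $\R \X [t_{i-1}, t_i]$, replacing it by $-\infty$ outside those slabs and on the diagonal by $0$, and closing up under the metric composition law of Lemma \ref{L:metric} (equivalently Lemma \ref{L:straightforward-dms}.5 applied to the slab restrictions). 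The point of temporal independence of $W^n$ is that $W^n_{t_0, \dots, t_m}|_{S_{t_{i-1}, t_i}}$ equals $W^n|_{S_{t_{i-1}, t_i}}$ and these are independent across $i$; by hypothesis each converges to an Airy sheet of scale $(t_i - t_{i-1})^3$ in $\scrE_*(S_{t_{i-1}, t_i})$, hence jointly (by independence) to a tuple of independent Airy sheets. Because the metric composition law is a continuous operation on graphs in the relevant topology when the limiting pieces are continuous sheets, $W^n_{t_0, \dots, t_m}|_S \cvgd \scrL_{t_0, \dots, t_m}|_S$ for any $S = \bigcup_i S_{a_i, b_i}$, where $\scrL_{t_0, \dots, t_m}$ is the directed-landscape-type metric built by composing those independent Airy sheets across the mesh $t_0 < \dots < t_m$.

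The second step is to upgrade from fixed mesh metrics to the full metric. The family $\{\scrL_{t_0, \dots, t_m}\}$ over all finite tuples in $T_0$ is consistent: refining the mesh only decreases the values (adding intermediate times in the composition law gives a $\max$ over more intermediate points, hence is monotone), and the projective limit over all such meshes is, by Theorem 10.9 / the construction in \cite*{DOV}, a version of the directed landscape $\scrL$. The multi-time convergence statement only asks for joint graph convergence of $W^n$ restricted to a countable union $S = \bigcup_{i=1}^\infty S_{t_i, t_i + s_i^3}$. Given such an $S$, I would choose, for each $\ep > 0$, a finite mesh in $T_0$ that contains good rational approximations to all the relevant endpoints $t_i$ and $t_i + s_i^3$ and is fine enough that, on the compact windows entering the Hausdorff metric on $\scrE_*$, the mesh metric $\scrL_{t_0, \dots, t_m}$ is within $\ep$ of $\scrL$ (using the a.s. continuity of $\scrL$ on $\Rh$, Proposition \ref{P:from-DOV}, and the quantitative estimate \eqref{E:LuCu} to control the convergence of the mesh approximations uniformly on compacts, together with the tail bound of Theorem \ref{T:tracy-widom} for the necessary tightness in $n$). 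Running the same finite-mesh approximation on the prelimit side, and using Step 1 to pass $W^n_{t_0, \dots, t_m}|_S \cvgd \scrL_{t_0, \dots, t_m}|_S$, a standard three-$\epsilon$ argument (triangle inequality for the Lévy--Prokhorov metric, or equivalently for the bounded-Lipschitz metric) on the Polish space $\prod_i \scrE_*(S_{t_i, t_i + s_i^3})$ then gives $\mathfrak g W^n|_S \cvgd \mathfrak g \scrL|_S$.

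The main obstacle is making Step 1 — continuity of the composition operation — precise in the graph/Hausdorff topology of $\scrE_*$. Composition of metrics across a time level involves a supremum over an intermediate spatial coordinate, and suprema are only lower semicontinuous in general; what rescues the argument is that (a) the limiting Airy sheets are continuous and the optimizing intermediate point lives in a compact set (Proposition \ref{P:from-DOV}.2 / the spacetime-metric structure of $\scrL$ in Definition \ref{D:spacetime}), so the $\max$ is attained and stable under perturbation on compacts, and (b) one needs a uniform-on-compacts control, hence a tightness input showing the prelimit optimizers do not escape to infinity — this is exactly where the one-point tail estimates for the integrable models (already packaged into Airy sheet convergence, and ultimately into \eqref{E:LuCu}) get used. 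Concretely I expect to phrase this as: if $f_n \to f$ and $g_n \to g$ in $\scrE_*(S_{r,s}), \scrE_*(S_{s,t})$ with $f, g$ continuous spacetime sheets, then $(x,y) \mapsto \max_z f_n(x, r; z, s) + g_n(z, s; y, t) \to \max_z f(x,r;z,s) + g(z,s;y,t)$ in $\scrE_*(S_{r,t})$, a lemma proved by combining the uniform-on-compacts convergence implied by graph convergence to a continuous limit (Lemma \ref{L:graph-equiv}) with the compactness of the set of near-optimizers. Once that lemma is in hand, iterating it over the finitely many levels of a mesh and then taking the projective limit is routine, and the remaining work is the bookkeeping of the three-$\epsilon$ argument and checking that temporal independence of $W^n$ indeed yields joint (not just marginal) convergence of the slab pieces, which is immediate since independent weak convergence implies joint weak convergence.
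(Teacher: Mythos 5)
Your strategy — building a mesh composite $W^n_{t_0,\dots,t_m}$ and showing it converges to $\scrL_{t_0,\dots,t_m}$ — runs into a gap that the paper's proof is precisely designed to avoid, and I don't think your sketch fills it.

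The issue is the first leg of your three-$\epsilon$ argument, $W^n|_S \approx W^n_{t_0,\dots,t_m}|_S$. The triangle inequality only gives the one-sided bound
\[
W^n(x, t_0; y, t_m) \;\ge\; W^n_{t_0,\dots,t_m}(x, t_0; y, t_m),
\]
and there is no a priori reason this should become an equality in the limit. On the limit side $\scrL_{t_0,\dots,t_m}|_{S_{a,b}} = \scrL|_{S_{a,b}}$ because $\scrL$ satisfies the metric composition law; but for $W^n$ (and even for a subsequential limit $\scrK$ of $W^n$) the composition law is exactly what you need to \emph{prove}, so appealing to it is circular. Your continuity-of-composition lemma, even if established, only gives $W^n_{t_0,\dots,t_m} \to \scrL_{t_0,\dots,t_m}$; it says nothing about $W^n - W^n_{t_0,\dots,t_m}$. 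The paper closes this gap with a soft distributional argument rather than a quantitative one: any subsequential limit $\scrK$ inherits the triangle inequality, giving a.s.\ the inequality $\scrK(x,t_i;y,t_k) \ge \sup_z [\scrK(x,t_i;z,t_j)+\scrK(z,t_j;y,t_k)]$; since both sides are equal in distribution (the slab marginals are Airy sheets, the middle slab is independent by temporal independence, and $\scrL$ — which has the same slab marginals — does satisfy the composition law), the a.s.\ inequality must be an a.s.\ equality. This completely bypasses both the continuity-of-composition question and any need to control the error between $W^n$ and its mesh composite.

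There is a secondary gap you do not address at all. Temporal independence of $W^n$ is defined for \emph{disjoint} time intervals, whereas the composition at level $t_j$ involves the slabs $[t_i, t_j]$ and $[t_j, t_k]$, which share the boundary time $t_j$. You assert that joint convergence of the slab pieces is ``immediate'' from independence, but the slabs are not independent as given. The paper handles this with a separate argument — the step establishing
\(
\scrK|_{S_{s,t}} = \limsup_{n} \scrK|_{S_{s_n,t_n}}
\)
for $s_n \downarrow s$, $t_n \uparrow t$, using the Tracy--Widom tail bounds of Theorem \ref{T:tracy-widom} — which shows that the limit on a closed slab is determined by the limits on strictly interior slabs, so independence extends to slabs sharing endpoints. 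Without this, the distributional equality your composition argument needs does not hold. Both of these are real ideas in the paper's proof that are missing from your sketch, not bookkeeping.
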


\begin{proof}
Without loss of generality, we may assume that $S = \bigcup S_{s^i, s^j}$ where the union is over all $s^i < s^j$ in some countable dense subset of $\R$. First, $\mathfrak{g} W^n|_S$ is tight in the product space $\prod \scrE_*(S_{s^i, s^j})$ since it has convergent marginals. Let $\scrK$ be any subsequential limit of $W^n$ in this space.
 We first show that for any $s = s^i < t = s^j$, we have
\begin{equation}
\label{E:K-determined}
\scrK|_{S_{s, t}} = \limsup_{n \to \infty} \scrK|_{S_{s_n, t_n}},
\end{equation}
for certain sequences $s_n \cvgdown s, t_n \cvgup t$. First, $\scrK$ inherits the triangle inequality from $W^n$, so
\begin{equation}
\label{E:scrKn-tri}
\scrK(x,s; y, t) \ge \scrK(x,s; x, s_n) + \scrK(x,s_n; y, t_n) + \scrK(y,t_n; y, t).
\end{equation}
Now, $\scrK|_{S_{s^i, s^j}} \eqd \scrL|_{S_{s^i, s^j}}$ for all $i, j$, so all four random variables in \eqref{E:scrKn-tri} are rescaled and shifted Tracy-Widom random variables by Theorem \ref{T:tracy-widom}.
In particular, as long as the sequences $s_n, t_n$ are converging to $0$ quickly enough, the tail bounds on $\scrK(x,s; x, s_n), \scrK(y,t_n; y, t)$ in Theorem \ref{T:tracy-widom} imply that
$$
\scrK(x, s; y, t) \ge \limsup_{n \to \infty} \scrK(x,s_n; y, t_n)
$$
almost surely.
However, $\scrK(x, s_n; y, t_n) \eqd \scrL(x, s_n; y, t_n)$, which converges in distribution with $n$ to $\scrK(x, s;y, t) \eqd \scrL(x, s; y, t)$, so this inequality is almost surely an equality. 

Next, $\scrK|_S$ inherits temporal independence from $W^n$. Equation \eqref{E:K-determined} implies that this independence also holds when the intervals $I_1, \dots, I_k$ overlap at their endpoints. We use this to show that $\scrK|_S \eqd \scrL|_S$. By Kolmogorov's extension theorem, it suffices to prove that $\scrK|_{S'} \eqd \scrL|_{S'}$ for all subsets $S' \sset S$ of the form
$$
	S' = \bigcup_{i < j \in \{1, \dots, m\}} S_{t_i, t_j},
$$
	where $t_1 < t_2 < \dots < t_m$. Letting
	$$
	S^* = \bigcup_{i=1}^{m-1} S_{t_i, t_{i+1}},
	$$
	we have $\scrK|_{S^*} \eqd \scrL|_{S^*}$: both consist of $m-1$ independent Airy sheets of scale $(t_{i+1} - t_{i})^{1/3}$. Next, the triangle inequality for $\scrK$ gives that
	\begin{equation}
	\label{E:L-x-y}
	\scrK(x,t_i;y,t_k) \ge \sup_{z \in \mathbb R} [\scrK(x,t_i;z,t_j)+\scrK(z,t_j;y,t_k)]
	\end{equation}
	for all  $x, y \in \R$ and $i < j < k \in \{1, \dots, m\}$. By the assumption of Airy sheet convergence for $\scrK$, and the independence of $\scrK(x,t_i;z,t_j)$ and $\scrK(z,t_j;y,t_k)$, we have that
	\begin{equation}
	\label{E:KeqdL}
	\begin{split}
\scrK(x,t_i;y,t_k) &\eqd \scrL(x,t_i;y,t_k) \mathand  \\
\scrK(x,t_i;\cdot,t_j)+\scrK(\cdot,t_j;y,t_k) &\eqd \scrL(x,t_i;\cdot,t_j)+\scrL(\cdot,t_j;y,t_k).
\end{split}
	\end{equation} 
	By the metric composition law for $\scrL$, \eqref{E:KeqdL} implies that the right hand side of \eqref{E:L-x-y} is a maximum, rather than a supremum, and that the two sides of \eqref{E:L-x-y} are equal in distribution. Therefore by the inequality in \eqref{E:L-x-y}, they must be equal almost surely. Continuity of both sides of \eqref{E:L-x-y} as functions of $x$ and $y$ therefore implies that almost surely, $\scrK$ satisfies the metric composition law everywhere on $S'$. This, along with $\scrK|_{S^*}$, uniquely determines $\scrK|_{S'}$. The directed landscape $\scrL|_{S'}$ is determined from $\scrL|_{S^*}$ in the same way.
\end{proof}

\section{Hypograph convergence to $\scrL$}
\label{S:hypo}

The multi-time convergence in Theorem \ref{T:rational-cvge} does not immediately imply either hypograph or graph convergence. For example, we could modify any sequence of metrics $W^n$ satisfying the conditions of Theorem \ref{T:rational-cvge} in the following way.
 Let $Q$ be a Poisson point process on $\R$, and define the map $P_n:\R^2 \to \R^2$ by
$$
 P_n(u) = \begin{cases}
u \qquad u \notin \R \X Q, \\
0, \qquad u \in \R \X Q.
 \end{cases}
$$
Let $\tilde W^n$ be the pullback of $W^n$ under $P_n$. Then $\tilde W^n$ satisfies the conditions of Theorem \ref{T:rational-cvge} since almost surely $(\R \X Q)^2 \cap S = \emptyset$ for any fixed set $S = \bigcup_{i=1}^\infty S_{s_i, t_i}$, but $\mathfrak{h} \tilde W^n \not \cvgd \mathfrak{h} \scrL$ since, for example, for large enough $c$ the random variable
$$
\sup_{s < t \in [0, 1], x, -y \in [c, 2c]} W_n(x, s; y, t)
$$
will have an atom at $0$ whose size does not tend to $0$ with $n$ whereas the corresponding random variable for $\scrL$ is nonatomic.
To work around this issue, we will need to be able to control any unusually large values of $W^n$ on subsets of $\R^4$. For example, in the setting above, $\tilde W^n(u, v) = 0$ whenever $u, v \in \R \X Q$, whereas $\scrL(u, v)$ can be arbitrarily small anywhere. For a set $A \sset \R^4$, and a function $d:\R^4 \to \bar\R$, we write
$$
d(A) = \sup \{d(x) : x \in A\}.
$$
Typically $d$ will be a directed metric of negative sign, though we will prove convergence results in greater generality.
We now give conditions for hypograph convergence of functions that arise naturally in the study of last passage percolation. For this proposition, recall that $\Rh = \R^4 \smin \Delta$, where $\Delta = \{(u;u) \in \R^4\}$.

\begin{prop}
\label{P:hypograph-cvg}
Suppose that $\scrK_n$ is a sequence of random functions from $\Rh \to \bar \R$. Suppose that the following conditions hold:
\begin{enumerate}[nosep, label=(\roman*)]
	\item For some countable dense subset $D \sset \Rh$ we have $\scrK_n|_D \cvgd \scrK|_D$, where $\scrK:\Rh \to \bar \R$ is a random continuous function.
	 \item For all $n$, $\scrK_n(x, s; y, t) = -\infty$ whenever $s > t$.
	 \item For $p = (x, t)\in \R^2,$ define
	 $$
	 B^+(p, r) = [x - r^2, x + r^2] \X [t + r^3, t + 2 r^3], \quad B^-(p, r) = [x - r^2, x + r^2] \X [t - 2r^3, t - r^3].
	 $$
	 For every compact $K\in \Rd$ and every $\eps>0$  we have
	 $$
	 \lim_{r \to 0^+} \frac{\sup_{(p,q)\in K}\limsup_{n\to\infty} \p\left( \mathcal K^n(B^+(p,r) \X B^-(q,r))-\mathcal K^n(p,q)>\eps\right)}{r^{10}} = 0.
	 $$
	\item For $A \sset \R^4$, let $A_{+r} = \{u \in \R^4 : \inf_{a \in A} |a - u| < r\}$. For any $m \in \R$ and any compact set $A \sset \{(x, s; y, s) : x, y, s \in \R, x \ne y\}$ we have  
	$$
	\lim_{r \to 0^+} \limsup_{n \to \infty} \p(\scrK_n(A_{+r}) > m) = 0. 
	$$
\end{enumerate}
Then $\mathfrak{h}\scrK_n \cvgd \mathfrak{h} \scrK$ as random elements of $\scrE_*$. Moreover, we have the joint distributional convergence $(\scrK_n|_D, \mathfrak{h}\scrK_n) \cvgd (\scrK|_D, \mathfrak{h} \scrK)$.
\end{prop}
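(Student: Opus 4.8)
The plan is to establish tightness of $\mathfrak{h}\scrK_n$ in the compact space $\scrE_*$ (automatic by Lemma~\ref{L:D-polish}), identify every subsequential limit as $\mathfrak{h}\scrK$, and upgrade this to the joint convergence with $\scrK_n|_D$ by the usual Skorokhod-representation argument. The heart of the matter is that condition (i) already forces any subsequential limit of $\mathfrak{h}\scrK_n$, together with $\scrK_n|_D$, to contain the closed hypograph $\mathfrak{h}\scrK$: pass to a coupling via Skorokhod's representation theorem in which $\scrK_n|_D \to \scrK|_D$ a.s.\ and $\mathfrak{h}\scrK_n \to \Ga$ a.s.\ in $\scrE_*$ for some random closed set $\Ga$; then for each $d \in D$ the point $(d, \scrK_n(d))$ converges to $(d, \scrK(d)) \in \Ga$, and since $D$ is dense and $\scrK$ continuous, $\mathfrak{h}\scrK \sset \Ga$. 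So it remains only to show the reverse containment $\Ga \sset \mathfrak{h}\scrK$, i.e.\ that no point of $\Ga$ lies strictly above the graph of $\scrK$.

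For this, write $\Ga = \mathfrak{h}g$ for an upper semicontinuous function $g$ with $g \ge \scrK$ (the inequality coming from $\mathfrak{h}\scrK \sset \Ga$ and continuity of $\scrK$, exactly as in the proof of Lemma~\ref{L:hypo-equiv}(2)); the goal is $g(p;q) \le \scrK(p;q)$ for all $(p;q) \in \Rh$. There are two regimes. First, when $s > t$: condition (ii) and the closedness of $\Ga$ force $g = -\infty$ off the closure of $\Rd$, while condition (iv) — applied to compact subsets of the ``coincident-time'' boundary $\{(x,s;y,s): x\ne y\}$ — controls $g$ on the diagonal-time part of $\partial \Rd$, showing $g$ there is not too large; a limiting/covering argument over these compacts rules out any excess mass on $\{s \ge t\}$. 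Second, and this is the main step, when $(p;q) \in \Rd$: suppose toward a contradiction that $g(p;q) \ge \scrK(p;q) + \eps$ for some $\eps > 0$. By upper semicontinuity of $g$ there is a small ball around $(p;q)$ on which $g \ge \scrK(p;q) + \eps/2$, hence (again by $\mathfrak{h}\scrK_n \to \mathfrak{h}g$) for every small $r$ there are, for infinitely many $n$, points in $B^+(p,r)\X B^-(q,r)$ at which $\scrK_n$ exceeds $\scrK(p;q)+\eps/4$; meanwhile $\scrK_n(p;q) \to \scrK(p;q)$ by a local version of condition (i) (or, where $(p;q)\notin D$, by approximation through $D$ and continuity of $\scrK$). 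Thus the event inside condition (iii) has $\limsup_n$-probability bounded below by a fixed positive constant for all small $r$, which is incompatible with the stated $r^{10}$ decay. This contradiction gives $g \le \scrK$, hence $\Ga = \mathfrak{h}\scrK$.

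A few technical points will need care. One must make the covering argument for (iii) uniform: fix a countable exhaustion of $\Rd$ by compacts $K_j$, choose the contradiction point $(p;q)$ to lie in some $K_j$, and run the argument inside $K_j$, using that the $\limsup_n$ in (iii) is taken after a $\sup$ over $K_j$. Likewise, the reduction of ``$\scrK_n(p;q) \to \scrK(p;q)$'' from $D$ to arbitrary $(p;q)\in\Rd$ should be done by sandwiching: pick $d^-, d^+ \in D$ near $(p;q)$ with $d^- \nearrow$ slightly-before and slightly-after, use monotonicity-free estimates via condition (iii)/(iv) to trap $\scrK_n(p;q)$ between $\scrK_n(d^\pm) + o(1)$, and send the approximations to $0$ — in fact the cleanest route is to observe that condition (iii), applied at the \emph{limit} points $d \in D$, already forces $\Ga$ and $\mathfrak{h}\scrK$ to agree on a neighborhood basis, so one only ever needs the convergence at points of $D$. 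Finally, the joint convergence $(\scrK_n|_D, \mathfrak{h}\scrK_n)\cvgd(\scrK|_D,\mathfrak{h}\scrK)$ follows because on the Skorokhod coupling both coordinates converge a.s., and the limit is deterministic given $\scrK$ (indeed $\mathfrak{h}\scrK$ is a measurable function of $\scrK|_D$). The step I expect to be the real obstacle is making the quantitative clash in condition (iii) rigorous: translating ``$\mathfrak{h}\scrK_n$ has a point above $\scrK$ near $(p;q)$'' into ``$\scrK_n$ is large somewhere in the specific boxes $B^\pm$'' requires choosing the scale $r$ correctly relative to the size of the ball of excess and to the Hausdorff-metric errors, and keeping everything uniform over the compact set so that the $r^{10} \to 0$ hypothesis can actually be invoked.
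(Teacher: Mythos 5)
Your overall skeleton matches the paper: pass to a Skorokhod coupling, write any subsequential limit of $\mathfrak{h}\scrK_n$ as $\mathfrak{h}\scrK'$ for an upper semicontinuous $\scrK'$, observe $\scrK \le \scrK'$ from convergence on $D$, dispatch $\{s>t\}$ via (ii) and the coincident-time boundary via (iv), and then show $\scrK' \le \scrK$ on $\Rd$. The joint-convergence coda is also fine. But the central step — ruling out $\scrK'(p;q) > \scrK(p;q)$ for $(p;q) \in \Rd$ — has a genuine gap, and the fix you gesture at does not address it.

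The problem is that the potential contradiction point $(p;q)$ is \emph{random} (it depends on the realization of the coupled limit $\scrK'$). Condition (iii) only controls the $\limsup_n$-probability of the event $\{\scrK_n(B^+(p,r)\X B^-(q,r)) - \scrK_n(p,q) > \eps\}$ at a \emph{fixed, deterministic} $(p,q)$. You cannot apply it directly at a random $(p;q)$, and an exhaustion by compacts does not resolve this — the issue is not countability, it is that you would need to union-bound over a set of locations rich enough to capture the random bad point. The correct mechanism, and the reason the exponent $10$ appears in (iii), is a mesh argument: the KPZ-scaled box $B^+(p,r)\X B^-(q,r)$ covers a region of four-volume $\asymp r^{10}$, so a compact $A\subset\Rd$ is covered by a finite mesh $U_r \subset D$ of cardinality $O(r^{-10})$. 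One then writes
\[
\p\bigl(\scrK'(A) > \scrK(A\cap D) + \eps\bigr) \le \sum_{(p,q)\in U_r}\p\bigl(\scrK'(B^+(p,r)\X B^-(q,r)) > \scrK(p,q) + \eps/2\bigr) + \p\bigl(\scrK(A_{+r^2}\cap D)-\scrK(A\cap D) > \eps/2\bigr),
\]
passes the first sum through the a.s.\ graph/hypograph convergence to bound it by the (iii)-probabilities at the deterministic mesh points, and the $r^{10}$ rate exactly beats the $O(r^{-10})$ union bound. Without this bookkeeping, the hypothesis (iii) is never actually used in the form in which it is stated, and your proposed contradiction (``probability bounded below by a constant contradicts $r^{10}$ decay'') in fact does not require the rate at all, which is a signal the argument is not engaging the hypothesis correctly.

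A second, related gap: you assume $\scrK_n(p;q) \to \scrK(p;q)$ for a general $(p;q)\in\Rd$, but (i) only gives this on $D$. The mesh argument avoids this by comparing $\scrK'(A)$ to $\scrK(A\cap D)$ (where only $D$-point convergence is needed) and then invoking continuity of $\scrK$ to identify $\scrK(A\cap D)=\scrK(A)$; your proposed sandwiching via (iii)/(iv) to upgrade $D$-convergence to pointwise convergence is not worked out and is not needed once the mesh argument is in place.
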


We have built the KPZ scaling into the third condition above in order to make the conditions of Proposition \ref{P:hypograph-cvg} easier to check in the cases we care about. Condition (iii) handles unusually large values of $\scrK_n$ in the interior of $\Rd$, and condition (iv) ensures that $\scrK_n$ is close to $-\infty$ near the boundary of $\Rd$.

\begin{proof}
	Let $(\mathcal{J}, \Ga)$ be any joint weak subsequential limit of $(\scrK_n|_D, \mathfrak{h} \scrK_n)$. By (i), $\mathcal{J} \eqd \scrK|_D$. By Skorokhod's representation theorem, we can couple the functions $\scrK_n$ so that $(\scrK_n|_D, \mathfrak{h} \scrK_n) \to (\scrK|_D, \Ga)$ almost surely along the given subsequence.
	
	The set $\Ga$ is the closed hypograph of a unique upper semicontinuous function $\scrK':\Rh \to \bar \R$. We will show that $\scrK' = \scrK$. By Lemma \ref{L:hypo-equiv}.1, $\scrK|_D \le \scrK'|_D$. Therefore upper semicontinuity of $\scrK'$ and continuity of $\scrK$ implies that $\scrK \le \scrK'$. To establish that this is in fact an equality, we first show that for any compact set $A \sset \Rd$,
	\begin{equation}
	\label{E:k'k}
	\scrK'(A) = \scrK(A \cap D) = \scrK(A)
	\end{equation}
	almost surely. In \eqref{E:k'k}, the second equality follows from the continuity of $\scrK$. Next, there exists an $A$-dependent constant $c > 0$ such that for every small enough $r > 0$, there exists a finite mesh $U_r \sset A_{+4r^2} \cap D$ with $|U_r| \le c r^{10}$ and
	$$
	A \sset \bigcup_{(p, q) \in U_r} B^+(p, r) \X B^-(q, r).
	$$
If
	\begin{equation}
	\label{E:K'A}
	\scrK'(A) > \scrK(A \cap D) + \ep
	\end{equation}
	for some $\ep > 0$, then there exists $(p,q) \in U_r$ so that either 
	$$
	\scrK'(B^+(p, r) \X B^-(q, r)) > \scrK(p, q) + \ep/2 \qquad \text{ or } \qquad \scrK(p,q)>\scrK(A \cap D)+\ep/2.
	$$
	Thus by a union bound the probability of \eqref{E:K'A} is bounded above by
	\begin{align*}
	\lf(\sum_{(p, q) \in U_r} \p\lf(\scrK'(B^+(p, r) \X B^-(q, r)) > \scrK(p, q) + \ep/2\rg)\rg) + \p\lf(\scrK(A_{+r^2} \cap D) - \scrK(A \cap D) > \ep/2\rg)
	\end{align*}
	As $r\downarrow 0$, the first term above goes to $0$ by assumption (iii) and the bound on $|U_r|$, and the second term goes to $0$ by the continuity of $\scrK$, proving \eqref{E:k'k}. Now, suppose that $\scrK'(u) > \scrK(u)$ for some $u \in \Rd$. Then by the continuity of $\scrK$, for some compact set $A \sset \Rd$ containing $u$, we have $\scrK'(A) \ge \scrK'(u) > \scrK(A)$. This contradicts \eqref{E:k'k}, implying that $\scrK = \scrK'$ on $\Rd$.

	 Moreover, $\scrK'(x, s; y, t) = \scrK(x, s; y, t) = -\infty$ for $s > t$ by assumption (ii). Finally, assumption (iv) guarantees that $\scrK' = -\infty$ on the set $E = \{ u = (x, s; y, s) \in \R^4: x \ne y \}$. Since $\scrK \le \scrK'$ everywhere, this implies $\scrK = \scrK'$ on $E$. Putting all this together gives that $\scrK = \scrK'$ on $\Rh$.
\end{proof}

\subsection{Independent models}
\label{SS:ind-hypo}
Conditions (iii) and (iv) of Proposition \ref{P:hypograph-cvg}, while technical, can be checked for sequences of last passage models with an independence structure whose one-point marginals converge to appropriately rescaled Tracy-Widom random variables. 

\begin{definition}
	\label{D:independent}
	A collection of random cadlag functions $F = \{F_j:\R\to \R, j \in \Z\}$ is an \textbf{independent environment} if for any disjoint sets $[a_1, b_1] \X \{j_1\}, \dots, [a_k, b_k] \X \{j_k\} \sset \R \X \Z$, the increments $F_{j_i}(b_i) - F_{j_i}(a_i)$ are independent for $i = 1, \dots, k$.
\end{definition}

Any independent environment $F$ gives rise to a directed metric $d_F$ on $\R \X (\Z/2)$ as in Example \ref{Ex:LPP-lines}. We will check the conditions of Proposition \ref{P:hypograph-cvg} for sequences of rescaled line last passage metrics arising from independent environments. Standard examples of lattice last passage percolation fit into this framework by the embedding \eqref{E:embed-1}.

 Consider sequences of independent environments $F^n$ on $\R \X \Z$, positive real numbers $m_n$, continuous additive metrics $h_n$ on $\R \X (\Z/2)$ (see Example \ref{Ex:additive-metrics}), and linear maps $L_n:\R^2 \to \R^2$ of the form
$$
L_n(x, y) = (a_n x, b_n x + c_n y). 
$$
Each $L_n$ is the composition of a horizontal shear and a diagonal transformation. Recalling the definition of $f_{\R^2 \to \R \X \Z}$ in \eqref{E:line-plane}, let $\scrK_n$ be the pullback under $f_{\R^2 \to \R \X \Z} \circ L_n$ of the directed metric
$$
m_n d_{F_n} + h_n = d_{m_n F_n} + h_n
$$
on $\R \X (\Z/2)$. 
By Lemma \ref{L:straightforward-dms}, each $\scrK_n$ is a directed metric of negative sign. Moreover, the rescaling by $m_n$ and the augmentation by the additive metric $h_n$ does not change the geodesic structure, so geodesics in $\scrK_n$ are simply pullbacks under $f_{\R^2 \to \R \X \Z} \circ L_n$ of geodesics in $d_{F^n}$. In particular, if $F^n$ itself arose from a lattice environment, then these are pullbacks of lattice last passage geodesics in $\Z^2$.

Now, the independence structure of $F^n$ guarantees that each $d_{F^n}$ is a temporally independent metric in the sense of Theorem \ref{T:rational-cvge}. Since each $L_n$ is the composition of a diagonal transformation and a horizontal shear, each $\scrK_n$ is also temporally independent. Therefore if the two-time marginals of $\scrK_n$ converge to Airy sheets, then $\scrK_n$ converges to $\scrL$ in the multi-time sense, implying that $\scrK_n$ satisfies condition (i) of Proposition \ref{P:hypograph-cvg} where the limit is the directed landscape $\scrL$. Moreover, $\scrK_n(x, s; y, t) = 0$ whenever $s > t$, so $\scrK_n$ also satisfies condition (ii). Conditions (iii) and (iv) of Proposition \ref{P:hypograph-cvg} will require the following additional assumption on $\scrK_n$.

\begin{description}
	\item[Mobile Tracy-Widom convergence:] \qquad
	A sequence of random functions $\scrK_n: \Rd \to \R$ satisfies \textbf{mobile Tracy-Widom convergence} if for any sequence $u_n \to u =(x, t; y, t + s^3) \in \Rd$, we have 
	$$
	\scrK_n(u_n) \cvgd \scrL(u) \eqd sT -\frac{(x -y)^2}{s^3}.
$$
	Here $\scrL$ is the directed landscape, and $T$ is a standard GUE Tracy-Widom random variable.
\end{description}

We can now state the main hypograph convergence theorem for independent environments; this is a rephrasing of Theorem \ref{T:intro-hypo}.

\begin{theorem}
	\label{T:ind-main}
	Let $\scrK_n$ be a sequence of random directed metrics of negative sign arising from a sequence of independent environments on $\R \X \Z$ as above. Suppose that $\scrK_n$ satisfies mobile Tracy-Widom convergence and that $\scrK_n|_{S_{t, t + s^3}}$ converges in $\scrE_*(S_{t, t + s^3})$ to an Airy sheet of scale $s$ for all $t \in \R, s > 0$. Then $\mathfrak{h} \scrK_n \cvgd \mathfrak{h} \scrL$ in $\scrE_*$.
\end{theorem}

\begin{remark}
	\label{R:ind-main}
	In the setting of Theorem \ref{T:ind-main}, we actually have a stronger joint convergence result. For any countable union $S = \bigcup_{i=1}^\infty S_{t_i, t_i + s_i^3}$, the pair $(\mathfrak{h}\scrK_n, \scrK_n|_S)$ converges in distribution to $(\mathfrak{h}\scrL, \scrL|_S)$. Here the underlying function space is
	$\scrE_* \X \prod_{i=1}^\infty \scrE_*(S_{t_i, t_i + s_i^3})$.
\end{remark}

\subsection{A maximal inequality for last passage percolation}
\label{SS:maximal}

To pass from multi-time convergence to hypograph convergence in Theorem \ref{T:ind-main}, we need a maximal inequality for last passage percolation. This can be thought of as a last-passage version of Kolmogorov's maximal inequality for sums of independent random variables. We will work within the framework of line last passage percolation set up in Example \ref{Ex:LPP-lines}. A similar maximal inequality could be proven for other directed metrics with an ordered independence structure, e.g. planar Poisson last passage percolation. 

Through this subsection, we let $F$ be an independent cadlag environment on $\R \X \Z$, and let $d_F$ be the corresponding directed metric of negative sign on $\R \X (\Z/2)$ defined as in Example \ref{Ex:LPP-lines}. 

As in Proposition \ref{P:hypograph-cvg}, for $A \sset (\R \X (\Z/2))^2$ and a directed metric $d$ of negative sign on $\R \X (\Z/2)$, we will let
$
d(A) = \sup \{d(u) : u \in A\}.
$
We use the shorthand $d(B, C)$ for $d(B \X C)$.

\begin{lemma}[Set-to-point maximal inequality]
	\label{L:doob-lpp}
	Let $F$ be an independent environment on $\R \X \Z$, and let $d$ be a deterministic, continuous directed metric of negative sign on $\R \X (\Z/2)$. Let $d_F^* = d_F + d$. Let $B \sset \R \X (\Z/2)$ be a finite union of intervals $[s,t] \X \{\ell\}$ for $\ell \in \Z/2$. Then for any $p, q \in \R \X (\Z/2)$ and $c, \ep \in \R$ we have
	\begin{align}
	\label{E:doob-i}
	\p\lf(d_F^*(p, q) \le c - \ep, c < d_F^*(p, B) \rg)  &\le \sup_{b \in B} \; \p\lf(d_F^*(b, q) < - \ep \rg), \\
	\label{E:doob-ii}
	\p\lf(d_F^*(p, q) \le c - \ep, c < d_F^*(B, q) \rg)  &\le \sup_{b \in B} \; \p\lf(d_F^*(p, b) < - \ep \rg).
	\end{align}
\end{lemma}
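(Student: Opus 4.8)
The plan is to prove \eqref{E:doob-i}; the statement \eqref{E:doob-ii} follows by the obvious symmetric argument (reversing the roles of start and end, or equivalently applying \eqref{E:doob-i} to the reversed environment). The key idea is the classical stopping-time/optional-sampling argument: we want to identify, on the event $\{c < d_F^*(p, B)\}$, a "first" point $b_\ast \in B$ at which the last passage value from $p$ exceeds $c$, in such a way that $b_\ast$ is measurable with respect to the environment "before" $b_\ast$, while $d_F^*(b_\ast, q)$ only depends on the environment "after" $b_\ast$. Then conditioning on $b_\ast$ and using the triangle inequality $d_F^*(p,q) \ge d_F^*(p, b_\ast) + d_F^*(b_\ast, q)$ (this is the reverse triangle inequality for negative-sign directed metrics, i.e.\ \eqref{E:triangle-ineq} absorbed into $d_F^*$, valid since the added deterministic piece $d$ is itself a directed metric and $d_F$ satisfies the triangle inequality), combined with independence, gives the bound.

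To make this precise I would order the "line index" coordinate in $\R\X(\Z/2)$ so that passage goes from higher index to lower index, and for each line index $\ell$ appearing in $B$ let $B_\ell = \{x : (x,\ell)\in B\}$ be the corresponding finite union of intervals. Sweep through the lines of $B$ from the highest (closest to $p$) to the lowest; within a fixed line, sweep the $x$-coordinate from left to right (the natural order for a path that can only move right). On the event $\{c < d_F^*(p,B)\}$ define $b_\ast$ to be the infimum, in this sweep order, of points $b \in B$ with $d_F^*(p, b) > c$; since $B$ is a finite union of intervals and $x\mapsto d_F^*(p, (x,\ell))$ is cadlag (a supremum over paths whose weights are built from cadlag increments), the set $\{b \in B : d_F^*(p,b) > c\}$ is relatively open from the right in each line, so either the infimum is attained or a right-limit point works; in any case $b_\ast$ is well-defined and the event $\{b_\ast \preceq b\}$ for a fixed $b$ depends only on the environment on the region of $\R\X(\Z/2)$ consisting of lines above $\ell(b)$ together with, on line $\ell(b)$, the increments to the left of $b$ — call this the "past" of $b$. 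Crucially, $d_F^*(p, b_\ast)$ depends only on the past of $b_\ast$, while $d_F^*(b_\ast, q)$ depends only on the "future" of $b_\ast$ (lines strictly below, plus increments to the right on line $\ell(b_\ast)$); these two regions are disjoint, and $F$ is an independent environment, so conditionally on the value of $b_\ast$ the two quantities are independent. I should be a little careful that the single shared line — line $\ell(b_\ast)$ — contributes its increment to the path length on at most one side; the convention in Example \ref{Ex:LPP-lines} (the jump at a point is collected when one \emph{enters} a line, i.e.\ the half-integer edge) makes the split clean, and I would spell this out.

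With $b_\ast$ in hand the estimate is short. On $\{d_F^*(p,q)\le c-\ep\} \cap \{c < d_F^*(p,B)\}$ we have $b_\ast$ defined and $d_F^*(p, b_\ast) \ge c$ (by right-continuity, $\ge c$ rather than $> c$, which is all we need), so the triangle inequality gives $c - \ep \ge d_F^*(p,q) \ge d_F^*(p,b_\ast) + d_F^*(b_\ast, q) \ge c + d_F^*(b_\ast, q)$, hence $d_F^*(b_\ast, q) \le -\ep$. Therefore
\begin{align*}
\p\lf(d_F^*(p, q) \le c - \ep,\; c < d_F^*(p, B) \rg)
&\le \p\lf(d_F^*(b_\ast, q) \le -\ep,\; c< d_F^*(p,B)\rg)\\
&= \E\lf[\indic\{c < d_F^*(p,B)\}\, \p\lf(d_F^*(b_\ast, q)\le -\ep \mid \mathcal G_\ast\rg)\rg],
\end{align*}
where $\mathcal G_\ast$ is the $\sigma$-algebra generated by the past process up to (and including the identity of) $b_\ast$. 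By the independence just described, $\p(d_F^*(b_\ast, q)\le -\ep \mid \mathcal G_\ast) = g(b_\ast)$ where $g(b) := \p(d_F^*(b,q)\le -\ep)$ evaluated at the deterministic point $b$; this is bounded above by $\sup_{b\in B} \p(d_F^*(b,q) < -\ep)$ once we note that for the specific $b_\ast$ arising on this event the inequality is strict (again because on the event we derived $d_F^*(b_\ast,q)\le -\ep$ from $d_F^*(p,q)\le c-\ep$ and $d_F^*(p,b_\ast)\ge c$; to get the strict inequality claimed in the lemma one re-runs the argument with the strict event $\{d_F^*(p,q) < c - \ep\}$ or absorbs an infinitesimal — I'll match whichever form the companion bound in \eqref{E:doob-i} actually asserts). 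Bounding the conditional probability by the supremum and using $\p(c<d_F^*(p,B))\le 1$ finishes the proof.

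The main obstacle I anticipate is the measurability/filtration bookkeeping in the paragraph above: making rigorous that $b_\ast$ is a genuine "stopping point" for the natural multi-parameter filtration of the independent environment, that $d_F^*(p, b_\ast)$ and $d_F^*(b_\ast, q)$ decompose cleanly across the past/future split with the shared line handled once, and that the cadlag regularity lets us pass from "$>c$" to "$\ge c$" without losing a strict inequality somewhere. The geometry (paths only move right and down, $B$ a finite union of product sets) is exactly what makes such a stopping point exist; the care is entirely in writing the one-sided limits and the shared-line convention precisely, which is presumably why the paper flags that the version in the lemma is "slightly stronger but more technical" than Theorem \ref{T:intro-maxineq}.
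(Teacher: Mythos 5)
Your strategy is the paper's strategy: find a ``first'' point of $B$ where the last passage value from $p$ exceeds $c$, using a sweep order adapted to the directed structure, and then combine the triangle inequality with independence of the past and future environment across that stopping point. The part you did not get is the key technical device the paper uses to make this rigorous. Rather than defining $b_\ast$ directly as an infimum over the uncountable set $B$ (which forces you into exactly the regularity and measurability issues you flag as ``the main obstacle''), the paper first replaces $B$ by a nested sequence of \emph{finite} sets $B_k = \{(x,\ell)\in B : 2^k x \in \Z\} \cup \del B$, defines $\tau_k$ as the minimum of the finite set $\{b \in B_k : d_F^*(p,b) > c\}$, runs your argument verbatim for each $k$ (where conditioning on the discrete random variable $\tau_k$ is trivially clean, the relevant inequality $d_F^*(p,\tau_k) > c$ is automatically strict, and the supremum on the right side is over a finite set), and then passes to the limit using only the one-sided inequality $\limsup_k d_F^*(p,(x_k,\ell)) \ge d_F^*(p,(x,\ell))$ for $x_k \downarrow x$, which gives $d_F^*(p,B_k) \uparrow d_F^*(p,B)$ and hence $\p(E_{1,k}) \to \p(E_1)$.

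Two concrete spots where your continuum version breaks without that discretization. First, you assert $x\mapsto d_F^*(p,(x,\ell))$ is cadlag; the paper only uses (and only needs) the weaker ``right $\limsup$'' inequality quoted above, and full right-continuity is not obviously true here since $d_F^*$ is a supremum over paths across an environment with positive jumps. Second, and relatedly, defining $b_\ast$ at an infimum that may only be a right-limit point gives you $d_F^*(p,b_\ast)\ge c$ rather than $>c$, so you only reach $d_F^*(b_\ast,q)\le -\ep$ rather than the strict inequality on the right side of \eqref{E:doob-i}; you notice this and propose to ``absorb an infinitesimal,'' but the discrete approximation makes the strictness automatic and there is nothing to patch. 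So the proof idea is right, but a proof that actually closes needs the $B_k$ approximation, and that is not a cosmetic detail: it is what turns your informal ``stopping point'' into a genuine finite-state stopping time.
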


Note that we could have considered more general sets $B$ in Lemma \ref{L:doob-lpp}; we work with finite unions of intervals for simplicity. The deterministic correction $d$ is necessary since we will apply Lemma \ref{L:doob-lpp} to shifted versions of line last passage metrics.

\begin{proof}[Proof of Lemma \ref{L:doob-lpp}]
	We only prove \eqref{E:doob-i}. The inequality \eqref{E:doob-ii} follows by a symmetric argument. Let $\del B$ be the boundary of $B$ in $\R \X (\Z/2)$. Note that $\del B$ is a finite set since $B$ is a finite union of intervals.
	For $k \in \N$, define the finite approximation
	$$
	B_k = \{(x, \ell) \in B : 2^kx \in \Z\} \cup \del B.
	$$
	For any point $(x, \ell) \in B$, that there is a nonincreasing sequence $x_k$ with $(x_k, \ell) \in B_k$ such that $x_k \cvgdown x$. Since $F$ is a cadlag environment and $d$ is continuous, we always have that
	$$
	\limsup_{k \to\infty} d_F^*(p, (x_k, \ell)) \ge d_F^*(p, (x, \ell)).
	$$
	Therefore $d_F^*(p, B_k) \cvgup d_F^*(p, B)$ as $k \to \infty$.

	Now define a total order on $\R \X (\Z/2)$ by letting $(x, n) < (y, m)$ whenever $n > m$, or $n=m$ and $x < y$. 
	 For each $k$, define
	$$
	\tau_k = \min \{b \in B_k : d^*_F(p, b) > c\},
	$$
	where the minimum is with respect to this total order.
	If the set on the right hand side above is empty, set $\tau_k = \infty$. The triangle inequality for $d^*_F$ implies that on the event $\tau_k \ne \infty$, we have
	$$
	d_F^*(p, \tau_k) + d_F^*(\tau_k, q) \le d_F^*(p, q)
	$$
	Now, using that $d_F^*(p, \tau_k) > c$ on the event $\tau_k \ne \infty$, we have the containment of events
	\begin{equation}
	\label{E:E2k1}
	E_{1, k} = \{\tau_k \ne \infty, d^*_F(p, q) \le c - \ep \} \sset E_{2, k} = \{\tau_k \ne \infty, d_F^*(\tau_k, q) < - \ep \}.
	\end{equation}
	Now, since $d_F^*(p, B_k) \cvgup d_F^*(p, B)$, we have that 
	$$
	E_{1, k} \to E_1 = \{c < d^*_F(p,B), d^*_F(p, q) \le c - \ep\}. 
	$$
	The event $E_1$ is the event on the left hand side of \eqref{E:doob-i}. Therefore by \eqref{E:E2k1}, to complete the proof it suffices to show that
	\begin{equation}
	\label{E:supbbb}
	\p E_{2, k} \le \sup_{b \in B} \p (d_F^*(b, q) < - \ep)
	\end{equation}
	for all $k \in \N$. We have
	\begin{equation}
	\label{E:E2k}
	\p E_{2, k} \le \max_{b \in B_k} \p(d_F^*(b, q) < -\ep \; |\; \tau_k = b).
	\end{equation}
	Since $F$ is an independent environment, the random variable $d_F^*(b, q)$ is independent of the event $\tau_k = b$ by the order in which we explored the set $B$. Therefore the right hand side of \eqref{E:E2k} is equal to 
	$$
	\max_{b \in B_k} \p(d_F^*(b, q) < -\ep).
	$$
	This is bounded above by the right hand side of \eqref{E:supbbb}, as desired. 
	\end{proof}
	
The ideas behind Lemma \ref{L:doob-lpp} can be iterated to bound maximum last passage values between two sets.
\begin{lemma} [Set-to-set maximal inequality]
	\label{L:doob-lpp-boot}
	Let $F$ be an independent environment on $\R \X \Z$, and let $d$ be a deterministic continuous directed metric of negative sign on $\R \X (\Z/2)$. Let $d_F^* = d_F + d$. 
	Let $A, B \subset \R \X (\Z/2)$ be finite unions of sets of the form $[s, t] \X \{\ell\}$ for $\ell \in \Z/2$ and let $p, q \in \R \X (\Z/2)$. Then
	\begin{equation}
	\label{E:doop-boot}
	\begin{split}
	& \; \p\lf( d_F^*(p, q) \le c - \ep, c < d_F^*(A, B) \rg) \le \;\sup_{a \in A} \p(d_F^*(p, a) < -\ep/2) +  \sup_{b \in B} \p \lf( d_F^*(b, q) < -\ep/2\rg).
	\end{split}
	\end{equation}
\end{lemma}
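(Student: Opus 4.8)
The plan is to reduce the set-to-set inequality of Lemma \ref{L:doob-lpp-boot} to two applications of the set-to-point inequality of Lemma \ref{L:doob-lpp}, by splitting the event $\{c < d_F^*(A,B)\}$ according to the value achieved on the two ``halves'' of the best pair. First I would fix $\ep > 0$ and observe that on the event $\{d_F^*(p,q) \le c - \ep,\ c < d_F^*(A,B)\}$ there exist (or, to be careful with the suprema being attained, can be approximated by) points $a \in A$, $b \in B$ with $d_F^*(a,b) > c$. By the triangle inequality for $d_F^*$ applied at $p, a, b, q$ we have $d_F^*(p,a) + d_F^*(a,b) + d_F^*(b,q) \le d_F^*(p,q) \le c - \ep$, and since $d_F^*(a,b) > c$ this forces
\begin{equation*}
d_F^*(p,a) + d_F^*(b,q) < -\ep.
\end{equation*}
Hence at least one of $d_F^*(p,a) < -\ep/2$ or $d_F^*(b,q) < -\ep/2$ must hold. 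This is the elementary but crucial combinatorial step: it lets me write the event in question as contained in a union of two events, each of which matches the left-hand side of one of the two set-to-point inequalities in Lemma \ref{L:doob-lpp}, provided the deterministic threshold $c$ is chosen correctly in each piece.

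The next step is to make this precise by introducing intermediate thresholds. Following the proof of Lemma \ref{L:doob-lpp}, I would explore the set $A$ in increasing order of the total order on $\R \X (\Z/2)$ and define the stopping location $\sigma = \min\{a \in A : d_F^*(p,a) \ge -\ep/2 \text{ is violated along the way}\}$ — more cleanly, I would first condition on $d_F^*(p, A)$ and split into the two cases. On the event where $d_F^*(p, a) < -\ep/2$ for the optimizing $a$, I apply the set-to-point bound \eqref{E:doob-ii} (or a direct union bound via the exploration argument) with the roles arranged so that the first factor is controlled by $\sup_{a \in A} \p(d_F^*(p,a) < -\ep/2)$. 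On the complementary event, the optimizer satisfies $d_F^*(b,q) < -\ep/2$, and a symmetric application gives $\sup_{b \in B} \p(d_F^*(b,q) < -\ep/2)$. Adding the two bounds yields \eqref{E:doop-boot}. The approximation of the finite sets $A, B$ by dyadic meshes $A_k, B_k$ together with cadlag right-continuity of $F$ and continuity of $d$, exactly as in the proof of Lemma \ref{L:doob-lpp}, handles the issue that the suprema defining $d_F^*(A,B)$ need not be attained.

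The main obstacle I anticipate is bookkeeping the independence correctly when both endpoints are being maximized simultaneously: in Lemma \ref{L:doob-lpp} the independence of $d_F^*(b,q)$ from the event $\{\tau_k = b\}$ was clean because the exploration of $B$ (or $A$) used only increments on one side. With two sets, I must be sure that after localizing the first coordinate via an exploration of $A$, the relevant increments used to evaluate $d_F^*(\text{second point}, q)$ are disjoint from those already examined. The fix is to exploit the ordered product structure: explore $A$ first (determining the best entry point $a$ using only increments ``before'' $a$ in the relevant sense), then, conditionally, the last-passage value from $a$ onward — which lands in $B$ — uses only increments on the other side, so Lemma \ref{L:doob-lpp} applies verbatim to the conditional environment. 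This two-stage conditioning is the only subtle point; the rest is the triangle-inequality split above plus the routine dyadic approximation, and I would keep those brief.
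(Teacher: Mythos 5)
Your core intuition is right---the triangle inequality at a pair $(a,b)$ with $d_F^*(a,b) > c$ does force one of $d_F^*(p,a) < -\ep/2$ or $d_F^*(b,q) < -\ep/2$---but the decomposition you build out of it, and the ``two-stage conditioning'' fallback, both run into a real independence problem.

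Concretely, your split produces events of the form $\{\exists a \in A : d_F^*(a, B) > c,\ d_F^*(p,a) < -\ep/2\}$: an existential over $a$ with a joint condition. Neither inequality in Lemma \ref{L:doob-lpp} matches that shape, and the natural exploration (stop at the first $a$ with $d_F^*(a,B) > c$) does not have the right filtration structure: the exploration condition at each candidate $a'$ scans across all of $B$, so the $\sigma$-algebra generated by deciding $\{\sigma = a\}$ already involves increments that overlap the box from the eventual optimizing $b$ to $q$, and the independence step that made Lemma \ref{L:doob-lpp} work breaks. Your proposed fix (explore $A$, then explore $B$ ``from $a$ onward'') does not rescue this, because the first-stage exploration has already looked past the region needed in the second stage.

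The missing idea is to pivot on a \emph{deterministic} threshold for the set-to-point quantity $d_F^*(A,q)$ rather than on the random optimizing pair. Split the target event according to whether $d_F^*(A,q) > c - \ep/2$ or $d_F^*(A,q) \le c - \ep/2$. In the first case, the event $\{d_F^*(p,q) \le c - \ep,\ c - \ep/2 < d_F^*(A,q)\}$ is \emph{exactly} the left side of \eqref{E:doob-ii} applied to the set $A$ (with $c' = c-\ep/2$, $\ep' = \ep/2$), so it is bounded by $\sup_{a \in A} \p(d_F^*(p,a) < -\ep/2)$. In the second case, one explores $B$ exactly as in Lemma \ref{L:doob-lpp}, stopping at $\tau_k = \min\{b \in B_k : d_F^*(A,b) > c\}$; the triangle inequality, after taking a supremum over $w \in A$ in $d_F^*(w,\tau_k) + d_F^*(\tau_k,q) \le d_F^*(w,q)$, gives $d_F^*(A,\tau_k) + d_F^*(\tau_k,q) \le d_F^*(A,q) \le c - \ep/2$, which combined with $d_F^*(A,\tau_k) > c$ forces $d_F^*(\tau_k,q) < -\ep/2$; the usual stopping-time independence then closes the argument. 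This is a single exploration plus a direct call to Lemma \ref{L:doob-lpp}---no nested conditioning is needed, and the triangle-inequality ``one-of-two-must-be-small'' intuition you started from survives, just mediated through the pivot quantity $d_F^*(A,q)$ instead of through the random $(a,b)$ pair.
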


Note that we do not impose any ordering constraints on $p, q, A, B$ in Lemma \ref{L:doob-lpp}. Theorem \ref{T:intro-maxineq} is the special case of Lemma \ref{L:doob-lpp-boot} when $d = 0$ and $A, B \sset \R \X \Z$.

\begin{proof}
We define the same finite approximations $B_k \sset B$ as in the proof of Lemma \ref{L:doob-lpp} so that $d^*_F(A, B_k) \cvgup d^*_F(A, B)$ as $k \to \infty$. We set
	$$
	\tau_k = \min \{b \in B_k : d_F^*(A, b) > c \},
	$$
	where the minimum is with respect to the total order used in the proof of Lemma \ref{L:doob-lpp}. We set $\tau_k = \infty$ if the set on the right hand side above is empty. On the event $\tau_k \ne \infty$, the triangle inequality gives that
	$$
d_F^*(w, \tau_k) + d_F^*(\tau_k, q) \le d_F^*(w, q) 
	$$
	for any $w$.
	Taking a supremum in the above inequality over all points $w \in A$ gives that
		$$
	d_F^*(A, \tau_k) + d_F^*(\tau_k, q) \le d_F^*(A, q).
	$$
	Since $d_F^*(A, \tau_k) > c$ on the event $\tau_k \ne \infty$, this implies the containment of events
	\begin{equation}
	\label{E:Z-event-2}
	D_{1, k} = \{\tau_k \ne \infty, d_F^*(A, q) \le c - \ep/2 \} \sset D_{2, k} = \{\tau_k \ne \infty, d_F^*(\tau_k, q) < -\ep/2\}.
	\end{equation}Now set
	$$
	D_1 = \{c < d^*_F(A, B), d_F^*(A, q) \le c - \ep/2\}.
	$$
	Exactly as in the proof of Lemma \ref{L:doob-lpp}, we have that
	\begin{equation}
	\label{E:E1-bbd}
	\begin{split}
	\p D_1 = \lim_{k \to \infty} \p D_{1, k} \le \sup_k \p D_{2, k} \le \sup_{b \in B} \p \lf( d_F^*(b, q) < -\ep/2\rg).
	\end{split}
	\end{equation}
	Finally, the event on the left hand side of \eqref{E:doop-boot} is contained in the union
	$$
	D_1 \cup \{ d_F^*(p, q) \le c - \ep, c - \ep/2 < d_F^*(A, q) \}.
	$$
	The probability $\p D_1$ is bounded by \eqref{E:E1-bbd}. The probability of the second event above can be bounded by Lemma \ref{L:doob-lpp}. A union bound completes the proof.
\end{proof}

\begin{remark}
	\label{R:edges-included}
	Both Lemma \ref{L:doob-lpp} and Lemma \ref{L:doob-lpp-boot} also apply in the lattice setting, by virtue of the embedding from lattice to lines. In the lattice setting the proofs simplify since we no longer need to take a discrete approximation of the sets in the lemma. 
\end{remark}

\subsection{The proof of Theorem \ref{T:ind-main}}

We can now use the maximal inequality check tightness conditions (iii) and (iv) in Proposition \ref{P:hypograph-cvg} about convergence of random sequences of functions to prove Theorem \ref{T:ind-main}.

We first translate the maximal inequality to the limiting setting. 

\begin{corollary}
	\label{C:limit-setting}
	Let $\scrK_n$ be as in Theorem \ref{T:ind-main}. For any compact convex sets $A, B \sset \R^2, p, q \in \R^2$ and $c, \ep \in \R$ we have 
	\begin{equation}
	\label{E:doop-boot-2}
	\begin{split}
	& \; \p\lf( \scrK_n(p, q) \le c - \ep, c < \scrK_n(A, B) \rg)
	\\ 
	&\le \;\sup_{a \in A} \p(\scrK_n(p, a) < -\ep/2) +  \sup_{b \in B} \p \lf( \scrK_n(b, q) < -\ep/2)\rg).
	\end{split}
	\end{equation}
\end{corollary}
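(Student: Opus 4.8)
The plan is to deduce Corollary \ref{C:limit-setting} from the set-to-set maximal inequality, Lemma \ref{L:doob-lpp-boot}, by pulling everything back through the embedding that defines $\scrK_n$. Recall from the setup preceding Theorem \ref{T:ind-main} that $\scrK_n$ is the pullback, under the map $g_n := f_{\R^2 \to \R \X \Z} \circ L_n$, of the directed metric $d^*_n := d_{m_n F_n} + h_n$ on $\R \X (\Z/2)$, where $F_n$ is an independent environment, $m_n > 0$, and $h_n$ is a deterministic continuous additive (hence continuous directed) metric. Since scaling an independent environment by a positive constant preserves independence of increments, $m_n F_n$ is again an independent environment, and $h_n$ plays exactly the role of the deterministic correction ``$d$'' in Lemma \ref{L:doob-lpp-boot}; thus $d^*_n$ is of the form to which that lemma applies.

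First I would record the geometric fact that $g_n$ sends a compact convex set $C \sset \R^2$ to a finite union of bounded intervals of the form $I \X \{\ell\}$, $\ell \in \Z/2$: the affine-linear map $L_n$ sends $C$ to a compact convex set, and $f_{\R^2 \to \R \X \Z}$ collapses each horizontal line $\R \X \{k\}$ and each open strip $\R \X (k, k+1)$ to a single line level while acting as the identity on the first coordinate, so the slice of $L_n C$ at each such level projects onto an interval (closed at integer levels, possibly half-open or open at half-integer levels). Write $\tilde A := \overline{g_n(A)}$, $\tilde B := \overline{g_n(B)}$ (finite unions of closed bounded intervals), and $\tilde p := g_n(p)$, $\tilde q := g_n(q)$. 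Because $\scrK_n = d^*_n \circ (g_n \X g_n)$, we have $\scrK_n(p,q) = d^*_n(\tilde p, \tilde q)$ exactly and $\scrK_n(A,B) = \sup_{g_n(A) \X g_n(B)} d^*_n \le d^*_n(\tilde A, \tilde B)$, so the event on the left of \eqref{E:doop-boot-2} is contained in $\{d^*_n(\tilde p,\tilde q) \le c - \ep,\; c < d^*_n(\tilde A, \tilde B)\}$. Applying Lemma \ref{L:doob-lpp-boot} to $d^*_n$ with the closed sets $\tilde A, \tilde B$ and the points $\tilde p, \tilde q$ bounds the probability of this event by $\sup_{a \in \tilde A} \p(d^*_n(\tilde p, a) < -\ep/2) + \sup_{b \in \tilde B}\p(d^*_n(b, \tilde q) < -\ep/2)$.

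The last step is to bound $\sup_{a \in \tilde A} \p(d^*_n(\tilde p, a) < -\ep/2)$ by $\sup_{u \in A}\p(\scrK_n(p, u) < -\ep/2)$, and symmetrically the other term, after which the corollary follows. For $a \in g_n(A)$ this is immediate, since then $a = g_n(u)$ for some $u \in A$ and $d^*_n(\tilde p, a) = \scrK_n(p,u)$; the only points to handle are the finitely many endpoints of the half-integer-level intervals of $\tilde A$ lying outside $g_n(A)$, and I expect this to be the main (if minor) technical point. It is dealt with using that $d^*_n(\tilde p, \cdot)$ restricted to each horizontal line is cadlag (the jumps of $m_n F_n$ are positive and $h_n$ is continuous) together with monotonicity of last passage values under rightward moves of the endpoint: such a non-attained endpoint is a limit of points $x_j$ of $g_n(A)$ on the same level, and combining right-continuity (if $x_j \downarrow t$) or the inequality $d_{m_n F_n}(\tilde p, (x_j, \ell)) \le d_{m_n F_n}(\tilde p, (t, \ell))$ with continuity of $h_n$ (if $x_j \uparrow t$) and Fatou's lemma gives $\p(d^*_n(\tilde p, (t,\ell)) < -\ep/2) \le \sup_{u \in A}\p(\scrK_n(p,u) < -\ep/2)$ -- the same approximation mechanism used inside the proof of Lemma \ref{L:doob-lpp} to pass from finite meshes to full intervals. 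Substituting these two bounds into the output of Lemma \ref{L:doob-lpp-boot} yields \eqref{E:doop-boot-2}.
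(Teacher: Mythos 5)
Your overall strategy is the same as the paper's: express $\scrK_n$ as a pullback of $d^*_n = d_{m_n F_n} + h_n$ under $g_n = f_{\R^2\to\R\X\Z}\circ L_n$, verify that $d^*_n$ satisfies the hypotheses of Lemma~\ref{L:doob-lpp-boot}, and apply that lemma with the sets $g_n(A), g_n(B)$. The paper's proof is essentially those three sentences, and at one point it asserts (without elaboration) that $f_{\R^2\to\R\X\Z}L_n A$ and $f_{\R^2\to\R\X\Z}L_n B$ are finite unions of \emph{closed} intervals; you notice, correctly, that at half-integer levels these images are generically open or half-open (since the half-integer level $-k-1/2$ collects the strip $\R\X(k,k+1)$, whose slice projection for a sheared convex body need not attain its endpoints), so some extra work is needed to justify replacing $\sup_{a\in\overline{g_n(A)}}$ by $\sup_{a\in A}$.

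The gap is in how you try to close this. Your treatment of a non-attained \emph{right} endpoint $t$ at level $\ell$, approached by $x_j\uparrow t$, invokes the inequality $d_{m_nF_n}(\tilde p,(x_j,\ell))\le d_{m_nF_n}(\tilde p,(t,\ell))$, i.e.\ monotonicity of the line last passage value in the terminal spatial coordinate. This fails for the class of environments that Theorem~\ref{T:ind-main} is stated for. An independent environment (Definition~\ref{D:independent}) consists of cadlag lines with no negative jumps, but those lines need not be nondecreasing — Brownian last passage percolation is the canonical example. Writing $F[(a,n)\to(y,m)] = f_m(y) + \sup_{t_m\le y}\big[-f_m(t_m^-)+\cdots\big]$, the supremum is nondecreasing in $y$, but the prefactor $f_m(y)$ can decrease, so the total can and does go down as $y$ increases; a simple deterministic piecewise-linear $f_m$ already gives a counterexample. (For i.i.d.\ lattice last passage with nonnegative weights the claim is true, because those lines \emph{are} nondecreasing; so your argument proves a special case but not the statement in its stated generality.) The right-continuity half of your argument, handling $x_j\downarrow t$, is fine and matches the $\limsup$ step inside Lemma~\ref{L:doob-lpp}. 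To handle the $x_j\uparrow t$ half correctly one should instead use the degenerate structure of the half-integer levels: the only $d_0$-edge into a point $(x,m+1/2)$ is the zero-cost edge from $(x,m+1)$, so $d_{m_nF_n}(\tilde p,(x,m+1/2))=d_{m_nF_n}(\tilde p,(x,m+1))$ identically, and one can then trade the offending boundary point at level $m+1/2$ for a corresponding point at an adjacent integer level (which is a genuine slice of the compact convex body and hence lies in the image), accounting for the deterministic $h_n$ shift between the two levels. Monotonicity of the lines plays no role in that argument.
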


\begin{proof}
	We use the notation introduced in Section \ref{SS:ind-hypo}.
	Let $F^n$ be the sequence of independent environments that give rise to $\scrK_n$. Then $\scrK_n$ is the pullback under $f_{\R^2 \to \R \X \Z} \circ L_n$ of a metric $d_{F_n}^* = d_{m_n F_n} + h_n$ on $\R \X (\Z/2)$, see the discussion after Definition \ref{D:independent}. The environments $m_n F_n$ are independent, and each $h_n$ is a deterministic continuous directed metric, so $d_{F_n}^*$ satisfies all the conditions of Lemma \ref{L:doob-lpp-boot}. Moreover, since $B$ and $C$ are compact and convex, $f_{\R^2 \to \R \X \Z} L_n B$  and $f_{\R^2 \to \R \X \Z} L_n C$ are finite unions of closed intervals in $\R \X (\Z/2)$. Hence we can apply Lemma \ref{L:doob-lpp-boot} and take a pullback to prove the corollary.
\end{proof}

\begin{proof}[Proof of Theorem \ref{T:ind-main} and Remark \ref{R:ind-main}]
	We check the conditions of Proposition \ref{P:hypograph-cvg}.
As discussed prior to the statement of Theorem \ref{T:ind-main}, $\scrK_n$ satisfies conditions (i) and (ii) of that proposition. It remains to check conditions (iii) and (iv). Throughout the proof we use the shorthand 
$$
A(p, q, r) = B^+(p,r) \X B^-(q,r).
$$
See the statement of Proposition \ref{P:hypograph-cvg} for the definition of the right hand side above.

We first show (iii). Fix a compact set $K \sset \Rd$ and let $(p, q) \in K$. By a union bound, for any $m > 0$ we can write
\begin{equation}
\label{E:split-it}
	\begin{split}
	&\p\left( \mathcal K_n(A(p, q, r))-\mathcal K_n(p,q)>\eps\right) \\
	&\le  \p(|\scrK_n(p, q)| \ge m) + \sum_{k = - \floor{2m/\ep}}^{\ceil{2m/\ep}} \p(\scrK_n(p, q) \le k \ep/2, \;\; k \ep/2 + \ep /2 < \scrK_n(A(p, q, r)).
	\end{split}
	\end{equation}
To see the above inequality, consider two numbers $a, b \in \R$ with $a - b > \ep$. Then either $|b| > m$, or else there exists some lattice point $x \in (\ep/2) \Z \cap [-m, m + \ep/2)$ such that $b \le x \le x + \ep/2 < a$. The above union bound uses this containment with $a =\mathcal K_n(A(p, q, r))$ and $b = \scrK_n(p,q)$.
	By Corollary \ref{C:limit-setting}, the right hand side of \eqref{E:split-it} is bounded above by
	\begin{equation}
	\label{E:Knprelim}
\p(|\scrK_n(p, q)| \ge m) + \sum_{k = - \floor{2m/\ep}}^{\ceil{2m/\ep}} \sup_{z \in B^+(p, r)} \p(\scrK_n(p, z) < -\ep/2) + \sup_{z \in B^-(q, r)} \p \lf(\scrK_n(z, q) < -\ep/2)\rg).
	\end{equation} 
	We can pass to the limit $\scrL$ via mobile Tracy-Widom convergence as long as $r$ is small enough so that $A(p, q, r)$ has compact closure in $\Rd$ for all $(p, q) \in K$. This gives that the limsup of \eqref{E:Knprelim} is equal to
	\begin{equation}
	\label{E:Lpqp}
	\p(|\scrL(p, q)| \ge m) + \sum_{k = - \floor{2m/\ep}}^{\ceil{2m/\ep}} \sup_{z \in B^+(p, r)} \p(\scrL(p, z) < -\ep/2) + \sup_{z \in B^-(q, r)} \p \lf(\scrL(z, q) < -\ep/2)\rg)
	\end{equation}
	Using the Tracy-Widom tail bounds from Theorem \ref{T:tracy-widom}, \eqref{E:Lpqp} is bounded above by
	$$
	c_Ke ^{- d_K m^{3/2}} + (c m/\ep) e^{- d(\ep/(2r) - 1)^{3/2}},
	$$
	where $c_K$ and $d_K$ are $K$-dependent constants, and $c$ and $d$ are constants. Setting $m = 1/r$, this expression is $o(r^{10})$ as $r \to 0^+$, yielding (iii).
	
	We now show (iv). Let $A$ be a compact subset of $\{(x, s; y, s) \in \R^4 : x \ne y\}$. We may assume that 
	$$
	A = \{(x, s; y, s) \in \R^4 : x \in I, y \in J, s \in T\}
	$$
where $I, J,$ and $T$ are closed intervals and $I$ and $J$ are disjoint. Any other compact subset of $\{(x, s; y, s) \in \R^4 : x \ne y\}$ can be covered by a finite union of sets of this form. For any fixed $\de > 0$, define
$$
\scrI_\de = \{(p, q) = (x, s - 3\de^3/2; y, s + 3\de^3/2) : x \in I \cap (\de^2/2) \Z, y \in J \cap (\de^2/2) \Z, s \in T \cap (\de^3/2) \Z\}.
$$
Then for all small enough $r > 0$ we have that 
	$$
	A_{+r} \sset \bigcup_{(p, q) \in \scrI_\de} A(p, q, \de).
	$$
	For all small enough $r > 0$ and $m > 0$, by a union bound we have
	\begin{align}
	\nonumber
	\p(\scrK_n(A_{+r}) > -m)
\le &\sum_{(p, q) \in \scrI_\de} \p(\scrK_n(A(p, q, \de)) > -m) \\
	\label{E:Knm}
	\le &\sum_{(p, q) \in \scrI_\de} \p(\scrK_n(A(p, q, \de)) > -m, \scrK_n(p, q) \le -2m) + \p(\scrK_n(p, q) > -2m).
	\end{align}
By Corollary \ref{C:limit-setting}, \eqref{E:Knm} is bounded above by
	\begin{equation}
	\label{E:sup-pq}
\sum_{(p, q) \in \scrI_\de} \sup_{z \in B^+(p, \de)} \p(\scrK_n(p, z) < -m/2) + \sup_{z \in B^-(q, \de)} \p \lf(\scrK_n(z, q) < -m/2)\rg) + \p(\scrK_n(p, q) > -2m).
	\end{equation}
	Passing to the limit via mobile Tracy-Widom convergence gives that the $\limsup$ of \eqref{E:sup-pq} is bounded above by
	\begin{equation}
	\label{E:sumiI}
\sum_{(p, q) \in \scrI_\de} \sup_{z \in B^+(p, \de)} \p(\scrL(p, z) < -m/2) + \sup_{z \in B^-(q, \de)} \p \lf(\scrL(z, q) < -m/2)\rg) + \p(\scrL(p, q) > -2m).
	\end{equation} Applying the Tracy-Widom tail-bounds in Theorem \ref{T:tracy-widom} and using that $|\scrI_\de| \le c_A \de^{-7}$ for an $A$-dependent constant $c_A$ gives that \eqref{E:sumiI} is bounded above by
	$$
	c_A\de^{-7} \lf(\exp(-d(m/\de)^{3/2}) + \exp \lf(-d[(1/(c_A\de^4) - m/\de)^+]^{3/2}\rg)\rg),
	$$
	for an $A$-dependent constant $c_A > 0$ and an absolute constant $d > 0$.
	Therefore for every fixed $m > 0$, we have
	$$
	\limsup_{r \to 0^+} \limsup_{n \to \infty} \p(\scrK_n(A_{+r} > -m) \le c_A\de^{-7} \lf(\exp(-d|m/\de|^{3/2}) + \exp \lf(-d[(1/(c_A\de^4) - m/\de)^+]^{3/2}\rg)\rg).
	$$
	Now, the left hand side above is independent of $\de$, and the right hand side above tends to $0$ as $\de \to 0$. Assumption (iv) of Proposition \ref{P:hypograph-cvg} follows.
	
	Now, let $S$ be as in the statement of the theorem, and let $D$ be any countable dense set in $\Rh$ such that $D \cap S$ is dense in $S$.
	The sequence $(\mathfrak{h} \scrK_n, \scrK|_D, \scrK_n|_S)$ is tight, and by Proposition \ref{P:hypograph-cvg} and Theorem \ref{T:rational-cvge} any distributional subsequential limit is of the form $(\mathfrak{h} \scrL, \scrL|_D, \scrL'|_S)$, where both $\scrL$ and $\scrL'$ are directed landscapes. Moreover, $\scrL|_{D \cap S} = \scrL'|_{D \cap S}$, so by continuity $\scrL'|_S = \scrL|_S$. This gives the joint convergence in Remark \ref{R:ind-main}.
\end{proof}

\begin{remark}
	\label{T:weaker-than-mobile-TW}
	We have stated Theorem \ref{T:ind-main} with the assumption of mobile Tracy-Widom convergence since it is natural and often easy to check in practice. However, the reader carefully following the proof of Theorem \ref{T:ind-main} will note that a weaker moment assumption would suffice. For example, instead of using mobile Tracy-Widom convergence to bound \eqref{E:Knprelim} and \eqref{E:sup-pq}, we could use the following moment estimate.
		$$
		\sup_{p \in \R^2} \limsup_{n \to \infty} \sup_{z \in B^+(p, r), z \in B^-(p, r)} \E [\scrK_n(p, z)^-]^{11} = o(r^{11}). 
		$$
\end{remark}

\section{Graph convergence to $\scrL$}
\label{S:graph-cvg}
Hypograph convergence is a strictly weaker notion than graph convergence. In fact, it is possible to construct sequences of i.i.d.\ lattice last passage percolation models whose hypographs converge to $\scrL$, but whose graphs do not, see Example \ref{Ex:elpp} in Section \ref{S:integrable}.

To move from hypograph convergence to graph convergence, we will not only need a way of controlling large values in prelimits of $\scrL$, but also a way of controlling small values. Specifically, the following will be enough. For this proposition, we work with both open balls $B(q, r)$ and their closures, denoted by $\bar B(q, r)$.

\begin{prop}
\label{P:graph-cvg}
Suppose that $\scrK_n:\Rh \to \bar \R$ is a sequence of random functions. Let $D$ be a countable dense subset of $\Rh$ and suppose that $(\mathfrak{h}\scrK_n, \scrK_n|_D) \cvgd (\mathfrak{h}\scrK, \scrK|_D)$ for some random continuous function $\scrK:\Rh \to \bar \R$ with $\scrK(x, s; y, t) = -\infty$ whenever $s \ge t$, and $\scrK$ is finite on $\Rd$. Suppose that
for every open ball $B(q, r) \sset \Rd$ whose closure is also contained in $\Rd$, there exists a sequence of finite subsets $D_1 \sset D_2 \dots$ of $D \cap B(q, r)$ such that $D_m \to \bar B(q, r)$ in the Hausdorff metric, and for every $\ep > 0$ we have
\begin{equation}
\label{E:Kninf}
\lim_{m \to \infty} \limsup_{n \to \infty} \p\lf(\inf_{u \in B(q, r)} \scrK_n(u) < \inf_{u \in D_m} \scrK_n(u) - \ep \rg) = 0.
\end{equation}
Then $\mathfrak{g} \scrK_n \cvgd \mathfrak{g} \scrK$ in $\scrE_*$.
\end{prop}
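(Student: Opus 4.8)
The plan is to leverage the hypograph convergence we are given and upgrade it to graph convergence using \eqref{E:Kninf} to rule out the "holes" that distinguish the two topologies. First I would note that, since $\scrE_*$ is compact by Lemma \ref{L:D-polish}, the sequence $(\mathfrak{g}\scrK_n, \mathfrak{h}\scrK_n, \scrK_n|_D)$ is tight, so it suffices to show that every joint distributional subsequential limit $(\Gamma, \mathfrak{h}\scrK, \scrK|_D)$ has $\Gamma = \mathfrak{g}\scrK$ almost surely. By Skorokhod's representation theorem I would pass to a coupling where $\mathfrak{g}\scrK_n \to \Gamma$, $\mathfrak{h}\scrK_n \to \mathfrak{h}\scrK$, and $\scrK_n(u) \to \scrK(u)$ for all $u \in D$, all almost surely, along the subsequence.

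Next I would identify $\Gamma$ as the closed graph of an upper semicontinuous function. Since $\Gamma$ is a closed subset of $\Rh \X \bar\R$ meeting every vertical slice, it is the closed graph of some function; more precisely, because $\mathfrak{g}\scrK_n \subset \mathfrak{h}\scrK_n$ for every $n$ (the graph is contained in the hypograph for $\bar\R$-valued functions), we get $\Gamma \subset \mathfrak{h}\scrK$, so $\Gamma$ lies under the graph of the continuous function $\scrK$. For the reverse containment $\mathfrak{g}\scrK \subset \Gamma$, the hard direction, I would argue that for every $u \in \Rd$ the point $(u, \scrK(u))$ is a limit of points of $\mathfrak{g}\scrK_n$. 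The convergence $\scrK_n|_D \to \scrK|_D$ immediately gives this for $u \in D$; for general $u \in \Rd$, fix a small open ball $B(q,r) \ni u$ with $\bar B(q,r) \subset \Rd$ and use the sequence $D_m \to \bar B(q,r)$ from the hypothesis. Estimate \eqref{E:Kninf} says the infimum of $\scrK_n$ over $B(q,r)$ is not much smaller than its minimum over the finite set $D_m$; combined with the uniform upper bound $(\scrK_n - \scrK)^+ \to 0$ on compacta from Lemma \ref{L:hypo-equiv}.1 and the continuity of $\scrK$, letting $r \to 0$ along a sequence of balls shrinking to $u$ forces $\inf_{v \in B(q,r)} \scrK_n(v) \to \scrK(u)$ in the appropriate sense; since each such infimum is approached by values $\scrK_n(v_n)$ with $(v_n, \scrK_n(v_n)) \in \mathfrak{g}\scrK_n$, we obtain $(u,\scrK(u)) \in \Gamma$. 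On $\R^4 \smin \Rd$ we have $\scrK = -\infty$, and since $\Gamma$ lies under $\mathfrak{h}\scrK$ it is forced to agree there too, so $\Gamma = \mathfrak{g}\scrK$.

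Finally I would conclude $\mathfrak{g}\scrK_n \cvgd \mathfrak{g}\scrK$ in $\scrE_*$, and observe that since $\scrK$ is continuous on $\Rh$ this is equivalent to compact convergence by Lemma \ref{L:graph-equiv}.

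The main obstacle is the quantitative argument extracting $(u,\scrK(u)) \in \Gamma$ from \eqref{E:Kninf}: one must carefully interleave the three limits (the mesh index $m$, the sample index $n$, and the ball radius $r$) so that the Hausdorff convergence $\mathfrak{g}\scrK_n \to \Gamma$ actually delivers a point of $\Gamma$ at height $\scrK(u)$ and not merely somewhere in the closed interval between $\liminf$ and $\limsup$ of nearby values. The bookkeeping is delicate because $\scrK_n$ may be wildly discontinuous, so one cannot evaluate it at $u$ directly — the role of \eqref{E:Kninf} is precisely to guarantee that the "downward" fluctuations of $\scrK_n$ near $u$ are witnessed, up to $\ep$, on the finite dense subsets $D_m$ where the almost sure convergence $\scrK_n|_D \to \scrK|_D$ can be applied.
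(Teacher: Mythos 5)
Your proof's conclusion does not follow from what you establish. You show $\mathfrak{g}\scrK \subset \Ga$ and $\Ga \subset \mathfrak{h}\scrK$ and then assert $\Ga = \mathfrak{g}\scrK$. But those two inclusions leave open the possibility that $\Ga$ contains points strictly below the graph of $\scrK$ on $\Rd$: a Hausdorff limit of closed graphs need not be a closed graph, since it can contain vertical segments of the form $\{u\} \times [c, \scrK(u)]$ with $c < \scrK(u)$. So your early assertion that $\Ga$ ``is the closed graph of some function'' is precisely what must be proved, not a structural fact you get for free. The actual content of the proposition, and the only reason hypothesis \eqref{E:Kninf} is present, is to rule out such dipping of $\Ga$ below the graph.

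You also aim \eqref{E:Kninf} at the wrong target. The containment $\mathfrak{g}\scrK \subset \Ga$ is the easy direction: $\scrK_n|_D \to \scrK|_D$ gives $(u, \scrK(u)) \in \Ga$ for $u \in D$, and then density of $D$, continuity of $\scrK$ on $\Rh$, and closedness of $\Ga$ give it for all $u$, with no quantitative input at all. What \eqref{E:Kninf} is needed for is control of the \emph{lower} boundary: set $\scrK'(u) = \inf\{c : (u, c) \in \Ga\}$ and show $\scrK' \ge \scrK$ on $\Rd$. The paper's proof does exactly this, observing that $\scrK'$ is lower semicontinuous, reducing by continuity of $\scrK$ and countability to the events $\{\inf_{B(q,r)} \scrK' < \inf_{D_m} \scrK - 1/k\}$ over rational $(q, r)$, and bounding their probabilities via a Skorokhod/Fatou argument together with \eqref{E:Kninf}. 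The interleaved-limit estimate on $\inf_{B(q,r)} \scrK_n$ you sketch is the right raw material; it needs to be pointed at bounding $\scrK'$ from below, not at the already-easy fact that the graph of $\scrK$ sits inside $\Ga$.
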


\begin{proof}
Let $(\Ga, \mathfrak{h} \scrK, \scrK|_D)$ be any joint subsequential distributional limit of $(\mathfrak{g} \scrK_n, \mathfrak{h}\scrK_n, \scrK_n|_D)$. Define
$$
\scrK'(u) = \inf \{x : (u, x) \in \Ga\}
$$ for all $u \in \Rh$. This is a lower semicontinuous function. Noting that 
$$
\scrK(u) = \sup \{x : (u, x) \in \Ga\},
$$
to prove the desired convergence it is enough to show that almost surely, $\scrK' \ge \scrK$.
Since $\scrK = -\infty$ on the set $\Rh \smin \Rd = \{(x, s; y, t) \in \Rh : s \ge t\}$, this inequality holds on that set. Now suppose that $\scrK'(u) < \scrK(u)$ for some $u \in \Rd$. Let $\bar B(q_i, r_i) \sset \Rd$ be a sequence of closed balls with rational center and rational radius $r_i \to 0$ such that $u \in B(q_i, r_i)$ for all $i$. By the continuity of $\scrK$, we have
$$
\inf_{v \in B(q_i, r_i)} \scrK(v) \to \scrK(u).
$$
Hence for large enough $i$, we have
\begin{equation}
\label{E:vqr'}
\scrK'(u) < \inf_{v \in B(q_i, r_i)} \scrK(v),
\end{equation}
and so
\begin{equation}
\label{E:vqr}
\inf_{v \in B(q_i, r_i)} \scrK'(v) < \inf_{v \in B(q_i, r_i)} \scrK(v).
\end{equation}
Since there are only countably many choices of $q_i, r_i$, it is enough to show that the probability that \eqref{E:vqr} holds is zero for every fixed rational pair $q, r$. Letting $D_m$ be as in the statement of the lemma for the set $B(q, r)$, by the continuity of $\scrK$ the event in \eqref{E:vqr} is equal to the union over $k \in \N$ of the events
\begin{equation*}
B_k := \bigcap_{m =1}^\infty A_{m, k}, \qquad \text{ where } \qquad 
A_{m, k} = \lf\{\inf_{v \in B(q, r)} \scrK'(v) < \inf_{v \in D_m} \scrK(v) - 1/k\rg\}.
\end{equation*}
We will show that
\begin{equation}
\label{E:Amkk}
\p A_{m, k} \le \limsup_{n \to \infty} \p A_{m, k, n}, \quad \text{ where } \quad A_{m, k, n} = \lf\{\inf_{v \in B(q, r)} \scrK_n(v) < \inf_{v \in D_m} \scrK_n(v) - 1/k\rg\}.
\end{equation}
By the assumption of the lemma, the right hand side of the inequality in \eqref{E:Amkk} goes to $0$ as $m \to \infty$. Hence \eqref{E:Amkk} implies that $\p B_k = 0$, giving that the probability of \eqref{E:vqr} is also $0$.

It remains to show \eqref{E:Amkk}. It is perhaps easiest to see this with Skorokhod's representation theorem. Consider a coupling of the $\scrK_n$s along a subsequence $N$ such that almost surely, $\mathfrak{g} \scrK_n \to \Ga$ and $\scrK_n|_D \to \scrK|_D$. Graph convergence implies that for any $(v, \scrK'(v)) \in \Ga$, there is a sequence $v_n \to v$ such that $\scrK_n(v_n) \to \scrK'(v)$. Then for $v$ in the open ball $B(q, r)$, we have
$$
\limsup_{n \in N} \inf_{w \in B(q, r)} \scrK_n(w) \le \lim_{n \to \infty} \scrK_n(v_n) = \scrK'(v).
$$
Taking the infimum over $v \in B(q, r)$ we get
$$
\limsup_{n \in N} \inf_{v \in B(q, r)} \scrK_n(v) \le \inf_{v \in B(q, r)} \scrK'(v). 
$$
Since $D_m$ is finite and $\scrK_n|_D \to \scrK|_D$ almost surely, we also have
$
\inf_{v \in D_m} \scrK_n(v) \to \inf_{v \in D_m} \scrK(v)
$
almost surely.
Hence in this coupling, almost surely
$$
\om \in A_{m, k}\qquad \implies \qquad \om \in A_{m, k, n} \text{ for all large enough $n \in N$},
$$
and so $\indic (A_{m, k}) \le \liminf_{n \in N} \indic(A_{m, k, n})$ almost surely. Applying Fatou's lemma and replacing the liminf over $n \in N$ with a limsup over all $n \in \N$ yields \eqref{E:Amkk}.
\end{proof}

For directed metrics $\scrK_n$ in the form of Theorem \ref{T:ind-main}, the conditions of Proposition \ref{P:graph-cvg} can be shown by a chaining argument as long as we have moderate deviation bounds on the one-point distributions of $\scrK_n$. This type of chaining argument was pioneered by \cite*{talagrand2006generic}, and  was used previously in this setting by \cite*{basu2014last} to prove lower bounds on the infimum of Poisson last passage percolation. We adapt the chaining framework for general planar directed metrics. The KPZ scaling is built into our setup to give an optimal result in the cases we care about.

We first set up a lemma needed for the chaining argument.  
For each $p = (p_1, p_2) \in \R^2$, we will set
$
p_k^+ = (p_{k,1}^+, p_{k,2}^+)
$
to be a point in $2^{-2k/3} \Z \X 2^{-k} \Z $ which is close to $p$, while still lying ahead of $p$ in the time direction. More precisely, define $p_k^+$ so that  $|p- p_k^+|$ is minimal subject to the condition 
\begin{equation}
\label{E:p-separation}
p_2 + 2^{-k} \le p^+_{k, 2} < p_2 + 2^{-k + 1}.
\end{equation}
Similarly let $p_k^-$ denote the point in $2^{-2k/3} \Z \X 2^{-k} \Z$ which minimizes $|p- p_k^-|$ subject to the condition $p_2 - 2^{-k} \ge p^-_{k, 2} > p_2 - 2^{-k + 1}$. If there are multiple closest points, choose an arbitrary method of breaking ties. We note for later use that with these definitions we have
\begin{equation}
\label{E:p-cone}
|p_1 - p_{k, 1}^+| \le |p_2 - p_{k, 2}^+|^{2/3} \quad \mathand \quad |p_{k, 1}^+ - p_{k-1, 1}^+| \le 2|p_{k, 2}^+ - p_{k-1, 2}^+|^{2/3},
\end{equation}
as well as the corresponding inequalities for $p_k^-$. Now consider a directed metric $d$ on $\R^2$. We will control the infimum of $d$ on a set $A$ in terms of its values on a grid. Define
$$
\eta^+_k(d, A) = \inf_{p \in A} d(p, p_k^+) \qquad \mathand \qquad \de^+_k (d, A) = \inf_{p \in A} d(p_{k+1}^+, p_k^+)
$$
and similarly let
$$
\eta^-_k(d, A) = \inf_{p \in A} d(p^-, p) \quad \mathand \qquad \de^-_k (d, A) = \inf_{p \in A} d(p_{k}^-, p_{k+1}^-).
$$
Finally, for a set $A \sset \R^2$, we let $A_\ell^+ = \{p_\ell^+ : p \in A\}$ and $A_\ell^- = \{p_\ell^- : p \in A\}$. 
\begin{lemma}
\label{L:chaining-data}
Let $d$ be a directed metric of negative sign and let $K = A \X B \sset \R^4$. Then for every $\ell < k \in \N$ we have
\begin{equation}
\label{E:chaining}
\inf_{u \in K} d(u) \ge \inf_{u \in A^+_\ell \X B^-_\ell} d(u) + \sum_{i=\ell}^{k-1} \lf( \de^+_i(d, A) + \de^-_i(d, B) \rg) + \eta^+_k(d, A) + \eta^-_k(d, B). 
\end{equation}
\end{lemma}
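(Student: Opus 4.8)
The inequality \eqref{E:chaining} is a telescoping (chaining) decomposition, so the natural approach is to build a path from an arbitrary point $u = (p, q) \in K$ down to a point of $A^+_\ell \times B^-_\ell$ through the intermediate grid points $p^+_k, p^+_{k-1}, \dots, p^+_\ell$ on the $A$-side and $q^-_k, q^-_{k-1}, \dots, q^-_\ell$ on the $B$-side, and then repeatedly apply the reverse triangle inequality for directed metrics of negative sign. First I would fix $p \in A$, $q \in B$ and observe that by the definitions \eqref{E:p-separation} the points $p^+_k$ and $q^-_k$ satisfy $p^+_k \ne p$ and $q^-_k \ne q$ and lie on the correct time-sides of $p$ and $q$, so the relevant directed distances are the genuine last-passage values we want to estimate.

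The key step is to write, for the fixed pair $(p,q)$,
\begin{align*}
d(p, q) \ge \; & d(p, p^+_k) + d(p^+_k, p^+_{k-1}) + \dots + d(p^+_{\ell+1}, p^+_\ell) \\
& {} + d(p^+_\ell, q^-_\ell) + d(q^-_\ell, q^-_{\ell+1}) + \dots + d(q^-_{k-1}, q^-_k) + d(q^-_k, q),
\end{align*}
which is just iterated use of the reverse triangle inequality $d(x,z) \ge d(x,y) + d(y,z)$ valid for a directed metric of negative sign. Then I bound each piece from below by its infimum over the relevant set: $d(p, p^+_k) \ge \eta^+_k(d, A)$, $d(q^-_k, q) \ge \eta^-_k(d, B)$, each $d(p^+_{i+1}, p^+_i) \ge \de^+_i(d, A)$ and each $d(q^-_i, q^-_{i+1}) \ge \de^-_i(d, B)$ for $i = \ell, \dots, k-1$, and finally $d(p^+_\ell, q^-_\ell) \ge \inf_{u \in A^+_\ell \times B^-_\ell} d(u)$ since $(p^+_\ell, q^-_\ell) \in A^+_\ell \times B^-_\ell$. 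Summing these lower bounds gives exactly the right-hand side of \eqref{E:chaining}, uniformly in the choice of $(p,q) \in A \times B$; taking the infimum over $u = (p,q) \in K$ on the left completes the argument.

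The only genuine subtlety — and the step I would be most careful about — is making sure every directed distance appearing in the telescoped chain is one that the directed metric $d$ can actually assign a finite (or at least $> -\infty$) value to in the relevant configuration, so that the reverse triangle inequality is not vacuous; but since the reverse triangle inequality holds for all triples regardless (with the convention $-\infty \le x$), the inequality \eqref{E:chaining} is true as stated even when some terms are $-\infty$, and the interesting content is simply that it is a lower bound. Thus the proof is essentially a one-line telescoping estimate followed by term-by-term minimization, and there is no real obstacle; I would present it compactly, noting only that the geometric bookkeeping in \eqref{E:p-cone} is not needed here (it will be used later, when these $\de$ and $\eta$ quantities are controlled via moderate-deviation estimates in the chaining proof proper).
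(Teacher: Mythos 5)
Your proposal is correct and follows exactly the paper's own argument: the iterated reverse triangle inequality through the chain $p \to p^+_k \to \cdots \to p^+_\ell \to q^-_\ell \to \cdots \to q^-_k \to q$, then term-by-term minimization over $A$ and $B$. The remark about $-\infty$ values is a fair observation but does not change the approach, which matches the paper line for line.
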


\begin{proof}
We have the iterated triangle inequality
$$
d(p, q) \ge d(p, p^+_k) + \sum_{i=\ell}^{k-1} d(p^+_{i+1}, p^+_i) + d(p^+_\ell, q^-_\ell) + \sum_{i=\ell}^{k-1} d(q^-_i, q^-_{i+1}) + d(q^-_k, q).
$$
Taking infima over $A \X B$ on both sides and using that $\inf R + T \ge \inf R + \inf T$ for sets $R, T \sset \R$ yields \eqref{E:chaining}.
\end{proof}

We can use Lemma \ref{L:chaining-data} to get a probabilistic condition for when a sequence of directed metrics satisfies the conditions of Proposition \ref{P:graph-cvg}.

\begin{prop}
\label{P:prob-cond}
Let $\scrK_n$ be a sequence of random directed metrics of negative sign on $\R^2$ and let $K = A \X B \sset \Rd$. Suppose that there exists a sequence $k(n) \to \infty$ such that for every $\ep > 0$ we have
\begin{itemize}
	\item $\p(\eta_{k(n)}^+(\scrK_n, A) < -\ep), \p(\eta_{k(n)}^-(\scrK_n, B) < -\ep) \to 0$ as $n \to \infty$.
	\item There exist positive numbers $\be_{i, n}(\ep)$ such that 
\begin{align}
\nonumber
\sup_{p \in A} \p(\scrK_n(p^+_{i+1}, p^+_i) < -\ep/i^2) &\le 2^{-5i/3} \be_{i, n}(\ep),\\
\nonumber
\sup_{q \in B} \p(\scrK_n(q^-_i, q^-_{i+1}) < -\ep/i^2) &\le 2^{-5i/3} \be_{i, n}(\ep), \qquad \mathand \\
\label{E:elln-infty}
\lim_{\ell \to \infty} \limsup_{n \to \infty} \sum_{i=\ell}^{k(n)} \be_{i, n}(\ep) &= 0.
\end{align} 
\end{itemize}
Then for any $\ep > 0$,
$$
\lim_{m \to \infty} \limsup_{n \to \infty} \p\lf(\inf_{u \in K} \scrK_n(u) < \inf_{u \in A^+_m \X B^-_m} \scrK_n(u) - \ep \rg) = 0.
$$
\end{prop}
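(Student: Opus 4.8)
The plan is to apply the chaining decomposition in Lemma \ref{L:chaining-data} with $\ell$ fixed and $k = k(n)$, and then bound each of the error terms in probability using the two bullet hypotheses. Concretely, for any $\ell < k(n)$ Lemma \ref{L:chaining-data} gives
\[
\inf_{u \in K} \scrK_n(u) \ge \inf_{u \in A^+_\ell \X B^-_\ell} \scrK_n(u) + \sum_{i=\ell}^{k(n)-1} \lf( \de^+_i(\scrK_n, A) + \de^-_i(\scrK_n, B) \rg) + \eta^+_{k(n)}(\scrK_n, A) + \eta^-_{k(n)}(\scrK_n, B).
\]
Since $A^+_\ell \X B^-_\ell$ is a finite set contained in $A^+_m \X B^-_m$ for $m \ge \ell$, we have $\inf_{u \in A^+_\ell \X B^-_\ell} \scrK_n(u) \ge \inf_{u \in A^+_m \X B^-_m} \scrK_n(u)$, so it suffices to show that for each $\ep > 0$ we can choose $\ell = \ell(\ep)$ so that the probability that the remaining terms (the two sums and the two $\eta$-terms) together fall below $-\ep$ tends to $0$ as $n \to \infty$. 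We will actually show this with $m = \ell$; the monotonicity just observed then upgrades it to all $m \ge \ell$, which is all that is needed since the statement has $\limsup_{n} $ inside $\lim_{m}$.

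First I would handle the $\eta$-terms: by the first bullet, for any $\ep > 0$ we have $\p(\eta^+_{k(n)}(\scrK_n, A) < -\ep/4) \to 0$ and likewise for $\eta^-$, directly giving that the sum of the two $\eta$-terms is $\ge -\ep/2$ with probability tending to $1$. Next, for the two chaining sums, the key point is a union bound over the grid points. The set $A^+_i := \{p^+_i : p \in A\}$ is finite with cardinality $O(2^{5i/3})$ (since $A$ is bounded and $A^+_i \sset 2^{-2i/3}\Z \X 2^{-i}\Z$ intersected with a slab of thickness $\asymp 2^{-i}$ in the time direction and $\asymp 1$ in the space direction), and the value $\de^+_i(\scrK_n, A) = \inf_{p \in A} \scrK_n(p^+_{i+1}, p^+_i)$ is an infimum over at most $|A^+_i| \cdot C = O(2^{5i/3})$ distinct pairs $(p^+_{i+1}, p^+_i)$. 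Hence by a union bound and the second bullet,
\[
\p\Big(\de^+_i(\scrK_n, A) < -\ep/i^2\Big) \le |A^+_i| \cdot C \cdot \sup_{p \in A} \p\big(\scrK_n(p^+_{i+1}, p^+_i) < -\ep/i^2\big) \le C' 2^{5i/3} \cdot 2^{-5i/3}\be_{i,n}(\ep) = C' \be_{i,n}(\ep),
\]
and similarly for $\de^-_i$. Summing over $i$ from $\ell$ to $k(n)$ and using $\sum_{i \ge \ell} \ep/i^2 \le \ep$ for $\ell$ large enough (say $\ell$ with $\sum_{i \ge \ell} 1/i^2 \le 1/4$, so the total deficit across both sums is $\le \ep/2$), a union bound gives
\[
\p\Big( \sum_{i=\ell}^{k(n)-1} (\de^+_i(\scrK_n, A) + \de^-_i(\scrK_n, B)) < -\ep/2 \Big) \le 2C' \sum_{i=\ell}^{k(n)} \be_{i,n}(\ep).
\]
Taking $\limsup_{n}$ and then $\ell \to \infty$, the right side vanishes by \eqref{E:elln-infty}. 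Combining the $\eta$ bound and the sum bound via one more union bound, for $\ell$ large we get $\limsup_n \p(\inf_{u \in K}\scrK_n(u) < \inf_{u \in A^+_\ell \X B^-_\ell}\scrK_n(u) - \ep) \to 0$ as $\ell \to \infty$, which is the claim.

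The one place requiring a little care — and the main (mild) obstacle — is the cardinality bookkeeping: I need to verify that the set of pairs $(p^+_{i+1}, p^+_i)$ arising from $p \in A$ has size $O(2^{5i/3})$ uniformly, and that the constant absorbed is independent of $i$ and $n$, so that it can be harmlessly merged into $\be_{i,n}$. This follows because $A$ is bounded, the map $p \mapsto p^+_i$ has image in a fixed-thickness time-slab of a $2^{-2i/3}\Z \X 2^{-i}\Z$ lattice (by \eqref{E:p-separation}), and $p^+_{i+1}$ is then within a bounded lattice-distance of $p^+_i$ by \eqref{E:p-cone}; so each $p^+_i$ comes from boundedly many pairs and there are $O(2^{i} \cdot 2^{2i/3}) = O(2^{5i/3})$ choices of $p^+_i$. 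Everything else is routine union bounds and the hypotheses of the proposition, applied with error budget $\ep/2$ for the $\eta$-terms and $\ep/2$ spread as $\ep/i^2$ over the chaining scales.
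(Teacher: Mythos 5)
Your proposal follows the same line as the paper's proof: apply Lemma~\ref{L:chaining-data} to reduce the claim to showing that the two chaining sums and the two $\eta$-terms are, with high probability, bounded below by $-\ep$; use the first bullet for the $\eta$-terms, allocate the remaining budget across scales as $\ep/i^2$, and control each scale by a union bound over the $O(2^{5i/3})$ lattice pairs together with the second bullet, summing to $O\big(\sum_{i\ge\ell}\be_{i,n}(\ep)\big)$ which vanishes by \eqref{E:elln-infty}. The budget split is slightly different (you choose $\ell$ with $\sum_{i\ge\ell}1/i^2\le 1/4$ and use $\be_{i,n}(\ep)$, whereas the paper takes any $\ell\ge 2$ and uses $\be_{i,n}(\ep/4)$), but these are equivalent.

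One sentence in your reduction is false: $A^+_\ell$ is \emph{not} contained in $A^+_m$ for $m\ge\ell$. By the constraint \eqref{E:p-separation}, $p^+_\ell$ has time coordinate in $[p_2+2^{-\ell},\,p_2+2^{-\ell+1})$ and $p^+_m$ in $[p_2+2^{-m},\,p_2+2^{-m+1})$; for $m>\ell$ these intervals are disjoint, and in addition the grids $2^{-2\ell/3}\Z$ and $2^{-2m/3}\Z$ are not nested in general. Fortunately this remark is not load-bearing: the chaining inequality with parameter $\ell$ already gives $\p(\inf_K\scrK_n<\inf_{A^+_\ell\X B^-_\ell}\scrK_n-\ep)\le\p(\text{error terms}<-\ep)$, and your bound shows the right side has $\limsup_n\to 0$ as $\ell\to\infty$, which is exactly the conclusion of the proposition with $m$ renamed to $\ell$. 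No monotonicity in $m$ is needed, so you should simply delete the containment claim and the surrounding ``upgrade to all $m\ge\ell$'' discussion.
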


\begin{proof} 
By Lemma \ref{L:chaining-data}, we just need to show that
\begin{equation}
\label{E:Wnell}
\lim_{\ell \to \infty} \limsup_{n \to \infty} \p\lf(\sum_{i=\ell}^{k(n)-1} \lf( \de^+_i(\scrK_n, A) + \de^-_i(\scrK_n, B) \rg) + \eta^+_{k(n)}(\scrK_n, A) + \eta^-_{k(n)}(\scrK_n, B) < -\ep\rg) = 0
\end{equation}
for all $\ep > 0$. First, for $\ell \ge 2$ we have the union bound
\begin{equation}
\label{E:sumsum}
\p \lf(\sum_{i=\ell}^{k(n) -1} \de_i^+(\scrK_n, A) < - \ep/4 \rg) \le \sum_{i=\ell}^{k(n) - 1} \p \lf(\de_i^+(\scrK_n, A) < - \ep/(4i^2) \rg)
\end{equation}
By the construction of the points $p_i^+$, there is a constant $c_A > 0$ such that the cardinality of the set
$$
\{ (p_{i+1}^+, p_i^+) : p \in A\}
$$
is bounded above by $c_A 2^{5i/3}$ for all $i$. Therefore by a second union bound, the right hand side of \eqref{E:sumsum} is bounded above by
$$
c_A \sum_{i=\ell}^{k(n)} \be_{i, n}(\ep/4).
$$
The probability that $\sum \de_i^-(\scrK_n, B)$ is greater than $\ep/4$ can be similarly bounded.
Finally, $\p(\eta_{k(n)}^+(\scrK_n, A) > -\ep/4)$ and $\p(\eta_{k(n)}^-(\scrK_n, B) > -\ep/4)$ tend to $0$ with $n$ by assumption. Putting these four bounds together and using the asymptotics \eqref{E:elln-infty} on $\be_{i, n}(\ep/4)$ proves \eqref{E:Wnell}.
\end{proof}

We also record a quantitative version of Proposition \ref{P:prob-cond}. This will be used later on when establishing tightness of exponential moments of last passage models.
\begin{corollary}
\label{C:true-bound-for-holes}
In the setting of Proposition \ref{P:prob-cond}, for $m \ge 2, n \in \N,$ and $\ep > 0$, we have
\begin{equation}
\label{E:infuK-prob}
\begin{split}
\p\Bigg(\inf_{u \in K} \scrK_n(u) &< \inf_{u \in A^+_m \X B^-_m} \scrK_n(u) - \ep \Bigg) \\
&\le \p(\eta_{k(n)}^+(\scrK_n, A) < -\ep/4) + \p(\eta_{k(n)}^-(\scrK_n, B) < -\ep/4) + c_K \sum_{i=m}^{k(n)} \be_{i, n}(\ep/4),
\end{split}
\end{equation}
where $c_K>0$ is a $K$-dependent constant.
\end{corollary}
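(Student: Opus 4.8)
The plan is to rerun the proof of Proposition \ref{P:prob-cond} keeping all constants explicit instead of passing to the $\ell,n\to\infty$ limit; there is no new idea needed, just careful bookkeeping. Fix $m\ge 2$, $n\in\N$, $\ep>0$, and abbreviate $k=k(n)$. First I would invoke the chaining inequality, Lemma \ref{L:chaining-data}, with $\ell=m$ applied to $\scrK_n$ and $K=A\X B$: deterministically,
\[
\inf_{u\in K}\scrK_n(u)\ \ge\ \inf_{u\in A^+_m\X B^-_m}\scrK_n(u)+\sum_{i=m}^{k-1}\lf(\de^+_i(\scrK_n,A)+\de^-_i(\scrK_n,B)\rg)+\eta^+_k(\scrK_n,A)+\eta^-_k(\scrK_n,B).
\]
Hence on the event in \eqref{E:infuK-prob} the sum of the four quantities $\Sigma^+:=\sum_{i=m}^{k-1}\de^+_i(\scrK_n,A)$, $\Sigma^-:=\sum_{i=m}^{k-1}\de^-_i(\scrK_n,B)$, $\eta^+_k(\scrK_n,A)$, $\eta^-_k(\scrK_n,B)$ is $<-\ep$, so at least one is $<-\ep/4$; a union bound reduces \eqref{E:infuK-prob} to bounding $\p(\Sigma^+<-\ep/4)$, $\p(\Sigma^-<-\ep/4)$, and the two $\eta$-probabilities, the last two of which already appear on the right side of \eqref{E:infuK-prob}.

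The core step is bounding $\p(\Sigma^+<-\ep/4)$ (and symmetrically $\p(\Sigma^-<-\ep/4)$). Since $m\ge 2$ we have $\sum_{i=m}^{k-1}i^{-2}\le\sum_{i\ge 2}i^{-2}=\pi^2/6-1<1$, so if $\de^+_i(\scrK_n,A)\ge-\ep/(4i^2)$ for all $i\in\{m,\dots,k-1\}$ then $\Sigma^+\ge-\ep/4$; this is exactly where the hypothesis $m\ge 2$ is used. A first union bound gives $\p(\Sigma^+<-\ep/4)\le\sum_{i=m}^{k-1}\p(\de^+_i(\scrK_n,A)<-\ep/(4i^2))$. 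Now $\de^+_i(\scrK_n,A)=\inf_{p\in A}\scrK_n(p^+_{i+1},p^+_i)$, and by the construction of the points $p^\pm_k$ together with the cone inequalities \eqref{E:p-cone}, the pair $(p^+_{i+1},p^+_i)$ takes at most $c_A2^{5i/3}$ distinct values as $p$ ranges over $A$, for a constant $c_A$ depending only on $A$. A second union bound over these values and the hypothesis $\sup_{p\in A}\p(\scrK_n(p^+_{i+1},p^+_i)<-(\ep/4)/i^2)\le 2^{-5i/3}\be_{i,n}(\ep/4)$ then yield $\p(\Sigma^+<-\ep/4)\le c_A\sum_{i=m}^{k-1}\be_{i,n}(\ep/4)\le c_A\sum_{i=m}^{k}\be_{i,n}(\ep/4)$, and likewise $\p(\Sigma^-<-\ep/4)\le c_B\sum_{i=m}^{k}\be_{i,n}(\ep/4)$.

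Combining the two displays gives \eqref{E:infuK-prob} with $c_K=c_A+c_B$. I do not anticipate a genuine obstacle: the only points requiring attention are the restriction $m\ge 2$ (so that $\sum_{i\ge m}i^{-2}<1$) and the counting estimate $|\{(p^+_{i+1},p^+_i):p\in A\}|\le c_A2^{5i/3}$, both of which are already present in the proof of Proposition \ref{P:prob-cond} and need only be made quantitative here.
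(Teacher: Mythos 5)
Your proof is correct and follows exactly the route the paper sketches: the paper's own "proof" of this corollary is just the one-line remark that it "follows by simply keeping track of the bound we get" in the proof of Proposition \ref{P:prob-cond}, and you have carried out that bookkeeping faithfully, using Lemma \ref{L:chaining-data} with $\ell=m$, the four-way union bound with threshold $\ep/4$, the observation that $\sum_{i\ge 2}i^{-2}<1$ (which is where $m\ge 2$ enters), and the $c_A2^{5i/3}$ grid-counting estimate with the $2^{-5i/3}\be_{i,n}(\ep/4)$ hypothesis cancelling it. No gaps.
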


Corollary \ref{C:true-bound-for-holes} follows by simply keeping track of the bound we get on the probability of the right side of \eqref{E:infuK-prob} when proving Proposition \ref{P:prob-cond}.

 Putting Proposition \ref{P:prob-cond} together with Proposition \ref{P:graph-cvg}, we obtain the following corollary. We state this for convergence to the directed landscape.
 
\begin{corollary}
\label{C:Kn}
Suppose that $\scrK_n$ is a sequence of random directed metrics of negative sign, and let 
$$
D = \Rd \cap \lf(\bigcup_{k \in \N} (2^{-2k/3} \Z \X 2^{-k} \Z)^2 \rg).
$$
Suppose that $(\mathfrak h K_n, K|_{D}) \cvgd (\mathfrak h \scrL, \scrL|_{D})$, where $\scrL$ is the directed landscape. Suppose that for every  compact set of the form $A \X B \sset \Rd$, that $\scrK_n$ satisfies the conditions of Proposition \ref{P:prob-cond} on $A \X B$. Then $\mathfrak g \scrK_n \cvgd \mathfrak g \scrL$.
\end{corollary}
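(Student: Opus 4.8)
The plan is to deduce Corollary~\ref{C:Kn} from Proposition~\ref{P:graph-cvg}, taking $\scrK=\scrL$ and using Proposition~\ref{P:prob-cond} to supply the quantitative ``no holes'' hypothesis~\eqref{E:Kninf}. The structural hypotheses of Proposition~\ref{P:graph-cvg} are immediate: $\scrL$ is a.s.\ continuous on $\Rh$, finite on $\Rd$, and equal to $-\infty$ on $\Rh\smin\Rd$ by Proposition~\ref{P:from-DOV} and Definition~\ref{D:DL-def}, and $(\mathfrak{h}\scrK_n,\scrK_n|_D)\cvgd(\mathfrak{h}\scrL,\scrL|_D)$ is assumed. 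Since the set $D$ of the corollary is dense only in $\Rd$, I would first enlarge it to a countable set $D'$ dense in $\Rh$ by adjoining a countable dense subset of $\Rh\smin\Rd$; this is harmless because Lemma~\ref{L:hypo-equiv}(1) applied to $\mathfrak{h}\scrK_n\cvgd\mathfrak{h}\scrL$ forces $\scrK_n(x)\to-\infty$ in probability for each fixed $x\in\Rh\smin\Rd$, so the joint convergence $(\mathfrak{h}\scrK_n,\scrK_n|_{D'})\cvgd(\mathfrak{h}\scrL,\scrL|_{D'})$ survives.

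The substantive step is~\eqref{E:Kninf}. Fix an open ball $B(q,r)$ with $\bar B(q,r)\sset\Rd$. Since $\bar B(q,r)$ is compact, $\Rd$ is open, and the time gap $t-s$ is bounded below by some $\delta_0>0$ on $\bar B(q,r)$, one can cover $\bar B(q,r)$ by finitely many compact product boxes $K_1=A_1\X B_1,\dots,K_N=A_N\X B_N$, each contained in $\Rd$. By the hypothesis of the corollary, $\scrK_n$ satisfies the assumptions of Proposition~\ref{P:prob-cond} on each $K_j$, so applying that proposition to each $K_j$ and summing the resulting bounds over $j$ gives, for the level-$m$ grid sets $G_m:=\bigcup_{j=1}^N(A_j)^+_m\X(B_j)^-_m$,
\[
\lim_{m\to\infty}\limsup_{n\to\infty}\p\!\left(\inf_{u\in B(q,r)}\scrK_n(u)<\inf_{u\in G_m}\scrK_n(u)-\ep\right)=0\qquad\text{for every }\ep>0,
\]
using $\inf_{B(q,r)}\scrK_n\ge\min_j\inf_{K_j}\scrK_n$ and $\inf_{G_m}\scrK_n=\min_j\inf_{(A_j)^+_m\X(B_j)^-_m}\scrK_n$. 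Each $G_m$ is finite, lies in $D'$ once $m$ is large (the points lie on the lattices appearing in the definition of $D$, and for $m$ large the time orderings along the boxes are preserved, so $G_m\sset\Rd$), and $G_m\to\bigcup_jK_j$ in the Hausdorff metric. One then sets $D_m:=\bigcup_{\ell\le m}\big(G_\ell\cap B(q,r)\big)$, a finite increasing sequence in $D'\cap B(q,r)$ converging to $\bar B(q,r)$, and must upgrade the display to the same statement with $G_m$ replaced by $D_m$.

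The hard part is precisely this upgrade: an open ball cannot be exhausted by finitely many product boxes contained in it, so the boxes $K_j$ --- hence the grid sets $G_m$ --- necessarily protrude a distance $O(2^{-2m/3})$ past $\partial B(q,r)$, whereas Proposition~\ref{P:graph-cvg} asks for approximating sets inside $B(q,r)$. The plan is to run the chaining not on $B(q,r)$ directly but on the exhausting family $\bar B(q,r-1/k)$, $k\in\N$: for each $k$ the set $\bar B(q,r-1/k)$ lies at distance $\ge 1/k$ from $\partial B(q,r)$, so it can be covered by finitely many product boxes that themselves lie in $B(q,r)$, whose level-$m$ grid sets then lie in $B(q,r)$, hence in $D_m$, for $m$ large; and the quantitative estimate of Corollary~\ref{C:true-bound-for-holes} for these covers is uniform in $k$ because the quantities $\eta^\pm_{k(n)}$, $\de^\pm_i$ and the combinatorial constants there are monotone in the underlying set and are dominated by those attached to the fixed compact projections $\pi_1(\bar B(q,r))$ and $\pi_2(\bar B(q,r))$ --- projections which still satisfy the hypotheses of Proposition~\ref{P:prob-cond}, since those hypotheses constrain the two factors of a product set only separately, and each factor may be paired with a far-future, respectively far-past, ball lying in $\Rd$. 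Letting $k\to\infty$ along this exhaustion, together with a final limiting argument for the shrinking boundary layer $B(q,r)\smin\bar B(q,r-1/k)$, yields~\eqref{E:Kninf} for $B(q,r)$; once that holds for every such ball, Proposition~\ref{P:graph-cvg} applies and delivers $\mathfrak{g}\scrK_n\cvgd\mathfrak{g}\scrL$.
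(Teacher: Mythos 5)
Your proposal correctly anticipates the paper's two-sentence proof (apply Propositions~\ref{P:graph-cvg} and~\ref{P:prob-cond}, covering $\bar B(q,r)$ by finitely many products), and you have put your finger on a genuine subtlety that the paper's remark glosses over: the grid sets $A_m^+\X B_m^-$ produced by Proposition~\ref{P:prob-cond} for a product $K\supset\bar B(q,r)$ are not contained in $B(q,r)$ (the points $p_m^+,q_m^-$ protrude by $O(2^{-2m/3})$ in space and $O(2^{-m})$ in time), whereas Proposition~\ref{P:graph-cvg} as stated insists on $D_m\sset D\cap B(q,r)$. Spotting this mismatch is to your credit.

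However, your proposed repair has a gap precisely where you wave your hands. If you cover $\bar B(q,r-1/k)$ by products lying inside $B(q,r)$ and take $D_m$ to be the union of the resulting grid sets, Corollary~\ref{C:true-bound-for-holes} controls $\p\big(\inf_{\bar B(q,r-1/k)}\scrK_n<\inf_{D_m}\scrK_n-\ep\big)$; but the event in~\eqref{E:Kninf} is $\inf_{B(q,r)}\scrK_n<\inf_{D_m}\scrK_n-\ep$, and $\inf_{B(q,r)}\le\inf_{\bar B(q,r-1/k)}$, so the two events differ exactly by what happens on the annulus $B(q,r)\smin\bar B(q,r-1/k)$. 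That annulus is disjoint from $D_m$ and is not covered by any product feeding into Corollary~\ref{C:true-bound-for-holes} in your construction, so the ``final limiting argument for the shrinking boundary layer'' is not a loose end but the whole problem, now pushed into a thin region. Uniformity of the chaining constants in $k$ does not help, because the issue is not that the estimates degenerate but that there is no estimate at all on the annulus with reference to a set inside $B(q,r)$. Diagonalizing $k=k(m)$ does not fix this for the same reason.

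The clean way out is to notice that the constraint $D_m\sset D\cap B(q,r)$ in Proposition~\ref{P:graph-cvg} is stronger than its proof needs; it is used only to show that the event~\eqref{E:vqr} is contained in $\bigcup_k B_k$ with $B_k=\bigcap_{m\ge1}A_{m,k}$. If $D_m\sset D$ with merely $D_m\to\bar B(q,r)$ in the Hausdorff metric, then by continuity of $\scrK$ one still has $\inf_{D_m}\scrK\to\inf_{\bar B(q,r)}\scrK$, and the inclusion becomes $\eqref{E:vqr}\sset\bigcup_k\bigcup_M\bigcap_{m\ge M}A_{m,k}$; since the paper's argument already gives $\p\big(\bigcap_{m\ge M}A_{m,k}\big)\le\inf_{m\ge M}\limsup_n\p A_{m,k,n}=0$, this suffices. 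With that relaxation you may take $D_m$ to be the union of the grid sets $(A_j)^+_m\X(B_j)^-_m$ over a finite cover $\{A_j\X B_j\}$ of $\bar B(q,r)$ by products contained in $\Rd$ (these lie in $D$ for $m$ large by the definition of $D$), apply Proposition~\ref{P:prob-cond} to each, and conclude. Your remark about enlarging $D$ to be dense in $\Rh$ is harmless but not needed: the argument only consults $D$ through the finite sets $D_m$ near $\bar B(q,r)\sset\Rd$, and on $\Rh\smin\Rd$ the limit $\scrL$ is deterministically $-\infty$, so nothing is lost by the given $D\sset\Rd$.
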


Corollary \ref{C:Kn} follows  from Propositions \ref{P:graph-cvg} and \ref{P:prob-cond}. The only thing to note is that  \eqref{E:Kninf} holds for all compact sets, rather than just products $A \X B$, since any compact set in $\Rd$ is contained in a finite union of such products.

\section{Convergence for lattice last percolation models}
\label{S:cvg-lattice}

In this section, we combine all of the results of the prior sections to give convergence criteria for lattice last passage percolation models with independent, identically distributed weights. 

To make the statements more concrete and avoid topological issues, all theorems in this section and the next are given in terms of couplings for sequences of directed metrics converging to the directed landscape. The first type of coupling we consider will satisfy the following three conditions on a set $\Om$ of probability one. To state these conditions, let $\scrK_n$ be a sequence of random directed metrics of negative sign on $\R^2$.

\medskip

\noindent {\bf Multi-time convergence (on $S$).}  Let $S$ be a finite or countable union of sets of the form
		$$
		S_{s, t} = \{(x, s, y, t) : x, y \in \R\},
		$$
		where $s < t$. Then $\scrK_n \to \scrL$ compactly on $S_{s, t}$ for any $S_{s, t} \sset S$. 

\medskip		

\noindent{\bf Hypograph convergence.} The hypographs $\mathfrak{h} \scrK_n \to \mathfrak{h} \scrL$ in $\scrD_*$. By Lemma \ref{L:hypo-equiv}, this implies that 
		\begin{itemize}[nosep]
		    \item $(\scrK_n - \scrL)^+ \to 0$ compactly on $\Rd$,
		    \item for any $u_n \to u \in \Rh$, we have $\limsup_{n \to \infty} \scrK_n(u_n) \le \scrL(u)$,
		    \item for any $u \in \Rd$ there exists a random sequence $U_n \to u$ such that $\scrK_n(U_n) \to \scrL(u)$.
		\end{itemize}

\medskip		

\noindent{\bf Geodesic convergence off holes.} Let $(p_n, q_n) \to (p, q) \in \Rd$, and let $\pi_n$ be a sequence of $\scrK_n$-geodesics from $p_n$ to $q_n$. Then $\pi_n$ is precompact in the Hausdorff topology. If $\om \in \Om$ is in the set
		$
		\{\scrK_n(p_n, q_n) \to \scrL(p, q) \}
		$
		then all subsequential limits of $\pi_n(\om)$ are $\scrL$-geodesics from $p$ to $q$. If
		$$
		\om \in \{\text{There exists a unique $\scrL$-geodesic $\pi$ from $p$ to $q$}\},
		$$
		then $\pi_n(\om) \to \pi$.

\medskip

A second, stronger coupling encodes graph convergence and the resulting strengthened geodesic convergence. This coupling will satisfy the following two conditions almost surely.

\medskip		

\noindent{\bf Graph convergence.} $\mathfrak g \scrK_n \to \mathfrak g \scrL$ as graphs in $\Rh$. In particular, $\scrK_n \to \scrL$ compactly on $\Rd$.

\medskip		

\noindent{\bf Geodesic convergence everywhere.} For any sequence $(u_n, v_n) \to (u, v) \in \Rd$ and any $\scrK_n$-geodesics $\pi_n$ from $u_n$ to $v_n$, the sequence $\pi_n$ is precompact in the Hausdorff topology and any subsequential limit $\pi$ of $\pi_n$ is an $\scrL$-geodesic from $u$ to $v$. In particular, if there is a unique $\scrL$-geodesic $\pi$ from $u$ to $v$, then $\pi_n \to \pi$. 

\medskip
	
The first theorem in this section gives conditions for when a sequence of potentially changing i.i.d.\ lattice last passage models can be coupled with the directed landscape to satisfy multi-time convergence, hypograph convergence, and geodesic convergence off holes. We do not consider graph convergence or geodesic convergence everywhere in this setting. By results of the previous sections, our main input is convergence to the Airy line ensemble.

Recall that for a lattice environment $X$,
$$
X[p^{k} \to q^{k}] = \sup_{\pi_1, \dots, \pi_k} \sum_{i=1}^k \sum_{v \in \pi_i} X_v,
$$
where the supremum is over all sets of $k$ disjoint minimal length lattice paths $\pi_1, \dots, \pi_k$, where $\pi_i$ goes from $p - (0, k -i)$ to $q + (0, i-1)$. We also write 
$$
X[p \to_{\Delta_k} q] = X[p^{k} \to q^{k}] - X[p^{k} \to q^{k}],
$$
where $X[p^{0} \to q^{0}] = 0$.

A random function $\scrA^r:\N \X \R \to \R$ is a \textbf{parabolic Airy line ensemble of scale $r$} if
$$
\scrA^r(\cdot) = r \scrA(\cdot/r^2),
$$
where $\scrA$ is a parabolic  Airy line ensemble. A parabolic Airy line ensemble of scale $r$
is related to an Airy sheet of scale $r$ in the same way that the parabolic Airy line ensemble is related to the Airy sheet.

\begin{theorem}
	\label{T:lattice-models} Let $X_n = \{X_n(u) : u \in \Z^2\}$ be a sequence of arrays of i.i.d.\ random variables with distribution $\mu_n$ supported on $[0, \infty)$. Let $m_n \in \N$ be a sequence converging to $\infty$ with $n$. Suppose that there exist sequences $\al_n, \be_n, \chi_n, \tau_n$ such that for every sequence $r_n$ converging to some $r > 0$, the functions
		\begin{equation}
		\label{E:Arn-k}
		A^{r_n}_{n, k}(y) =  \frac{X_n[(0, \floor{r_n n}) \to_{\Delta k} (\floor{r_n m_n} + \floor{\tau_n y}, 1)] - \al_n \floor{r_n n} - \tau_n y \be_n}{\chi_n}
		\end{equation}
		converge jointly in distribution as $n \to \infty$ to an Airy line ensemble of scale $r^{1/3}$ in the product-of-Skorokhod topologies on functions from $\R \to \R$. Suppose also that as $n \to \infty$ we have
		\begin{equation}
		\label{E:scaling}
\frac{\be_n}{\chi_n} \to 0, \qquad \frac{\be_n m_n - \al_n n}{n \chi_n} \to 0, \qquad \frac{1}{\tau_n} \to 0, \qquad \frac{m_n}{n \tau_n} \to 0.
		\end{equation}
	Then letting $d_{X_n}$ denote the lattice last passage metric defined from $X_n$, after rescaling and centering $d_{X_n}$ can be coupled with $\scrL$ so that for any finite or countable union $S = \bigcup S_{s_i, t_i}$, we have multi-time convergence on $S$, hypograph convergence, and geodesic convergence off holes.
	More precisely, we define a sequence of additive metrics by setting $h_n:\Z^2 \to \R$ by
	$$
	h_n(m_n t/n+ x, - t) = \al_n t + \be_n x,
	$$
	and extending $h_n$ to the directed edge set $E$ for $\Z^2$ by letting $h_n((e_1, e_2)) = h(e_1)$. 
Let $\scrK_n$ be the pullback onto $\R^2$ of the metric
$
\chi_n^{-1}(d_{X_n} - h_n)
$
under the map $
f_{\R^2 \to \Z^2} \circ L_n :\R^2 \to \Z^2 \cup E,
$
where $L_n:\R^2 \to \R^2$ is the linear map with matrix
$$
A = \lf[\begin{array}{cc}
\tau_n & m_n \\
0 & n 
\end{array}\rg],
$$
and $f_{\R^2 \to \Z^2}$ is as in \eqref{E:lat-to-plane}. Then for any finite or countable union $S = \bigcup S_{s_i, t_i}$, there is a coupling of $\scrK_n$ and $\scrL$ satisfying multi-time convergence on $S$, hypograph convergence, and geodesic convergence off holes.
\end{theorem}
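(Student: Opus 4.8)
The strategy is to deduce the statement from Theorem~\ref{T:ind-main} and Remark~\ref{R:ind-main} together with the deterministic geodesic results of Section~\ref{S:geodesics}. The first point is that $\scrK_n$ has exactly the structure to which Theorem~\ref{T:ind-main} applies: by the embedding of Example~\ref{Ex:LPP-lines} and Proposition~\ref{P:melon-cor}, the lattice metric $d_{X_n}$ coincides with the line last passage metric $d_{F^n}$ of a cadlag environment $F^n$ built from partial sums of the i.i.d.\ array $X_n$, and such an $F^n$ is an independent environment in the sense of Definition~\ref{D:independent}; the correction $h_n$ is a deterministic continuous additive metric, and $L_n$ is a diagonal transformation composed with a horizontal shear. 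Hence $\scrK_n$ --- the pullback of $\chi_n^{-1}(d_{X_n}-h_n)$ under $f_{\R^2 \to \Z^2}\circ L_n$ --- is a random directed metric of negative sign of precisely the form considered before Theorem~\ref{T:ind-main}, and it remains only to check the two hypotheses of that theorem: mobile Tracy--Widom convergence, and convergence of each slice $\scrK_n|_{S_{t,t+s^3}}$ to an Airy sheet of scale $s$ in $\scrE_*(S_{t,t+s^3})$.

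For the slice convergence I would fix $t\in\R,\,s>0$ and, using horizontal and vertical translation invariance of the i.i.d.\ array to move the base of the slice to the origin, identify $\scrK_n|_{S_{t,t+s^3}}$ with a rescaled, shifted line last passage metric of the form $S_n(x,y)=F^n[(x+a_n,N_n)^+\to(y,1)]+c_n$ with $N_n\approx ns^3$, where $a_n$ and $c_n$ are read off from the scaling parameters; the conditions~\eqref{E:scaling} are exactly what make this consistent, forcing $a_n\to-\infty$ and making the floor discretizations in~\eqref{E:lat-to-plane} and~\eqref{E:Arn-k} together with the shear contribute only at order $o(1)$ after dividing by $\chi_n$. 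With this identification, the hypotheses of Theorem~\ref{T:airy-sheet-gen} become, first, convergence of the forward multipoint processes $A^z_{n,k}$ of that theorem to the parabolic Airy line ensemble of scale $s$, which is precisely assumption~\eqref{E:Arn-k} with $r_n\to s^3$ (the $z$-shifted versions following from the fixed one by horizontal translation invariance), and, second, convergence of the reversed processes $\tilde A^z_{n,k}$, which reduces to the forward statement because reflecting the i.i.d.\ array through a vertical line and flipping the line index leaves its law unchanged. Theorem~\ref{T:airy-sheet-gen} then gives $S_n|_{[-r,r]^2}\cvgd\scrS|_{[-r,r]^2}$ for all $r\in\N$, which by continuity of Airy sheets and Lemma~\ref{L:graph-equiv} is exactly graph convergence in $\scrE_*(S_{t,t+s^3})$ to an Airy sheet of scale $s$. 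Mobile Tracy--Widom convergence is handled the same way: for $u_n\to u=(x,t;y,t+s^3)$, translation invariance rewrites $\scrK_n(u_n)$, up to the discretization and an $o(1)$ line-count correction, as the $k=1$ top-line case of~\eqref{E:Arn-k} with $r_n\to s^3$ and transversal argument $\to y-x$, whose limit $\scrA^s_1(y-x)$ has the law $sT-(x-y)^2/s^3=\scrL(u)$; again~\eqref{E:scaling} ensures the centering by $h_n$ strips off the leading linear terms (this is also the fixed-endpoint input of the companion paper \cite*{DNV}). I expect this change of variables --- pinning down $a_n,c_n,N_n$, keeping track of all the floors, and verifying that each of the four conditions in~\eqref{E:scaling} is used exactly where it is needed --- to be the only real work; everything else is an assembly of already-proven results.

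With both hypotheses verified, Theorem~\ref{T:ind-main} and Remark~\ref{R:ind-main} give the joint convergence $(\mathfrak{h}\scrK_n,\scrK_n|_S)\cvgd(\mathfrak{h}\scrL,\scrL|_S)$ in $\scrE_*\X\prod_i\scrE_*(S_{s_i,t_i})$ for any finite or countable $S=\bigcup_i S_{s_i,t_i}$, and Skorokhod's representation theorem produces a coupling in which this holds almost surely; on the resulting probability-one set $\Om$ we have hypograph convergence and, since $\scrL$ is continuous on $\Rh$, multi-time convergence on $S$. Finally, geodesic convergence off holes is deterministic given hypograph convergence: by Lemma~\ref{L:pullback-geod} every $\scrK_n$-geodesic is the preimage under $f_{\R^2 \to \Z^2}\circ L_n$ of an up-right lattice geodesic, hence a connected staircase subset of $\R^2$ carrying a monotone continuous parametrization, and $\scrK_n(u;v)=-\infty$ unless $u$ precedes $v$ in the order of Theorem~\ref{T:geod-cvg-stronger}; moreover $\scrL$ is almost surely a spacetime metric by Proposition~\ref{P:from-DOV} and Definition~\ref{D:DL-def}. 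Thus conditions 1--3 of Theorem~\ref{T:geod-cvg-stronger} hold, so that theorem yields precompactness of the rescaled geodesic closures and convergence to the unique $\scrL$-geodesic when one exists, while Theorem~\ref{T:geod-cvg}, applied on the event $\{\scrK_n(p_n,q_n)\to\scrL(p,q)\}$ (which is the ``off holes'' event), shows that on that event all subsequential limits of the rescaled geodesics are $\scrL$-geodesics from $p$ to $q$.
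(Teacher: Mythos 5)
Your proposal follows the same strategy as the paper's proof: convert the lattice metric to a line environment, reduce to Theorem~\ref{T:ind-main} (via Remark~\ref{R:ind-main}), verify its two hypotheses by reducing to Theorem~\ref{T:airy-sheet-gen}, and close with Skorokhod's representation plus the deterministic geodesic results from Section~\ref{S:geodesics}. Your handling of the reversed processes $\tilde A^z_{n,k}$ via reflection/rotation symmetry of the i.i.d.\ array is exactly what the paper does, as is obtaining mobile Tracy--Widom convergence from the sequences-of-$r_n$ flexibility in hypothesis~\eqref{E:Arn-k}.

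Two small items worth flagging. First, you are explicit that you haven't done the change-of-variables bookkeeping; the paper's Steps~3--4 spend most of their effort there, and they also need a \emph{scaled} version of Theorem~\ref{T:airy-sheet-gen} (convergence of the input to the Airy line ensemble of scale $r^{1/3}$ giving an Airy sheet of scale $r^{1/3}$), which your write-up invokes implicitly. Second, there is a genuine lattice edge case that your proposal does not mention: when $ns_n\in\Z$ for infinitely many $n$, the translation-invariance identification of $\scrK_n(\cdot,s_n;\cdot,s_n+r_n)$ with $S_n$ picks up an extra term $\chi_n^{-1}X_n(\tau_n x+m_n s_n,-ns_n)$ at the starting vertex. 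The paper (Step~5) resolves this by using nonnegativity of $X_n$ to sandwich $\scrL_n$ between two sequences differing by a $o(1)$ shift and concluding by monotonicity; you would need a similar argument to make the reduction airtight.
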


The proof of Theorem \ref{T:lattice-models} is straightforward but tedious, and consists of carefully checking the assumptions of the convergence theorems in the previous sections. We remark that the scaling assumptions in \eqref{E:scaling} are quite natural. They are there purely to handle lattice effects. The first two conditions amount to the statement that as $n \to \infty$,
\begin{equation}
\label{E:hn-scaling-cond}
\frac{|h_n(v) - h_n(v')|}{\chi_n} \to 0 
\end{equation}
for adjacent vertices $v$ and $v'$. The final two scaling conditions are equivalent to the statement that if $v$ and $v'$ are adjacent vertices in $\Z^2$ connected in $\Z^2$ by an edge, then $\|L_n^{-1} v - L_n^{-1} v' \|_2 \to 0$ as $n \to \infty$. In particular, the third scaling condition actually follows from the Airy line ensemble convergence, \eqref{E:Arn-k}. Indeed, if $\tau_n^{-1}$ did not converge to $0$, then subsequential limits of the line ensembles $A^{r_n}$ would be piecewise linear.

\begin{proof}
\textbf{Step 1: Converting to a line environment.} \qquad
In order to apply the theorems of previous sections, we first convert $X_n$ into a line environment. As in \eqref{E:fGkk}, define sequences of nondecreasing cadlag environments $\bar X_n:\R \X \Z \to \R$ by letting
$$
\bar X_{n, i}(0) = 0, \qquad \mathand \qquad \bar X_{n, i}(y) - \bar X_{n, i}(x) = \sum_{p \in (x, y] \X \{i\}} X_n(p).
$$
By the discussion following equation \eqref{E:embed-1}, the line last passage metric $d_{\bar X_n}$ is simply the pullback via $f_{\R \X \Z \to \Z^2}$ of the lattice last passage metric $d_{X_n}$. We also define the continuous additive metric $d_{\bar h_n}$ on $\R \X \Z \cup E_v$ from the functions $\bar h_n:\R \X (\Z/2) \to \R$ given by
\begin{equation}
\label{E:hn-def}
\begin{split}
\bar h_n(m_n t/n + x, -t) &= \al_n t + \be_n x, \qquad (m_n t/n  + x, -t) \in \R \X \Z \quad \mathand \\
\bar h_n(m_n t/n + x, -t) &= h_n(m_n t/n + x, -t - 1/2), \qquad (m_n t/n + x, -t) \in \R \X (\Z + 1/2).
\end{split}
\end{equation}
Let $\scrL_n$ be the pullback of $\chi_n^{-1}(d_{\bar X_n} - \bar h_n)$ under the map $f_{\R^2 \to \R \X \Z} \circ L_n$. 
By \eqref{E:hn-scaling-cond} and the fact that $d_{\bar X_n}$ is the pullback of $d_{X_n}$, it suffices to prove the theorem for $\scrL_n$ in place of $\scrK_n$. For this, we check the conditions of Theorem \ref{T:ind-main}. Before checking these conditions, we show why this theorem implies the results.

\textbf{Step 2: The proof assuming Theorem \ref{T:ind-main}.} \qquad  If Theorem \ref{T:ind-main} applies (or more precisely, Remark \ref{R:ind-main}), then for any countable union $S = \bigcup_{i=1}^\infty S_{s_i, t_i}$, we have that $(\mathfrak{h} \scrL_n, \scrL_n|_S) \cvgd (\mathfrak{h} \scrL, \scrL|_S)$. By Skorokhod's representation theorem, there exists a coupling of $\scrL_n, \scrL$ such that $(\mathfrak{h} \scrL_n, \scrL_n|_S) \to (\mathfrak{h} \scrL, \scrL|_S)$ almost surely. This immediately implies multi-time convergence on $S$ and hypograph convergence in this coupling. 

Finally,  geodesic convergence off holes holds by Theorem \ref{T:geod-cvg} and Theorem \ref{T:geod-cvg-stronger}. We just need to check that the conditions of those theorems hold almost surely for $\scrL_n, \scrL$. The directed landscape $\scrL$ is almost surely a spacetime metric by Proposition \ref{P:from-DOV} and Definition \ref{D:DL-def}. The metrics $\scrL_n$ satisfy conditions 1 and 2 of Theorem \ref{T:geod-cvg-stronger} since $d_{\bar X_n}(x, s; y, t) > -\infty$ if and only if $x \le y$ and $\floor{s} \ge \ceil{t}$. Geodesics in the pullback of $d_{\bar X_n}$ under $f_{\R^2 \to \Z^2}$ are piecewise linear curves consisting of vertical and horizontal segments. Hence geodesics in $\scrL_n$ are also piecewise linear curves since adding the additive metric $\bar h_n$ does not affect the geodesics (see the discussion after Example \ref{Ex:additive-metrics}), and pulling back the metric under the invertible linear map $L_n$ just pulls back all geodesics by Lemma \ref{L:pullback-geod}. Therefore Condition 3 of Theorem \ref{T:geod-cvg-stronger} also holds.

\textbf{Step 3: Reducing Theorem \ref{T:ind-main} to Theorem \ref{T:airy-sheet-gen}.} \qquad We now check the two conditions Theorem \ref{T:ind-main}. To do this, we will reduce to a statement that can be checked via Theorem \ref{T:airy-sheet-gen}.

The mobile Tracy-Widom convergence and Airy sheet convergence required for Theorem \ref{T:ind-main} are both implied by the statement that for any $s_n \to s \in \R$ and $r_n \to r > 0$ we have
\begin{equation}
\label{E:Lnnn}
\mathfrak{g} \scrL_n(\cdot, s_n; \cdot, s_n + r_n) \cvgd \mathfrak{g}\scrS_{r^{1/3}}
\end{equation}
Here the underlying topology is graph convergence in $\scrE_*(\R^2)$ (see Remark \ref{R:other-domains}), and $\scrS_{r^{1/3}}$ denotes an Airy sheet of scale $r^{1/3}$. 
We first deal with the case when $n s_n \notin \Z$ for all $n$; if $n s_n \in \Z$ for infinitely many $n$, there is an extra complication that we leave to Step 5.
Set 
$$
\tilde r_n = \frac{\floor{-ns_n} - \ceil{-n(s_n + r_n)} + 1}n.
$$
We will show that the convergence in \eqref{E:Lnnn} is equivalent to graph convergence of the functions
\begin{equation}
\label{E:want-after-trans}
S_n(x, y) = \chi_n^{-1} \lf(\bar X_n[(\tau_n x, \floor{\tilde r_n n}) \to (\floor{\tilde r_n m_n} + \tau_n y, 1)] - (\al_n \floor{\tilde r_n n} + \tau_n (y-x) \be_n ) \rg)
\end{equation}
to $\scrS_{r^{1/3}}$. This we will be able to check with Theorem \ref{T:airy-sheet-gen}. The deterministic second term in \eqref{E:want-after-trans} is equal to
\begin{equation}
\label{E:Hdef}
H_n(y-x) := \bar h_n(\tau_n(y-x) +\tilde r_n m_n, -\tilde r_n n).
\end{equation} 
 Letting $g_n = \bar h_n \circ f_{\R^2 \to \R \X \Z} \circ L_n$, we can write 
\begin{equation}
\label{E:split-Ln}
\begin{split}
\scrL_n(x, s_n; y, s_n + r_n)  &=\chi_n^{-1} \bigg(\bar X_n[(\tau_n x + m_n s_n, \floor{-ns_n}) \to [(\tau_n y + m_n (s_n + r_n), \ceil{-n(s_n + r_n)})] \\
&- (g_n(y, s_n + r_n) - g_n(x, s_n)) \bigg).
\end{split}
\end{equation}
We have
\begin{equation*}
\label{E:gncalc}
\begin{split}
g_n(y, s_n + r_n) - g_n(x, s_n) = \;&\bar h_n(\tau_n y + m_n (s_n + r_n), \floor{-n (s_n + r_n)}) - \bar h_n(\tau_n x + m_n s_n, \floor{-n s_n}) \\
= \; & \bar h_n(\tau_n(y - x) + m_nr_n, -\tilde{r_n} n + O(1) ).
\end{split}
\end{equation*}
Here the first equality is by definition and the second inequality is by the linearity of $\bar h_n$. By \eqref{E:hn-scaling-cond} and the definition \eqref{E:Hdef}, this is equal to 
\begin{equation}
\label{E:Hndeep}
 H_n(y - x + \ep_n) + o(\chi_n), \qquad \text{ where } \qquad \ep_n = \frac{m_n r_n - m_n \tilde r_n}{\tau_n}.
\end{equation}
Next, by the translation invariance of $X_n$, the first term on the right hand side of \eqref{E:split-Ln} is equal in distribution, jointly in $x$ and $y$, to 
\begin{equation}
\label{E:splitt}
\begin{split}
\bar X_n[(\tau_n (x + \de_n), \floor{\tilde r_n n}) \to [(\tau_n (y + \de_n+ \ep_n) + m_n \tilde r_n , 1)], \quad \text{where} \quad \de_n = \frac{m_n s_n - \floor{m_n s_n}}{\tau_n}.
\end{split}
\end{equation}
Combining \eqref{E:splitt} and \eqref{E:Hndeep}, we get that
$$
\scrL_n(x, s_n; y, s_n + r_n) \eqd S_n(x + \de_n, y + \de_n + \ep_n) + o(1),
$$
where the equality in distribution is joint in all $x$ and $y$. Finally, $\de_n = O(\tau_n^{-1})$ and $\ep_n = O(m_n/n \tau_n)$, so both terms converge to $0$ by \eqref{E:scaling}. Hence graph convergence of $\scrL_n(\cdot, s_n; \cdot, s_n + r_n)$ is equivalent to graph convergence of $S_n$ when $ns_n \notin \Z$.

\textbf{Step 4: Checking Theorem \ref{T:airy-sheet-gen} when $ns_n \notin \Z$.} \qquad Define the rescaled and linearly shifted line environment $Z^n$ by
$$
Z^n_i(y) = \chi_n^{-1} \lf(\bar X_{n, i}(\tau_n y + \floor{\tilde r_n m_n} ) + \be_n \tau_n y\rg).
$$
By the fact that last passage values across cadlag paths commute with linear shifts, we have that
\begin{equation}
\label{E:Sndef}
\begin{split}
&S_n(x, y) = Z^n[(x + a_n, \floor{\tilde r_n n}) \to (y, 1)] + c_n, \qquad \text{ where } \\
&a_n = -\frac{\floor{\tilde r_n m_n}}{\tau_n} \quad \mathand \quad c_n = -\frac{\al_n \floor{\tilde r_n n} + \be_n \floor{\tilde r_n m_n}}{\chi_n}.
\end{split}
\end{equation}
We will check that $Z^n$ satisfies the conditions of Theorem \ref{T:airy-sheet-gen}. First note that $Z^n$ is a cadlag line environment with positive jumps since $X_n$ was supported on $[0, \infty)$. Next, letting $A^{\tilde r_n}_{n, k}$ be as in \eqref{E:Arn-k}, we have
\begin{align}
\label{E:AY-trans}
A^{n, \tilde r_n}_k(y) = Z^n[(a_n, \floor{\tilde r_n n}) \to_{\Delta_k} (y, 1)] + c_n.
\end{align}
Here the $\to_{\Delta k}$ notation for line models is as in Theorem \ref{T:airy-sheet-gen}. This requires the fact that multi-point last passage values in a lattice environment are equivalent to multi-point last passage values in the corresponding line environment, Proposition \ref{P:melon-cor}.

Moreover, $Z_n$ inherits translation invariance from the i.i.d.\ environment $X_n$: 
$$
Z^n(\cdot) + \chi_n^{-1} \be_n \tau_n z \eqd Z^n(\cdot + z) \qquad \text{  for any $z \in \tau_n^{-1} \Z$. }
$$
Therefore letting $\ceil{x}_n$ denote the largest element of $\tau_n^{-1} \Z$ which is greater than or equal to $x$, for any $z \in \R$, we have
\begin{align*}
Z^n[(a_n + z, \floor{\tilde r_n n}) \to_{\Delta_k} (z + y, 1)] = \;&Z^n[(\ceil{a_n + z}_n, \floor{\tilde r_n n}) \to_{\Delta_k} (z + y, 1)] \\
\eqd \; & Z^n[(a_n, \floor{\tilde r_n n})\to_{\Delta_k} (y + O(\tau_n^{-1}), 1)]
\end{align*}
where $O(\tau_n^{-1})$ depends on $y$ and $z$ but is deterministically bounded in absolute value by $2|\tau_n^{-1}|$. Here the equality in distribution is as a function of both $y$ and $k$.
 Therefore by \eqref{E:AY-trans}, the convergence assumption \eqref{E:Arn-k}, and the fact that $\tau_n^{-1} = o(1)$ by \eqref{E:scaling}, for any $z \in \R$ the sequence of functions
$$
Z^n[(a_n + z, \floor{r_n n})^+ \to_{\Delta_k} (z + \cdot, 1)] + c_n
$$
converges in distribution in the product-of-Skorokhod topologies to an Airy line ensemble of scale $r^{1/3}$.
Moreover, again using that $X_n$ is an i.i.d.\ environment, 
we have that 
$$
Z^n[(a_n - x, \floor{r_n n}) \to_{\Delta_k} (y, 1)] \eqd Z^n[(a_n - y, \floor{r_n n}) \to_{\Delta_k} (x, 1)]
$$
jointly in $x, y,$ and $k$. Thus for every $z \in \R$, the sequence of functions 
$$
Z^n[(a_n + z - \cdot, \floor{r_n n}) \to_{\Delta_k} (z, 1)] + c_n
$$
also converges in the product-of-Skorokhod topologies to the Airy line ensemble of scale $r^{1/3}$.
Therefore by Theorem \ref{T:airy-sheet-gen} and \eqref{E:Sndef}, the sequences $S_n$ can be coupled to an Airy sheet $\scrS_r$ of scale $r^{1/3}$ so that $S_n \to \scrS_r$ compactly a.s. Therefore $\mathfrak{g} S_n \cvgd \mathfrak{g} S_r$, as desired.

\textbf{Step 5: The extension on the lattice.} \qquad 
 Finally, we extend \eqref{E:Lnnn} to the case when $ns_n \in \Z$ for some $n$. When 
 $(\tau_n x + m_n s_n, -ns_n) \in \Z^2$, then the right hand side of \eqref{E:split-Ln} is equal to
$$
\scrL_n(x, s_n; y, s_n + r_n) + \chi_n^{-1} X_n(\tau_n x + m_n s_n, - n s_n).
$$
Therefore since all entries in the environment $X_n$ are nonnegative, the right hand side of \eqref{E:split-Ln} is always an upper bound for $\scrL_n(x, s_n; y, s_n + r_n)$ and a lower bound for
$$
\scrL_n(x-\tau_n^{-1}, s_n; y, s_n + r_n) + \chi_n^{-1}\lf(g_n(x, s_n) - g_n(x - \tau_n^{-1}, s_n)\rg) = \scrL_n(x-\tau_n^{-1}, s_n; y, s_n + r_n) + \chi_n^{-1}\be_n.
$$
The equality above follows from writing $g_n$ in terms of $\bar h_n$, and then evaluating $\bar h_n$. The term $\chi_n^{-1} \be_n \to 0$ and the shift $\tau_n^{-1} \to 0$ by the first and third conditions in \eqref{E:scaling}. 
Therefore the sequence $\scrL(x, s_n; y, s_n + r_n)$ is stochastically dominated above and below by sequences of random functions whose graphs converge to $\mathfrak{g}S_r$, and so $\mathfrak{g}\scrL(x, s_n; y, s_n + r_n) \to \mathfrak{g}\scrS_r$, as desired. 
\end{proof}

Theorem \ref{T:lattice-models} simplifies greatly if $X_n = X$ for all $X$ and $m_n = \floor{\rho n}$ for all $n$ for some fixed constant $\rho$. In this case, the sequences $\chi_n, \tau_n, \al_n, \be_n$ can be given by $\chi n^{1/3}, \tau n^{2/3}, \al, \beta$ for fixed numbers $\chi, \tau, \al, \beta >0$. The $n^{1/3}, n^{2/3}$ scaling for $\chi_n$ and $\tau_n$ is forced upon us by the $1-2-3$ scale invariance of the directed landscape. Note that when the sequences $X_n$ change with $n$ and $m_n$ is not proportional to $n$, this no longer needs to be the case. We will see an example of this in Section \ref{S:integrable} for sequences of environments of i.i.d.\ geometric random variables. We also address graph convergence and geodesic convergence everywhere in this simpler setting.

\begin{theorem}
	\label{T:lattice-models-i}
	Let $X = \{X(u) : u \in \Z^2\}$ be a collection of i.i.d.\ random variables in $[0, \infty)$ and let $\rho > 0$. Suppose that there exists $\al, \be, \chi, \tau > 0$ such that
	$$
	A^n_k(y) = \chi^{-1} n^{-1/3} \lf(X[(0, n) \to_{\Delta k} (\floor{\rho n} + \lfloor \tau n^{2/3} y \rfloor, 1)] - (\al n + \be \tau n^{2/3} y  ) \rg)
	$$ 
	converge jointly in distribution as $n \to \infty$, in the product-of-Skorokhod topologies on functions from $\R \to \R$, to the Airy line ensemble. 
	Let $h$ be the additive metric on $\Z^2 \cup E$ defined from the function $h:\Z^2 \to \R$ with
	\begin{align*}
	h(\rho t + x, -t) &= \al t + \be x.
	\end{align*}
	Set $h(e_1, e_2) = e_1$ for all edges.
	Let $\scrK_n$ be the pullback onto $\R^2$ of the metric
	$
	\chi^{-1} n^{-1/3}(d_X - h)
	$
	under the map 
	$
	f_{\R^2 \to \Z^2} \circ L_n :\R^2 \to \Z^2 \cup E,
	$
	where $L_n:\R^2 \to \R^2$ is the linear map with matrix
	$$
	A = \lf[\begin{array}{cc}
	\tau n^{2/3} & \rho n \\
	0 & n 
	\end{array}\rg].
	$$
	Then for any finite or countable union $S = \bigcup S_{s_i, t_i}$, there is a coupling of $\scrK_n$ and $\scrL$ satisfying multi-time convergence on $S$, hypograph convergence, and geodesic convergence off holes.
	If, in addition, for some $\de > 0$ we have that 
	\begin{equation}
	\label{E:liminf-5mom}
	\limsup_{n \to \infty} \sup_{y \in [-3, 3]} \expt [(A^n_1(y))^-]^{5 + \de} < \infty,
	\end{equation}
	then there is a coupling of $\scrK_n$ and $\scrL$ satisfying graph convergence and geodesic convergence everywhere.
\end{theorem}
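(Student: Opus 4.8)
The plan is to obtain the first part (multi-time, hypograph, and geodesic-off-holes convergence) directly from Theorem \ref{T:lattice-models}, and the second part (graph convergence and geodesic convergence everywhere) by combining Theorem \ref{T:lattice-models} with the chaining machinery of Section \ref{S:graph-cvg}, in particular Corollary \ref{C:Kn}. For the first part, I would specialize Theorem \ref{T:lattice-models} to the stationary case $X_n = X$, $m_n = \floor{\rho n}$, $\chi_n = \chi n^{1/3}$, $\tau_n = \tau n^{2/3}$, $\al_n = \al$, $\be_n = \be$. The four scaling conditions in \eqref{E:scaling} then read $\be/(\chi n^{1/3}) \to 0$, $(\be \floor{\rho n} - \al n)/(\chi n^{4/3}) \to 0$, $1/(\tau n^{2/3}) \to 0$, and $\floor{\rho n}/(\tau n^{5/3}) \to 0$: all are immediate, and the hypothesis on $A^n_k$ here is exactly the hypothesis on $A^{r_n}_{n,k}$ of Theorem \ref{T:lattice-models} (with $r_n \equiv 1$ and general $r_n$ following by a routine rescaling). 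This gives the coupling satisfying the first three convergence modes verbatim.

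For the second part, the strategy is to upgrade hypograph to graph convergence via Corollary \ref{C:Kn}, which requires checking that $(\mathfrak{h}\scrK_n, \scrK_n|_D) \cvgd (\mathfrak{h}\scrL, \scrL|_D)$ for the dyadic-grid set $D$ (already supplied by the first part together with Remark \ref{R:ind-main}), and that on every compact box $A \X B \sset \Rd$ the sequence $\scrK_n$ satisfies the hypotheses of Proposition \ref{P:prob-cond}. The latter has two ingredients: (a) endpoint control, $\p(\eta^\pm_{k(n)}(\scrK_n, \cdot) < -\ep) \to 0$ for a suitable sequence $k(n)\to\infty$, and (b) the summable-tail bounds $\sup_p \p(\scrK_n(p^+_{i+1},p^+_i) < -\ep/i^2) \le 2^{-5i/3}\be_{i,n}(\ep)$ with $\lim_\ell \limsup_n \sum_{i=\ell}^{k(n)} \be_{i,n}(\ep) = 0$. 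The key point is that $\scrK_n(p^+_{i+1}, p^+_i)$ is, after unwinding the pullback under $f_{\R^2\to\Z^2}\circ L_n$ and the additive shift by $h$, a single rescaled lattice last passage value between two nearby points whose time separation is of order $2^{-i}$; by the $1$-$2$-$3$ geometry \eqref{E:p-cone} the spatial separation is controlled too. I would bound its lower tail using the moment hypothesis \eqref{E:liminf-5mom}: a Markov/Chebyshev bound on the $(5+\de)$-th moment of the negative part, together with the KPZ rescaling that relates a time-$2^{-i}$ last passage value to the fixed-scale object $A^n_1$, gives a tail of order (distance)$^{(5+\de)/\text{something}}$, and one checks this beats $2^{-5i/3}$ with room to spare precisely because $5+\de > 5$. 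Here one must be slightly careful because the moment hypothesis is stated for the scale-$1$ variable $A^n_1(y)$ on a compact $y$-range, so I would first use metric composition / a union bound to reduce a distance-$2^{-i}$ negative tail to a bounded number of scale-$1$ negative tails, at the cost of the KPZ $2^{-i/3}$ rescaling factor, and then take $\be_{i,n}(\ep)$ to absorb both the combinatorial count of grid pairs and the residual geometric decay. The endpoint terms $\eta^\pm$ are handled the same way with $k(n)$ chosen growing slowly enough (e.g. $k(n) = \floor{\log_2 n}/100$) that $A^{2^{-k(n)}}_{n,1}$ is still in the regime where \eqref{E:liminf-5mom} applies uniformly.

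Once $\mathfrak{g}\scrK_n \cvgd \mathfrak{g}\scrL$ is established, Skorokhod's representation theorem gives a coupling with almost sure graph convergence, hence $\scrK_n \to \scrL$ compactly on $\Rd$; geodesic convergence everywhere then follows from Theorem \ref{T:geod-cvg} exactly as in Step 2 of the proof of Theorem \ref{T:lattice-models} (the metrics $\scrK_n$ are spacetime-type with connected, piecewise-linear geodesics by the pullback structure, and $\scrL$ is almost surely a spacetime metric by Proposition \ref{P:from-DOV}, so all geodesic hypotheses hold). The main obstacle I anticipate is step (b): making the tail estimate quantitative enough that the exponents line up. One has to be honest about how a negative moment bound for the fixed-scale variable $A^n_1$, valid only on a compact window and only in the $\limsup$ sense, transfers to a uniform-in-$n$ summable bound for the many small-scale increments appearing in the chaining sum — this is where the "$5+\de$ is optimal" phenomenon referenced after the theorem statement genuinely bites, since $\de=0$ would make $\sum_i \be_{i,n}$ fail to vanish. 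I would organize this as a self-contained lemma: ``a $(5+\de)$-th negative moment bound on the scale-$1$ two-point function implies the hypotheses of Proposition \ref{P:prob-cond} on every compact box,'' and then the theorem is a short assembly.
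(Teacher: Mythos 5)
Your structural plan matches the paper's: specialize Theorem \ref{T:lattice-models} to the stationary case for the first three convergence modes, then upgrade to graph convergence via Corollary \ref{C:Kn}/Proposition \ref{P:prob-cond}, and deduce geodesic convergence everywhere from Theorem \ref{T:geod-cvg}. Your handling of the chaining terms $\de^\pm_i$ is also essentially the paper's: use translation invariance of the i.i.d.\ environment to relate $\scrK_n(p^+_{i+1},p^+_i)$ to a copy of the one-point marginal at an intermediate scale, then apply Markov's inequality with the $(5+\de)$-moment; the exponent count $2^{-5i/3}\cdot\text{(grid cardinality)}\sim 2^{5i/3}$ against $2^{-(5+\de)i/3}$ is exactly where $\de>0$ is needed.

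However, there is a genuine gap in how you propose to handle the endpoint terms $\eta^\pm_{k(n)}(\scrK_n, \cdot)$. You say you would ``handle these the same way'' via the moment hypothesis \eqref{E:liminf-5mom} with $k(n)$ chosen slowly growing. This does not directly work: $\eta^+_{k(n)}(\scrK_n, A) = \inf_{p\in A}\scrK_n(p,p^+_{k(n)})$ is an infimum over a \emph{continuum} of starting points $p$, not over grid points. A union bound over the $O(2^{5k(n)/3})$ distinct targets $p^+_{k(n)}$ does not control the infimum in $p$ within each cell, and the polynomial tail from \eqref{E:liminf-5mom} gives no pointwise almost-sure control. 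The observation that closes this gap in the paper is that the environment $X$ is \emph{nonnegative}, so the raw last passage value $X[\cdot\to\cdot]\ge 0$; after subtracting the additive metric $h$, the quantity $\scrK_n(p,p^+_{k(n)})$ is therefore bounded \emph{deterministically} below by minus a pullback of $h$, which with $k(n)=\ceil{3\log_2 n / 4}$ is of order $n^{-1/3}\cdot n^{1/4}\to 0$. This is a deterministic bound holding simultaneously for all $p$, and it is the place where the nonnegativity hypothesis enters in an essential, non-substitutable way — precisely the same hypothesis that lets one apply the Skorokhod-topology RSK machinery in the first place. You should replace your endpoint argument with this deterministic bound; without it the chaining setup is not complete. (A more minor point you omit: the paper works with a translated $\tilde\scrK_n$, i.e.\ a pullback under a shift $g_n\to\mathrm{id}$ chosen so $L_n g_n\Q^2\cap\Z^2=\emptyset$, to avoid lattice-boundary ambiguities when writing $\scrK_n(p^+_{i+1},p^+_i)$ as a lattice last passage value; you would hit this technicality when trying to make step (b) of your plan explicit.)
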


The fifth moment condition is essentially optimal given our chaining method, see Example \ref{Ex:elpp}. Note that the constant $3$ in \eqref{E:liminf-5mom} is arbitrary; a variant of the same proof would work with $3$ replaced by some $\ep > 0$. However, in practice we do not expect that there is any difference between the verifiability of the two conditions.

We expect that for general i.i.d.\ random variables whose support is all of $\R$, rather than $[0, \infty)$, that it is equivalent to the condition that $\expt [X(0,0)^-]^{5 + \de} > -\infty$. It is not difficult to check that if $\expt [X(0,0)^-]^5 = -\infty$ then graph convergence to any continuous limit will fail. 

Just as with the proof of Theorem \ref{T:lattice-models}, the proof of Theorem \ref{T:lattice-models-i} is a straightforward but tedious check on the assumptions of convergence theorems in previous sections.

\begin{proof}
	It is straightforward to check that $X$ satisfies the Airy line ensemble convergence condition in Theorem \ref{T:lattice-models}. This follows since we are using the same environment for all $n$, and since the scaling factors satisfy the $1-2-3$ KPZ scaling. Moreover, all of the scaling conditions in \eqref{E:scaling} are immediate, so Theorem \ref{T:lattice-models} applies.
	
	Therefore we just need to check graph convergence and geodesic convergence everywhere. For this, it suffices to show that we can couple $\scrK_n, \scrL$ so that $\mathfrak{g} \scrK_n \to \mathfrak{g} \scrL$. 
	If this is the case, then for any sequence $(u_n, v_n) \to (u, v) \in \Rd$, we have $\scrK_n(u_n, v_n) \to \scrL(u, v)$. Since all geodesics in $\scrK_n$ are connected, we can apply Theorem \ref{T:geod-cvg} to conclude that geodesic convergence everywhere holds in this coupling.
	
	Rather than showing graph convergence of $\scrK_n$ to $\scrL$, we will show graph convergence of $\tilde \scrK_n$ to $\scrL$, where $\tilde \scrK_n$ is the pullback of $\scrK_n$ under a map $g_n:\R^2 \to \R^2$ such that
	\begin{itemize}[nosep]
		\item  $g_n(u) = u + \xi_n$ for a sequence $\xi_n \to 0$,
		\item $L_n \circ g_n \Q^2 \cap \Z^2 = \emptyset$ for all $n$.
	\end{itemize} 
The first condition above guarantees that the conclusion of Theorem \ref{T:lattice-models} holds for $\tilde \scrK_n$, and that graph convergence of $\tilde \scrK_n$ is equivalent to graph convergence of $\scrK_n$. The second condition will allow us to avoid lattice difficulties when applying the results of Section \ref{S:graph-cvg}.
	
	By Corollary \ref{C:Kn}, it is enough to check the conditions of Proposition \ref{P:prob-cond} for $\tilde \scrK_n$ for any fixed compact set $A \X B \sset \Rd$.
	
	Let $k(n) = \ceil{3\log_2 (n)/4}$. By the bounds in \eqref{E:p-separation} and \eqref{E:p-cone} relating $p$ and $p_{k(n)}^+$, we can check that 
	$$
	L_n(p_{k(n)}^+) - L_n(p) = (\Theta(n^{1/4}) + O(n^{1/6}), \Theta(n^{1/4}))
	$$
	for all $p \in \R^2$. Therefore for large enough $n$, $\scrK_n(p, p_{k(n)}^+)$ is finite for all $p$. Moreover, since $X(u)$ is supported on $[0, \infty)$, $\scrK_n(p, p_{k(n)}^+)$ is bounded below by
	\begin{equation}
	\label{E:detlower}
-\chi n^{-1/3} \lf(h_n \circ f_{\R^2 \to \Z^2} \circ L_n(p_{k(n)}^+) - h_n \circ f_{\R^2 \to \Z^2} \circ L_n(p) \rg) = -\chi n^{-1/3} \Theta(n^{1/4}). 
\end{equation}
A similar deterministic lower bound holds on $\scrK_n(p_{k(n)}^-, p)$. The right hand side of \eqref{E:detlower} tends to $0$ with $n$, giving the first condition of Proposition \ref{P:prob-cond}.

Now, for all $i \le k(n) - 1$ and $p \in \R^2$, since $L_n \circ g_n \Q^2 \cap \Z^2 = \emptyset$, we can explicitly write out $\scrK_n(p_{i+1}^+, p_i^+)$ as in \eqref{E:split-Ln}:
\begin{equation}
\label{E:XXKn}
\begin{split}
&\scrK_n(p_{i+1}^+, p_i^+) \\
=\; &\chi n^{-1/3} \bigg( X_n[(\ceil{\tau n^{2/3}p_{i+1, 1}^+ + \rho n p_{i+1, 2}^+}, \floor{-n p_{i+1, 2}^+}) \to (\floor{\tau n^{2/3}p_{i, 1}^+ + \rho n p_{i, 2}^+}, \ceil{-n p_{i, 2}^+})] \\
- \; &\lf(h_n \circ f_{\R^2 \to \Z^2} \circ L_n(p_i^+) - h_n \circ f_{\R^2 \to \Z^2} \circ L_n(p_{i+1}^+) \rg) \bigg).
\end{split}
\end{equation}
Now let
\begin{equation}
\label{E:pin}
p_{i, n} = \frac{1}n \lf(\floor{-n p_{i+1, 2}^+} - \ceil{-n p_{i, 2}^+} + 1 \rg) \qquad \mathand \qquad q_{i, n} = \frac{p_{i, 1}^+ - p_{i+1}^+}{p_{i, n}^{2/3}}.
\end{equation}
By translation invariance of $X_n$, \eqref{E:XXKn} is equal in distribution to
\begin{equation}
\label{E:piiii}
\begin{split}
&\chi n^{-1/3} \bigg( X[(0, n p_{i, n}) \to ( \floor{\rho n p_{i, n}} + \floor{\tau (n p_{i, n})^{2/3} (q_{i, n} + \ep_n)}, 1)] \\
-\;&\al n p_{i, n} + \be \tau (n p_{i, n})^{2/3}(q_{i, n} + \ep_n) + \de_n \bigg),
\end{split}
\end{equation}
where $\ep_n$ and $\de_n$ are error terms satisfying $|\ep_n (n p_{i, n})^{2/3})|, |\de_n| \le c$ for some constant $c$ that is independent of the initial choice of $p$. We can then rewrite \eqref{E:piiii} in terms of $\scrA^{np_{i, n}}_1$ to get that 
	\begin{equation}
	\label{E:KnA1-explicit}
	\begin{split}
	\scrK_n(p_{i+1}^+, p_i^+) &\eqd O(n^{-1/3}) + p_{i, n}^{1/3}\scrA_{1}^{np_{i, n}}\lf(q_{i, n} + \ep_n\rg).
	\end{split}
	\end{equation}
	By \eqref{E:p-cone} and the error bound on $\ep_n$, we have that $q_{i, n} + \ep_n \in [-3,3]$ for large enough $n$ and $i \le k(n)$. Moreover, from the bound in \eqref{E:p-separation} we have that $p_{i, 2}^+ - p_{i+1, 2}^+ \ge 2^{-i}$ for all $i$. Therefore for any $n \in \N, p \in \R^2, i \le k(n)$, we have that $n p_{i, n} \ge c n^{1/4} - 1$ for some universal $c > 0$. These two facts will allow us to bound \eqref{E:KnA1-explicit} via \eqref{E:liminf-5mom} and Markov's inequality.
	
	More precisely, there exists $c', d > 0$ and $n_0 \in \N$ such that for all $n \ge n_0, \ep \ge c' n^{-1/3}, p \in \R^2,$ and $i \le k(n)$, we have
	\begin{equation}
		\label{E:be-ready}
	\begin{split}
	\p\lf(\scrK_n(p_{i+1}^+, p_i^+) \le -\ep \rg) &\le \sup_{y \in [-3, 3]} \p( \scrA^{n p_{i, n}}_1(y) \le - p_{i, n}^{-1/3}\ep/2) \\
	&\le d \ep^{-(5 + \de)} p_{i, n}^{(5 + \de)/3} \le  d \ep^{-(5 + \de)} {2}^{-(5 + \de)(i-1)/3}.
	\end{split}
	\end{equation}
	The third inequality follows from the bound in \eqref{E:p-separation}, which ensures that $p_{i, 2}^+ - p_{i+1, 2}^+ \le 2^{-i + 1}$ for all $i$. The same bound holds for $\scrK_n(q_i^-, q_{i+1}^-)$ for all $q \in \R^2$. Given \eqref{E:be-ready}, we can satisfy the second condition of Proposition \ref{P:prob-cond} for all $\ep > 0$ by taking $\be_{i, n}(\ep) = i^{-2}$ for all $n \ge n_0, i \ge i_0$ for some $\ep$-dependent natural numbers $n_0, i_0$.
\end{proof}

\begin{remark}
	\label{R:intro-cct}
	 Theorem \ref{T:intro-compact} can be extracted from the proof of Theorem \ref{T:lattice-models-i}. Indeed, the assumption of multi-time convergence to $\scrL$ used in that theorem is the only consequence of the Airy line ensemble convergence in Theorem \ref{T:lattice-models-i} that we uses in the proof above.
\end{remark}

All statements about convergence of last passage metrics to the directed landscape can be converted into statements that do not use the directed metric framework or the topological language of Theorems \ref{T:lattice-models}, \ref{T:lattice-models-i}. The statements become cleaner in this context, since we need not be concerned with preserving the metric structure of the prelimit when we embed. We demonstrate this here by translating the conclusion of Theorem \ref{T:lattice-models-i}. We have simplified the conclusions slightly here. We also include a statement about joint convergence to the Airy line ensemble here that will be of use in Section \ref{S:integrable}.

\begin{corollary}
	\label{C:Xiidmaster}
Let $X$ be an i.i.d.\ environment of nonnegative random variables and let $\rho, \chi, \tau, \al, \beta$ be positive numbers. Let $(x, s)_n = (\floor{\rho n s + \tau n^{2/3} x}, \floor{-ns})$, and define
\begin{equation*}
\scrL_{n, \Delta k} (x, s; y, t) = \frac{X_n[(x, s)_n \to_{\Delta k} (y, t)_n] - \be(t-s)n - \al\tau(y - x) n^{2/3}}{\chi n^{1/3}},
\end{equation*}
and write $\scrL_n := \scrL_{n, \Delta 1}$.
Suppose that $\{\scrL_{n, \Delta k}(0, 0; y, 1) : (y, k) \in \Z \X \R\}$ converges in distribution in the product-of-Skorokhod topologies to the Airy line ensemble. Then $\scrL_{n}$ can be coupled with the directed landscape $\scrL$ so that on a set $\Om$ of probability $1$, the following hold:
$$
\scrL_n(\cdot, s; \cdot, t) \to \scrL (\cdot, s; \cdot, t)
$$
compactly for all rational pairs $s, t$, and for any $u_n \to u \in \Rh$ we have
$$
\limsup_{n \to \infty} \scrL_n(u_n) \le \scrL(u).
$$
Now fix $(x, s; y, t) \in \Rd$, let $\pi_n$ be any sequence of last passage geodesics from $(x, s)_n$ to $(y, t)_n$, and let $\Om' \sset \Om$ be the set of probability one where $\scrL$ has a unique geodesic $\pi$ from $(x, s)$ to $(y, t)$. Then on $\Om'$, the sequence of rescaled geodesics
$$
\tilde \pi_n := \lf[\begin{array}{cc}
\tau^{-1} n^{-2/3} & -\rho \tau^{-1} n^{-2/3} \\
0 & -n^{-1} 
\end{array}\rg] \pi_n
$$
converges to $\pi$ in the Hausdorff topology. For this convergence, we view the geodesics $\pi_n$ as subsets of $\Z^2$, which are then in turn subsets of $\R^2$. Moreover, we can additionally couple $A^n_k(\cdot) = \scrL_{n, \Delta k}(0,0; \cdot, 1)$ so that $A^n_k(\cdot)$ converges to an Airy line ensemble $\scrA$, and $\scrA$ and the Airy sheet $\scrL(\cdot, 0; \cdot, 1)$ are coupled as in Remark \ref{R:airy-sheet-coup}.
 Finally, if in addition there exists $\de > 0$ such that
\begin{equation}
\label{E:Lnkexpt}
\limsup_{n \to \infty} \sup_{y \in [-3,3]} \expt(\scrL_{n}(0, 0; y, 1)^-)^{5 + \de} <\infty, 
\end{equation}
then the coupling can be set up so that $\scrL_n \to \scrL$ compactly on $\Rd$ on $\Om$.
\end{corollary}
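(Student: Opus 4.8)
The plan is to treat Corollary \ref{C:Xiidmaster} as essentially a repackaging of Theorem \ref{T:lattice-models-i}, since every hypothesis here is the specialization of the hypotheses there with $m_n = \floor{\rho n}$, $\chi_n = \chi n^{1/3}$, $\tau_n = \tau n^{2/3}$, $\al_n = \al$, $\be_n = \be$. First I would observe that the four scaling conditions in \eqref{E:scaling} are automatic for this choice: $\be_n/\chi_n = \be/(\chi n^{1/3}) \to 0$, $(\be_n m_n - \al_n n)/(n \chi_n) = O(1/(n^{1/3} \cdot n)) \to 0$ using $\be \floor{\rho n} - \al n = O(n)$ — wait, one must be slightly careful here: $\be\rho$ need not equal $\al$, so this term is $\Theta(n)/(n \cdot \chi n^{1/3}) = \Theta(n^{-1/3}) \to 0$, which is fine — and $1/\tau_n, m_n/(n\tau_n) \to 0$ trivially. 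So Theorem \ref{T:lattice-models-i} (equivalently, the $X_n = X$, $m_n = \floor{\rho n}$ case of Theorem \ref{T:lattice-models}) applies directly and produces a coupling with multi-time convergence on $S$, hypograph convergence, and geodesic convergence off holes, and additionally graph convergence and geodesic convergence everywhere under the fifth-moment assumption \eqref{E:Lnkexpt} (which is literally \eqref{E:liminf-5mom} after matching up $A^n_1(y) = \scrL_n(0,0;y,1)$).

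Next I would do the bookkeeping to convert the topological conclusions into the plainer statements asked for here. From hypograph convergence via Lemma \ref{L:hypo-equiv}, the multi-time convergence on $S$ gives $\scrL_n(\cdot, s; \cdot, t) \to \scrL(\cdot, s; \cdot, t)$ compactly for rational $s < t$ (here I would take $S = \bigcup_{s < t \in \Q} S_{s,t}$), and the $\limsup_{n} \scrL_n(u_n) \le \scrL(u)$ statement is exactly the second bullet of hypograph convergence. The geodesic statement: the rescaled geodesics $\tilde\pi_n$ are precisely the images of the lattice geodesics $\pi_n$ under $f_{\R^2 \to \Z^2} \circ L_n$ pulled back — more precisely, the matrix $\left[\begin{array}{cc} \tau^{-1}n^{-2/3} & -\rho\tau^{-1}n^{-2/3} \\ 0 & -n^{-1} \end{array}\right]$ is $L_n^{-1}$ composed with the coordinate reflection built into $f_{\R^2 \to \R\X\Z}$, so $\tilde\pi_n$ is, up to the negligible lattice-cell discrepancy, a $\scrK_n$-geodesic from a point converging to $(x,s)$ to a point converging to $(y,t)$. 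Then geodesic convergence off holes (Theorem \ref{T:geod-cvg-stronger}), together with the fact that on a probability-one set $\Om'$ there is a unique $\scrL$-geodesic $\pi$ from $(x,s)$ to $(y,t)$ (Remark \ref{R:landscape-geodesics}), gives $\tilde\pi_n \to \pi$ in the Hausdorff topology on $\Om'$. Under \eqref{E:Lnkexpt}, graph convergence holds, hence $\scrL_n \to \scrL$ compactly on $\Rd$ on $\Om$, which is the final clause.

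The one genuinely new ingredient — the joint coupling of $A^n_k(\cdot) = \scrL_{n, \Delta k}(0,0;\cdot,1)$ with an Airy line ensemble $\scrA$ and the Airy sheet $\scrL(\cdot,0;\cdot,1)$ as in Remark \ref{R:airy-sheet-coup} — I would extract from Step 3–4 of the proof of Theorem \ref{T:lattice-models}: there the Airy sheet marginal $\scrL_n(\cdot, s_n; \cdot, s_n + r_n)$ is shown (for $s_n \to 0$, $r_n \to 1$) to agree in distribution, up to $o(1)$, with $S_n(x + \de_n, y + \de_n + \ep_n)$ where $S_n$ is a line-model last passage process to which Theorem \ref{T:airy-sheet-gen} applies; and Remark \ref{R:airy-sheet-coup} asserts precisely that in that theorem one gets a coupling where $(S_n, A_n) \to (\scrS, \scrA)$ compactly with $(\scrS,\scrA)$ coupled as in Definition \ref{D:airy-sheet}. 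So I would invoke Remark \ref{R:airy-sheet-coup} with $s_n = 0$, $r_n = 1$, note that with $s_n=0$ the environment-reindexing shifts $\de_n, \ep_n$ vanish and the melon increments $A_n$ of that line environment are exactly the $A^n_k$ here (via Proposition \ref{P:melon-cor}), and transport the coupling back through the invertible pullback. The main obstacle — and it is more tedium than difficulty — is keeping the lattice-rounding error terms ($\de_n, \ep_n$, the $O(1)$ shears in the floor functions, and the $+$ in $(x+a_n,n)^+$) under control while matching the precise normalizations $\al, \be, \chi, \tau$ against those of Theorem \ref{T:lattice-models-i}; since all of these rounding errors are $O(n^{-1/3})$ or $O(n^{-2/3})$ and hence vanish in the limit, no new estimate is needed beyond what is already established in the proofs of Theorems \ref{T:lattice-models} and \ref{T:lattice-models-i}.
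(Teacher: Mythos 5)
Your proposal follows the same route as the paper's proof: both treat the corollary as a direct rephrasing of Theorem \ref{T:lattice-models-i} with $m_n = \floor{\rho n}$, translate the hypotheses ($\scrL_{n,\Delta k}(0,0;\cdot,1)$ differs from $A^n_k$ only by $O(n^{-2/3})$ in the argument and $O(n^{-1/3})$ in the value, and \eqref{E:Lnkexpt} gives \eqref{E:liminf-5mom}), transfer the topological conclusions via $\scrL_n(u) = \scrK_n(u + O(n^{-2/3})) + O(n^{-1/3})$, identify $\tilde\pi_n$ with $\scrK_n$-geodesics up to a lattice-cell Hausdorff discrepancy, and invoke Remark \ref{R:airy-sheet-coup} for the Airy sheet--line ensemble coupling. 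The extra steps you include (re-verifying \eqref{E:scaling}, writing out the $L_n^{-1}$ identification of the geodesic matrix, unwinding Remark \ref{R:airy-sheet-coup} from Steps 3--4 of Theorem \ref{T:lattice-models}) are already built into the proof of Theorem \ref{T:lattice-models-i} and do not change the argument.
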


\begin{proof}
This is essentially just a rephrasing of Theorem \ref{T:lattice-models-i}. 
Note that there is only a slight difference between the definitions of $\scrL_{n, \Delta k}(0, 0; y, 1)$ and $A^n_k(y)$ in Theorem \ref{T:lattice-models-i}, and between $\scrL_{n}$ and $\scrK_n$ from Theorem \ref{T:lattice-models-i}. In particular, we have that
$$
\scrA^n_k(y) = \scrL_{n, k}(0, 0; y + O(n^{-2/3}), 1) + O(n^{-1/3}),
$$
so convergence of $\scrL_{n, \Delta k}(0, 0; y, 1)$ to the Airy line ensemble $\scrA$ implies convergence of $\scrA^n_k(y)$ to the Airy line ensemble and \eqref{E:Lnkexpt} implies \eqref{E:liminf-5mom}. Also,
\begin{equation}
\label{E:LK-equivalence}
\scrL_{n}(u) = \scrK_n(u + O(n^{-2/3})) + O(n^{-1/3}),
\end{equation}
so all convergence statements about $\scrK_n$ pass over to convergence statements about $\scrL_{n, 1}$. The fact that we can couple the limiting Airy line ensemble $\scrA$ with the Airy sheet $\scrL(0, 0; \cdot, 1)$ follows from Remark \ref{R:airy-sheet-coup}.
Finally, it is straightforward to check that each of the geodesics $\tilde \pi_n$ is Hausdorff distance $O(n^{-2/3})$ from a geodesic in $\scrK_n$ from $(x,s)$ to $(y, t)$. The geodesic convergence in the corollary then follows from geodesic convergence off holes for $\scrK_n$.
\end{proof}

\begin{remark}
	\label{R:Xn-notin}
The results of this section only apply to nonnegative random variables as written. This is due to the technical constraints on the combinatorics of cadlag last passage percolation from Section \ref{S:lpp-cadlag}. Though we do not pursue this here, the results of this section could be modified to included more general random variables whose support is potentially all of $\R$. 

For example, for an i.i.d.\ environment of general random variables $X_u \in \R$ satisfying all the assumptions of Theorem \ref{T:lattice-models-i} except nonnegativity, for each $n$ we could replace the environment $X = \{X_u : u \in \R^2\}$ with a sequence of nonnegative environments $X^n = \{(X_u + c_n)^+ : u \in \R^2\}$ for a sequence of constants $c_n$ satisfying $\p(X_u + c_n \le 0) = o(n^{-2})$. On box of size $O(n) \X O(n)$, the two environments are equal with high probability up to a constant shift. Hence by shifting the second environment by shifted versions of original function $h$, we could still ensure the Airy line ensemble convergence in Theorem \ref{T:lattice-models}.
\end{remark}

\subsection{Exponential tightness}
\label{SS:exponential-tight}
If we strengthen certain tail bound assumptions in the setup of Theorem \ref{T:lattice-models-i} and Corollary \ref{C:Xiidmaster}, then we can move from distributional convergence to convergence of moments. 
We give an example of how to do this in the setting of Theorem \ref{T:lattice-models-i} with additional exponential control on one-point distributions.

\begin{theorem}[Upper exponential tightness]
\label{T:upper-exponential}
Consider the setup of Theorem \ref{T:lattice-models-i} without the additional assumption \eqref{E:liminf-5mom}. Suppose that there exists $a > 1, \mathfrak{p} > 0$ such that for every compact set $K \sset \Rd$, we have
\begin{equation}
\label{E:LS-infty}
\limsup_{n \to \infty} \sup_{u \in K} \expt a^{|\scrK_{n}(u)|^\mathfrak{p}} <\infty.
\end{equation}
Then there exists $b>1$ so that for every compact  $K\subset \Rd$
there exists $c>0$ so that for all large enough $n$ 
we have 
\[
\expt b^{\sup_K (\scrK_{n}^+)^\mathfrak{p}}<c.
\]
\end{theorem}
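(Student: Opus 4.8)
\textbf{Proof plan for Theorem \ref{T:upper-exponential} (upper exponential tightness).}

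The plan is to control $\sup_K \scrK_n^+$ by the supremum over a deterministic grid plus a chaining remainder, exactly mirroring the structure of Proposition \ref{P:prob-cond} and Corollary \ref{C:true-bound-for-holes} but now applied to $-\scrK_n$ (since $\scrK_n$ is of negative sign, $-\scrK_n$ is of positive sign, and $\sup_K \scrK_n^+ = \sup_K (-\scrK_n)^-$ when we look at it the right way). First I would fix a compact product $K = A \X B \sset \Rd$ (any compact set in $\Rd$ is contained in a finite union of such products, and a finite union costs only a constant factor after applying $x \mapsto b^{x^{\mathfrak p}}$ and H\"older, so this reduction is harmless). Let $k(n) = \ceil{3 \log_2 n /4}$ as in the proof of Theorem \ref{T:lattice-models-i}, and recall the dyadic approximating points $p_k^\pm$ from Section \ref{S:graph-cvg}. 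The key identity is the iterated triangle inequality from Lemma \ref{L:chaining-data}, but applied to $-\scrK_n$: for $(p, q) \in K$,
\begin{equation*}
-\scrK_n(p, q) \le -\scrK_n(p^+_{k(n)}, q^-_{k(n)}) + \sum_{i=0}^{k(n)-1} \big( -\scrK_n(p^+_i, p^+_{i+1}) \big) + \sum_{i=0}^{k(n)-1}\big( -\scrK_n(q^-_{i+1}, q^-_i)\big) + \big(-\scrK_n(p, p^+_{k(n)})\big) + \big(-\scrK_n(q^-_{k(n)}, q)\big),
\end{equation*}
which follows by writing $-\scrK_n(p,q)\le \sum$ of consecutive increments along the chain $p \to p^+_{k(n)} \to p^+_{k(n)-1}\to \dots\to p^+_0$, then $p^+_0 \to q^-_0$ (the coarse grid), then back down $q^-_0 \to \dots \to q^-_{k(n)} \to q$, and using the triangle inequality of the positive-sign metric $-\scrK_n$. (Here I am implicitly using that for the relevant large $n$ the base points $p^+_0, q^-_0$ lie in a fixed compact set, so $-\scrK_n(p^+_0, q^-_0)$ is one of finitely many coarse-grid values, and the "tail" terms $-\scrK_n(p, p^+_{k(n)})$, $-\scrK_n(q^-_{k(n)}, q)$ are bounded by the deterministic lower bound on $\scrK_n$ across the $k(n)$-th grid, of size $\chi n^{-1/3}\Theta(n^{1/4})\to 0$, exactly as in \eqref{E:detlower}.)

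Next I would bound $\expt b^{(\sup_K(-\scrK_n)^-)^{\mathfrak p}}$. Since the number of increments in the chaining sum is $\le 2 k(n) + 3 = O(\log n)$ and the $p^+_i$-level grid has $O(2^{5i/3})$ points, a union bound over all chains gives at most $C n^{C'} \cdot \prod_i 2^{5i/3}$ terms; combined with the sub-additivity inequality $(\sum_j x_j)^{\mathfrak p} \le C_{\mathfrak p, N} \sum_j x_j^{\mathfrak p}$ for $N$ terms (which costs a factor polynomial in $N = O(\log n)$, hence sub-exponential) and a generalized H\"older inequality to split $\expt b^{(\sum x_j)^{\mathfrak p}} \le \prod_j (\expt b^{N^{\mathfrak p - 1} C_{\mathfrak p} x_j^{\mathfrak p}})^{1/N}$, one reduces everything to bounding a single increment's exponential moment: $\expt a'^{|\scrK_n(p^+_{i+1}, p^+_i)|^{\mathfrak p}}$ for a slightly smaller base $a' > 1$. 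Now the increment $\scrK_n(p^+_{i+1}, p^+_i)$, by the translation-invariance and explicit rewriting in \eqref{E:KnA1-explicit} (which says it equals in distribution $O(n^{-1/3}) + p_{i,n}^{1/3}\scrA_1^{n p_{i,n}}(q_{i,n} + \ep_n)$ with $q_{i,n}+\ep_n \in [-3,3]$ and $p_{i,n} \asymp 2^{-i}$, $n p_{i,n}\ge c n^{1/4}$), is distributed as $p_{i,n}^{1/3}$ times a one-point value on a mesh of size $\asymp 2^{-i}$; so the hypothesis \eqref{E:LS-infty} with exponent $\mathfrak p$, applied at the finitely many meshes (which all contain the relevant points in their $n^{-2/3}$-neighborhood for large $n$ and whose exponential moments are uniformly bounded by a sup over a fixed compact set), gives $\sup_n \sup_i \sup_p \expt a^{|\scrK_n(p^+_{i+1},p^+_i)|^{\mathfrak p}} < \infty$ after absorbing the $p_{i,n}^{1/3} < 1$ scaling into a smaller base. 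Here the concavity-type inequality $|x+y|^{\mathfrak p} \le 2^{(\mathfrak p - 1)^+}(|x|^{\mathfrak p} + |y|^{\mathfrak p})$ handles the $O(n^{-1/3})$ error cleanly.

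Putting it together: choose $b > 1$ small enough (depending on $\mathfrak p$ and on the constants $a$ from \eqref{E:LS-infty}, but not on $K$) so that after the H\"older split and the sub-additivity loss, each factor in the product is $\le \expt a^{|\text{increment}|^{\mathfrak p}} \le M$ for a universal $M$; then $\expt b^{\sup_K(\scrK_n^+)^{\mathfrak p}} \le (\text{number of chains}) \cdot M^{O(\log n)} \cdot (\text{factor for coarse grid and tail terms})$. The number of chains is $\exp(O((\log n)^2))$ --- genuinely super-polynomial --- so naively this blows up. \textbf{The main obstacle, and the key trick:} rather than union-bounding over all chains directly, I would estimate $\expt \exp(b(\sup_K(\scrK_n^+))^{\mathfrak p})$ via $\expt \sup \le \sum \expt$ \emph{after} noting that the grid spacing $\asymp 2^{-i}$ at level $i$ means the contribution of level $i$ to the exponent is of order $2^{-i/3}$ (the scaling factor) times something with uniformly-bounded exponential moment, so the series $\sum_i 2^{-i/3}(\text{one-point})$ is exponentially tight with a base uniform in the number of terms --- this is exactly the point of the $\be_{i,n}(\ep)$ summability in \eqref{E:elln-infty}, now upgraded from probabilities to exponential moments. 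Concretely: the $2^{5i/3}$ grid points at level $i$ each contribute a term of size $\asymp 2^{-i/3} Y_{i,p}$ with $\expt a^{Y_{i,p}^{\mathfrak p}} \le M$, and $\sum_p$ over the $2^{5i/3}$ points of the probability that $2^{-i/3}Y_{i,p}$ is large decays like $2^{5i/3} \cdot a^{-c(2^{i/3}t)^{\mathfrak p}}$, summable in $i$ for every $t$; converting this tail bound back into an exponential moment bound for $b$ small gives a geometric series $\sum_i 2^{5i/3} \expt b' (2^{-i/3}Y_{i,p})^{\mathfrak p}$ that converges. So the correct organization is: bound the tail $\p(\sup_K \scrK_n^+ > t)$ using Corollary \ref{C:true-bound-for-holes}'s chaining bound with the exponential-moment-derived $\be_{i,n}$, getting $\p(\sup_K\scrK_n^+ > t) \le C_K e^{-d_K t^{\mathfrak p}}$ uniformly in large $n$, and then integrate: $\expt b^{(\sup_K \scrK_n^+)^{\mathfrak p}} = 1 + \int_0^\infty b^{t^{\mathfrak p}} \mathfrak p t^{\mathfrak p - 1}\log b \cdot \p(\sup_K\scrK_n^+ > t)\, dt < \infty$ whenever $\log b < d_K$. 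Taking $b$ uniform over $K$ requires the constant $d_K$ in the tail bound to be bounded below over the compact sets we care about, which it is because $\mathfrak p$ and $a$ are fixed and the relevant one-point exponential moments are controlled by a single sup over a fixed compact neighborhood. This last integration step, and making the tail constant $d_K$ explicit enough to choose $b$ before $K$, is the part that needs the most care.
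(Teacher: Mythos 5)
Your proposal takes a chaining route, but there is a fundamental direction error that makes the whole plan collapse. Chaining a directed metric of negative sign by iterating the reverse triangle inequality---which is precisely the content of Lemma \ref{L:chaining-data}, Proposition \ref{P:prob-cond}, and Corollary \ref{C:true-bound-for-holes}---produces a \emph{lower} bound on $\scrK_n(p,q)$. Your display (after fixing the typos $-\scrK_n(p^+_i, p^+_{i+1}) \mapsto -\scrK_n(p^+_{i+1}, p^+_i)$ and the coarse-grid term $p^+_{k(n)}\to q^-_{k(n)} \mapsto p^+_0 \to q^-_0$) is equivalent to
$$
\scrK_n(p, q) \ge \scrK_n(p, p^+_{k(n)}) + \sum_{i} \scrK_n(p^+_{i+1}, p^+_i) + \scrK_n(p^+_0, q^-_0) + \sum_i \scrK_n(q^-_i, q^-_{i+1}) + \scrK_n(q^-_{k(n)}, q),
$$
i.e., $\scrK_n(p,q)$ is \emph{not too negative}. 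Negating to a positive-sign metric $-\scrK_n$ does not change which tail you can reach: $\sup_K(-\scrK_n)\le \text{(grid sup)}+\text{(increments)}$ is $-\inf_K\scrK_n \le \text{(\ldots)}$, the same lower-tail statement. Chaining therefore controls $\p(\inf_K \scrK_n<-m)$ (used in Theorem \ref{T:lower-exponential}), but gives no control whatsoever on $\p(\sup_K\scrK_n>m)$, which is what $\expt b^{\sup_K(\scrK_n^+)^{\mathfrak p}}$ requires. Your appeal to Corollary \ref{C:true-bound-for-holes} in the final paragraph is a category error for the same reason: it bounds $\p(\inf_K\scrK_n < \text{grid inf} - \ep)$, never $\p(\sup_K\scrK_n > t)$.

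To control $\sup_K\scrK_n$ from above one must go the other way around the triangle inequality: pick reference points $p_0, q_0$ with $(p_0, q_0)\nearrow (p, q)$ for all $(p,q)\in K$, so $\scrK_n(p,q)\le \scrK_n(p_0,q_0)-\scrK_n(p_0,p)-\scrK_n(q,q_0)$. This is exactly what the set-to-set maximal inequality (Lemma \ref{L:doob-lpp-boot}, or its rescaled form Corollary \ref{C:limit-setting}) does, and crucially it converts the resulting set-infima into \emph{one-point} probabilities via a stopping-time argument exploiting temporal independence---no chaining, no union bound over grids. The paper's proof is then one paragraph: for a product $K=A\X B\sset\Rd$, fix $p,q$ with $\{p\}\X A, B\X\{q\}\sset\Rd$, apply Corollary \ref{C:limit-setting} with $c=m$ and $\ep=2m/3$ to get $\p(\sup_K\scrK_n>m)\le \p(\scrK_n(p,q)>m/3) + \sup_{a\in A}\p(\scrK_n(p,a)<-m/3)+\sup_{b\in B}\p(\scrK_n(b,q)<-m/3)$, then bound each term by $ca^{-(m/3)^{\mathfrak p}}$ via Markov and \eqref{E:LS-infty}, and integrate the tail. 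This yields the uniform constant $b=a^{1/3^{\mathfrak p}}$ directly, without any of the $O(\log n)$-term H\"older splits or super-polynomial chain counts you were wrestling with (which were themselves symptoms of the approach being the wrong tool).
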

\begin{proof}
It suffices to prove this for compact product sets $A \X B \sset \Rd$. 
Fix points $p, q$ with $\{p\} \X A, B \X \{q\} \sset \Rd$. Let $m > 0$. We apply Corollary \ref{C:limit-setting} to $\scrK_n$ with $c=m$, $\ep = 2m/3$, and the given points $p, q$. This gives 
\begin{equation*}
    \p\lf(m < \sup_{u \in A \X B} \scrK_n(u) \rg) \le \p(K_n(p, q) > m/3) + \sup_{a \in A} \p(\scrK_n(p, a) < -m/3) +  \sup_{b \in B} \p \lf( \scrK_n(b, q) < -m/3\rg).
\end{equation*}
By Markov's inequality and assumption \eqref{E:LS-infty}, there exists $c > 0$ such that for all large enough $n$, the right hand side of this expression is bounded above by
$
c a^{- m^\mathfrak{p}/3^\mathfrak{p}},
$
yielding the result for every $1 < b < a^{1/3^\mathfrak{p}}$.
\end{proof}

\begin{theorem}[Lower exponential tightness]
\label{T:lower-exponential}
In the setup of Theorem \ref{T:lattice-models-i}, suppose additionally that there exists $a > 1, \mathfrak{l} > 0$, such that for every compact set $K \sset \Rd$ we have
\begin{equation}
\label{E:LSn-infty}
\limsup_{n \to \infty} \sup_{u \in K} \expt a^{(\scrL_{n}(u)^-)^\mathfrak{l}} <\infty.
\end{equation}
Then there exists $b > 1$ such that for every compact  $K\subset \Rd$ there exists $c>0$ so that for all large enough $n$ 
we have 
\[
\expt b^{\sup_K (\scrL_{n}^-)^\mathfrak{l}}<c.
\]
\end{theorem}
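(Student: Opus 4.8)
The plan is to mirror the structure of the proof of Theorem~\ref{T:upper-exponential}, but now using the chaining estimate of Corollary~\ref{C:true-bound-for-holes} in place of the set-to-set maximal inequality, together with hypograph convergence to pin down the ``bulk'' behaviour. Fix a compact product set $A \X B \sset \Rd$; a general compact $K$ is covered by finitely many such products. The starting observation is that
\[
\sup_{A \X B} \scrL_n^- = -\inf_{A \X B} \scrL_n \le -\inf_{u \in A^+_m \X B^-_m} \scrL_n(u) + \lf(\inf_{u \in A^+_m \X B^-_m} \scrL_n(u) - \inf_{A \X B}\scrL_n \rg)^+,
\]
so it suffices to get exponential control on the grid minimum $-\inf_{u \in A^+_m \X B^-_m} \scrL_n(u)$ for a single fixed (small) $m$, and a separate exponential tail bound on the ``hole'' term. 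The grid $A^+_m \X B^-_m$ is finite with a number of points depending only on $A, B$ and $m$, so a union bound over it reduces the first piece to the one-point assumption \eqref{E:LSn-infty}: $\expt b^{(-\inf_{u \in A^+_m \X B^-_m}\scrL_n(u))^{\mathfrak l}} \le |A^+_m \X B^-_m| \max_u \expt b'^{(\scrL_n(u)^-)^{\mathfrak l}}$ for $b$ slightly smaller than $a$ (using $(\max_i x_i)^{\mathfrak l} \le \sum_i x_i^{\mathfrak l}$ when $\mathfrak l \le 1$, or a convexity/Jensen splitting when $\mathfrak l > 1$).

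For the hole term, the key input is Corollary~\ref{C:true-bound-for-holes}, which gives for $m \ge 2$ and $\ep > 0$ a bound on $\p\big(\inf_{A\X B}\scrL_n < \inf_{A^+_m\X B^-_m}\scrL_n - \ep\big)$ in terms of $\p(\eta^\pm_{k(n)} < -\ep/4)$ and $\sum_{i=m}^{k(n)} \be_{i,n}(\ep/4)$. Revisiting the proof of Theorem~\ref{T:lattice-models-i}: there we took $k(n) = \ceil{3\log_2 n/4}$, the terms $\eta^+_{k(n)}(\scrK_n,A)$ were bounded \emph{deterministically} below by something tending to $0$ (since $X$ is nonnegative, the only cost along the short final link is the deterministic additive metric, which is $O(n^{-1/12})$), and the bounds \eqref{E:be-ready} gave $\sup_p \p(\scrK_n(p^+_{i+1},p^+_i) \le -\ep) \le d\ep^{-(5+\de)} 2^{-(5+\de)(i-1)/3}$ via a $(5+\de)$-moment estimate. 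Under the strengthened hypothesis \eqref{E:LSn-infty} one replaces Markov's inequality applied to the $(5+\de)$-th moment by Markov applied to $\expt a^{(\scrK_n(p^+_{i+1},p^+_i)^-)^{\mathfrak l}}$ at level $\ep/(2i^2)$: since $\scrK_n(p^+_{i+1},p^+_i) \eqd O(n^{-1/3}) + p_{i,n}^{1/3}\scrA^{np_{i,n}}_1(q_{i,n}+\ep_n)$ with $q_{i,n}+\ep_n \in [-3,3]$ and $n p_{i,n} \ge c n^{1/4}$, one gets $\p(\scrK_n(p^+_{i+1},p^+_i) \le -\ep/(2i^2)) \le C \exp\big(-c'(\ep/(i^2 p_{i,n}^{1/3}))^{\mathfrak l}\big) \le C\exp\big(-c''(\ep 2^{i/3}/i^2)^{\mathfrak l}\big)$; absorbing the $2^{5i/3}$ cardinality factor of the grid at scale $i$ into $\be_{i,n}(\ep)$, one checks $\sum_{i \ge m}\be_{i,n}(\ep) \le g(\ep,m)$ with $g$ decaying (super-)geometrically in $m$ for fixed $\ep$ and, crucially, decaying exponentially in $\ep^{\mathfrak l}$ for fixed $m$. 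Thus for a fixed small $m$ one obtains $\p\big(\sup_{A\X B}\scrL_n^- > \sup_{A^+_m\X B^-_m}\scrL_n^- + \ep\big) \le C e^{-c \ep^{\mathfrak l}}$ uniformly in large $n$, which after integrating $\int_0^\infty b^{t^{\mathfrak l}}\,d\p(\text{hole} > t)$ gives the exponential moment bound on the hole term for $b$ close enough to $1$. Combining with the grid-minimum bound via the elementary inequality $b^{(x+y)^{\mathfrak l}} \le (b^{x^{\mathfrak l}})^{2^{\mathfrak l}}(b^{y^{\mathfrak l}})^{2^{\mathfrak l}}$ (or Cauchy--Schwarz after this split) and Hölder's inequality yields $\expt b'^{\sup_K(\scrL_n^-)^{\mathfrak l}} < c$ for some $b' > 1$ and all large $n$.

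The main obstacle I anticipate is bookkeeping rather than conceptual: one must verify that the chaining bounds $\be_{i,n}(\ep)$ from Theorem~\ref{T:lattice-models-i}, when upgraded from polynomial to exponential tails, still sum over $i \in [m, k(n)]$ to something that both (a) tends to $0$ as $m \to \infty$ uniformly in $n$ and (b) has the right exponential-in-$\ep^{\mathfrak l}$ decay to survive the integration $\int b^{t^{\mathfrak l}}d\p$. Step (b) is delicate because the exponent $2^{i/3}/i^2$ is smallest at $i = m$, so the tail decay rate of the hole term is governed by the coarsest scale $m$; one needs $m$ fixed (not growing with $n$) but large enough that $\sum_{i\ge m}\be_{i,n}(\ep/4) \to 0$, and then the $e^{-c(\ep 2^{m/3}/m^2)^{\mathfrak l}}$ factor gives a genuine (if $m$-dependent) exponential rate. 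A secondary point to handle carefully is the case $\mathfrak l < 1$ versus $\mathfrak l \ge 1$ in the various ``$(x+y)^{\mathfrak l}$'' splittings, but this is routine. The nonnegativity of $X$ is used exactly as in Theorem~\ref{T:lattice-models-i} to make the $\eta^\pm_{k(n)}$ terms deterministically negligible, so no extra assumption on the left tail beyond \eqref{E:LSn-infty} is needed.
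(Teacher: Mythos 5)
Your proposal is correct and mirrors the paper's proof almost exactly: both apply Corollary~\ref{C:true-bound-for-holes} with exponentially-decaying $\be_{i,n}$'s obtained by upgrading the chaining bounds \eqref{E:be-ready} from Theorem~\ref{T:lattice-models-i} via Markov's inequality applied to the exponential moment hypothesis \eqref{E:LSn-infty}, note the deterministic vanishing of the $\eta^\pm_{k(n)}$ terms, and control the grid minimum by a union bound over the finite mesh together with the one-point assumption. The worry you flag about the hole-term rate being ``$m$-dependent'' is easily dismissed: since $2^{\mathfrak l i/3} i^{-2\mathfrak l}$ is bounded below uniformly over $i \ge 2$, the exponent $c_{\mathfrak l}$ in the bound $\sum_{i\ge m}\be_{i,n}(b/2) \le c_K \exp(-c_{\mathfrak l} b^{\mathfrak l})$ can be taken independent of $m$, hence of $K$, so the base $b$ is $K$-independent as the theorem requires.
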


\begin{proof}
As in the proof of Theorem \ref{T:upper-exponential}, we let our compact set $K$ be of the form $A \X B \sset \Rd$.
We will use the notation of Proposition \ref{P:prob-cond}. Our aim is to appeal to Corollary \ref{C:true-bound-for-holes}. 

We follow the proof of Theorem \ref{T:lattice-models-i}. With the choice $k(n) = \floor{3 \log_2(n)/4}$ as in that proof, condition \eqref{E:detlower} shows that for large enough $n$,
\begin{equation}
\label{E:eta-kn}
\p(\eta_{k(n)}^+(\scrK_n, A) < -1) + \p(\eta_{k(n)}^-(\scrK_n, B) < -1) = 0.
\end{equation}
Next, recalling \eqref{E:be-ready}, there exists $c' > 0$ and $n_0 \in \N$ such that for all $n \ge n_0, \ep \ge c' n^{-1/3}, p \in \R^2,$ and $i \le k(n)$, we have
\begin{equation}
		\label{E:be-ready-2}
	\p\lf(\scrK_n(p_{i+1}^+, p_i^+) \le -\ep \rg) \le \sup_{y \in [-3, 3]} \p( \scrA^{n p_{i, n}}_1(y) \le - p_{i, n}^{-1/3}\ep/2).
	\end{equation}
As in the proof of Theorem \ref{T:lattice-models-i}, the quantity $n p_{i, n} \to \infty$ uniformly over $p \in \R^2$ and $i \le k(n)$, so by Markov's inequality and the assumptions of the theorem, there exists $d, c_\mathfrak{l} > 0$ such that for all $n \ge n_0, \ep \ge c' n^{-1/3}, p \in \R^2,$ and $i \le k(n)$, the hand right side of \eqref{E:be-ready-2} is bounded above by
$$
d \exp ( -\log (a) p_{i, n}^{-\mathfrak{l}/3}\ep^\mathfrak{l}) \le  d \exp ( -c_\mathfrak{l} 2^{\mathfrak{l} i/3}\ep^\mathfrak{l}).
$$
The same bound holds in the same parameter range for $	\p\lf(\scrK_n(p_{i}^-, p_{i+1}^-) \le -\ep \rg)$. Therefore the assumptions of Corollary \ref{C:true-bound-for-holes} hold with 
$\be_{i, n}(\ep) = d \exp ( -c_\mathfrak{l} 2^{\mathfrak{l} i/3}\ep^\mathfrak{l} i^{-2\mathfrak{l}}) 2^{5 i /3}$ for all $n \ge n_0$ and $\ep \ge c' n^{-1/3} k(n)^2$. In particular, as long as $n_0$ is large enough we can use this value of $\be_{i, n}(\ep)$ for all $\ep > 1$.

Next, let $m \ge 2$ be the smallest integer so that $A_m^+ \X B_m^- \sset \Rd$ and let $b > 2$. Then with notation as in Section \ref{S:graph-cvg},
\begin{equation}
\label{E:sup-double-bd}
\p( \sup_K \scrK^-_n > b) \le \p\lf( \sup_{A_m^+ \X B_m^-} \scrK^-_n > b/2\rg) + \p\lf( \inf_K \scrK_n - \inf_{A_m^+ \X B_m^-} \scrK_n < - b/2 \rg)
\end{equation}
By \eqref{E:LSn-infty}, a union bound, and Markov's inequality,  the first probability on the right side of \eqref{E:sup-double-bd} is bounded above by $c_{K} a^{-b^\mathfrak{l}/2^\mathfrak{l}}$, where $c_K > 0$ is a $K$-dependent constant. By Corollary \ref{C:true-bound-for-holes}, the second term in \eqref{E:sup-double-bd} is bounded above by $c_K \sum_{i=m}^{k(n)} \be_{i, n}(b/2)$. By the bounds on $\be_{i, n}$, this is bounded above by $c_K \exp (-c_\mathfrak{l} b^\mathfrak{l})$, and so \eqref{E:sup-double-bd} is also bounded above by $c_K \exp (-c_\mathfrak{l} b^\mathfrak{l})$, yielding the result.
\end{proof}

\section{Convergence of integrable last passage models}
\label{S:integrable}

We now use the results of the previous sections to prove convergence of specific models of last passage percolation to the Airy sheet and the directed landscape. This section will cover the proofs of Theorems \ref{T:intro-main} and \ref{T:intro-geod}. The language of convergence in our main theorems here is slightly different than in the introduction, we comment more on this in Remark \ref{R:intro-diff-main}.

\subsection{Integrable zoo}
\label{S:zoo}

We first prove multi-time convergence, hypograph convergence, and geodesic convergence off holes for geometric last passage percolation in all feasible parameter directions by checking the conditions of Theorem \ref{T:lattice-models}. We then appeal to coupling arguments to extend this to other integrable last passage models.

\subsubsection{Geometric environment}

\begin{theorem}
	\label{T:geom-lpp}
	For each $\ga \in (0, \infty)$, let $X_{\ga} = \{X_{\ga}(u) : u \in \Z^2\}$ be an array of i.i.d.\ geometric random variables on $\{0, 1, 2, \dots\}$ of mean $\ga$. Let $n, m_n, \ga_n$ be any sequences satisfying
	\begin{equation*}
n \to \infty, \qquad m_n\to\infty, \qquad n m_n \ga_n\to\infty.
	\end{equation*}
	Then there exist parameters $\chi_n, \tau_n, \al_n, \be_n$ such that with the sequence of environments $\scrK_n$ defined as in Theorem \ref{T:lattice-models} from the i.i.d.\ environments $X_{\ga_n}$, the following holds: for any finite or countable union $S = \bigcup S_{s_i, t_i}$, there is a coupling of $\scrK_n$ and $\scrL$ satisfying multi-time convergence on $S$, hypograph convergence, and geodesic convergence off holes. See the beginning of Section \ref{S:cvg-lattice} for definitions.
	
Define $\chi_n, \tau_n, \al_n, \be_n$ as follows. For each $n,m\in \mathbb{N}$ and $\ga \in (0,\infty)$, define the arctic curve:
\begin{equation}\label{E:arctic1}
g_{n,\ga}(m) = (m + n)\ga + 2 \sqrt{mn \ga(1 + \ga)},
\end{equation}
which is the deterministic approximation of the last passage value $X_\ga[(0,n) \to (m, 1)]$.
	We define all scaling parameters in terms of the value of the arctic curve $g=g_{n,\ga_n}$ and its derivatives $g',g''$ evaluated at $m_n$:
	\begin{equation}
	\label{E:tau-xi-geom}
n\al_n = g, \qquad \be_n = g', \qquad \tau_n^3 = \frac{2 g'(1 + g')}{g''^2}, \qquad \chi_n^3 = \frac{ (g'(1+g'))^{2}}{ -2g''}.
	\end{equation}
\end{theorem}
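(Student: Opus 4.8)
The plan is to apply Theorem \ref{T:lattice-models} to the sequence of i.i.d.\ environments $X_{\ga_n}$, so the entire proof reduces to verifying its two hypotheses: (a) the Airy line ensemble convergence \eqref{E:Arn-k} for the multi-point last passage values, and (b) the four scaling conditions in \eqref{E:scaling}. The scaling parameters $\chi_n, \tau_n, \al_n, \be_n$ are \emph{defined} in \eqref{E:tau-xi-geom} precisely so that the Taylor expansion of the arctic curve $g_{n, \ga_n}$ near $m_n$ matches the parabolic profile of the Airy line ensemble; this is the content of the ``scaling heuristics'' remark in the introduction, and concretely one checks $g_{n,\ga_n}(m_n + \tau_n n^{2/3} y) = n\al_n + \be_n \tau_n n^{2/3} y - \chi_n n^{1/3} y^2 + o(\chi_n n^{1/3})$ by a second-order Taylor expansion, using the definitions of $\tau_n^3$ and $\chi_n^3$ to make the linear and quadratic coefficients come out right. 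The remaining derivative bookkeeping (that higher-order terms are genuinely $o(\chi_n n^{1/3})$ uniformly on compacts in $y$) is a routine but slightly involved calculation with the explicit formula \eqref{E:arctic1}.

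Next I would verify the scaling conditions \eqref{E:scaling}. Recall these amount to two statements: first, $\be_n/\chi_n \to 0$ and $(\be_n m_n - \al_n n)/(n\chi_n) \to 0$, which say the additive correction $h_n$ varies by $o(\chi_n)$ across adjacent lattice sites; second, $1/\tau_n \to 0$ and $m_n/(n\tau_n) \to 0$, which say the inverse linear map $L_n^{-1}$ contracts lattice edges to length $o(1)$. All four follow from the asymptotics of $g, g', g''$ as $n, m_n, n m_n \ga_n \to \infty$: for instance $g' = \ga_n + \sqrt{n\ga_n(1+\ga_n)/m_n}$ and $g'' = -\tfrac12 \sqrt{n\ga_n(1+\ga_n)}\, m_n^{-3/2}$, from which $\tau_n^3 \asymp m_n^2/(n\ga_n(1+\ga_n)) \cdot (\text{stuff})$ and $\chi_n^3 \asymp$ a growing quantity; one then checks each ratio tends to zero in the stated parameter regime. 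This is a matter of carefully tracking which of $n$, $m_n$, $\ga_n$ dominates, splitting into cases as needed (e.g.\ $\ga_n$ bounded away from $0$ and $\infty$ versus $\ga_n \to 0$), but involves no new ideas.

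The substantive input is hypothesis (a): joint convergence in the product-of-Skorokhod topology of the processes $A^{r_n}_{n,k}$ of \eqref{E:Arn-k} to a parabolic Airy line ensemble of scale $r^{1/3}$. This is exactly the kind of statement supplied by the companion paper \cite*{DNV}; here the plan is to cite that $\scrD^n_0$-melon convergence result for the geometric ensemble. The point is that $X_{\ga_n}[(0, \floor{r_n n}) \to_{\Delta k} (\cdot, 1)]$ is, via Proposition \ref{P:melon-cor} and Definition \ref{D:intro-melon}, a marginal of the melon $W f^{X_{\ga_n}}$, and the melon of a geometric last passage environment is an ensemble of nonintersecting geometric random walks whose scaling limit is the parabolic Airy line ensemble --- this is classical via the determinantal structure / \cite*{prahofer2002scale}, and the process-level statement in the cadlag framework is \cite*{DNV}. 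One must check that the centering $\al_n \floor{r_n n} + \tau_n y \be_n$ and scaling $\chi_n$ in \eqref{E:Arn-k} agree with the ones coming out of that theorem; but by the Taylor-expansion computation above, the arctic-curve-based parameters \eqref{E:tau-xi-geom} are precisely the correct ones, and the scale factor $r^{1/3}$ appears because replacing $n$ by $r_n n$ (and correspondingly $m_n$ by $r_n m_n$, keeping $\ga_n$ fixed) multiplies $\chi_n$ by $r^{1/3}$ and $\tau_n$ by $r^{2/3}$ to leading order --- which is the $1$-$2$-$3$ scaling.

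The main obstacle is thus packaging the Airy line ensemble convergence for the \emph{varying} geometric environments $X_{\ga_n}$ with possibly $\ga_n \to 0$ or $\ga_n \to \infty$ and with $m_n$ not proportional to $n$: the cited \cite*{DNV} convergence must be quoted in a form uniform enough to cover this regime, and one must confirm that the stated parameters are what that theorem produces after the change of variables. Once (a) is in hand, applying Theorem \ref{T:lattice-models} with $X_n = X_{\ga_n}$ immediately yields multi-time convergence on $S$, hypograph convergence, and geodesic convergence off holes, completing the proof. I would end with a short remark that the case $m_n = \floor{\rho n}$, $\ga_n = \ga$ fixed recovers the cleaner $n^{1/3}, n^{2/3}$ scaling of Theorem \ref{T:lattice-models-i}, with the explicit constants $\chi, \tau, \al, \be$ in the introduction's table obtained by evaluating \eqref{E:tau-xi-geom} at $g_{n,\ga}(\floor{\rho n})$.
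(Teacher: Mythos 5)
Your proposal is correct and follows essentially the same route as the paper: check the scaling conditions \eqref{E:scaling} by direct computation, cite the companion paper (Theorem 1.1 of \cite*{DNV}) for the Airy line ensemble convergence of \eqref{E:Arn-k}, and apply Theorem \ref{T:lattice-models}. The paper's version is just more explicit at a couple of points — it verifies \eqref{E:scaling} in cubed form via the substitution $\xi = \sqrt{\ga_n(1+\ga_n)n/m_n}$ and notes that the centering in DNV differs from the one in \eqref{E:Arn-k} by $O(\be_n/\chi_n) \to 0$ — and one small notational slip in your sketch: the factors $n^{2/3}$, $n^{1/3}$ appearing in your Taylor-expansion display should not be there, since in the general Theorem \ref{T:lattice-models} setting (unlike Theorem \ref{T:lattice-models-i}) they are already absorbed into $\tau_n$ and $\chi_n$.
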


In this and all remaining theorems in this section, we have matched our theorem presentation above with that of \cite*{DNV}, so that the interested reader can easily refer back to the results we use in that paper. One can easily check that setting $\rho = m_n/n$ in all theorems, that the scaling parameters match those in the table prior to Theorem \ref{T:intro-main}. 

The scaling assumptions in Theorem \ref{T:geom-lpp} are optimal. The condition that $n m_n \ga_n \to \infty$ ensures that the mean total weight of the random variables in the box $[0, m_n] \X [1, n]$ tends to infinity. If this is not the case, then $X_n[(0, n) \to (m_n, 1)]$ could not rescale to a continuous random variable whose support is all of $\R$.

\begin{proof}
We first check the four scaling conditions in \eqref{E:scaling}. For this, we drop the subscripts $n$ in order to avoid cluttered notation. Because the scaling relationships in \eqref{E:tau-xi-geom} are expressed in terms of cubes, it will be easier to check that
 \begin{equation}
 \label{E:scaling2}
 \frac{\be^3}{\chi^3} \to 0, \qquad \frac{|\be m - \al n|^3}{(n \chi)^3} \to 0, \qquad \frac{1}{\tau^3} \to 0, \qquad \frac{m^3}{n^3 \tau^3} \to 0.
 \end{equation}
 Letting $\xi = \sqrt{\ga(1 + \ga)n/m}$, we can calculate that
 $$
 \frac{\be_n^3}{\chi_n^3} = \frac{(\ga + \xi)\xi}{(1 + \ga + \xi)^2 m} \le \frac{1}m.
 $$
 This tends to zero since $m \to \infty$ with $n$. Next, we calculate that
 \begin{align}
\nonumber
 \frac{|\be m - \al n|^3}{(n \chi)^3} &= \frac{(n \ga + \xi m)^3\xi}{n^3(1 + \ga + \xi)^2(\ga + \xi)^2 m} \\
  \label{E:beall}
 & \le \frac{4\ga^3\xi}{(1 + \ga + \xi)^2(\ga + \xi)^2 m} + \frac{4\xi^4 m^2}{n^3(1 + \ga + \xi)^2(\ga + \xi)^2}
 \end{align}
The first term in \eqref{E:beall} is always bounded above by $4/m$, which tends to $0$ as $n \to \infty$. The second term in \eqref{E:beall} is bounded above by
\begin{equation*}
\frac{4}n \frac{\xi^4 m^{2}}{n^{2} \ga^2(1 + \ga)^2} = \frac{4}n,
\end{equation*}
which tends to $0$ with $n$. Finally, we can write
\begin{equation*}
\frac{1}{\tau^3} = \frac{\xi^2}{8(1 + \ga + \xi)(1 + \xi)m^2} \le \frac{1}{8m^2}
\end{equation*}
and
\begin{equation*}
\frac{m^3}{n^3 \tau^3} = \frac{\xi^2 m}{8n^3 (1 + \ga + \xi)(1 + \xi)} \le \frac{\xi^2 m}{8\ga(1 + \ga)n^3} = \frac{1}{8n^2},
\end{equation*}
both of which tend to $0$ with $n$.
The assumption of Airy line ensemble convergence for each of the sequences \eqref{E:Arn-k} in Theorem \ref{T:lattice-models} is a translation of Theorem 1.1 from \cite*{DNV}. Note that the definition of last passage percolation in that paper follows lattice paths with edges in $\{(0, 1), (1, 0)\}$, rather than in $\{(0, -1), (1, 0)\}$. Also, in that paper Airy line ensemble convergence is stated with a slightly different deterministic term subtracted from the main last passage term. The difference between the two deterministic terms is $O(\be_n/\chi_n)$, which tends to $0$ with $n$ by \eqref{E:scaling2}.
\end{proof}

\cite*{DNV} used the fact that geometric last passage percolation converges to the parabolic Airy line ensemble in any feasible direction in the parameter space to pass to other integrable last passage models via coupling. These couplings are strong enough that we can immediately get versions of Theorem \ref{T:geom-lpp} for these models.

\subsubsection{Exponential environment}

We can pass from a sequence i.i.d.\ geometric environments to an i.i.d.\ exponential environment via the usual geometric-to-exponential limiting transition. The details are given in \cite*{DNV}, Corollary 6.1, and are applicable for the present convergence as well.  

\begin{corollary}\label{C:exponential}
	Let $X$ be an environment of independent exponential random variables of mean $1$. For any $m,n$ define the arctic curve
	$$
	g_n(m) = n+m+2\sqrt{nm},
	$$
	which is the deterministic approximation of the last passage value $X[(0,n) \to (m, 1)]$ in this model.
	
	Let $m_n\to \infty$ be a sequence of natural numbers. Denoting by $g, g', g''$ the value of $g_n$ and its derivatives evaluated at $m_n$, we set the scaling parameters for this model:
	\begin{align}\nonumber
	\label{E:tau-xi-exponential}
	n\al_n = g, \quad \be_n =g', \quad \tau_n^3 &= \frac{2  g'^{2}}{g''^2} = \frac{8 m_n^2 (\sqrt{m_n} + \sqrt{n})^2}n, \quad &\chi_n^3 = \frac{ g'^{4}}{ -2g''} = \frac{(\sqrt{m_n} + \sqrt{n})^4}{\sqrt{m_n n}}.
	\end{align}
	Then with the sequence of environments $\scrK_n$, defined as in Theorem \ref{T:lattice-models}, the following holds: for any finite or countable union $S = \bigcup S_{s_i, t_i}$, there is a coupling of $\scrK_n$ and $\scrL$ satisfying multi-time convergence on $S$, hypograph convergence, and geodesic convergence off holes.
\end{corollary}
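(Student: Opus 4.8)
The plan is to deduce Corollary~\ref{C:exponential} from Theorem~\ref{T:geom-lpp} by the standard geometric-to-exponential limit, exactly as in \cite*{DNV}, Corollary~6.1. First I would recall the elementary distributional fact that if $G_\eps$ is geometric on $\{0,1,2,\dots\}$ with mean $\ga = \eps/(1-\eps) \to 0$ (equivalently success probability $1-\eps \to 1$), then $\eps\, G_\eps$ converges in distribution to a mean-$1$ exponential; more importantly, one can couple an entire array. Concretely, fix the exponential array $X$ and, for a sequence $\eps_k \downarrow 0$, realize geometric arrays $X^{(k)} = \{\lceil X(u)/\eps_k\rceil : u \in \Z^2\}$ so that $\eps_k X^{(k)} \to X$ pointwise and monotonically. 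Since last passage values are monotone and additive in the weights, $\eps_k X^{(k)}[p\to q] \to X[p \to q]$ pointwise, and likewise for the multi-point values $X^{(k)}[p^k\to q^k]$, uniformly over the finitely many lattice points relevant to any compact region.

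Next I would set up the diagonal sequence. In Theorem~\ref{T:geom-lpp} we are free to let the mean $\ga_n \to 0$ as long as $n m_n \ga_n \to \infty$; with $\ga_n = \eps_n/(1-\eps_n)$ this holds provided $\eps_n \to 0$ slowly enough relative to $n m_n$. The plan is to choose $\eps_n \downarrow 0$ along the given sequence $m_n \to \infty$ so that: (i) $n m_n \ga_n \to \infty$, so Theorem~\ref{T:geom-lpp} applies to the arrays $X_{\ga_n}$; and (ii) the geometric arctic curve $g_{n,\ga_n}$ from \eqref{E:arctic1}, after multiplying weights by $\eps_n$, has value and first/second derivatives at $m_n$ matching (up to lower-order corrections vanishing under the scaling \eqref{E:scaling}) the exponential arctic curve $g_n$ of \eqref{E:tau-xi-exponential}. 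A short computation shows $\eps_n g_{n,\ga_n}(m) = (m+n)\eps_n\ga_n + 2\sqrt{mn\,\eps_n\ga_n(1+\eps_n\ga_n)} = (m+n)(1+o(1)) + 2\sqrt{mn}(1+o(1))$, so the geometric scaling parameters $\al_n,\be_n,\tau_n,\chi_n$ of \eqref{E:tau-xi-geom}, applied to the rescaled weights $\eps_n X^{(n)}$, converge to the exponential ones in \eqref{E:tau-xi-exponential} in the sense that their ratios tend to $1$ and the deterministic shift discrepancies are $o(\chi_n)$; hence the centered, rescaled metrics built from $\eps_n X^{(n)}$ and from $X$ differ by a $\scrE_*$-negligible amount.

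The remaining work is to transfer the three conclusions. Rescaling all weights by the constant $\eps_n$ is harmless: it multiplies $d_{X^{(n)}}$ by $\eps_n$, which is absorbed into the definition of $\chi_n$ and does not change geodesics. So the environment $\scrK_n^{\mathrm{geo}}$ produced by Theorem~\ref{T:geom-lpp} from $X_{\ga_n}$ and the environment $\scrK_n^{\mathrm{exp}}$ of Corollary~\ref{C:exponential} are pullbacks of last passage metrics built from the same underlying randomness (after the coupling above), with parameters differing by $1+o(1)$ factors and $o(\chi_n)$ additive shifts. Because multi-time convergence, hypograph convergence, and geodesic convergence off holes are all statements about limits in $\scrE_*$ or the Hausdorff topology that are insensitive to such vanishing perturbations — and since geodesics are literally unchanged by the constant rescaling and unaffected by additive metrics (the discussion after Example~\ref{Ex:additive-metrics}) — the coupling of $\scrK_n^{\mathrm{geo}}$ with $\scrL$ from Theorem~\ref{T:geom-lpp} immediately yields a coupling of $\scrK_n^{\mathrm{exp}}$ with $\scrL$ satisfying the same three properties. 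I expect the main obstacle to be purely bookkeeping: verifying that the choice of $\eps_n$ can be made to simultaneously satisfy the feasibility constraint $n m_n \ga_n \to \infty$ and force all four of the scaling ratios in \eqref{E:scaling} for the geometric parameters to behave exactly as the exponential ones, and checking that the coupling error $\eps_n X^{(n)} - X$ really is $o(\chi_n)$ uniformly on compacts after the linear rescaling $L_n$ — this is where one must be careful about lattice effects, but it is the same argument already carried out in \cite*{DNV}, Corollary~6.1, so no new ideas are needed.
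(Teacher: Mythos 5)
Your outline is essentially the route the paper has in mind — the geometric-to-exponential coupling from \cite*{DNV}, Corollary~6.1, which the paper cites as the whole proof — but a few details need fixing. First, the direction of the geometric mean is backwards: for $\eps G_\eps$ to converge to a mean-$1$ exponential, $G_\eps$ must have success probability $\eps$ and mean $(1-\eps)/\eps \to \infty$, not success probability $1-\eps$ and mean $\eps/(1-\eps)\to 0$. Your coupling formula $X^{(k)} = \lceil X(u)/\eps_k\rceil$ is correct (it produces geometrics of diverging mean $\approx 1/\eps_k$), so the surrounding prose, which repeatedly asserts $\ga_n\to 0$, should be corrected to $\ga_n\to\infty$; the feasibility condition $n m_n \ga_n\to\infty$ is then automatic. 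Second, your claim that ``geodesics are literally unchanged by the constant rescaling'' does not hold here: $d_{X^{(n)}}$ and $d_X$ are distinct metrics on distinct (though coupled) environments, so their geodesics are genuinely different sets. This does not derail anything, because once hypograph convergence of $\scrK_n^{\mathrm{exp}}$ is established, geodesic convergence off holes follows deterministically from Theorems~\ref{T:geod-cvg} and~\ref{T:geod-cvg-stronger}, whose structural hypotheses are trivially met by a rescaled lattice last passage metric; but the argument should be phrased that way rather than as a claim that the coupled geodesics coincide.

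Third, there is a coupling subtlety that your write-up glosses over: Theorem~\ref{T:geom-lpp} only guarantees \emph{some} coupling of the geometric sequence with $\scrL$, which need not be compatible with your deterministic construction $X^{(n)}=\lceil X/\eps_n\rceil$. The correct bridge is to extract from Theorem~\ref{T:geom-lpp} the distributional statement that $\mathfrak{h}\scrK_n^{\mathrm{geo}} \cvgd \mathfrak{h}\scrL$ (and similarly on each $S_{s_i,t_i}$), then use the deterministic bound $|\eps_n X^{(n)}(u) - X(u)|\le\eps_n$ together with the convergence of the rescaled geometric parameters $\eps_n\al_n,\eps_n\be_n,\tau_n,\eps_n\chi_n$ to the exponential ones (choosing $\eps_n$ small enough that $\eps_n(m_n+n)/\chi_n^{\mathrm{exp}}\to 0$) to force $\scrK_n^{\mathrm{exp}}-\scrK_n^{\mathrm{geo}}\to 0$ uniformly on compacts in any joint law, hence the same distributional convergence for $\scrK_n^{\mathrm{exp}}$; finally apply Skorokhod's representation theorem to $\scrK_n^{\mathrm{exp}}$ alone. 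It is also worth noting that a shorter route exists that avoids any coupling at the metric level: invoke \cite*{DNV} Corollary~6.1 directly as the Airy line ensemble input \eqref{E:Arn-k}, verify the scaling conditions \eqref{E:scaling} for the exponential parameters (a short calculation parallel to the one carried out for geometric in the proof of Theorem~\ref{T:geom-lpp}), and apply Theorem~\ref{T:lattice-models} exactly as in the geometric case.
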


\subsubsection{Poisson lines environment}
\label{SS:Poisson lines}

There are also two degenerations of the geometric environment to line last passage environments. The first is a Poisson transition which keeps the discrete nature of the weights. The details of this transition are given in \cite*{DNV}, Corollary 6.3. 

\begin{corollary} 
	\label{C:Poisson-lines} 
	Let $F = \{F_i: i \in \Z\}$ be a collection of $n$ independent rate-$1$ Poisson processes. Define the Poisson lines arctic curve
	$$
	g_{n}(t) = t+ 2 \sqrt{tn}.
	$$
	This is the deterministic approximation of the last passage value
	$F[(0, n) \to (t, 1)]$. Let $t_n$ be any sequence of nonnegative numbers such that $n t_n \to \infty$.
	We define scaling parameters in terms of the arctic curve $g=g_{n}$ and its derivatives $g',g''$ taken in the variable $t$ at the value $t_n$:
	\begin{equation*}
	n\al_n = g, \qquad \be_n=g', \qquad \tau_n^3 = \frac{2 g'}{g''^2}, \qquad \chi_n^3 = \frac{ g'^{2}}{ -2g''}.
	\end{equation*}
Define $h_n:\R \X (\Z/2) \to \R$ by $h_n(m_ny +x, y) = \al_n y + \be_n x$, and let $\scrK_n$ be the pullback onto $\R^2$ of the directed metric $\chi^{-1}(d_F - h_n)$ under the map $f_{\R^2 \to \R \X \Z} \circ L_n:\R^2 \to \R \X \Z$, where $L_n$ is the linear map with matrix
$$
A = \lf[\begin{array}{cc}
	\tau_n & \rho_n \\
	0 & n 
\end{array}\rg].
$$
Then for any finite or countable union $S = \bigcup S_{s_i, t_i}$, there is a coupling of $\scrK_n$ and $\scrL$ satisfying multi-time convergence on $S$, hypograph convergence, and geodesic convergence off holes.
\end{corollary}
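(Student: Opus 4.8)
The plan is to derive Corollary~\ref{C:Poisson-lines} from Theorem~\ref{T:geom-lpp} (more precisely from the coupled convergence in \cite*{DNV}, Corollary 6.3) by the same geometric-to-Poisson limiting transition used there, combined with the abstract machinery of Section~\ref{S:cvg-lattice}. First I would recall the coupling: if one takes an array of i.i.d.\ geometric weights $X_{\ga}$ with mean $\ga = \ga_m \to 0$ on a lattice of width $\sim t_n/\ga_m$, then the column sums converge to a rate-$1$ Poisson process, and Corollary 6.3 of \cite*{DNV} shows the corresponding melons converge jointly to the same parabolic Airy line ensemble as in the geometric case. Thus the hypothesis of Airy line ensemble convergence for the line environment $F$ — namely that the rescaled and centered functions $F[(0, n) \to_{\Delta k} (t_n + \tau_n\,\cdot\,, 1)]$ converge to a parabolic Airy line ensemble of the appropriate scale — is already available. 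The only check is that the scaling parameters $\al_n, \be_n, \chi_n, \tau_n$ defined from the Poisson lines arctic curve $g_n(t) = t + 2\sqrt{tn}$ are the limits of the corresponding geometric parameters under the transition $\ga_m \to 0$, $m \sim t_n/\ga_m$; this is a routine Taylor-expansion computation that matches $g_{n,\ga_m}(m)\ga_m^{-1}$-type quantities to $g_n(t_n)$ and its derivatives. (Equivalently one simply reads it off the table before Theorem~\ref{T:intro-main} by setting $\rho = t_n/n$.)

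Second, with Airy line ensemble convergence in hand, I would apply the line-environment version of the machinery rather than the lattice version. The environment $F$ is an independent environment in the sense of Definition~\ref{D:independent}, and $\scrK_n$ is exactly the pullback of $\chi_n^{-1}(d_F - h_n)$ under $f_{\R^2 \to \R \X \Z} \circ L_n$, which is precisely the setup preceding Theorem~\ref{T:ind-main} (with $m_n = \rho_n$, $a_n = \tau_n$, $b_n = \rho_n$, $c_n = n$). So it suffices to verify the two hypotheses of Theorem~\ref{T:ind-main}: mobile Tracy--Widom convergence, and $\scrE_*(S_{t, t+s^3})$-convergence of the two-time slices $\scrK_n|_{S_{t, t+s^3}}$ to an Airy sheet of scale $s$. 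The second follows from Theorem~\ref{T:airy-sheet-gen} applied to the (appropriately translated and reversed) line environment, exactly as in Step 3--5 of the proof of Theorem~\ref{T:lattice-models}; the translation-invariance of $F$ under integer shifts (here, shifts in $t_n^{-1}\Z$, which vanish since $\tau_n^{-1} \to 0$ by the condition $nt_n \to \infty$) lets one put the last passage value in the canonical form to which Theorem~\ref{T:airy-sheet-gen} applies. Mobile Tracy--Widom convergence for any sequence $u_n \to u = (x, t; y, t+s^3)$ follows from the one-point marginal of the Airy line ensemble convergence together with continuity of the limit, again using the $\tau_n^{-1}\to 0$ shift-invariance to absorb the discretization error in the spatial argument. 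Once Theorem~\ref{T:ind-main} gives $\mathfrak h \scrK_n \cvgd \mathfrak h \scrL$ (with the joint convergence of Remark~\ref{R:ind-main} on $S$), Skorokhod's representation theorem produces the coupling, multi-time convergence on $S$ is immediate, and geodesic convergence off holes follows from Theorems~\ref{T:geod-cvg} and \ref{T:geod-cvg-stronger} exactly as in Step 2 of the proof of Theorem~\ref{T:lattice-models} --- here one uses that geodesics in the line metric $d_F$ are piecewise-linear (horizontal/vertical) curves so that conditions 1--3 of Theorem~\ref{T:geod-cvg-stronger} hold, and that $\scrL$ is a.s.\ a spacetime metric by Proposition~\ref{P:from-DOV}.

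The main obstacle is the bookkeeping in transferring the geometric coupling of \cite*{DNV} into a statement about the \emph{line} environment rather than a lattice environment, and keeping track of the simultaneous limits $n \to \infty$ and $\ga_m \to 0$ with $m \sim t_n/\ga_m$ so that the scaling parameters and the centering term both converge; in particular one must check, as in the proof of Theorem~\ref{T:geom-lpp}, that the difference between the deterministic centering used in \cite*{DNV} and the one used here is $O(\be_n/\chi_n) = o(1)$. Everything else is a direct transcription of the lattice arguments (Steps 3--5 of Theorem~\ref{T:lattice-models}) into the line-environment language, where in fact the arguments simplify slightly because no lattice rounding of the form $\floor{\rho n s}$ intervenes --- only the spatial discretization at scale $\tau_n^{-1}$, which vanishes. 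I do not expect the weaker topology caveat for the Poisson line model mentioned in Theorem~\ref{T:intro-main} to enter here: Theorem~\ref{T:ind-main} as stated already delivers hypograph convergence, which is exactly the claim of the corollary; the stronger graph convergence (requiring the extra moment input of Theorem~\ref{T:lattice-models-i}) is what is unavailable for this model, and the corollary correctly does not assert it.
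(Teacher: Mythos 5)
Your proposal is correct and takes essentially the same route the paper has in mind: the paper gives no explicit proof of Corollary~\ref{C:Poisson-lines} beyond the pointer to \cite{DNV}, Corollary 6.3, but the intended argument is exactly what you describe — transfer Airy line ensemble convergence through the geometric-to-Poisson degeneration, verify the scaling parameters match the arctic curve, then run the line-environment machinery (Theorem~\ref{T:ind-main} for multi-time and hypograph convergence, Theorems~\ref{T:geod-cvg} and \ref{T:geod-cvg-stronger} for geodesic convergence off holes), exactly as the paper does explicitly for the Sepp\"al\"ainen--Johansson model in Theorem~\ref{T:seppalainen}. You are also right that the hypograph-only caveat in Theorem~\ref{T:intro-main} is already respected by the corollary's statement. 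One small simplification you miss: Poisson line environments are invariant under \emph{all} real spatial shifts (stationarity of the Poisson process), so no discrete-lattice rounding or shift-averaging is needed in the spatial variable at all; the only discretization left is the integer line index, handled by the $1/n$ time resolution as in the proof of Theorem~\ref{T:lattice-models}. Your parenthetical about ``shifts in $t_n^{-1}\Z$'' is therefore superfluous (and should read $\tau_n^{-1}\Z$ if kept), but this does not affect the validity of the argument.
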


\subsubsection{Brownian last passage percolation}
\label{SS:Brownian-LP}

The second degeneration from a lattice to a line environment yields Brownian last passage percolation. Convergence of Brownian last passage percolation to the directed landscape was proven in \cite*{DOV}. We include the result here for completeness. 

\begin{corollary}\label{C:Brownian}
	Let $\{B_i : i \in \Z\}$ be a collection of independent copies of a two-sided Brownian motion of variance $1$. Define the Brownian arctic curve
	$$
	g_{n}(t) = 2 \sqrt{tn},
	$$
	which is the deterministic approximation of the last passage value  $B[(0, n) \to (t, 1)]$. We define scaling parameters in terms of the arctic curve $g=g_{n}$ and its derivatives $g',g''$ taken in the variable $t$ at the value $1$:
	\begin{equation*}
	n\al_n = g, \qquad \be_n=g', \qquad \tau_n^3 = \frac{2}{g''^2}, \qquad \chi_n^3 = \frac{1}{ -2g''}.
	\end{equation*}
	Let $\scrK_n$ be defined from $d_B$ and $\al_n, \be_n, \tau_n, \xi_n$ as in Corollary \ref{C:Poisson-lines}. Then for any for any finite or countable union $S = \bigcup S_{s_i, t_i}$, there is a coupling of $\scrK_n$ and $\scrL$ satisfying multi-time convergence on $S$, hypograph convergence, and geodesic convergence off holes.
\end{corollary}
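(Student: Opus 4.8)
The plan is to reduce Corollary \ref{C:Brownian} to Theorem \ref{T:geom-lpp} (geometric last passage percolation) by the same coupling machinery that \cite*{DNV} uses to degenerate geometric last passage to Brownian last passage at the level of the melon/Airy line ensemble. The only thing that really needs to be checked is the Airy line ensemble convergence hypothesis of the abstract convergence theorems; once that is in hand, multi-time convergence, hypograph convergence, and geodesic convergence off holes all follow formally from the results of Sections \ref{S:cvg-lattice}--\ref{S:hypo}. Concretely, $d_B$ arises from an independent environment in the sense of Definition \ref{D:independent} (each $B_i$ has independent increments), so the pullback $\scrK_n$ under $f_{\R^2 \to \R \X \Z} \circ L_n$ of $\chi_n^{-1}(d_B - h_n)$ is, by Lemma \ref{L:straightforward-dms}, a temporally independent directed metric of negative sign, and adding the additive metric $h_n$ and rescaling by $\chi_n^{-1}$ does not disturb the geodesic structure.

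The key steps, in order, are: First, verify the scaling conditions analogous to \eqref{E:scaling} (or their cubed form \eqref{E:scaling2}) for the parameters $\al_n,\be_n,\tau_n,\chi_n$ defined here from the Brownian arctic curve $g_n(t)=2\sqrt{tn}$ evaluated at $t=1$. With $m_n$ replaced by the fixed value $1$, one computes $g=2\sqrt{n}$, $g'=\sqrt{n}$, $g''=-\tfrac12 n^{-1/2}$, hence $\tau_n^3 = 8n$ and $\chi_n^3 = n$; the conditions $\be_n/\chi_n = n^{1/6}\cdot n^{-1/3}\to 0$, $|\be_n\cdot 1 - \al_n n|/(n\chi_n)$, $1/\tau_n\to 0$, and $1/(n^3\tau_n^3)\to 0$ are all immediate from these powers of $n$, exactly as in the proof of Theorem \ref{T:geom-lpp}. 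Second, invoke the Airy line ensemble convergence for Brownian last passage percolation: this is the Brownian degeneration of geometric last passage, recorded in \cite*{DNV} (and going back to the convergence in \cite*{DOV}), which says precisely that the rescaled multi-line last passage values $A^{r_n}_{n,k}$ of the form \eqref{E:Arn-k} converge in the product-of-Skorokhod topology to the Airy line ensemble of scale $r^{1/3}$ for every sequence $r_n\to r>0$. Third, apply Theorem \ref{T:lattice-models} (or, since the environment is a line environment rather than a lattice one, the line-environment version used for Corollary \ref{C:Poisson-lines}, which is just Theorem \ref{T:ind-main} combined with Theorems \ref{T:geod-cvg} and \ref{T:geod-cvg-stronger}): the two hypotheses of Theorem \ref{T:ind-main}, namely mobile Tracy-Widom convergence and Airy sheet convergence in each $\scrE_*(S_{t,t+s^3})$, both follow from the Airy line ensemble convergence via Theorem \ref{T:airy-sheet-gen} exactly as in Step 3--Step 4 of the proof of Theorem \ref{T:lattice-models}. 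The geodesic statements then follow because geodesics in $\scrK_n$ are images under $f_{\R^2\to\R\X\Z}\circ L_n$ of cadlag last passage geodesics and hence are connected curves monotone in the time order, so conditions 1--3 of Theorem \ref{T:geod-cvg-stronger} hold, while $\scrL$ is almost surely a spacetime metric by Proposition \ref{P:from-DOV}.

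There is essentially no new obstacle here; the content is bookkeeping, matching the normalizations of \cite*{DNV} and \cite*{DOV} to the ones in Theorem \ref{T:lattice-models}. The one point deserving a line of care is that the deterministic term subtracted in the Airy line ensemble convergence statement as it appears in \cite*{DOV}/\cite*{DNV} differs from the one in \eqref{E:Arn-k} by $O(\be_n/\chi_n)$, which tends to $0$; this is handled verbatim as in the last paragraph of the proof of Theorem \ref{T:geom-lpp}. Since $m_n$ here is held fixed equal to $1$ rather than growing, the shear in $L_n$ has slope $\rho_n = 1$ (or whatever fixed $\rho$ one chooses, as in Corollary \ref{C:Poisson-lines}), and the $1$-$2$-$3$ scaling $\tau_n \asymp n^{2/3}$, $\chi_n \asymp n^{1/3}$ is forced, so no subsequential-limit degeneracy can occur. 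Thus the proof is: check the scaling conditions from the explicit $g_n$, cite the Brownian Airy line ensemble convergence from \cite*{DNV}, and apply Theorem \ref{T:lattice-models} (line-environment form) together with Theorems \ref{T:geod-cvg} and \ref{T:geod-cvg-stronger}.

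\begin{proof}[Proof of Corollary \ref{C:Brownian}]
This follows from Theorem \ref{T:lattice-models} (in the line-environment form used for Corollary \ref{C:Poisson-lines}) and Theorems \ref{T:geod-cvg}, \ref{T:geod-cvg-stronger}, together with the convergence of the melon of Brownian last passage percolation to the parabolic Airy line ensemble.

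First, the collection $\{B_i : i \in \Z\}$ is an independent environment in the sense of Definition \ref{D:independent}, since each $B_i$ has independent increments and the $B_i$ are independent. By Lemma \ref{L:straightforward-dms}, the pullback $\scrK_n$ of $\chi_n^{-1}(d_B - h_n)$ under $f_{\R^2 \to \R \X \Z} \circ L_n$ is a temporally independent directed metric of negative sign, and by the discussion after Example \ref{Ex:additive-metrics} the addition of the additive metric $h_n$ and the rescaling by $\chi_n^{-1}$ do not affect the geodesic structure; moreover pulling back by the invertible linear map $L_n$ pulls back geodesics by Lemma \ref{L:pullback-geod}. In particular geodesics in $\scrK_n$ are images of cadlag last passage geodesics, so they are connected curves that are monotone in the time order, and hence conditions $1$, $2$, $3$ of Theorem \ref{T:geod-cvg-stronger} hold. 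The directed landscape $\scrL$ is almost surely a spacetime metric by Proposition \ref{P:from-DOV} and Definition \ref{D:DL-def}.

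Next we check the scaling conditions \eqref{E:scaling}. With $g = g_n$ evaluated at $t = 1$ we have $g = 2\sqrt{n}$, $g' = \sqrt{n}$, and $g'' = -\tfrac12 n^{-1/2}$, so that $n\al_n = 2\sqrt n$, $\be_n = \sqrt n$, $\tau_n^3 = 2/g''^2 = 8n$, and $\chi_n^3 = 1/(-2g'') = n$. Hence $\be_n/\chi_n = n^{1/6}\cdot n^{-1/3} \to 0$; also $\be_n \cdot 1 - \al_n n = \sqrt n - 2\sqrt n = -\sqrt n$, so $|\be_n - \al_n n|/(n\chi_n) = \sqrt n / n^{4/3} \to 0$; and $1/\tau_n = (8n)^{-1/3} \to 0$, $1/(n^3\tau_n^3) = 1/(8n^4) \to 0$. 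Thus all four conditions in \eqref{E:scaling} hold (with $m_n$ replaced by the fixed value $1$ and $\tau_n$ playing the role of the first column of the matrix $A$).

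Finally, the assumption of parabolic Airy line ensemble convergence for the rescaled melons of Brownian last passage percolation in the sense required by Theorem \ref{T:lattice-models}, i.e.\ convergence of the functions of the form \eqref{E:Arn-k} to the Airy line ensemble of scale $r^{1/3}$ for every sequence $r_n \to r > 0$, is the Brownian degeneration of geometric last passage percolation recorded in \cite*{DNV}; it also follows from the convergence results of \cite*{DOV}. The difference between the deterministic correction subtracted there and the one in \eqref{E:Arn-k} is $O(\be_n/\chi_n)$, which tends to $0$ by the computation above, exactly as in the last paragraph of the proof of Theorem \ref{T:geom-lpp}. Therefore Theorem \ref{T:lattice-models} applies, and for any finite or countable union $S = \bigcup S_{s_i, t_i}$ we obtain a coupling of $\scrK_n$ and $\scrL$ satisfying multi-time convergence on $S$ and hypograph convergence. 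Geodesic convergence off holes then follows from Theorems \ref{T:geod-cvg} and \ref{T:geod-cvg-stronger}, whose hypotheses were verified above.
\end{proof}
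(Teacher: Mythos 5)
Your high-level plan (Airy line ensemble convergence from \cite{DNV}/\cite{DOV} plus the abstract convergence machinery; Theorem~\ref{T:ind-main} for the hypograph statement; Theorems~\ref{T:geod-cvg}, \ref{T:geod-cvg-stronger} for geodesics) is the right one, and it is essentially what the paper intends --- the paper itself gives no proof of Corollary~\ref{C:Brownian} and just cites \cite{DOV}. However, two things in your execution are wrong, and one of them is a real gap.

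First, the arithmetic. For $g_n(t)=2\sqrt{tn}$ one has $g_n'(t)=\sqrt{n}\,t^{-1/2}$ and $g_n''(t)=-\tfrac12\sqrt{n}\,t^{-3/2}$, so at $t=1$, $g''=-\tfrac12\sqrt{n}$, not $-\tfrac12 n^{-1/2}$ as you wrote. Consequently $\tau_n^3 = 2/g''^2 = 8/n$ and $\chi_n^3 = 1/(-2g'') = n^{-1/2}$, i.e.\ $\tau_n\sim n^{-1/3}$ and $\chi_n\sim n^{-1/6}$, both tending to $0$; your claimed $\tau_n^3=8n$, $\chi_n^3=n$ are wrong (and even with your $g''$, the $\chi_n$ computation is internally inconsistent: $1/(-2g'')$ would give $n^{1/2}$, not $n$). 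These powers of $n$ are the familiar ones for Brownian LPP: fluctuations of $B[(0,n)\to(1,1)]$ around $2\sqrt n$ are of order $n^{-1/6}$ and the transversal scale is $n^{-1/3}$.

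Second, and this is the substantive gap: with the correct values, the scaling condition $\be_n/\chi_n\to 0$ from \eqref{E:scaling} fails badly, since $\be_n/\chi_n = \sqrt{n}\cdot n^{1/6}=n^{2/3}\to\infty$, and $1/\tau_n=\tfrac12 n^{1/3}\to\infty$ as well. So Theorem~\ref{T:lattice-models} does \emph{not} apply here, and it cannot be patched: the conditions in \eqref{E:scaling} exist precisely to absorb horizontal lattice discretization error (``$\frac{|h_n(v)-h_n(v')|}{\chi_n}\to 0$ for adjacent lattice vertices''), an error term that is genuinely absent when the environment is a semidiscrete one like Brownian LPP, which has no lattice in the horizontal direction. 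You flagged the ``line-environment form'' parenthetically, but then went ahead and checked \eqref{E:scaling} anyway, which is checking hypotheses of the wrong theorem. The correct route for line environments is the one the paper takes in the proof of Theorem~\ref{T:seppalainen}: verify the two hypotheses of Theorem~\ref{T:ind-main} (mobile Tracy--Widom convergence and Airy sheet convergence in each $\scrE_*(S_{t,t+s^3})$) directly via Theorem~\ref{T:airy-sheet-gen} and the DNV Airy line ensemble limit, with no mention of \eqref{E:scaling}, and then conclude geodesic convergence off holes via Theorem~\ref{T:geod-cvg-stronger} whose structural conditions~1--3 you did check correctly. If you rewrite along those lines, the proof is sound.
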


\subsubsection{Poisson last passage percolation in the plane}

Finally, geometric last passage percolation degenerates to a planar Poisson last passage percolation as we decrease the mean of the geometric random variables and make the grid $\Z^2$ finer and finer. For this definition, we write $\R^4_\nearrow = \{(a, c) \in \R^4 : a_1 \le c_1, a_2 \le c_2\}$. 

\begin{definition}
\label{D:planar-poisson}
Let $P \sset \R^2$ be a collection of points such that $|P \cap K|$ is finite for any compact set $K \sset \R^2$. Let $d_P$ be the directed metric of negative sign induced on $\R^2$ by the function $f:\R^4_\nearrow \to \R$ given by
$$
f(a, c) = \begin{cases} 1, \qquad 
P \cap \lf([a_1, c_1] \X [a_2, c_2] \smin \{a\} \rg) \ne \emptyset \\
0, \qquad \text{otherwise}.
\end{cases} 
$$
\end{definition}

Observe that for $(a, c) \in \R^4_\uparrow$, that $d_P(a, c) + |P \cap \{a\}|$ is a last passage value in the usual sense of Poisson last passage percolation. That is,
$$
d_P(a, c) + |P \cap \{a\}| = \sup_{\pi} |P \cap \pi|,
$$
where the supremum is over all continuous up-right paths from $a$ to $c$. The details of the transition from discrete geometric environments to a Poisson environment are given in \cite*{DNV}, Corollary 6.5.

\begin{corollary}
	\label{C:Poisson-box}
	Let $P$ be a Poisson process on $\R^2$.
	To match with the main theorem, for all $t \in \R$ we define the arctic curve
	$$
	g_t(r) = 2\sqrt{rt},
	$$
	which is the deterministic approximation of $d_P(0,0; r, t)$. 
	We now define scaling parameters in terms of the arctic curve $g = g_t$ and its derivatives $g',g''$ taken in the variable $r$ at the point $r = t$:
	\begin{equation*}
	\tau_t^3 = \frac{2g'}{g''^2} = 8t^2, \qquad \chi_t^3 = \frac{g'^2}{ -2g''} = t.
	\end{equation*}
	Using these definitions, let $h:\R^2 \to \R$ be given by $h(x + y, y) = 2y + x$ and let $\scrK_t$ be the pullback of $\chi_t^{-1}(d_P - h)$ under the linear map $L_t:\R^2 \to \R^2$ with matrix
	$$
A_t = \lf[\begin{array}{cc}
	\tau_t & t \\
	0 & t 
\end{array}\rg].
	$$
	Then for any sequence $t_n \to \infty$ and any finite or countable union $S = \bigcup S_{s_i, t_i}$, there is a coupling of $\scrK_{t_n}$ and $\scrL$ satisfying multi-time convergence on $S$, hypograph convergence, and geodesic convergence off holes.
\end{corollary}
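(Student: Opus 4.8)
The plan is to derive Corollary \ref{C:Poisson-box} from the geometric result, Theorem \ref{T:geom-lpp}, exactly as \cite*{DNV}, Corollary 6.5 derives the corresponding Airy line ensemble convergence for planar Poisson last passage from the geometric environment. First I would recall the standard coupling: for a large parameter $N$, place i.i.d.\ geometric random variables of mean $\ga_N \approx c/N$ on the sublattice $\frac1N \Z^2$. As $N \to \infty$, the point process of sites where the geometric variable is nonzero converges to a rate-$1$ Poisson process on $\R^2$, and with overwhelming probability no site carries a value $\ge 2$ in any fixed compact window, so that $d_{X_{\ga_N}}$ restricted to the rescaled lattice converges (in the appropriate embedded sense) to $d_P$. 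More precisely, we take $n, m_n$ in Theorem \ref{T:geom-lpp} growing like $N t_n$ in both coordinates (after the linear rescaling built into $\scrK_n$ there), with $\ga_n \sim 1/N$, so that $n m_n \ga_n \to \infty$ as required, and check that the scaling parameters $\al_n, \be_n, \tau_n, \chi_n$ coming from the geometric arctic curve \eqref{E:arctic1} converge, after the diagonal rescaling, to the Poisson arctic-curve parameters $\tau_t^3 = 8t^2$, $\chi_t^3 = t$ and the corresponding $h$. This is the content of the limit transition in \cite*{DNV}, Corollary 6.5, applied at the level of full metrics rather than single marginals.

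Second, I would package this into the framework of Section \ref{S:cvg-lattice}: the pullback metrics $\scrK_{t_n}$ for Poisson last passage are, up to the coupling above and a vanishing error, limits of the lattice pullback metrics $\scrK_n$ built from $X_{\ga_n}$ as in Theorem \ref{T:lattice-models}/\ref{T:geom-lpp}. The cleanest route is to observe that all three conclusions we want — multi-time convergence on $S$, hypograph convergence, geodesic convergence off holes — follow once we verify the hypotheses of Theorem \ref{T:ind-main} directly for $\scrK_{t_n}$, since planar Poisson last passage percolation fits the framework of directed metrics with an ordered independence structure (this is remarked after Theorem \ref{T:intro-maxineq}, and the maximal inequality Lemma \ref{L:doob-lpp-boot} has a planar-Poisson analogue). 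Concretely: (a) temporal independence of $\scrK_{t_n}$ is immediate from independence of the Poisson process on disjoint horizontal strips; (b) mobile Tracy-Widom convergence for $\scrK_{t_n}$ follows from the one-point convergence in \cite*{DNV} (or \cite*{baik1999distribution} after the standard transition); (c) convergence of $\scrK_{t_n}|_{S_{t, t+s^3}}$ to an Airy sheet of scale $s$ in $\scrE_*(S_{s,t})$ follows from Theorem \ref{T:airy-sheet-gen}, whose hypothesis — Skorokhod convergence of the melon-side last passage increments to the parabolic Airy line ensemble — is exactly the Poisson-lines/planar-Poisson Airy line ensemble convergence established in \cite*{DNV}, Corollary 6.5. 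Then Theorem \ref{T:ind-main} gives hypograph convergence, Theorem \ref{T:rational-cvge} gives multi-time convergence, and Theorems \ref{T:geod-cvg}, \ref{T:geod-cvg-stronger} give geodesic convergence off holes, exactly as in the proof of Theorem \ref{T:lattice-models}; the only new check is that planar Poisson last passage geodesics are connected up-right curves with the order $\le$ of Theorem \ref{T:geod-cvg-stronger}.1, which is clear from Definition \ref{D:planar-poisson} and the up-right path description of $d_P$.

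The note $v = (\rho, 1)$ rather than $(\rho, -1)$ in Theorem \ref{T:intro-main} reflects that the Poisson model is a negative-sign metric oriented so that both coordinates increase along paths, whereas lattice last passage in our convention has one coordinate decreasing; this is purely a matter of composing with the reflection $(x,y) \mapsto (x,-y)$ in the embedding $f_{\R^2 \to \Z^2}$, and does not affect any of the convergence arguments. The main obstacle I anticipate is bookkeeping rather than conceptual: one must carefully track how the diagonal-plus-shear linear maps $L_n$ for the geometric environment degenerate to the map $L_t$ with matrix $A_t$ as $\ga_n \to 0$ and $N \to \infty$ simultaneously, and confirm that the error between $\scrK_{t_n}$ and the degenerating $\scrK_n$ is $o(1)$ in the localized Hausdorff metric on $\scrE_*$ — i.e.\ that the lattice-discretization error and the probability of a doubled Poisson point both vanish uniformly on compact sets in $\Rd$. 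This is routine given the quantitative estimates in \cite*{DNV}, but it is where all the care is needed; everything downstream is a black-box application of Section \ref{S:cvg-lattice}. Since the whole argument for the five integrable models is structurally identical, I would in practice prove Theorem \ref{T:geom-lpp} in full and then state Corollary \ref{C:Poisson-box} as following ``by the limit transition of \cite*{DNV}, Corollary 6.5, exactly as in the proof of Theorem \ref{T:geom-lpp}.''
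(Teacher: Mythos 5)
Your primary plan --- degenerating Theorem~\ref{T:geom-lpp} to the planar Poisson model via the coupling of \cite*{DNV}, Corollary~6.5 --- is exactly the paper's approach: the paper states no further proof for Corollary~\ref{C:Poisson-box} beyond the sentence that these couplings ``are strong enough that we can immediately get versions of Theorem~\ref{T:geom-lpp} for these models,'' and you correctly flag the only place needing care, namely that the mesh-discretization error and the probability of a doubled lattice value both vanish uniformly on compact subsets of $\Rd$ in the $\scrE_*$ topology, so that geodesic convergence off holes and hypograph convergence transfer from the coupled geometric environment.

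Your alternative ``cleanest route,'' however, has a genuine gap. Theorem~\ref{T:ind-main} is stated only for directed metrics arising as pullbacks, under $f_{\R^2\to\R\X\Z}\circ L_n$, of line last passage metrics $d_{m_nF_n}+h_n$ built from independent environments $F^n$ on $\R\X\Z$; planar Poisson last passage percolation is \emph{not} of this form --- it is a genuinely two-dimensional metric, not the pullback of a line metric under a fixed shear-and-diagonal map. Likewise, Theorem~\ref{T:airy-sheet-gen} takes as input a sequence of cadlag line environments $L_n\in\scrD^\Z$, so it cannot be fed $d_P$ directly. The remark after Theorem~\ref{T:intro-maxineq} does say the maximal inequality ``could be proven'' for planar Poisson, but this is not carried out, and the maximal inequality is only one of several ingredients that Theorem~\ref{T:ind-main} borrows from the line structure. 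The paper itself confronts exactly this issue in the proof of Theorem~\ref{T:geom-lpp-emph}.3 --- which is proved \emph{starting from} Corollary~\ref{C:Poisson-box}, so cannot be its source --- and resolves it by approximating $d_P$ by vertically compressed line metrics $d^*_{P_s}$ and taking $s\to0$. If you want the direct route, you would need to import that approximation step or re-prove Theorems~\ref{T:airy-sheet-gen} and~\ref{T:ind-main} for planar models; otherwise, the coupling route you open with and settle on at the end is the correct and intended one.
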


We have stated Corollary \ref{C:Poisson-box} for Poisson last passage percolation on square boxes of increasing area. Convergence results for more general rectangular boxes of increasing area follow from scale invariance of Poisson processes.

\subsubsection{The Sepp\"al\"ainen-Johansson model}
\label{SS:SJ}

The Sepp\"al\"ainen-Johansson model is the only model we consider that is not a degeneration of geometric last passage percolation.
We recall the setup of Example \ref{Ex:SJ}. Let $\Z^2$ be the lattice where $(x, y) \in \Z^2 \X \Z^2$ is a directed edge if $y-x \in \{(1, 0), (0, -1)\}$. Fix a parameter $p \in (0, 1)$, and let $d_{p}$ be the directed metric of positive sign on $\Z^2$ induced by the collection of independent weights $M^p$, where
\begin{itemize}[nosep]
    \item $M^p(x, y) = 0$ if $y - x = (-1, 0)$,
    \item $M^p(x, y)$ is a Bernoulli random variable of mean $p$ if $y - x = (1, 0)$.
\end{itemize}
The Sepp\"al\"ainen-Johansson model $d_{p}$ can be reinterpreted as a line last passage metric. Given the weight function $M^p$, define functions $M^p_i:\Z \to \Z, i \in \Z$ by setting
$$
M^p_i(0) = 0, \qquad M^p_i(n) - M^p_i(m) = n - m - \sum_{j=m}^{n-1} M^p((j, i), (j+1, i)).
$$
Extend each $M^p_{i}$ to a function from $\R \to \Z$ by letting it be linear on every interval $[n, n+1], n \in \Z$. Note that each function $M_i$ is an independent random walk with Bernoulli steps of mean $1- p$.
 Let $D_{1-p}$ be the line last passage metric associated to $M^p$.
By construction, for any $(x, n), (y, m) \in \Z^2$ we have
\begin{equation}
\label{E:Dp-dp-equivalence}
d_p((x, n), (y, m)) = (y - x) - D_{1-p}((x, n), (y, m)).
\end{equation}
We can use this formula to extend $d_p$ to a metric on all of $\R \X \Z/2$, the space where $D_{1-p}$ is naturally defined.

\begin{theorem}
	\label{T:seppalainen}
	For $p \in (0, 1)$ and $m \in \N$ set $\la = (1-p)/p$ and $\rho = m/n$.
Define the arctic curve
	$$
	g_{n, \la}(m) = \frac{(\sqrt{m \la} -\sqrt{n})^2}{1 + \la}\mathbf{1}(\rho > 1/\la),
	$$
	which is the deterministic approximation of the value $d_p((0, n), (m, 1))$. Consider sequences $p_n \in (0, 1), m_n \in \N$ such that $\rho_n >1/\la_n$ for all $n$. 
	We define scaling parameters in terms of $g =g_{n, \la_n}$ and its derivatives $g'$, $g''$ evaluated at the point $m_n$:
	\begin{equation*}
	n \al_n = g \qquad \be_n = g', \qquad \tau_n^3 = \frac{2g'(1 - g')}{(g'')^2}, \qquad \chi_n^3 = \frac{n [g'(1 - g')]^2}{-2g''}.
	\end{equation*}
 Define $h_n:\R \X (\Z/2) \to \R$ by $h_n(m_ny +x, -y) = \al_n y + \be_n x$, and let $\scrK_n$ be the pullback onto $\R^2$ of the directed metric $\chi_n^{-1}(d_{p_n} - h_n)$ under the map $f_{\R^2 \to \R \X \Z} \circ L_n:\R^2 \to \R \X \Z$, where $L_n$ is the linear map with matrix
	$$
	A = \lf[\begin{array}{cc}
	\tau_n & \rho_n \\
	0 & n 
	\end{array}\rg].
	$$
	Note that unlike $d_{P_n}$, $\scrK_n$ is a directed metric of \textbf{negative sign} since $\chi_n < 0$.
	
	Now suppose that $\chi_n \to -\infty$ with $n$. Then for any finite or countable union $S = \bigcup S_{s_i, t_i}$, there is a coupling of $\scrK_n$ and $\scrL$ satisfying multi-time convergence on $S$, hypograph convergence, and geodesic convergence off holes. 
\end{theorem}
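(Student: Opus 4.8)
The plan is to deduce Theorem \ref{T:seppalainen} from the general lattice result, Theorem \ref{T:lattice-models}, exactly as the other corollaries in this section (exponential, Poisson lines, Brownian, planar Poisson) are deduced. The only genuine new ingredient is translating between the positive-sign Sepp\"al\"ainen--Johansson metric $d_{p_n}$ and the negative-sign line last passage metric $D_{1-p_n}$ via the identity \eqref{E:Dp-dp-equivalence}, and then checking that the input hypotheses of Theorem \ref{T:lattice-models}---Airy line ensemble convergence for the rescaled multi-point values \eqref{E:Arn-k}, and the four scaling conditions \eqref{E:scaling}---hold in the present parameter regime.

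First I would rewrite everything in terms of $D_{1-p_n}$. By \eqref{E:Dp-dp-equivalence}, for lattice points $d_{p_n}((x,n),(y,m)) = (y-x) - D_{1-p_n}((x,n),(y,m))$; the linear term $(y-x)$ is an additive metric in the sense of Example \ref{Ex:additive-metrics}, so it can be absorbed into the deterministic correction $h_n$ without affecting the geodesic structure. Concretely, $\chi_n^{-1}(d_{p_n} - h_n) = \chi_n^{-1}((-D_{1-p_n}) - (h_n - (y-x)\text{-term}))$, and since $\chi_n<0$ this is $|\chi_n|^{-1}(D_{1-p_n} - \tilde h_n)$ for an adjusted additive metric $\tilde h_n$. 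Thus $\scrK_n$ is (up to the sign bookkeeping the theorem statement already flags) exactly a rescaled, recentered line last passage metric built from the independent random walks $M^{p_n}_i$, which have i.i.d.\ Bernoulli($1-p_n$) increments; via the embedding \eqref{E:embed-1} this is an i.i.d.\ lattice last passage model in the sense of Theorem \ref{T:lattice-models}, with weight distribution $\mu_n = \mathrm{Bernoulli}(1-p_n)$, $m_n$ as given, and scaling parameters $|\chi_n|, \tau_n, \al_n, \be_n$.

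Next I would verify the two hypotheses of Theorem \ref{T:lattice-models}. For Airy line ensemble convergence of the functions \eqref{E:Arn-k}: this is precisely the Sepp\"al\"ainen--Johansson melon convergence theorem from the companion paper \cite*{DNV} (the analogue of Theorem 1.1 there for the S-J model), and I would cite it, noting as in the proof of Theorem \ref{T:geom-lpp} that any discrepancy between the deterministic centering there and the one used here is $O(\be_n/|\chi_n|)$, which $\to 0$ by the scaling conditions. For the scaling conditions \eqref{E:scaling}, I would follow the template of the proof of Theorem \ref{T:geom-lpp}: introduce the convenient variable $\xi_n = \sqrt{n/(m_n)}$ (or equivalently work directly with $g',g''$ of the arctic curve $g_{n,\la_n}$), express each of $\be_n^3/\chi_n^3$, $|\be_n m_n - \al_n n|^3/(n\chi_n)^3$, $\tau_n^{-3}$, $m_n^3/(n^3\tau_n^3)$ in closed form using \eqref{E:tau-xi-geom}-analogues, and bound each by an explicit quantity tending to $0$. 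The hypothesis $\chi_n \to -\infty$ plays the role that $m_n\to\infty$ and $nm_n\ga_n\to\infty$ play in the geometric case: it guarantees the relevant denominators blow up, which is exactly what forces $\be_n^3/\chi_n^3 \to 0$ and the second condition. I expect this to be a short but slightly fiddly computation, entirely parallel to \eqref{E:scaling2}--\eqref{E:beall}.

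The main obstacle---and the only place requiring real care---is the consistent handling of the sign flip: $d_{p_n}$ is of positive sign, $\scrK_n$ of negative sign, and one must make sure the additive-metric absorption of the $(y-x)$ term, the negation, and the division by the negative quantity $\chi_n$ all compose correctly so that the object fed into Theorem \ref{T:lattice-models} is genuinely a negative-sign line last passage metric with nonnegative weights and with the stated scaling parameters. Once that bookkeeping is done, Theorem \ref{T:lattice-models} delivers the coupling with multi-time convergence on $S$, hypograph convergence, and geodesic convergence off holes verbatim, and there is nothing further to prove. (I would not attempt graph convergence here, consistent with the statement, since the required one-point moderate-deviation bounds for the S-J model are not established---this is exactly the gap flagged in the ``Possible projects'' remark in the introduction.)
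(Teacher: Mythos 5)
Your high-level outline (pass to the $D_{1-p_n}$ formulation via \eqref{E:Dp-dp-equivalence}, absorb the linear $(y-x)$ correction into the additive metric, cite \cite*{DNV} for melon convergence) matches the paper, but the specific reduction you propose does not go through. You claim that after the sign flip, $\scrK_n$ "is an i.i.d.\ lattice last passage model in the sense of Theorem~\ref{T:lattice-models}, with weight distribution Bernoulli$(1-p_n)$.'' This is false: Theorem~\ref{T:lattice-models} applies to vertex-weighted lattice environments whose associated cadlag line environments are \emph{step functions} via \eqref{E:fGkk}, whereas the Sepp\"al\"ainen--Johansson reformulation produces the continuous piecewise-linear random walks $M^{p_n}_i$. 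The original S-J model is edge-weighted (Bernoulli weights on east edges, zero on north edges); converting it to a vertex-weighted lattice environment introduces an extra $\Theta(n)$ of Bernoulli noise (one vertex per row) whose $\Theta(\sqrt n)$ fluctuations swamp the $n^{1/3}$ scale, so that route genuinely changes the model. Because Theorem~\ref{T:lattice-models} is not applicable, the scaling conditions \eqref{E:scaling} you propose to verify are not the right bookkeeping either.

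The paper instead bypasses Theorem~\ref{T:lattice-models} entirely and checks the hypotheses of Theorem~\ref{T:ind-main} / Remark~\ref{R:ind-main} directly for the line environment $M^{p_n}$: Corollary 6.6 of \cite*{DNV} gives Airy line ensemble convergence for the rescaled multi-point increments $A^n_k$, and since that corollary is stated for arbitrary convergent sequences of parameters, Theorem~\ref{T:airy-sheet-gen} yields Airy sheet convergence on every slice $S_{t,t+s^3}$ and the mobile Tracy--Widom condition simultaneously. (You never mention the mobile Tracy--Widom hypothesis, which is an essential separate input to Theorem~\ref{T:ind-main}.) Geodesic convergence off holes then follows from Theorem~\ref{T:geod-cvg-stronger} as in Step~2 of the proof of Theorem~\ref{T:lattice-models}. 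Your instinct not to attempt graph convergence is correct, as is the expectation that $\chi_n\to -\infty$ is the operative hypothesis from \cite*{DNV}, but the proposal as written would not compile into a correct proof because it leans on a theorem whose preconditions the S-J model does not satisfy.
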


Note that the condition that $\chi_n \to-\infty$ is always satisfied if $\rho, \la$ are both fixed. The proof of Theorem \ref{T:seppalainen} is similar to the proof of the main theorem for general models, Theorem \ref{T:lattice-models}.

\begin{proof}
We can equivalently work with the last passage metrics $D_{1-p_n}$ defined by the formula \eqref{E:Dp-dp-equivalence}. These fit better into the framework of the paper.

We can alternately think of $\scrK_n$ as the pullback onto $\R^2$ of the directed metric $|\chi_n|^{-1}(D_{1-p_n} - H_n)$ under the map $f_{\R^2 \to \R \X \Z} \circ L_n$, where $H_n(a, b) = a - h_n(a, b)$. We will use this to check that $\scrK_n$ satisfies the conditions of Theorem \ref{T:ind-main}/Remark \ref{R:ind-main}, which will imply multi-time convergence on $S$ and hypograph convergence. Geodesic convergence off holes will then follow from Theorem \ref{T:geod-cvg-stronger} as in Step 2 of the proof of Theorem \ref{T:lattice-models}. 

In the line environments $M^{p_n}$, the multi-point last passage differences
$$
A^n_k(y) := |\chi_n|^{-1} \Big(M^{p_n}[(0, n) \to_{\Delta k} (\rho_n n + \tau_n y, 1)] - (\rho_n - \al_n) n - (1-\be_n) \tau_n y \Big)
$$ 
converge compactly to the Airy line ensemble $\scrA$ for any sequence of parameters $p_n, \rho_n$ such that the corresponding $\chi_n \to -\infty$. This is Corollary 6.6 in \cite*{DNV}. Therefore by Theorem \ref{T:airy-sheet-gen}, for any $t \in \R, s > 0$, the function $\scrK_n(\cdot, t; \cdot, t+s^3)$ converges in $\scrE_*(S_{t, t + s^3})$ to an Airy sheet of scale $s$. Moreover, Corollary 6.6 in \cite*{DNV} also implies that this same convergence holds if $t, s$ are replaced by convergent sequences $t_n \to t, s_n \to s > 0$. In particular, the sequence $\scrK_n$ also satisfies the mobile Tracy-Widom convergence condition in Theorem \ref{T:ind-main}, so $\scrK_n$ satisfies all the conditions of that theorem.
\end{proof}

\subsection{Graph convergence, exponential tightness and geodesic convergence everywhere}

The moment bounds necessary to prove graph convergence and geodesic convergence everywhere are not known for all of the models discussed above. However, they are available for geometric last passage percolation with a fixed height-to-width ratio and parameter, and for exponential, Brownian, and planar Poisson last passage percolation. In these models, stronger bounds also allow us to prove exponential tightness

\begin{theorem}
\label{T:geom-lpp-emph}
\begin{enumerate}
	\item (Geometric) Let the sequence of environments $\scrK_n$ be either as in Theorem \ref{T:geom-lpp} where $\ga = \ga_n$ and $m_n = \floor{\rho n}$ for some fixed constants $\rho$ and $\ga$ that do not depend on $n$. 
	\item (Exponential) Let $\scrK_n$ be as in Corollary \ref{C:exponential} for some sequence $m_n = \floor{\rho n}$. 
	\item (Poisson) Let $\scrK_n := \scrK_{t_n}$ be as in Corollary \ref{C:Poisson-box}. 
	\item (Brownian) Let $\scrK_n$ be as in Corollary \ref{C:Brownian}. 
\end{enumerate}
In all four cases, there is a coupling of $\scrK_n$ and $\scrL$ satisfying graph convergence and geodesic convergence everywhere. Moreover, in cases $1$ and $2$ we can couple $\scrK_n, \scrL$ so that for some $a > 1$, for every compact set $K \sset \Rd$, we have
\begin{equation}
\label{E:Moment-exponential}
    \lim_{n \to \infty} \E a^{[\sup_K (\scrK_n - \scrL)^-]^3} + \E a^{\sup_K (\scrK_n - \scrL)^+} = 2.
\end{equation}
In case $3$, we can couple $\scrK_n, \scrL$ so that
\begin{equation}
\label{E:Moment-Poisson}
    \lim_{n \to \infty} \E a^{[\sup_K (\scrK_n - \scrL)^-]^3} + \E a^{[\sup_K (\scrK_n - \scrL)^+]^{3/2}} = 2,
\end{equation}
and in case $4$, we can couple $\scrK_n, \scrL$ so that 
\begin{equation}
\label{E:Moment-Brownian}
    \lim_{n \to \infty} \E a^{[\sup_K (\scrK_n - \scrL)^-]^{3/4}} + \E a^{[\sup_K (\scrK_n - \scrL)^+]^{3/2}} = 2.
\end{equation}
\end{theorem}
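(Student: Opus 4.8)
The plan is to derive Theorem~\ref{T:geom-lpp-emph} by feeding the integrable zoo results of Section~\ref{S:zoo} into the abstract machinery of Section~\ref{S:cvg-lattice}. The backbone is Theorem~\ref{T:lattice-models-i} (and its line-environment analogues implicit in Corollaries~\ref{C:exponential}--\ref{C:Poisson-box}): in each of the four cases the prelimit $\scrK_n$ is a rescaled last passage metric arising from an i.i.d.\ (or independent-increment) environment with a fixed height-to-width ratio, so the $1$--$2$--$3$ KPZ scaling is built in, and the Airy line ensemble convergence needed as input is already supplied by the companion paper \cite*{DNV} (as cited in each corollary of Section~\ref{S:zoo}). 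First I would record that hypograph convergence and geodesic convergence off holes are immediate from Theorems~\ref{T:geom-lpp}, \ref{C:exponential}, \ref{C:Brownian}, \ref{C:Poisson-box}. To upgrade to graph convergence and geodesic convergence everywhere, by Theorem~\ref{T:lattice-models-i} (case~1, 2) and the analogous statements for the line models, it suffices to verify the left-tail moment hypothesis: a uniform bound of the form $\limsup_n \sup_{y \in [-3,3]} \E[(A^n_1(y))^-]^{5+\de} < \infty$. This reduces the whole theorem, apart from the exponential-tightness refinements, to a single one-point estimate in each model.

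For that one-point estimate I would invoke the known moderate-deviation/lower-tail bounds for the integrable models. For geometric and exponential last passage percolation the left tail of the rescaled last passage value decays like $\exp(-c m^3)$ at scale $m$ (the cube in the exponent coming from the eigenvalue-rigidity heuristic for the underlying determinantal process), which gives all moments and in particular \eqref{E:liminf-5mom} with room to spare; for Brownian LPP one uses the corresponding bound (with the weaker $3/4$ exponent in the left tail, explaining the $3/4$ appearing in \eqref{E:Moment-Brownian}); for planar Poisson LPP one uses the lower-tail bound, e.g.\ the estimates of \cite*{basu2014last}, which give the cube exponent on the left and $3/2$ on the right. Concretely, feed the hypothesis $\limsup_n \sup_{u \in K} \E a^{(\scrK_n(u)^-)^{\mathfrak l}} < \infty$ with $\mathfrak l = 3$ for geometric, exponential, Poisson and $\mathfrak l = 3/4$ for Brownian, and the hypothesis $\limsup_n \sup_{u \in K} \E a^{|\scrK_n(u)|^{\mathfrak p}} < \infty$ with $\mathfrak p = 1$ for geometric/exponential and $\mathfrak p = 3/2$ for Poisson/Brownian, into Theorems~\ref{T:lower-exponential} and \ref{T:upper-exponential}. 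These theorems then yield constants $b>1$ with $\E b^{\sup_K(\scrK_n^-)^{\mathfrak l}} < c$ and $\E b^{\sup_K(\scrK_n^+)^{\mathfrak p}} < c$ uniformly in $n$.

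Once uniform exponential moments of $\sup_K \scrK_n^\pm$ are in hand, the coupling statements \eqref{E:Moment-exponential}--\eqref{E:Moment-Brownian} follow by a standard uniform-integrability argument. In the coupling produced by Theorem~\ref{T:lattice-models-i} (via Skorokhod representation) we already have $\scrK_n \to \scrL$ compactly on $\Rd$ a.s., hence $\sup_K(\scrK_n - \scrL)^\pm \to 0$ a.s.\ for every compact $K \sset \Rd$. The directed landscape itself has exponential moments of the same order by the Tracy--Widom tail bound in Theorem~\ref{T:tracy-widom} together with the regularity estimate \eqref{E:LuCu}; combined with the uniform bounds on $\scrK_n$ and the elementary inequality $[\sup_K (\scrK_n - \scrL)^\pm]^{\mathfrak q} \le C(\mathfrak q)\big([\sup_K \scrK_n^\pm]^{\mathfrak q} + [\sup_K \scrL^\pm]^{\mathfrak q}\big)$ (for the relevant exponents $\mathfrak q \in \{3, 3/2, 3/4\}$, using concavity/convexity as appropriate), we get that $a^{[\sup_K(\scrK_n-\scrL)^\pm]^{\mathfrak q}}$ is uniformly integrable along $n$ for some $a>1$. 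A.s.\ convergence to $0$ plus uniform integrability gives convergence in $L^1$, i.e.\ the expectations in \eqref{E:Moment-exponential}--\eqref{E:Moment-Brownian} converge to $1 + 1 = 2$.

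The main obstacle is bookkeeping rather than conceptual: one must match the tail exponents $\mathfrak l, \mathfrak r$ (equivalently $\mathfrak p$) in the table following Theorem~\ref{T:intro-main} with the actual known tail bounds for each integrable model in the correct rescaled coordinates, and confirm that the line-environment versions of Theorem~\ref{T:lattice-models-i} and Theorems~\ref{T:upper-exponential}--\ref{T:lower-exponential} genuinely apply (the line models are covered because lattice environments embed as semidiscrete ones via \eqref{E:embed-1}, and the chaining argument of Section~\ref{S:graph-cvg} is stated for general planar directed metrics of negative sign). A secondary subtlety is that for the left tail one needs the bound on $\scrK_n(u)^-$ uniformly over $u$ in a compact subset of $\Rd$, not just at a single point; this is exactly why the input to Theorems~\ref{T:lower-exponential} and \ref{T:upper-exponential} is phrased as a $\sup_{u \in K}$ bound, and it is supplied by the translation invariance of the i.i.d.\ environment together with the one-point tail bound, as in the proof of Theorem~\ref{T:lattice-models-i}. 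Brownian LPP is the one place where the exponent is genuinely worse ($3/4$ on the left), and as noted in the Remark on possible projects this is an artifact of the directed metric taking negative values and forcing extra care in the chaining argument of Section~\ref{S:graph-cvg}; I would simply carry that weaker exponent through rather than attempt to optimize it.
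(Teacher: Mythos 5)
Your plan reproduces the paper's strategy in broad strokes---verify one-point exponential tail bounds from the integrable-probability literature, feed them into Theorems~\ref{T:upper-exponential} and~\ref{T:lower-exponential}, deduce graph convergence and uniform integrability, and conclude the moment limits by almost-sure convergence plus UI. That matches the paper. However, there are two genuine gaps in the route you sketch.

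First, the planar Poisson model is neither a lattice environment nor a semidiscrete/line environment, so Theorems~\ref{T:lattice-models-i}, \ref{T:upper-exponential}, and~\ref{T:lower-exponential} do \emph{not} apply to it as stated; your remark that ``the line models are covered because lattice environments embed as semidiscrete ones'' does not cover this case. The chaining machinery of Section~\ref{S:graph-cvg} does transfer with only cosmetic changes (since the $1$--$2$--$3$ scaling is built in and Poisson last passage is nonnegative), but the upper-tail argument rests on the maximal inequality, Lemma~\ref{L:doob-lpp-boot}, whose proof is specific to line environments. The paper bridges this by exhibiting $d_P$ as a uniform almost-sure limit of vertically compressed line last passage metrics, which transports the maximal inequality and hence Corollary~\ref{C:limit-setting} to the Poisson setting; your proposal omits this step entirely. (Also, the Poisson moderate-deviation bounds the paper invokes are from L\"owe--Merkl, not \cite*{basu2014last}, which is cited only for the chaining idea.)

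Second, your plan applies Theorem~\ref{T:lower-exponential} with $\mathfrak l = 3/4$ to Brownian LPP. This cannot work as stated: the proof of Theorem~\ref{T:lower-exponential} (via the chaining bound~\eqref{E:detlower} and Corollary~\ref{C:true-bound-for-holes}) crucially uses that the weights are nonnegative to get a deterministic lower bound on $\scrK_n(p, p_{k(n)}^+)$, and Brownian increments are not sign-definite. The paper sidesteps this by quoting Theorem~1.5 of \cite*{DOV} directly, which already packages the Brownian graph convergence and the lower exponential bound $\E b^{[\sup_K(\scrK_n-\scrL)^-]^{3/4}}$ in the required coupling. You acknowledge in passing that negative weights cause trouble, but your proof as written does not actually resolve it---you would need either the DOV input (as the paper does) or a modification of the chaining argument that handles non-deterministically-bounded environments. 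Finally, a small bookkeeping note: in your UI inequality the signs should be crossed, i.e.\ $(\scrK_n - \scrL)^- \le \scrK_n^- + \scrL^+$ and $(\scrK_n - \scrL)^+ \le \scrK_n^+ + \scrL^-$, though this does not change the conclusion since $\scrL$ has exponential moments on both sides.
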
 

\begin{proof}[Proof of Theorem \ref{T:geom-lpp-emph}.1]
When $X$ is a geometric environment with fixed $\ga, \rho$, by the proof of Theorem \ref{T:geom-lpp}, we are in the setting of Theorem \ref{T:lattice-models-i}. Given this, to prove the uniform tail bounds in \eqref{E:Moment-exponential}, it suffices to check the conditions of Theorems \ref{T:upper-exponential} and \ref{T:lower-exponential} with $\mathfrak{p} = 1$ and $\mathfrak{l} = 3$
and then appeal to those theorems. Indeed, if the condition in Theorem \ref{T:lower-exponential} holds then so does the condition \eqref{E:Lnkexpt}, so in some coupling $\scrK_n \to \scrL$ in the sense of graph convergence and geodesic convergence everywhere. Moreover, in this coupling
\[
\sup_K (\scrK_n - \scrL)^+, \sup_K (\scrK_n - \scrL)^- \to 0
\]
almost surely. Theorems \ref{T:upper-exponential} and \ref{T:lower-exponential} imply the uniform integrability of $a^{[\sup_K (\scrK_n - \scrL)^+]}$ and $a^{[\sup_K (\scrK_n - \scrL)^-]^3}$ for small enough $a > 1$,
so the right side of \eqref{E:Moment-exponential} converges to $2$ for such $a$.

There exists $a > 1$ such that for any compact set $K \sset \Rd$, and all large enough $n \in \N,$  we have the upper tail bound
\begin{equation}
\label{E:Knu-upper}
\p\lf(\scrK_n(u) > m \rg) \le c_K a^{-m},
\end{equation}
for all $m > 0$. See, for example, Proposition 2.2 in \cite*{ledoux2007deviation} for this bound.

By equation (1.21) in \cite*{baik2001optimal}, there exists $a > 1, \de > 0$ such that for any compact set $K \sset \Rd$, for large enough $n$ and all $m \le \de n^{2/3}$ and $u \in K$ we have
\begin{equation}
\label{E:plfAn1}
\p\lf(\scrK_n(u) \le - m \rg) \le c_K a^{-m^3}.
\end{equation}
Since $\scrK_n(u)$ is deterministically bounded below on $K$ by $d n^{2/3}$ for some constant $d$, \eqref{E:plfAn1} extends to all $m$, by possibly changing the constant $a > 1$.  Combining \eqref{E:Knu-upper} and \eqref{E:plfAn1} shows that the condition in Theorem \ref{T:lower-exponential} holds with $\mathfrak{p} = 1$. The inequality \eqref{E:plfAn1} shows that Theorem \ref{T:lower-exponential} holds with $\mathfrak{l} = 3$.
\end{proof}

\begin{proof}[Proof of Theorem \ref{T:geom-lpp-emph}.2] The proof here is similar. Again, it suffices to check the conditions of Theorem \ref{T:upper-exponential} and \ref{T:lower-exponential} with $\mathfrak{p} = 1$ and $\mathfrak{l} = 3$. By symmetry, we can assume $\rho \ge 1$, i.e. $m_n \ge n$ for all $n$. The bound \eqref{E:Knu-upper} also holds for the exponential environment. This is again shown in \cite*{ledoux2007deviation}, see the discussion immediately following Proposition 2.2 therein.

Next, by Theorem 2 in \cite*{ledoux2010small}, there are constants $c, c' > 0$ such that for every $\ep \in (0, 1]$ and $m, n \in \N$ we have
\begin{equation}
\label{E:exponential-tail}
\p\lf(X[(1, n) \to (m, 1)] \le (\sqrt{m} + \sqrt{n})^2(1 - \ep) \rg) \le c \exp \lf(-c' \ep^3 m n \lf(\frac{1}{\ep} \wedge \frac{m^{1/2}}{n^{1/2}} \rg)\rg).
\end{equation}
After a change of variables to $\scrK_n(u)$, this gives that there exists $a > 1$ such that for every compact set $K \sset \Rd$, for all large enough $n$ the bound in \eqref{E:plfAn1} holds.
\end{proof}

\begin{proof}[Proof of Theorem \ref{T:geom-lpp-emph}.3]
 While we cannot appeal directly to Theorems \ref{T:lattice-models-i}, \ref{T:upper-exponential}, or \ref{T:lower-exponential} directly since the Poisson environment is not defined on a lattice, the proofs go through almost verbatim with the obvious changes. We sketch what is needed here.
 
Starting from Corollary \ref{C:Poisson-box}, we can conclude graph convergence and the lower tail in Theorem \ref{T:lower-exponential} via the chaining statements in Proposition \ref{P:prob-cond} and Corollary \ref{C:true-bound-for-holes}. Since the convergence of Poisson last passage percolation in Corollary \ref{C:Poisson-box} is following a standard $1-2-3$ scaling, i.e. $\tau_t = 8^{2/3} t^{2/3}, \chi_t = t^{1/3}$, and Poisson last passage values are always nonnegative, we can again apply Proposition \ref{P:prob-cond} and Corollary \ref{C:true-bound-for-holes} with $k(n) = \floor{3 \log_2(n)/4}$. Just as in the lattice case, here the bound on the $\be_{n, i}$ functions follows from the fact that there exists $a > 1$ such that for all $K \sset \Rd$, we have
\begin{equation}
\label{E:LSS}
\limsup_{n \to \infty} \sup_{u \in K} \E a^{(\scrK_n(u)^-)^3} < \infty.
\end{equation}
 Equation \eqref{E:LSS} is an immediate corollary of Theorem 1.2 in \cite*{lowe2002moderate}. 
 
 For the upper tail, again, we cannot immediately appeal to the maximal inequality in Lemma \ref{L:doob-lpp-boot} since $d_P$ is not a line last passage metric. However, $d_P$ can be obtained as a uniform limit of vertically compressed line last passage metrics. Indeed, starting with the Poisson process $P$ on $\R^2$ and a scaling parameter $s > 0$, form a collection of Poisson processes $P_{s, i}, i \in \Z$ on $\R$, by setting
 $$
 P_{s, i}[a, b] = P([-si, -s(i + 1)] \X [a, b])
 $$
 for all $i$. Let $d^*_{P_s}$ be the pullback of the line last passage metric formed from $P_{s, i}, i \in \Z$ under the map $(x, y) \mapsto (x, -y/s)$. Then as $s \to 0$, $d^*_{P_s} \to d_P$ compactly on $\R^4$ a.s., and so by taking a limit, the maximal inequality in Lemma \ref{L:doob-lpp-boot} holds for $d_P$. Therefore Corollary \ref{C:limit-setting}
 holds for $\scrK_n$, and so we can pass through the proof of Theorem \ref{T:upper-exponential}. The required exponential tail condition follows from \eqref{E:LSS} and the corresponding upper tail bound from \cite*{lowe2001moderate}, Theorem 1.3, which shows that
 there exists $a > 1$ such that for all $K \sset \Rd$, we have
\[
\limsup_{n \to \infty} \sup_{u \in K} \E a^{(\scrK_n(u)^+)^{3/2}} < \infty. \qedhere
\]
 \end{proof}
 
 \begin{proof}
 [Proof of Theorem \ref{T:geom-lpp-emph}.4]
 The graph convergence in the Brownian case is the main result of \cite*{DOV}, see Theorem 1.5 in that paper. Geodesic convergence everywhere is stronger than the type of geodesic convergence proven in Theorem 1.8 of \cite*{DOV}. It follows deterministically from graph convergence by Theorem \ref{T:geod-cvg}, as in the proof of Theorem \ref{T:lattice-models-i}. 
 
 The exponential tightness bound on $(\scrK_n - \scrL)^-$ also follows from Theorem 1.5 in \cite*{DOV}. For the upper bound, as in the proof of Theorem \ref{T:upper-exponential} we can just appeal to Corollary \ref{C:limit-setting}, since this corollary is stated in the generality of line last passage models. The required compressed exponential moment bound on $|\scrK_n(u)|$ follows in this case from Theorem 1 in \cite*{ledoux2010small}, which together with standard large deviation bounds on $|\scrK_n(u)|$ (e.g. see display (1.4) in in \cite*{ledoux2010small}), gives that there exists $a > 1$ such that for every compact set $K \sset \Rd$,
 \[
 \limsup_{n \to \infty} \E a^{\sup_K |\scrK_n - \scrL|^{3/2}} < \infty. \qedhere
 \]
 \end{proof}
 
 \begin{remark}
All exponential tail estimates in Theorem \ref{T:geom-lpp-emph} are optimal up to the value of $a$, except for the bound on $(\scrK_n - \scrL)^-$ in the Brownian case. Optimal tail estimates could be obtained by applying the ideas of Theorem \ref{T:lower-exponential} and tail bounds from \cite*{ledoux2010small}. A few modifications would need to be made to resolve the fact that the Brownian landscapes $\scrK_n$ are not deterministically bounded below at any scale. For brevity, we do not pursue this here.
 \end{remark}

\begin{remark}
Since the bound \eqref{E:exponential-tail} has no restriction on the ratio of $m/n$, it is strong enough to give graph convergence and geodesic convergence everywhere for any sequence $\scrK_n$ as in Corollary \ref{C:exponential} with $m_n$ tending to $\infty$. The proof of Theorem \ref{T:lattice-models-i} can be modified to work in this setting. 

The bound on $\scrK_n(p^+_{i+1}, p^+_{i})$ goes through in the same way. However, the deterministic bound on $\scrK_n(p, p_{k(n)}^+)$ fails when the ratio $m_n/n$ tends to $0$ or $\infty$ quickly enough. To deal with this, we must replace this deterministic bound with a union bound over last passage values for all possible starting lattice points in a compact set. At a fine scale, the cost of taking the union bound is outweighed by the exponential tails in \eqref{E:plfAn1}.
\end{remark}

\begin{remark}[Translation to Theorems  \ref{T:intro-main} and \ref{T:intro-geod}]
	\label{R:intro-diff-main}
	In order to stay in the world of directed metrics after rescaling, the setup in this section looks slightly different than the simpler setup in Theorems \ref{T:intro-main} and \ref{T:intro-geod}. It is nonetheless easy to see that those statements are equivalent to the ones in this section. We give a detailed example of how to see this for the case of the geometric environment with fixed slope $\rho$ and mean $\ga$. Let $\sig_n = n^{1/3}$, set $v = (\rho, -1), u = (\tau_n, 0)$, and let $\scrK_n, \tau_n, \chi_n, \al, \be$ be as in Theorem \ref{T:geom-lpp} for $m_n := \rho n$. Note that it is not important that $m_n$ be an integer for the purposes of applying Theorem \ref{T:geom-lpp}, that $\al, \be$ do not change with $n$, and that $\chi_n = \chi n^{1/3}, \tau_n = \tau n^{2/3}$ for fixed $\chi, \tau$.
	
Let $d$ denote the last passage metric on $\Z^2 \cup E$ coming from an array of i.i.d.\ geometric random variables $X_\ga$, and let $h$ be the additive metric on $\Z^2 \cup E$ satisfying $h(\rho t + x, -t) = \al t + \be x$ for $(\rho t + x, -t) \in \Z^2$ and $h(e_1, e_2) = e_1$ on edges. For $(x, s) \in \R^2$, we rewrite the linear scaling map $L_n$ as $L_n(x, s) = x \sig_n^2 u + s \sig_n^3 v$. Now let
\begin{align*}
(p)_n &=(\floor{(L_n p)_1}, \floor{(L_n p)_2}), \quad \floor{p}_n = L_n^{-1}(\floor{(L_n p)_1}, \floor{(L_n p)_2}), \quad \mathand \\
d_n(x, s; y, t) &= d((x, s)_n, (y, t)_n) - \lf(\al_n \sig_n^3(t-s) + \be_n \tau_n \sig_n^2(y-x)\rg).
\end{align*}
If we restrict $d_n$ to $\floor{\R^2}_n \X \floor{\R^2}_n$, then $d_n$ is exactly the pullback of the metric $d + h$ under the map $f_{\R^2 \to \Z^2} \circ L_n$. That is, $\chi_n^{-1}  d_n = \scrK_n$ on this set. Moreover, from the scaling it is easy to see that $d_n(p, q) - d_n(\floor{p}_n, \floor{q}_n) = O(1)$, uniformly in $p, q,$ and $n$. Combining these facts with Theorem \ref{T:geom-lpp-emph} gives that $\chi_n^{-1} d_n \to \scrL$ compactly in some coupling, and that rescaled geodesics converge everywhere, as is necessary for Theorem \ref{T:intro-geod}.
\end{remark}

We finish this subsection with a different way of rescaling planar Poisson last passage percolation where we first rotate the plane by $\pi/4$ and then scale by a diagonal transformation. Rotated planar Poisson last passage percolation is precisely the polynuclear growth model of \cite*{prahofer2002scale}. Note that \cite*{prahofer2002scale} work with Poisson processes of intensity $2$, whereas we work with Poisson processes of intensity $1$.
The prelimiting models satisfies a natural reflection invariance that passes to $\scrL$, see Proposition \ref{P:landscape-sym}.

\begin{corollary}[Convergence of polynuclear growth]
	\label{C:rotated-Poisson}
	With all notation and assumptions as in Corollary \ref{C:Poisson-box}, let
	$\scrL_t$ be the pullback of $\chi_t^{-1}(d_P - d_h)$ under the linear map $M_t:\R^2 \to \R^2$ with matrix
	\begin{equation*}
	B_t = \lf[\begin{array}{cc}
	1/\sqrt{2} & 1/\sqrt{2} \\
	-1/\sqrt{2} & 1/\sqrt{2}  
	\end{array}\rg] 
	\lf[\begin{array}{cc}
	\sqrt{2} \tau_t& 0 \\
	0 & \sqrt{2} t
	\end{array}\rg] = \lf[\begin{array}{cc}
	\tau_t & t \\
	-\tau_t & t
	\end{array}\rg].
	\end{equation*}
	Then for any sequence $t_n \to \infty$, there is a coupling of $\scrL_{t_n}$ and $\scrL$ satisfying graph convergence and geodesic convergence everywhere.
\end{corollary}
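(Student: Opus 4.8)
The plan is to deduce Corollary~\ref{C:rotated-Poisson} from Corollary~\ref{C:Poisson-box} together with Theorem~\ref{T:geom-lpp-emph}.3 by recognizing the rotated model $\scrL_t$ as a deterministic linear re-coordinatization of the axis-aligned model $\scrK_t$. Note that $M_t$ and $L_t$ send the time direction $(0,1)$ to the same vector $(t,t)$: the two scalings share a macroscopic time direction and differ only in the transversal direction, which is horizontal for $L_t$ and anti-diagonal for $M_t$. The anti-diagonal is exactly the tangent direction to the Poisson arctic curve $g_t(r)=2\sqrt{rt}$ at $r=t$, i.e.\ the ``cleanest'' choice of transversal direction in the sense of the Remarks following Theorem~\ref{T:intro-main}; in particular the shear coefficient $\beta$ vanishes in that direction, which is why $\scrL_t$ can be centered by the \emph{same} additive metric $d_h$ used for $\scrK_t$, and why the two $\sqrt2$'s in $B_t$ together with $\chi_t=t^{1/3}$ are the correct normalizations.

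First I would write $M_t=L_t\circ\Phi_t$ for the linear map $\Phi_t$ with matrix $A_t^{-1}B_t$, check that $\Phi_t$ converges to a fixed linear map $\Phi_\infty$ as $t\to\infty$, and conclude from functoriality of pullbacks that $\scrL_t=\Phi_t^{*}\scrK_t$. Theorem~\ref{T:geom-lpp-emph}.3 gives a coupling in which $\scrK_{t_n}\to\scrL$ with graph convergence and geodesic convergence everywhere. Since a deterministic invertible linear map is a homeomorphism and graph convergence (with continuous limit on $\Rd$) is preserved under composition with a convergent sequence of such homeomorphisms, $\scrL_{t_n}=\Phi_{t_n}^{*}\scrK_{t_n}$ converges in the graph topology to $\Phi_\infty^{*}\scrL$; by Lemma~\ref{L:pullback-geod} the $\scrL_{t_n}$-geodesics are exact linear images of $\scrK_{t_n}$-geodesics, so geodesic convergence everywhere transfers too. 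It then remains to identify $\Phi_\infty^{*}\scrL$ (in law) with a directed landscape: this is the change-of-variables computation behind \eqref{E:d-sigma-intro2}, using the $1:2:3$ scale invariance of $\scrL$ and its time/space stationarity to absorb the constants produced by $\Phi_\infty$, after which one re-couples to obtain a coupling of $\scrL_{t_n}$ with a genuine directed landscape $\scrL$ in the sense of Definition~\ref{D:DL-def}.

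The main obstacle is the linear-algebra bookkeeping of this change of variables --- tracking the shear hidden in $A_t$, the rotation and the two $\sqrt2$'s in $B_t$, the (degenerate-looking) tangent-direction expansion of $2\sqrt{rt}$, and their interplay with the $1:2:3$ scaling --- while also passing to the limit jointly in the metric $\scrK_{t_n}$ and in the $t$-dependent reparametrization $\Phi_{t_n}$. A final point, which is the payoff used in Proposition~\ref{P:landscape-sym}, is that the prelimit carries an exact reflection symmetry: the Poisson process $P$ is invariant in law under the reflection $(u,v)\mapsto(v,u)$, $d_h$ is symmetric under it, and $M_t$ intertwines it with the spatial flip $(x,s)\mapsto(-x,s)$, so $\scrL_t(-x,s;-y,t')\eqd\scrL_t(x,s;y,t')$ for every $t$; passing to the limit through the coupling above yields $\scrL(-x,s;-y,t)\eqd\scrL(x,s;y,t)$.
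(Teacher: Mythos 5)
Your structural plan matches the paper's proof: factor $M_t = L_t \circ \Phi_t$ with $\Phi_t$ given by $A_t^{-1}B_t$, so that $\scrL_t$ is the pullback of $\scrK_t$ under $\Phi_t$, then transfer graph and geodesic convergence from the coupling of Theorem \ref{T:geom-lpp-emph}.3 using Lemma \ref{L:pullback-geod}. That much is right, and it is the same route the paper takes.

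The gap is that you never compute $\Phi_t$, and this is exactly where the whole argument becomes trivial. The paper's observation is that $A_t^{-1}B_t$ is lower triangular with diagonal entries $1$ and off-diagonal entry $-\tau_t/t = O(t^{-1/3})$, hence converges to the \emph{identity} as $t \to \infty$. Once one knows $\Phi_\infty = I$, the limit of the pullbacks $\Phi_{t_n}^*\scrK_{t_n}$ is simply $\scrL$ itself, and nothing further needs to be said: no re-identification, no re-coupling. Your last step --- ``identify $\Phi_\infty^*\scrL$ in law with a directed landscape via 1-2-3 scale invariance and stationarity, then re-couple'' --- is not a valid replacement for this computation. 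For a general linear $\Phi_\infty$, the pullback $\Phi_\infty^*\scrL$ need not be a directed landscape (a diagonal map $\mathrm{diag}(q^{-2}, q^{-3})$ produces $q^{-1}\scrL$, which has the wrong scale, and a shear breaks the independent-Airy-sheet structure in time slices), and even if it were equal \emph{in law} to one, equality in law does not allow you to swap the almost-sure limit inside the given coupling for a different directed landscape coupled to the same $\scrL_{t_n}$. The corollary asserts an almost-sure coupling with the directed landscape $\scrL$ from Theorem \ref{T:geom-lpp-emph}.3 itself, so one genuinely needs $\Phi_\infty = I$. Your closing paragraph on reflection symmetry is correct and is indeed how Proposition \ref{P:landscape-sym} is derived, but it plays no role in proving the corollary.
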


\begin{proof}
Let $\scrK_t, A_t$ be as in Corollary \ref{C:Poisson-box}. By Theorem \ref{T:geom-lpp-emph}, we consider a coupling of $\scrK_{t_n}, \scrL$ satisfying graph convergence and geodesic convergence off holes. The environment $\scrL_{t}$ is the pullback of $\scrK_{t}$ under the map
	$$
	A_t^{-1}B_t = \lf[\begin{array}{cc}
	1 & 0 \\
	-\tau_t/t & 1
	\end{array}\rg].
	$$
	Since $\tau_t/t \to 0$ as $t \to \infty$, $A_t^{-1}B_t$ converges to the identity as $t \to \infty$. Since $\scrL_{t_n}$ is the pullback of $\scrK_{t_n}$ under a sequence of maps converging to the identity, we have that $\mathfrak{g} \scrL_{t_n} \to \mathfrak{g} \scrL$ in the specified coupling. Moreover, geodesics in $\scrL_{t_n}$ are simply pullbacks of geodesics in $\scrK_{t_n}$ by Lemma \ref{L:pullback-geod}, so $\scrL_{t_n}$ inherits geodesic convergence everywhere from $\scrK_{t_n}$.
\end{proof}

\subsection{A distinction between hypograph and graph convergence}

We end this section by modifying exponential last passage percolation to give an example of a sequence of i.i.d.\ lattice environments $\tilde \scrK_n$ that can be coupled with $\scrL$ to satisfy multi-time convergence, hypograph convergence, and geodesic convergence off holes, but not graph convergence.

\begin{example}
\label{Ex:elpp}
Let $X, \scrK_n$ be as in Corollary \ref{C:exponential} with $m_n = n$. Define a new sequence of i.i.d.\ environments $\tilde X_n$ so that
$$
\tilde X_n(u) = \begin{cases}
X(u), \qquad&\text{ with probability } 1 - n^{-5/3} \\
- n^{1/3}, \qquad &\text{ with probability } n^{-5/3},
\end{cases} 
$$
and define $\tilde \scrK_n$ from $\tilde X_n$ in the same way that $\scrK_n$ was defined from $X$.
Then for any finite or countable union $S = \bigcup S_{s_i, t_i}$, there is a coupling of $\tilde \scrK_n$ and $\scrL$ satisfying multi-time convergence on $S$, hypograph convergence, and geodesic convergence off holes. However, $\mathfrak{g} \tilde \scrK_n$ does not converge in distribution to $\mathfrak{g} \scrL$.
\end{example}

To see why Example \ref{Ex:elpp} works, first observe that there exists a constant $c > 0$ such that 
\begin{equation}
\label{E:lfinf}
\p\lf(\inf_{(x, s; y, t) \in [0,1]^2 \X \{0, 2\}} \scrK_n(x, s; y, t) - \tilde \scrK_n(x, s; y, t) \le - 2^{-4/3} \rg) > c.
\end{equation}

This is because there are $O(n^{5/3})$ squares in $\Z^2$ that get sent to the box $[0,1]^2$ after rescaling to get $\scrK_n$ and $\tilde \scrK_n$, and so with nonvanishing probability, there is some box $u$ in this set with $\tilde X_n(u) = -n^{1/3} < 0 \le X(u)$. Since $X$ dominates $\tilde X_n$ everywhere the bound on $\scrK_n - \tilde \scrK_n$ above follows since the scaling parameter $\chi_n = 2^{4/3} n^{1/3}$ for this model.

Since $\mathfrak{g}\scrK_n$ converges in distribution to $\mathfrak{g}\scrL$ by Theorem \ref{T:geom-lpp-emph} and $\scrK_n \ge \tilde \scrK_n$, \eqref{E:lfinf} implies that $\mathfrak{g} \tilde \scrK_n$ cannot also converge in distribution to $\mathfrak{g}\scrL$. On the other hand, holes where $X(u) \ne \tilde X_n(u)$ are rare. In particular, $\tilde \scrK_n(u)$ and $\scrK_n(u)$ will always be close (and typically will be equal) for all points $u = (u_1; u_2) \in \Rd$ such that neither endpoint landed on a lattice point where $\scrK_n$ disagreed with $\tilde \scrK_n$. It is not difficult to make this argument quantitative, and therefore show that $\tilde \scrK_n$ satisfies all the conditions of Theorem \ref{T:lattice-models}. Therefore there is a coupling of $\tilde \scrK_n$ and $\scrL$ satisfying multi-time convergence on $S$, hypograph convergence, and geodesic convergence off holes.

Note that the environments $\tilde \scrK_n$ satisfy the moment condition \eqref{E:liminf-5mom} when $\expt \min(0, A^n_1(y))^{5 + \de}$ is replaced by the fifth moment $\expt \min(0, A^n_1(y))^5$.

\section{Consequences}
\label{S:consequences}

In this section, we prove consequences of the convergence statements in Section \ref{S:integrable}.

\subsection{Symmetries of $\scrL$ and $\scrS$} 
\label{SS:symmetries}
The next proposition is a restatement of Proposition \ref{P:intro-sym}.
	\begin{prop}
		\label{P:landscape-sym}
	As a function of $x, s; y,$ and $t$, the directed landscape $\scrL$ satisfies
	\begin{equation}
	\label{E:Lxyyx}
	\scrL(x, s; y, t) \eqd \scrL(-x, s; -y, t) \eqd \scrL(y, -t; x, -s),
	\end{equation}
	and as a function of $x$ and $y$, the Airy sheet $\scrS$ satisfies
	\begin{equation}
	\label{E:Sxyyx}
	\scrS(x, y) \eqd \scrS(-x, -y) \eqd \scrS(y, x).
	\end{equation}
\end{prop}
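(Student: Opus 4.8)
The plan is to obtain both symmetries of $\scrL$ and $\scrS$ as limits of the corresponding symmetries in prelimiting models, using the convergence results of Section \ref{S:integrable}. The key point is that the prelimiting last passage models already possess exact (discrete or combinatorial) symmetries, and that graph convergence is strong enough to push these symmetries through to the limit.

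First I would establish the reflection symmetry $\scrL(x,s;y,t)\eqd\scrL(-x,s;-y,t)$. I would use a model whose prelimiting environment has an exact spatial reflection symmetry: planar Poisson last passage percolation is the cleanest choice, since a Poisson process on $\R^2$ is invariant in law under the reflection $(a_1,a_2)\mapsto(-a_1,a_2)$. Under this reflection the induced metric $d_P$ satisfies $d_P(a,c)\eqd d_P(-a_1,a_2;-c_1,c_2)$ jointly in all $(a,c)$, and the subtracted linear correction $d_h$ and the linear scaling map $M_t$ (or $L_t$) transform compatibly under this reflection, so the rescaled metrics $\scrK_{t_n}$ satisfy $\scrK_{t_n}(x,s;y,t)\eqd \scrK_{t_n}(-x,s;-y,t)$ jointly. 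By Theorem \ref{T:geom-lpp-emph}.3, $\mathfrak{g}\scrK_{t_n}\cvgd\mathfrak{g}\scrL$, and since the reflection map on $\Rh\X\bar\R$ is a homeomorphism, $\mathfrak{g}\scrK_{t_n}(-\cdot,\cdot;-\cdot,\cdot)\cvgd\mathfrak{g}\scrL(-\cdot,\cdot;-\cdot,\cdot)$ as well. Uniqueness in law of the directed landscape (Theorem 10.9 in \cite*{DOV}, invoked via Definition \ref{D:DL-def}) gives that the reflected process is again a directed landscape, hence the claimed equality in law. Care is needed to check that the reflection indeed commutes with the specific linear maps $A_t$ (resp. $B_t$) up to a further symmetry of $\scrL$ that is already known — namely the $1$-$2$-$3$ scaling — but this is a routine linear-algebra check.

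Next I would establish the time-reversal symmetry $\scrL(x,s;y,t)\eqd\scrL(y,-t;x,-s)$. Here the natural prelimiting input is the $180^\circ$ rotation symmetry of last passage percolation: for any of the integrable environments, reversing a lattice path and rotating by $\pi$ gives a bijection between up-right paths from $p$ to $q$ and up-right paths from $-q$ to $-p$, preserving the collected weight, so $d(p,q)\eqd d(-q,-p)$ jointly when the environment is invariant under the antipodal map (true for i.i.d.\ lattice environments and for the Poisson environment). Composing this with the spatial reflection already obtained, and tracking how the scaling map and linear correction transform, yields the time-reversal identity for $\scrK_n$ up to known symmetries of $\scrL$, and then graph convergence plus uniqueness in law finishes it exactly as above. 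The Airy sheet identities \eqref{E:Sxyyx} then follow immediately by specializing: $\scrS(\cdot,\cdot)\eqd\scrL(\cdot,0;\cdot,1)$ up to scale, so $\scrS(x,y)\eqd\scrS(-x,-y)$ is the $s=0,t=1$ restriction of the first identity, and $\scrS(x,y)\eqd\scrS(y,x)$ follows from the time-reversal identity combined with the reflection identity (the time reversal sends $(x,0;y,1)\mapsto(y,-1;x,0)$, and after reflecting and translating by the known translation invariance of $\scrL$ one lands back at $(y,0;x,1)$). Alternatively, $\scrS(x,y)\eqd\scrS(y,x)$ can be read off directly from the RSK isometry, since $f[(x,n)\to(y,1)]=Wf[(x,n)\to(y,1)]$ together with the reversal of the melon exhibits the required symmetry at the discrete level.

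The main obstacle I anticipate is purely bookkeeping rather than conceptual: one must verify that each discrete symmetry, after being conjugated by the affine scaling map $f_{\R^2\to\Z^2}\circ L_n$ (or $M_t$) and after subtracting the additive metric $h_n$, produces the stated transformation of $\scrL$ only up to symmetries of $\scrL$ that are either already known (translation and $1$-$2$-$3$ scaling invariance, from Definition \ref{D:DL-def}) or are themselves being proven. One has to be a little careful to avoid circularity — e.g.\ using scale invariance, which is built into the definition, is fine, but one should not use the very reflection symmetry being proven in the course of proving it. Organizing the argument so that the reflection identity is established first, standalone, and then reused for the time-reversal identity, keeps everything clean. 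A secondary technical point is that the prelimiting identities are joint-in-all-coordinates equalities in law of the full random functions (restricted to the relevant countable dense set $D$), which is exactly what is needed to conclude equality in law of the graph-valued limits; one should state this carefully using the joint convergence in Corollary \ref{C:Xiidmaster} (or its Poisson analogue) rather than just finite-dimensional convergence.
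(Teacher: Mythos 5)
Your high-level strategy -- push exact reflection symmetries of Poisson last passage percolation through graph convergence to $\scrL$ -- is the same as the paper's, and that part is sound. But there is a concrete error in the first step. The map $(a_1,a_2)\mapsto(-a_1,a_2)$ is \emph{not} a symmetry of $d_P$: it preserves the law of the Poisson process, but it reverses the NE directionality on which $d_P$ is built, so $d_P((-a_1,a_2),(-c_1,c_2)) = -\infty$ whenever $d_P(a,c)$ is finite (the ordering $a_1\le c_1$ is flipped). The claim ``$d_P(a,c)\eqd d_P(-a_1,a_2;-c_1,c_2)$ jointly'' is false. The reflections that do preserve $d_P$ as a directed metric are the diagonal swap $(a_1,a_2)\mapsto(a_2,a_1)$ (giving $d_P(a,c)\eqd d_P(\sigma a,\sigma c)$), the antipodal map $(a_1,a_2)\mapsto(-a_1,-a_2)$ (giving $d_P(a,c)\eqd d_P(-c,-a)$), and the anti-diagonal reflection $(a_1,a_2)\mapsto(-a_2,-a_1)$ (giving $d_P(a,c)\eqd d_P(-\sigma c,-\sigma a)$). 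Moreover, even the correct diagonal swap, when conjugated through the unrotated scaling map $L_t$ of Corollary \ref{C:Poisson-box}, produces $(x,s)\mapsto(-x, s+2t^{-1/3}x)$ rather than a clean $(-x,s)$; the shear vanishes as $t\to\infty$ but would need to be argued away.

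This is precisely why the paper's proof uses the \emph{rotated} model $\scrL_t$ from Corollary \ref{C:rotated-Poisson} rather than $\scrK_t$ from Corollary \ref{C:Poisson-box}. With the rotated scaling matrix $B_t = \begin{pmatrix}\tau_t & t\\ -\tau_t & t\end{pmatrix}$, the diagonal swap of the Poisson plane conjugates to exactly $(x,s)\mapsto(-x,s)$ and the anti-diagonal reflection conjugates to exactly $(x,s)\mapsto(x,-s)$, with no residual shear. So both identities in \eqref{E:Lxyyx} hold \emph{exactly in law for the prelimits} $\scrL_t$, and Corollary \ref{C:rotated-Poisson} passes them directly to $\scrL$. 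The ``routine linear-algebra check'' you deferred is thus not a side issue: with the wrong choice of model and reflection it is where the argument breaks, and the paper's choice of the rotated (PNG) coordinates is what makes it go away entirely. Your treatment of the Airy sheet identities (specialize to $s=0,t=1$ and use temporal translation invariance) and your remark that the joint convergence of the full random functions is needed -- not just finite-dimensional convergence -- are both correct and match the paper.
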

\begin{proof}
Let $\scrL_t$ be as in Corollary \ref{C:rotated-Poisson}. We have the symmetries
\begin{equation*}
\scrL_t(x, s; y, t) \eqd \scrL_t(-x, s; -y, t) \eqd \scrL_t(y, -t; x, -s).
\end{equation*}
These follow from reflection invariance of a planar rate-$1$ Poisson process across the horizontal and vertical axes. The distributional equalities in \eqref{E:Lxyyx} follow immediately from these by Corollary \ref{C:rotated-Poisson}. The distributional equalities in \eqref{E:Sxyyx} are equivalent to the distributional equalities in \eqref{E:Lxyyx} when we fix $s = 0, t = 1$.
\end{proof} 

The next theorem is a restatement of Theorem \ref{T:sheet-map}. 

\begin{theorem}
\label{T:airy-sheet-mble}
Let $\scrA$ be an Airy line ensemble, and let $\scrA^*(x) = \scrA(-x).$ For $(x, y, z) \in \Q^+ \X \Q^2$ define
	\begin{equation}
	\label{E:S'-form}
	\begin{split}
	\scrS'(x, y, z) &= \scrA[(-\sqrt{k/(2x)}, k) \to (y, 1)]- \scrA[(-\sqrt{k/(2x)}, k) \to (z, 1)],\\
	\scrS'(-x, y, z) &= \scrA^*[(-\sqrt{k/(2x)}, k) \to (-y, 1)]- \scrA^*[(-\sqrt{k/(2x)}, k) \to (-z, 1)],
	\end{split}
	\end{equation}
for all sufficiently large $k$. This is well-defined since the right hand sides above stabilize almost surely as $k \to \infty$. For $(x, y) \in \Q \setminus \{0\} \X \Q$, define
\begin{equation}
\label{E:Sxy-lim}
    \scrS(x, y) = \E \scrA_1(0) +\lim_{n \to \infty} \frac{1}{2n + 1} \sum_{z=-n}^{n}\scrS'(x, y, z) - (z - x)^2,
\end{equation} Finally, extend $\scrS$ to all of $\R^2$ by continuity.
Then $\scrS$ is an Airy sheet and $\scrS(0,\cdot)=\scrA_1(\cdot)$.
\end{theorem}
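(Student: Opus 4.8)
The strategy is to show that the explicitly-defined function $\scrS$ in \eqref{E:Sxy-lim} agrees in law with the Airy sheet defined through convergence of a symmetric last passage model, namely rotated (intensity-$1$) planar Poisson last passage percolation, and then to upgrade this distributional statement to an almost sure identity by exploiting the coupling of Remark \ref{R:airy-sheet-coup} together with the extra symmetry coming from Corollary \ref{C:rotated-Poisson}. Concretely, I would first verify that the right-hand sides of \eqref{E:S'-form} stabilize: for fixed $(x,y,z)$, the quantity $\scrA[(-\sqrt{k/(2x)},k)\to(y,1)] - \scrA[(-\sqrt{k/(2x)},k)\to(z,1)]$ is eventually constant in $k$. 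This is exactly the content behind Definition \ref{D:airy-sheet}(ii): the random index $K_{x,y,z}$ exists, and the differences stabilize to $\scrS_0(x,y)-\scrS_0(x,z)$ for \emph{some} Airy sheet $\scrS_0$ coupled to $\scrA$ as in that definition. For the reflected version $\scrS'(-x,y,z)$ we apply the same fact to $\scrA^*$, which is again a parabolic Airy line ensemble by the flip symmetry in Proposition \ref{P:sym}; this identifies $\scrS'(-x,\cdot,\cdot)$ with the increments of the Airy sheet $\scrS_0^*$ built from $\scrA^*$, and Proposition \ref{P:landscape-sym} (specifically $\scrS(x,y)\eqd\scrS(-x,-y)$, and the coupling-level version of it) will let me recognize $\scrS_0^*(x,y) = \scrS_0(-x,-y)$ on the relevant coupling.

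The second and central step is to pin down the additive constant. The definition in \eqref{E:Sxy-lim} recovers $\scrS(x,y)$ from the increments $\scrS(x,y)-\scrS(x,z)$ by averaging over $z$ and subtracting the parabola $(z-x)^2$, using that $\scrS(x,z)+(z-x)^2$ has a well-defined spatial average: $\scrS(x,\cdot)$ is a parabolic Airy process (of the appropriate recentering), so $\scrS(x,z)+(z-x)^2 - \E\scrA_1(0)$ is stationary and ergodic in $z$ with mean zero. Hence the Cesàro average $\frac{1}{2n+1}\sum_{z=-n}^n[\scrS(x,z)+(z-x)^2-\E\scrA_1(0)]$ converges almost surely to $0$ by Birkhoff's ergodic theorem — this is the same ergodicity of $\scrA_1(t)+t^2$ (equation (5.15) in \cite*{prahofer2002scale}) used at the end of the proof of Theorem \ref{T:airy-sheet-gen}. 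Therefore $\E\scrA_1(0) + \lim_n \frac{1}{2n+1}\sum_z[\scrS_0(x,z)-\scrS_0(x,y)] - (z-x)^2$, after expanding $\scrS_0(x,z) = \big(\scrS_0(x,z)-\scrS_0(x,y)\big)+\scrS_0(x,y)$ and taking the average, equals precisely $\scrS_0(x,y)$. This shows the explicitly-defined $\scrS$ coincides, on the rational grid $\Q\setminus\{0\}\times\Q$, with the Airy sheet $\scrS_0$ coupled to $\scrA$; continuity of both $\scrS_0$ (Definition \ref{D:airy-sheet}) and the claimed continuous extension then forces equality everywhere. Finally, $\scrS_0(0,\cdot) = \scrA_1(\cdot)$ by Definition \ref{D:airy-sheet}(ii), which gives the last assertion $\scrS(0,\cdot)=\scrA_1(\cdot)$; note the $x=0$ line is not covered by \eqref{E:Sxy-lim} but is added by continuity, so one must check the continuous extension matches $\scrA_1$ there, which follows since $\scrS_0$ is continuous and equals $\scrA_1$ on $\{0\}\times\R$.

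I expect the main obstacle to be the justification that the Airy sheet constructed as a distributional limit (via Theorem \ref{T:airy-sheet-gen} and the symmetric-model route of Corollary \ref{C:rotated-Poisson}) is not merely \emph{equal in law} to the function $\scrS$ of \eqref{E:Sxy-lim}, but can be realized on the \emph{same} probability space as $\scrA$ in a way that makes the identity almost sure — in other words, that the coupling in Definition \ref{D:airy-sheet}/Remark \ref{R:airy-sheet-coup} is consistent with the reflection symmetries needed for the $\scrS'(-x,\cdot,\cdot)$ branch. Here the key input is that the symmetric prelimit (rotated planar Poisson LPP) has an exact reflection invariance across both axes, which survives to the limit by Corollary \ref{C:rotated-Poisson}; passing this symmetry through Remark \ref{R:airy-sheet-coup} (which couples the Airy sheet with its Airy line ensemble in the canonical Busemann way) gives that the canonical coupling intertwines $(x,y)\mapsto(-x,-y)$ on the sheet side with $\scrA\mapsto\scrA^*$ on the ensemble side, which is exactly what is needed to interpret the $-x$ branch of \eqref{E:S'-form}. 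Once that compatibility is in hand, everything else is an application of the stabilization in Definition \ref{D:airy-sheet}(ii) and Birkhoff's ergodic theorem.
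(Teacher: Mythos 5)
Your overall architecture matches the paper's: pass to a coupled limit of a symmetric prelimit, exploit the resulting a.s.\ reflection symmetry of the Airy sheet, and then recover the additive constant by a Birkhoff ergodic average. The ergodic-average step is correct and is exactly what the paper does (using ergodicity of $\scrA_1(t)+t^2$ from \citet*{prahofer2002scale} and \citet*{corwin2014ergodicity}). But there is a genuine gap in the central step, which you correctly flag as the ``main obstacle'' and then resolve by assertion rather than by argument.

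The gap is your claim that ``the canonical coupling intertwines $(x,y)\mapsto(-x,-y)$ on the sheet side with $\scrA\mapsto\scrA^*$ on the ensemble side,'' which you derive from the exact reflection invariance of the prelimiting model. Reflection invariance of the prelimit only gives this in \emph{law}. What the proof actually needs is an \emph{almost sure} identity between two a priori different objects in the coupled limit: the Airy line ensemble $\scrA$ coupled to $\scrS$ via Definition~\ref{D:airy-sheet}(ii) (obtained by opening the melon from the ``left''), and the line ensemble $\tilde\scrA$ coupled to the reflected sheet $\tilde\scrS(x,y)=\scrS(-x,-y)$ (obtained by opening the melon from the ``right''). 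Equality in distribution of $\tilde\scrA$ and $\scrA^*$ follows from the flip symmetry of $\scrA$ (Proposition~\ref{P:sym}) and does not require the prelimit at all. What does require work is the a.s.\ equality $\tilde\scrA=\scrA^*$, and this is precisely where the paper spends its effort: it proves the prelimiting triangle inequality \eqref{E:XXX} for multi-point last passage values (concatenating disjoint $k$-tuples from $\bp\to\bq'$ with ones from $\bq''\to\bq$), passes it to the limit, and obtains the a.s.\ inequalities \eqref{E:Aiii}
\[
\sum_{i=1}^k \tilde\scrA_i(-x) \le \sum_{i=1}^k \scrA_i(x), \qquad \sum_{i=1}^k \tilde\scrA_i(x) \le \sum_{i=1}^k \scrA_i(-x),
\]
which combined with equality in law force a.s.\ equality. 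Without this, the $-x$ branch of \eqref{E:S'-form} is not justified in the coupling of Remark~\ref{R:airy-sheet-coup}, and the theorem would only be established in law rather than as a deterministic map $\scrA\mapsto\scrS$.

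A smaller point: the paper uses geometric LPP with $\ga_n=1$, $m_n=n$ and the diagonal reflection $(a,b)\mapsto(-b,-a)$ (routed through Corollary~\ref{C:Xiidmaster}), whereas you propose rotated Poisson LPP (Corollary~\ref{C:rotated-Poisson}). Either symmetric prelimit can in principle serve, but your route would require a Poisson analogue of the joint melon convergence in Corollary~\ref{C:Xiidmaster} and of the prelimiting triangle inequality above, neither of which you lay out. This is a workable but nontrivial substitution, not a free one.
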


\begin{proof} To prove Theorem \ref{T:airy-sheet-mble}, we will first construct a coupling between an Airy sheet $\scrS$ and $\scrA$  satisfying \eqref{E:S'-form} with $\scrS'(x, y, z)$ replaced by $\scrS(x, y) - \scrS(x, z)$.  

We work in the context of Theorem \ref{T:geom-lpp}. Let $X$ be an environment of i.i.d.\ geometric random variables of mean $\ga_n = 1$ and $m_n = n$. With these choices, there exist $\al, \be, \tau, \chi$ such that $\al =\al_n, \be = \be_n, \tau_n = \tau n^{2/3}$ and $\chi_n = \chi n^{1/3}$ for all $n$. Rather than work with the directed metrics $\scrK_n$ as in Theorem \ref{T:geom-lpp}, we will work with the prelimit $\scrL_{n, k}$ as defined in Corollary \ref{C:Xiidmaster}. That is, letting $(x, s)_n = (\floor{n s + \tau n^{2/3} x}, \floor{-ns})$, we set
	\begin{equation}
	\label{E:Lnk}
	\scrL_{n, \Delta k} (x, s; y, t) = \frac{X[(x, s)_n \to_{\Delta k} (y, t)_n] - \be(t-s)n - \al\tau(y - x) n^{2/3}}{\chi n^{1/3}}.
	\end{equation}
Also let $\tilde X(a, b) = X(-b, -a)$, and let $\tilde \scrL_{n, k}$ be defined from $\tilde X$ in the same way that $\scrL_{n, k}$ was defined from $X$.  All the assumptions of Corollary \ref{C:Xiidmaster} hold for the environments $X$ and $\tilde X$.

 By Corollary \ref{C:Xiidmaster}, $\scrL_{n, 1}, \scrL_{n, \cdot}(0, 0; \cdot, 1)$ converge jointly to $\scrL, \scrA$, where convergence to $\scrL$ is in the topology of convergence of graphs, and convergence to $\scrA$ is in the product of Skorokhod topologies. Moreover, $\scrL(\cdot, 0; \cdot, 1)$ is coupled to $\scrA$ as in Definition \ref{D:airy-sheet}. The same convergence holds for $\tilde \scrL, \tilde \scrL_{n, \cdot}(0, 0; \cdot, 1)$, so we can find a joint subsequential limit
 $\scrL, \scrA, \tilde \scrL, \tilde \scrA$ of 
$$
\scrL_{n, 1},  \qquad \tilde \scrL_{n, 1}, \qquad \scrL_{n, \cdot}(0, 0; \cdot, 1), \qquad \tilde \scrL_{n, \cdot}(0, 0; \cdot, 1).
$$
The coupling between $\scrL_{n, 1}$ and $\tilde \scrL_{n, 1}$ and the scaling in \eqref{E:Lnk} guarantees that
\begin{equation}
\label{E:LtildeL}
\scrL(x, s; y, t) = \tilde \scrL(-x, s; -y, t).
\end{equation}
Moreover, the coupling between $\scrL, \scrA,$ and  $\tilde \scrL, \tilde \scrA$ guarantees that with  $\scrS(\cdot, \cdot) = \scrL(\cdot,0;, \cdot, 1)$, for all $(x, y, z) \in \Q^+ \X \Q^2$, we have
	\begin{equation}
	\label{E:S-form}
	\begin{split}
	\scrS(x, y) - \scrS(x, z) &= \scrA[(-\sqrt{k/(2x)}, k) \to (y, 1)]- \scrA[(-\sqrt{k/(2x)}, k) \to (z, 1)],\\
	\scrS(-x, y) - \scrS(-x, z) &= \tilde \scrA[(-\sqrt{k/(2x)}, k) \to (-y, 1)]- \tilde \scrA[(-\sqrt{k/(2x)}, k) \to (-z, 1)],
	\end{split}
	\end{equation}
for all sufficiently large $k$. Next we show that $\scrA^*(x) := \scrA(-x)$ is equal to $\tilde \scrA(x)$ for all $x \in \R$.

By continuity of $\scrA$ and $\tilde \scrA$, and the fact that $\scrA(-x) \eqd \tilde \scrA(x)$ for all $x$, to prove this it is enough to show that for any fixed $x > 0$ and $k \in \N$, we have
\begin{equation}
\label{E:Aiii}
\sum_{i=1}^k \tilde \scrA_i(-x) \le \sum_{i=1}^k \scrA_i(x) \qquad \mathand \qquad \sum_{i=1}^k \tilde \scrA_i(x) \le \sum_{i=1}^k \scrA_i(-x) \qquad \text{ almost surely}.
\end{equation}
We will prove the first of these inequalities. The second follows by a symmetric argument.
In the prelimit, the right hand side of the first inequality in \eqref{E:Aiii} is a rescaled and centered multi-point last passage value in $X$ from
\begin{align*}
&\bp = ((0, 1-k), \dots, (0, 0))
\end{align*}
to
\begin{align*}
&\bq = ((n + \floor{\tau n^{2/3} x}, -n), \dots, (n + \floor{\tau n^{2/3} x}, - n +  k - 1),
\end{align*}
and the left hand side of the first inequality in \eqref{E:Aiii} is a rescaled and centered multi-point last passage value in $X$ from $\bp$ to
$$
\bq' = ((n, -n + \floor{\tau n^{2/3} x}), \dots (n, -n + \floor{\tau n^{2/3} x} +  k-1)).
$$
Also let $\bq''$ denote the vector $\bq'$ with all components shifted by $(1, 0)$. Since $x > 0$, we have the triangle inequality
\begin{equation}
\label{E:XXX}
X[\bp \to \bq'] + X[\bq'' \to \bq] \le X[\bp \to \bq],
\end{equation}
as any $k$-tuples of disjoint paths from $\bp \to \bq'$ and $\bq'' \to \bq'$ can be concatenated to give a $k$-tuple of disjoint paths from $\bp$ to $\bq$. We can rewrite \eqref{E:XXX} in terms of $\scrL_{n, k}, \tilde \scrL_{n, k}$ to get that
\begin{equation}
\label{E:Ynk-sum}
Y_{n, k} + \sum_{i=1}^k \tilde \scrL_{n, \Delta k}(0,0; -x, 1)  \le \sum_{i=1}^k \scrL_{n, \Delta k}(0,0; x, 1)
\end{equation}
for some error term $Y_{n, k}$ given by
$$
Y_{n, k} = \sum_{i=1}^k \scrL_{n, \Delta k} (x + O(k n^{-2/3}), 1 - \tau n^{2/3} x + O(k/n) ; x, 1).
$$
The inequality \eqref{E:Ynk-sum} implies \eqref{E:Aiii} if we can show that the error term $Y_{n, k}$ converges to $0$. By translation invariance of $X$, we can write
$$
Y_{n, k} \eqd O((\ell_n/n)^{1/3}) \sum_{i=1}^k \scrL_{\ell_n, \Delta k}(0,0; z_n, 1),
$$
where the sequence $z_n$ is bounded and $\ell_n = \Theta(n^{2/3})$. Since $\scrL_{\ell_n, k}(0,0; \cdot, 1)$ converges to the parabolic Airy line ensemble and $\ell_n/n \to 0$, $Y_{n, k} \to 0$ in distribution. This yields \eqref{E:Aiii}, and hence $\tilde \scrA = \scrA^*$.

We can now use \eqref{E:S'-form} to show that the construction in the theorem is well-defined and returns the Airy sheet $\scrS$. Each of the Airy processes $y  \mapsto \scrS(x, t) = \scrA_1(t) + t^2$ is strong mixing, see equation (5.15) in \cite*{prahofer2002scale} and \cite*{corwin2014ergodicity}, Proposition 1.13. Therefore with $\scrS'$ defined as in \eqref{E:S'-form} from $\scrA$, by \eqref{E:S-form} and the pointwise ergodic theorem, the right hand side of \eqref{E:Sxy-lim} returns the Airy sheet value $\scrS(x, y)$ almost surely. Extending continuously to $\R^2$ returns the entire Airy sheet $\scrS$. This Airy sheet satisfies $\scrS(0, \cdot) = \scrA_1(\cdot)$ by Definition \ref{D:airy-sheet}(ii).
\end{proof}

\subsection{Directed geodesics and longest increasing subsequences}
\label{SS:LIS}

Recall from Remark \ref{R:landscape-geodesics} that there is almost surely a unique $\scrL$-geodesic $\Ga$ from $(0,0)$ to $(0, 1)$. We can represent $\Ga$ as $\{(\Pi(t), t) : t \in [0, 1]\}$ for some random continuous function $\Pi:[0, 1] \to \R$. We call $\Pi$ a \textbf{directed geodesic}. The almost surely unique $\scrL$-geodesic between any other pair of points in the plane is equal in distribution to a rescaled, sheared, version of $\Pi$. In this section, we show that $\Pi$ is the scaling limit of the longest increasing subsequence in a uniform permutation.

Let $\sig$ be an element of the symmetric group $S_n$. An \textbf{increasing subsequence} in $\sig$ is a subsequence $I = \{i_1 < \dots < i_k\} \sset \{1, \dots n\}$ such that
$$
\sig(i_1) < \sig(i_2) < \dots < \sig(i_k).
$$
We think of each subsequence $I$ as a function from $\{1, \dots, k\}$ to $\N$ with $I(m) = i_m$.
A basic observation is that the length of the \textbf{longest increasing subsequence} can be related to a last passage value: if we let 
\begin{equation}
\label{E:PP-perm}
P=\{(i, \sig(i)): i \in \{1, \dots, n\}\},
\end{equation}
and define $d_P$ as in Definition \ref{D:planar-poisson}, then
$
d_P((0,0); (n, n))
$
is the largest length of any increasing subsequence in $\sig$.

When $\sig$ is a uniformly chosen permutation, then the point process $P$ is closely related to a Poisson process in the plane, and so we can use Corollary \ref{C:Poisson-box} and the connection above to show that the scaling limit of the longest increasing subsequence in a uniform permutation is the directed geodesic.

\begin{theorem}
	\label{T:subsequence}
	Let $\sig_n \in S_n$ be a uniformly chosen permutation. Let $I_n,J_n$ be arbitrary maximal length increasing subsequences in $\sig_n$. Then 
	$$
	\tilde I_n(x) :=\frac{I_n(\lfloor 2x\sqrt{n}\rfloor ) - nx}{2n^{5/6}} \cvgd \Pi(x) \qquad \mathand \qquad \max_{1\le i \le 2\sqrt{n}}\frac{|I_n(i) - J_n(i) | }{n^{5/6}} \cvgp 0,
	$$
	where $\Pi$ is the directed geodesic, the underlying topology in the first convergence is the Skorokhod topology on cadlag functions from $[0, 1] \to \R$, and $I_n(i):=n$ for $i\ge |I_n|$.
\end{theorem}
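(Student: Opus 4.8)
The plan is to derive Theorem \ref{T:subsequence} from Corollary \ref{C:Poisson-box} (and its geodesic-convergence statement) via the classical identification of longest increasing subsequences with Poisson last passage percolation. First I would recall the standard coupling: if $\sigma_n$ is uniform on $S_n$ and $P_n = \{(i,\sigma_n(i))\}$, then $d_{P_n}((0,0);(n,n))$ is the length of the longest increasing subsequence, and moreover for any $0 \le s \le t \le n$ the point process $P_n$ restricted to $[0,s]\times[0,t]$ records an initial segment of any longest increasing subsequence. The subsequence $I_n$ itself, read as a function $i\mapsto I_n(i)$, is (up to the obvious staircase interpolation) the $y$-coordinate profile of a $d_{P_n}$-geodesic from $(0,0)$ to $(n,n)$. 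So the shape question reduces to convergence of rescaled Poisson LPP geodesics, which is exactly what Theorem \ref{T:geom-lpp-emph}.3 / Corollary \ref{C:Poisson-box} provides: in the scaling there, $\scrL_{t_n} \to \scrL$ in the graph topology and geodesics converge everywhere in the Hausdorff topology.

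The main technical bridge is to compare the uniform-permutation point process $P_n$ with a genuine Poisson process. The standard device (Poissonization / de-Poissonization): let $\tilde P$ be a rate-$1$ Poisson process on $[0,\infty)^2$; conditioned on having exactly $N$ points in $[0,\sqrt N]^2$, those points (sorted by $x$-coordinate) have $y$-coordinates forming a uniform permutation of the order statistics, hence a uniform element of $S_N$ after monotone rescaling of each axis. With $N$ itself Poisson of mean $n$, concentration of $N$ around $n$ at scale $\sqrt n$ combined with the monotonicity of last passage values and geodesics under adding/removing points lets me transfer the a.s. geodesic convergence for $\tilde P$ (Corollary \ref{C:Poisson-box}, rescaled on a box of area $n$, i.e. $t_n$ chosen so that $\chi_{t_n}\sim n^{1/3}$, $\tau_{t_n}\sim n^{2/3}$) to a statement about $P_n$. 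The linear rescaling $L_t$ in Corollary \ref{C:Poisson-box} sends a $d_{P_n}$-geodesic to a curve whose graph converges to $\{(\Pi(x),x):x\in[0,1]\}$; unwinding the coordinates of $L_t$ gives precisely $\tilde I_n(x) = (I_n(\lfloor 2x\sqrt n\rfloor) - nx)/(2n^{5/6}) \to \Pi(x)$. Hausdorff convergence of the graph of a monotone staircase to the graph of the continuous function $\Pi$ upgrades to Skorokhod convergence of $\tilde I_n$ because $\Pi$ is continuous and $\tilde I_n$ has uniformly vanishing jumps (each jump is $O(n^{-1/3})$ after rescaling). For the second assertion, uniqueness of the $\scrL$-geodesic from $(0,0)$ to $(0,1)$ (Remark \ref{R:landscape-geodesics}) forces all subsequential limits of rescaled geodesics $I_n, J_n$ to coincide with $\Pi$, so their rescaled difference tends to $0$ in probability; since jumps are $O(n^{-1/3})$, uniform (sup-norm) closeness follows from Hausdorff closeness.

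I would organize the write-up as: (i) recall the LPP reformulation and the de-Poissonization lemma, citing the monotonicity of $d_P$ and its geodesics in $P$ (Definition \ref{D:planar-poisson} and Lemma \ref{L:mono-path}-type monotonicity of rightmost geodesics); (ii) invoke Corollary \ref{C:Poisson-box} to get a coupling of $\tilde P$ and $\scrL$ with graph convergence and geodesic convergence everywhere, specializing the box to area $n$; (iii) transfer to $P_n$ using the $N\approx n$ concentration and a sandwich argument (the geodesic for $P_n$ is squeezed between geodesics for the Poisson process on slightly larger and slightly smaller boxes, both of which converge to the same $\Pi$ by continuity of the limiting geodesic in its endpoints, Corollary \ref{C:geod-cvg-2}); (iv) translate Hausdorff convergence of graphs into Skorokhod convergence of $\tilde I_n$ and into the sup-norm statement for $|I_n - J_n|$, using continuity of $\Pi$ and the $O(n^{-1/3})$ jump bound. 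The hard part will be step (iii): making the de-Poissonization genuinely rigorous at the level of geodesics (not just last passage values) — one must control that the extra/missing $O(\sqrt n)$ points near the boundary of the box do not deflect the geodesic on the macroscopic scale, which requires either a monotone coupling argument together with continuity of $\scrL$-geodesics in their endpoints, or an explicit a priori transversal fluctuation bound. Everything else is bookkeeping on the linear maps $L_{t_n}$ and routine topology.
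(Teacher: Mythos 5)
Your reduction of the theorem to convergence of Poisson LPP geodesics is the right idea, and the LPP reformulation of longest increasing subsequences is correctly set up, but the crucial technical step is handled very differently in the paper and your proposed route has a genuine gap.

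The paper does not de-Poissonize by conditioning. Instead, it uses an \emph{exact} embedding: fixing a single Poisson process $P$ on $\R^2$ and defining $T_n = \inf\{t \ge 0 : P([0,t]\times[0,\sqrt n]) \ge n\}$, the $n$ points in $[0,T_n]\times[0,\sqrt n]$, with $y$-coordinates read off in the order of increasing $x$-coordinate, form a uniform permutation of $S_n$ automatically — no conditioning, and one fixed probability space for all $n$ simultaneously. Your proposal to condition on $N$ Poisson points in $[0,\sqrt N]^2$ with $N$ itself Poisson is not well-formed as stated (the box depends on what you are conditioning on), and even the standard version (condition on $n$ points in $[0,\sqrt n]^2$) destroys the a.s.\ coupling across $n$ that the graph-convergence/geodesic-convergence coupling of Theorem~\ref{T:geom-lpp-emph} provides. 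Your step (iii) sandwich argument is also harder than you concede: adding or deleting $O(\sqrt n)$ boundary points can move a geodesic (there is no pointwise monotone coupling of geodesics in the point set — monotonicity like Lemma~\ref{L:mono-path} is over endpoints in a fixed environment, not over environments), so you would need a quantitative transversal fluctuation estimate or a more delicate argument. The paper's $T_n$ device sidesteps this entirely: the random box $[0,T_n]\times[0,\sqrt n]$ is genuinely a subregion of the coupled Poisson process, so the permutation geodesic is literally a $d_P$-geodesic, and one only needs the elementary estimate $(T_n - \sqrt n)/n^{1/3} \to 0$.

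Two further points where the paper does work you would also need to do. First, the paper inserts a dedicated intermediate result, Proposition~\ref{P:planar-poisson}, which extracts both the $y$- and $x$-coordinate profiles ($\tilde G_n$ and $\tilde E_n$) of a geodesic from $(0,0)$ to $(T_n,\sqrt n)$ and proves their convergence to $\Pi$ via Proposition~\ref{P:convergence-alond-geod}; your plan invokes Corollary~\ref{C:Poisson-box} directly and leaves this extraction as ``bookkeeping,'' but it requires a real argument (in particular, the relation $G_n(x) = h(0,0;E_n(x),x) + O(n^{1/6})$ uses convergence of distances along geodesics, not just Hausdorff convergence of graphs). Second, passing from the $x$-coordinate of the $k$-th Poisson point to $k/\sqrt n$ (i.e.\ from $p_{I_n(j),1}$ to $I_n(j)/\sqrt n$) is a Donsker-type fluctuation estimate for an exponential random walk, which the paper invokes explicitly; your proposal does not mention it, and your asserted a priori $O(n^{-1/3})$ jump bound for $\tilde I_n$ is not self-evident — what one actually uses is that the staircase converges in Hausdorff distance to the graph of a continuous function, plus monotonicity, which yields uniform (hence Skorokhod) convergence without any separate jump bound.
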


We will prove Theorem \ref{T:subsequence} in tandem with a closely related theorem. This can be thought of as a Kolmogorov-Smirnov test for longest increasing subsequences.

\begin{theorem}
	\label{T:kstest}	
	Let $\{X(i) : i \in \N\}$ be a sequence of i.i.d.\ random variables with a continuous CDF $F$. For each $n$, let $I_n = \{I_{n, 1} < \dots < I_{n, K_n}\} \sset \{1, \dots, n\}$ be a subsequence of maximal length satisfying
	$$
	X(I_{n, 1}) < \dots < X(I_{n, K_n}).
	$$
	Define the empirical distribution function $F_n:\R \to [0, 1]$ by
	$$
	F_n(x) = \frac{1}{K_n} \sum_{j=1}^{K_n} \indic(X(I_{n, j}) \le x).
	$$
	Then 
	$$
	\tilde F_n := n^{1/6} \lf(F_n - F\rg) \cvgd \Pi \circ F
	$$
	where $\Pi$ is the directed geodesic. Here the underlying topology is the Skorokhod topology on cadlag functions from $\R \to \R$. Moreover, if $\tilde F_n'$ is defined analogously from a different maximal length subsequence $J_n$, then
	\[
	\sup_{x \in \R} |\tilde F_n(x) - \tilde F_n'(x)| \cvgp 0 \qquad \mathas n \to \infty.
	\]
\end{theorem}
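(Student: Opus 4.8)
\emph{Overall strategy.} The plan is to deduce Theorem~\ref{T:kstest} from Theorem~\ref{T:subsequence} (equivalently, from geodesic convergence for Poisson last passage percolation, Corollary~\ref{C:Poisson-box}); in practice one would prove the two theorems together. First I would reduce to the uniform case: since $F$ is nondecreasing, applying $x\mapsto F(x)$ to the sample does not change the relative order of $X(1),\dots,X(n)$, hence not the family of maximal increasing subsequences $I_n$ viewed as subsets of $\{1,\dots,n\}$; and since $F$ is continuous, $U(i):=F(X(i))$ are i.i.d.\ uniform on $[0,1]$. A direct check shows that the version of $\tilde F_n$ built from the $U(i)$, evaluated at $F(x)$, equals $n^{1/6}(F_n(x)-F(x))$. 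So it suffices to treat $X(i)$ i.i.d.\ uniform, $F=\mathrm{id}$, and prove $\tilde F_n\cvgd\Pi$ together with the two-subsequence claim.

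\emph{The combinatorial dictionary.} Let $\sigma_n$ be the (uniform) rank permutation of the sample and $P=\{(i,\sigma_n(i))\}$, so that a maximal increasing subsequence $I_n=\{i_1<\dots<i_{K_n}\}$ of $\sigma_n$ is the point set of a $d_P$-geodesic from $(0,0)$ to $(n,n)$ (Definition~\ref{D:planar-poisson}), with $K_n=d_P((0,0);(n,n))$. Writing $g_n(j):=\sigma_n(i_j)$ for the strictly increasing \emph{vertical profile} of this geodesic, and $R_n(x):=\#\{k\le n:X(k)\le x\}$ for the empirical count, the identity $X(i_j)\le x\iff\sigma_n(i_j)\le R_n(x)$ gives
\[
K_nF_n(x)=\#\{j:g_n(j)\le R_n(x)\}=g_n^{-}(R_n(x)),
\]
where $g_n^-$ is the left-continuous generalized inverse of $g_n$. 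In words, $K_nF_n$ is the inverse vertical profile of the geodesic, read at the empirical CDF.

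\emph{Passing to the limit.} The maximal increasing subsequences of $\sigma_n^{-1}$ (again uniform) are exactly the sets $\{\sigma_n(i):i\in I_n\}$, and the one associated to $I_n$ is, as a function of $j\in\{1,\dots,K_n\}$, precisely $j\mapsto g_n(j)$. Hence Theorem~\ref{T:subsequence} applied to $\sigma_n^{-1}$ yields $(g_n(\lfloor2x\sqrt n\rfloor)-nx)/(2n^{5/6})\cvgd\Pi(x)$ in the Skorokhod topology; equivalently $g_n$ is, uniformly, within $o_P(n^{5/6})$ of the affine function $j\mapsto\tfrac{\sqrt n}{2}j$ plus the $n^{5/6}$-scale fluctuation $2n^{5/6}\Pi(\cdot/(2\sqrt n))$. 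Inverting a monotone function whose graph is this close to a line of slope $\tfrac{\sqrt n}{2}$ is continuous at this scale and gives $g_n^-(y)=\tfrac{2y}{\sqrt n}-4n^{1/3}\Pi(y/n)+o_P(n^{1/3})$ uniformly on $[0,n]$. Composing with $R_n$ (whose fluctuation $\sup_x|R_n(x)-nx|=O_P(\sqrt n)=o_P(n^{2/3})$ is negligible at scale $n^{1/3}$, using uniform continuity of $\Pi$ on $[0,1]$) and dividing by $K_n=2\sqrt n\,(1+O_P(n^{-1/3}))$ gives $F_n(x)=x-2n^{-1/6}\Pi(x)+o_P(n^{-1/6})$ uniformly in $x\in[0,1]$, with $F_n\equiv F$ off $[0,1]$. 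Thus $\tilde F_n$ converges uniformly, hence in the Skorokhod topology (the limit is continuous and $\tilde F_n$ is eventually $0$ outside a compact set), to a fixed multiple of a reflection of the directed geodesic; matching this to $\Pi\circ F$ is a matter of the constant conventions and the reflection symmetry $\scrL(x,s;y,t)\eqd\scrL(-x,s;-y,t)$ (Proposition~\ref{P:intro-sym}), exactly as in the proof of Theorem~\ref{T:subsequence}.

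\emph{Two subsequences, and the main obstacle.} If $g_n,g_n'$ are the vertical profiles from two maximal subsequences $I_n,J_n$, then the second half of Theorem~\ref{T:subsequence} applied to $\sigma_n^{-1}$ reads $\max_j|g_n(j)-g_n'(j)|=o_P(n^{5/6})$; by the inversion estimate above this propagates to $\sup_y|g_n^-(y)-(g_n')^-(y)|=o_P(n^{1/3})$, whence $\sup_x|F_n(x)-F_n'(x)|=o_P(n^{-1/6})$, i.e.\ $\sup_x|\tilde F_n(x)-\tilde F_n'(x)|\cvgp0$. (Alternatively, in the coupling of Corollary~\ref{C:Poisson-box} in which the rescaled geodesics converge to the almost surely unique directed geodesic $\{(\Pi(t),t)\}$, both $\tilde F_n$ and $\tilde F_n'$ converge in probability to the \emph{same} limit, so their difference tends to $0$ in probability.) The genuinely delicate point is the inversion step: converting sup-norm closeness of the vertical profiles into sup-norm closeness of their generalized inverses at the correct order $n^{1/3}$ (not the cruder $n^{5/6}$) requires that $g_n$ has no anomalously long flat stretch, equivalently that the geodesic has no long horizontal run on scale $n^{5/6}$; this follows from convergence to the continuous path $\Pi$ together with the transversal-fluctuation estimates for Poisson last passage percolation already invoked above. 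Keeping track of the interplay of the error scales ($n^{1/6}$ from $K_n$, $\sqrt n$ from $R_n$, $n^{5/6}$ versus $n^{1/3}$ from the profile) is routine but must be done with care.
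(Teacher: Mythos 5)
Your overall skeleton is sound — reduce to the uniform case via $U(i)=F(X(i))$, connect $F_n$ to the geometry of the last passage geodesic, and invoke the Poisson geodesic convergence — but the route you take in the uniform case is genuinely different from the paper's and carries an extra, somewhat delicate step.

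The paper's key observation is that $K_nF_n(x)$ is \emph{exactly} $G_n(\sqrt n x)$, where $G_n$ is the last-passage-value-along-the-geodesic function appearing in Proposition~\ref{P:planar-poisson}, parametrized by the vertical coordinate. Since Proposition~\ref{P:planar-poisson} already proves $\tilde G_n\to\Pi$, no inversion at all is needed for Theorem~\ref{T:kstest}; the only remaining work is the harmless $K_n$ normalization. You instead introduce the vertical profile $g_n(j)=\sigma_n(i_j)$, obtain its scaling limit by applying Theorem~\ref{T:subsequence} to $\sigma_n^{-1}$, and then invert $g_n$ to recover $g_n^-(R_n(x))=K_nF_n(x)$. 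Unbeknownst to you, this $g_n^-$ is the same object as the paper's $G_n$ (modulo the negligible $R_n$ fluctuation), so you are re-deriving, via a detour through the ``dual'' permutation and an explicit inversion, exactly the quantity the paper reads off directly.

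The detour is not just longer: it introduces two genuine subtleties that your sketch leaves unresolved. First, Theorem~\ref{T:subsequence} is a \emph{distributional} statement, so applying it to $\sigma_n^{-1}$ yields convergence to some directed geodesic $\Pi'$ that is only equal in law to $\Pi$; to conclude anything about $\tilde F_n-\Pi$, you must pin down how $\Pi'$ is coupled to $\Pi$ when both permutations come from the same Poisson configuration. A direct computation of the rotated coordinates shows $i_j - g_n(j)$ and $i_j+g_n(j)$ carry the transversal ($n^{5/6}$-scale) and longitudinal ($n^{2/3}$-scale) fluctuations respectively, which forces $\Pi'$ to be (essentially) $-\Pi$ in this coupling and is what makes the factors of $2$ and the sign in your final expression $\tilde F_n \approx -2\Pi'$ come out right. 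Dismissing this as ``a matter of constant conventions and reflection symmetry'' is not enough; the identity $\Pi' = -\Pi$ (not merely $\Pi'\eqd\Pi$) is exactly what is needed, and it does not follow from the statement of Theorem~\ref{T:subsequence} alone — it requires going back to the coupling in Proposition~\ref{P:planar-poisson}, at which point one might as well use $G_n$ directly. Second, your worry about ``anomalously long flat stretches'' in the inversion step is a red herring: once you have uniform $o(n^{5/6})$ closeness of $g_n$ to an $n$-slope line, the inverse is automatically controlled at scale $o(n^{1/3})$ by a straightforward one-line estimate; no separate transversal-fluctuation input is needed. So that flagged ``delicate point'' is actually benign, whereas the truly delicate point (the coupling of $\Pi'$ to $\Pi$) is the one your sketch glosses over.
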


Note that the Skorokhod convergence in Theorems \ref{T:subsequence} and \ref{T:kstest} is equivalent of the existence of couplings such that $\tilde I_n \to \Pi$ uniformly, and $\tilde F_n \to \Pi \circ F$ uniformly. The key step in proving Theorems \ref{T:subsequence} and \ref{T:kstest} is the following convergence result for planar Poisson last passage percolation.

\begin{prop}
	\label{P:planar-poisson}
	Let $P$ be a Poisson process on $\R^2$, and define
	$$
	T_n = \inf \{t \ge 0 : P_n [0, T_n] \X [0, \sqrt{n}] \ge n \}
	$$
	Let $\Ga_n \sset\R^2$ be any sequence of $d_P$-geodesics from $(0,0)$ to $(T_n, \sqrt{n})$ such that 
	$$
	\Ga_n = \{(E_n(x), x) : x \in [0, \sqrt{n}]\}
	$$
	for some continuous bijection $E_n:[0, \sqrt{n}] \to [0, T_n]$. Also define
	$$
	G_n(x) = d_P((0,0), (E_n(x), x)) \quad \mathand \quad G_n^{-1}(m) = \inf \{x \in [0, \sqrt{n}] : G_n(x) = m \}.
	$$
	Then
	\begin{equation}
	\label{E:GnnN}
	\tilde G_n(x) :=\frac{G_n(\sqrt{n}x) - 2\sqrt{n}x}{2n^{1/3}} \cvgd \Pi(x), \;\mathand\; \tilde E_n(x) := \frac{E_n \circ G_n^{-1}(\floor{2\sqrt{n}x}) - \sqrt{n}x}{2n^{1/3}}  \cvgd \Pi(x),
	\end{equation}
	where $\Pi$ is the directed geodesic. Here the underlying topology is the Skorokhod topology on cadlag functions from $[0, 1] \to \R$. 
	To ensure that $\tilde E_n$ is well-defined for all $x$, we set $G_n^{-1}(y) = \sqrt{n}$ for $y > d_P((0,0), (\sqrt{n}, T_n))$.
	
	Moreover, if $\Ga_n'$ is a different sequence of geodesics from $(0,0)$ to $(\sqrt{n}, T_n)$, and $\tilde G_n', \tilde E_n'$ were formed from $\Ga_n'$ in the same way that $\tilde G_n, \tilde E_n$ were formed from $\Ga_n$, then
	\begin{equation}
	\label{E:GnnM}
	\sup_{x \in [0, 1]} |\tilde G_n(x) - \tilde G_n'(x)| + |\tilde E_n(x) - \tilde E_n'(x)| \cvgp 0
	\end{equation}
	as $n \to \infty$.
	
\end{prop}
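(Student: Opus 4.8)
The plan is to reduce Proposition \ref{P:planar-poisson} to the geodesic convergence results already established for planar Poisson last passage percolation, Corollary \ref{C:Poisson-box} together with the moment bounds of Theorem \ref{T:geom-lpp-emph}.3, which give that the rescaled Poisson metric $\scrK_{t_n}$ can be coupled with the directed landscape $\scrL$ so that graph convergence and geodesic convergence everywhere hold. The essential point is that $T_n$ is a random (but concentrated) time, and once we know $T_n/\sqrt n \to 1$ with high probability, the geodesics $\Ga_n$ from $(0,0)$ to $(T_n,\sqrt n)$ in $d_P$, after the $1$-$2$-$3$ rescaling, are exactly geodesics in $\scrK_{t_n}$ between endpoints that converge to $(0,0;0,1)\in\Rd$, so Corollary \ref{C:Poisson-box} and geodesic convergence everywhere (from Theorem \ref{T:geom-lpp-emph}.3) apply. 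Since $\scrL$ almost surely has a unique geodesic from $(0,0)$ to $(0,1)$ by Remark \ref{R:landscape-geodesics}, this unique geodesic is $\{(\Pi(x),x):x\in[0,1]\}$, and geodesic convergence in the Hausdorff topology then upgrades to the functional convergence claimed in \eqref{E:GnnN}.

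First I would set up the coupling. Write $t_n = \sqrt n$ for the nominal time, and recall from Corollary \ref{C:Poisson-box} that $\scrK_{t_n}$ is the pullback of $\chi_{t_n}^{-1}(d_P - h)$ under the linear map $L_{t_n}$ with $\tau_{t_n}^3 = 8 t_n^2$, $\chi_{t_n}^3 = t_n$. In this rescaling the point $(0,0)$ maps to $(0,0)$ and $(T_n,\sqrt n)$ maps to a point $(X_n, S_n)$ where $S_n = T_n^{-1}\cdot$ (a deterministic multiple) — more precisely, one computes from the matrix $A_{t_n}$ that the time coordinate is $\sqrt n / t_n = 1$ and the space coordinate is $O((T_n - t_n)/\tau_{t_n}) = O((T_n-\sqrt n) n^{-1/3})$. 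The first step is therefore a concentration estimate: $P([0,s]\times[0,\sqrt n])$ is Poisson with mean $s\sqrt n$, so $T_n$ is the first time a rate-$\sqrt n$ Poisson process reaches level $n$, giving $T_n = \sqrt n + O_{\mathbb P}(n^{1/4})$ — actually $T_n - \sqrt n$ is of order $n^{-1/4}\cdot\sqrt n = n^{1/4}$ after rescaling one checks $(T_n - \sqrt n) n^{-1/3} \to 0$ in probability, hence $(X_n,S_n)\to(0,0;0,1)$ in probability. (If one wants an a.s.\ coupling, pass to a subsequence and use Skorokhod's representation theorem, as is done throughout Section \ref{S:integrable}.)

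The second step is to translate the geodesic statement. In $d_P$ the geodesic $\Ga_n$ is the graph of $E_n$; after applying $L_{t_n}^{-1}$ it becomes the graph of a function on $[0,1]$, which by geodesic convergence everywhere (Theorem \ref{T:geom-lpp-emph}.3, invoked via Theorem \ref{T:geod-cvg}) converges in the Hausdorff topology to the unique $\scrL$-geodesic $\{(\Pi(x),x):x\in[0,1]\}$, on the a.s.\ event of Remark \ref{R:landscape-geodesics}. Since both the prelimit graphs and the limit are graphs of continuous functions on the compact interval $[0,1]$, Hausdorff convergence of graphs is equivalent to uniform convergence of the functions; reading off the space coordinate and undoing the linear rescaling gives $\tilde E_n \to \Pi$ uniformly (and hence in the Skorokhod topology). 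For the statement about $\tilde G_n$, note $G_n(x) = d_P((0,0),(E_n(x),x))$ is, up to the deterministic additive/linear correction built into $\scrK_{t_n}$, exactly $\scrK_{t_n}$ evaluated along its own geodesic, so Proposition \ref{P:convergence-alond-geod} (convergence of directed metrics along geodesics, which requires graph convergence) yields $\scrK_{t_n}(0,0; \tilde E_n(x), x)\to \scrL(0,0;\Pi(x),x)$ uniformly in $x$; undoing the rescaling converts this to $\tilde G_n \to \Pi$ after identifying $\scrL(0,0;\Pi(x),x)$ along the geodesic with a reparametrization of $\Pi$ — here one uses that the $\scrL$-geodesic has the property that its spatial location at time $x$ is $\Pi(x)$, and the last-passage value $G_n^{-1}$ inverts the time-change cleanly because $G_n$ is strictly increasing. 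The uniqueness statement \eqref{E:GnnM} is immediate: on the a.s.\ event that $\scrL$ has a unique geodesic, \emph{every} sequence of geodesics $\Ga_n'$ must converge to the same limit $\{(\Pi(x),x)\}$, so $\tilde E_n - \tilde E_n' \to 0$ and $\tilde G_n - \tilde G_n'\to 0$ uniformly in probability.

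I expect the main obstacle to be the bookkeeping in the second step: carefully matching the random endpoint $(T_n,\sqrt n)$ with the deterministic scaling of Corollary \ref{C:Poisson-box}, controlling the additive and shear corrections $h$ and $L_{t_n}$ so that $G_n$ really is $\scrK_{t_n}$ along a geodesic up to negligible error, and verifying that the inversion $G_n \mapsto G_n^{-1}$ and the resulting change of variables commute with all these limits in the Skorokhod topology rather than just pointwise. The probabilistic inputs (concentration of $T_n$, geodesic convergence everywhere, convergence along geodesics) are all available off the shelf; the work is entirely in the deterministic translation between the ``Poisson on a box of random area'' picture of Theorem \ref{T:subsequence} and the ``rescaled directed metric'' picture of Corollary \ref{C:Poisson-box}.
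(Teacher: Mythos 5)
Your overall strategy is the same as the paper's: couple via Corollary \ref{C:Poisson-box} and Theorem \ref{T:geom-lpp-emph}.3, concentrate $T_n$, use geodesic convergence everywhere and Proposition \ref{P:convergence-alond-geod}, and reduce to the unique $\scrL$-geodesic from $(0,0)$ to $(0,1)$. But there is a genuine gap in how you handle $\tilde G_n$, and a related gap in the reparametrization built into $\tilde E_n$.

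For $\tilde G_n$: you write that Proposition \ref{P:convergence-alond-geod} gives ``$\scrK_{t_n}(0,0;\tilde E_n(x),x)\to\scrL(0,0;\Pi(x),x)$ uniformly, and undoing the rescaling converts this to $\tilde G_n\to\Pi$ after identifying $\scrL(0,0;\Pi(x),x)$ along the geodesic with a reparametrization of $\Pi$.'' This is not correct; $\scrL(0,0;\Pi(x),x)$ is a last passage value along the geodesic, not $\Pi$ nor any reparametrization of it, and there is no identification. The reason $\tilde G_n\to\Pi$ is different and relies crucially on the form of the deterministic approximation $h$: in the Poisson model $h(a,b)=a+b$, so
$$G_n(x)=d_P\bigl((0,0),(E_n(x),x)\bigr)=h\bigl((0,0);(E_n(x),x)\bigr)+n^{1/6}\cdot O(1)=x+E_n(x)+O(n^{1/6}),$$
where the $O(n^{1/6})$ comes exactly from Proposition \ref{P:convergence-alond-geod} bounding $|\scrK_{\sqrt n}|$ along the geodesic. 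Substituting into the definition, $\tilde G_n(x)=(E_n(\sqrt n x)-\sqrt n x)/(2n^{1/3})+O(n^{-1/6})$. This is the spatial coordinate of the rescaled geodesic, which converges to $\Pi(x)$ by Hausdorff convergence. The directed landscape only enters at the subleading fluctuation scale; your identification would give a wrong constant.

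For $\tilde E_n$: it is not ``$E_n(\sqrt n x)$ read off the geodesic graph,'' but $E_n\circ G_n^{-1}(\lfloor 2\sqrt n x\rfloor)$, a reparametrization by $G_n^{-1}$. Hausdorff convergence of geodesic graphs gives $(E_n(\sqrt n x)-\sqrt n x)/(2n^{1/3})\to\Pi(x)$, not $\tilde E_n\to\Pi$. To close the gap you must first establish $\tilde G_n\to\Pi$ (as above), then use it to show $G_n^{-1}(\lfloor 2\sqrt n x\rfloor)=\sqrt n\, y_n(x)$ with $y_n(x)-x=O(n^{-1/6})$ uniformly, and finally conclude $\tilde E_n(x)=\Pi(y_n(x))+o(1)\to\Pi(x)$ by continuity of $\Pi$. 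Your proposal inverts this order and never controls $G_n^{-1}$. (Also a minor slip: $T_n-\sqrt n$ is $O_{\p}(1)$, not $O(n^{1/4})$ — it is the $n$th arrival of a rate-$\sqrt n$ Poisson process, which has variance $1$; your stated conclusion $(T_n-\sqrt n)n^{-1/3}\to 0$ is still correct.)
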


\begin{proof}
	In the proof, we let $\scrK_{\sqrt{n}}$ and all related notation be as in Corollary \ref{C:Poisson-box}. Consider a new coupling of the environments $\scrK_{\sqrt{n}}$ where almost surely,
	\begin{enumerate}
		\item $\mathfrak{g} \scrK_{\sqrt{n}} \to \mathfrak{g} \scrL$ in $\scrE_*$.
		\item There is a unique $\scrL$-geodesic $\Ga$ from $(0,0) \to (0, 1)$. Moreover, for any $p_n \to (0, 1)$, any sequence of $\scrK_{\sqrt{n}}$-geodesics $\Lambda_n$ from $(0,0) \to p_n$ converges in the Hausdorff metric to $\Ga$.
		\item $(T_n - \sqrt{n})/n^{1/3} \to 0$.
	\end{enumerate}
	The fact that such a coupling exists follows from Theorem \ref{T:geom-lpp-emph}.3 for the first two points. The third point follows from elementary tail estimates for Poisson random variables. Now, in this coupling, with the matrices $A_t$ as in Corollary \ref{C:Poisson-box}, condition 3 above guarantees that
	\begin{equation*}
	A_{\sqrt{n}}^{-1} (T_n, \sqrt{n}) \to (0, 1).
	\end{equation*}
	This convergence, along with condition 2 above implies that
	\begin{equation}
	\label{E:Ga-n-x}
	A_{\sqrt{n}}^{-1}\Ga_n = \lf\{\lf(\frac{E_n(\sqrt{n}x) - \sqrt{n}x}{2 n^{1/3}}, x \rg) : x \in [0, 1] \rg\} \to \Ga.
	\end{equation}
	Moreover, Proposition \ref{P:convergence-alond-geod} implies that there exists a random constant $C$ such that
	$$
	\limsup_{n \to\infty} \sup_{u \in A_{\sqrt{n}}^{-1} \Ga_n} |\scrK_{\sqrt{n}}(0, 0; u)| \le C.
	$$
	Using that $\scrK_{\sqrt{n}}$ is the pullback under $A_{\sqrt{n}}$ of $n^{-1/6}(d_P - h)$, this gives that
	\begin{align}
	\label{E:GnnQ}
	G_n(x) &= h(0,0; E_n(x), x) + O(n^{1/6}) = x + E_n(x) + O(n^{1/6}),
	\end{align}
	Here the $O(n^{1/6})$ is bounded in absolute value by $C n^{1/6}$ for all $x \in [0, \sqrt{n}]$. Therefore
	$$
	\tilde G_n(x) = \frac{E_n(\sqrt{n}x) - \sqrt{n}x}{2 n^{1/3}} + O(n^{-1/6}).
	$$
	Therefore by \eqref{E:Ga-n-x} and the equivalence of Hausdorff convergence and uniform convergence to continuous functions, $\tilde G_n \to \Pi$ uniformly a.s. This yields the first convergence in \eqref{E:GnnN}. We turn to the second convergence statement in \eqref{E:GnnN}. In the chosen coupling, the first convergence in \eqref{E:GnnN} yields
	$$
	G_n(\sqrt{n}x) = 2\sqrt{n}x + 2n^{1/3}\Pi(x) + o(n^{1/3}),
	$$
	where $o(n^{1/3})/n^{1/3} \to 0$ uniformly a.s.
	This implies that $G_n^{-1}(\floor{2\sqrt{n}x}) = \sqrt{n}y_n(x)$ for some $y_n(x)$ satisfying 
	\begin{equation}
	\label{E:nxx}
	2\sqrt{n}x = 2\sqrt{n} y_n(x) + 2\Pi(y_n(x)) n^{1/3} + o(n^{1/3}). 
	\end{equation}
	Now, \eqref{E:Ga-n-x} also implies that
	\begin{equation}
	\label{E:nFn}
	E_n(\sqrt{n}x) = 2 \Pi(x) n^{1/3} + \sqrt{n}x + o(n^{1/3}).
	\end{equation}
	Plugging \eqref{E:nFn} and $G_n^{-1}(\floor{2\sqrt{n}x}) = \sqrt{n}y_n(x)$ into the formula for $\tilde E_n$ gives that
	\begin{equation}
	\label{E:Piyn}
	\tilde E_n(x) = \Pi(y_n(x)) + o(1).
	\end{equation}
	Finally, equation \eqref{E:nxx} implies that $|y_n(x) - x| = O(n^{-1/6})$, so \eqref{E:Piyn} and the continuity of $\Pi$ proves that $\tilde E_n \to \Pi$ uniformly a.s. 
	
	Finally, in this coupling the same uniform a.s.\ convergences hold with $\tilde G_n', \tilde E_n'$ in place of $\tilde G_n, \tilde E_n$, yielding \eqref{E:GnnM}.
\end{proof}

\begin{proof}[Proof of Theorem \ref{T:subsequence}]
	Let $P, T_n$ be as in the statement of Proposition \ref{P:planar-poisson}. Let
	$p_1, \dots, p_n$ be the Poisson points in $[0, T_n] \X [0, \sqrt{n}]$, listed in order of increasing first coordinate, and let $\sig \in S_n$ be the permutation such that the points $p_{\sig(1)}, \dots, p_{\sig(n)}$ are listed in order of increasing second coordinate.
	
	Then $\sig$ is a uniform random permutation in $S_n$. Now let $I_n \sset \sig$ be a subsequence of maximal length $L_n$, and define $\Ga_n \sset [0, T_n] \X [0, \sqrt{n}]$ by connecting the sequence of points 
	$$
	(0,0), p_{I_n(1)}, \dots, p_{I_n(L_n)}, (T_n, \sqrt{n})
	$$
	with straight lines between adjacent points. Then $\Ga_n$ is a $d_P$-geodesic from $(0,0)$ to $(T_n, \sqrt{n})$. Let $\tilde E_n$ be formed from $\Ga_n$ as in Proposition \ref{P:planar-poisson}. Then
	\begin{equation}
	\label{E:FNIN}
	\tilde E_n(x) = \frac{p_{I(\floor{2 \sqrt{n} x}), 1} - \sqrt{n} x}{2n^{1/3}}.
	\end{equation}
	Here $p_{i, 1}$ denotes the first coordinate of $p_i$.
	Now, $\{p_{i, 1} : i = 1, \dots, n\}$ is an exponential random walk, where each exponential random variable has mean $1/\sqrt{n}$. Therefore by Donsker's theorem, the sequence of random variables
	$$
	\max_{i \in [1, n]} |p_{i, 1} - i/\sqrt{n}|
	$$
	is tight as $n \to \infty$. Therefore by \eqref{E:FNIN},
	$$
	\tilde E_n(x) = \frac{I(\floor{2 \sqrt{n} x}) - nx}{2n^{5/6}} + O(n^{-5/6}),
	$$
	so the first convergence in Theorem \ref{T:subsequence} follows from the second convergence in \eqref{E:GnnN}. Similarly, the second convergence in Theorem \ref{T:subsequence} follows from \eqref{E:GnnM}.
\end{proof}

\begin{proof}[Proof of Theorem \ref{T:kstest}]
	We first deal with the case when the $X(i)$ are uniformly distributed.
	Again, let $P, T_n$ be as in the statement of Proposition \ref{P:planar-poisson}, and let $p_1, \dots, p_n$ be the Poisson points in $[0, T_n] \X [0, \sqrt{n}]$, listed in order of increasing first coordinate. Letting $X(i) = p_{i, 2}/\sqrt{n}$, we have that $X(1), \dots, X(n)$ is a sequence of i.i.d.\ uniform random variables on $[0, 1].$
	
	As in the proof of Theorem \ref{T:subsequence}, if we let $I_n$ be a subsequence of maximal length satisfying 
	$$
	p_{I_n(1), 2} < \dots < p_{I_n(|I_n|), 2},
	$$
	then the set $\Ga_n$ formed as in the proof of Theorem \ref{T:subsequence} is a $d_P$-geodesic from $(0,0)$ to $(T_n, \sqrt{n})$. Moreover, letting $F_n, K_n$ be formed in Theorem \ref{T:kstest} from $X(1), \dots, X(n)$, and $G_n$ be as in Proposition \ref{P:planar-poisson}, we have that
	$
	G_n(\sqrt{n} x) = K_n F_n(x)
	$
	for all $x$. In particular,
	$$
	\tilde G_n(x) - \tilde F_n(x) = G_n(\sqrt{n} x) \lf( \frac{K_n - 2\sqrt{n}}{2 n^{1/3} K_n} \rg) = \frac{G_n(\sqrt{n} x)}{K_n}  O(n^{-1/6}),
	$$
	where in the final equality we have used that the length $K_n$ of the longest increasing subsequence is $2\sqrt{n} + O(n^{1/6})$. Since $G_n(x) \le K_n$ for all $x$, the above converges to $0$ in probability with $n$, yielding the convergence of $\tilde F_n$ for uniform random variables. The convergence of $|\tilde F_n - \tilde F_n'|$ in uniform norm follows from the analogous statement for $\tilde G_n, \tilde G_n'$ in \eqref{E:GnnM}.
	
	Now consider $X(i)$ with a general continuous CDF $F$. Define $F^{-1}:(0, 1) \to \R$ by
	$$
	F^{-1}(x) = \min \{y \in \R : F(y) = x\}.
	$$
	Since $F$ is continuous, this function is well-defined and satisfies $F \circ F^{-1}(x) = x$ for all $x \in (0, 1)$ and $F^{-1} \circ F(y) \le y$ for all $y \in \R$. We can write $X(i) = F^{-1} (U(i))$ for a sequence of i.i.d.\ uniform random variables $U(i)$ and for every $x \in \R$ and $i \in \N$ we have that
	\begin{equation}
	\label{E:Xiiii}
	X(i) = F^{-1}(U(i)) \le x \qquad \text{ if and only if } \qquad U(i) \le F(x).
	\end{equation}
	Now, with notation as in the statement of Theorem \ref{T:kstest}, let
	$$
	H_n(x) = \frac{1}{K_n} \sum_{i=1}^{K_n} \indic(U(I_{n, i}) \le x), \qquad \mathand \qquad \tilde H_n(x) = n^{1/6}(H_n(x) - x).
	$$
	By \eqref{E:Xiiii}, we have that 
	$
	\tilde F_n = \tilde H_n \circ F.
	$
	By Theorem \ref{T:kstest} for uniform random variables on $[0, 1]$, we can couple $G_n$ and $\Pi$ so that $\tilde H_n \to \Pi$ uniformly a.s., and hence $\tilde F_n = \tilde H_n \circ F \to \Pi \circ F$ uniformly a.s.\ as well, yielding the distributional convergence in the theorem. The convergence of $|\tilde F_n - \tilde F_n'|$ from the corresponding statement for uniform random variables follows similarly.
\end{proof}
\section{Interfaces}
\label{S:interfaces}
The goal of the remainder of the paper is to move from convergence of last passage percolation to convergence of tasep. Tasep and exponential last passage percolation are related through a well-known transformation, which describes tasep in terms of interfaces in the corresponding last passage metric. Because of this, we first prove general convergence theorems for interfaces. Though interfaces can be defined abstractly, we restrict our attention to planar directed metrics of negative sign with an asymptotic spacetime structure.
 
First, for a directed metric $d$ of negative sign on $\R^2$, $x \in \R^2$ and $t \in \R$, define the \textbf{backwards ball of radius $t$ centered at a set $S$} by
\begin{equation}
\label{E:BdSx-union}
B_d(S, t) = \{y \in \R^2 : d(y, x) > t \text{ for some }x \in S\} = \bigcup_{x \in S} B_d(x, t).
\end{equation}
We write $B_d(x, t) = B_d(\{x\}, t)$. Note that for $t\ge  0$, $x \notin B_d(x, t)$: in usual metric spaces, the analogous balls would have nonpositive radius, and so would be empty. Now, suppose that $d$ satisfies the positivity condition
\begin{equation}
\label{E:positivity}
d(x, s; x, t) \ge 0 \qquad \text{ for all } x \in \R, s \le t.
\end{equation}
For such a metric $d$, boundaries of backwards balls are graphs of functions. Indeed, letting $\bar{B}$ denote the Euclidean closure, define 
$$
I_d(S, t)(x) = \sup \{y \in \R : (x, y) \in \bar{B}_d(S, t)\},
$$
the \textbf{interface of radius $t$ centered at $S$}. Positivity \eqref{E:positivity} implies that $\bar{B}$ is the closed hypograph (see Section \ref{S:topologies}) of $I_d(S,t)$, with the infinities removed: 
$$
\bar{B}_d(S, t) = {\mathfrak h }I_d(S,t)\cap \R^2.
$$
We now move on to proving convergence theorems for interfaces. For the remaining theorems in this section, we will work with the following objects and assumptions.

\begin{assumptions} \label{A:interface} We have \\ 
\vspace{-2.2em}
\begin{itemize}
 \itemsep0em 
	\item a sequence of directed metrics $d_n$ of negative sign on $\R^2$ satisfying $d_n(x, s; y, t) = -\infty$ for $s > t$,
	\item a limiting metric $d$ which is continuous on $\Rh$ such that $\mathfrak{g} d_n \to \mathfrak{g} d$ in $\scrE_*$, 
	\item scaling parameters $b_n > 0$ and $a_n \in \R$ such that
\begin{equation}
\label{E:bnn}
b_n \to \infty, \quad a_n/b_n \to 0,
\end{equation}
and an additive metric $\ell_n$ given by 
\begin{equation*}
\ell_n(p,q)=\ell_n(p)-\ell_n(q), \qquad \text{ where } \qquad \ell_n(x, t) = a_n x + b_n t
\end{equation*}
such that $d_n + \ell_n$ satisfies the positivity condition \eqref{E:positivity}.
\end{itemize}
\end{assumptions}

\begin{theorem}
\label{T:abstract-dm}
With the Assumptions \ref{A:interface}, define a function $i_n:\Rd \to \R$ by
$$
i_n(y, t; x, s) = b_n I_{d_n+\ell_n}(x, s; b_n (s-t))(y) - b_n t - a_n(x-y).
$$
Then $i_n\to d$ compactly on $\R^4_\uparrow$.
Moreover, letting
$$
\tilde h_n(y,t;x,g)=b_n I_{d_n+\ell_n}(x, (g-a_nx)/b_n; b_nt)(y) + b_n t + a_ny,
$$
we have that
$
\tilde h_n(y, t;x, g) \to d(y,-t;x,0) + g
$
compactly on $(y, t; x, g) \in \R \X (0, \infty) \X \R^2$.
\end{theorem}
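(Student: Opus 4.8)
The plan is to deduce the last assertion directly from the first assertion of Theorem \ref{T:abstract-dm} by an explicit change of variables in the spacetime and radius parameters. Set $s_n(x,g) = (g - a_n x)/b_n$, so that the interface appearing in $\tilde h_n$ is $I_{d_n+\ell_n}(x, s_n(x,g); b_n t)(y)$, with center $(x, s_n(x,g)) \in \R^2$ and radius $b_n t$. Matching this with the interface $I_{d_n+\ell_n}(x', s'; b_n(s'-t'))(y')$ appearing in the definition of $i_n$ forces $x' = x$, $s' = s_n(x,g)$, $y' = y$, and $t' = s' - t = s_n(x,g) - t$. Carrying out the algebra and using $b_n s_n(x,g) = g - a_n x$, one obtains the identity
\begin{equation*}
\tilde h_n(y,t;x,g) = i_n\bigl(y,\ s_n(x,g) - t;\ x,\ s_n(x,g)\bigr) + g,
\end{equation*}
valid for every $(y,t;x,g) \in \R \X (0,\infty) \X \R^2$ and every $n$. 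This is a purely formal computation; the only point to record is that the $b_n\sigma$ and $a_n x$ terms cancel as claimed, which I would present as a one-line displayed calculation.

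Next I would analyze the argument of $i_n$ on the right-hand side. Since $b_n \to \infty$ and $a_n/b_n \to 0$ by Assumptions \ref{A:interface}, we have $s_n(x,g) = (g - a_n x)/b_n \to 0$ uniformly for $(x,g)$ in any compact subset of $\R^2$, hence $s_n(x,g) - t \to -t$ uniformly on compacts. Consequently, for any compact set $K \sset \R \X (0,\infty) \X \R^2$, the map $(y,t;x,g) \mapsto (y,\ s_n(x,g) - t;\ x,\ s_n(x,g))$ converges uniformly on $K$ to the map $(y,t;x,g) \mapsto (y, -t; x, 0)$; moreover, because $t$ is bounded below by a positive constant on $K$, for all large $n$ the images of $K$ under these maps lie in a single compact subset $K' \sset \Rd$ (they are bounded, and the gap between the two time coordinates equals $t$, which stays away from $0$). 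In particular the limiting point $(y,-t;x,0)$ lies in $\Rd$ and not on the diagonal, which is precisely why the statement is restricted to $t > 0$.

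Finally I would combine these two observations. By the first assertion of Theorem \ref{T:abstract-dm}, $i_n \to d$ uniformly on $K'$, and $d$ is uniformly continuous on $K'$ since it is continuous on $\Rh$. Hence $i_n\bigl(y,\ s_n(x,g) - t;\ x,\ s_n(x,g)\bigr) \to d(y, -t; x, 0)$ uniformly on $K$, and adding the (continuous, bounded on $K$) term $g$ gives $\tilde h_n(y,t;x,g) \to d(y,-t;x,0) + g$ uniformly on $K$, i.e.\ compactly on $\R \X (0,\infty) \X \R^2$, as desired.

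I do not expect a genuine obstacle here once the first assertion of Theorem \ref{T:abstract-dm} is available: the entire content is the bookkeeping of the change of variables and checking that the shifted arguments remain in a fixed compact subset of $\Rd$ for large $n$. The only mild care needed is in tracking the scaling factors $a_n, b_n$ when verifying the algebraic identity and in confirming that the restriction to $t > 0$ (rather than $t \geq 0$) is exactly what makes the limiting arguments land in $\Rd$.
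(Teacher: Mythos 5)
You only address the ``Moreover'' clause of Theorem~\ref{T:abstract-dm}, taking the first assertion ($i_n \to d$ compactly on $\R^4_\uparrow$) as given. That is the substantive half of the theorem, and it does not follow by a change of variables from anything else: the paper's proof of it is a genuine argument that (a) shows $d_n \to d$ compactly from the graph convergence hypothesis, (b) uses the positivity condition~\eqref{E:positivity} to identify the interface level $w_n := I_{d_n+\ell_n}(x_n,s_n;b_n(s_n-t_n))(y_n)$ as the unique level where $(d_n+\ell_n)(y_n,\cdot;x_n,s_n)$ crosses $b_n(s_n-t_n)$, (c) exploits $b_n\to\infty$ and $a_n/b_n\to 0$ to pin down $w_n \to t$, and then (d) extracts the finer estimate $(d_n+\ell_n)(y_n,w_n;x_n,s_n)-b_n(s_n-t_n)\to 0$ by squeezing with a sequence $\ep_n$ chosen so that $b_n\ep_n\to 0$. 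None of this is present in your proposal, so it is not a proof of the theorem as stated.

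On the part you do prove: your algebra is correct. Setting $s_n(x,g)=(g-a_nx)/b_n$, the identity $\tilde h_n(y,t;x,g)=i_n\bigl(y,\,s_n(x,g)-t;\,x,\,s_n(x,g)\bigr)+g$ is exactly the paper's equation~\eqref{E:inhn}, and your deduction that $s_n(x,g)\to 0$ compactly (using $a_n/b_n\to 0$ and $b_n\to\infty$), together with the observation that the time gap $t$ stays bounded away from $0$ so the shifted arguments land in a fixed compact subset of $\Rd$, reproduces the paper's reasoning with slightly more care spelled out. For this half your argument is the same as the paper's; the paper simply states ``compact convergence of $i_n\to d$ implies the compact convergence of $\tilde h_n$'' where you unfold the uniform-continuity step. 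The gap is entirely in the first assertion, which you declare out of scope but which is actually the core claim to be proven.
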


It helps with understanding how the scaling works to check that if $d_n=0$  then $i_n=0$. In Section \ref{S:tasep}, $\tilde h_n$ will correspond to a rescaled and shifted version of a tasep height function $h_n$.

\begin{proof} By continuity of $d$ on $\Rd$, it suffices to show that for any $u_n = (y_n, t_n; x_n, s_n) \to u = (y, t; x, s) \in \Rd$, that $i_n(u_n) \to d(u)$. To simplify notation in the proof, we let
$$
w_n := I_{d_n+\ell_n}(x_n, s_n; b_n (s_n- t_n))(y_n).
$$
Graph convergence of $d_n \to d$ and continuity of $d$ implies that $d_n \to d$ compactly on $\Rd$, see Lemma \ref{L:graph-equiv}. Therefore
$$
\limsup_{n \to \infty} \sup_{r \in [t - 1, (s + t)/2]} |d_n(y_n, r; x_n, s_n)| < \infty. 
$$
By the asymptotics in \eqref{E:bnn}, this implies that
$$
\frac{(d_n + \ell_n)(y_n, r; x_n, s_n)}{b_n} \to s-r
$$
uniformly for $r \in [t - 1, (s + t)/2]$.
Therefore since $s_n \to s$ and $t_n \to t$, for any $\ep > 0$ we have that
$$
(d_n + \ell_n)(y_n, t + \ep; x_n, s_n) < b_n(s_n-t_n)<(d_n + \ell_n)(y_n, t - \ep; x_n, s_n)
$$
for large enough $n$. This implies that
\begin{equation}
\label{E:wntt}
w_n \to t \quad \mathas \quad n \to \infty.
\end{equation}
Now, for any positive sequence $\ep_n$, condition \eqref{E:positivity} implies that
\begin{equation}
\label{E:dnpn}
(d_n + \ell_n)(y_n, w_n +\ep_n; x_n, s_n) \le b_n (s_n-t_n)\le (d_n + \ell_n)(y_n, w_n -\ep_n; x_n, s_n).
\end{equation}
Let $\ep_n>0$ be such that $b_n\ep_n \to 0$.  Since $d_n \to d$ compactly on $\Rd$ and $d$ is continuous we have
$$
(d_n + \ell_n)(y_n, w_n +\ep_n; x_n, s_n) - (d_n + \ell_n)(y_n, w_n -\ep_n; x_n, s_n) \to 0. 
$$
Therefore \eqref{E:dnpn} and the monotonicity of $(d_n + \ell_n)(y_n, \cdot\; ; x_n, s_n)$ implies that
\begin{equation}
\label{E:wnt}
(d_n + \ell_n)(y_n, w_n; x_n, s_n) - b_n (s_n - t_n) \to 0,
\end{equation}
or equivalently, 
\begin{equation}
\label{E:wnt2}
d_n (y_n, w_n; x_n, s_n) + a_n(x_n-y_n) + b_n(t_n -w_n)\to 0.
\end{equation}
Now, \eqref{E:wntt} implies that $(y_n, w_n; x_n, s_n) \to u = (y, t; x, s)$, so $d_n(y_n,w_n;x_n,s_n) \to d(u)$. Subtracting \eqref{E:wnt2} from this we get the required convergence 
\[
i_n(u_n)=b_nw_n - b_n t_n  -a_n(x_n - y_n) \to d(u) \quad \mathas n \to \infty.
\]
For the `Moreover' part of the theorem, we can rewrite $\tilde h_n$ in terms of $i_n$ as
\begin{equation}
\label{E:inhn}
\tilde h_n(y, t; x, g) = i_n\lf(y, - t + \frac{g - a_n x}{b_n}; x, \frac{g - a_n x}{b_n}\rg) + g.
\end{equation}
Since $a_n/b_n \to 0$ with $n$, $(g-a_nx)/b_n \to 0$ compactly on $(x, g) \in \R^2$. Therefore compact convergence of $i_n \to d$ implies the compact convergence of $\tilde h_n$.
\end{proof}

When we apply Theorem \ref{T:abstract-dm} in Section \ref{S:tasep}, it will give joint convergence of tasep from a field of narrow wedge initial conditions. To give convergence of tasep from general initial conditions, we will need a few variations.
The first handles initial conditions with compact support. 

\begin{theorem}
\label{T:set-cvg-1}
With the Assumptions \ref{A:interface}, fix a compact interval $J \sset \R$, and let $f_n:\R\to \R \cup \{-\infty\}$ be a sequence of upper semicontinuous functions. Assume that $f_n=-\infty$ outside $J$, and that $\mathfrak h f_n \to \mathfrak h f$ for some upper semicontinuous limit $f: \R \to \R \cup \{-\infty\}$ which is not equal to $-\infty$ everywhere.
Define
$$
\tilde h_n(t,y;f_n)=\sup_{x : f_n(x) \ne -\infty}  \tilde h_n(t,y;x,f_n(x)),\qquad 
h(t, y;f) = \sup_{x \in \R} f(x)+d(y, -t; x, 0).
 $$
Then 
$
\tilde h_n \to h$, compactly on $(t, y) \in (0, \infty) \X \R$.
\end{theorem}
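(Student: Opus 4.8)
The plan is to deduce the statement from Theorem \ref{T:abstract-dm}, which already supplies the convergence $\tilde h_n(y,t;x,g)\to d(y,-t;x,0)+g$ compactly in $(y,t;x,g)\in\R\times(0,\infty)\times\R^2$, together with the hypograph convergence $\mathfrak h f_n\to\mathfrak h f$; the only genuinely new work is to interchange the supremum over $x$ with the limit in $n$, and the compactness of $J$ makes this feasible. First I would record a few preliminaries. Since $f_n\equiv-\infty$ off the compact $J$ and $\mathfrak h f_n\to\mathfrak h f$, the limit $f$ is $-\infty$ off $J$ as well, so $h(t,y;f)=\sup_{x\in J}(f(x)+d(y,-t;x,0))$; for $t>0$ the point $(y,-t;x,0)$ lies in $\Rh$ away from the region $\{s>t\}$, so $x\mapsto d(y,-t;x,0)$ is finite and continuous on $J$, and since $f$ is upper semicontinuous and $\not\equiv-\infty$ on the compact $J$ this supremum is attained at some $x^\ast=x^\ast(t,y)$ with $f(x^\ast)$ finite, and $h(t,y;f)\in\R$. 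I would also note that $\tilde h_n(y,t;x,g)$ is non-decreasing in $g$: in the interface representation $\tilde h_n(y,t;x,g)=b_n I_{d_n+\ell_n}(x,(g-a_nx)/b_n;b_nt)(y)+b_nt+a_ny$, increasing $g$ moves the center point forward in time, and since $d_n+\ell_n$ is a directed metric of negative sign satisfying the positivity condition \eqref{E:positivity}, the reverse triangle inequality gives that $(d_n+\ell_n)(\,\cdot\,,(x,s))$ is non-decreasing in $s$, hence so are the corresponding backwards ball and interface.

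It suffices to show $\tilde h_n(t_n,y_n;f_n)\to h(t,y;f)$ for every sequence $(t_n,y_n)\to(t,y)$ with $t>0$, since this is equivalent to compact convergence on $(0,\infty)\times\R$ (and forces continuity of $h$). For the upper bound I would pick $x_n\in J$ with $f_n(x_n)>-\infty$ and $\tilde h_n(t_n,y_n;x_n,f_n(x_n))\ge\tilde h_n(t_n,y_n;f_n)-1/n$, and, to bound the limsup, pass to a subsequence realizing it and a further subsequence along which $x_n\to x^\ast\in J$ and $f_n(x_n)\to g^\ast\in[-\infty,\infty)$, noting $g^\ast\le f(x^\ast)$ by hypograph convergence. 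If $g^\ast$ is finite, Theorem \ref{T:abstract-dm} applied along $(y_n,t_n,x_n,f_n(x_n))\to(y,t,x^\ast,g^\ast)$ gives $\tilde h_n(t_n,y_n;x_n,f_n(x_n))\to d(y,-t;x^\ast,0)+g^\ast\le d(y,-t;x^\ast,0)+f(x^\ast)\le h(t,y;f)$; if $g^\ast=-\infty$, then for each fixed $C$ monotonicity in $g$ gives $\tilde h_n(t_n,y_n;x_n,f_n(x_n))\le\tilde h_n(t_n,y_n;x_n,-C)\to d(y,-t;x^\ast,0)-C$ for large $n$, so the limsup is $-\infty<h(t,y;f)$. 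For the lower bound I would use that hypograph convergence supplies $x_n\to x^\ast(t,y)$ with $f_n(x_n)\to f(x^\ast)$ finite; then $x_n$ is admissible in the prelimiting supremum for large $n$, and $\tilde h_n(t_n,y_n;f_n)\ge\tilde h_n(t_n,y_n;x_n,f_n(x_n))\to d(y,-t;x^\ast,0)+f(x^\ast)=h(t,y;f)$ by Theorem \ref{T:abstract-dm}. Combining the two bounds finishes the argument.

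The main obstacle is mild: approximate maximizers $x_n$ of the prelimiting supremum may have $f_n(x_n)$ unbounded below, so they cannot be fed directly into Theorem \ref{T:abstract-dm}; the monotonicity of $\tilde h_n$ in its last argument is exactly what dispatches that case. The remaining care is bookkeeping — keeping the four-tuples $(y_n,t_n,x_n,f_n(x_n))$ inside a compact subset of $\R\times(0,\infty)\times\R^2$ where Theorem \ref{T:abstract-dm} applies, and correctly combining graph convergence of $d_n$ with hypograph convergence of $f_n$ — all of which is routine once $J$ is a fixed compact interval.
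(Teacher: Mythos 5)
Your proof is correct and takes a genuinely different route from the paper's. The paper proceeds by truncation and sandwiching: it defines upper and lower truncations $(f_n)_b^\pm$ of $f_n$ at level $-b$, reduces to the case where the prelimiting initial data are uniformly bounded (where the `Moreover' part of Theorem~\ref{T:abstract-dm} applies directly), and then argues that $\tilde h_n(\cdot\,;(f_n)_b^-) \le \tilde h_n(\cdot\,;f_n) \le \tilde h_n(\cdot\,;(f_n)_b^+)$ while $h(\cdot\,;f_b^+) = h(\cdot\,;f_{b-1}^-)$ on compacts for large $b$. Your approach avoids the truncation machinery: you fix $(t_n,y_n)\to(t,y)$, pull out a subsequence of near-maximizers $x_n$ for the prelimiting supremum, observe that either $f_n(x_n)$ has a finite subsequential limit (in which case Theorem~\ref{T:abstract-dm} applies pointwise along the four-tuples) or $f_n(x_n)\to-\infty$ (in which case monotonicity of $\tilde h_n$ in $g$ kills the contribution), and for the lower bound produce $x_n$ from the Hausdorff convergence of hypographs. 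Both arguments hinge on the same structural fact — monotonicity of $\tilde h_n$ in its height argument $g$, a consequence of the positivity condition~\eqref{E:positivity} via the triangle inequality — but they deploy it differently: the paper uses it to justify the sandwich $\tilde h_n(\cdot\,;f_n)\le \tilde h_n(\cdot\,;(f_n)_b^+)$, while you use it to dismiss the $g^*=-\infty$ branch directly. Your argument is shorter and more self-contained; the paper's has the advantage of reusing the bounded case verbatim as a lemma. One small remark: you assert $x\mapsto d(y,-t;x,0)$ is finite on $J$; Assumptions~\ref{A:interface} only guarantee $d$ is continuous on $\Rh$ (with values in $\R\cup\{-\infty\}$), but if $d$ equals $-\infty$ somewhere on the relevant slice the statement becomes degenerate and both sides are $-\infty$, so this does not affect the substance; in all applications $d$ is a directed landscape and hence finite on $\Rd$.
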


Here recall from Remark \ref{R:other-domains} what it means for $\mathfrak h f_n \to \mathfrak h f$.
Note that for the directed metrics we study in Section \ref{S:tasep}, we will be able to relate $\tilde h_n(t,y;f_n)$ to a set interface, using that for nicely behaved metrics $d$ and sets $S$, we have
$$
I_d(S, t) = \sup_{y \in S} I_d(y, t).
$$
This formula does not hold in general.

\begin{proof}
When all of the sets $f_n(\R) \smin \{-\infty\}$ lie in a common compact set $A$, then the claim follows by the uniform convergence in the `Moreover' part of Theorem \ref{T:abstract-dm}. 

Otherwise, we need to apply a truncation argument. For this, first observe that since $\mathfrak h f_n \to \mathfrak h f $ and $f$ is upper semicontinuous and never takes on the value $\infty$, and equals $-\infty$ off of $J$, that there exists some $a \in \R$ such that $f_n(x) \le a$ for all $n \in \N, x \in \R$.
Next, fix $b>0$, and for any function $f$ define
$$
f_b^- = \begin{cases}f, \qquad &f(x) \ge - b, \\
-\infty, \qquad &f_n(x) < -b,
\end{cases} \qquad \text{ and } \qquad
f_b^+ = \begin{cases} f \vee (-b), \qquad &x \in J, \\
-\infty, \qquad &x \notin J.
\end{cases}
$$
With these definitions, all of the sets $(f_n)_b^-(\R)\smin \{-\infty\}$ and $(f_n)_b^+(\R)\smin \{-\infty\}$ lie in the compact set $[-b, a]$, so we can apply the theorem to the sequences $(f_n)_b^-, (f_n)_b^+$.

Now, recall that Hausdorff convergence is continuous under finite unions: if $A_n, B_n$ are any sequences of closed sets such that $A_n \to A$ and $B_n \to B$ in the Hausdorff topology, then $A_n \cup B_n \to A \cup B$. Therefore as $n \to \infty$,
$$
 \mathfrak{h} (f_n)_b^+ = \mathfrak{h} f_n \cup J \X [-\infty, -b] \to  \mathfrak{h} f \cup J \X [-\infty, -b] =  \mathfrak{h} f_b^+,
$$
so
\begin{equation}
\label{E:hnfb1}
\tilde h_n(\;\cdot\;; (f_n)_b^+) \to h(\;\cdot\;; f_b^+)
\end{equation}
compactly on $(0, \infty) \X \R$. Handling the lower truncations $(f_n)^-_b$ is more subtle. Consider any function $g_{b, N}$ such that $\mathfrak{h} g_{b, N}$ is a subsequential limit of $\mathfrak{h} (f_n)_b^-$ along some subsequence $N$. Then 
\begin{equation}
\label{E:hnfb2}
\tilde h_n(\;\cdot\;; (f_n)_b^-) \to h(\;\cdot\; ; g_{b,N})
\end{equation}
compactly on $(0, \infty) \X \R$ along $N$. Next, observe that $f_{b-1}^- \le g_{b, N} \le f$. Now,
\begin{equation}
\label{E:hnfb3}
h(\;\cdot\;; f_{b-1}^-) \le h(\;\cdot\;; g_{b,N})\le h (\;\cdot\;; f)\le  h(\;\cdot\;; f_b^+).
\end{equation}
Moreover, for any compact set $K \sset (0, \infty) \X \R$, we claim that
\begin{equation}
\label{E:hnfb4}
h(p; f_b^+) = h(p; f_{b-1}^-)
\end{equation}
on $K$ for all large enough $b$. Indeed, \eqref{E:hnfb4} holds if there exists any $s \in \R$ with $f(s) > - b + 1$, such that for all $p \in K$, we have
$$
f(s) + d(p; s, 0) > \sup_{x \in J} -b + d(p; x, 0).
$$
This holds for all large enough $b$ for any $s$ such that $f(s) > -\infty$ by the continuity of $d$. The existence of one such $s$ follows since $f$ is not identically $-\infty$.

Finally, \eqref{E:positivity} implies that for any $b$, 
\begin{equation}
\label{E:hnfb5}
\tilde h_n(\;\cdot\;; (f_n)_b^-) \le \tilde h_n(\;\cdot\;; f_n)\le  \tilde h_n(\;\cdot\;; (f_n)_b^+).
\end{equation}
Combining \eqref{E:hnfb1}, \eqref{E:hnfb2}, \eqref{E:hnfb3}, \eqref{E:hnfb4}, and \eqref{E:hnfb5} implies that $\tilde h_n(\;\cdot \; ; f_n) \to h(\; \cdot \; ; f)$ compactly.
\end{proof}

To handle initial conditions $f_n, f$ with potentially noncompact support, we need an additional tightness condition on $d_n$.

\begin{tightness-condition}
	\label{tightness}
	Fix $t> 0$. For any compact set $K\subset \mathbb R$,  slope and height $s',h_*\in \mathbb R$ there exists $a \in \R$ and $g_n:\R \to \R$ with $g_n(x)\ge s'|x|-a$ for all $x$ and $\sup_{y\in K} \tilde h_n(y,t;g_n)\le h_*$ for all large enough $n$.
\end{tightness-condition}

 This tightness condition is tailored to be used later in the context of tasep.

\begin{theorem}
	\label{T:noncompact-interfaces} 
	With the Assumptions \ref{A:interface}, suppose that the tightness condition \ref{tightness} holds for some fixed $t > 0$. Consider any sequence of initial conditions $f_n:\R \to \R \cup \{-\infty\}$ for which there exists $s, a \in  \R$ such that $f_n(x)<a+s|x|$ for all large enough $n$. Suppose that $\mathfrak h f_n\to \mathfrak h f$ for some upper semicontinuous function $f:\R \to \R \cup \{-\infty\}$ which is not equal to $-\infty$ everywhere. Define
$$
\tilde h_n(t,y;f_n)=\sup_{x : f_n(x) \ne -\infty}  \tilde h_n(t,y;x,f_n(x)),\qquad 
h(t, y;f) = \sup_{x \in \R} f(x)+d(y, -t; x, 0).
 $$
Assume that for every $s'> 0$ and every compact set $K \sset \R$, 
\begin{equation}
\label{E:supxy}
\sup_{(x,y)\in \mathbb R \times K}  s'|x|+d(y,-t;x,0)<\infty.
\end{equation}
Then 
$
\tilde h_n(t, \;\cdot\; ; f_n) \to h(t, \;\cdot\; ; f_n)$, compactly on $\R$.
\end{theorem}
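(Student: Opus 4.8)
\textbf{Proof plan for Theorem \ref{T:noncompact-interfaces}.}

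The plan is to reduce the noncompact case to Theorem \ref{T:set-cvg-1} by a truncation-and-squeeze argument, using the tightness condition \ref{tightness} to control the error introduced by discarding the tails of $f_n$. First I would fix a compact set $K \sset \R$ on which I want to prove uniform convergence, and fix a large truncation parameter $R > 0$. Define the truncated initial conditions $f_n^R$ by setting $f_n^R(x) = f_n(x)$ for $x \in [-R, R]$ and $f_n^R(x) = -\infty$ otherwise, and similarly $f^R$. Since $\mathfrak{h} f_n \to \mathfrak{h} f$ and these hypographs are being intersected with the fixed compact slab $[-R, R] \X \bar\R$, one checks that $\mathfrak{h} f_n^R \to \mathfrak{h} f^R$ (for $R$ chosen so that $f$ does not have mass exactly escaping at $\pm R$, which holds for all but countably many $R$; alternatively pass to $f^R_{\text{open}}$ on $(-R,R)$ and use upper semicontinuity). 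Theorem \ref{T:set-cvg-1} then gives $\tilde h_n(t, \cdot \,; f_n^R) \to h(t, \cdot \,; f^R)$ compactly on $(0,\infty) \X \R$, in particular uniformly on $\{t\} \X K$ for our fixed $t$. By the positivity condition \eqref{E:positivity} we have the lower bound $\tilde h_n(t, y; f_n) \ge \tilde h_n(t, y; f_n^R)$ for all $y$, so
\begin{equation*}
\liminf_{n \to \infty} \inf_{y \in K} \big( \tilde h_n(t, y; f_n) - h(t, y; f^R) \big) \ge 0,
\end{equation*}
and letting $R \to \infty$, monotone convergence $h(t, \cdot\,; f^R) \uparrow h(t, \cdot\,; f)$ (which holds because $f^R \uparrow f$ and the sup defining $h$ is monotone, using that the sup is finite by \eqref{E:supxy} with $s' = s$) yields the correct lower bound $\liminf_n \inf_K (\tilde h_n(t, \cdot\,;f_n) - h(t,\cdot\,;f)) \ge 0$.

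The substantive half is the matching upper bound, i.e. showing that the contribution to $\tilde h_n(t, y; f_n) = \sup_{x} \tilde h_n(t, y; x, f_n(x))$ coming from $|x| > R$ is negligible for $R$ large, uniformly in $n$ and in $y \in K$. This is exactly where the tightness condition \ref{tightness} enters. The idea: since $f_n(x) < a + s|x|$ for all large $n$, and since I want to dominate the part of the supremum over $|x| > R$, I want to produce a barrier. Apply condition \ref{tightness} with the given compact set $K$, with slope $s' = s + 1$ (say, some slope strictly larger than $s$), and with height $h_* $ chosen appropriately: it provides $a'$ and a function $g_n$ with $g_n(x) \ge (s+1)|x| - a'$ and $\sup_{y \in K} \tilde h_n(y, t; g_n) \le h_*$. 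Now on the region $|x| > R$, for $R$ large we have $f_n(x) < a + s|x| \le (s+1)|x| - a' = g_n(x)$ once $R > a + a'$. Hence by monotonicity of $\tilde h_n(t, y; x, \cdot)$ in the last argument and the fact that $g_n \ge f_n$ on $\{|x| > R\}$, the truncated-away part of the supremum satisfies
\begin{equation*}
\sup_{|x| > R, \, f_n(x) \ne -\infty} \tilde h_n(t, y; x, f_n(x)) \le \sup_{x \in \R} \tilde h_n(t, y; x, g_n(x)) = \tilde h_n(t,y;g_n) \le h_*
\end{equation*}
for all $y \in K$ and all large $n$. Combined with $\tilde h_n(t, y; f_n) = \max\big( \tilde h_n(t, y; f_n^R), \; \sup_{|x|>R} \tilde h_n(t,y;x,f_n(x)) \big)$ and the fact that $\tilde h_n(t, \cdot\,; f_n^R) \to h(t, \cdot\,; f^R)$ which on $K$ for large $n$ exceeds any fixed constant below $\inf_K h(t,\cdot\,;f)$... here I should be slightly more careful and re-run the argument with $h_*$ chosen below $\inf_{y\in K} h(t,y;f^{R_0})$ for some fixed moderate $R_0$ — condition \ref{tightness} lets me pick $h_*$ freely, so choose $h_*$ strictly smaller than $\liminf_n \inf_{y \in K}\tilde h_n(t,y;f_n^{R_0})$, which is finite and positive-gap by the Theorem \ref{T:set-cvg-1} convergence. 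Then the $|x|>R$ part is dominated by the $|x| \le R_0$ part, so $\tilde h_n(t, y; f_n) = \tilde h_n(t,y;f_n^R)$ on $K$ for all large $n$ and $R \ge R_0$, which together with the first paragraph gives $\tilde h_n(t, \cdot\,; f_n) \to h(t, \cdot\,; f)$ uniformly on $K$.

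I expect the main obstacle to be the bookkeeping in the previous paragraph: making sure the barrier height $h_*$ from condition \ref{tightness} can genuinely be chosen below the ``signal'' level $\inf_{y\in K} h(t,y;f)$ uniformly, so that the tail contribution is provably dominated rather than merely bounded. This needs the observation that $h(t,y;f) > -\infty$ on all of $(0,\infty)\X\R$ (from $f \not\equiv -\infty$, continuity of $d$ on $\Rd$, and that $d$ is finite there), and that $\tilde h_n(t,\cdot\,;f_n^{R_0})$ converges to something finite on $K$ by Theorem \ref{T:set-cvg-1}; the inequality $f^{R_0} \le f$ then gives $h(t,\cdot\,;f^{R_0}) \le h(t,\cdot\,;f)$, but I actually want to compare the tightness barrier to the $R_0$-truncated signal, which is exactly what the two-sided squeeze delivers. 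A secondary technical point is the choice of $R$ at which $\mathfrak{h} f_n^R \to \mathfrak{h} f^R$; I would sidestep it by working with the open truncation on $(-R,R)$ and invoking upper semicontinuity of $f$ plus the continuity-under-finite-unions property of the Hausdorff topology already used in the proof of Theorem \ref{T:set-cvg-1}. The role of hypothesis \eqref{E:supxy} is precisely to guarantee $\sup_{x\in\R} (s|x| + d(y,-t;x,0)) < \infty$ uniformly for $y \in K$, which is what makes $h(t,\cdot\,;f)$ finite (since $f(x) \le a+s|x|$) and makes the monotone limit $h(t,\cdot\,;f^R)\uparrow h(t,\cdot\,;f)$ legitimate.
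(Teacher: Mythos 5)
Your plan is essentially the paper's proof, with the same three ingredients deployed in the same logical order: (a) a lower bound on $\tilde h_n(t,\cdot\,;f_n)$ over $K$ that tells you where to set the tightness barrier, (b) the tightness condition \ref{tightness} with slope $s'=s+1$ producing a $g_n$ that dominates $f_n$ outside a compact set and whose interface stays below the barrier level, so that via \eqref{E:positivity} the full sup over $x$ collapses to the sup over a compact truncation, and (c) Theorem \ref{T:set-cvg-1} plus \eqref{E:supxy} to identify the limit of the truncated problem with $h(t,\cdot\,;f)$ on $K$.

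The one place where your bookkeeping diverges from the paper's, and where you (correctly) sense a complication, is the passage from the truncated prelimits to the truncated limit. You try to arrange $\mathfrak h f_n^R \to \mathfrak h f^R$ directly, which requires care about ``mass escaping at $\pm R$'' and invites an open-vs-closed truncation dodge. The paper avoids this entirely: after using the barrier to conclude $\tilde h_n(t,y;f_n) = \tilde h_n(t,y;f_{n,E})$ on $K$ for large $n$ (your exact reduction), it simply takes an \emph{arbitrary subsequential hypograph limit} $f'$ of $f_{n,E}$. Theorem \ref{T:set-cvg-1} applies to any such subsequence, giving $\tilde h_n(t,\cdot\,;f_{n,E}) \to h(t,\cdot\,;f')$; and since $D \subset \operatorname{int}(E)$ forces $f_D \le f' \le f$ while \eqref{E:supxy} gives $h(t,\cdot\,;f_D) = h(t,\cdot\,;f)$ on $K$, the squeeze identifies $h(t,\cdot\,;f')=h(t,\cdot\,;f)$ on $K$ regardless of which subsequential limit $f'$ you landed on. This sidesteps the question of whether the truncation map commutes with hypograph convergence. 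One other small simplification in the paper: the lower bound used to calibrate $h_*$ comes from a single point $x_0$ with $f(x_0)>-\infty$ (pick $x_n\to x_0$ with $f_n(x_n)\to f(x_0)$ and use the `Moreover' of Theorem \ref{T:abstract-dm}), rather than from a first pass through Theorem \ref{T:set-cvg-1} at a fixed truncation $R_0$ — it removes a layer of indirection but is not essential. Your plan, with the fixes you already flag, would work; the paper's version is just tidier.
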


\begin{proof}
Fix a compact set $K \sset \R$.
First, $f(x) \le a + s|x|$ for all $x$ since each $f_n$ satisfies the same inequality. Therefore by \eqref{E:supxy} with $s' = s + 1$, there exists a compact set $D \sset \R$ such that
\begin{equation}
\label{E:htyfD}
   h(t,y;f) = h(t, y, f_D), \qquad y\in K
\end{equation}
where $f_D$ is equal to $f$ on $D$ and $-\infty$ elsewhere. Also, let $x_0$ be such that $f(x_0) > -\infty$, and consider a sequence of points $x_n \to x_0$ such that $f_n(x_n) \to f(x_0)$. 
We have
$$
\liminf_{n \to \infty} \inf_{(t, y) \in K} \tilde h_n(t, y; f_n) \ge \lim_{n \to \infty} \inf_{(t, y) \in K} \tilde h_n(t, y; x_n, f_n(x_n)) = f(x_0)+\inf_{y\in K} d(y,-t,x_0,0),
$$
where the final equality follows from the `Moreover' in Theorem \ref{T:abstract-dm}. Letting $b$ denote the right hand side above, for large enough $n$ we have
$$
\tilde h_n(t, y; f_n) \ge b -1
$$
on $K$. Now let $g_n$ be as in the tightness condition with $K$ as above, $s' = s + 1$, and $h^* = b - 2$. Then
$$
\tilde h_n(t, y; f_n) > \tilde h_n(t, y; g_n)
$$
on $K$ for all large enough $n$, and there exists a compact set $E$ containing $D$ in its interior such that $g_n > f_n$ on $E^c$. Therefore by condition \eqref{E:positivity}, on $K$
\begin{equation}
\label{E:trunc}
    \tilde h_n(t, y; f_n) = \tilde h_n(t, y; f_{n,E}),
\end{equation}
where $f_{n, E}$ is equal to $f_n$ on $E$ and $-\infty$ elsewhere.
Now let $f'$ be any subsequential limit in the hypograph topology of $f_{n, E}$. By Theorem \ref{T:set-cvg-1}, $\tilde h_n(t, y; f_{n, E}) \to h(t, y; f')$ uniformly on $K$ along this subsequence. Since $D$ is contained in the interior of $E$, we have $f_D \le f' \le f$, and so by \eqref{E:htyfD},
\begin{equation}
\label{E:3eqs}
    h(t,y;f) = h(t,y;f') =h(t, y, f_D), \qquad y\in K.
\end{equation}
Combining \eqref{E:trunc} and \eqref{E:3eqs} yields the theorem.
\end{proof}

\section{Tasep and the KPZ fixed point}
\label{S:tasep}

In this section we prove Theorem \ref{T:intro-tasep}, and a few other convergence results relating tasep and the directed landscape. Let $\mathcal H$ be the set of functions $h:\mathbb Z \to \mathbb Z$ so that $h(0)$ is even and $|h(i)-h(i-1)|=1$ for all $i$.

{\bf Continuous time tasep} is a continuous-time Markov process with state space $\mathcal H$. We write $h(t, y; f)$ for continuous time tasep at time $t \in [0, \infty)$ and location $y \in \R$ started from an initial condition $f \in \scrH$. A continuous time tasep $h$ decreases by $2$ at each of its local maxima at rate 1. The movements at each of the local maxima are independent.

Continuous time tasep is connected to exponential last passage percolation via the well-known coupling given in the introduction. This coupling can be expressed using the language of interfaces introduced in Section \ref{S:interfaces}. Let
$E^2 = \{(x, y) \in \Z^2 : x + y \in 2 \Z\}$ denote the even lattice points. Let $X$ be an array of independent mean 1 exponential random variables indexed by $E^2$. Let $d_X$ be the directed metric of negative sign on $\R^2$ induced by the function $d_0(a, c) = X(c)$, where $d_0$ is defined for all points $a \in \R^2, c \in \Z^2$ with 
$
a \ne c, \quad |a_1 - c_1| \le a_2 - c_2.
$
This directed metric corresponds to a rotated version of the exponential last passage metric in the plane, see Remark \ref{R:planar-embedding}.

Next, extend all functions in $\mathcal H$ to functions from $\mathbb R\mapsto\mathbb R$ by making them linear on intervals $[i,i+1], i\in \mathbb Z$. Recall the definition of the interface $I_{d_X}(p, t)$ from Section \ref{S:interfaces}. 
By the definition, for any $p \in E^2$ and $t \ge 0$, we have
$
I_{d_X}(p, t) \in \scrH
$,
and the function 
\begin{equation}
\label{E:one-NW}
h(t, y;p) := I_{d_X}(p, t)(y)
\end{equation}
evolves as continuous time tasep started from the {\bf narrow wedge }
$$
  N^p(x) = p_2 - |x- p_1|,
$$
so we may write $h(t,y;N^p):=h(t,y;p)$.
We call this collection of coupled processes $h(\cdot; p), p \in \R^2$ the \textbf{narrow wedge field} for tasep. For a general initial condition $f \in \scrH$, the standard variational formula implies that  
\begin{equation}
\label{E:var-form}
h(\cdot; f) = \max_{x
\in \Z} h(\cdot; (x,f(x))).
\end{equation}
evolves as tasep started from $f$. This couples tasep from all initial conditions. In the proofs, it will be convenient to use the formula \eqref{E:var-form} with $\R$ in place of $\Z$. This changes all values by at most $2$, so will not affect any convergence statements where space is rescaled. We can also use the set $M_f$ of local maxima of $f$ in place of $\Z$ without changing the right side of \eqref{E:var-form}.

\subsection{The KPZ fixed point}
\label{SS:KPZfixed}

The KPZ fixed point is a Markov process that describes the scaling limit of tasep from an arbitrary single initial condition. It was first constructed in \cite*{matetski2016kpz} as the scaling limit of tasep, and later shown in \cite*{nica2020one} to be a function of the directed landscape. The \textbf{KPZ fixed point} associated to a directed landscape $\scrL$ is the random function given by
$$
h_\scrL(t, y; f) := \max_{x \in \R} \scrL(x, 0; y, t) + f(x). 
$$
The domain of $h_\scrL$ is every $t > 0, y \in \R$ and any function $f:\R \to \R \cup \{-\infty\}$ that is not identically equal to $-\infty$, and satisfies
$$
f(x) \le s |x| +a
$$
for some $s, a > 0$ and all $x$. While weaker conditions on $f$ are possible, see \cite*{sarkar2020brownian}, we restrict our attention here to functions satisfying the linear growth bound above. By the bound in \eqref{E:LuCu}, $h_\scrL$ is almost surely continuous and finite for all valid choices of $t, y, f$.

The main theorem of \cite*{matetski2016kpz} shows that tasep started from a given initial condition converges after rescaling to the directed landscape, see Theorem \ref{T:matetski-kpz} of Section \ref{SS:tasep-intro}. 

Their theorem concerns convergence of hypographs of random upper semicontinuous functions from $\R$ to $\R \cup \{-\infty\}$. However, Theorem 3.9 in \cite*{matetski2016kpz} implies that the same distributional convergence holds with the stronger topology of uniform convergence on compact sets. 

In the context of tasep, the theorems in Section \ref{S:interfaces} allow for variations and strengthenings of Theorem \ref{T:matetski-kpz}. In particular, we can prove joint convergence to the KPZ fixed point from multiple initial conditions. 

\subsection{Convergence of tasep}
\label{SS:conv-tasep}

We first note that rotated exponential last passage still converges to the directed landscape and specify the scaling constants. The following is an application of Corollary \ref{C:exponential}.

\begin{corollary}
\label{C:rotated-ELPP}
Let $d_n'$ be the pullback of $d_{X}$ under the linear transformation $K_n(x, t) = (2 n^{2/3} x, nt)$,
let  
$
\ell_n(p, q) = \ell_n(q) - \ell_n(p)$, 
$\ell_n(x, t) = n^{2/3} t,
$
and let $d_n =  n^{-1/3} d_n' /2- \ell_n$. Then $\mathfrak{g} d_n \cvgd \mathfrak{g} \scrL$ in $\scrE_*$.
\end{corollary}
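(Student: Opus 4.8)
The plan is to recognize Corollary \ref{C:rotated-ELPP} as a direct specialization of the exponential last passage percolation convergence already established in Corollary \ref{C:exponential}, composed with a known fact that rotating the plane does not change a planar Poisson-type last passage metric in an essential way. First I would observe that $d_X$, the directed metric of negative sign on $\R^2$ induced by $d_0(a,c) = X(c)$ with $X$ an array of i.i.d.\ mean-$1$ exponentials indexed by the even lattice $E^2$, is (up to the identification in Remark \ref{R:planar-embedding}) a rotated copy of the usual exponential last passage metric on $\Z^2$: last passage values $d_X(a,c)$ for $a \nearrow c$ count the maximal weight of an up-right path, and the weights sit on $E^2$ exactly as in i.i.d.\ lattice last passage percolation with exponential weights in the coordinate system of Example \ref{Ex:geom-exp}. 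So the content to verify is purely a bookkeeping check that the scaling map $K_n(x,t) = (2n^{2/3}x, nt)$ together with the additive correction $\ell_n(x,t) = n^{2/3}t$ and the multiplicative factor $n^{-1/3}/2$ reproduce, after the $45^\circ$ rotation, precisely the scaling parameters $\al_n, \be_n, \tau_n, \chi_n$ of Corollary \ref{C:exponential} for the diagonal direction $m_n = n$ (i.e.\ $\rho = 1$).

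The key steps, in order, are: (i) write out the change of variables relating $d_X$ on $\R^2$ to the exponential last passage metric $d_{X'}$ on $\Z^2 \cup E$ of Example \ref{Ex:geom-exp}, using the rotation-by-$\pi/4$ identification, so that a point $(x,t)$ in the rotated plane corresponds to lattice coordinates whose difference in the two axis directions is governed by $t \pm x$; (ii) plug $m_n = n$ into the exponential arctic curve $g_n(m) = n + m + 2\sqrt{nm}$ from Corollary \ref{C:exponential}, giving $g = 4n$, $g' = 2$, $g'' = -1/(2n)$ evaluated at $m_n = n$, and hence $\al_n = 4$, $\be_n = 2$, $\tau_n^3 = 8n^2 \cdot 2 \cdot 2 / \dots$; more carefully $\tau_n^3 = 2g'^2/g''^2 = 8 n^2 \cdot 2^2 / \dots$ — in any case $\tau_n = \Theta(n^{2/3})$ and $\chi_n = \Theta(n^{1/3})$, with the precise constants $\tau_n = 2^{?}n^{2/3}$, $\chi_n^3 = g'^4/(-2g'') = 16 \cdot 4n = 64n$ so $\chi_n = 4n^{1/3}$; (iii) match these against the prefactor $n^{-1/3}/2$, the spatial scaling $2n^{2/3}$, and the linear tilt $n^{2/3}t$ in the definition of $d_n$, checking that the constants line up (the factor $2$ in $2n^{2/3}x$ and in $n^{-1/3}d_n'/2$ absorb the numerical constants coming from the rotation, which introduces factors of $\sqrt 2$ in each coordinate direction); (iv) conclude by invoking Corollary \ref{C:exponential}, which gives multi-time convergence, hypograph convergence, and geodesic convergence off holes for the correspondingly scaled $\scrK_n$, and then upgrade to graph convergence $\mathfrak g d_n \cvgd \mathfrak g \scrL$ via Theorem \ref{T:geom-lpp-emph}.2, which supplies exactly the graph convergence (and indeed exponential tightness) for exponential last passage percolation with $m_n = \floor{\rho n}$, here $\rho = 1$.

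A subtlety worth handling explicitly in step (iv): the metric $d_n$ of Corollary \ref{C:rotated-ELPP} is defined on all of $\R^2$ via the continuous induced metric $d_X$, whereas $\scrK_n$ in Theorem \ref{T:geom-lpp-emph} is a pullback under $f_{\R^2 \to \Z^2} \circ L_n$. These differ by a bounded error and a deterministic reparametrization (a shear composed with a diagonal map), just as in Remark \ref{R:intro-diff-main}: one checks that $d_n = \scrK_n \circ (\text{bounded perturbation of identity}) + O(1)$ uniformly, which preserves graph convergence since the limit $\scrL$ is continuous on $\Rd$ and the perturbations vanish. The rotation by $\pi/4$ is handled exactly as in Corollary \ref{C:rotated-Poisson}: the extra map relating the rotated and axis-aligned scalings converges to the identity, so graph convergence transfers.

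The main obstacle — though it is bookkeeping rather than conceptual — is getting all the numerical constants (the $2$'s, the $\sqrt 2$'s from the rotation, the $n^{2/3}$ and $n^{1/3}$ powers) to match between the ad hoc normalization $d_n = n^{-1/3}d_n'/2 - \ell_n$ and the canonically-scaled $\scrK_n$; an off-by-a-constant slip here would be invisible at the level of topology but would give the wrong limiting metric (a constant multiple of $\scrL$ rather than $\scrL$ itself). The cleanest way to pin this down is to compute $d_n(0,0;0,t)$ and $d_n(0,0;x,1)$ asymptotically by hand using the law of large numbers for exponential last passage percolation (the arctic curve), confirming that $d_n(0,0;0,t) \to 0$ and that the transversal fluctuation scale and curvature agree with $\scrL(0,0;\cdot,1)$, i.e.\ with the Airy sheet of scale $1$. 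Once that single normalization check passes, the rest follows formally from the cited results.
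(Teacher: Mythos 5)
Your overall approach is the same as the paper's: recognize $d_X$ as a rotated exponential last passage metric and match the scaling constants against Corollary~\ref{C:exponential}. The paper's proof is exactly this, stated in one line: ``a rotated version of Corollary~\ref{C:exponential} with parameters $N = n/2$, $m_N = N$, $\alpha_N = 4$, $\beta_N = 2$, $\chi_N = 2n^{1/3}$, $\tau_N = 2n^{2/3}$.''

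The substantive gap in your write-up is precisely the one you flagged as the ``main obstacle'': your bookkeeping of the constants does not in fact close. You take $m_n = n$ (so the lattice depth is $n$), but under the $\pi/4$ rotation, moving forward by $nt$ in the time coordinate of $K_n$ corresponds to moving $nt/2$ lattice steps in each unrotated coordinate --- so the correct depth parameter is $N = n/2$, not $n$. With your choice $N = n$ you get $\chi^3 = g'^4/(-2g'') = 16/(1/n) = 16n$, so $\chi = 16^{1/3}n^{1/3} \approx 2.52\,n^{1/3}$, which does not match the prefactor $n^{-1/3}/2$ in $d_n$ (you would need $\chi = 2n^{1/3}$). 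Your intermediate step ``$\chi_n^3 = 16 \cdot 4n = 64n$'' is also an arithmetic slip --- the $4$ has no source. With $N = n/2$ one gets $g''_N(N) = -1/(2N) = -1/n$, hence $\chi_N^3 = 16/(2/n) = 8n$ and $\chi_N = 2n^{1/3}$, which matches, and likewise $\tau_N^3 = 8/(1/n^2) = 8n^2$ gives $\tau_N = 2n^{2/3}$, matching the $2n^{2/3}$ in $K_n$. So the idea is right but you did not carry the $\sqrt 2$ factors (equivalently the reparametrization $N = n/2$) all the way through, and the proposed sanity check (compute $d_n(0,0;0,t)$ and $d_n(0,0;x,1)$ asymptotically) is the right one to resolve it but was left undone.

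One point in your favor: you correctly note that Corollary~\ref{C:exponential} as stated only gives multi-time, hypograph, and off-holes geodesic convergence, whereas the statement of Corollary~\ref{C:rotated-ELPP} requires \emph{graph} convergence $\mathfrak{g}d_n \cvgd \mathfrak{g}\scrL$. The upgrade you propose via Theorem~\ref{T:geom-lpp-emph}.2 (exponential case with $m_n = \floor{\rho n}$, $\rho = 1$) is the correct ingredient, and the transfer across the rotation is handled exactly as in Corollary~\ref{C:rotated-Poisson}. The paper's own one-line proof cites only Corollary~\ref{C:exponential}, so you are being more careful than the paper here.
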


\begin{proof}
 This is a rotated version of the convergence of exponential last passage, Corollary \ref{C:exponential}, with  parameters $N = n/2$,  $m_N = N$, $\al_N = 4$, $\be_N = 2$, $\chi_N = 2 n^{1/3}$, $\tau_N = 2 n^{2/3}.$ 
\end{proof}

Our goal is now to translate the three convergence theorems of Section \ref{S:interfaces} to tasep. We start with a lemma that checks that the assumptions of Section \ref{S:interfaces} can be guaranteed to hold on a particular coupling.

\begin{lemma}
\label{L:assumption-check}

Let $d_n, \ell_n$ be as in Corollary \ref{C:rotated-ELPP}, let $a_n = 0$ and $b_n = n^{2/3}$, and let $d = \hat \scrL$ be a directed landscape.
\begin{enumerate}[label=(\roman*)] 
	\item 
	With these choices, there of exists a coupling of the exponential environments $X_n$ for different $n$ where $\mathfrak{g} d_n \to \mathfrak{g} \hat \scrL$ in $\scrE_*$ almost surely. In this coupling, Assumptions \ref{A:interface} hold almost surely. Moreover, letting $h_n(t, y; f)$ denote the collection of coupled taseps governed by $X_n$, see \eqref{E:one-NW} and \eqref{E:var-form},
	 the functions $i_n, \tilde h_n$ defined in Theorem \ref{T:abstract-dm} are given by
	\begin{equation}
	\label{E:in-jn}
	\begin{split}
i_n(y, t; x, s) &= n^{-1/3} h_n(2n(s - t), 2n^{2/3} y; 2n^{2/3} x, ns) - n^{2/3} t \\
\tilde h_n(t, y; x, g) &= n^{-1/3} h_n(2n t, 2n^{2/3} y; 2n^{2/3} x, n^{1/3} g) + n^{2/3} t.
	\end{split}
	\end{equation}
	\item For any finite or countable set $T \sset (0, \infty)$ and any subsequence $Y \sset \N$, we can find a further subsequence $Y' \sset Y$ and a coupling of the exponential environments for different $n \in Y'$ such that both the conclusion of (i) holds a.s.\ along $Y'$ and Tightness condition \ref{tightness} holds a.s.\ along $Y'$ for all $t \in T$.
\end{enumerate}

\end{lemma}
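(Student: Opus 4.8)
\textbf{Plan of proof for Lemma \ref{L:assumption-check}.}

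The plan is to first establish part (i) by unwinding the definitions, then bootstrap to part (ii) via a diagonalization argument. For part (i), I would start by invoking Corollary \ref{C:rotated-ELPP}, which gives $\mathfrak{g} d_n \cvgd \mathfrak{g} \scrL$ in $\scrE_*$. Since $\scrE_*$ is Polish (Lemma \ref{L:D-polish}), Skorokhod's representation theorem produces a coupling of the $d_n$ (equivalently, of the underlying exponential environments $X_n$) and a limiting directed landscape $\hat\scrL$ on a single probability space with $\mathfrak{g} d_n \to \mathfrak{g} \hat\scrL$ almost surely. On this coupling I would verify the three bullet points of Assumptions \ref{A:interface}: the first (that $d_n(x,s;y,t)=-\infty$ for $s>t$) is immediate from the definition of the rotated exponential metric and the embedding, since no up-right path exists; the second (graph convergence to a metric continuous on $\Rh$) holds by construction and by the continuity of $\scrL$ on $\Rh$ from Proposition \ref{P:from-DOV}; the third (the scaling constraints $b_n\to\infty$, $a_n/b_n\to 0$, and positivity of $d_n+\ell_n$) follows from $a_n=0$, $b_n=n^{2/3}\to\infty$, and the fact that $d_{X_n}+$ (the additive correction) is a nonnegative-weight last passage metric, hence satisfies \eqref{E:positivity}. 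The identities \eqref{E:in-jn} are then just the translation of the interface definitions in Theorem \ref{T:abstract-dm} through the scaling maps $K_n$ and $\ell_n$ together with the tasep–interface dictionary \eqref{E:one-NW}, \eqref{E:var-form}; I would carry out this substitution carefully but it is routine bookkeeping, matching $I_{d_X}(p,t)$ with a tasep height function and tracking the rescaling.

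For part (ii), the main new ingredient is Tightness condition \ref{tightness}. I would first prove a distributional version: for each fixed $t>0$, each compact $K\subset\R$, each slope $s'$ and height $h_*$, the probability that there exists an initial condition $g_n$ with $g_n(x)\ge s'|x|-a$ and $\sup_{y\in K}\tilde h_n(y,t;g_n)\le h_*$ tends to $1$ as $n\to\infty$ (for suitable $a$). Concretely I would take $g_n$ to be a linear wedge of slope $s'$ dropping off from a very negative value $-a$, so that $\tilde h_n(y,t;g_n)$ is essentially $\sup_x(s'|x|-a)+d_n(y,-t;x,0)$ up to the discretization and the convergence $\tilde h_n\to d=\hat\scrL$ from Theorem \ref{T:abstract-dm}. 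The bound \eqref{E:supxy}, which holds for $\scrL$ by the landscape estimate \eqref{E:LuCu} in Proposition \ref{P:from-DOV}, shows that $\sup_{(x,y)\in\R\times K}(s'|x|+\scrL(y,-t;x,0))$ is a.s.\ finite, so one can choose $a$ large enough that the limiting quantity is below $h_*$ with probability arbitrarily close to $1$; using graph convergence of $d_n$ (equivalently uniform convergence on compacts, Lemma \ref{L:graph-equiv}) together with a tail bound controlling the contribution from large $|x|$ (here the one-point Tracy–Widom tails in Theorem \ref{T:tracy-widom}, or the exponential tightness of Theorem \ref{T:geom-lpp-emph}, give the needed uniform control of $d_n(y,-t;x,0)-2\sqrt{\cdot}$ type corrections), this transfers to the prelimit for large $n$. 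Having established this as a high-probability statement for each $(t,K,s',h_*)$ in a countable dense family (it suffices to check countably many compacts, slopes and heights, since the condition is monotone in each), I would apply a diagonalization: along any given subsequence $Y$, extract a further subsequence $Y'$ along which all these high-probability events occur eventually almost surely (Borel–Cantelli after choosing $Y'$ sparse enough that the failure probabilities are summable), and along which the a.s.\ graph convergence of part (i) also holds (again by Skorokhod and a further subsequence). On $Y'$ one then reads off Tightness condition \ref{tightness} for every $t\in T$ by a monotonicity/density argument reducing the general $(K,s',h_*)$ to the countable family.

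The step I expect to be the main obstacle is the quantitative control needed to push Tightness condition \ref{tightness} from the limit $\hat\scrL$ back to the prelimiting $d_n$ uniformly over the full spatial line $x\in\R$ — graph convergence only gives uniformity on compact sets, so one must separately argue that for a wedge of slope $s'$, the supremum over large $|x|$ is not attained, using that $s'|x|$ beats the at-most-quadratic-with-fluctuations decay of $d_n(y,-t;x,0)$; making this robust in $n$ requires a uniform-in-$n$ upper tail bound on $\sup_{|x|\text{ large}}(d_n(y,-t;x,0)+(\text{deterministic parabola}))$, which is exactly where the exponential tightness estimates of Theorem \ref{T:geom-lpp-emph} (in its exponential-LPP form, Corollary \ref{C:exponential}/Theorem \ref{T:geom-lpp-emph}.2) or at least the second moment bounds enter. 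Everything else is a matter of carefully chaining Skorokhod couplings and Borel–Cantelli along subsequences.
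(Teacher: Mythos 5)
For part (i) your plan is essentially identical to the paper's: invoke Corollary \ref{C:rotated-ELPP}, apply Skorokhod's representation theorem using Lemma \ref{L:D-polish}, verify the three bullets of Assumptions \ref{A:interface} directly, and unwind \eqref{E:in-jn} via the pullback identity for backwards balls. No issues there.

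For part (ii) you take a genuinely different route, and it is worth comparing. The paper does \emph{not} try to prove a uniform-in-$n$ spatial tail bound on $d_n(y,-t;x,0)$ over all of $\R$, which is the obstacle you flag as the ``main'' one. Instead, it observes that for each fixed wedge $g^{s',a}_n$ (a valid $\scrH$ initial condition approximating the slope-$(s'+1)$ cone), the random variable
$D_n(m,s',a,t)=\sup_{y\in[-m,m]}\tilde h_n(t,y;g^{s',a}_n)$
is a rescaled tasep height from a single non-compactly-supported initial condition, and so Theorem \ref{T:matetski-kpz} (Matetski--Quastel--Remenik) \emph{already} gives its distributional limit as a functional of $\scrL$. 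Tightness of $D_n$ as a function on the countable parameter set is then automatic, so one can extract a subsequence $Y'$ along which $(D_n,\mathfrak{g}d_n)$ converge jointly, apply Skorokhod, and finish by choosing a random $A$ with $D(m,s',A,t)<h_*$ almost surely via the landscape estimate \eqref{E:LuCu}. In other words, the whole non-compact-support difficulty is outsourced to the already-proven single-condition tasep limit theorem rather than re-derived.

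Your route --- directly controlling $\sup_{|x|\ \mathrm{large}}(s'|x|+d_n(y,-t;x,0))$ uniformly in $n$ and then running Borel--Cantelli along a sparse subsequence --- is plausible but incomplete as written. The exponential-tightness statements of Theorem \ref{T:geom-lpp-emph} and Theorem \ref{T:upper-exponential} are stated only for compact $K\sset\Rd$, so to implement your plan you would need to chain over a sequence of growing boxes $|x|\in[m,m+1]$ and combine the compact-set tail bound with the deterministic parabolic decay of the centering, essentially redoing a piece of the Matetski--Quastel--Remenik argument that the paper instead imports wholesale. You correctly identify this as the hard step, but describing it as ``the exponential tightness estimates enter'' understates the work: a separate union-bound-over-boxes argument with explicit parabolic comparison is required, and the bounds as stated do not immediately furnish the uniformity in $n$ that you need. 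So either fill in that chaining argument carefully, or switch to the paper's strategy of invoking Theorem \ref{T:matetski-kpz}, which is cleaner and avoids the issue entirely. The diagonalization / subsequence-extraction part of your plan is fine, though note the paper achieves the same effect by upgrading distributional convergence of the pair $(D_n,\mathfrak{g}d_n)$ to almost sure convergence via Skorokhod rather than a sparse-subsequence Borel--Cantelli; both are valid.
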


\begin{proof}
For (i), by Corollary \ref{C:rotated-ELPP} and Skorokhod's representation theorem there exists a coupling where $\mathfrak{g} d_n \to \mathfrak{g} \hat \scrL$ in $\scrE_*$ almost surely. Let $X_n$ denote the  exponential environment at stage $n$ in this coupling. In this coupling, $d_n(x, s; y, t) = -\infty$ for $s < t$ since this property holds for $d_X$, $b_n \to \infty$ and $a_n/b_n = 0$ for all $n$, and $d_n + \ell_n$ satisfies \eqref{E:positivity} since it is a rescaled version of the $d_X$. We now check \eqref{E:in-jn}. With $d_n'$ as in Corollary  \ref{C:rotated-ELPP}, a directed translation of the formula for $i_n$ in Theorem \ref{T:abstract-dm} gives
\begin{equation}
\begin{split}
i_n(y, t; x, s) = n^{2/3} I_{2^{-1} n^{-1/3} d_{n}'}(x, s; n^{2/3}(s - t))(y) - n^{2/3} t.
\end{split}
\end{equation}
We move the scaling factor of $2^{-1} n^{-1/3}$ in front of $d_n'$ into the $n^{2/3}(s-t)$ to get that 
\begin{equation}
\label{E:modified}
i_n(y, t; x, s) = n^{2/3} I_{d_n'}(x, s; 2n(s - t))(y) - n^{2/3} t.
\end{equation}
To relate this to an interface in $d_{X_n}$, we can observe the following simple fact about backwards balls and pullbacks: if $d$ is a directed metric of negative sign on $\R^2$, and $g_*d$ is the pullback of $d$ under a function $g:\R^2 \to \R^2$, then for  $p \in \R^2$ and $t \in \R$, we have
\begin{equation}
\label{E:pullback}
B_{g_* d}(p, t) = g^{-1} B_d(gp, t) := \{x \in \R^2: gx \in B_d(gp, t) \}.
\end{equation}
Using \eqref{E:pullback} and the fact that $d_n'$ is the pullback of $d_{X_n}$ under the map $K_n(x, t) (2n^{2/3} x, nt)$, the right side of \eqref{E:modified} equals 
$$
n^{2/3} n^{-1} I_{d_{X_n}}(2n^{2/3} x, n s; 2n(s - t))(2 n^{2/3} y) - n^{2/3} t.
$$
By \eqref{E:one-NW}, this is equal to the right hand side of \eqref{E:in-jn}. 
Moreover, $i_n$ is related to $h_n$ by the formula \eqref{E:inhn}; the same relationship holds between the two right sides of \eqref{E:in-jn}. 

We move to (ii). Fix $Y$. For every $s', a \in \N$, we can find a sequence of functions $g^{s', a}_n:\R \to \R$ such that
\begin{enumerate}
	\item $\mathfrak{h} g^{s', a}_n \to \mathfrak{h} g^{s', a}$, where $g^{s', a}(x) = (s'+1)|x| - a + 1.$
	\item $(s' + 2)|x| - a + 2 \ge g^{s', a}_n(x) \ge s'|x| - a$ for all large enough $n$ and all $x$, and $J_n g^{s', a}_n \in \scrH$ for all $n$, where $J_n f(x) = n^{1/3}f(x/(2n^{2/3}))$.
\end{enumerate}
Fix $T$, and for each $n$, define a function $D_n:\N^3 \X T \to \R$ by
$$
D_n(m, s', a, t) := \sup_{y\in [-m, m]} \tilde h_n(t, y ; g^{s', a}_n).
$$
By the formulas \eqref{E:var-form} and \eqref{E:in-jn}, $\tilde h_n(\cdot ; g^{s', a}_n)$ is a rescaled and centered tasep from the initial condition $J_n g^{s', a}_n$ up to an error of size at most $2 n^{-1/3}$ (the error comes from going from a maximum over $\Z$ to a maximum over $\R$, see the discussion after \eqref{E:var-form}). More precisely, for all $t > 0, y \in \R$ we have
\begin{equation}
\label{E:tildehn}
\lf|\tilde h_n(t, y; g_n) - n^{-1/3}(h_n(2nt, 2n^{2/3}y; J_n g^{s', a}_n) - nt)\rg| \le 2n^{-1/3}.
\end{equation}
By \eqref{E:tildehn} and conditions 1 and 2 above, for any fixed $m, s', a, t$, the random variables $D_n(m, s', a, t)$ converge in distribution with limits determined by Theorem \ref{T:matetski-kpz}. Therefore $D_n$ is tight as a function in the pointwise topology, so we can find a subsequence $Y' \sset Y$ where $D_n, \mathfrak{g}d_n$ converge jointly in distribution to $D, \mathfrak{g}\hat \scrL$, where $\hat \scrL$ is a directed landscape. By Skorokhod's representation theorem, we can realize this as almost sure convergence. Now we check Tightness condition \ref{tightness}. Fix $t \in T$, a compact set $K \sset \R$, and a slope and height $s', h_* \in \R$ as in the statement of the tightness condition. By possibly increasing $s'$ and the size of $K$ and decreasing $h_*$, we may assume $s', -h_* \in \N$ and $K = [-m, m]$ for some $m \in \N$. For any $a \in \N$, by Theorem \ref{T:matetski-kpz} we have
$$
\p(D(m, s', a, t) \ge h_*) = \p\lf( \max_{x \in \R, y \in [-m, m]} \scrL(x, 0; y, t) + (s'+1)|x| - a + 1 \ge h_*\rg).
$$
By \eqref{E:LuCu}, the right hand side above converges to $0$ as $a \to \infty$. Therefore there exists a random $A \in \N$ such that $D(m, s', A, t) < h_*$ almost surely, and so almost surely,
$$
\sup_{y \in [-m, m]} \tilde h_n(y, t; g^{s', A}_n) = D_n(m, s', A, t) < h_*
$$
for all large enough $n$. The functions $g^{s', A}_n$ satisfy the criteria of Tightness condition \ref{tightness}, so the condition holds almost surely for the given $s', -h_*, m \in \N, t \in T$. Since we only have countably many choices for these parameters, the tightness condition holds almost surely along $Y'$ for all $t \in T$.  
\end{proof}

We can now translate the three interface convergence theorems of Section \ref{S:interfaces}. 

\begin{theorem}[Convergence of the narrow wedge field]
\label{T:tasep-1}
In the coupling in Lemma \ref{L:assumption-check} (i), $i_n \to \hat \scrL$ compactly on $\Rd$ almost surely. Moreover, letting 
\begin{equation}
\label{E:L-hat-L}
\scrL(x, s; y, t) = \hat \scrL(y, -t; x, -s),
\end{equation}
we have $
h_n(t, y; x, g) \to \scrL(x, 0; y, t) + g 
$
compactly on $(y, t; x, g) \in \R \X (0, \infty) \X \R^2$ almost surely, and $\scrL$ is a directed landscape.
\end{theorem}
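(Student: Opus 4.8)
The plan is to derive Theorem~\ref{T:tasep-1} as a direct application of Theorem~\ref{T:abstract-dm}, using Lemma~\ref{L:assumption-check}(i) to verify its hypotheses. First I would fix the coupling from Lemma~\ref{L:assumption-check}(i): on this coupling $\mathfrak{g}d_n \to \mathfrak{g}\hat\scrL$ in $\scrE_*$ almost surely, and the lemma asserts that Assumptions~\ref{A:interface} hold almost surely with $a_n = 0$, $b_n = n^{2/3}$, $d = \hat\scrL$, and that the functions $i_n, \tilde h_n$ from Theorem~\ref{T:abstract-dm} have the explicit tasep representations \eqref{E:in-jn}. Applying Theorem~\ref{T:abstract-dm} verbatim then gives $i_n \to \hat\scrL$ compactly on $\Rd$ almost surely, which is the first claim. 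For the second claim, Theorem~\ref{T:abstract-dm} also gives $\tilde h_n(y, t; x, g) \to \hat\scrL(y, -t; x, 0) + g$ compactly on $\R \X (0,\infty) \X \R^2$. By the definition \eqref{E:L-hat-L}, $\hat\scrL(y,-t;x,0) = \scrL(x, 0; y, t)$, so this is exactly $\tilde h_n(y,t;x,g) \to \scrL(x,0;y,t)+g$. The remaining point is that the notation $h_n(t,y;x,g)$ in the theorem statement is precisely the rescaled narrow wedge field, i.e.\ it coincides with $\tilde h_n(y,t;x,g)$ via the second line of \eqref{E:in-jn}; I would simply cite that formula to identify the two.

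Next I would argue that $\scrL$ as defined by \eqref{E:L-hat-L} is a directed landscape. Since $\hat\scrL$ is a directed landscape (it is the compact limit of $\mathfrak{g}d_n$, which is a directed landscape by Corollary~\ref{C:rotated-ELPP}), this is an instance of the time-space reflection symmetry of the directed landscape recorded in Proposition~\ref{P:landscape-sym}: there $\scrL(x,s;y,t) \eqd \scrL(y,-t;x,-s)$, so the map $(x,s;y,t) \mapsto \hat\scrL(y,-t;x,-s)$ produces a process with the law of a directed landscape. Taking $s = 0$ here gives exactly $\scrL(x,0;y,t) = \hat\scrL(y,-t;x,0)$, and more generally $\scrL$ as a function on all of $\R^4$ has the law of a directed landscape. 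I should note that Proposition~\ref{P:landscape-sym} is only an equality in distribution, but that is all that is asserted: the theorem claims "$\scrL$ is a directed landscape," meaning it has the directed landscape law, not that it equals $\hat\scrL$ pathwise.

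The proof is therefore essentially a bookkeeping exercise: unwind the definitions so that the three objects $i_n$, $\tilde h_n$, and $h_n(t,y;\cdot)$ from Theorem~\ref{T:abstract-dm} and from the tasep setup are seen to be literally the same rescaled random functions, invoke Theorem~\ref{T:abstract-dm}, and then invoke the reflection symmetry. The main obstacle — and it is a mild one — is the careful tracking of the coordinate flips and rescalings: Theorem~\ref{T:abstract-dm} is stated in terms of interfaces $I_{d_n+\ell_n}$ with arguments in a particular order, while tasep height functions $h_n(t, y; \cdot)$ have time and space in the opposite roles, and the directed-metric $\hat\scrL$ lives on "spacetime with increasing time downward" whereas $\scrL$ lives on the usual Airy-sheet orientation. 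Lemma~\ref{L:assumption-check}(i) already does this translation, establishing \eqref{E:in-jn}, so in the write-up I would lean on that lemma rather than redo the computation; the only genuinely new content here beyond Theorem~\ref{T:abstract-dm} is the identification \eqref{E:L-hat-L} together with the appeal to Proposition~\ref{P:landscape-sym}.

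\begin{proof}
Work in the coupling provided by Lemma~\ref{L:assumption-check}(i). By that lemma, Assumptions~\ref{A:interface} hold almost surely with $a_n = 0$, $b_n = n^{2/3}$, and $d = \hat\scrL$, and the functions $i_n$ and $\tilde h_n$ defined in Theorem~\ref{T:abstract-dm} are given explicitly by \eqref{E:in-jn}. Applying Theorem~\ref{T:abstract-dm} in this coupling gives directly that $i_n \to \hat\scrL$ compactly on $\Rd$ almost surely, which is the first assertion.

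Theorem~\ref{T:abstract-dm} also yields that $\tilde h_n(y, t; x, g) \to \hat\scrL(y, -t; x, 0) + g$ compactly on $(y, t; x, g) \in \R \X (0, \infty) \X \R^2$ almost surely. By the second line of \eqref{E:in-jn}, $\tilde h_n(y, t; x, g)$ is exactly the rescaled and recentered narrow wedge field, i.e.\ it coincides with the quantity denoted $h_n(t, y; x, g)$ in the statement of the theorem. Hence, using the definition \eqref{E:L-hat-L} of $\scrL$, which gives $\hat\scrL(y, -t; x, 0) = \scrL(x, 0; y, t)$, we obtain
$$
h_n(t, y; x, g) \to \scrL(x, 0; y, t) + g
$$
compactly on $\R \X (0, \infty) \X \R^2$ almost surely.

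Finally, $\hat\scrL$ is a directed landscape: it is the almost sure compact limit of $\mathfrak{g} d_n$, and by Corollary~\ref{C:rotated-ELPP} we have $\mathfrak{g} d_n \cvgd \mathfrak{g}\scrL$ in $\scrE_*$, so any such limit has the law of a directed landscape. By the reflection symmetry recorded in Proposition~\ref{P:landscape-sym}, namely $\scrL(x, s; y, t) \eqd \scrL(y, -t; x, -s)$ as functions of $(x, s; y, t)$, the function $(x, s; y, t) \mapsto \hat\scrL(y, -t; x, -s)$ again has the law of a directed landscape. Taking $s = 0$ shows that $\scrL$, defined on all of $\R^4$ by $\scrL(x, s; y, t) = \hat\scrL(y, -t; x, -s)$, is a directed landscape, which completes the proof.
\end{proof}
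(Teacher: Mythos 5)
Your proof is correct and follows essentially the same route as the paper's (very brief) proof: apply Theorem~\ref{T:abstract-dm} via Lemma~\ref{L:assumption-check}(i), identify $\tilde h_n$ with the rescaled narrow wedge field via \eqref{E:in-jn}, and invoke Proposition~\ref{P:landscape-sym} to conclude that $\scrL$ is a directed landscape. The only thing worth flagging is a notational wrinkle already present in the paper (Theorem~\ref{T:abstract-dm} writes $\tilde h_n(y,t;x,g)$ while Lemma~\ref{L:assumption-check} and Theorem~\ref{T:tasep-1} use $(t,y)$ ordering, and the symbol $h_n$ in the theorem statement means the rescaled $\tilde h_n$ rather than the raw tasep height), which you handle correctly by making the identification explicit.
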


\begin{proof}
This follows from Theorem \ref{T:abstract-dm} and Lemma \ref{L:assumption-check}(i). The fact that $\scrL$ is also a directed landscape follows from Proposition \ref{P:landscape-sym}.
\end{proof}

For these next two theorems, we use the notation $\iota_n f(x) = n^{-1/3} f(2 n^{2/3} x)$ from the introduction. The first theorem gives a strong convergence which is uniform in time, but only deals with compactly supported initial conditions.
\begin{theorem}[Compactly supported initial conditions]
	\label{T:tasep-2}
	Let $f_n \in \scrH$. Suppose that there exists a compact set $I$, such that for every $n$, the set of local maxima $M_{\iota_n f_n}$ of $\iota_n f_n$  is contained in $I$.
	Assume that $\mathfrak h \iota_n f_n \to \mathfrak h f $ for some upper semicontinuous limit $f: \R \to \R \cup \{-\infty\}$ with $f_n \not\equiv -\infty$. Then in the coupling of Lemma \ref{L:assumption-check} (i) with $\scrL$ as in \eqref{E:L-hat-L}, almost surely we have
	\begin{equation}
	\label{E:Hnn}
	n^{-1/3}(h_n(2nt,2n^{2/3} y;f_n) + nt)  \to h_\scrL(t, y, f)
	\end{equation}
compactly on $(t, y) \in (0, \infty) \X \R$.
\end{theorem}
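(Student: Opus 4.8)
The plan is to deduce Theorem \ref{T:tasep-2} from the general interface result Theorem \ref{T:set-cvg-1}, applied in the coupling of Lemma \ref{L:assumption-check}(i) with $d=\hat\scrL$, $a_n=0$, $b_n=n^{2/3}$, so that Assumptions \ref{A:interface} hold almost surely. The role of the initial data in Theorem \ref{T:set-cvg-1} will be played by the truncations $g_n$, where $g_n=\iota_n f_n$ on a fixed compact interval $J$ with $I\subset\operatorname{int}(J)$ and $g_n=-\infty$ on $\R\smin J$; these are upper semicontinuous, equal to $-\infty$ off the fixed compact $J$, and not identically $-\infty$. Observe first that the limiting object $h(t,y;f)$ produced by Theorem \ref{T:set-cvg-1} is $\sup_x f(x)+d(y,-t;x,0)=\sup_x f(x)+\hat\scrL(y,-t;x,0)$, which by the definition \eqref{E:L-hat-L} of $\scrL$ equals $\max_x \scrL(x,0;y,t)+f(x)=h_\scrL(t,y,f)$. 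So it suffices to (i) identify the prelimit $\tilde h_n(t,y;g_n)$ of Theorem \ref{T:set-cvg-1} with $n^{-1/3}(h_n(2nt,2n^{2/3}y;f_n)+nt)$, and (ii) verify the hypograph convergence $\mathfrak h g_n\to\mathfrak h f$ needed to invoke that theorem.

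For (i): by \eqref{E:in-jn}, $\tilde h_n(t,y;x,g)=n^{-1/3}h_n(2nt,2n^{2/3}y;2n^{2/3}x,n^{1/3}g)+n^{2/3}t$, the parenthesised term being the value at $2n^{2/3}y$ of the interface started from the narrow wedge $N^{(2n^{2/3}x,\,n^{1/3}g)}$, so that
$$
\tilde h_n(t,y;g_n)=\sup_{x\in J}\Bigl(n^{-1/3}h_n(2nt,2n^{2/3}y;2n^{2/3}x,n^{1/3}\iota_n f_n(x))+n^{2/3}t\Bigr).
$$
Since $f_n\in\scrH$ is $1$-Lipschitz, $N^{(2n^{2/3}x,\,f_n(2n^{2/3}x))}\le f_n$ pointwise for every $x$, so by monotonicity of interfaces in the initial condition together with \eqref{E:var-form}, each term on the right is at most $n^{-1/3}(h_n(2nt,2n^{2/3}y;f_n)+nt)$. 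Conversely, by \eqref{E:var-form} — using the local-maximum set $M_{f_n}$ in place of $\Z$, as permitted by the remark following \eqref{E:var-form} — the value $h_n(2nt,2n^{2/3}y;f_n)$ is attained at some $(i,f_n(i))$ with $i/(2n^{2/3})\in M_{\iota_n f_n}\subset I\subset\operatorname{int}(J)$, so the displayed supremum is in fact equal to $n^{-1/3}(h_n(2nt,2n^{2/3}y;f_n)+nt)$, with no error term.

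For (ii): the structural fact is that $f\equiv-\infty$ on $\R\smin I$. Indeed, since each $\iota_n f_n$ has no local maximum outside $I$ and its slopes are $\pm 2n^{1/3}\to\pm\infty$, at any $x\notin I$ either $\iota_n f_n(x)\to-\infty$ or else $\iota_n f_n$ runs monotonically through $x$ with blowing-up slope, forcing it to diverge to $+\infty$ on one side of $x$; the latter is incompatible with $\mathfrak h\iota_n f_n\to\mathfrak h f$ and $f$ being $\R\cup\{-\infty\}$-valued, so $f(x)=-\infty$. Given this, a standard compactness argument in $\scrE_*(\R)$ shows $\iota_n f_n\to-\infty$ uniformly on compact subsets of $\R\smin I$: a bounded limit point of $\iota_n f_n(x_n)$ along $x_n\to x_\ast\notin I$ would place a finite point of $\mathfrak h f$ over $x_\ast$. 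Consequently $\mathfrak h g_n$ and $\mathfrak h\iota_n f_n$ agree over $J$ and both converge to the $-\infty$-floor over any compact subset of $\R\smin J$ (the far tails escape), whence $\mathfrak h g_n\to\mathfrak h f$. Theorem \ref{T:set-cvg-1} now gives $\tilde h_n(\cdot;g_n)\to h(\cdot;f)=h_\scrL(\cdot,\cdot,f)$ compactly on $(0,\infty)\times\R$, which combined with (i) is the claim.

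The main obstacle is step (ii): ruling out that the truncation to $J$ alters the hypograph limit, i.e.\ that the local-maximum constraint $M_{\iota_n f_n}\subset I$ together with $\R\cup\{-\infty\}$-valuedness of $f$ forces the initial data to collapse to $-\infty$ away from $I$. Everything else — matching scalings through \eqref{E:in-jn}, the variational formula, and the time reversal \eqref{E:L-hat-L} — is bookkeeping.
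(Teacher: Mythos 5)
Your proposal is correct and follows the same route as the paper: reduce to Theorem~\ref{T:set-cvg-1} in the coupling of Lemma~\ref{L:assumption-check}(i), with a truncated initial condition $g_n$ playing the role of $f_n$ there, and then translate the resulting interface limit back to the KPZ fixed point via \eqref{E:in-jn} and \eqref{E:L-hat-L}. Where you add real value is in step~(ii): the paper's own proof simply asserts \emph{``$\mathfrak h g_n \to \mathfrak h f$ since $\mathfrak h \iota_n f_n \to \mathfrak h f$,''} with $g_n$ the restriction of $\iota_n f_n$ to $I$, but this does not follow from the hypograph convergence alone — it genuinely uses the hypothesis $M_{\iota_n f_n}\subset I$, via exactly the argument you give (the $\pm 2n^{1/3}$ slopes with no local maximum off $I$ force $\iota_n f_n\to -\infty$ locally uniformly off $I$, whence $f\equiv -\infty$ off $I$). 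Your choice $J\supsetneq I$ rather than $J=I$ also sidesteps a boundary subtlety (approximating sequences $x_n\to x_0\in\partial I$ may lie just outside $I$, in which case with $J=I$ they are not in the hypograph of $g_n$), so it is the cleaner truncation. One small imprecision: ``$\iota_n f_n$ runs monotonically through $x$'' omits the possibility that the piecewise-linear profile is V-shaped with its vertex at or near $x$ (there can be one local minimum on each side of $I$); the conclusion that $\iota_n f_n$ diverges to $+\infty$ at some fixed $y$ near $x$ still holds in this case, but the sentence as written does not cover it. The remaining differences from the paper's argument — using $M_{f_n}$ to get exact equality in the variational identity, where the paper accepts an $O(n^{-1/3})$ error from maximizing over $I\subset\R$ — are cosmetic.
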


\begin{proof}
	 Using the notation $\tilde h_n$ from Lemma \ref{L:assumption-check}, since all the local maxima of $\iota_n f_n$ are contained in $I$ the left side of \eqref{E:Hnn} equals
$$
\max_{x \in M_{\iota_n f_n}} \tilde h_n(t,y; x, \iota_n f_n(x)) =  \max_{x \in I} \tilde h_n(t,y; x, \iota_n f_n(x)) + O(n^{-1/3}).
$$
Here the $O(n^{-1/3})$ correction comes from replacing the maximum over points of local maxima with a larger set, see the discussion after \eqref{E:var-form}. In the notation of Theorem \ref{T:set-cvg-1}, the right side above equals $\tilde h_n(t,y; g_n) + O(n^{-1/3})$, where $g_n$ equals $\iota_n f_n$ on $I$ and $-\infty$ elsewhere. Now, $\mathfrak h g_n \to \mathfrak h f$ since $\mathfrak h \iota_n f_n \to \mathfrak h f$. Therefore applying Theorem \ref{T:set-cvg-1} and Lemma \ref{L:assumption-check}(i) yields the theorem.
\end{proof}

The next theorem is a restatement of Theorem \ref{T:intro-tasep}.

\begin{theorem}[Noncompact initial conditions]
	\label{T:tasep-3}
		Fix a finite or countable set $T \sset (0, \infty)$. Then there is a coupling of a sequence $h_n$ of coupled taseps and $\scrL$, and an event $\Om$ of probability $1$ with the following property. Consider any collection of functions $f_n \in \scrH$ satisfying
	\begin{enumerate}
		\item $\mathfrak{h} \iota_n f_n \to \mathfrak{h} f$ for some $f: \R \to \R \cup \{-\infty\}$ with $f \not \equiv -\infty$, and 
		\item there exists $s, a \in \R$ such that for all $x$, $f_n(x) < a + s |x|$ for all large enough $n$.
	\end{enumerate}
Then for any $t \in T$, on $\Om$ we have 
	$$
	n^{-1/3}(h_n(2nt,2n^{2/3} y;f_n) + nt) \to h_\scrL(t, y ; f)
	$$
compactly over $y \in \R$.
\end{theorem}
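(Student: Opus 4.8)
The plan is to combine Theorem~\ref{T:noncompact-interfaces} with the coupling produced in Lemma~\ref{L:assumption-check}(ii), exactly parallelling the way Theorem~\ref{T:tasep-2} was deduced from Theorem~\ref{T:set-cvg-1}. First I would invoke Lemma~\ref{L:assumption-check}(ii): given the fixed finite or countable set $T \sset (0, \infty)$, it produces (along a subsequence $Y'$, which we can take to be all of $\N$ after relabelling) a coupling of the exponential environments $X_n$ and a directed landscape $\hat\scrL$ on an event $\Om$ of probability $1$ on which (a) $\mathfrak{g} d_n \to \mathfrak{g}\hat\scrL$ in $\scrE_*$, so Assumptions~\ref{A:interface} hold with $a_n = 0$, $b_n = n^{2/3}$, $d = \hat\scrL$, and (b) Tightness condition~\ref{tightness} holds for every $t \in T$. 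We then set $\scrL(x,s;y,t) = \hat\scrL(y,-t;x,-s)$ as in \eqref{E:L-hat-L}; by Proposition~\ref{P:landscape-sym} this $\scrL$ is again a directed landscape, and $\hat\scrL(y,-t;x,0) = \scrL(x,0;y,t)$.

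Next, on the event $\Om$ I would fix $t \in T$ and apply Theorem~\ref{T:noncompact-interfaces} with $d_n, \ell_n$ as in Corollary~\ref{C:rotated-ELPP}, $d = \hat\scrL$, and the initial conditions taken to be $\iota_n f_n$ (more precisely, the piecewise-linear extension whose set of local maxima one uses in \eqref{E:var-form}). The two hypotheses of that theorem need checking: hypothesis (1), $\mathfrak{h}\iota_n f_n \to \mathfrak h f$ with $f \not\equiv -\infty$, is assumption (1) of the present statement; hypothesis (2), that $\iota_n f_n(x) < a' + s'|x|$ eventually, follows from assumption (2) of the present statement together with the scaling $\iota_n f_n(x) = n^{-1/3} f_n(2n^{2/3} x)$, which turns $f_n(z) < a + s|z|$ into $\iota_n f_n(x) < n^{-1/3}a + s|x| \le a + s|x|$ for large $n$. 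The remaining condition \eqref{E:supxy}, namely $\sup_{(x,y) \in \R \X K} s'|x| + \hat\scrL(y,-t;x,0) < \infty$ for every $s' > 0$ and compact $K$, is the content of the parabolic decay estimate \eqref{E:LuCu}: $\hat\scrL(y,-t;x,0) \le -(x-y)^2/t + C t^{1/3}\log^{\cdots}(\cdots)$, and the quadratic in $x$ dominates the linear term $s'|x|$ uniformly for $y$ in a compact set. Theorem~\ref{T:noncompact-interfaces} then gives $\tilde h_n(t, \cdot\,; \iota_n f_n) \to h(t, \cdot\,; f)$ compactly on $\R$, where $h(t,y;f) = \sup_x f(x) + \hat\scrL(y,-t;x,0) = \sup_x f(x) + \scrL(x,0;y,t) = h_\scrL(t,y;f)$.

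Finally I would unwind the definition of $\tilde h_n$ from Lemma~\ref{L:assumption-check}(i), which gives $\tilde h_n(t,y;\iota_n f_n) = n^{-1/3} h_n(2nt, 2n^{2/3} y; f_n) + n^{2/3} t + O(n^{-1/3})$; wait---more carefully, $\tilde h_n(t,y;x,g) = n^{-1/3} h_n(2nt, 2n^{2/3}y; 2n^{2/3}x, n^{1/3}g) + n^{2/3}t$, and the variational formula \eqref{E:var-form} together with the scaling $\iota_n f_n$ identifies $\sup_x \tilde h_n(t,y;x,\iota_n f_n(x))$ with $n^{-1/3}(h_n(2nt,2n^{2/3}y;f_n) + nt)$ up to an additive $O(n^{-1/3})$ error coming from replacing a maximum over $\Z$ by a maximum over $\R$ (the discussion after \eqref{E:var-form}). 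Combining this identity with the previous paragraph yields $n^{-1/3}(h_n(2nt, 2n^{2/3}y; f_n) + nt) \to h_\scrL(t,y;f)$ compactly over $y$, on $\Om$, for every $t \in T$, which is precisely the claim.

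The main obstacle I anticipate is not conceptual but bookkeeping: verifying that Tightness condition~\ref{tightness} (as established in Lemma~\ref{L:assumption-check}(ii)) is exactly what Theorem~\ref{T:noncompact-interfaces} requires, and that the subsequence extraction in Lemma~\ref{L:assumption-check}(ii) is harmless --- i.e.\ that one genuinely gets a single coupling on one event $\Om$ good for the whole countable set $T$ and for \emph{all} admissible sequences $f_n$ simultaneously. The latter works because the conclusion of Theorem~\ref{T:noncompact-interfaces}, once Assumptions~\ref{A:interface} and the tightness condition are in force pathwise, is a deterministic statement about $\mathfrak{g} d_n \to \mathfrak g \hat\scrL$ and the given $f_n$; so fixing the good event $\Om$ once and for all, every sequence $f_n$ meeting (1)--(2) is handled by the same $\Om$. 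A secondary subtlety is the passage from $Y'$ back to all of $\N$: strictly the convergence holds along $Y'$, but since $Y$ in Lemma~\ref{L:assumption-check}(ii) was arbitrary, a standard subsequence argument upgrades this to convergence in probability along $\N$, and then to almost sure convergence in a single coupling by a further diagonal extraction --- this is the only place where a little care with the order of quantifiers is needed, and it is routine.
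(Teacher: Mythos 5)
Your proposal identifies all the right ingredients — Lemma~\ref{L:assumption-check}(ii), Theorem~\ref{T:noncompact-interfaces}, the parabolic decay estimate \eqref{E:LuCu} for condition \eqref{E:supxy}, the unwinding of $\tilde h_n$, and the observation that $\Om$ works simultaneously for all admissible $f_n$ because the interface theorem is a deterministic statement once the Assumptions and tightness condition hold pathwise — and on all of those points it matches the paper's proof. There is, however, a genuine defect in the final step where you pass from the subsequence $Y'$ back to the full sequence.

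You write that ``a standard subsequence argument upgrades this to convergence in probability along $\N$, and then to almost sure convergence in a single coupling by a further diagonal extraction.'' Neither half of this works as stated. Convergence in probability presupposes a fixed probability space carrying all of the $d_n$ and $\scrL$ simultaneously, but before a coupling is chosen there is no such space — the whole point of the argument is to construct one. And a ``further diagonal extraction'' produces a smaller subsequence, not the full sequence $\N$; it goes in the wrong direction. The paper's resolution is different and does not involve diagonalization or convergence in probability at all. One observes that the joint law of the subsequential limit $(D, \mathfrak{g}\hat\scrL)$ — where $D_n$ are the supremum statistics introduced in the proof of Lemma~\ref{L:assumption-check}(ii) to control Tightness condition~\ref{tightness} — is the same for every choice of $Y$ and $Y'$. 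By the subsequence criterion for distributional convergence, this forces $(D_n, \mathfrak{g}d_n) \cvgd (D, \mathfrak{g}\hat\scrL)$ along all of $\N$. A single application of Skorokhod's representation theorem then yields one coupling on all of $\N$ in which the hypotheses of Theorem~\ref{T:noncompact-interfaces} (including the tightness condition for every $t \in T$) hold on a common a.s.\ event, and the conclusion follows. So your overall structure is right and you have correctly located where the subtlety lives, but the mechanism you propose for closing it is not correct; the uniqueness of the subsequential limit distribution is the step that actually does the work.
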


\begin{proof}
First, for any subsequence $Y \sset \N$, there exists a further subsequence $Y' \sset Y$ and a coupling of $h_n, n \in Y', \scrL$ such that the result holds along this subsequence. Indeed, this follows from Theorem \ref{T:noncompact-interfaces} and Lemma \ref{L:assumption-check}(ii). The fact that condition \eqref{E:supxy}, which is required for Theorem \ref{T:noncompact-interfaces}, holds almost surely for $\scrL$ follows from \eqref{E:LuCu}. 

Now, in the coupling in Lemma \ref{L:assumption-check}(ii), the sequence of rescaled exponential last passage metrics $d_n$ satisfies $\mathfrak{g} d_n \to \mathfrak{g} \hat \scrL$ almost surely, where $\scrL, \hat \scrL$ are related via \eqref{E:L-hat-L}. Also, with $D_n$ as in the proof of Lemma \ref{L:assumption-check}, in this coupling we have that almost surely,
$$
D_n(m, s', a, t) \to D(m, s', a, t) := \max_{x \in \R, y \in [-m, m]} \scrL(x, 0; y, t) + (s'+1)|x| - a + 1.
$$
for all $m, s', a \in \N, t \in T$. Recall also that $d_n \to \hat \scrL$ along this subsequence. Since the joint distribution of $D, \hat \scrL$ is independent of $Y, Y'$, this implies that the whole sequence $(D_n, \mathfrak{g}d_n), n \in \N$ converges in distribution to $(D, \mathfrak{g} \hat \scrL)$. Therefore we can apply the argument of the proof of Lemma \ref{L:assumption-check}(ii) without first passing to a subsequence, to get that Lemma \ref{L:assumption-check} (ii) holds with $Y' = Y = \N$, and hence so does Theorem \ref{T:tasep-3}.
\end{proof}
\begin{remark}
\label{R:discrete}
Here we have focused on convergence of continuous-time tasep to the KPZ fixed point by coupling with exponential last passage percolation. We can also consider discrete-time tasep.

\textbf{Discrete-time tasep} with parameter $p \in (0, 1)$ is a discrete time Markov chain $D(t, \cdot; h_0)$ defined on the same state space $\scrH$ as continuous-time tasep. At each time, $D(t+1, \cdot; h_0)$ is obtained from $D(t, \cdot; h_0)$ by independently decreasing all local maxima by $2$ with probability $p$. Discrete-time tasep can be coupled to last passage percolation with geometric weights in the same way that continuous-time tasep can be coupled to last passage percolation with exponential weights. The correct geometric weights have mean $p^{-1}$ and take values in $\{1, 2, \dots\}$. This corresponds to the $\ga = p^{-1} - 1$ and $m_n = n$ case of Theorem \ref{T:geom-lpp}; note that in that theorem we worked with geometric random variables on $\{0, 1, 2, \dots\}$.

 We can then use Theorem \ref{T:geom-lpp} to conclude convergence of discrete-time tasep to the KPZ fixed point $h_\scrL$, just as in Theorems \ref{T:tasep-1}, \ref{T:tasep-2}, and \ref{T:tasep-3}. The rescaled version that converges to $h_\scrL(t, y; f)(x)$ is
$$
D_n(t, y; f) := c_1 n^{-1/3} D\lf( c_3 n t, c_2 n^{2/3} y; J_n f\rg) + c_1 n^{2/3}t.
$$
Here $J_n f(x) := c_1^{-1} n^{1/3} f(x/(c_2 n^{2/3}))$, and $c_1, c_2, c_3$ are $p$-dependent constants given by
\begin{equation*}
c_1 = \frac{2^{1/3} (1-\sqrt{q})^3}{p^{1/3} q^{1/6}}, \qquad c_2 = \frac{2^{1/3}(1 + \sqrt{q})^{2/3}}{q^{1/6}}, \qquad c_3 = \frac{1 + \sqrt{q}}p,
\end{equation*}
where $q = 1- p$.
The rescaled narrow wedge field that converges to $\scrL$ (i.e. the analogue of $i_n$ in \eqref{E:in-jn}) is
$$
i_n(y, t; x, s) := c_1 n^{-1/3} D_n\lf( c_3 n (s-t), c_2 n^{2/3} y; c_2 n^{2/3} x, n s\rg) - c_1 n^{2/3}t.
$$
\end{remark}

\begin{remark}
	\label{R:slopes}
	We have only considered tasep with unsloped initial conditions (i.e. initial conditions $f_n(x)$ that converge after rescaling without subtracting an $x$-dependent linear term). One can also consider both discrete and continuous-time tasep with initial conditions with a nonzero slope $\rho \in (-1, 1)$. Tasep with such initial conditions also converges to the KPZ fixed point. The same methods apply for proving convergence, though one needs to be more careful about tracking certain linear transformations. In this context, the right last passage percolation convergence result to apply is Theorem \ref{T:geom-lpp} or Corollary \ref{C:exponential} with $(1+ \rho)m_n = (1- \rho)n$. 
\end{remark}

\smallskip

\noindent{\bf Acknowledgments.} We thank Timo Sepp\"al\"ainen for suggesting references. The authors are both supported by the NSERC discovery grants. 

\bibliographystyle{dcu}

\bibliography{MasterBib}

\end{document}